\newcommand{\mdd}{\md^{\mathrm{disc}}}
\newcommand{\stm}{\mathrm{St}}
\newcommand{\shot}{\clg( \prl_{\mathrm{st}})}
 \newcommand{\gal}{\mathrm{Gal}}
\newcommand{\fung}{\fun^{\mathrm{Gal}}}
\newcommand{\finset}{\mathrm{FinSet}}
\newcommand{\op}{\mathrm{op}}
\newcommand{\cov}{\mathrm{Cov}}
\newcommand{\loc}{\mathrm{Loc}}
\newcommand{\pro}{\mathrm{Pro}}
\newcommand{\tring}{\mathrm{2}\text{-}\mathrm{Ring}}
\newcommand{\e}[1]{\mathbf{E}_{#1}}
\newcommand{\spec}{\mathrm{Spec}}
\newcommand{\prl}{\mathrm{Pr}^L}
 \renewcommand{\part}[1]{}
\newcommand{\holim}{\mathrm{holim}}
\newcommand{\einf}{\mathrm{CAlg}}
\renewcommand{\mod}{\mathrm{Mod}}
\newcommand{\cati}{\mathrm{Cat}_\infty}
 \newcommand{\G}{\mathscr{G}}
\newcommand{\fun}{\mathrm{Fun}}
\newcommand{\fgp}{\mathrm{Gpd}_{\mathrm{fin}}}
\newcommand{\idem}{\mathrm{Idem}}
\renewcommand{\sp}{\mathrm{Sp}}
\newcommand{\otop}{\mathcal{O}^{\mathrm{top}}}
\newcommand{\mell}{M_{{ell}}}
\renewcommand{\hom}{\mathrm{Hom}}
\newtheorem{lemma}{Lemma}[section]
\newtheorem*{sublemma}{Sublemma}
\newtheorem{corollary}[lemma]{Corollary}
\newtheorem{conj}[lemma]{Conjecture}
\newtheorem{theorem}[lemma]{Theorem}
\newtheorem{proposition}[lemma]{Proposition}
 \newcommand{\clg}{\mathrm{CAlg}}
\begin{document}

\pagestyle{fancy}
\lhead{}
\rhead{}
\lfoot{}
\rfoot{}
\renewcommand{\headrulewidth}{0pt}

\setlength{\parskip}{0.8mm}
\renewcommand{\rightrightarrows}{\begin{smallmatrix} \to \\
\to \end{smallmatrix} }
\newcommand{\triplearrows}{\begin{smallmatrix} \to \\ \to \\ 
\to \end{smallmatrix} }

\newcommand{\sh}{\mathbf{Sh}}
 
\renewcommand{\ltimes}{\stackrel{\mathbb{L}}{\otimes}}
\newcommand{\psh}{\mathbf{PSh}}
\theoremstyle{definition}
\newtheorem{definition}[lemma]{Definition}
\newcommand{\cd}{\mathrm{cd}}
\newcommand{\OO}{\widetilde{\mathcal{O}}}
\newcommand{\A}{\mathbb{A}}
\newcommand{\F}{\mathcal{F}}
\renewcommand{\P}{\mathbb{P}}
\newcommand{\bl}{\bullet}
\newcommand{\Tmf}{\mathrm{Tmf}}
\newcommand{\TMF}{\mathrm{TMF}}
\renewcommand{\ell}{\mathrm{Ell}}
\newcommand{\tmf}{\mathrm{tmf}}

 \theoremstyle{definition}
\newtheorem{remark}[lemma]{Remark}
\newcommand{\rng}{\mathrm{Ring}}
\newcommand{\gpd}{\mathbf{Gpd}}
\newcommand{\ei}{\mathbb{E}_1}
\renewcommand{\A}{\mathcal{A}_*}

\newcommand{\qcoha}{\qcoh^{\mathrm{ab}}}
\newtheorem{example}[lemma]{Example}
\newtheorem*{exm}{Example}
\newcommand{\md}{\mathrm{Mod}}
\newcommand{\qcoh}{\mathrm{QCoh}}
\newcommand{\pic}{\mathrm{Pic}}

%\frontmatter
\title{The Galois group of a stable homotopy theory}
\author{Akhil Mathew}
\date{\today}
\email{amathew@math.harvard.edu}
\address{Department of Mathematics, Harvard University, Cambridge, MA 02138}

\begin{abstract}
To a ``stable homotopy theory'' (a presentable, symmetric monoidal stable
$\infty$-category), we naturally associate a 
category of finite \'etale algebra objects and, using Grothendieck's 
categorical machine, a profinite group that we call the Galois group.  We then  calculate the Galois groups in several
examples. For instance, we show that the Galois group of the periodic
$\e{\infty}$-algebra of topological modular forms is trivial 
and that the
Galois group of $K(n)$-local stable homotopy theory is an extended version of
the Morava stabilizer group. 
We also describe the Galois group of the stable module
category of a finite group. 
A fundamental idea throughout 
is the purely categorical notion of a ``descendable'' algebra object and an associated analog of
faithfully flat descent in this context.
\end{abstract}
\maketitle

\tableofcontents

\section{Introduction}

Let $X$ be a connected scheme. One of the basic arithmetic invariants that one
can extract from $X$ is the \emph{\'etale fundamental group} $\pi_1(X,
\overline{x})$ relative to a ``basepoint'' $\overline{x} \to X$ (where
$\overline{x}$ is the spectrum of a separably closed field). 
The fundamental group was defined by Grothendieck \cite{sga1} in terms of the category of finite, \'etale
covers of $X$. It provides an analog of the usual 
fundamental group of a topological space (or rather, its profinite
completion), and plays an important role in algebraic geometry and number
theory, as a precursor to the theory of \'etale cohomology. 
From a categorical point of view, it unifies the classical Galois theory of
fields and covering space theory via a single framework.

In this paper, we will define an analog of the \'etale fundamental group, and
construct a form of the Galois correspondence, in 
stable homotopy theory. 
For example, while the classical theory of \cite{sga1} enables one to
define the fundamental (or Galois) group of a commutative ring, we will define
the fundamental group of the homotopy-theoretic analog: an $\e{\infty}$-ring
spectrum. 

The idea of a type of Galois theory applicable to structured ring spectra
begins with Rognes's work in \cite{rognes}, where, for a finite group $G$,
the notion of a \emph{$G$-Galois extension} of $\e{\infty}$-ring spectra $A \to
B$ was introduced (and more generally, $E$-local $G$-Galois extensions for a
spectrum $E$). 
Rognes's definition is an analog of the notion of a finite $G$-torsor of commutative rings
in the setting of ``brave new'' algebra, and it includes many highly
non-algebraic 
examples in stable homotopy theory. For instance, the ``complexification'' map
$KO \to KU$ from real to complex 
$K$-theory is a fundamental example of a $\mathbb{Z}/2$-Galois extension.
Rognes has also explored  the more 
general theory of \emph{Hopf-Galois} extensions, intended as a topological
version of the idea of a torsor over a group \emph{scheme} in algebraic
geometry, as has Hess in \cite{hess}. 
More recently, the PhD thesis of Pauwels  \cite{pauwels} has studied Galois theory in
tensor-triangulated categories. 

In this paper, we
will take the setup of an \emph{axiomatic stable homotopy theory.}  
For us, this will mean: 

\renewcommand{\1}{\mathbf{1}}
\begin{definition} 
An \textbf{axiomatic stable homotopy theory} is a presentable, symmetric monoidal stable
$\infty$-category $(\mathcal{C}, \otimes, \1)$ where the tensor product
commutes with all colimits. 
\end{definition} 

An axiomatic stable homotopy theory defines, at the level of homotopy
categories, a \emph{tensor-triangulated category}. Such axiomatic stable homotopy theories arise not only from stable
homotopy theory itself, but also from representation theory and algebra, and we
will discuss many examples below. 
We will associate, to every
axiomatic stable homotopy theory $\mathcal{C}$, a profinite group (or, in general,
groupoid) which we call the
\emph{Galois group} $\pi_1( \mathcal{C})$. In order to do this, we will give a definition of a
\emph{finite cover} generalizing the notion of a Galois extension,
and, using heavily ideas from descent theory, show that these can naturally be arranged into a Galois category in the
sense of Grothendieck. 
We will actually define two flavors of the fundamental group, one of which
depends only on the structure of the dualizable objects in $\mathcal{C}$ and is
appropriate to the study of ``small'' symmetric monoidal $\infty$-categories. 

Our thesis is that the Galois group of a stable homotopy theory is a natural
invariant that one can attach to it; some of the (better studied) others include the 
algebraic $K$-theory (of the compact objects, say), the lattice of thick
subcategories, and the Picard group. We will discuss several
examples. The classical fundamental group in algebraic geometry can be
recovered as the Galois group of the derived category of quasi-coherent
sheaves. Rognes's Galois theory (or rather, \emph{faithful} Galois theory) is
the case of $\mathcal{C}  = \mod(R)$ 
for $R$ an $\e{\infty}$-algebra.  

Given a stable homotopy theory $(\mathcal{C}, \otimes, \mathbf{1})$, the
collection of all homotopy classes of maps $\mathbf{1} \to \mathbf{1}$ is
naturally a commutative ring $R_{\mathcal{C}}$. 
In general, there is always a surjection of profinite groups
\begin{equation} \label{keymap} \pi_1 \mathcal{C}
\twoheadrightarrow \pi_1^{\mathrm{et}}
\spec  R_{\mathcal{C}}.  \end{equation}
The \'etale fundamental group of $\spec R_{\mathcal{C}}$ represents the ``algebraic''
part of the Galois theory of $\mathcal{C}$. 
For example, if $\mathcal{C}  = \md(R)$ for $R$ an $\e{\infty}$-algebra, then
the ``algebraic'' part of the Galois theory of $\mathcal{C}$ corresponds to
those $\e{\infty}$-algebras under $R$ which are finite \'etale at the level of
homotopy groups. It is an insight of Rognes that, in general, the Galois group
contains a topological component as well: the map 
\eqref{keymap} is generally not an isomorphism. The remaining Galois extensions
(which behave much differently on the level of homotopy groups) can be quite
useful computationally.

In the rest of the paper, we will describe several computations of these Galois
groups in various settings. 
Our basic tool is the following result, which is a refinement of (a natural
generalization of) the main result of \cite{BR2}. 

\begin{theorem} If $R$ is an even periodic $\e{\infty}$-ring with $\pi_0 R$
regular noetherian, then the Galois group of $R$ is that of the
discrete ring
$\pi_0 R$: that is, \eqref{keymap} is an isomorphism. 
\end{theorem}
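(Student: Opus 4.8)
\smallskip
\noindent\emph{Proof plan.} Set $\mathcal{C}=\mod(R)$, so that $R_{\mathcal{C}}=\pi_0 R$, and recall that \eqref{keymap} is the map on fundamental groups induced by the natural functor $\Phi$ from finite \'etale $\pi_0 R$-algebras to finite covers of $\mod(R)$, sending a finite \'etale $\pi_0 R$-algebra $A_0$ to the essentially unique \'etale $R$-algebra $R\langle A_0\rangle$ with $\pi_0 R\langle A_0\rangle\cong A_0$ (such a lift exists, and satisfies $\pi_* R\langle A_0\rangle\cong\pi_* R\otimes_{\pi_0 R}A_0$, by the topological invariance of the small \'etale site). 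The functor $\Phi$ is fully faithful precisely because \eqref{keymap} is surjective, and since its essential image is closed under subobjects it will be an equivalence of Galois categories -- so that \eqref{keymap} is an isomorphism -- as soon as it is essentially surjective. So the plan is to show that every finite cover $S$ of $\mod(R)$ is equivalent, as an $R$-algebra, to $R\langle\pi_0 S\rangle$. Since the statements ``$\pi_0 R\to\pi_0 S$ is finite \'etale'' and ``$\pi_* S\cong\pi_* R\otimes_{\pi_0 R}\pi_0 S$'' are local on $\spec\pi_0 R$ and survive base change along $R\to R_{\mathfrak m}$, I would begin by reducing to the case where $\pi_0 R$ is regular local, with maximal ideal $\mathfrak m=(x_1,\dots,x_d)$ generated by a regular system of parameters and residue field $k$; then $\pi_* R\cong\pi_0 R[\beta^{\pm1}]$ ($|\beta|=2$) is regular noetherian of finite global dimension and concentrated in even degrees.

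\smallskip
Here $S$ is a finite cover: a commutative $R$-algebra that is dualizable over $R$ and separable (so $S\otimes_R S\simeq S\times S'$ for some $S'$). Dualizability and noetherianity force $\pi_* S$ to be a finitely generated $\pi_* R$-module, hence of finite projective dimension. The crux -- and the promised refinement of \cite{BR2} -- is the assertion that \emph{$\pi_* S$ is free over $\pi_* R$ and concentrated in even degrees}. Granting this, $\pi_* S\cong(\pi_0 S)[\beta^{\pm1}]$ with $\pi_0 S$ finitely generated projective over $\pi_0 R$; flatness then gives $\pi_*(S\otimes_R S)\cong\pi_* S\otimes_{\pi_* R}\pi_* S$, so the decomposition $S\otimes_R S\simeq S\times S'$ shows that $\pi_0 S\otimes_{\pi_0 R}\pi_0 S\to\pi_0 S$ is split, i.e.\ that $\pi_0 S$ is separable -- being also finite projective, finite \'etale -- over $\pi_0 R$. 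Finally, the universal property of $R\langle\pi_0 S\rangle$ yields an $R$-algebra map $R\langle\pi_0 S\rangle\to S$ that is the identity on $\pi_0$, and hence (both sides having homotopy $\pi_* R\otimes_{\pi_0 R}\pi_0 S$) an equivalence; thus $S\simeq\Phi(\pi_0 S)$, as required.

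\smallskip
To establish the crux, my plan is to compare two ``rank'' functions on the Galois category of finite covers. Let $\mathcal{K}=R/(x_1,\dots,x_d)$, so that $\pi_*\mathcal{K}\cong k[\beta^{\pm1}]=:\bar k$, the graded residue field of $\pi_* R$; for perfect $N$ one has $\pi_*(N\otimes_R\mathcal{K})\cong H_*(x_\bullet;\pi_* N)$, a finite graded $\bar k$-module, and I set $d_{\mathcal{K}}(N):=\dim_{\bar k}\pi_*(N\otimes_R\mathcal{K})$ (counted over one period of $\beta$). Iterating the Koszul long exact sequences, $d_{\mathcal{K}}(N)=\sum_i\dim_{\bar k}\mathrm{Tor}^{\pi_* R}_i(\pi_* N,\bar k)$; in particular $d_{\mathcal{K}}$ restricted to finite covers is additive on the coproducts $\prod_j S_j$, unital ($d_{\mathcal{K}}(R)=1$), positive on every nonzero cover (if $S\neq0$ then $\mathrm{Tor}^{\pi_* R}_0(\pi_* S,\bar k)=\pi_* S/\mathfrak m\pi_* S\neq0$, since $\mathfrak m$ lies in the support of the nonzero finitely generated $\pi_* R$-module $\pi_* S$), and multiplicative on the products $S\otimes_R S'$ -- this last being the substantive point, which I would verify by a Koszul/K\"unneth spectral-sequence computation. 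A $\mathbb{Z}_{\geq0}$-valued function on a Galois category that is additive on coproducts, multiplicative on products, unital, and positive on nonzero objects must be the standard rank: it extends to a ring homomorphism from the Burnside ring of $\pi_1(\mathcal{C})$ to $\mathbb{Z}$, hence is a mark homomorphism $X\mapsto|X^H|$ for some closed subgroup $H\leq\pi_1(\mathcal{C})$, and positivity forces $H=1$ (otherwise a suitable transitive $\pi_1(\mathcal{C})$-set, corresponding to a nonzero cover, would have no $H$-fixed points). Thus $d_{\mathcal{K}}(S)=\operatorname{rank}(S)$ for every finite cover $S$. On the other hand, writing the minimal free graded resolution $F_\bullet\to\pi_* S$ over $\pi_* R$ with $F_i\cong\pi_* R^{a_i}\oplus\Sigma\pi_* R^{b_i}$, so $\mathrm{Tor}^{\pi_* R}_i(\pi_* S,\bar k)\cong\bar k^{a_i}\oplus\Sigma\bar k^{b_i}$, one gets $d_{\mathcal{K}}(S)=\sum_i(a_i+b_i)$, whereas the rank is also computed by the Euler characteristic $\operatorname{rank}(S)=\chi\big(\pi_*(S\otimes_R\mathcal{K})\big)=\sum_i(-1)^i(a_i-b_i)$. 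The equality $\sum_i(a_i+b_i)=\sum_i(-1)^i(a_i-b_i)$ forces $b_i=0$ for even $i$ and $a_i=0$ for odd $i$; since $\pi_* R$ is evenly graded, the differential $F_1\to F_0$ -- a map from a purely odd free module to a purely even one -- vanishes, whence $F_0\xrightarrow{\ \sim\ }\pi_* S$. So $\pi_* S$ is free and concentrated in even degrees, as claimed.

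\smallskip
The main obstacle is exactly the crux: passing from the essentially formal separability and dualizability of a finite cover to flatness (and evenness) of its homotopy over $\pi_* R$ -- concretely, establishing the multiplicativity of $d_{\mathcal{K}}$, or equivalently controlling the Koszul homology of $\pi_* S$. Everything else -- the reduction to the regular local case, the Burnside-ring identification of $d_{\mathcal{K}}$ with the rank, and the recognition of $S$ as $R\langle\pi_0 S\rangle$ -- is routine given the topological invariance of the \'etale site and standard facts about Galois categories and regular local rings.
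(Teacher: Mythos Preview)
Your Burnside-ring identification of $d_{\mathcal K}$ with the Galois rank is a genuinely different and elegant route from the paper's, but you have misdiagnosed the obstacle and left a real gap. The step you flag as the ``main obstacle''---multiplicativity of $d_{\mathcal K}$---is in fact immediate: the functor $M\mapsto\pi_*(M\otimes_R\mathcal K)$ satisfies a K\"unneth isomorphism simply because $\pi_*\mathcal K=\bar k$ is a graded field (this is exactly how the paper sets up its homology theory $Q_*$). Your Burnside argument is then correct and gives $d_{\mathcal K}(S)=\dim_k Q_0(S)+\dim_k Q_1(S)=\operatorname{rank}(S)$ for every finite cover. The gap is the next, unargued, assertion: $\operatorname{rank}(S)=\chi\bigl(\pi_*(S\otimes_R\mathcal K)\bigr)=\dim_k Q_0(S)-\dim_k Q_1(S)$. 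Why should the Galois rank equal this graded Euler characteristic? The natural justifications are either (i) May's additivity of traces in triangulated categories, so that both the symmetric monoidal dimension (which for a finite cover \emph{is} the Galois rank) and $\chi$ are additive on cofiber sequences and hence factor through the cyclic group $K_0(\mathrm{Perf}(R))$, agreeing on $[R]$; or (ii) base change to the fraction field $K$ of $\pi_0R$, where the field case (\Cref{fieldreg}) shows $\pi_*S_K$ is free and even, whence $\sum_i(-1)^ib_i=0$ and $\operatorname{rank}=\sum_i(-1)^ia_i=\sum_i(-1)^i(a_i-b_i)$. Either requires nontrivial input you do not mention; without it your two expressions for the rank are unlinked and the deduction $Q_1=0$ collapses.

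The paper follows route (ii). Working with a $G$-torsor $B$, it reads off $\dim Q_0+\dim Q_1=|G|$ directly from $B\otimes_R B\simeq\prod_G B$ and K\"unneth (your Burnside argument is a slicker way to obtain this, and for all covers at once), then proves a Bockstein inequality $\operatorname{rank}_{\pi_0R}\pi_0B\le\dim_k Q_0(B)$ (\Cref{BSSlem}), and invokes \Cref{fieldreg} at the fraction field to identify $\operatorname{rank}_{\pi_0R}\pi_0B$ with $|G|$, forcing $Q_1=0$. A secondary issue in your write-up: the identity $d_{\mathcal K}(N)=\sum_i\dim_{\bar k}\mathrm{Tor}_i^{\pi_*R}(\pi_*N,\bar k)$ amounts to degeneration of the K\"unneth spectral sequence; this does hold here (regularity of $\pi_*R$ lets one realize the minimal free resolution of $\pi_*N$ as a cell filtration of $N$), but ``iterating the Koszul long exact sequences'' does not establish it. You can sidestep this entirely: once $Q_1(S)=0$, a direct descending induction on the regular parameters (\Cref{easyBSS}) yields $\pi_*S$ free and even without any $\mathrm{Tor}$ computation.
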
 

Using various techniques of descent theory, and a version of van Kampen's
theorem, we are able to compute Galois groups in
several other examples of stable homotopy theories ``built'' from $\mod(R)$
where $R$ is an even periodic $\e{\infty}$-ring; these include in particular many arising from 
both chromatic stable homotopy theory and modular representation theory. 
In particular, we prove the following three theorems. 

\begin{theorem} 
The Galois group of the $\infty$-category $L_{K(n)} \sp$ of $K(n)$-local
spectra  is the extended Morava stabilizer group. 
\end{theorem}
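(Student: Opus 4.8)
The plan is to realize $L_{K(n)}\sp$ as a category of ``continuous $\mathbb{G}_n$-equivariant Morava $E$-modules'' and to check that $\md_{E_n}$ contributes nothing new to the Galois theory, so that $\pi_1(L_{K(n)}\sp)$ is exactly the descent group, the extended Morava stabilizer group $\mathbb{G}_n = \mathbb{S}_n \rtimes \gal(\overline{\mathbb{F}}_p/\mathbb{F}_p)$. The crucial structural input is that, if $E_n$ denotes Lubin--Tate theory at height $n$ over $\overline{\mathbb{F}}_p$, regarded as an $\e{\infty}$-algebra in $L_{K(n)}\sp$, then the unit map $L_{K(n)}S^0 \to E_n$ is \emph{descendable} in the sense used throughout this paper; equivalently, the $K(n)$-local $E_n$-based Adams spectral sequence computing $\pi_\ast L_{K(n)}S^0$ has a horizontal vanishing line at a finite page. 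This follows from the work of Devinatz and Hopkins (building on Hopkins--Ravenel), and holds at every prime and height, including when $\mathbb{G}_n$ has $p$-torsion. Consequently the Amitsur (cobar) tower realizes descent: $L_{K(n)}\sp \simeq \mathrm{Tot}\big( \md_{E_n} \rightrightarrows \md_{E_n \otimes E_n} \triplearrows \cdots \big)$, where $\otimes$ is the $K(n)$-localized smash product.

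Next I would identify this descent with equivariance. Again by Devinatz--Hopkins, $L_{K(n)}S^0 \to E_n$ is a \emph{pro-$\mathbb{G}_n$-Galois extension}: coface-compatibly one has $E_n \otimes E_n \simeq \mathrm{Map}^c(\mathbb{G}_n, E_n)$ and similarly in higher cosimplicial degrees, so the descent category above is that of $K(n)$-local $E_n$-modules equipped with a continuous semilinear $\mathbb{G}_n$-action. Since finite covers are dualizable and the property of being a finite cover satisfies descent along a descendable algebra, one may restrict descent to finite covers and obtain an equivalence $\cov(L_{K(n)}\sp) \simeq \cov(\md_{E_n})^{h\mathbb{G}_n}$, where the fixed points are taken with respect to the continuous $\mathbb{G}_n$-action induced from its action on $E_n$. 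Concretely, it is probably cleanest to organize this bookkeeping through the finite quotients: write $\mathbb{G}_n = \varprojlim_U \mathbb{G}_n/U$ over open normal $U$, note that each $E_n^{hU}$ is a \emph{faithful} $\mathbb{G}_n/U$-Galois extension of $L_{K(n)}S^0$, that $E_n \simeq \mathrm{colim}_U E_n^{hU}$, and run a van Kampen--type argument on this colimit.

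It then remains to compute $\cov(\md_{E_n})$, which is where the even-periodic theorem quoted above enters. The ring $E_n$ is even periodic with $\pi_0 E_n = \mathbb{W}(\overline{\mathbb{F}}_p)[[u_1,\dots,u_{n-1}]]$, a complete (hence henselian) regular local ring with separably closed residue field $\overline{\mathbb{F}}_p$; therefore every finite \'etale $\pi_0 E_n$-algebra is a finite product of copies of $\pi_0 E_n$, i.e.\ $\pi_1^{\mathrm{et}}\spec \pi_0 E_n$ is trivial. Moreover a finite cover of $\md_{E_n}(L_{K(n)}\sp)$, being dualizable, is a finitely generated projective $E_n$-module and hence automatically $K(n)$-local, so these finite covers coincide with those in $\md_{E_n}(\sp)$ and the quoted theorem applies. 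Thus $\cov(\md_{E_n}) \simeq \finset$ with trivial $\mathbb{G}_n$-action on objects, and feeding this back gives $\cov(L_{K(n)}\sp) \simeq \finset^{h\mathbb{G}_n}$ = the category of finite continuous $\mathbb{G}_n$-sets; by Grothendieck's machine this identifies $\pi_1(L_{K(n)}\sp) \cong \mathbb{G}_n$. (Had one instead used the $\mathbb{F}_{p^n}$-model of $E_n$, the residual $\pi_1^{\mathrm{et}}\spec \pi_0 E_n = \gal(\overline{\mathbb{F}}_p/\mathbb{F}_{p^n})$ would simply recombine with the finite Galois factor of $\mathbb{G}_n = \mathbb{S}_n \rtimes \gal(\mathbb{F}_{p^n}/\mathbb{F}_p)$ to give the same group.)

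The main obstacle is concentrated in the first two steps: the whole argument rests on $E_n$ being descendable in $L_{K(n)}\sp$ and on the resulting descent being exactly the continuous $\mathbb{G}_n$-equivariance of Devinatz--Hopkins. The descendability is the genuinely deep point --- it is the categorical shadow of the horizontal vanishing line in the $K(n)$-local $E_n$-Adams spectral sequence --- and additional care is needed to treat $\mathbb{G}_n$ as a profinite rather than finite group: one must interpret ``continuous cochains'' and ``homotopy fixed points'' correctly, verify the coface maps agree with the cobar construction on $\mathbb{G}_n$, and pass between $E_n$ and the tower $\{E_n^{hU}\}$ of faithful finite Galois extensions (this last point is precisely what forces the $\mathbb{G}_n$-sets arising on covers to be \emph{continuous}). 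Granting these structural facts, the remaining steps --- triviality of $\pi_1^{\mathrm{et}}\spec \pi_0 E_n$ and the passage from finite covers to the profinite group --- are formal consequences of the Galois correspondence together with the already-established computation for even periodic rings.
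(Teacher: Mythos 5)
Your proposal is correct and follows essentially the same route as the paper: descend along the descendable unit map $L_{K(n)}S^0 \to E_n$ (obtained by $K(n)$-localizing the Hopkins--Ravenel constant pro-object), use the dualizable-implies-perfect lemma together with the even periodic/regular theorem to see that all finite covers of each $L_{K(n)}(E_n^{\otimes m})$ are classically \'etale, and then identify the resulting colimit of \'etale fundamental groupoids with $\Ge_n$. The only difference is packaging: you phrase the last step as continuous $\G_n$-homotopy fixed points of Galois categories via the Devinatz--Hopkins identification $E_n \otimes E_n \simeq \mathrm{Map}^c(\G_n, E_n)$, while the paper reads off the same answer as $\pi_1^{\mathrm{et}}$ of the stack $M_{FG}^{n}$ presented by the cosimplicial ring $\pi_0 L_{K(n)}(E_n^{\otimes \bullet})$ after reducing modulo $(p, v_1, \dots, v_{n-1})$.
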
 

\begin{theorem} 
The Galois group of the $\e{\infty}$-algebra $\TMF$ of (periodic) topological
modular forms is trivial. 
\end{theorem}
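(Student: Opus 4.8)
The plan is to pass, via faithfully flat descent, from $\TMF$ to the moduli stack of elliptic curves, and then to reduce the resulting statement about the \'etale fundamental group of that stack to arithmetic facts about $\spec\mathbb{Z}$, with the chromatic picture at the primes $2$ and $3$ supplying the input needed there.

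\emph{Reduction to the moduli stack.} Write $\TMF = \Gamma(\mell, \otop)$ for $\otop$ the derived structure sheaf on the moduli stack $\mell$ of elliptic curves over $\spec\mathbb{Z}$. I would first record that $\mell$ is ``affine'' for $\otop$, i.e.\ that global sections give an equivalence $\qcoh(\mell,\otop) \xrightarrow{\ \sim\ } \md(\TMF)$: concretely, the cover of $\mell$ by the moduli stacks of elliptic curves with full level-$3$ and level-$4$ structure provides a descendable $\TMF$-algebra in the sense of this paper, and descent along it gives the equivalence. Granting this, the Galois category of $\TMF$ is the category of finite \'etale $\otop$-algebra sheaves on $\mell$; since on an \'etale chart (on which $\omega$ is trivialized) $\otop$ is an even-periodic ring with regular noetherian $\pi_0$, the quoted theorem shows that finite \'etale extensions there are rigid and detected on $\pi_0$, and as $\pi_0\otop = \mathcal{O}_{\mell}$ this is just the category of finite \'etale covers of the ordinary stack $\mell$. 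Hence $\pi_1(\TMF) \cong \pi_1^{\mathrm{et}}(\mell)$, and the theorem becomes the assertion that the moduli stack of elliptic curves over $\mathbb{Z}$ is simply connected.

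\emph{Killing the geometric and arithmetic monodromy.} A connected finite \'etale cover $\widetilde{M} \to \mell$ is, over $\overline{\mathbb{Q}}$, the cover attached to an open subgroup $H \leq \pi_1^{\mathrm{et}}(\mell_{\overline{\mathbb{Q}}}) \cong \widehat{SL_2(\mathbb{Z})}$, i.e.\ essentially a level structure. I would argue that if $H$ has ``level'' $N > 1$ then the covering map degenerates (acquires ramification), over the supersingular locus or at the cusp, at every prime $p \mid N$, since the corresponding modular curve has bad reduction there; so $\widetilde{M}$ cannot be \'etale over all of $\spec\mathbb{Z}$, forcing $H = \widehat{SL_2(\mathbb{Z})}$ and $\widetilde{M}$ geometrically trivial. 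Then $\widetilde{M}_{\mathbb{Q}}$ is a base change $\mell_{\mathbb{Q}} \otimes_{\mathbb{Q}} L$ for $L/\mathbb{Q}$ a product of number fields, and \'etaleness over $\spec\mathbb{Z}$ forces $L/\mathbb{Q}$ everywhere unramified, so $L = \mathbb{Q}$ by Minkowski's theorem and $\widetilde{M}$ is trivial.

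\emph{The primes $2$ and $3$, and the main obstacle.} The delicate part of the previous paragraph is the claim that level structures genuinely ramify over $\spec\mathbb{Z}$, together with the control of covers near the stacky (supersingular) points of $\mell$ in characteristics $2$ and $3$, where the reduction of modular curves is additive and wildly ramified and the moduli stack carries large automorphism groups. Here I would use the chromatic decomposition: near $p=2$ (resp.\ $p=3$) the relevant local piece is the $K(2)$-localization $L_{K(2)}\TMF$, which by the Goerss--Hopkins--Miller theory is $E^{hG}$ for $E$ a Lubin--Tate theory of the supersingular curve and $G$ the finite group of automorphisms of that curve (a finite subgroup of the Morava stabilizer group); since $\pi_0 E$ is a complete regular local ring with separably closed residue field, $\pi_1(E) = 1$ by the quoted theorem, so the Galois group of $L_{K(2)}\TMF$ is exactly $G$. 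Combining this with the $K(1)$-local piece (governed by $p$-complete $K$-theory) and with the already-established computation of the Galois group of $L_{K(n)}\sp$, and reassembling by a van Kampen argument, one checks that the monodromy contributed around the supersingular points collapses. The heart of the argument, and the main obstacle, is precisely this collapse: a priori the pieces $\TMF[1/6]$, $L_{K(1)}\TMF$ and $L_{K(2)}\TMF$ (at $p=2,3$) each have a \emph{nonzero} Galois group --- the geometric monodromy $\widehat{SL_2(\mathbb{Z})}$ for the first, the finite automorphism groups of supersingular curves for the last --- and one must verify that reassembling $\TMF$ from these pieces kills all of it, equivalently that $\mell$ over $\mathbb{Z}$ admits no nontrivial finite \'etale cover, an arithmetic fact of the same flavour as ``$\spec\mathbb{Z}$ is simply connected'' and ``no abelian scheme over $\mathbb{Z}$ has everywhere good reduction''.
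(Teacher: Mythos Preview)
Your reduction to $\pi_1^{\mathrm{et}}(\mell/\mathbb{Z})$ is essentially the paper's argument, but with one unnecessary strengthening: the paper does not invoke the full affineness equivalence $\md(\TMF)\simeq\qcoh(\otop)$ here. The inverse-limit presentation $\TMF=\varprojlim_{\spec R\to\mell}\otop(\spec R)$ already yields a fully faithful embedding $\md^\omega(\TMF)\hookrightarrow\varprojlim\md^\omega(\otop(\spec R))$, and since the Galois group depends only on dualizable objects this gives a surjection $\pi_1^{\mathrm{et}}(\mell)\twoheadrightarrow\pi_1(\TMF)$. That upper bound is all that is needed once one knows $\mell/\mathbb{Z}$ is simply connected.

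Where you diverge from the paper is in treating $\pi_1^{\mathrm{et}}(\mell/\mathbb{Z})=1$ as something to be proved rather than cited. The paper simply records this as a folklore arithmetic fact (with a reference to Conrad). Your Step~2 outline---geometric covers of $\mell_{\overline{\mathbb{Q}}}$ are level structures, these ramify at primes dividing the level, then Minkowski for the arithmetic part---is a reasonable sketch of how such a proof goes, though it needs care (e.g.\ $\mell$ is the open stack, so ``ramification at the cusp'' is not directly available).

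Your Step~3, however, is a genuine detour. You propose to handle the primes $2$ and $3$ by computing $\pi_1(L_{K(2)}\TMF_p)$ and $\pi_1(L_{K(1)}\TMF_p)$ and then ``reassembling by van Kampen.'' There are two problems. First, this inverts the logical flow: the paper computes $K(n)$-local Galois groups \emph{from} the algebraic input (regularity of $\pi_0 E_n$), not the other way around, so using chromatic localizations to establish the algebraic simple connectedness of $\mell$ is at best circular. Second, the reassembly step is not specified: the chromatic fracture square $\TMF\simeq L_1\TMF\times_{L_1 L_{K(2)}\TMF}L_{K(2)}\TMF$ does give a pushout of Galois groupoids, but carrying this out would require knowing $\pi_1$ of all three corners, including $L_1 L_{K(2)}\TMF$, and showing the pushout collapses---none of which you do, and each of which is at least as hard as the original problem. (Also, the Lubin--Tate theory $E$ attached to a supersingular curve over $\mathbb{F}_p$ does not have separably closed residue field, so $\pi_1(E)\neq 1$; the Galois group of $L_{K(2)}\TMF_p$ picks up a Frobenius factor.) The honest proof of $\pi_1^{\mathrm{et}}(\mell/\mathbb{Z})=1$ stays entirely on the algebraic side; the chromatic machinery does not contribute.
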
 

\begin{theorem} 
Given a finite group $G$ and a separably closed field $k$ of
characteristic $p$, the Galois group of the stable module $\infty$-category
of $k[G]$ is the profinite completion of the nerve of the category
of $G$-sets of the form $\left\{G/A\right\}$ where $ A \subset G$
is a nontrivial elementary abelian $p$-subgroup. \end{theorem}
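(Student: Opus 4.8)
The plan is to compute the category $\cov(\mathcal{C})$ of finite covers of $\mathcal{C}:=\mathrm{StMod}(k[G])$ by descent along the elementary abelian subgroups, reducing everything to the elementary abelian case, where the answer turns out to be trivial. We may assume $p\mid|G|$, since otherwise $\mathcal{C}=0$ and both sides are trivial. Write $\mathbf{1}=k$ for the unit of $\mathcal{C}$, let $\mathcal{F}$ be the set of nontrivial elementary abelian $p$-subgroups of $G$, and let $\mathcal{A}_G$ be the orbit category of $G$-sets $\{G/A:A\in\mathcal{F}\}$ from the statement. For $A\in\mathcal{F}$ the $G$-set $G/A$ linearizes to a commutative algebra $k[G/A]\in\mathcal{C}$, there is a standard equivalence $\md_{k[G/A]}(\mathcal{C})\simeq\mathrm{StMod}(k[A])$, and iterating one gets $\md_{B^{\otimes(n+1)}}(\mathcal{C})\simeq\prod_{(A_i),\,o}\mathrm{StMod}(k[H_o])$ for $B:=\bigoplus_{A\in\mathcal{F}}k[G/A]$, the product running over all tuples $(A_0,\dots,A_n)\in\mathcal{F}^{n+1}$ and all $G$-orbits $o$ of $G/A_0\times\cdots\times G/A_n$, with $H_o$ the (always elementary abelian) stabilizer — the factor being $0$ when $H_o=1$. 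The first step is to show that $\mathbf{1}$ is \emph{descendable} along $B$. Conservativity of $M\mapsto M\otimes B$ is exactly Chouinard's theorem, since $M\otimes k[G/A]\simeq\mathrm{ind}_A^G\mathrm{res}_A^GM$ vanishes in $\mathrm{StMod}$ precisely when $\mathrm{res}_AM$ is projective, and a $k[G]$-module is projective iff its restriction to every elementary abelian subgroup is; upgrading this to honest descendability of $\mathbf{1}$ along $B$ — equivalently, convergence of the Amitsur complex of $\mathbf{1}\to B$ after finitely many stages — is where the finiteness in Quillen's $F$-isomorphism theorem feeds into the descendability criteria of this paper. (One could first reduce to $G$ a $p$-group, as for a Sylow $p$-subgroup $S$ the map $\mathbf{1}\to k[G/S]$ is finite \'etale — $[G:S]$ being invertible in $k$ — and faithful, hence descendable.)

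Next I would treat the base case: for $E\neq 1$ elementary abelian of rank $r$, I claim $\cov(\mathrm{StMod}(k[E]))\simeq\finset$, i.e. the Galois group of $\mathrm{StMod}(k[E])$ is trivial — consistent with the asserted formula, since $E\in\mathcal{F}$ makes $E/E$ a terminal object of $\mathcal{A}_E$ and hence $N\mathcal{A}_E$ contractible. The trivial module $k$ is a compact generator of $\mathrm{StMod}(k[E])$ (it is compact, and by the stratification theorem it has full support $\mathrm{Proj}\,H^\ast(E;k)$), so $\mathrm{StMod}(k[E])\simeq\md_{\widehat{k}^{tE}}$, where $\widehat{k}^{tE}=\mathrm{End}(\mathbf{1})$ is the $k$-linear Tate construction, an $\e{\infty}$-ring with $\pi_\ast\widehat{k}^{tE}=\widehat{H}^{-\ast}(E;k)$. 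The Balmer spectrum of $\mathrm{StMod}(k[E])^\omega$ is $\mathrm{Proj}\,H^\ast(E;k)\cong\P^{r-1}_k$; if $\zeta_1,\dots,\zeta_r$ are the polynomial generators of $H^\ast(E;k)$ modulo nilpotents, the localizations $\md_{\widehat{k}^{tE}[\zeta_i^{-1}]}$ are the smashing localizations at the standard affine charts, and this finite Zariski-type cover is descendable. Since $\widehat{H}^n=H^n$ for $n\geq1$, one has $\pi_\ast\widehat{k}^{tE}[\zeta_i^{-1}]\cong H^\ast(E;k)[\zeta_i^{-1}]$, which is periodic with $\pi_0$ Noetherian and $\spec \pi_0\cong\mathbb{A}^{r-1}_k$ as a scheme up to nilpotents (which do not affect the \'etale site); so by (a mild extension of) the even periodic theorem the Galois group of $\md_{\widehat{k}^{tE}[\zeta_i^{-1}]}$ is $\pi_1^{\mathrm{et}}(\mathbb{A}^{r-1}_k)$, and the overlaps see the standard opens of $\P^{r-1}$. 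Feeding this into van Kampen's theorem, $\pi_1(\mathrm{StMod}(k[E]))$ is the resulting colimit, namely $\pi_1^{\mathrm{et}}(\P^{r-1}_k)=1$, as $\P^{r-1}$ over a separably closed field is simply connected.

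Finally I would assemble. By the descent property of $\cov$ along the descendable algebra $B$, $\cov(\mathcal{C})$ is the totalization of the cosimplicial category $[n]\mapsto\cov(\md_{B^{\otimes(n+1)}}(\mathcal{C}))$. Using the description of $\md_{B^{\otimes(n+1)}}$ above and the base case — every stabilizer $H_o$ occurring is elementary abelian, and in the only case where this step is applied $G$ is not itself elementary abelian (the elementary abelian case being handled directly, to avoid circularity), so each $H_o$ is a proper elementary abelian subgroup with $\cov(\mathrm{StMod}(k[H_o]))\simeq\finset$ — this cosimplicial category is $[n]\mapsto\fun(S_n,\finset)$, where $S_n$ is the set of $G$-orbits of the $G/A_0\times\cdots\times G/A_n$ (over all tuples in $\mathcal{F}^{n+1}$) having nontrivial stabilizer; the trivial-stabilizer orbits give the zero category, with trivial $\cov$, and drop out. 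Then $S_\bullet$ is a simplicial set, the totalization is $\fun(|S_\bullet|,\finset)$ — the category of finite covers of the space $|S_\bullet|$ — and $S_\bullet$ is precisely the bar construction computing $\mathrm{hocolim}_{\mathcal{A}_G}(\ast)$, i.e. the orbit-category (``subgroup'') decomposition, so $|S_\bullet|\simeq|N\mathcal{A}_G|$. Hence $\cov(\mathcal{C})$ is the category of finite covers of $|N\mathcal{A}_G|$, and by Grothendieck's machine $\pi_1(\mathcal{C})$ is the profinite completion of the fundamental groupoid $\Pi_1(N\mathcal{A}_G)$ of the nerve of $\mathcal{A}_G$, as claimed.

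I expect the main obstacle to be the descendability input: conservativity of the elementary abelian cover is classical (Chouinard, Quillen), but verifying honest descendability — that $\mathbf{1}$ lies in the thick subcategory, not merely the thick $\otimes$-ideal, generated by $\bigoplus_A k[G/A]$ — is the real content, and is exactly where the finiteness in Quillen's stratification must be used. A secondary delicate point is the van Kampen step in the elementary abelian case: each affine piece $\md_{\widehat{k}^{tE}[\zeta_i^{-1}]}$ has a large Galois group on its own ($\pi_1^{\mathrm{et}}(\mathbb{A}^{r-1}_k)$ is enormous in characteristic $p$, by Artin--Schreier theory), so one must check that gluing along the overlaps collapses these precisely as in the proof that $\P^{r-1}$ is simply connected.
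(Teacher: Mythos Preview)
Your overall strategy --- descend along the elementary abelian subgroups via Quillen stratification, establish that $\stm_V(k)$ has trivial Galois group for elementary abelian $V$, then assemble by van Kampen --- is exactly the paper's approach, and your assembly step identifying the resulting colimit with the nerve of $\mathcal{A}_G$ is correct. Your worry about descendability is misplaced: this is a known theorem of Carlson (reformulated by Balmer), recorded as \Cref{BC}, and is taken as input.

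The genuine gap is in your base case. You assert that $\pi_0$ of $\widehat{k}^{tE}[\zeta_i^{-1}]$ is $\mathbb{A}^{r-1}_k$ ``up to nilpotents (which do not affect the \'etale site)'' and then invoke ``a mild extension of'' the even periodic theorem. But for $p$ odd the exterior generators $\epsilon_0,\dots,\epsilon_{r-1}$ in $H^*(E;k)$ contribute honest nilpotents to $\pi_0$ of each chart, and \Cref{etalegalois} requires $\pi_0$ \emph{regular}. For a general $\e{\infty}$-ring there is no statement that nilpotents in $\pi_0$ can be ignored for the Galois group in the sense of this paper --- the algebraic \'etale site of $\pi_0$ is only a quotient of the Galois group, not the whole thing, and the invariance results of Section~8 are delicate and do not apply here in any obvious way. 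There is no evident $\e{\infty}$-ring with regular $\pi_0$ to compare against; the paper explicitly notes that the Witt-vector trick used for rank one (\Cref{galoistaterankone}) breaks down in higher rank.

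The paper's actual argument for \Cref{galelem} is quite different from your sketch. It does use the Zariski decomposition of $\stm_V(k)$ over $\mathbb{P}(V_k^\vee)$ (\Cref{affinestmod}), as you do, but to handle each chart it exploits the fibration $S^1\to B\mathbb{Z}/p\to BS^1$ to exhibit $C^*(B(\mathbb{Z}/p)^{n+1};k)[e_0^{-1}]$ as a faithful $\mathbb{T}^{n+1}$-Galois extension of $C^*(B\mathbb{T}^{n+1};k)[e_0^{-1}]$. The latter \emph{is} even periodic with regular $\pi_0$, so \Cref{etalegalois} applies directly there. An exact sequence for Galois groups along a topological-group Galois extension (\Cref{basicexact}) then reduces the problem to showing a certain map out of $\widehat{\mathbb{Z}}^{n+1}$ vanishes, which is proved by a naturality comparison with the completed situation where the answer is known via the rank-one case and \Cref{weakinv}. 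This torus comparison is the real content your ``mild extension'' is hiding, and it is where the work in the elementary abelian case actually lies --- not in the descendability input or the van Kampen gluing.
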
 

These results suggest a number of other settings in which the computation of
Galois groups may be feasible, for example, 
in stable module $\infty$-categories for finite group \emph{schemes}. We hope that these
results and ideas will, in addition, shed light on some of the other invariants
of $\e{\infty}$-ring spectra and stable
homotopy theories. 

\subsection*{Acknowledgments} 

I would like to thank heartily Mike Hopkins for his advice and
support over the past few years, with this project and others. In addition, I
would like to thank Bhargav Bhatt, Brian Conrad, Gijs Heuts,  
Tyler Lawson, Lennart Meier, Niko Naumann, Justin Noel,  
Oriol Ravent{\'o}s,
Vesna Stojanoska, and in particular Jacob Lurie, for numerous helpful discussions. 
Finally, I would like to thank the referee for several corrections. 
The author was supported by the NSF Graduate Fellowship under grant DGE-1144152.
\part{Descent theory}

\section{Axiomatic stable homotopy theory}
As mentioned earlier, the goal of this paper is to  extract a Galois group(oid) from a \emph{stable
homotopy theory.} Once again, we restate the definition. 

\begin{definition}\label{shot}
A \textbf{stable homotopy theory} is a presentable, symmetric monoidal stable $\infty$-category
$(\mathcal{C}, \otimes, \mathbf{1})$ where the tensor product commutes with all
colimits. 
\end{definition} 

In this section, intended mostly as background, we will describe several general features of the setting
of stable homotopy
theories. We will discuss a number of examples, and then 
construct a basic class of commmutative algebra objects in any such $\mathcal{C}$ (the
so-called ``\'etale algebras'') whose associated corepresentable functors can
be described very easily. 
The homotopy categories of stable homotopy theories, which acquire both a
tensor structure and a compatible triangulated structure, have been described at
length in the memoir \cite{axiomatic}. In addition, their invariants have been
studied in detail in  
the program of tensor triangular geometry of Balmer (cf. \cite{Balmer} for a
survey). 

\subsection{Stable $\infty$-categories}

Let $\mathcal{C}$ be a stable $\infty$-category in the sense of
\cite[Ch. 1]{higheralg}. Recall that stability is a \emph{condition} on an $\infty$-category,
rather than extra data, in the same manner that, in ordinary category theory, being an
abelian category is a property. 
The homotopy category of a stable $\infty$-category is canonically
\emph{triangulated,} so that stable $\infty$-categories may be viewed as 
enhancements of triangulated categories; however, as opposed to traditional
DG-enhancements, stable $\infty$-categories can be used to model phenomena in
stable homotopy theory (such as the $\infty$-category of spectra, or the
$\infty$-category of modules over a structured ring spectrum). 

Here we will describe some general features of stable $\infty$-categories, and
in particular the constructions one can perform with them. 
Most of this is folklore (in the setting of triangulated or DG-categories) or
in \cite{higheralg}.

\begin{definition} 
Let $\cati$ be the $\infty$-category of (small) $\infty$-categories. Given
$\infty$-categories $\mathcal{C}, \mathcal{D}$, the mapping space
$\hom_{\cati}(\mathcal{C}, \mathcal{D})$ is the maximal $\infty$-groupoid
contained in the $\infty$-category $\mathrm{Fun}(\mathcal{C},\mathcal{D})$ of
functors $\mathcal{C} \to \mathcal{D}$. 
\end{definition} 

\newcommand{\scat}{\mathrm{Cat}_\infty^{\mathrm{st}}}
\begin{definition} 
We define an $\infty$-category $\scat$ of (small) stable  $\infty$-categories where: 
\begin{enumerate}
\item The objects of $\scat$ are the stable $\infty$-categories which are
idempotent complete.\footnote{This can be removed, but will be assumed for
convenience.}  
\item Given $\mathcal{C}, \mathcal{D} \in \scat$, the mapping space
$\hom_{\scat}(\mathcal{C}, \mathcal{D})$ is the union of connected components in
$\hom_{\cati}(\mathcal{C}, \mathcal{D})$ spanned by those functors which
preserve finite limits (or, equivalently, colimits). Such functors are called \emph{exact.}
\end{enumerate}
\end{definition} 

The $\infty$-category $\scat$ has all limits, and the forgetful functor
$\scat \to \cati$ commutes with limits. 
For example, given a diagram in $\scat$
\[ \xymatrix{
& \mathcal{C} \ar[d]^F \\
\mathcal{D} \ar[r]^G &  \mathcal{E}
},\]
we can form a pullback $\mathcal{C} \times_{\mathcal{E}} \mathcal{D}$
consisting of triples $(X, Y, f)$ where $X \in \mathcal{C}, Y \in \mathcal{D}$,
and $f\colon F(X) \simeq G(Y)$ is an equivalence. This pullback is automatically
stable. 

Although the construction is more complicated, $\scat$ is also cocomplete.
For example, 
the colimit (in $\cati$) of a \emph{filtered} diagram of stable
$\infty$-categories and exact functors is automatically stable, so that the
inclusion $\scat \subset \cati$ preserves filtered colimits. 
In general, one has:

\begin{proposition} 
\label{scatispresentable}
$\scat$ is a presentable $\infty$-category. 
\end{proposition}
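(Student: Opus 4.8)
The plan is to verify presentability of $\scat$ by exhibiting it as an accessible localization of a presentable $\infty$-category, or better, by directly producing a small generating set and invoking the recognition criterion for presentable $\infty$-categories (e.g. \cite[5.5.0.1]{htt} style: cocomplete plus accessible, or equivalently a localization of presheaves). First I would recall that $\cati$ itself is presentable, and that the forgetful functor $\scat \to \cati$ creates filtered colimits (noted in the excerpt) and all limits (also noted). So the main work is to show $\scat$ is accessible and admits all colimits; presentability then follows from the adjoint functor theorem package once we know it is cocomplete and accessible.

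For accessibility, the key step is to show that every small stable idempotent-complete $\infty$-category is a filtered (in fact $\kappa$-filtered for a suitable regular cardinal) colimit, computed in $\scat$, of ``small'' ones — but since all objects of $\scat$ are already small, the real point is a bound on the size of the indexing data. Concretely, I would argue that $\scat$ is generated under filtered colimits by the (essentially small) collection of stable $\infty$-categories that are $\kappa$-compactly generated for a fixed small $\kappa$, or even just: every small stable $\infty$-category $\mathcal{C}$ is the filtered union of its stable idempotent-complete subcategories generated by finitely many objects, and these finitely-generated pieces form a set up to equivalence. One must check that this filtered union is the colimit in $\scat$, which uses that filtered colimits in $\scat$ agree with those in $\cati$. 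This shows $\scat$ is $\omega_1$-accessible (or accessible for an appropriate cardinal), with the finitely-generated stable idempotent-complete $\infty$-categories as a set of compact generators — here one invokes idempotent completeness to ensure the subcategory generated by finitely many objects, thickened, stays in $\scat$.

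For cocompleteness, having filtered colimits and accessibility, it suffices to construct finite colimits, equivalently pushouts (and the zero object). The cleanest route is the one already hinted at in the excerpt: $\scat$ has all limits and the forgetful functor to $\cati$ preserves them; dually, pushouts in $\scat$ can be built via a bar-type or free-functor construction. Alternatively — and this is probably the slickest — I would pass through the equivalence between small idempotent-complete stable $\infty$-categories and compactly generated stable $\infty$-categories with morphisms the colimit-preserving functors preserving compact objects (via $\mathrm{Ind}$), where $\mathrm{Ind}\colon \scat \to \prl_{\mathrm{st}}$ is fully faithful; since $\prl_{\mathrm{st}}$ is presentable (it sits inside $\prl$, which is presentable, as a full subcategory closed under the relevant operations), and the essential image of $\mathrm{Ind}$ is characterized by an accessibility/compactness condition, one identifies $\scat$ with an accessible subcategory of a presentable $\infty$-category that is closed under $\kappa$-filtered colimits and is itself cocomplete. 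Presentability descends.

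I expect the main obstacle to be bookkeeping around \emph{which} colimits in $\cati$ or $\prl$ restrict correctly to $\scat$, and in particular verifying that the subcategory generated by finitely many objects and closed under finite (co)limits and retracts is genuinely small (so that we get a \emph{set} of generators) and that the resulting filtered system has colimit computed in $\scat$ rather than merely in $\cati$. The idempotent-completeness assumption on objects of $\scat$ is exactly what makes the retract-closure step behave, but it also means pushouts in $\scat$ may require idempotent-completing the naive pushout, so care is needed when transporting colimit constructions. Once these size and compatibility issues are pinned down, the statement follows formally from the standard characterization of presentable $\infty$-categories as accessible, cocomplete $\infty$-categories.
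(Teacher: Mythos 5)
Your overall route is the same as the paper's: cocompleteness is obtained by identifying $\scat$ (via $\mathrm{Ind}$) with a full subcategory of $\prow \subset \prl$ that is closed under colimits, and presentability then reduces to accessibility, which is proved by writing every object of $\scat$ as a filtered colimit of ``small'' subcategories. The paper's proof of the proposition is literally two sentences on top of its proof that $\prow$ is presentable, and your sketch reproduces both halves.

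The one step that would fail as stated is your choice of generating family in the accessibility argument. The stable idempotent-complete subcategories ``generated by finitely many objects'' do \emph{not} form a set up to equivalence: already the categories of perfect $R$-modules, as $R$ ranges over discrete commutative rings, are each generated by a single object and form a proper class of pairwise inequivalent objects of $\scat$, because finite generation puts no bound on the cardinality of the mapping spaces. What the paper actually uses is a bound on the cardinality of the whole quasi-category: every finitely cocomplete quasi-category is the filtered union of its \emph{countable} sub-quasi-categories $\overline{\mathcal{D}}$ that are finitely cocomplete and whose inclusions preserve finite colimits (with $\aleph_1$ in place of $\aleph_0$ to accommodate idempotent completion, and with stability added in). Producing such a $\overline{\mathcal{D}}$ containing a given countable subset is the technical heart of the argument---a small-object-argument closure that first adjoins colimit cones for all finite diagrams and then adjoins enough further simplices to make those cones initial in the relevant undercategories of $\overline{\mathcal{D}}$ itself, not merely of the ambient category. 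You correctly flag the compatibility of colimits as ``the main obstacle,'' but the cardinality issue is the part of your sketch that needs to be repaired rather than merely checked.
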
 

\newcommand{\catib}{\widehat{\mathrm{Cat}_\infty}}
\newcommand{\prr}{\mathrm{Pr}^R}

To understand this, it is convenient to work with  the (big) $\infty$-category
$\prl$.

\begin{definition}[{\cite[5.5.3]{HTT}}] $\prl$
is the $\infty$-category 
of 
presentable $\infty$-categories and colimit-preserving (or left adjoint)
functors. 
\end{definition} 
The $\infty$-category $\prl$ is known to have all 
colimits (cf. \cite[5.5.3]{HTT}). We briefly review this here. 
Given a diagram $F\colon I \to \prl$, we can form the dual
$I^{\op}$-indexed diagram in
the $\infty$-category $\prr$ of presentable $\infty$-categories and
\emph{right} adjoints between them. Now we can form a \emph{limit} in $\prr$ at
the level of underlying $\infty$-categories; by duality between $\prl,
\prr$ in the form $\prl \simeq (\prr)^{\op}$, this can be identified with the
colimit $\varinjlim_I F$ in $\prl$. 

In other words, for each map $f\colon  i \to i'$ in
$I$, consider the induced adjunction of $\infty$-categories $L_f, R_f\colon  F(i) \rightleftarrows F(i')$. 
Then an object $x$ in $\varinjlim_I F$ is the data of: 
\begin{enumerate}
\item For each $ i \in I$, an object $x_i \in F(i)$.  
\item For each $f \colon  i \to i'$, an isomorphism $x_i \simeq R_f(x_{i'})$. 
\item Higher homotopies and coherences. 
\end{enumerate}

For each $i$, we get a natural functor in $\prl$, $F(i) \to \varinjlim_I F$.
We have a tautological description of the \emph{right adjoint}, which to an
object $x$ in $\varinjlim_I F$ as above returns $x_i \in F(i)$. 

\begin{example} 
Let $\mathcal{S}_\ast$ be the $\infty$-category of pointed spaces and pointed maps
between them. We have an endofunctor $\Sigma\colon  \mathcal{S}_* \to \mathcal{S}_*$
given by suspension, whose right adjoint is the loop functor $\Omega\colon 
\mathcal{S}_* \to \mathcal{S}_*$. The filtered colimit
in $\prl$
of the diagram
\[ \mathcal{S}_* \stackrel{\Sigma}{\to} \mathcal{S}_* \stackrel{\Sigma}{\to}
\dots,  \]
can be identified, by this description, as the $\infty$-category of 
sequences of pointed spaces $(X_0, X_1, X_2, \dots, )$ together with
equivalences $X_n \simeq \Omega X_{n+1}$ for $n \geq 0$: in other words, one
recovers the $\infty$-category of spectra. 
\end{example}

\begin{proposition} \label{procompact}
Suppose $F\colon  I \to \prl$ is a diagram where, for each $i \in I$, the
$\infty$-category $F(i)$ is compactly generated; and where, for each $i \to
i'$, the left adjoint $F(i) \to F(i')$ preserves compact objects.\footnote{This
is equivalent to the condition that the \emph{right adjoints} preserve filtered
colimits.} Then each $F(i) \to \varinjlim_I F$ preserves compact objects, and
$\varinjlim_I F$ is compactly generated. 
\end{proposition}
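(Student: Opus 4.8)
The plan is to exhibit explicit compact generators for $\varinjlim_I F$, exploiting the description recalled above of this colimit as a limit in $\prr$ of the $F(i)$ along the right adjoints $R_f$. Throughout I write $\phi_i \colon F(i) \to \varinjlim_I F$ for the canonical functors and $\psi_i \colon \varinjlim_I F \to F(i)$ for their right adjoints, so that (by the explicit model above) $\psi_i$ is the ``evaluation at $i$'' functor $x \mapsto x_i$.

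The first and key step is to observe that \emph{filtered colimits in $\varinjlim_I F$ are computed pointwise}, i.e.\ $(\varinjlim_\alpha x^{\alpha})_i \simeq \varinjlim_\alpha x^{\alpha}_i$ for every filtered system. This is where the hypothesis enters: as noted in the footnote, the assumption that each left adjoint $F(i) \to F(i')$ preserves compact objects is equivalent to the assertion that each $R_f$ preserves filtered colimits, and a colimit in a limit of $\infty$-categories is computed pointwise as soon as the transition functors of the limit preserve that colimit (cf.\ \cite[5.5.3]{HTT}). Granting this, each $\psi_i$ preserves filtered colimits, so its left adjoint $\phi_i$ carries compact objects to compact objects; this already gives the first assertion.

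For the second assertion, for each $i$ choose a small set $G_i$ of compact generators of the compactly generated $\infty$-category $F(i)$, and set $S := \{\,\phi_i(c) : i \in I,\ c \in G_i\,\}$. Since $I$ is small and each $F(i)$ has an essentially small category of compact objects, $S$ is a small set, and by the previous step it consists of compact objects of $\varinjlim_I F$. It remains to check that $S$ generates $\varinjlim_I F$ under colimits; since $\varinjlim_I F$ is presentable (being a colimit in $\prl$), it suffices to verify that the functors $\hom(\phi_i(c), -)$, for $\phi_i(c) \in S$, are jointly conservative. Given $f \colon x \to y$ with $\hom(\phi_i(c), f)$ an equivalence for all $\phi_i(c) \in S$, adjunction shows that $\hom_{F(i)}(c, \psi_i(f))$ is an equivalence for all $i$ and all $c \in G_i$; as $G_i$ generates $F(i)$, the map $\psi_i(f)$ is an equivalence for every $i$; and since the projections $\psi_i$, being the structure maps of $\varinjlim_I F$ viewed as a limit in $\prr$, are jointly conservative, $f$ is an equivalence.

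I expect the main obstacle to be a careful justification of the first step — that filtered colimits, and the joint conservativity of the $\psi_i$, may be read off pointwise from the model of $\varinjlim_I F$ as a limit along the right adjoints $R_f$ in $\prr$. Both facts are formal once one has the standard description of colimits in $\prl$ via limits in $\prr$ together with the behaviour of colimits and of equivalences in limits of $\infty$-categories, but it is exactly in the first of these that the hypothesis on compact objects (equivalently, on filtered-colimit-preservation of the $R_f$) is consumed; the rest is adjunction bookkeeping. One could alternatively avoid the conservativity criterion and argue via the universal property of $\varinjlim_I F$ in $\prl$: the colimit-closure $\mathcal{D}_0$ of the set $S$ is presentable and contains each $\phi_i(F(i))$, so the $\phi_i$ corestrict to colimit-preserving functors into $\mathcal{D}_0$ and, by the universal property, induce an inverse to the inclusion $\mathcal{D}_0 \hookrightarrow \varinjlim_I F$.
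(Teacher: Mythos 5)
Your argument is correct and is essentially the paper's own proof, just written out in more detail: the paper likewise deduces that the right adjoints to $F(i) \to \varinjlim_I F$ preserve filtered colimits from the explicit (pointwise) description of the colimit, and takes the images of the compact objects of the $F(i)$ as compact generators because they jointly detect equivalences. The extra care you devote to why filtered colimits are computed pointwise and why the $\psi_i$ are jointly conservative is exactly the bookkeeping the paper leaves implicit.
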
 
\begin{proof} 
It follows from the explicit description of $\varinjlim_I F$, in fact, that
the right adjoints to $F(i) \to \varinjlim_I F$ preserve filtered colimits;
this is dual to the statement that the left adjoints preserve compact objects. 
Moreover, the images of each compact object in each $F(i)$ in $\varinjlim_I F$
can be taken as compact generators, since they are seen to detect equivalences. 
\end{proof} 

\newcommand{\prow}{\mathrm{Pr}^{L, \omega}}

\begin{definition} 
$\prow$ is the $\infty$-category of compactly generated, presentable
$\infty$-categories and colimit-preserving functors which preserve compact
objects. 
\end{definition}

It is fundamental that $\prow$ is equivalent to the $\infty$-category of
idempotent complete, finitely cocomplete $\infty$-categories and finitely
cocontinuous functors, under the construction $\mathcal{C} \to
\mathrm{Ind}(\mathcal{C})$ starting from the latter and ending with the
former (and the dual construction that takes an object in $\prow$ to its
subcategory of compact objects). \Cref{procompact} implies that colimits exist
in $\prow$ and the inclusion $\prow \to \prl$ preserves them. 

\begin{corollary} \label{stpr}
$\prow$ is a presentable $\infty$-category. 
\end{corollary}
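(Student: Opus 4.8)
The plan is to transport the statement across the equivalence, recalled above, between $\prow$ and the $\infty$-category $\mathcal{E}$ of small, idempotent complete $\infty$-categories admitting finite colimits, with finitely cocontinuous functors, given by $\mathcal{C}_0 \mapsto \mathrm{Ind}(\mathcal{C}_0)$. Two of the three defining features of presentability are already at hand. Cocompleteness of $\prow$ was noted above: by \Cref{procompact} colimits in $\prow$ exist and are created by the inclusion $\prow \hookrightarrow \prl$. Local smallness is immediate on the $\mathcal{E}$ side, since for $\mathcal{C}_0, \mathcal{D}_0 \in \mathcal{E}$ the mapping space $\hom_{\mathcal{E}}(\mathcal{C}_0, \mathcal{D}_0)$ is a union of connected components of $\fun(\mathcal{C}_0, \mathcal{D}_0)^{\simeq}$, which is essentially small. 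So the content of the corollary is \emph{accessibility}: I must exhibit, for some regular cardinal $\kappa$, a small set of $\kappa$-compact objects of $\prow$ that generates it under $\kappa$-filtered colimits.

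First I would fix a sufficiently large regular cardinal $\kappa$ and consider, for $\mathcal{C}_0 \in \mathcal{E}$, the full subcategories $\mathcal{C}_0' \subseteq \mathcal{C}_0$ that are themselves idempotent complete, closed under finite colimits, and generated by these operations from a $\kappa$-small subcategory. Every $\mathcal{C}_0$ is the $\kappa$-filtered union of such $\mathcal{C}_0'$, and — this is the key cardinality point — for $\kappa$ large enough each such $\mathcal{C}_0'$ is again $\kappa$-small, so that up to equivalence they form a small set $S_\kappa$. Applying $\mathrm{Ind}(-)$ and invoking \Cref{procompact} to identify these filtered unions with filtered colimits in $\prow$, it remains to check that each $\mathrm{Ind}(\mathcal{C}_0')$ with $\mathcal{C}_0' \in S_\kappa$ is a $\kappa$-compact object of $\prow$; equivalently, that $\hom_{\mathcal{E}}(\mathcal{C}_0', -)$ commutes with $\kappa$-filtered colimits. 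This should hold because a finitely cocontinuous functor out of $\mathcal{C}_0'$ is determined by its restriction to the $\kappa$-small generating subcategory together with $\kappa$-small coherence data witnessing compatibility with the finite colimits and retracts used to build $\mathcal{C}_0'$, and a $\kappa$-filtered colimit in $\cati$ absorbs all of this data at a single stage. Granting this, $\prow \simeq \mathcal{E}$ is accessible and cocomplete, hence presentable.

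The main obstacle is the cardinality bookkeeping underlying the previous paragraph: the free finite cocompletion and the idempotent completion are themselves built as (filtered) colimits of ``finite stages'', so one must run a transfinite closure, verify that it stabilizes strictly below $\kappa$ once $\kappa$ is large enough, and check that mapping spaces inside the resulting $\infty$-categories stay $\kappa$-small — and one must confirm throughout that $\kappa$-filtered colimits in $\mathcal{E}$ really are computed on $\mathrm{Ind}$-completions via \Cref{procompact}, so that ``factoring through a stage'' is literally a statement about $\kappa$-compact objects. A cleaner route, which packages exactly this bookkeeping into a single invocation of the literature, is to observe that the forgetful functor $U \colon \mathcal{E} \to \cati$ admits a left adjoint $F$ (the free idempotent complete finite cocompletion, modeled as the closure of the Yoneda image inside $\fun(\mathcal{A}^{\op}, \mathcal{S})$ under finite colimits and retracts), to verify via Barr--Beck--Lurie that $U$ is monadic, and to note that the monad $T = UF$ is accessible because its construction only forms colimits indexed by a fixed small set of diagram shapes (the finite $\infty$-categories and the idempotent-classifying category $\idem$); then $\mathcal{E} \simeq \mathrm{Mod}_T(\cati)$ is presentable by Lurie's theorem on algebras over an accessible monad on a presentable $\infty$-category, $\cati$ being presentable (cf. \cite{HTT}). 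In either approach, the accessibility bound — whether phrased through $S_\kappa$ or through the monad $T$ — is the single genuinely technical ingredient.
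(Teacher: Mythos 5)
Your first route is, in substance, the paper's own argument: reduce to accessibility (cocompleteness already follows from \Cref{procompact}) and exhibit every idempotent-complete, finitely cocomplete small $\infty$-category as a filtered colimit of such subcategories of bounded cardinality. The difference is that what you label ``the main obstacle'' --- the transfinite closure and cardinality bookkeeping --- is precisely what the paper's proof consists of: working with an explicit quasi-category model, it takes a countable simplicial subset $\mathcal{D} \subset \mathcal{C}$, uses the small object argument to enlarge it to a countable sub-quasi-category $\overline{\mathcal{D}}$ closed under finite colimits whose inclusion preserves them (adding cone points, then adding simplices to make those cone points initial in the relevant slice quasi-categories), and observes that $\mathcal{C}$ is the filtered union of the resulting $\overline{\mathcal{D}}$'s; idempotent completeness is absorbed by passing to $\aleph_1$-filtered colimits. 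So your outline is correct but stops exactly where the paper's proof begins. Your second, monadic route is genuinely different and would package the bookkeeping more cleanly; the one step there that is not automatic and that you should not wave past is monadicity of $U\colon \mathcal{E} \to \cati$ itself. Conservativity is clear, but you must check that a $U$-split simplicial object of $\mathcal{E}$ has its (absolute) colimit in $\cati$ again lying in $\mathcal{E}$ with finitely cocontinuous structure maps --- plausible, since split geometric realizations are absolute colimits and hence retract-like, but it is a verification, not a citation. The accessibility of $T = UF$ is fine for the reason you give: every object of the free idempotent-complete finite cocompletion is a retract of a finite colimit of representables, so $F$ commutes with filtered colimits. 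Either route works; the paper's buys concreteness at the cost of simplicial-set surgery, yours buys brevity at the cost of a Barr--Beck--Lurie verification.
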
 
\begin{proof} 
It suffices to show that any idempotent complete, finitely cocomplete
$\infty$-category is a filtered colimit of such of bounded cardinality (when
modeled via quasi-categories, for instance). For simplicity, we will sketch the
argument for finitely cocomplete quasi-categories. The idempotent complete
case can be handled similarly by replacing filtered colimits with
$\aleph_1$-filtered  colimits. 

To see this, let $\mathcal{C}$ be such a quasi-category. 
Consider any countable simplicial subset $\mathcal{D}$ of $\mathcal{C}$ 
 which is a quasi-category.
 We will show that $\mathcal{D}$ is contained in a bigger countable simplicial
 subset $\overline{\mathcal{D}}$ of $\mathcal{C}$ which is a finitely cocomplete
 quasi-category such that $\overline{\mathcal{D}} \to \mathcal{C}$ preserves
 finite colimits. 
 This will show that $\mathcal{C}$ is the filtered union of such
 subsets
 $\overline{\mathcal{D}}$ (ordered by set-theoretic inclusion) and will thus complete the proof. 

Thus, fix $\mathcal{D} \subset \mathcal{C}$ countable. 
For each finite
simplicial set $K$, and each map $K \to \mathcal{D}$, by definition there is
an extension $K^{\rhd} \to \mathcal{C}$ which is a colimit diagram. 
We can find a countable simplicial set $\mathcal{D}'$ such that $\mathcal{D}
\subset \mathcal{D}' \subset \mathcal{C}$ such that every diagram $K \to
\mathcal{D}$ extends over a diagram $K^{\rhd} \to \mathcal{D}'$ such that the
composite $K^{\rhd} \to\mathcal{D}' \to \mathcal{C}$ is a colimit diagram in
$\mathcal{C}$. Applying the small object argument (countably many times), we
can find a countable quasi-category $\mathcal{D}_1$ with $\mathcal{D} \subset \mathcal{D}_1  \subset
\mathcal{C}$ such that any diagram $K \to \mathcal{D}_1$ extends over a diagram
$K^{\rhd} \to \mathcal{D}_1$ such that the composite $K^{\rhd} \to \mathcal{D}_1 \to
\mathcal{C}$ is a colimit diagram. It follows thus that any countable
simplicial subset $\mathcal{D}$ of $\mathcal{C}$ containing all the vertices is 
contained in such a (countable) $\mathcal{D}_1$. (At each stage in the small
object argument, we also have to add in fillers to all inner horns.)

Thus, consider any countable simplicial subset $\mathcal{D} \subset
\mathcal{C}$ which is a quasi-category containing all the vertices of
$\mathcal{C}$, and such that any diagram $K \to \mathcal{D}$ (for $K$ finite)
extends over a diagram $K^{\rhd} \to \mathcal{D}$ such that the composite
$K^{\rhd} \to \mathcal{C}$ is a colimit diagram. We have just shown that
$\mathcal{C}$ is a (filtered) union of such. Of course, $\mathcal{D}$ may not
have all the colimits we want. Consider the (countable) collection $S_{\mathcal{D}}$ of all diagrams
$f\colon  K^{\rhd} \to \mathcal{D}$ whose composite $K^{\rhd} \stackrel{f}{\to}
\mathcal{D} \to \mathcal{C}$ is a colimit. We want to enlarge $\mathcal{D}$ so
that each of these becomes a  colimit, but not too much; we want
$\mathcal{D}$ to remain countable. 

For each $f \in S_{\mathcal{D}}$, consider $\mathcal{D}_{K/} \subset
\mathcal{C}_{K/}$. By construction, we have an object in $\mathcal{D}_{K/}$
which is initial in $\mathcal{C}_{K/}$. By adding a countable number of
simplices to $\mathcal{D}$, though, we can make this initial in
$\mathcal{D}_{K/}$ too; that is, there exists a $\mathcal{D}' \subset
\mathcal{D}$ with the same properties such that the object
defined is initial in $\mathcal{D}'_{K/}$. 
Iterating this process (via the small object argument), we can construct a
countable simplicial subset $\overline{\mathcal{D}} \subset \mathcal{C}$,
containing $\mathcal{D}$, which is a quasi-category and such that any diagram
$K \to \overline{\mathcal{D}}$ extends over a diagram $K^{\rhd} \to
\overline{\mathcal{D}}$ which is a colimit preserved under
$\overline{\mathcal{D}} \to \mathcal{C}$. This completes the proof. 
\end{proof}

We can use this to describe $\scat$. We have a \emph{fully
faithful} functor
\[ \scat \to \prow,  \]
which sends a stable $\infty$-category $\mathcal{C}$ to
the \emph{compactly generated}, presentable stable
$\infty$-category $\mathrm{Ind}(\mathcal{C})$. 
In fact, $\scat$ can be identified with the $\infty$-category of stable,
presentable, and compactly generated $\infty$-categories, and
colimit-preserving functors between them that also preserve compact objects, so that $\scat \subset \prow$ as a
full subcategory.

\begin{proof}[Proof of \Cref{scatispresentable}]
We need to show that $\scat$ has all colimits. Using the explicit construction
of a colimit of presentable $\infty$-categories, however, it follows that a
colimit of presentable, \emph{stable} $\infty$-categories is stable. In
particular, $\scat$ has colimits and they are computed in $\prow$. 

Finally, we need to show that any object in $\scat$ is a filtered union of
objects in $\scat$ of bounded cardinality. This can be argued similarly as
above (we just need to add stability into the mix). 
\end{proof} 

Compare also the treatment of stable $\infty$-categories in \cite{BGT}, which
shows (cf. \cite[Th. 4.22]{BGT}) that $\scat$ can be obtained as an
accessible localization of the $\infty$-category associated to a
combinatorial model category and indeed shows that $\scat$ is compactly
generated \cite[Cor. 4.25]{BGT}.

We will need some examples of limits and colimits in $\scat$. 
Compare \cite[sec. 5]{BGT} for a detailed treatment. 

\begin{definition} 
\label{vq}
Let $\mathcal{C} \in \scat$ and let $\mathcal{D} \subset \mathcal{C}$ be a
full, stable idempotent complete subcategory. 
We define the \textbf{Verdier quotient} $\mathcal{C}/\mathcal{D}$ to be the pushout in $\scat$
\[ \xymatrix{
\mathcal{D} \ar[d] \ar[r] &  \mathcal{C} \ar[d] \\
0 \ar[r] &  \mathcal{C}/\mathcal{D}
}.\]
\end{definition} 

Fix $\mathcal{E} \in \scat$. 
By definition, to give an exact functor $\mathcal{C}/\mathcal{D} \to
\mathcal{E}$ is equivalent to giving an exact functor $\mathcal{C} \to
\mathcal{E}$ which sends every object in $\mathcal{D}$ to a zero object; note
that this is a \emph{condition} rather than extra data. 
The Verdier quotient can be described very explicitly. Namely, consider the
inclusion $\mathrm{Ind}(\mathcal{D}) \subset \mathrm{Ind}(\mathcal{C})$ of
stable $\infty$-categories. For any $X \in \mathrm{Ind}(\mathcal{C})$, there is
a 
natural cofiber sequence
\[ M_{\mathcal{D}} X \to X \to L_{\mathcal{D}} X,  \]
where: 
\begin{enumerate}
\item $M_{\mathcal{D}} X$ is in the full stable subcategory of
$\mathrm{Ind}(\mathcal{C})$ generated under colimits by $\mathcal{D}$ (i.e.,
$\mathrm{Ind}(\mathcal{D})$). 
\item For any $D \in \mathcal{D}$, $\hom_{\mathrm{Ind}(\mathcal{C})}(D,
L_{\mathcal{D}} X)$ is contractible. 
\end{enumerate}
One can construct this sequence by taking $M_{\mathcal{D}}$ to be the right
adjoint to the inclusion functor $\mathrm{Ind}(\mathcal{D}) \subset
\mathrm{Ind}(\mathcal{C})$. 

We say that an object $X \in \mathrm{Ind}(\mathcal{C})$ is \emph{$\mathcal{D}^{\perp}$-local}
if $M_{\mathcal{D}}X$ is contractible. The full subcategory
$\mathcal{D}^{\perp} \subset \mathrm{Ind}(\mathcal{C})$ of
$\mathcal{D}^{\perp}$-local objects is a localization of
$\mathrm{Ind}(\mathcal{C})$, with localization functor given by
$L_{\mathcal{D}}$. 
We have an adjunction
\[ \mathrm{Ind}(\mathcal{C}) \rightleftarrows \mathcal{D}^{\perp},  \]
where the right adjoint, the inclusion $\mathcal{D}^{\perp} \subset
\mathcal{C}$, is fully faithful. 
The inclusion $\mathcal{D}^{\perp} \subset \mathrm{Ind}(\mathcal{C})$ preserves filtered
colimits since $\mathcal{D} \subset \mathrm{Ind}(\mathcal{C})$ consists of
compact objects, so that the localization $L_{\mathcal{D}}$ preserves compact objects. 
Now, the Verdier quotient can be described as  the
subcategory of $\mathcal{D}^{\perp}$ spanned by compact objects (in
$\mathcal{D}^{\perp}$); it is generated under finite colimits and retracts by the image of
objects in $\mathcal{C}$. 
Moreover, $\mathrm{Ind}(\mathcal{C}/\mathcal{D})$ is precisely
$\mathcal{D}^{\perp} \subset \mathrm{Ind}(\mathcal{C})$. 

\begin{remark} 
The pushout diagram defining the Verdier quotient is also a pullback. 
\end{remark} 
\begin{remark} 
A version of this construction makes sense in the world of presentable, stable
$\infty$-categories (which need not be compactly generated). 
\end{remark} 

These Verdier quotients have been considered, for example, in \cite{miller} under the name
\emph{finite localizations.}

\subsection{Stable homotopy theories and 2-rings}
In this paper, our goal is to describe an invariant of \emph{symmetric
monoidal} stable $\infty$-categories. For our purposes, we can think of them as
\emph{commutative algebra objects} with respect to a certain \emph{tensor
product} on $\scat$. We begin by reviewing this and some basic properties of
stable homotopy theories, which are the ``big'' versions of these. 

\begin{definition}[{\cite[4.8]{higheralg}, \cite{BFN}}] 
Given $\mathcal{C}, \mathcal{D} \in \scat$, we define the
\emph{tensor product} $\mathcal{C} \boxtimes
\mathcal{D} \in \scat$ via the universal property
\begin{equation} \label{tensor} \hom_{\scat}( \mathcal{C} \boxtimes \mathcal{D}, \mathcal{E}) \simeq
\mathrm{Fun}'( \mathcal{C} \times \mathcal{D}, \mathcal{E}),  \end{equation}
where $\mathrm{Fun}'(\mathcal{C} \times \mathcal{D}, \mathcal{E})$ consists of
those functors $\mathcal{C} \times \mathcal{D} \to \mathcal{E}$ which preserve
finite colimits in each variable separately. 
\end{definition} 

It is known (see \cite[4.8]{higheralg}) that this defines a symmetric
monoidal structure on $\scat$. The commutative algebra objects are
\emph{precisely} the symmetric monoidal, stable $\infty$-categories
$(\mathcal{C}, \otimes, \mathbf{1})$ such that the tensor product preserves
finite colimits in each variable.

\begin{definition} 
We let $\tring = \clg(\scat)$ be the $\infty$-category of  commutative algebra
objects in $\scat$. 
We will also write 	$\shot$ for the $\infty$-category of stable homotopy
theories (i.e., presentable stable symmetric monoidal $\infty$-categories with
bicocontinuous tensor product); this is the ``big'' version of $\tring$.
\end{definition} 

The tensor product $\boxtimes\colon  \scat \times \scat \to \scat$ preserves filtered
colimits in each variable; this follows from \eqref{tensor}. In particular, since $\scat$ is
a presentable $\infty$-category, it follows that $\tring $ is a presentable
$\infty$-category. 

In this paper, we will define a functor
\[ \pi_{\leq 1} \colon  \tring \to \pro( \fgp)^{\op}, \]
where we will specify what the latter means below, called the Galois
groupoid. 
The Galois groupoid will parametrize certain very special commutative algebra
objects in a given 2-ring. 
Given a stable homotopy theory $(\mathcal{C}, \otimes,
\mathbf{1})$ (in the sense of \Cref{shot}), the invariant we will define will
depend only on the small subcategory $\mathcal{C}^{\mathrm{dual}}$ of
\emph{dualizable} objects in $\mathcal{C}$. 

We will also define a slightly larger version of the Galois groupoid
that will see more of the ``infinitary'' structure of the stable homotopy theory, which will
make a difference in settings where the unit is not compact (such as
$K(n)$-local stable homotopy theory).  
In this case, it will not be sufficient to work with $\tring$. However, the
interplay between $\tring$ and the theory of (large) stable homotopy theories
will be crucial in the following. 

\begin{definition}[{Cf. \cite[4.6.1]{higheralg}}] 
In a symmetric monoidal $\infty$-category $(\mathcal{C}, \otimes, \mathbf{1})$,
an object $X$ is \textbf{dualizable} if there exists an object $Y$ and maps
\[ \mathbf{1} \xrightarrow{\mathrm{coev}} Y \otimes X, \quad X \otimes Y
\xrightarrow{\mathrm{ev}} \mathbf{1},  \]
such that the composites
\[ X \simeq  X \otimes \mathbf{1} \xrightarrow{1_{X} \otimes
\mathrm{coev}} X \otimes Y \otimes X  \xrightarrow{\mathrm{ev} \otimes
1_X} X, 
\quad Y  \simeq   \mathbf{1} \otimes Y \xrightarrow{\mathrm{coev} \otimes 1_Y}
Y \otimes X \otimes Y \xrightarrow{1_Y \otimes \mathrm{ev}} Y
\]
are homotopic to the respective identities. 
In other words, $X$ is dualizable if and only if it is dualizable in the
homotopy category with its induced symmetric monoidal structure. 
\end{definition} 

These definitions force natural homotopy equivalences
\begin{equation} \label{duality} \hom_{\mathcal{C}}(Z, Z' \otimes X ) 
\simeq \hom_{\mathcal{C}}(Z \otimes Y, Z'), \quad Z, Z' \in \mathcal{C}.
\end{equation}
Now let $(\mathcal{C}, \otimes, \mathbf{1})$  be a stable homotopy theory. The
collection of all dualizable objects in $\mathcal{C}$ 
(cf. also \cite[sec. 2.1]{axiomatic})
is a \emph{stable}
and idempotent complete subcategory, which is closed under the monoidal product. Moreover, suppose that $\mathbf{1}$ is $\kappa$-compact for some
regular cardinal $\kappa$. Then 
\eqref{duality} with $Z = \mathbf{1}$ forces any dualizable object $Y$ to be
$\kappa$-compact as well. In particular, it follows that the subcategory of
$\mathcal{C}$ spanned by the dualizable objects is (essentially) small and
belongs to $\tring$. (By contrast, no amount of compactness is sufficient to
imply dualizability). 

We thus have the two constructions: 
\begin{enumerate}
\item Given a stable homotopy theory, take the symmetric monoidal, stable
$\infty$-category of dualizable objects, which is a 2-ring.  
\item Given an object $\mathcal{C} \in \tring$, $\mathrm{Ind}(\mathcal{C})$ is
a stable homotopy theory. 
\end{enumerate}
These two constructions are generally not inverse to one another. However, the
``finitary'' version of the Galois group we will define will be unable to see
the difference.

Next, we will describe some basic constructions in $\tring$. 
The $\infty$-category $\tring$ has all limits, and these may be computed at the level of the
underlying $\infty$-categories. 
As such, these homotopy limit constructions can be used to build new examples
of 2-rings from old ones. 
These constructions will also apply to stable homotopy theories. 
To start with, we discuss Verdier quotients. 

\begin{definition} 
Let $(\mathcal{C}, \otimes, \mathbf{1}) \in \tring$ and let $\mathcal{I}
\subset \mathcal{C}$ be a full stable, idempotent complete subcategory. We say that $\mathcal{I}$ is an
\textbf{ideal} or \textbf{$\otimes$-ideal} if whenever $X \in \mathcal{C}, Y \in \mathcal{I}$, the tensor
product $X \otimes Y \in \mathcal{C}$ actually belongs to $\mathcal{I}$. 
\end{definition} 

If $\mathcal{I} 
\subset \mathcal{C}$ is an ideal, then the Verdier quotient
$\mathcal{C}/\mathcal{I}$ naturally inherits the structure of an object in
$\tring$. This follows naturally from \cite[Proposition 2.2.1.9]{higheralg} and the explicit
construction of the Verdier quotient. By definition,
$\mathrm{Ind}(\mathcal{C}/\mathcal{I})$
consists of the objects $X \in \mathrm{Ind}(\mathcal{C})$ which have the
property that $\hom_{\mathrm{Ind}(\mathcal{C})}(I, X) $ is contractible when $I
\in \mathcal{I}$. We can describe this as the localization of
$\mathrm{Ind}(\mathcal{C})$ at the collection of maps $f\colon  X \to Y$ whose
cofiber belongs to $\mathrm{Ind}(\mathcal{I})$. These maps, however, form an
ideal since $\mathcal{I}$ is an ideal. 
As before, given $\mathcal{D} \in \tring$, we have a natural fully faithful inclusion
\[  \hom_{\tring}(\mathcal{C}/\mathcal{I}, \mathcal{D}) \subset
\hom_{\tring}(\mathcal{C}, \mathcal{D}),  \]
where the image of the map consists of all symmetric monoidal functors
$\mathcal{C} \to \mathcal{D}$ which take every object in $\mathcal{I}$ to a
zero object. 

Finally, we describe some \emph{free} constructions. 
Let $\sp$ be the $\infty$-category of spectra, and let $\mathcal{C}$ be a
 small symmetric monoidal $\infty$-category. Then the $\infty$-category
$\mathrm{Fun}(\mathcal{C}^{\op}, \sp)$ is a stable homotopy theory under the
\emph{Day convolution product} \cite[4.8.1]{higheralg}. Consider  the collection of compact objects in
here, which we will write as the ``monoid algebra'' $\sp^\omega[\mathcal{C}]$. 
One has the universal property 
\[ \hom_{\tring}( \sp^\omega[\mathcal{C}], \mathcal{D}) \simeq
\mathrm{Fun}_{\otimes}( \mathcal{C}, \mathcal{D}),   \]
i.e., 
an equivalence
between functors of 2-rings $\sp[\mathcal{C}] \to \mathcal{D}$ and 
symmetric monoidal functors $\mathcal{C} \to \mathcal{D}$. We can also define the free
stable homotopy theory on $\mathcal{C}$ as the $\mathrm{Ind}$-completion of this
2-ring, or equivalently as $\mathrm{Fun}(\mathcal{C}^{\op}, \sp)$. 

\begin{example} 
The free symmetric monoidal $\infty$-category on a single object is the
disjoint union $\bigsqcup_{n \geq 0} B \Sigma_n$, or the groupoid of finite
sets and isomorphisms between them, with $\sqcup$ as the symmetric monoidal
product. Using this, we can describe the ``free stable homotopy theory'' on a
single object. As above, an object in this stable homotopy theory consists of giving a spectrum $X_n$ with a
$\Sigma_n$-action for each $n$; the tensor structure comes from a convolution
product. If we consider the compact objects in here, we obtain the free 2-ring
on a given object. 
\end{example} 

Finally, we will need to discuss a bit of algebra internal to $\mathcal{C}$. 

\begin{definition} 
There is a natural $\infty$-category of \emph{commutative
algebra objects} in $\mathcal{C}$ (cf. \cite[Ch. 2]{higheralg}) which we will denote by $\clg(\mathcal{C})$. 
When $\mathcal{C} = \sp$ is the $\infty$-category, we will just write $\clg$
for the $\infty$-category of $\e{\infty}$-ring spectra.
\end{definition}

Recall that a commutative algebra object in $\mathcal{C}$ consists of an object
$X \in \mathcal{C}$ together with a multiplication map $m\colon  X \otimes X \to X$
and a unit map $\mathbf{1}\to X$, which satisfy the classical axioms of a
commutative algebra object up to coherent homotopy; for instance, when
$\mathcal{C} = \sp$, one obtains the classical notion of an $\e{\infty}$-ring. 
The amount of homotopy coherence is sufficient to produce the following:
\begin{definition}[{\cite[Sec. 4.5]{higheralg}}] 
Let $\mathcal{C}$ be a stable homotopy theory. 
Given $A \in \clg(\mathcal{C})$, there is a natural $\infty$-category
$\md_{\mathcal{C}}(A)$ of $A$-module objects internal to $\mathcal{C}$.
The $\infty$-category
$\md_{\mathcal{C}}(A)$ acquires the structure of a stable homotopy theory with
the relative $A$-linear tensor product. 
\end{definition} 

The relative $A$-linear tensor product requires the formation of geometric
realizations, so we need infinite colimits to exist in $\mathcal{C}$ for the
above construction to make sense in general.

\subsection{Examples}
Stable homotopy theories and 2-rings occur widely in ``nature,'' and in this section, we
describe a few basic classes of such widely occurring examples. 
We begin with two of the most fundamental ones. 
\begin{example}[Derived categories]
The derived $\infty$-category $D(R)$ of a commutative ring $R$ (cf. \cite[Sec.
1.3]{higheralg})
with the derived tensor product is a stable homotopy theory. 
\end{example}

\begin{example}[Modules over an $\e{\infty}$-ring]
As a more general example, the $\infty$-category $\md(R)$ of modules over an
$\e{\infty}$-ring spectrum $R$ with the relative smash product is a stable
homotopy theory. 
For instance, taking $R = S^0$, we get the $\infty$-category $\sp$ of spectra. 
This is the primary example (together with $E$-localized versions)
considered in \cite{rognes}. 
\end{example}

\begin{example}[Quasi-coherent sheaves]
\label{qcohintro}
Let $X$ be a scheme (or algebraic stack, or even prestack). To $X$, one can
associate a stable homotopy theory $\qcoh(X)$ of \emph{quasi-coherent
complexes} on $X$. By definition, $\qcoh(X)$ is the homotopy limit of the
derived $\infty$-categories $D(R)$ where $\spec R \to X$ ranges over all maps from
affine schemes to $X$. For more discussion, see \cite{BFN}. 
\end{example}

\begin{example} 
\label{modclosed}
Consider a cartesian diagram of $\e{\infty}$-rings 
\[ \xymatrix{
A \times_{A''} A' \ar[d] \ar[r] &  A \ar[d]  \\
A' \ar[r] &  A''
}.\]
We obtain a diagram of stable homotopy theories
\[ \xymatrix{
\md(A \times_{A''} A') \ar[d] \ar[r] & \md(A) \ar[d]  \\
\md(A') \ar[r] & \md( A'')
},\]
and in particular a symmetric monoidal functor
\[ \md(A \times_{A''} A') \to \md(A) \times_{\md(A'')} \md(A').\]
This functor is generally not an equivalence in $\tring$. 

This functor is \emph{always} fully faithful. 
However, if $A ,A',
A''$ are \emph{connective} and $A \to A'', A' \to A''$ induce surjections on
$\pi_0$, then it is proved in \cite[Theorem 7.2]{DAGIX} that 
the functor induces an equivalence on the \emph{connective} objects or, more
generally, on the $k$-connective objects for any $k \in \mathbb{Z}$. 
In particular, if we let $\md^\omega$ denote perfect modules, we have an equivalence of 2-rings
\[ \md^\omega( A \times_{A''} A') \simeq \md^{\omega}(A) \times_{\md^{\omega}(A'')}
\md^{\omega}(A') , \]
since an $A \times_{A''} A'$-module is perfect if and only if its base-changes
to $A, A'$ are. 
However, the $\mathrm{Ind}$-construction generally does not commute even with finite
limits. 
\end{example}

\begin{example}[Functor categories]
As another example of a (weak) 2-limit, we consider any $\infty$-category
$K$ and a stable homotopy theory $\mathcal{C}$; then $\mathrm{Fun}(K,
\mathcal{C})$ is naturally  a stable homotopy theory under the ``pointwise''
tensor product. If $K = BG$ for a group $G$, then this example endows the
$\infty$-category of objects in $\mathcal{C}$ with  a $G$-action with the
structure of a stable homotopy theory. 
\end{example} 

Finally, we list several other miscellaneous examples of stable homotopy
theories.

\begin{example}[Hopf algebras]
\label{Hopfalg}
Let $A$ be a finite-dimensional cocommutative Hopf algebra over the field $k$. In this case, the
(ordinary) category $\mathcal{A}$ of discrete $A$-modules has a natural symmetric monoidal structure
via the $k$-linear tensor product. In particular, its \emph{derived}
$\infty$-category $D(\mathcal{A})$
is naturally symmetric monoidal, and is thus a stable homotopy theory. 
Stated more algebro-geometrically, $\spec A^{\vee}$ is a group scheme $G$ over the
field $k$, and $D(\mathcal{A})$ is the
$\infty$-category of quasi-coherent sheaves of complexes on the classifying
stack $BG$. 
\end{example} 

\begin{example}[Stable module $\infty$-categories]
\label{stmodcat}
Let $A$ be a finite-dimensional cocommutative Hopf algebra over the field $k$. Consider the 
subcategory $D(\mathcal{A})^{\omega} \subset D(\mathcal{A})$ (where $\mathcal{A}$ is the abelian category
of $A$-modules, as in \Cref{Hopfalg}) of $A$-module spectra which are perfect as $k$-module spectra.
Inside $D(\mathcal{A})^{\omega}$ is the subcategory $\mathcal{I}$ of those
objects which
are perfect as $A$-module spectra. This subcategory is stable, and is an
\emph{ideal} by the observation (a projection formula of sorts) that the
$k$-linear tensor product of $A$
with any $A$-module
is free as an $A$-module. 

\begin{definition} 
The \textbf{stable module $\infty$-category} $\stm_A = \mathrm{Ind}(
D(\mathcal{A})^{\omega}/\mathcal{I})$ is the $\mathrm{Ind}$-completion of the  Verdier quotient
$D(\mathcal{A})^{\omega}/\mathcal{I}$. 
If $A = k[G]$ is the group algebra of a finite group $G$, we write $\stm_G(k)$
for $\stm_{k[G]}$. 
\end{definition} 

The stable module $\infty$-categories of finite-dimensional Hopf algebras
(especially group algebras) and their
various invariants (such as the Picard groups and the thick
subcategories) have been studied
extensively in the modular representation theory literature. 
For a recent survey, see \cite{BIK}. 
\end{example}

\begin{example}[Bousfield localizations]
Let $\mathcal{C}$ be a stable homotopy theory, and let $E \in \mathcal{C}$. In
this case, there is a naturally associated stable homotopy theory $L_E
\mathcal{C}$ of \emph{$E$-local objects}. By definition, $L_E \mathcal{C}$ is
a full subcategory of $\mathcal{C}$; an object $X \in
\mathcal{C}$ belongs to $L_E \mathcal{C}$ if and only if whenever $Y  \in
\mathcal{C}$ satisfies $Y \otimes E \simeq 0$, the spectrum
$\hom_{\mathcal{C}}(Y, X)$ is contractible. 
The $\infty$-category $L_E \mathcal{C}$ is symmetric monoidal under the
\emph{$E$-localized tensor product}: since the tensor product of two $E$-local
objects need not be $E$-local, one needs to localize further. 
For example, the unit object in $L_E \mathcal{C}$ is $L_E \mathbf{1}$. 

There is a natural adjunction
\[ \mathcal{C} \rightleftarrows L_E \mathcal{C},  \]
where the (symmetric monoidal) left adjoint sends an object to its
$E$-localization, and where the (lax symmetric monoidal) right adjoint is the
inclusion. 
\end{example}

\subsection{Morita theory}

Let $(\mathcal{C}, \otimes, \mathbf{1})$ be a stable homotopy theory. In
general, there is a very useful criterion for recognizing when $\mathcal{C}$ is
equivalent (as a stable homotopy theory) to the $\infty$-category of modules
over an $\e{\infty}$-ring. 

Note first that if $R$ is an $\e{\infty}$-ring, then the unit object of $\md(R)$
is a compact generator. The following result,
which for stable $\infty$-categories (without the symmetric monoidal structure)
is due to Schwede-Shipley \cite{schwedeshipley} (preceded by ideas of Rickard
and others on tilting theory), 
asserts the converse. 

\begin{theorem}[{\cite[Proposition 8.1.2.7]{higheralg}}] 
\label{luriess}
Let $\mathcal{C}$ be a stable homotopy theory where $\mathbf{1}$ is a compact
generator. Then there is a natural symmetric monoidal equivalence
\[ \md(R) \simeq \mathcal{C} , \]
where $R \simeq \mathrm{End}_{\mathcal{C}}(\mathbf{1})$ is naturally an
$\e{\infty}$-ring. 
\end{theorem}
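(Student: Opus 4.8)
The plan is to use the standard adjunction-and-module-lifting argument. First I would form the adjunction attached to the unit: since $\mathbf{1}$ is a compact generator of $\mathcal{C}$, the endomorphism spectrum $R := \mathrm{End}_{\mathcal{C}}(\mathbf{1})$ is naturally an $\e{\infty}$-ring, because $\mathbf{1}$ is a commutative algebra object (indeed it \emph{is} the unit) and so its endomorphism object inherits an $\e{\infty}$-structure; more cleanly, $R$ is the image of $\mathbf{1}$ under the lax symmetric monoidal global-sections-type functor $\hom_{\mathcal{C}}(\mathbf{1}, -)\colon \mathcal{C} \to \sp$. This functor is exact (it is corepresented by a compact object) and lax symmetric monoidal, hence factors through an exact, \emph{symmetric monoidal} functor $F\colon \mathcal{C} \to \md(R)$ — equivalently, the left adjoint to the module structure is obtained by observing that $\hom_{\mathcal{C}}(\mathbf{1},-)$ lands in $\md_{\mathrm{End}(\mathbf{1})}(\sp) = \md(R)$. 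Its left adjoint $G\colon \md(R) \to \mathcal{C}$ is the colimit-preserving symmetric monoidal functor sending $R \mapsto \mathbf{1}$ (it exists by presentability and the adjoint functor theorem, and is symmetric monoidal by the universal property of $\md(R)$ among presentable symmetric monoidal categories receiving a map from $\sp$).

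Next I would check that $F$ is fully faithful and essentially surjective. Fully faithfulness: the unit of the adjunction $X \to F(G(X))$, resp. the counit $G(F(Y)) \to Y$, should be checked on generators and then propagated. On $R \in \md(R)$ the composite $F(G(R)) = F(\mathbf{1}) = \hom_{\mathcal{C}}(\mathbf{1},\mathbf{1}) = R$ is the identity, so $F$ is fully faithful on the thick subcategory generated by $R$, which is all of $\md(R)^{\omega}$; since $F$ preserves colimits (it is a left adjoint, being corepresented by the compact $\mathbf{1}$ — here compactness of $\mathbf{1}$ is exactly what makes $\hom_{\mathcal{C}}(\mathbf{1},-)$ colimit-preserving), it is fully faithful on all of $\md(R)$. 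For essential surjectivity: the essential image of $F$ is a colimit-closed subcategory of $\mathcal{C}$ containing $\mathbf{1}$; since $\mathbf{1}$ \emph{generates} $\mathcal{C}$, this image is all of $\mathcal{C}$. I should double-check the direction of the equivalence: it is cleaner to argue $G\colon \md(R) \xrightarrow{\sim} \mathcal{C}$ — $G$ preserves colimits, $G(R) = \mathbf{1}$ is a compact generator, and $G$ is fully faithful on $R$ because $\hom_{\md(R)}(R,R) = R \xrightarrow{\sim} \hom_{\mathcal{C}}(\mathbf{1},\mathbf{1})$, then propagate along colimits and use generation exactly as above.

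The main obstacle — and the step deserving the most care — is promoting the equivalence from the level of underlying stable $\infty$-categories (which is the Schwede--Shipley statement) to a \emph{symmetric monoidal} equivalence: one must know that a colimit-preserving functor $\md(R) \to \mathcal{C}$ carrying $R$ to $\mathbf{1}$ can be upgraded to a symmetric monoidal functor, and that this upgrade is unique. This is where I would invoke the universal property of $\md(R)$ as the presentable symmetric monoidal $\infty$-category freely generated under the map $\sp \to \mathcal{C}$ classifying $R$ (Lurie, \cite[Sec.~4.8.5]{higheralg}): the $\e{\infty}$-ring $R = \mathrm{End}_{\mathcal{C}}(\mathbf{1})$ classifies a map $\sp \to \mathcal{C}$ in $\shot$, which factors (uniquely) through $\md(R)$, producing the symmetric monoidal $G$; its underlying functor is the Schwede--Shipley equivalence by the uniqueness of colimit-preserving functors out of $\md(R)$ determined by the image of $R$. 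Everything else (the $\e{\infty}$-structure on $R$, compactness giving colimit-preservation, generation giving essential surjectivity) is formal once this universal property is in hand — so in the write-up I would simply cite \cite[Proposition 8.1.2.7]{higheralg} for the symmetric monoidal refinement and indicate the above as the shape of the argument.
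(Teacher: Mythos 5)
The paper gives no proof of this theorem at all — it is imported verbatim from Lurie as \cite[Proposition 8.1.2.7]{higheralg} — so there is no internal argument to compare against. Your sketch is the standard argument behind the cited result and is correct: compactness of $\mathbf{1}$ makes $\hom_{\mathcal{C}}(\mathbf{1},-)$ preserve all colimits, which lets you propagate full faithfulness from the generator $R$ to all of $\md(R)$; generation gives essential surjectivity; and the symmetric monoidal refinement comes from the universal property of $\md(R)$ in \cite[Sec.~4.8.5]{higheralg}. (One small slip in phrasing: $F=\hom_{\mathcal{C}}(\mathbf{1},-)$ is the \emph{right} adjoint of the adjunction you set up, and what compactness of $\mathbf{1}$ buys is precisely that this right adjoint preserves colimits — it is not the left adjoint by virtue of being corepresentable.)
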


In general, given a symmetric monoidal stable $\infty$-category $\mathcal{C}$,
the endomorphism ring $R = \mathrm{End}_{\mathcal{C}}( \mathbf{1})$ is
\emph{always} naturally an $\e{\infty}$-ring, and 
one has a natural adjunction
\[   \md(R) \rightleftarrows \mathcal{C}, \]
where the left adjoint ``tensors up'' an $R$-module with $\mathbf{1} \in
\mathcal{C}$, and the right adjoint sends $X \in \mathcal{C}$ to the mapping
spectrum
$\hom_{\mathcal{C}}(\mathbf{1}, X)$, which naturally acquires the structure of
an $R$-module. The left adjoint is symmetric monoidal, and the right adjoint is
\emph{lax} symmetric monoidal. In general, one does not expect the right
adjoint to preserve filtered colimits: it does so if and only if $\mathbf{1}$
is compact. In this case, if $\mathbf{1}$ is compact, we get a fully faithful
inclusion
\[ \md(R) \subset \mathcal{C},  \]
which exhibits $\md(R)$ as a \emph{colocalization} of $\mathcal{C}$. If
$\mathbf{1}$ is not compact, we at least get a fully faithful inclusion of the
\emph{perfect} $R$-modules into $\mathcal{C}$.

For example, let $G$ be a finite $p$-group and $k$ be a field of characteristic
$p$. In this case, every finite-dimensional $G$-representation on a $k$-vector space
is unipotent: any such has a finite filtration whose subquotients are isomorphic to
the trivial representation. 
From this, one might suspect that one has an equivalence
of stable homotopy theories
\(  \fun( BG, \md(k))  \simeq \md ( k^{hG}), \)
where $k^{hG}$ is the $\e{\infty}$-ring of endomorphisms of the unit object
$k$, but this fails because the unit object of $\md( k[G])$ fails to be
compact: taking $G$-homotopy fixed points does not commute with homotopy
colimits. However, by fixing this reasoning, one obtains an equivalence
\begin{equation} \label{reppgroup} \fun( BG, \md^\omega(k))  \simeq \md^{\omega}( k^{hG}), \end{equation}
between perfect $k$-module spectra with a $G$-action and perfect
$k^{hG}$-modules. 
If one works with stable module $\infty$-categories, then the unit object \emph{is}
compact (more or less by fiat) and one has: 

\begin{theorem}[Keller \cite{keller}] \label{keller} Let $G$ be a finite $p$-group and $k$ a
field of characteristic $p$. Then we
have an equivalence of symmetric monoidal $\infty$-categories
\[ \md( k^{tG}) \simeq \stm_G(k),  \]
between the $\infty$-category of modules over the Tate $\e{\infty}$-ring $k^{tG}$
and the stable module $\infty$-category of $G$-representations over $k$. 
\end{theorem}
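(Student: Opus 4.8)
The plan is to produce the equivalence via Morita theory, namely \Cref{luriess}: I will exhibit the stable module $\infty$-category $\stm_G(k)$ as a stable homotopy theory whose unit object is a \emph{compact generator}, and then identify its endomorphism $\e{\infty}$-ring with the Tate construction $k^{tG}$. The compactness of the unit in $\stm_G(k)$ is essentially built into the definition: $\stm_G(k) = \mathrm{Ind}(D(\mathcal{A})^\omega/\mathcal{I})$, so the unit is the image of $k$ under the localization $D(\mathcal{A})^\omega \to D(\mathcal{A})^\omega/\mathcal{I}$, and the compact objects of an $\mathrm{Ind}$-completion are (retracts of) the objects one completed from. For the unit to \emph{generate}, I would use the fact that $G$ is a finite $p$-group and $k$ has characteristic $p$: every perfect $k[G]$-module spectrum is built out of finitely many copies of $k$ (the trivial module) under finite colimits and retracts, modulo free modules — because every finite-dimensional $k[G]$-representation is unipotent, as recalled in the discussion preceding \eqref{reppgroup}. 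Hence the image of $k$ generates $D(\mathcal{A})^\omega/\mathcal{I}$ as a thick subcategory, and therefore generates $\stm_G(k)$ under colimits.

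Next I would compute $R := \mathrm{End}_{\stm_G(k)}(\mathbf{1})$. By construction of the Verdier quotient, the mapping spectrum in $D(\mathcal{A})^\omega/\mathcal{I}$ out of (the image of) $k$ is computed in the localization $\mathcal{I}^\perp \subset \mathrm{Ind}(D(\mathcal{A}))$; concretely,
\[ \mathrm{End}_{\stm_G(k)}(\mathbf{1}) \simeq \hom_{D(k[G])}(k, L_{\mathcal{I}} k), \]
where $L_{\mathcal{I}}$ is localization away from the perfect (= free, here) $k[G]$-modules. This is exactly a Tate-type construction: $L_{\mathcal{I}} k$ is the cofiber of the norm-type map from the ``finite'' part to $k$, and the resulting endomorphism object is $k^{tG} = \mathrm{cofib}(\mathrm{Nm}\colon k_{hG} \to k^{hG})$. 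I would make this precise by recalling that killing the ideal generated by the free modules $k[G]$ is, on the level of $\hom(k,-)$, the same as taking $k$-linear homotopy fixed points and then quotienting by the image of the norm — i.e. the standard description of the Tate construction as a localization/completion of $\md(k^{hG})$. This identification of $R$ with $k^{tG}$ is compatible with the symmetric monoidal structure because the Verdier quotient functor $D(\mathcal{A})^\omega \to D(\mathcal{A})^\omega/\mathcal{I}$ is symmetric monoidal, so the induced $\e{\infty}$-structure on $\mathrm{End}(\mathbf{1})$ matches the $\e{\infty}$-structure on $k^{tG}$ inherited from $k^{hG} \simeq \mathrm{End}_{\fun(BG,\md(k))}(k)$.

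Assembling: \Cref{luriess} then yields a symmetric monoidal equivalence $\md(R) \simeq \stm_G(k)$ with $R \simeq k^{tG}$, which is the claim.

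The main obstacle I anticipate is the second step — carefully identifying the endomorphism $\e{\infty}$-ring of the unit with $k^{tG}$, \emph{as $\e{\infty}$-rings and symmetric monoidally}, rather than merely computing its homotopy groups. One has to track that the localization defining the Verdier quotient, applied to mapping spectra out of the unit, really produces the cofiber-of-norm construction with its multiplicative structure; this is where one invokes (a) the equivalence \eqref{reppgroup} identifying perfect $k[G]$-modules with perfect $k^{hG}$-modules, so that the ideal $\mathcal{I}$ of ``induced/free'' modules corresponds on the $k^{hG}$ side to the ideal generated by $k_{hG}$ (the image of the norm), and (b) the fact that quotienting $\md^\omega(k^{hG})$ by that ideal and $\mathrm{Ind}$-completing gives precisely $\md(k^{tG})$, the $\e{\infty}$-ring $k^{tG}$ being the Bousfield localization of $k^{hG}$. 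One must also verify the generation claim does not require $\aleph_1$-completeness subtleties beyond what the finiteness of $G$ already provides; since $G$ is finite this is harmless, but it is worth stating explicitly.
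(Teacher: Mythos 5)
Your proposal is correct and is exactly the argument the paper has in mind: the theorem is stated immediately after \Cref{luriess} precisely because the unit of $\stm_G(k)$ is compact by construction and generates since every finite-dimensional $k[G]$-representation is unipotent for $G$ a $p$-group in characteristic $p$, so Morita theory applies. The one step you flag as the main obstacle---identifying $\mathrm{End}_{\stm_G(k)}(\mathbf{1})$ with the cofiber-of-norm model of $k^{tG}$ as an $\e{\infty}$-ring---is sidestepped in the paper, which simply \emph{defines} $k^{tG}$ to be that endomorphism ring and records the norm-cofiber description only at the level of $k$-module spectra; your sketch of that identification via the localization $L_{\mathcal{I}}k$ is the standard one and is fine.
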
 

The \emph{Tate construction} $k^{tG}$, for our purposes, can be \emph{defined} as the endomorphism
$\e{\infty}$-ring of the unit object in the stable module $\infty$-category
$\stm_G(k)$. As a $k$-module spectrum, it can also be obtained as the cofiber of
the \emph{norm map} $k_{hG} \to k^{hG}$.  
We also refer to \cite[sec. 2]{toruspic} for 
further discussion on this point.

\subsection{\'Etale algebras}

Let $R$ be an $\e{\infty}$-ring spectrum. 
Given an $\e{\infty}$-$R$-algebra $R'$, recall that the homotopy groups 
$\pi_* R'$ form a graded-commutative $\pi_*R$-algebra. 
In general, there is no reason for a given graded-commutative $\pi_* R$-algebra
to be realizable as the homotopy groups in this way, although one often has
various obstruction theories (see for instance \cite{robinsonobstruct, rezkHM,
goersshopkins}
for examples of obstruction theories in different contexts) to attack such questions. There is, however,
always one case in which the obstruction theories degenerate completely. 

\begin{definition} 
An $\e{\infty}$-$R$-algebra $R'$ is \textbf{\'etale} if: 
\begin{enumerate}
\item The map $\pi_0 R \to \pi_0 R'$ is \'etale (in the sense of ordinary
commutative algebra).  
\item The natural map $\pi_0 R' \otimes_{\pi_0 R} \pi_* R \to \pi_* R'$ is an
isomorphism. 
\end{enumerate}
\end{definition} 

The basic result in this setting is that the theory of \'etale algebras 
is entirely algebraic: the obstructions to existence and uniqueness all  vanish. 
\begin{theorem}[{\cite[Theorem 7.5.4.2]{higheralg}}]
\label{etaletopinv}
Let $R$ be an $\e{\infty}$-ring. Then the $\infty$-category of \'etale
$R$-algebras is equivalent (under $\pi_0$) to the ordinary category of \'etale
$\pi_0 R$-algebras. 
\end{theorem}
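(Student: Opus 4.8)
The plan is to prove this by a combination of deformation theory (to get the statement for nilpotent thickenings) and a descent/limit argument (to pass to the general case). The key input is that étale morphisms of ordinary rings are formally étale and unramified, so the relevant cotangent complexes vanish and all the André--Quillen cohomology obstruction groups die. Concretely, write $\einf_R^{\mathrm{\acute{e}t}}$ for the $\infty$-category of étale $\e{\infty}$-$R$-algebras and $(\pi_0 R)^{\mathrm{\acute{e}t}}$ for the ordinary category of étale $\pi_0 R$-algebras; we must show $\pi_0 \colon \einf_R^{\mathrm{\acute{e}t}} \to (\pi_0 R)^{\mathrm{\acute{e}t}}$ is an equivalence.

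First I would treat the case where $R$ is connective. The functor factors through the Postnikov tower: the unit map $R \to \pi_0 R$ is a sequence of square-zero extensions $\tau_{\leq n+1} R \to \tau_{\leq n} R$ with fiber (a shift of) $\pi_{n+1} R$ viewed as a $\pi_0 R$-module. By the obstruction theory of \cite{higheralg} (the analog of the Hochschild--Kostant--Rosenberg / Goerss--Hopkins machinery built from the cotangent complex), lifting an étale $\tau_{\leq n}R$-algebra to a $\tau_{\leq n+1}R$-algebra, and lifting maps between such, is controlled by the groups $\mathrm{Ext}^i_{\pi_0 R'}(L_{\pi_0 R'/\pi_0 R} , \pi_{n+1}R \otimes_{\pi_0 R}\pi_0 R')$ for small $i$; since $\pi_0 R \to \pi_0 R'$ is étale, $L_{\pi_0 R'/\pi_0 R} \simeq 0$, so all these groups vanish and the lift exists and is unique up to contractible choice. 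Taking the inverse limit over the Postnikov tower (and using that étale algebras, being finitely presented, are compact enough for the relevant limit to behave), one gets that $\einf_{\tau_{\leq n}R}^{\mathrm{\acute{e}t}} \simeq \einf_{\pi_0 R}^{\mathrm{\acute{e}t}}$ for all $n$ and then for $\tau_{\leq\infty}R = R$. An essential point along the way is to check that a lift constructed this way is again étale — that condition 2 in the definition is preserved — which follows because base change of an étale algebra along $\tau_{\leq n+1}R \to \tau_{\leq n}R$ is computed on homotopy groups by flat (indeed étale) base change of $\pi_*$.

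Next I would bootstrap to arbitrary $R$ by writing $R$ as built from its connective cover. One reduces to the connective case by the observation that an étale $R$-algebra $R'$ is, up to the algebraic data, insensitive to inverting elements and to the non-connective part: more precisely, one can use that $\einf_R^{\mathrm{\acute{e}t}}$ only depends on $\spec \pi_0 R$ as a Zariski (even étale) site, glue local pieces, and for each piece realize $R$ as a localization of its connective cover $\tau_{\geq 0}R$, noting that both étale algebras and the category $(\pi_0 R)^{\mathrm{\acute{e}t}}$ behave well under localization. Full faithfulness is the easier half throughout — mapping spaces between étale algebras are discrete with $\pi_0$ the set of $\pi_0 R$-algebra maps — so the content is essential surjectivity, i.e. realizing every étale $\pi_0 R$-algebra, which is exactly what the Postnikov obstruction argument provides.

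The main obstacle I expect is not conceptual but bookkeeping: making the Postnikov-tower limit argument precise in the $\infty$-categorical setting, i.e. verifying that $\einf_R^{\mathrm{\acute{e}t}} \simeq \varprojlim_n \einf_{\tau_{\leq n}R}^{\mathrm{\acute{e}t}}$ and that the transition functors induce equivalences, which requires knowing that the relevant functor of $\infty$-categories commutes with this particular tower of limits. This is exactly the place where one leans on the machinery of \cite[Ch. 7]{higheralg}, so in the write-up I would invoke \Cref{etaletopinv}'s source there rather than reprove it, and spend most of the effort on the vanishing of the cotangent complex and the preservation of the étale condition under the lifts.
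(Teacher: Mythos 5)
First, note that the paper does not prove this statement at all: it is quoted verbatim from Lurie (\cite[Theorem 7.5.4.2]{higheralg}), so the only comparison available is with the standard proof there. Your connective half is essentially that proof: Postnikov tower, square-zero extensions, obstruction groups killed because the relevant cotangent complex vanishes for an \'etale map, plus the convergence statement $\clg_R^{\mathrm{\acute{e}t}} \simeq \varprojlim_n \clg_{\tau_{\leq n}R}^{\mathrm{\acute{e}t}}$. One imprecision there: the obstructions live in Ext groups of the \emph{topological} cotangent complex $L_{R'/R}$, not of $L_{\pi_0 R'/\pi_0 R}$; that the topological one vanishes for \'etale maps is itself a lemma (proved in \cite{higheralg} via flatness, or via the fact that the multiplication $R' \otimes_R R' \to R'$ is localization at an idempotent), so you should not simply substitute the algebraic cotangent complex.

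The genuine gap is in your reduction of the non-connective case to the connective one. It is false that a general $\e{\infty}$-ring $R$ is, even Zariski-locally on $\spec \pi_0 R$, a localization of its connective cover: localizing $\tau_{\geq 0}R$ at elements of $\pi_0 R$ can never produce negative homotopy groups, so any $R$ with $\pi_{-1}R \neq 0$ and $\pi_0 R$ a field (e.g.\ $C^*(S^1;\mathbb{F}_p)$, or $k^{tV}$ from this very paper) is a counterexample. The claim that $\clg_R^{\mathrm{\acute{e}t}}$ ``only depends on $\spec \pi_0 R$'' is moreover exactly the statement being proved, so invoking it is circular. The correct reduction is base change along $\tau_{\geq 0}R \to R$: one shows $B' \mapsto B' \otimes_{\tau_{\geq 0}R} R$ carries \'etale $\tau_{\geq 0}R$-algebras to \'etale $R$-algebras (the Tor spectral sequence collapses because $\pi_*B' \cong \pi_*(\tau_{\geq 0}R) \otimes_{\pi_0 R} \pi_0 B'$ with $\pi_0 B'$ flat, giving $\pi_*(B' \otimes_{\tau_{\geq 0}R} R) \cong \pi_*R \otimes_{\pi_0 R}\pi_0 B'$), and that this functor is an equivalence onto the \'etale $R$-algebras; the latter needs the discreteness/corepresentability of mapping spaces out of \'etale algebras over a possibly non-connective base, which is a separate (non-formal) input in \cite{higheralg} and is not supplied by your sketch.
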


One can show more, in fact: given an \'etale $R$-algebra $R'$, then for any
$\e{\infty}$ $R$-algebra $R''$, the natural map
\[ \hom_{R/}(R', R'') \to \hom_{\pi_0 R/}( \pi_0 R', \pi_0 R'') \]
is a homotopy equivalence. Using an adjoint functor theorem approach (and the
infinitesimal criterion for \'etaleness), one may
even \emph{define} $R'$ in terms of $\pi_0 R'$ in this manner, although
checking that it has the desired homotopy groups takes additional work. In
particular, note that \'etale $R$-algebras are \emph{0-cotruncated} objects of
the $\infty$-category $\clg_{R/}$: that is, the space of maps out of any such
is always homotopy discrete. 
The finite covers that we shall consider in this paper will also
have this property.

\begin{example} 
This implies that one can adjoin $n$th roots of unity to the sphere spectrum
$S^0$ once $n$ is inverted. An argument of Hopkins implies that the inversion
of $n$ is necessary: for $p > 2$, one cannot adjoin a $p$th root of unity to $p$-adic
$K$-theory, as one sees by considering the $\theta$-operator on $K(1)$-local
$\e{\infty}$-rings under $K$-theory (cf. \cite{kone})  which satisfies $x^p = \psi(x) - p \theta(x)$ where $\psi$
is a homomorphism on $\pi_0$. 
If one could adjoin $\zeta_p$ to $p$-adic $K$-theory, then 
one would have $-p\theta( \zeta_p) = 1 - \zeta_p^a$ for some unit $a \in
(\mathbb{Z}/p\mathbb{Z})^{\times}$, but $p$ does not divide $1 -
\zeta_p^a$ in $\mathbb{Z}_p[\zeta_p]$.
\end{example}

Let $(\mathcal{C}, \otimes, \mathbf{1})$ be a stable homotopy theory. 
We will now attempt to do the above in $\mathcal{C}$ itself. 
We will obtain some of the simplest classes of objects in $\clg(\mathcal{C})$. 
The following notation will be convenient. 
\begin{definition} 
Given a stable homotopy theory $(\mathcal{C}, \otimes, \mathbf{1})$, we will
write
\begin{equation} 
\pi_* X \simeq \pi_* \hom_{\mathcal{C}}( \mathbf{1}, X).
\end{equation} 
\end{definition} 

In particular, $\pi_* \mathbf{1} \simeq \pi_*
\mathrm{End}_{\mathcal{C}}(\mathbf{1}, \mathbf{1})$ is a graded-commutative
ring, and 
for any $X \in \mathcal{C}$, $\pi_* X$ is naturally a $\pi_* \mathbf{1}$-module. 

\begin{remark} 
Of course, $\pi_*$ does not commute with infinite direct sums unless
$\mathbf{1}$ is compact. For example, $\pi_*$ fails to commute with direct sums
in $L_{K(n)} \sp$ (which is actually compactly generated, albeit not by the
unit object). 
\end{remark}

Let $(\mathcal{C}, \otimes, \mathbf{1})$ be a stable homotopy theory. 
As in the previous section, we have an adjunction
of symmetric monoidal $\infty$-categories
\[ \left(\cdot \otimes_R \mathbf{1}, \hom_{\mathcal{C}}(\mathbf{1}, \cdot)
\right) \colon \md(R) \rightleftarrows \mathcal{C},  \]
where $R = \mathrm{End}_{\mathcal{C}}(\mathbf{1})$ is an $\e{\infty}$-ring. 
Given an \'etale $\pi_0 R \simeq \pi_0 \mathbf{1}$-algebra $R_0'$, we can thus
construct an \'etale $R$-algebra $R'$ and an associated object $R' \otimes_R
\mathbf{1} \in \clg(\mathcal{C})$. The object $R' \otimes_R \mathbf{1}$ naturally acquires the
structure of a commutative algebra, and, by playing again with
adjunctions, we find that
\[ \hom_{\clg(\mathcal{C})}(R' \otimes_{R} \mathbf{1}, T) \simeq \hom_{\pi_0
\mathbf{1}}(R'_0, \pi_0 T), \quad T \in \clg(\mathcal{C}).  \]

\begin{definition} 
\label{classicaletale}
The objects of $\clg(\mathcal{C})$ obtained in this manner are called
\textbf{classically \'etale.}
\end{definition} 

The classically \'etale objects in $\clg(\mathcal{C})$ span a 
subcategory of $\clg(\mathcal{C})$. In general, this is not equivalent to the
category of \'etale $\pi_0 R$-algebras if $\mathbf{1}$ is not compact (for example, $\md(R) \to \mathcal{C}$
need not be conservative; take $\mathcal{C} = L_{K(n)} \sp$ and $L_{K(n)} S^0
\otimes \mathbb{Q}$). 
However, note that the functor
\[ \md^\omega(R) \to \mathcal{C},  \]
from the $\infty$-category $\md^\omega(R)$ of perfect $R$-modules into
$\mathcal{C}$, 
is \emph{always} fully faithful. It follows that there is a full subcategory of
$\clg( \mathcal{C})$ \emph{equivalent} to the category of \emph{finite} \'etale
$\pi_0 R$-algebras. This subcategory will give us the ``algebraic'' part of the
Galois group of $\mathcal{C}$. 

We now specialize to the case of \emph{idempotents.}
Let $(\mathcal{C}, \otimes, \mathbf{1})$ be a stable homotopy theory, and $A
\in \clg(\mathcal{C})$ a commutative algebra object, so that $\pi_0 A$ is a
commutative ring. 

\begin{definition} 
An \textbf{idempotent} of $A$ is an idempotent of the commutative ring $\pi_0 A$. 
We will denote the set of idempotents of $A$ by $\idem(A)$. 
\end{definition} 

The set $\idem(A)$ acquires some additional structure; as the set of
idempotents in a commutative ring, it is naturally a \emph{Boolean algebra}
under the multiplication in $\pi_0 A$ and the addition that takes idempotents
$e, e'$ and forms $e  + e' - ee'$. 
For future reference, recall the following: 

\newcommand{\bool}{\mathrm{Bool}}
\begin{definition} 
A \textbf{Boolean algebra} is a commutative ring $B$ such that $x^2 = x$ for
every $x \in B$. The collection of all Boolean algebras forms a full  subcategory
$\bool$ of the category of commutative rings. 
\end{definition}

Suppose given an idempotent $e$ of $A$, so that $1-e$ is also an idempotent. 
In this case, we can obtain a \emph{splitting}
\[ A \simeq A[e^{-1}] \times A[(1-e)^{-1}]  \]
as a product of two objects in $\clg(\mathcal{C})$, as observed in
\cite{mayidem}. 
To see this, we may reduce to the case when $A = \mathbf{1}$, by replacing
$\mathcal{C}$ by $\md_{\mathcal{C}}(A)$. In this case, we obtain the splitting
from the discussion above in \Cref{classicaletale}: $A[e^{-1}]$ and $A[(1 -
e)^{-1}]$ are both classically \'etale and in the thick subcategory
generated by $A$. 
Conversely, given such a
splitting, we obtain corresponding idempotents, e.g., reducing to the case of an
$\e{\infty}$-ring. 

Suppose the unit object $\mathbf{1} \in \mathcal{C}$ decomposes as a product
$\mathbf{1}_1 \times \mathbf{1}_2 \in \clg(\mathcal{C})$. In this case, we have
a decomposition at the level of stable homotopy theories
\[ \mathcal{C} \simeq \md_{\mathcal{C}}( \mathbf{1}_1) \times
\md_{\mathcal{C}}( \mathbf{1}_2),  \]
so in practice, most stable homotopy theories that in practice we will be
interested in will have no such nontrivial idempotents. However, the theory of
idempotents will be very important for us in this paper.

For example, using the theory of idempotents, we can describe maps \emph{out of} a product
of commutative algebras. 

\begin{proposition} \label{prode}
Let $A, B \in \clg( \mathcal{C})$. Then if $C \in \clg( \mathcal{C})$, then we
have a homotopy equivalence
\[ \hom_{\clg(\mathcal{C})}(A \times B, C) \simeq \bigsqcup_{C \simeq C_1
\times C_2} \hom_{\clg(\mathcal{C})}(A, C_1) \times
\hom_{\clg(\mathcal{C})}(B, C_2),   \]
where the disjoint union is taken over all decompositions $C \simeq C_1 \times
C_2$ in $\clg(\mathcal{C})$ (i.e., over idempotents in $C$). 
\end{proposition}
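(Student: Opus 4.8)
The plan is to reduce the statement to a purely formal manipulation of idempotents, using the splitting principle for commutative algebra objects that was just established. First I would observe that both sides of the claimed equivalence are functorial in $C$, and that the right-hand side is, by construction, a disjoint union indexed by the (discrete) set $\idem(C)$ of idempotents of $C$. The essential input is that giving a map $A \times B \to C$ in $\clg(\mathcal{C})$ is the same as giving an idempotent $e \in \idem(C)$ (the image of the idempotent $(1,0) \in \pi_0(A \times B)$) together with, after splitting $C \simeq C[e^{-1}] \times C[(1-e)^{-1}] =: C_1 \times C_2$, a map $A \to C_1$ and a map $B \to C_2$.

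The key steps, in order: (1) Use the decomposition $\mathbf{1}_{A \times B}$-discussion to note $\md_{\mathcal{C}}(A \times B) \simeq \md_{\mathcal{C}}(A) \times \md_{\mathcal{C}}(B)$, so that working relative to $A \times B$ we may think of $A, B$ as the two ``coordinate'' unit objects. (2) Given $f \colon A \times B \to C$, pull back the canonical orthogonal idempotents $e_A = (1,0)$, $e_B = (0,1)$ of $\pi_0(A \times B)$ to get orthogonal idempotents $\pi_0 f(e_A), \pi_0 f(e_B) \in \pi_0 C$ summing to $1$; this produces a splitting $C \simeq C_1 \times C_2$ in $\clg(\mathcal{C})$ by the splitting construction of \cite{mayidem} recalled above. (3) Check that $f$ factors as $A \times B \to C_1 \times C_2$ componentwise, i.e.\ that $f$ restricted appropriately lands in the summands; this is where one uses that $C_1 = C[e^{-1}]$ is obtained by inverting $e$ and that $A$ (resp.\ $B$) maps to it killing the complementary idempotent. (4) Conversely, from a decomposition $C \simeq C_1 \times C_2$ and maps $A \to C_1$, $B \to C_2$, form the product map $A \times B \to C_1 \times C_2 \simeq C$. (5) Verify these two constructions are mutually inverse up to coherent homotopy, and more precisely that they assemble into an equivalence of spaces (mapping spaces / disjoint unions), not merely a bijection on $\pi_0$ — here one wants that the set of decompositions of $C$ is genuinely discrete, which follows because $\idem(C)$ is a set and distinct idempotents give non-equivalent splittings.

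The cleanest way to organize steps (2)--(5) is probably to exhibit a natural map in one direction and show it is an equivalence on each component: fix a decomposition $C \simeq C_1 \times C_2$, and show the fiber of $\hom_{\clg(\mathcal{C})}(A \times B, C)$ over this component (i.e.\ those maps inducing the corresponding pair of idempotents) is equivalent to $\hom_{\clg(\mathcal{C})}(A, C_1) \times \hom_{\clg(\mathcal{C})}(B, C_2)$. For this I would use the universal property of the product $A \times B$ in $\clg(\mathcal{C})$ together with the fact — again from the idempotent discussion — that $\clg(\mathcal{C}_1 \times \mathcal{C}_2) \simeq \clg(\mathcal{C}_1) \times \clg(\mathcal{C}_2)$ applied to $\mathcal{C}$ replaced by $\md_{\mathcal{C}}(C)$, so that $\clg(\md_{\mathcal{C}}(C)) \simeq \clg(\md_{\mathcal{C}}(C_1)) \times \clg(\md_{\mathcal{C}}(C_2))$; a map $A \times B \to C$ over the given idempotents then unwinds into a pair of maps over $C_1$ and $C_2$ separately.

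The main obstacle I expect is step (5): tracking the homotopy-coherent data to confirm that the disjoint union over decompositions is correct at the level of \emph{spaces} rather than just sets of components. One must be careful that the ``set of decompositions $C \simeq C_1 \times C_2$'' really is a set — equivalently that the space of idempotents of $C$ is discrete — and that choosing such a decomposition is not itself a source of higher homotopy; this is exactly the content of idempotents being elements of the \emph{ring} $\pi_0 C$ and of the splitting being canonical once the idempotent is fixed. Granting that, the argument is formal: it is essentially the observation that $\spec$ of a product is a disjoint union, transported through the adjunctions and the $\md_{\mathcal{C}}(-)$ formalism, with no genuine homotopy-theoretic obstruction to overcome.
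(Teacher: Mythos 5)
Your proposal is correct and follows essentially the same route as the paper: extract the two orthogonal idempotents in $\pi_0 C$ from the images of $(1,0)$ and $(0,1)$, split $C \simeq C[e^{-1}] \times C[(1-e)^{-1}]$, and identify the fiber over each decomposition via the universal property of localization. The coherence worry in your step (5) is handled exactly as you suspect: $\idem(C)$ is a discrete set and the splitting is canonical once the idempotent is fixed, so nothing further is needed.
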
 
\begin{proof} 
Starting with a map $A \times B \to C$, we get a decomposition of $C$ into two
factors coming from the two natural idempotents in $A \times B$, whose images
in $C$ give two orthogonal idempotents summing to $1$. Conversely, starting with something in the
right-hand-side, given via maps $A \to C_1$ and $B \to C_2$ and an equivalence
$C \simeq C_1 \times C_2$, we can take the product 
of the two maps to get $A \times B \to C$. 
The equivalence follows from the universal property of localization. 
\end{proof}

For example, consider the case of $A, B = \mathbf{1}$. In this case, we find
that, if $C \in \clg(\mathcal{C})$, then 
\[ \hom_{\clg(\mathcal{C})}(\mathbf{1} \times \mathbf{1}, C)   \]
is homotopy discrete, and 
consists of the \emph{set} of idempotents in $C$. We could have obtained this
from the theory of ``classically \'etale''  objects earlier. 
Using this description as a corepresentable functor, we find: 

\begin{corollary} 
\label{idemlimit}
The functor $A \mapsto \idem(A)$, $\clg(\mathcal{C}) \to \bool$, commutes with limits. 
\end{corollary}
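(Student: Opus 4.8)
The plan is to deduce \Cref{idemlimit} directly from \Cref{prode} by exhibiting $\idem(-)$ as a functor corepresented by a specific commutative algebra object. First I would recall the computation established just before the corollary: for any $C \in \clg(\mathcal{C})$, the space $\hom_{\clg(\mathcal{C})}(\mathbf{1} \times \mathbf{1}, C)$ is homotopy discrete and is canonically identified, as a set, with $\idem(C)$. This identification is natural in $C$ — a map $C \to C'$ carries idempotents to idempotents and is compatible with postcomposition — so we obtain an equivalence of functors $\clg(\mathcal{C}) \to \mathcal{S}$ (landing in sets),
\[ \idem(-) \simeq \hom_{\clg(\mathcal{C})}(\mathbf{1} \times \mathbf{1}, -). \]
Since corepresentable functors on any $\infty$-category preserve all limits, this shows $\idem(-)\colon \clg(\mathcal{C}) \to \finset \hookrightarrow \mathcal{S}$ commutes with limits that are computed in $\mathcal{S}$.

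Next I would address the only real subtlety: the corollary asserts that $A \mapsto \idem(A)$ commutes with limits \emph{as a functor into $\bool$}, not merely into sets (or spaces). So I must check that the forgetful functor $\bool \to \finset$ (or to $\mathrm{Set}$) creates limits — equivalently, that the Boolean-algebra structure on $\idem(\varprojlim_i A_i)$ agrees with the one induced termwise on $\varprojlim_i \idem(A_i)$. This is a routine verification: limits in $\clg(\mathcal{C})$ are computed on underlying objects, hence $\pi_0$ of a limit maps to the limit of the $\pi_0$'s, and in particular $\idem(\varprojlim A_i) \hookrightarrow \varprojlim \idem(A_i)$ is the natural inclusion; the operations $e \cdot e'$ and $e + e' - ee'$ are defined using only the ring operations of the various $\pi_0 A_i$, which are compatible with the limit maps, so the Boolean structure is the evident one. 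Since $\bool$ is a full subcategory of commutative rings and the forgetful functor $\bool \to \mathrm{Set}$ is a right adjoint (Boolean algebras being monadic over sets, or more simply: it creates limits because the defining equations $x^2 = x$ and the ring axioms are all limit-type conditions), limits in $\bool$ are computed on underlying sets. Combining this with the previous paragraph gives the claim.

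The one step requiring genuine care — and the one I'd expect a referee to scrutinize — is the naturality of the identification $\idem(C) \simeq \hom_{\clg(\mathcal{C})}(\mathbf{1}\times\mathbf{1}, C)$ as a \emph{ring} map on the nose, i.e. that under this bijection the Boolean operations correspond to the operations coming from the coalgebra structure of $\mathbf{1}\times\mathbf{1}$ in $\clg(\mathcal{C})$ (its two projections, diagonal, etc.). But for the corollary as stated we do not actually need to match the operations with a coalgebra structure; we only need that $\idem$, viewed as a set-valued functor, is corepresentable (which handles the \emph{underlying} limit) together with the separate, elementary observation that the Boolean operations are termwise-defined. So the argument factors cleanly into "corepresentability handles the set-level limit" plus "$\bool \to \mathrm{Set}$ creates limits," and no delicate coherence of the Boolean structure is required. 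I would present it in exactly that order, keeping the write-up to a few lines since each ingredient is either \Cref{prode} or a standard fact about limits of algebraic structures.
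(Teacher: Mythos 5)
Your proof is correct and follows the same route as the paper: the identification $\idem(C) \simeq \hom_{\clg(\mathcal{C})}(\mathbf{1}\times\mathbf{1}, C)$ from \Cref{prode} makes $\idem$ corepresentable, hence limit-preserving. The paper leaves the compatibility of the Boolean structure implicit (and offers a separate direct argument in \Cref{sq0rem}), so your extra check that $\bool \to \mathrm{Set}$ creates limits is a harmless refinement rather than a divergence.
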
 

\begin{remark} 
\label{sq0rem}
\Cref{idemlimit} can also be proved directly. 
Since $\pi_* $ commutes with arbitrary products in $\mathcal{C}$, it follows
that $A \mapsto \idem(A)$ commutes with arbitrary products. It thus suffices to
show that if we have a pullback diagram
\[ \xymatrix{
A \ar[d] \ar[r] & B \ar[d] \\
C \ar[r] & D 
},\]
in $\clg(\mathcal{C})$, 
then the induced diagram of Boolean algebras
\[ \xymatrix{
\idem(A) \ar[d] \ar[r] & \idem(B) \ar[d] \\
\idem(C) \ar[r] & \idem(D )
}\]
is also cartesian. 
In fact, we have a surjective map of commutative rings $\pi_0(A) \to \pi_0(B)
\times_{\pi_0(D)} \pi_0(C)$ whose kernel is the image of the connecting
homomorphism $\pi_1(D) \to \pi_0(A)$. It thus suffices to show that the
product of two elements in the  image of this connecting homomorphism vanishes, 
since square-zero ideals do not affect idempotents. 

Equivalently, we claim that if $x,y \in \pi_0(A)$ map to zero in $\pi_0(B)$
and $\pi_0(C)$, then $xy = 0$.
In fact, $x$  and $y$ define maps  $A \to A$ and, in fact, 
endomorphisms of the exact triangle 
\[ A \to B \oplus C \to D,  \]
and each is nullhomotopic on $B \oplus C$ and on $D$. 
A diagram chase with exact triangles now shows that $xy$ defines the
\emph{zero map} $A \to A$, as desired. 
\end{remark}

\section{Descent theory}

\label{sec:descent}
Let $A \to B$ be a faithfully flat map of discrete commutative rings.
Grothendieck's theory of \emph{faithfully flat descent} (cf. \cite[Exp.
VIII]{sga1}) can be used to describe
the category $\mdd(A)$ of (discrete, or classical) $A$-modules in terms of the
three categories $\mdd(B), \mdd(B \otimes_A B), \mdd(B \otimes_A B \otimes_A B)$. 
Namely, it identifies the category $\mdd(A)$ with the category of $B$-modules
with \emph{descent data}, or states that the diagram
\[ \mdd(A) \to \mdd(B) \rightrightarrows \mdd(B \otimes_A B) \triplearrows
\mdd(B \otimes_A B \otimes_A B),  \]
is a limit diagram in the 2-category of categories. This diagram of
categories comes from the
\emph{cobar construction} on $A \to B$, which is the augmented cosimplicial
commutative ring
\[ A \to B \rightrightarrows B \otimes_A B  \triplearrows \dots .  \]

Grothendieck's theorem can be proved via the \emph{Barr-Beck theorem,} by
showing that if $A \to B$ is faithfully flat, the natural
tensor-forgetful adjunction $\mdd(A)
\rightleftarrows \mdd(B)$ is 
comonadic. 
Such results are extremely useful in practice, for instance because the
category of $B$-modules may be much easier to study. From another point of
view, these results imply that any $A$-module $M$ can be expressed as an equalizer
of $B$-modules (and maps of $A$-modules), via
\[ M \to M \otimes_A B \rightrightarrows M \otimes_A B \otimes_A B,   \]
where the two maps are $m \otimes b \mapsto m \otimes b \otimes 1$ and $m
\otimes b \mapsto m \otimes 1 \otimes b$.

In the setting of ``brave new'' algebra, descent theory for maps of $\e{\infty}$
(or weaker) algebras has been extensively considered in the papers
\cite{DAGdesc, DAGss}. 
In this setting, one has a map of $\e{\infty}$-rings $A \to B$, and one wishes to
describe the stable $\infty$-category $\md(A)$ in terms of the stable
$\infty$-categories $\md(B), \md(B \otimes_A B), \dots $.
A sample result would run along the following lines. 

\begin{theorem}[{\cite[Theorem 6.1]{DAGss}}]\label{luried}
Let $A \to B$ be a map of $\e{\infty}$-rings such that $\pi_0 (A) \to \pi_0(B)$
is faithfully flat and the map $\pi_*(A) \otimes_{\pi_0(A)} \pi_0( B) \to \pi_*
(B)$ is an isomorphism. Then the adjunction $\md(A) \rightleftarrows \md(B)$ is
comonadic, so that $\md(A)$ can be recovered as the totalization of the
cosimplicial $\infty$-category 
\[ \md(B) \rightrightarrows \md(B \otimes_A B) \triplearrows \dots.  \]
\end{theorem}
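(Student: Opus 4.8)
We follow the method of \cite[Theorem 6.1]{DAGss}: the statement is deduced from the Barr--Beck--Lurie theorem (\cite[Theorem 4.7.3.5]{higheralg}), in its comonadic (dual) form, applied to the base-change functor
\[ F = B \otimes_A (-) \colon \md(A) \longrightarrow \md(B), \]
a symmetric monoidal, colimit-preserving functor whose right adjoint $G$ is the forgetful functor, with associated comonad $\Theta = FG = B \otimes_A (-)$ on $\md(B)$. Once $F$ is known to be comonadic, $\md(A) \simeq \mathrm{coMod}_\Theta(\md(B))$; and since $\md(-)$ is symmetric monoidal it carries the Amitsur cosimplicial $\e{\infty}$-ring $B^{\otimes_A (\bullet+1)}$ to the cobar construction of $\Theta$, so $\mathrm{coMod}_\Theta(\md(B))$ is the totalization of $[n] \mapsto \md(B^{\otimes_A (n+1)})$ --- which is exactly the asserted description of $\md(A)$. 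That last identification is formal, so everything reduces to the two hypotheses of comonadic Barr--Beck.

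The first is that $F$ be \emph{conservative}. As $\md(A), \md(B)$ are stable and $F$ exact, this means $F$ detects zero objects: if $B \otimes_A M \simeq 0$ then $M \simeq 0$. This is where the hypotheses on $A \to B$ are used. Since $\pi_* B \cong \pi_* A \otimes_{\pi_0 A} \pi_0 B$ with $\pi_0 A \to \pi_0 B$ flat, $\pi_* B$ is a flat graded $\pi_* A$-module, so the K\"unneth (Tor) spectral sequence
\[ \mathrm{Tor}^{\pi_* A}_{s}(\pi_* B, \pi_* M)_t \Longrightarrow \pi_{s+t}(B \otimes_A M) \]
is concentrated on the line $s = 0$, collapses, and converges, giving a natural isomorphism $\pi_*(B \otimes_A M) \cong \pi_0 B \otimes_{\pi_0 A} \pi_* M$. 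If the left side vanishes in all degrees, faithful flatness forces $\pi_* M = 0$, hence $M \simeq 0$.

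The second hypothesis is that $F$ preserve totalizations of $F$-split cosimplicial objects (these totalizations exist, $\md(A)$ being presentable). Let $X^\bullet$ be a cosimplicial $A$-module with $B \otimes_A X^\bullet$ split. Since $G$ is conservative and limit-preserving, it suffices to show $\Theta = B \otimes_A(-)$ preserves $\mathrm{Tot}(X^\bullet)$. Now $\Theta$ is exact, so commutes with every finite partial totalization; for the passage to the limit, a Bousfield--Kan spectral sequence argument again invokes flatness: flat base change commutes with the cohomology of the cosimplicial graded group $\pi_* X^\bullet$, so splitness of $B \otimes_A X^\bullet$ forces this cohomology to vanish in positive degrees after faithfully flat base change, hence to vanish. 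Thus the Bousfield--Kan spectral sequence for $X^\bullet$ collapses to an equalizer, and flat base change commutes with that equalizer, giving $\Theta(\mathrm{Tot} X^\bullet) \simeq \mathrm{Tot}(\Theta X^\bullet)$.

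The main work --- and the only part that is not formal --- is organizing these two spectral sequences (the K\"unneth/Tor spectral sequence for the relative smash product, and the Bousfield--Kan spectral sequence for totalizations) and checking that the flatness hypotheses force their collapse and strong convergence. A spectral-sequence-free alternative is a $t$-structure argument: reduce to bounded $M$ and bounded cosimplicial objects, on which $B \otimes_A(-)$ is $t$-exact (again by flatness) and the relevant finite limits are preserved directly. Once these inputs are in place, the reduction to comonadic Barr--Beck and the identification of $\mathrm{coMod}_\Theta(\md(B))$ with the cobar totalization complete the proof.
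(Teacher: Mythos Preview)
The paper does not give its own proof of this statement: it is quoted as \cite[Theorem 6.1]{DAGss} and used as a black box, with the surrounding discussion immediately moving on to the more general ``descendable'' framework. So there is no in-paper argument to compare against.

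Your sketch is essentially the standard line from \cite{DAGss}: reduce to the comonadic Barr--Beck--Lurie hypotheses, verify conservativity of $B\otimes_A(-)$ via the collapsed Tor spectral sequence (correct), and verify preservation of $F$-split totalizations. The one place to be careful is the Bousfield--Kan step: deducing that $H^s(\pi_t X^\bullet)=0$ for $s>0$ from faithful flatness is fine, but you then need strong convergence of the BK spectral sequence to conclude $\pi_*\mathrm{Tot}(X^\bullet)$ is the equalizer, and this is not automatic for unbounded $X^\bullet$. Lurie's actual argument avoids this by working through the $t$-structure (exactly the alternative you flag at the end), using that $B\otimes_A(-)$ is $t$-exact under the flatness hypothesis and reducing to the heart. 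Your outline is correct once that convergence point is addressed, and you have already identified the right fix.
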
 

In practice, the condition of faithful flatness on $\pi_*(A) \to \pi_*(B)$ can
be weakened significantly; there are numerous examples of morphisms of
$\e{\infty}$-rings which do not behave well on the level of $\pi_0$ but under
which one does have a good theory of descent (e.g., the conclusion of
\Cref{luried} holds). 
For instance, there is a good theory of descent along $KO \to KU$, and this can be
used to describe features of the $\infty$-category $\md(KO)$ in terms of the
$\infty$-category $\md(KU)$. 
One advantage of considering descent in this more
general setting is that $KU$ is  \emph{much simpler} algebraically: its
homotopy groups are given by $\pi_*(KU) \simeq \mathbb{Z}[\beta^{\pm }]$, which
is a regular ring, even one-dimensional (if one pays attention to the
grading), while $\pi_*(KO)$ is of infinite homological dimension. 
There are many additional tricks one has when working with modules over a more
tractable $\e{\infty}$-ring such as $KU$; we shall see a couple of them below in
the proof of \Cref{etalegalois}. 

\begin{remark} 
For some applications of these ideas to computations, see the paper \cite{thick} (for
descriptions of thick subcategories) and 
\cite{GL, MS, HMS} (for calculations of certain Picard groups). 
\end{remark}

In this section, we will describe a class of maps of $\e{\infty}$-rings $A \to
B$ that have an \emph{especially good} theory of descent. We will actually work
in more generality, and fix a stable homotopy theory $(\mathcal{C}, \otimes,
\mathbf{1})$, and isolate a class of commutative algebra objects for which
the analogous theory of descent (internal to $\mathcal{C}$) works especially
well (so well, in fact, that it will be tautologically preserved by any
morphism of stable homotopy theories). Namely, we will define $A \in \clg(\mathcal{C})$ to be
\emph{descendable} if the thick $\otimes$-ideal that $A$ generates contains the
unit object $\mathbf{1} \in \mathcal{C}$. This definition, which is motivated by the
\emph{nilpotence technology} of Devinatz, Hopkins, Smith, and Ravenel
\cite{HS, DHS} (one part of which  states that the map $L_n S^0 \to
E_n$ from the $E_n$-local sphere to Morava $E$-theory $E_n$ satisfies this
property), is enough to imply that the conclusion of \Cref{luried}
holds, and has the virtue of being purely diagrammatic. The definition  has
also been  recently
and independently considered by Balmer \cite{balmersep} (under the name ``nil-faithfulness'') 
in the setting of tensor-triangulated categories. 

In the rest of the section, we will give several
examples of descendable morphisms, and describe in
\Cref{lindesc}
an application to descent for \emph{2-modules} (or linear
$\infty$-categories), which has applications to the study of the \emph{Brauer
group}. This provides a slight strengthening of the descent results in
\cite{DAGQC, DAGdesc}. 

\subsection{Comonads and descent}
The language of $\infty$-categories gives very powerful tools for proving
descent theorems such as \Cref{luried} as well as its generalizations;
specifically, the Barr-Beck-Lurie theorem of \cite{higheralg} gives a criterion
to check when an adjunction is comonadic (in the $\infty$-categorical sense),
although the result is usually stated in its equivalent form for monadic
adjunctions.
This result has recently been reproved from the point of view of weighted
(co)limits by Riehl-Verity \cite{RV}. 

\begin{theorem}[{Barr-Beck-Lurie \cite[Section 4.7]{higheralg}}]
Let $F, G\colon  \mathcal{C} \rightleftarrows \mathcal{D}$ be an adjunction between
$\infty$-categories. Then 
the adjunction is comonadic if and only if: 
\begin{enumerate}
\item $F$ is conservative.  
\item Given a cosimplicial object $X^\bullet$ in $\mathcal{C}$ such that
$F(X^\bullet)$ admits a splitting, then $\mathrm{Tot}(X^\bullet)$ exists in
$\mathcal{C}$ and the map $F( \mathrm{Tot}(X^\bullet)) \to \mathrm{Tot}
F(X^\bullet)$ is an equivalence. 
\end{enumerate}
\end{theorem}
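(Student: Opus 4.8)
The plan is to deduce this from Lurie's \emph{monadic} Barr--Beck theorem by passing to opposite $\infty$-categories, and then to run the classical Beck argument, now powered by the rigidity of split (co)simplicial objects in place of the elementary diagram chases available one categorical level down. Passing to opposites turns the comonad $FG$ on $\mathcal{D}$ into a monad, turns ``$F$ is comonadic'' into ``$F^{\mathrm{op}}$ is monadic,'' carries condition (1) to ``$F^{\mathrm{op}}$ is conservative,'' and carries condition (2) to ``the source admits, and $F^{\mathrm{op}}$ preserves, colimits of $F^{\mathrm{op}}$-split simplicial objects.'' So it suffices to prove the monadic form: a right adjoint $U \colon \mathcal{E} \to \mathcal{B}$, with left adjoint $\Phi$ and associated monad $T = U\Phi$ on $\mathcal{B}$, is monadic if and only if it is conservative and $\mathcal{E}$ admits and $U$ preserves $U$-split geometric realizations. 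The relevant comparison is the canonical functor $\theta \colon \mathcal{E} \to \mathrm{Mod}_T(\mathcal{B})$ over $\mathcal{B}$, sending $E$ to $U(E)$ equipped with the action $T U(E) = U\Phi U(E) \to U(E)$ induced by the counit of $\Phi \dashv U$; it satisfies $U_T \circ \theta \simeq U$ and $\theta \circ \Phi \simeq \mathrm{Free}_T$, where $\mathrm{Free}_T \dashv U_T$ is the free/forgetful adjunction on $\mathrm{Mod}_T(\mathcal{B})$. Being monadic means exactly that $\theta$ is an equivalence.

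For the easy direction --- if $\theta$ is an equivalence then (1) and (2) hold --- it is enough to verify them for $U_T \colon \mathrm{Mod}_T(\mathcal{B}) \to \mathcal{B}$. Conservativity is immediate, since a morphism of $T$-modules is an equivalence as soon as its underlying morphism in $\mathcal{B}$ is. For (2), if $M_\bullet$ is a $U_T$-split simplicial $T$-module then $U_T(M_\bullet)$ extends to a split simplicial object of $\mathcal{B}$, so its geometric realization is an \emph{absolute} colimit --- preserved by every functor, in particular by $T$ --- and the simplicial action maps then assemble into a canonical $T$-module structure on $|U_T(M_\bullet)|$ exhibiting it as $|M_\bullet|$, with $U_T$ preserving and reflecting this colimit.

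The substance is the converse. Granting (1) and (2), I would construct a left adjoint $L$ to $\theta$ by a bar resolution: for a $T$-module $M$, the two-sided bar construction $\mathrm{Bar}_\bullet(\mathrm{Free}_T, T, M)$ is a simplicial object of \emph{free} $T$-modules whose geometric realization in $\mathrm{Mod}_T(\mathcal{B})$ is $M$, and it is $U_T$-split via the standard extra degeneracy. Replacing $\mathrm{Free}_T$ by $\Phi$ produces a simplicial object $\mathrm{Bar}_\bullet(\Phi, T, M)$ in $\mathcal{E}$; applying $U$ returns $\mathrm{Bar}_\bullet(T, T, M)$, which is split, so by (2) the realization $L(M) := |\mathrm{Bar}_\bullet(\Phi, T, M)|$ exists in $\mathcal{E}$ and is preserved by $U$. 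One then checks $L \dashv \theta$ (on free modules $L$ agrees with $\Phi$; the general case follows from the bar resolution, using that $\theta$ preserves the relevant realizations because $U$ does and $\theta$ is conservative), and finally that the unit $\mathrm{id} \to \theta L$ and counit $L\theta \to \mathrm{id}_{\mathcal{E}}$ are equivalences: each may be tested after the conservative functor $U$ (resp.\ $U_T$), where both sides become the same split bar realization, which $U$ preserves by hypothesis. This gives $\theta$ an inverse, i.e.\ monadicity, and dually the original comonadic statement; the application to descent, as in \Cref{luried}, is then the case where $U$ is base change and $\Phi$ the forgetful functor.

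The main obstacle is this converse direction, and inside it the coherent handling of the two-sided bar construction. In the $\infty$-categorical setting there is no strict model, so one must genuinely produce $\mathrm{Bar}_\bullet(\Phi, T, M)$ as a simplicial object of $\mathcal{E}$ depending functorially on $M$ --- Lurie does this via the formalism of algebra objects over the associative operad and operadic left Kan extensions in \cite{higheralg} --- then identify its $U$-image with the split resolution, and establish the triangle identities for $L \dashv \theta$ together with the invertibility of the comparison maps. What makes all of this go through, in the absence of the object-and-arrow arguments available in the $1$-categorical proof, is precisely the rigidity built into condition (2): colimits of split simplicial objects are absolute, hence automatically preserved by and compatible with every functor in sight.
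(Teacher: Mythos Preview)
The paper does not prove this theorem; it is stated with a citation to Lurie's \emph{Higher Algebra} \cite[Section 4.7]{higheralg} and used as a black box throughout (the paper also remarks that Riehl--Verity \cite{RV} have reproved it via weighted (co)limits). So there is no ``paper's own proof'' to compare against.

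That said, your sketch is a faithful outline of Lurie's argument: reduce comonadicity to monadicity by passing to opposites; the easy direction uses that split geometric realizations are absolute colimits; the hard direction builds a left adjoint $L$ to the comparison functor $\theta$ via the two-sided bar resolution $\mathrm{Bar}_\bullet(\Phi, T, M)$, whose realization exists and is preserved by $U$ precisely because $U$ makes it split; then one checks the unit and counit are equivalences after applying the conservative $U$. You correctly identify the genuine technical content as the coherent construction of the bar resolution and the adjunction $L \dashv \theta$ in the $\infty$-categorical setting, which is where Lurie's operadic machinery does the work. As a proof \emph{sketch} this is correct and essentially what one finds in the reference; as a proof it would of course need that machinery spelled out, which is beyond the scope of what the paper itself undertakes.
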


In practice, we will be working with presentable $\infty$-categories, so the
existence of totalizations will be assured. The conditions of the
Barr-Beck-Lurie theorem are thus automatically satisfied if $F$ preserves
\emph{all} totalizations (as sometimes happens) and is conservative. 

\begin{example} 
Let $A \to B$ be a morphism of $\e{\infty}$-rings. The forgetful functor $\md(B) \to
\md(A)$ is conservative and preserves all 
limits and \emph{colimits}. By the adjoint functor theorem, it is a 
left adjoint. (The right adjoint to this functor sends an $A$-module $M$ to the $B$-module
$\hom_A(B, M)$.) By the Barr-Beck-Lurie theorem, this adjunction is comonadic. 
\end{example} 

However,
we will need to consider the more general case. 
Given a comonadic adjunction as above, one can recover any object $C \in \mathcal{C}$ as the homotopy
limit of the \emph{cobar construction}
\begin{equation} \label{cobar} C \to  \left(TC \rightrightarrows T^2 C \triplearrows
\dots \right),  \end{equation}
where $T = GF$ is the induced comonad on $\mathcal{C}$. 
The cobar construction is a cosimplicial diagram in $\mathcal{C}$ consisting of
objects which are in the image of $G$. 

Here a fundamental distinction between $\infty$-category theory and 1-category
theory appears. In 1-category theory, the limit of a cosimplicial diagram can
be computed as a (reflexive) \emph{equalizer}; only the first zeroth and first
stage of the cosimplicial diagram are relevant. In $n$-category theory (i.e.,
$(n, 1)$-category theory), one only needs to work with the $n$-truncation of a
cosimplicial object. But in an $\infty$-category $\mathcal{C}$, given a
cosimplicial diagram $X^\bullet \colon \Delta \to \mathcal{C}$, one obtains a
\emph{tower}  of partial totalizations
\[ \dots \to \mathrm{Tot}_n(X^\bullet) \to \mathrm{Tot}_{n-1}(X^\bullet) \to
\dots \to \mathrm{Tot}_1(X^\bullet) \to \mathrm{Tot}_0(X^\bullet),  \]
whose homotopy inverse limit is the totalization or inverse limit
$\mathrm{Tot}(X^\bullet)$. By definition, $\mathrm{Tot}_n(X^\bullet)$ is the
inverse limit of the $n$-truncation of $X^\bullet$. 

In an $n$-category, the above tower stabilizes at a finite stage: that is,
the successive maps $\mathrm{Tot}_m(X^\bullet) \to
\mathrm{Tot}_{m-1}(X^\bullet)$ become
equivalences for $m$ large (in fact, $m > n $). In
$\infty$-category theory, this is almost never expected. For example, it will
never hold for the cobar constructions that we obtain from descent
along maps of $\e{\infty}$-rings except in trivial cases. In particular, \eqref{cobar} is an infinite homotopy limit rather than
a finite one. 

Nonetheless, there are certain types of towers that exhibit a weaker form
of stabilization, and behave close to finite
homotopy limits if one is willing to include retracts. Even with 
$\infty$-categories, there are several instances where this weaker form of
stabilization occurs, and it is the purpose of this section to discuss that.

\subsection{Pro-objects}

Consider the following two towers of abelian groups:
\[ \xymatrix{
\vdots \ar[d]  \\
\mathbb{Z} \ar[d]^2 \\
\mathbb{Z} \ar[d]^2 \\
\mathbb{Z}
} \quad \quad \quad \quad\quad\quad\quad\xymatrix{
\vdots \ar[d]  \\
\mathbb{Z} \ar[d]^0 \\
\mathbb{Z} \ar[d]^0 \\
\mathbb{Z}
}.\]
Both of these have inverse limit zero. However, there is an essential
difference between the two. The second inverse system has inverse limit zero
for essentially ``diagrammatic'' reasons. In particular, the inverse limit
would remain zero if we applied any additive functor whatsoever. The first
inverse system has inverse limit zero for a more ``accidental'' reason: that
there are no integers infinitely divisible by two. If we tensored this inverse
system with $\mathbb{Z}[1/2]$, the inverse limit would be $\mathbb{Z}[1/2]$.

The essential difference can be described efficiently using the theory of
\emph{pro-objects}: the second inverse system is actually \emph{pro-zero},
while the first inverse system is a more complicated pro-object. The theory of
pro-objects (and, in particular, constant pro-objects) in
$\infty$-categories will be integral to our
discussion of descent, so we spend the present subsection reviewing it. 

We begin by describing the construction that associates to a given $\infty$-category an
$\infty$-category of pro-objects. Although we have already used freely the
(dual) $\mathrm{Ind}$-construction, we review it formally for convenience. 
\begin{definition}[{\cite[Section 5.3]{HTT}}]
Let $\mathcal{C}$ be an $\infty$-category with finite limits. Then the
$\infty$-category $\pro(\mathcal{C})$ is an $\infty$-category with \emph{all}
limits, receiving a map $\mathcal{C} \to \pro(\mathcal{C})$ with the following
properties: 
\begin{enumerate}
\item $\mathcal{C}  \to \pro(\mathcal{C})$ respects finite limits. 
\item Given an $\infty$-category $\mathcal{D}$ with all limits, restriction
induces an equivalence of $\infty$-categories
\[ \mathrm{Fun}^R(\pro(\mathcal{C}), \mathcal{D}) \simeq
\mathrm{Fun}^\omega(\mathcal{C}, \mathcal{D})  \]
between the $\infty$-category 
$\mathrm{Fun}^R(\pro(\mathcal{C}), \mathcal{D}) $ of limit-preserving functors
$\pro(\mathcal{C}) \to \mathcal{D}$ and the $\infty$-category 
$\mathrm{Fun}^\omega(\mathcal{C}, \mathcal{D})$
of functors
$\mathcal{C} \to \mathcal{D}$ which preserve finite limits. 
\end{enumerate}
\end{definition} 

There are several situations in which the $\infty$-categories of pro-objects
can be explicitly described. 
We refer to \cite[Sec. 3.2]{BHH} for a detailed discussion. 
\begin{example}[{Cf. \cite[7.1.6]{HTT}}] 
The $\infty$-category $\pro(\mathcal{S})$
(where $\mathcal{S}$, as
usual, is the $\infty$-category of spaces)
can be described via
\[ \pro(\mathcal{S}) \simeq
\mathrm{Fun}_{\mathrm{acc}}^{\omega-\mathrm{ct}}(\mathcal{S},
\mathcal{S})^{\op};  \]
that is, $\pro(\mathcal{S})$ is anti-equivalent to the $\infty$-category of
accessible\footnote{In other words, commuting with sufficiently filtered
colimits.} functors $\mathcal{S} \to \mathcal{S}$ which respect finite limits.
This association sends a given space $X$ to the functor $\mathrm{Hom}(X,
\cdot)$ and sends formal cofiltered limits to filtered colimits of functors. \end{example} 

\begin{example} 
Similarly, one can describe the $\infty$-category 
$\pro(\sp)$ of \emph{pro-spectra} as the opposite to the $\infty$-category of
accessible, exact functors $\sp \to \sp$ (a spectrum $X$ is sent to
$\hom_{\sp}(X, \cdot)$ via the co-Yoneda embedding). 
\end{example} 

By construction, any object in $\pro(\mathcal{C})$ can be written as a ``formal'' filtered
inverse limit of objects in $\mathcal{C}$: that is, $\mathcal{C}$ generates
$\pro(\mathcal{C})$ under cofiltered limits. Moreover, $\mathcal{C}
\subset \pro(\mathcal{C})$ as a full subcategory. If $\mathcal{C}$ is
idempotent complete, then $\mathcal{C} \subset \pro(\mathcal{C})$ consists of
the cocompact objects.

\begin{remark} 
If $\mathcal{C}$ is  an ordinary category, then $\pro(\mathcal{C})$ is a
discrete category (the usual pro-category) too. 
\end{remark}

We now discuss the inclusion $\mathcal{C} \subset \pro(\mathcal{C})$, where
$\mathcal{C}$ is an $\infty$-category with finite limits.
\begin{definition} 
An object in $\pro(\mathcal{C})$ is \textbf{constant} if it is equivalent to an
object in the image of $\mathcal{C} \to \pro(\mathcal{C})$. 
\end{definition}

\begin{proposition} 
\label{whenisproobjconst}
Let $\mathcal{C}$ have finite limits. 
A cofiltered diagram $F\colon  I \to \mathcal{C}$ defines a constant pro-object if and
only if the following two conditions are satisfied: 
\begin{enumerate}
\item $F$ admits a limit in $\mathcal{C}$.
\item Given any functor $G\colon  \mathcal{C} \to \mathcal{D}$  preserving finite
limits, the inverse limit of $F$ is preserved 
under $G$. 
\end{enumerate}
\end{proposition}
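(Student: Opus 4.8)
The plan is to translate the condition ``$F$ defines a constant pro-object'' into the single assertion that the fully faithful embedding $c\colon \mathcal{C}\to\pro(\mathcal{C})$ \emph{preserves} the limit of $F$, and then to recognize that (1)--(2) are exactly what makes this run. Write $\widetilde{F}:=\lim_I(c\circ F)\in\pro(\mathcal{C})$ for the pro-object attached to $F$; by construction it is the genuine limit of $c\circ F$ in $\pro(\mathcal{C})$, and $F$ defines a constant pro-object precisely when $\widetilde{F}$ lies in the essential image of $c$. The first step is to check the following reformulation: $\widetilde{F}$ is constant if and only if $F$ admits a limit $X$ in $\mathcal{C}$ and the canonical map $c(X)\to\widetilde{F}$ is an equivalence (equivalently: $c$ carries a limit cone of $F$ to a limit cone in $\pro(\mathcal{C})$). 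The implication ``$\Leftarrow$'' is immediate. For ``$\Rightarrow$'', if $\widetilde{F}\simeq c(Y)$ then transporting the limit cone $\{\widetilde{F}\to cF(i)\}$ through $c$ and using full faithfulness produces a cone $\{Y\to F(i)\}$ in $\mathcal{C}$; verifying its universal property is a routine unwinding — a cone over $F$ in $\mathcal{C}$ maps via $c$ to a cone over $cF$, hence factors uniquely through $\widetilde{F}\simeq c(Y)$, and $c$ full and faithful descends this factorization — so $Y=\lim_I F$ and the comparison map is the given equivalence.

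Granting (1), it then remains to show: $c$ preserves $\lim_I F$ if and only if every finite-limit-preserving functor $G\colon\mathcal{C}\to\mathcal{D}$ does. One direction is trivial: $c$ itself preserves finite limits (property (1) of $\pro(\mathcal{C})$), so one applies the hypothesis to $G=c$. For the converse, suppose $c$ preserves $\lim_I F$ and let $G\colon\mathcal{C}\to\mathcal{D}$ preserve finite limits. If $\mathcal{D}$ has all limits, the universal property of $\pro(\mathcal{C})$ (restriction along $c$ gives $\fun^R(\pro(\mathcal{C}),\mathcal{D})\simeq\fun^\omega(\mathcal{C},\mathcal{D})$) factors $G$ as $\widehat{G}\circ c$ with $\widehat{G}$ limit-preserving, and then $G(\lim_I F)=\widehat{G}(c(\lim_I F))\simeq\widehat{G}(\widetilde{F})=\widehat{G}(\lim_I cF)=\lim_I G(F(i))$, compatibly with the comparison maps. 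For arbitrary $\mathcal{D}$, postcompose with the Yoneda embedding $y\colon\mathcal{D}\hookrightarrow\psh(\mathcal{D})$ (enlarging the universe if necessary): $y$ is fully faithful, preserves all limits, and its target has all limits, so $y\circ G$ preserves $\lim_I F$ by the case just treated; since a fully faithful functor reflects limits, $G$ preserves $\lim_I F$ as well.

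Setting aside the mechanical ``first step,'' the only point that needs real care is bookkeeping with the comparison maps, so that the conclusion genuinely reads ``$G$ \emph{preserves} the limit of $F$'' and not merely ``some equivalence exists'' — together with the correct invocation of the universal property of $\pro(\mathcal{C})$ (as restriction along $c$) and of ``fully faithful $\Rightarrow$ reflects limits'' in the reduction to targets with all limits. Cofilteredness of $I$ enters only to guarantee that $\widetilde{F}$ is a well-defined object of $\pro(\mathcal{C})$; everything else is formal manipulation with $c$ and the defining property of $\pro(\mathcal{C})$.
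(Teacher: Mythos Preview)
Your proof is correct and follows essentially the same route as the paper's: both directions hinge on factoring through $\pro(\mathcal{C})$ and using that the extension preserves all limits. The only cosmetic difference is that the paper, for the direction ``constant $\Rightarrow$ (2)'', uniformly replaces an arbitrary target $\mathcal{D}$ by $\pro(\mathcal{D})$ (using the commutative square $c_{\mathcal{D}}\circ G \simeq \widetilde{G}\circ c_{\mathcal{C}}$ with $\widetilde{G}=\pro(G)$ limit-preserving), thereby avoiding your case split; you instead embed via Yoneda into $\psh(\mathcal{D})$ after first treating the case where $\mathcal{D}$ already has all limits. Your detour through the ``$\mathcal{D}$ has all limits'' case is unnecessary --- the Yoneda step alone would suffice --- but the argument is sound either way.
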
 
In other words, the inverse limit of $F$ is required to exist for essentially
``diagrammatic reasons.''
\begin{proof} 

One direction of this is easy to see (take $\mathcal{D} = \pro(\mathcal{C})$). 
Conversely, if $F$ defines a constant pro-object, then given $\mathcal{C} \to
\mathcal{D}$, we consider the commutative diagram
\[ \xymatrix{
\mathcal{C} \ar[d] \ar[r]^G & \mathcal{D} \ar[d]  \\
\pro(\mathcal{C}) \ar[r]^{\widetilde{G}} &  \pro(\mathcal{D})
}.\]
The functor $F\colon  I \to \mathcal{C} \to \pro(\mathcal{C})$ has an inverse limit,
which actually lands inside the full subcategory 
$\mathcal{C} \subset \pro(\mathcal{C})$. Since $\widetilde{G}\colon  \pro(\mathcal{C}) \to
\pro(\mathcal{D})$ preserves all limits, it follows formally that $\widetilde{G} \circ F$
has an inverse limit lying inside $\mathcal{D} \subset \pro(\mathcal{D})$ and
that $G$ preserves the inverse limit. 

\end{proof} 

\begin{example}[Split cosimplicial objects] \label{splitconst}
Let $\mathcal{C}$ be an $\infty$-category  with finite limits. 
Let $X^\bullet$ be a cosimplicial object of $\mathcal{C} $. 
Suppose $X^\bullet$ extends to a \emph{split, augmented cosimplicial object}. 
In this case, the pro-object associated to the
$\mathrm{Tot}$ tower of $X^\bullet$ (i.e., the tower
$\left\{\mathrm{Tot}_n X^\bullet\right\}$) is constant.

In fact, 
let $\mathcal{D}$ be any $\infty$-category, and let $F\colon 
\mathcal{C} \to \mathcal{D}$ be a functor. Let
$\overline{X}\colon  \Delta^+ \to \mathcal{C}$ be the augmented cosimplicial object 
extending $X^\bullet$ that can be split. 
Then, by \cite[Section
4.7.3]{higheralg}, the composite
diagram
\[ \Delta_+ \stackrel{\overline{X}}{\to} \mathcal{C}
\stackrel{F}{\to} \mathcal{D},  \]
is a limit diagram: that is, $F( \overline{X}^{-1}) \simeq \mathrm{Tot} F(
X^\bullet)$, and in particular $\mathrm{Tot} F( X^\bullet)$ exists. 

Suppose $\mathcal{D}$ admits finite limits and $F$ preserves finite limits. 
Then $F( \mathrm{Tot}_n X^\bullet) \simeq \mathrm{Tot}_n F( X^\bullet)$, since
$F$ preserves finite limits,  so that 
\[ F(\overline{X}^{-1}) \simeq \holim_n \mathrm{Tot}_n F(X^\bullet) \simeq \holim_n F(
\mathrm{Tot}_n X^\bullet),  \]
in $\mathcal{D}$. In particular, the tower $F( \mathrm{Tot}_n X^\bullet)$
converges to $F(\overline{X}^{-1})$. 
By \Cref{whenisproobjconst}, this proves constancy as desired. 

\end{example} 

\begin{example}[Idempotent towers] 
Let $X \in \mathcal{C}$ and let $e\colon  X \to X$ be an \emph{idempotent} self-map;
this means not only that $e^2 \simeq e$, but a choice of coherent homotopies,
which can be expressed by the condition that one has an \emph{action} of the
monoid $\left\{1, x\right\}$ with two elements (where $x^2 =x$) on $X$. In this
case, the tower
\[
\dots \to X \stackrel{e}{\to} X \stackrel{e}{\to} X 
,\]
is pro-constant if it admits a homotopy limit (e.g., if $\mathcal{C}$ is
idempotent complete). This holds for the same reasons: the image of an
idempotent is always a \emph{universal} limit (see \cite[Section 4.4.5]{HTT}). 
\end{example}

Conversely, the fact that a pro-object indexed by a cofiltered diagram $F\colon  I
\to\mathcal{C}$ is constant has many useful implications coming from the fact
that the inverse limit of $F$ is ``universal.''
\begin{example} 
Let $(\mathcal{C}, \otimes, \mathbf{1})$ be a stable homotopy theory. Given 
 a cofiltered diagram $F\colon  I \to \mathcal{C}$, it follows that if the induced
 pro-object is constant, then for any $X \in \mathcal{C}$, the natural map
 \[ (\varprojlim_I F(i)) \otimes X \to \varprojlim_I (F(i) \otimes X),\]
 is an equivalence. See \Cref{dualthing}
 below for a partial converse. 
\end{example}

Next, we show that in a finite diagram of $\infty$-categories, a pro-object is
constant if and only if it is constant at each stage. 

Let $K$ be a finite simplicial set, and let $F\colon  K \to \cati$ be a functor
into the $\infty$-category $\cati$ of $\infty$-categories. Suppose that each
$F(k)$ has finite limits and each edge in $K$ is taken to a functor which
respects finite limits. 
In this case, we obtain a natural functor
\begin{equation}\label{rkan} \pro \left( \varprojlim_K F(k) \right)
\to \varprojlim_K \pro( F(k)),  \end{equation}
which respects all limits. 

\begin{proposition} 
The functor
$ \pro \left( \varprojlim_K F(k) \right)
\to \varprojlim_K \pro( F(k))$ is fully
faithful. 
\end{proposition}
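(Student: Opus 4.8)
The plan is to reduce everything to a statement about mapping \emph{spaces} in $\pro$-categories and then use the $\pro$-Yoneda/universal property to trade a limit over $K$ for a limit of mapping spaces. Recall that for any $\infty$-category $\mathcal{D}$ with finite limits and objects $Y = \varprojlim_\alpha Y_\alpha$, $Z = \varprojlim_\beta Z_\beta$ of $\pro(\mathcal{D})$ written as formal cofiltered limits, one has the standard formula
\[ \hom_{\pro(\mathcal{D})}(Y,Z) \simeq \varprojlim_\beta \varinjlim_\alpha \hom_{\mathcal{D}}(Y_\alpha, Z_\beta). \]
First I would record that the functor \eqref{rkan} sends a formal cofiltered limit $\varprojlim_\alpha X_\alpha$ of objects $X_\alpha \in \varprojlim_K F(k)$ to the collection $\big(\varprojlim_\alpha \, \mathrm{pr}_k(X_\alpha)\big)_{k}$, with the transition data along edges of $K$ supplied by the fact that each edge functor preserves finite limits (hence extends to a limit-preserving functor on $\pro$ and commutes with the relevant formal limits). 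This is just the observation that \eqref{rkan} is the unique limit-preserving extension of $\mathcal{C}_K := \varprojlim_K F(k) \to \varprojlim_K \pro(F(k))$, which exists by the universal property of $\pro(\mathcal{C}_K)$ in the Definition following \cite[Section 5.3]{HTT}.

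The main step is then to compute both sides of the comparison on mapping spaces. Fix two objects $X = \varprojlim_\alpha X_\alpha$ and $X' = \varprojlim_\beta X'_\beta$ of $\pro(\mathcal{C}_K)$. On the source we have the formula above with $\mathcal{D} = \mathcal{C}_K$. On the target, a map in $\varprojlim_K \pro(F(k))$ is a compatible family of maps in each $\pro(F(k))$, so
\[ \hom_{\varprojlim_K \pro(F(k))}\!\big( (\varprojlim_\alpha X_\alpha^{k})_k, (\varprojlim_\beta X_\beta'^{k})_k \big) \simeq \varprojlim_{k \in K} \hom_{\pro(F(k))}(\varprojlim_\alpha X_\alpha^k, \varprojlim_\beta X_\beta'^k). \]
Here I use that $K$ is an $\infty$-category and a mapping space in a limit of $\infty$-categories is the corresponding limit of mapping spaces. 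Now expand each inner $\pro(F(k))$-mapping space by the standard formula as $\varprojlim_\beta \varinjlim_\alpha \hom_{F(k)}(X_\alpha^k, X_\beta'^k)$, and likewise expand $\hom_{\mathcal{C}_K}(X_\alpha, X'_\beta) \simeq \varprojlim_{k \in K} \hom_{F(k)}(X_\alpha^k, X_\beta'^k)$ using that $\mathcal{C}_K = \varprojlim_K F$. So the source mapping space is
\[ \varprojlim_\beta \varinjlim_\alpha \varprojlim_{k\in K} \hom_{F(k)}(X_\alpha^k, X_\beta'^k), \]
while the target mapping space is
\[ \varprojlim_{k\in K} \varprojlim_\beta \varinjlim_\alpha \hom_{F(k)}(X_\alpha^k, X_\beta'^k) \simeq \varprojlim_\beta \varprojlim_{k\in K}\varinjlim_\alpha \hom_{F(k)}(X_\alpha^k, X_\beta'^k), \]
having commuted the two limits freely. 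Thus the comparison of mapping spaces is exactly the canonical map
\[ \varinjlim_\alpha \varprojlim_{k\in K} \hom_{F(k)}(X_\alpha^k, X_\beta'^k) \longrightarrow \varprojlim_{k\in K} \varinjlim_\alpha \hom_{F(k)}(X_\alpha^k, X_\beta'^k) \]
(levelwise in $\beta$), and full faithfulness of \eqref{rkan} amounts to showing this map is an equivalence. This is where finiteness of $K$ enters: filtered colimits of spaces commute with \emph{finite} limits, and the index category of $K$ (a finite simplicial set) contributes a finite limit — more precisely $\varprojlim_{k\in K}$ can be computed as a finite limit since $K$ has finitely many nondegenerate simplices, so one may use the cofinality of the category of simplices or the skeletal filtration to reduce to finite products and pullbacks, both of which commute with the filtered colimit $\varinjlim_\alpha$ (the indexing category of a $\pro$-object being cofiltered, hence its opposite filtered). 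I expect this interchange — carefully justifying that the $K$-shaped limit is "finite enough" to commute past the filtered colimit indexing the $\pro$-object — to be the only real obstacle; once it is in place, assembling the chain of equivalences above and checking naturality gives full faithfulness.
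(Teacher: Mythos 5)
Your proof is correct, and it rests on exactly the same pivot as the paper's: the commutation of filtered colimits with finite limits in spaces, applied to the $K$-indexed limit of mapping spaces. The difference is purely in the packaging. The paper does not compute mapping spaces at all; it invokes the general criterion from \cite[Section 5.3]{HTT} that the limit-preserving extension $\pro(\mathcal{C}) \to \mathcal{D}$ of a fully faithful, finite-limit-preserving functor $\mathcal{C} \to \mathcal{D}$ is fully faithful provided the image of $\mathcal{C}$ consists of \emph{cocompact} objects of $\mathcal{D}$, and then observes that cocompactness in $\varprojlim_K \pro(F(k))$ can be checked vertexwise (which is where the finite-limit/filtered-colimit interchange enters). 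Your version unfolds that criterion by hand: the double-limit formula $\hom_{\pro}(Y,Z) \simeq \varprojlim_\beta \varinjlim_\alpha \hom(Y_\alpha, Z_\beta)$ on both sides reduces the comparison to the canonical map $\varinjlim_\alpha \varprojlim_K \to \varprojlim_K \varinjlim_\alpha$, which is an equivalence for $K$ finite. What the paper's route buys is less bookkeeping (no need to track the two separate cofiltered indexing diagrams or to justify that mapping spaces in a limit of $\infty$-categories are limits of mapping spaces); what yours buys is self-containedness and an explicit view of exactly where finiteness of $K$ is used. One small point worth tightening in your write-up: the claim that $\varprojlim_{k \in K}$ of spaces commutes with filtered colimits for $K$ a finite simplicial set deserves a precise citation (e.g.\ the statement in \cite[Section 5.3]{HTT} that limits indexed by finite simplicial sets commute with filtered colimits in $\mathcal{S}$), since the category of simplices of $K$ is not literally finite; but this is the same assertion the paper itself makes without further elaboration, so it is not a gap relative to the original.
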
 
\begin{proof} 
In fact, the functors $F(k) \to \pro(F(k))$ are fully faithful for each $k \in
K$, so that 
\[ \varprojlim_K F(k) \to  \varprojlim_K \pro( F(k)) \]
is fully faithful and respects finite limits. 
In order for the right Kan extension
\eqref{rkan} to be fully faithful, it follows by \cite[Section 5.3]{HTT} that it suffices
for the embedding 
$\varprojlim_K F(k) \to  \varprojlim_K \pro( F(k))$ to land in the
\emph{cocompact} objects. However, over a finite diagram of $\infty$-categories, an
object is cocompact if and only if it is cocompact pointwise, because finite
limits commute with filtered colimits in spaces. 
\end{proof} 

\begin{corollary} 
\label{constantfinpro}
Let $K$ be a finite simplicial set and let $F\colon  K \to \cati$ be a functor as
above. Then a pro-object in
$\varprojlim_K F(k)$ is constant if and only if its evaluation in $\pro( F(k))$
is constant for each vertex $k \in K$. 
\end{corollary}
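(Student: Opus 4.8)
The plan is to deduce the corollary formally from the preceding proposition together with the universal property of $\pro(-)$. Write $\iota\colon \varprojlim_K F(k)\hookrightarrow \pro(\varprojlim_K F(k))$ for the canonical inclusion, and $j\colon \varprojlim_K F(k)\to \varprojlim_K \pro(F(k))$ for the functor induced by the pointwise embeddings $F(k)\hookrightarrow \pro(F(k))$; since limits in a limit of $\infty$-categories are computed pointwise, $j$ preserves finite limits. Let $\Phi\colon \pro(\varprojlim_K F(k))\to \varprojlim_K \pro(F(k))$ be the functor \eqref{rkan}, i.e.\ the limit-preserving extension of $j$ along $\iota$, so that $\Phi\circ\iota\simeq j$; by the preceding proposition $\Phi$ is fully faithful. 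Likewise, for each vertex $k$ the evaluation $\mathrm{ev}_k\colon \varprojlim_K F(k)\to F(k)$ preserves finite limits, hence extends to a limit-preserving $e_k\colon \pro(\varprojlim_K F(k))\to\pro(F(k))$, and by uniqueness of such extensions $e_k\simeq \mathrm{ev}_k\circ\Phi$; thus ``the evaluation of a pro-object $\mathcal{X}$ in $\pro(F(k))$'' of the statement is $e_k(\mathcal{X})\simeq(\mathrm{ev}_k\circ\Phi)(\mathcal{X})$.

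The one \emph{non-formal} ingredient I would isolate is a general fact about limits of $\infty$-categories: given a diagram $K\to\cati$ and, for each $k$, a full subcategory $\mathcal{A}_k\subseteq\mathcal{B}_k$ compatible with the diagram, the induced functor $\varprojlim_K \mathcal{A}_k\to\varprojlim_K \mathcal{B}_k$ is fully faithful and its essential image is the full subcategory spanned by those objects whose image in $\mathcal{B}_k$ lies (up to equivalence) in $\mathcal{A}_k$ for every $k$. Applying this with $\mathcal{A}_k=F(k)$ and $\mathcal{B}_k=\pro(F(k))$ identifies the essential image of $j$ with the full subcategory of $\varprojlim_K \pro(F(k))$ spanned by the pointwise constant objects.

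Granting this, the corollary is a diagram chase. By definition $\mathcal{X}\in\pro(\varprojlim_K F(k))$ is constant iff it lies in the essential image of $\iota$. Since $\Phi$ is fully faithful, this holds iff $\Phi(\mathcal{X})$ lies in the essential image of $\Phi\circ\iota\simeq j$, i.e.\ iff $\Phi(\mathcal{X})$ is pointwise constant, i.e.\ iff $e_k(\mathcal{X})\simeq(\mathrm{ev}_k\circ\Phi)(\mathcal{X})$ is a constant pro-object in $\pro(F(k))$ for every vertex $k$ --- which is precisely the asserted condition. (The implication ``constant $\Rightarrow$ pointwise constant'' also follows directly, since an object of $\varprojlim_K F(k)$ has a component in $F(k)$ at each vertex, so only the converse needs the isolated fact.)

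The main obstacle is exactly that isolated fact about essential images of limits of $\infty$-categories. Full faithfulness passes to limits without difficulty, but pinning down the essential image requires choosing a model for $\varprojlim_K$ --- for instance realizing it as the $\infty$-category of Cartesian sections of the Cartesian fibration classified by $K\to\cati$, so that a section landing pointwise in a sub-fibration is literally a section of that sub-fibration --- and then invoking full faithfulness to see the resulting lift is well defined up to contractible choice. Finiteness of $K$ is not needed here (it was already used, via cocompactness being detected pointwise, in the preceding proposition), so the remainder is bookkeeping with the universal property of $\pro(-)$.
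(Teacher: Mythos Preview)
Your proof is correct and follows essentially the same approach as the paper. The paper's argument is simply a compressed version of yours: it draws the commutative square relating $\varprojlim_K F(k)$, $\pro(\varprojlim_K F(k))$, and $\varprojlim_K \pro(F(k))$, notes that the bottom arrow is fully faithful (the preceding proposition), and then dispatches your ``non-formal ingredient'' in a single clause --- ``since each $F(k)\to\pro(F(k))$ is fully faithful, this can be checked pointwise.'' Your expansion of that step (identifying the essential image of a limit of fully faithful inclusions via Cartesian sections) and your observation that finiteness of $K$ plays no further role here are both accurate elaborations of what the paper leaves implicit.
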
 
\begin{proof} 
We have a commutative diagram
\[ \xymatrix{
\varprojlim_K F(k) \ar[d] \ar[r]^{\simeq} & \varprojlim_K F(k)  \ar[d] \\
\pro( \varprojlim_K F(k)) \ar[r] &  \varprojlim_K \pro(F(k))
},\]
where the bottom arrow is fully faithful. Given an object in $\pro(
\varprojlim_K F(k))$, it is constant if and only if the image in $\varprojlim_K
\pro(F(k))$ belongs to $\varprojlim_K F(k)$. Since each $F(k) \to \pro(F(k))$
is fully faithful, this can be checked pointwise. 
\end{proof} 

\begin{remark} 
The functor \eqref{rkan} is usually not essentially surjective; consider (with
$\mathrm{Ind}$-objects) for
instance the
failure of essential surjectivity in \Cref{modclosed}. 
\end{remark} 

\subsection{Descendable algebra objects}
Let $(\mathcal{C}, \otimes, \mathbf{1})$ be a 2-ring or a  stable homotopy theory. 
In this subsection, we will describe a definition of a commutative algebra object
in $\mathcal{C}$ which ``admits descent'' in a very strong sense, and prove
some basic properties.

We start by recalling a basic definition. 
\begin{definition} 
If $\mathcal{E}$ is a stable $\infty$-category, we will say that a full subcategory
$\mathcal{D} \subset \mathcal{E}$ is \textbf{thick} if $\mathcal{D}$ is closed
under finite limits and colimits and under retracts. In particular,
$\mathcal{D}$ is stable. Further, if $\mathcal{E}$ is given a symmetric
monoidal structure, then  $\mathcal{D}$ is a \textbf{thick 
$\otimes$-ideal} if in addition it is a $\otimes$-ideal. 

Given a collection of objects in $\mathcal{E}$, the thick subcategory (resp.
thick $\otimes$-ideal) that they \textbf{generate} is defined to be the smallest
thick subcategory (resp. thick $\otimes$-ideal) containing that collection. 

\end{definition} 

The theory of thick subcategories, introduced in \cite{DHS, HS}, has played an important role in making
``descent'' arguments in proving the basic structural results of chromatic
homotopy theory. Thus, it is not too surprising that the following definition
might be useful. 
This notion has been independently studied under the name \emph{nil-faithfulness} by Balmer
\cite{balmersep}. 

\begin{definition} \label{admitd}
Given $A \in \clg(\mathcal{C})$, we will say that $A$ \textbf{admits descent}
or is \textbf{descendable}
if the thick $\otimes$-ideal generated by $A$ is all of $\mathcal{C}$. 

More generally, in a stable homotopy theory $(\mathcal{C}, \otimes,
\mathbf{1})$, we will say that a morphism $A \to B$ in $\clg(\mathcal{C})$
\textbf{admits descent} if $B$, considered as a commutative algebra object in
$\md_{\mathcal{C}}(A)$, admits descent in the above sense.

\end{definition} 

\newcommand{\cb}{\mathrm{CB}^\bullet}
\newcommand{\cbaug}{\mathrm{CB}_{\mathrm{aug}}^\bullet}

We now prove a few basic properties of the property of ``admitting descent,''
for instance the (evidently desirable) claim that an analog of \Cref{luried}
goes through. Here is the first observation. 

\begin{proposition} 
\label{faithful}
If $A \in \clg(\mathcal{C})$ admits descent, then $A$ is faithful: if $M \in
\mathcal{C}$, and $M \otimes A \simeq 0$, then $M$ is contractible. 
\end{proposition}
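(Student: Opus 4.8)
The plan is to use the defining property of descent in the cleanest possible way: exhibit the class of objects annihilated by $M$ as a thick $\otimes$-ideal, observe it contains $A$, and conclude it is everything.

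First I would let $\mathcal{K}$ be the full subcategory of $\mathcal{C}$ spanned by those $N$ with $N \otimes M \simeq 0$. The hypothesis $M \otimes A \simeq 0$ says exactly that $A \in \mathcal{K}$. Next I would verify that $\mathcal{K}$ is a thick $\otimes$-ideal. Since $\mathcal{C}$ is a stable homotopy theory, the functor $(-)\otimes M$ is exact and preserves all colimits, and it certainly preserves retracts; hence $\mathcal{K}$, being the preimage of the zero object under this functor, is closed under finite limits, finite colimits, and retracts, so it is thick. It is moreover a $\otimes$-ideal: for $N \in \mathcal{K}$ and any $N' \in \mathcal{C}$, associativity gives $(N' \otimes N)\otimes M \simeq N' \otimes (N \otimes M) \simeq N' \otimes 0 \simeq 0$, using that $N' \otimes (-)$ preserves the zero object.

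Then, since $A$ admits descent, the thick $\otimes$-ideal generated by $A$ is all of $\mathcal{C}$ by \Cref{admitd}; as $\mathcal{K}$ is a thick $\otimes$-ideal containing $A$, it follows that $\mathcal{K} = \mathcal{C}$. In particular $\mathbf{1} \in \mathcal{K}$, i.e. $M \simeq \mathbf{1}\otimes M \simeq 0$, which is the claim. I do not expect any genuine obstacle here — the argument is purely formal; the only thing to check carefully (and it is immediate from exactness of the tensor product in each variable) is that $\mathcal{K}$ really is closed under the operations defining a thick $\otimes$-ideal.
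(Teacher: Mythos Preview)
Your proof is correct and follows exactly the same approach as the paper: the paper also considers the collection of $N$ with $M \otimes N \simeq 0$, notes it is a thick $\otimes$-ideal containing $A$, and concludes it contains $\mathbf{1}$. You have simply spelled out in more detail why this collection is a thick $\otimes$-ideal, which the paper dismisses with ``clearly.''
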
 
\begin{proof} 
Consider the collection of all objects $N \in \mathcal{C}$ such that $M \otimes
N \simeq 0$. This is clearly a thick $\otimes$-ideal. Since it contains $A$, it
must contain $\mathbf{1}$, so that $M$ is contractible. 
\end{proof} 

Given $A \in \clg(\mathcal{C})$, one can form the \emph{cobar resolution}
\[ A \rightrightarrows A \otimes A \triplearrows \dots,  \]
which is a cosimplicial object in $\clg(\mathcal{C})$, receiving an
augmentation from $\mathbf{1}$. 
Call this cosimplicial object $\cb(A)$ and the augmented version
$\cbaug(A)$. 

\begin{proposition} 
\label{constpro}
Given $A \in \clg(\mathcal{C})$, $A$ admits descent if and only if 
the cosimplicial diagram $\cb(A)$ defines a constant pro-object on
the level of towers $\left\{\mathrm{Tot}_n \cb(A)\right\}_{n \geq 0}$ which
converges to $\mathbf{1}$ (i.e., $\cbaug(A)$ is a limit diagram). 
\end{proposition}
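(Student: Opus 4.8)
The plan is to show both implications by relating the "thick $\otimes$-ideal generated by $A$" to the convergence of the Tot-tower of $\cb(A)$. The key observation is that the augmented cobar complex $\cbaug(A)$ has a canonical \emph{splitting after tensoring with $A$}: the augmented cosimplicial object $A \otimes \cbaug(A)$ is split (this is the standard extra-degeneracy argument — once $A$ is an algebra, $A \to A \otimes A$ has a retraction given by multiplication, and these assemble into a splitting). Hence by \Cref{splitconst} the tower $\{\mathrm{Tot}_n \cb(A)\}$ becomes pro-constant, converging to $\mathbf{1}$, \emph{after applying the functor $A \otimes (-)$}. The question is exactly when this already holds before tensoring with $A$.

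For the "only if" direction, suppose $A$ admits descent. First I would observe that for each $n$, the partial totalization $\mathrm{Tot}_n \cb(A)$ lies in the thick subcategory generated by $A$ (it is a finite limit of finite tensor powers of $A$, hence built from $A$ by finite limits/colimits and retracts), and more to the point the cofiber $C_n$ of $\mathbf{1} \to \mathrm{Tot}_n \cb(A)$ — equivalently, the "reduced" $n$th partial totalization of $\cbaug(A)$ — lies in the thick $\otimes$-ideal generated by the \emph{fiber} of $\mathbf{1} \to A$. Call that fiber $I$, so $A$ descendable means $\mathbf{1}$ is killed by some finite stage, i.e. $I^{\otimes N}$ (the $N$-fold tensor power, suitably interpreted as an iterated cofiber/fiber) is such that the composite $I^{\otimes N} \to \mathbf{1}$ is null, or rather the thick $\otimes$-ideal generated by $A$ being all of $\mathcal{C}$ forces the tower $\{I^{\otimes n}\}$ (with the evident maps) to be pro-zero. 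The cleanest route: the reduced cobar tower $\{\widetilde{\mathrm{Tot}}_n\}$ is, up to a shift/retract, the tower of iterated tensor powers $I^{\otimes (n+1)} \to I^{\otimes n} \to \cdots$; this is a classical identification (cf. the discussion of the "$I$-adic tower"). So I need: $A$ descendable $\iff$ the tower $\{I^{\otimes n}\}_{n\ge 0}$ is pro-zero. Granting that, if $A$ is descendable then the thick $\otimes$-ideal generated by $A$ — equivalently the thick subcategory of objects $X$ with $\{I^{\otimes n}\otimes X\}$ pro-zero, which one checks is a thick $\otimes$-ideal containing $A$ — is all of $\mathcal{C}$, so in particular $\mathbf{1}$ is in it, giving pro-zero, hence $\cbaug(A)$ is a limit diagram and the Tot-tower is pro-constant with value $\mathbf{1}$.

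For the "if" direction, suppose $\{\mathrm{Tot}_n \cb(A)\}$ is a constant pro-object converging to $\mathbf{1}$. By \Cref{whenisproobjconst} this means the limit $\mathbf{1} = \mathrm{Tot}\,\cb(A)$ is preserved by every finite-limit-preserving functor; applying this (or more simply using that a constant pro-object, being a retract in the pro-category of a single term, forces the tower to split up to retract) I get that $\mathbf{1}$ is a retract of $\mathrm{Tot}_n \cb(A)$ for some finite $n$. Since $\mathrm{Tot}_n \cb(A)$ is built out of $A, A^{\otimes 2}, \dots, A^{\otimes (n+1)}$ by finitely many finite limits, and each $A^{\otimes k}$ lies in the thick $\otimes$-ideal generated by $A$, it follows that $\mathrm{Tot}_n\cb(A)$, and hence its retract $\mathbf{1}$, lies in the thick $\otimes$-ideal generated by $A$. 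Therefore $A$ admits descent.

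The main obstacle — and the step I'd want to be careful about — is the identification of the reduced cobar/Tot tower with the "$I$-adic" tower $\{I^{\otimes n}\}$ and, relatedly, making precise that "constant pro-object converging to $\mathbf{1}$" yields a genuine \emph{finite-stage retract} $\mathbf{1}\to \mathrm{Tot}_n\cb(A)\to\mathbf{1}$ rather than merely a formal statement about the pro-category. For the latter: a pro-object $\{X_n\}$ that is constant with value $X$ means the identity of $\{X_n\}$ in $\pro(\mathcal{C})$ factors through (the constant pro-object) $X$; unwinding the definition of morphisms in $\pro(\mathcal{C})$ as $\varinjlim_m \varprojlim_n \hom(X_n, Y_m)$, the map $\{X_n\}\to X$ together with $X\to\{X_n\}$ composing to the identity pro-map produces, for some index $m$, an actual retraction $X_m \to X \to X_m$ up to homotopy compatible with the tower — this is exactly where the finite stage $n$ comes from. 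I would state this as a small lemma (perhaps it is folklore, cf.\ \cite[Sec.~3.2]{BHH}) and cite the pro-category mapping-space formula. Everything else — the extra-degeneracy splitting of $A\otimes\cbaug(A)$, thickness of the relevant subcategory, stability of tensor powers of $A$ under the thick $\otimes$-ideal — is routine.
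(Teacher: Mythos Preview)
Your proof is correct in both directions, and the ``if'' direction matches the paper exactly: a constant pro-object with value $\mathbf{1}$ forces $\mathbf{1}$ to be a retract of some $\mathrm{Tot}_n \cb(A)$, which visibly lies in the thick $\otimes$-ideal generated by $A$. (One small slip: you wrote the retraction as $X_m \to X \to X_m$; it should be $X \to X_m \to X$.)

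For the ``only if'' direction you take a genuine detour. You set up exactly the right observation at the outset --- that $A \otimes \cbaug(A)$ is split, so the question is whether pro-constancy holds \emph{before} tensoring with $A$ --- but then switch to the $I$-adic tower $\{I^{\otimes n}\}$, $I = \mathrm{fib}(\mathbf{1}\to A)$, identify it with the reduced Tot tower, and argue via the thick $\otimes$-ideal of objects $X$ with $\{I^{\otimes n}\otimes X\}$ pro-zero. This is correct (the identification of the Adams and cobar towers is classical, and the pro-zero condition is indeed a thick $\otimes$-ideal containing $A$), but it is more work than necessary and you yourself flag the identification as the step needing care. The paper's argument stays with your first observation and never leaves it: let $\mathcal{C}_{\mathrm{good}}$ be the collection of $M$ such that $\cbaug(A)\otimes M$ is a limit diagram whose Tot tower is a constant pro-object. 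By the splitting, $A\in\mathcal{C}_{\mathrm{good}}$; since constant pro-objects form a thick subcategory and tensoring a constant pro-object with any object remains constant, $\mathcal{C}_{\mathrm{good}}$ is a thick $\otimes$-ideal; descendability of $A$ then gives $\mathbf{1}\in\mathcal{C}_{\mathrm{good}}$. This avoids the $I$-adic bookkeeping entirely. (Also, in your opening paragraph the split augmented object $A\otimes\cbaug(A)$ converges to $A$, not $\mathbf{1}$.) Your route has the advantage of making the connection to nilpotence of $I$ explicit, which is the content of the later \Cref{descnilp}, but for the statement at hand the paper's argument is shorter and requires no auxiliary identifications.
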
 
\begin{proof}
Suppose $A$ admits descent. 
Consider the collection $\mathcal{C}_{\mathrm{good}}$ of $M \in \mathcal{C}$ such that the augmented
cosimplicial diagram $\cbaug(A) \otimes M$ is a limit diagram, and such that
the induced $\mathrm{Tot}$ tower converging to $M$ defines a constant
pro-object. Our goal is to show that $\mathbf{1} \in
\mathcal{C}_{\mathrm{good}}$. 

Note first that $A \in \mathcal{C}_{\mathrm{good}}$: in fact, the augmented
cosimplicial diagram $\cbaug(A) \otimes A$ is \emph{split} and so is a limit
diagram and defines a constant pro-object (\Cref{splitconst}). Moreover, $\mathcal{C}_{\mathrm{good}}$ is a
thick $\otimes$-ideal. 
The collection of pro-objects which are constant is thick, and the tensor
product of a constant pro-object with any object of $\mathcal{C}$ is constant (and the
limit commutes with the tensor product). Since $A \in \mathcal{C}_{\mathrm{good}}$, it
follows that $\mathbf{1} \in \mathcal{C}_{\mathrm{good}}$, which completes the
proof in one direction. 

Conversely, if $\cbaug(A)$ is a limit diagram, and $\cb(A)$  defines a constant
pro-object, 
it follows that $\mathbf{1}$ is a retract of $\mathrm{Tot}_n \cb(A)$, for $n
\gg 0$. However, $\mathrm{Tot}_n \cb(A)$ clearly lives in the thick 
$\otimes$-ideal generated by $A$, which shows that $A$ admits descent. 
\end{proof}

In other words, thanks to \Cref{constpro}, $A$ admits descent if and only if the unit object $\mathbf{1}$ can be
obtained as a retract of a  finite colimit of a diagram in $\mathcal{C}$ consisting of
objects, each of which admits the structure of a module over $A$.

One advantage of the purely categorical (and finitistic) definition of
admitting descent is that it is preserved under base change. The next result
follows from \Cref{constpro}. 
\begin{corollary} 
Let $F\colon  \mathcal{C} \to \mathcal{C}'$ be a symmetric monoidal functor between
symmetric monoidal, stable $\infty$-categories. Given $A \in
\clg(\mathcal{C})$, if $A$ admits descent, then $F(A)$ does as well. 
\end{corollary}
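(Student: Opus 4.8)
The plan is to invoke \Cref{constpro}, which characterizes the descent property purely in terms of the $\mathrm{Tot}$-tower of the cobar construction $\cb(A)$ being a constant pro-object converging to $\mathbf{1}$. Since a symmetric monoidal functor $F$ preserves the tensor product and the unit (up to coherent equivalence), it carries the augmented cosimplicial object $\cbaug(A)$ in $\mathcal{C}$ to the augmented cosimplicial object $\cbaug(F(A))$ in $\mathcal{C}'$. So the task reduces to checking that applying $F$ to a constant pro-object (arising from a $\mathrm{Tot}$-tower) yields a constant pro-object, together with the convergence statement.

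First I would observe that $F$, being symmetric monoidal and exact (hence finite-limit-preserving), induces a commuting square relating $\cbaug(A)$ and $\cbaug(F(A))$: on the $n$-truncations, $F(\mathrm{Tot}_n \cb(A)) \simeq \mathrm{Tot}_n \cb(F(A))$ because $\mathrm{Tot}_n$ is a finite limit and $F$ preserves finite limits. Next, by \Cref{constpro} applied to $A$, the pro-object $\{\mathrm{Tot}_n \cb(A)\}$ is constant with value $\mathbf{1}$, meaning (by \Cref{whenisproobjconst}) that its limit exists in $\mathcal{C}$ and is preserved by every finite-limit-preserving functor out of $\mathcal{C}$. Applying this with the functor $F$ itself shows that $F$ carries that limit to the limit of $\{F(\mathrm{Tot}_n \cb(A))\} = \{\mathrm{Tot}_n \cb(F(A))\}$, so $F(\mathbf{1}) \simeq \mathbf{1}_{\mathcal{C}'}$ is the totalization of $\cb(F(A))$ and $\cbaug(F(A))$ is a limit diagram.

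It then remains to upgrade ``the limit exists and is computed correctly'' to ``the pro-object is constant.'' For this I would again use \Cref{whenisproobjconst}: given any finite-limit-preserving $G\colon \mathcal{C}' \to \mathcal{D}$, the composite $G \circ F\colon \mathcal{C} \to \mathcal{D}$ preserves finite limits, so since $\{\mathrm{Tot}_n \cb(A)\}$ is constant in $\mathcal{C}$ its limit is preserved by $G \circ F$; unwinding, this says exactly that $G$ preserves the limit of $\{\mathrm{Tot}_n \cb(F(A))\}$. As this holds for all such $G$, the pro-object $\{\mathrm{Tot}_n \cb(F(A))\}$ is constant in $\mathcal{C}'$, and by the converse direction of \Cref{constpro} we conclude $F(A)$ admits descent.

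The only genuine point requiring care — and the step I expect to be the main (though mild) obstacle — is the bookkeeping identifying $F$ applied to the cobar tower of $A$ with the cobar tower of $F(A)$ at the level of pro-objects, i.e. checking that $F$ being symmetric monoidal really does intertwine the two cosimplicial diagrams and their $\mathrm{Tot}$-towers coherently (not just on objects but as maps of towers), so that the hypothesis of \Cref{whenisproobjconst} can be applied cleanly. Once that identification is in hand, everything else is a formal consequence of \Cref{constpro} and \Cref{whenisproobjconst}, with no actual computation needed.
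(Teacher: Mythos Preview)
Your argument is correct and is precisely the approach the paper indicates: the paper states only that the result ``follows from \Cref{constpro}'' without further detail, and your proof supplies exactly the verification that a symmetric monoidal exact functor carries the cobar $\mathrm{Tot}$-tower of $A$ to that of $F(A)$ and preserves the constancy of the associated pro-object via \Cref{whenisproobjconst}. The bookkeeping concern you flag is mild and handled by the fact that $F$, being symmetric monoidal, sends the cosimplicial object $\cbaug(A)$ to $\cbaug(F(A))$ functorially.
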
 

\begin{proposition} \label{easydesc}
Let $\mathcal{C} $ be a stable homotopy theory. 
Let $A \in \clg(\mathcal{C})$ admit descent. Then the adjunction
\[ \mathcal{C} \rightleftarrows \md_{\mathcal{C}}(A),  \]
given by tensoring with $A$ and forgetting, is comonadic. In particular, the
natural functor from $\mathcal{C}$ to the totalization 
\[ \mathcal{C} \to \mathrm{Tot}\left( \md_{\mathcal{C}}(A) \rightrightarrows
\md_{\mathcal{C}}(A \otimes A) \triplearrows \dots \right)  \]
is an equivalence. 
\end{proposition}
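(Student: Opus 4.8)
The plan is to apply the Barr-Beck-Lurie theorem to the tensor-forgetful adjunction $\mathcal{C} \rightleftarrows \md_{\mathcal{C}}(A)$, whose left adjoint $F = A \otimes (-)$ we must show is conservative and suitably compatible with totalizations of $F$-split cosimplicial objects. Conservativity is immediate from \Cref{faithful}: a map $f\colon M \to N$ in $\mathcal{C}$ becomes an equivalence after tensoring with $A$ iff its cofiber $C_f$ satisfies $C_f \otimes A \simeq 0$, and since $A$ is faithful this forces $C_f \simeq 0$, i.e.\ $f$ was already an equivalence. (Here we use that $F$ is exact, being a tensor with a fixed object in a stable homotopy theory, so it commutes with cofiber sequences.) Since $\mathcal{C}$ is presentable, all totalizations exist in $\mathcal{C}$, so the only remaining point of the Barr-Beck-Lurie criterion is that for a cosimplicial object $X^\bullet$ in $\mathcal{C}$ for which $F(X^\bullet)$ admits a splitting, the canonical map $F(\mathrm{Tot}(X^\bullet)) \to \mathrm{Tot}(F(X^\bullet))$ is an equivalence.

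For this last point the strategy is to reduce to the constancy-of-pro-objects machinery already developed. First I would observe, exactly as in the proof of \Cref{constpro}, that the collection $\mathcal{C}_{\mathrm{good}}$ of objects $M \in \mathcal{C}$ such that tensoring the $\mathrm{Tot}$-tower of $X^\bullet$ with $M$ yields a constant pro-object (equivalently, such that $M \otimes \mathrm{Tot}(X^\bullet) \simeq \mathrm{Tot}(M \otimes X^\bullet)$ and this persists under any exact monoidal functor) is a thick $\otimes$-ideal containing $A$. That $A \in \mathcal{C}_{\mathrm{good}}$ is the key input: the cosimplicial object $A \otimes X^\bullet$ is, by hypothesis, $F$-split, i.e.\ it extends to a split augmented cosimplicial object in $\md_{\mathcal{C}}(A)$ (after forgetting, in $\mathcal{C}$), so by \Cref{splitconst} its $\mathrm{Tot}$-tower is a constant pro-object; and since constancy of a pro-object is preserved by arbitrary tensoring (as recorded in the pro-object subsection) and the limit commutes with the tensor product in that situation, $A \otimes \mathrm{Tot}(X^\bullet) \simeq \mathrm{Tot}(A \otimes X^\bullet)$ as required. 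Because $A$ admits descent, the thick $\otimes$-ideal it generates is all of $\mathcal{C}$, so $\mathbf{1} \in \mathcal{C}_{\mathrm{good}}$; in particular $F = \mathbf{1} \otimes A \otimes (-)$ — more precisely, tensoring with $A$, which factors through $\mathcal{C}_{\mathrm{good}} \ni \mathbf{1}$ — commutes with $\mathrm{Tot}(X^\bullet)$.

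With both hypotheses of Barr-Beck-Lurie verified, the adjunction is comonadic, and the associated comonad is $A \otimes (-)$; the cobar construction of this comonad on an object $C \in \mathcal{C}$ is precisely the cosimplicial diagram $\md_{\mathcal{C}}(A) \rightrightarrows \md_{\mathcal{C}}(A \otimes A) \triplearrows \dots$ evaluated at $C$ (using that the category of comodules over the comonad $A \otimes (-)$ is $\mathrm{Tot}$ of the cosimplicial $\infty$-category of module categories, via the standard identification of descent data), so the canonical functor $\mathcal{C} \to \mathrm{Tot}\bigl(\md_{\mathcal{C}}(A) \rightrightarrows \md_{\mathcal{C}}(A \otimes A) \triplearrows \dots\bigr)$ is an equivalence. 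I expect the main obstacle to be bookkeeping rather than conceptual: carefully matching the comonadic descent statement (comodules over $A \otimes (-)$) with the stated totalization of the cosimplicial diagram of module $\infty$-categories, i.e.\ invoking the correct form of the $\infty$-categorical comonadicity/descent dictionary from \cite[Section 4.7]{higheralg}, and making sure the splitting hypothesis is used in exactly the form \Cref{splitconst} demands.
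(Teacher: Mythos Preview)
Your proof is correct and follows essentially the same route as the paper: verify the Barr--Beck--Lurie hypotheses by using \Cref{faithful} for conservativity and, for the totalization condition, run a thick $\otimes$-ideal argument showing that the set of $M$ for which $M \otimes X^\bullet$ defines a constant pro-object contains $A$ (via \Cref{splitconst}) and hence contains $\mathbf{1}$. The paper phrases the conclusion slightly more directly (once $X^\bullet$ itself is pro-constant, tensoring with $A$ commutes with its totalization), and likewise defers the comonadicity/totalization-of-module-categories dictionary to \cite[Th.~4.7.6.2]{higheralg}.
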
 
\begin{proof} 
We need to check that the hypotheses of the Barr-Beck-Lurie theorem go through. 
We refer to \cite[Th. 4.7.6.2]{higheralg} for the connection between
comonadicity and the totalization of $\infty$-categories considered above,
which is an $\infty$-categorical generalization of the classical
Beck-B{\'e}nabou-Roubaud theorem \cite{BRB}. 

By \Cref{faithful}, tensoring with $A$ is conservative. 
Now, fix a cosimplicial object $X^\bullet\colon  \Delta \to \mathcal{C}$ such that $A
\otimes X^\bullet$ is split. We need to show 
that the map
\[ A \otimes \mathrm{Tot}(X^\bullet) \to \mathrm{Tot}(A \otimes X^\bullet)  \]
is an equivalence. 
This will follow if the pro-object defined by $X^\bullet$ (i.e., by the
$\mathrm{Tot}$ tower) is constant. To see that, consider the collection of
objects $M \in \mathcal{C}$ such that $M \otimes X^\bullet$ defines a constant
pro-object.  By assumption (and \Cref{splitconst}), this collection contains
$A$, and it is a thick $\otimes$-ideal. It follows that $X^\bullet$ itself defines
a constant pro-object, so we are done. 
\end{proof} 

\begin{remark} 
We have used the fact that we have a symmetric monoidal functor $\mathcal{C}
\to \pro( \mathcal{C})$, which embeds $\mathcal{C}$ as a full subcategory of
$\pro( \mathcal{C})$: in particular, the tensor product of two constant
pro-objects in $\pro(\mathcal{C})$ is constant. 
\end{remark}

Finally, we prove a few basic permanence properties for admitting descent. 
\begin{proposition} \label{permanence}
Suppose $\mathcal{C}$ is a stable homotopy theory. 
Let $A \to B \to C$ be maps in $\clg(\mathcal{C})$. 
\begin{enumerate}
\item If $A \to B$ and $B \to C$ admit descent, so does $A \to C$.  
\item If $A \to C$ admits descent, so does $A \to B$. 
\end{enumerate}
\end{proposition}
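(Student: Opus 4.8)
The whole argument rests on one elementary observation, which I will call the \emph{retract trick}: for any $A \in \clg(\mathcal{C})$, the thick $\otimes$-ideal generated by $A$ in $\mathcal{C}$ contains the underlying object of every $A$-module. Indeed, if $M \in \md_{\mathcal{C}}(A)$ then $M$ is a retract of $A \otimes M$ — split by the unit $M \to A \otimes M$ and the action map $A \otimes M \to M$ — and $A \otimes M$ lies in the thick $\otimes$-ideal generated by $A$ because that ideal contains $A$ and is closed under tensoring with arbitrary objects. I would first record this as a lemma, noting that it applies equally well internally to any $\md_{\mathcal{C}}(D)$, which is again a stable homotopy theory.

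For (2): since $\md_{\mathcal{C}}(A)$ is a stable homotopy theory, $B \in \clg(\md_{\mathcal{C}}(A))$, and $C$ is a module over $B$ (the algebra structure map of $C$ factors through $B$), the retract trick inside $\md_{\mathcal{C}}(A)$ shows that the underlying object of $C$ lies in the thick $\otimes$-ideal generated by $B$ in $\md_{\mathcal{C}}(A)$. That ideal therefore contains the thick $\otimes$-ideal generated by $C$ in $\md_{\mathcal{C}}(A)$, which by hypothesis is everything; hence $B$ already generates $\md_{\mathcal{C}}(A)$ as a thick $\otimes$-ideal, i.e.\ $A \to B$ admits descent.

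For (1): I would first replace $\mathcal{C}$ by $\md_{\mathcal{C}}(A)$, reducing to the case $A = \mathbf{1}$; so now $B$ generates $\mathcal{C}$ as a thick $\otimes$-ideal, $C$ generates $\md_{\mathcal{C}}(B)$ as a thick $\otimes$-ideal, and the goal is that $C$ generates $\mathcal{C}$. Write $\mathcal{J} \subseteq \mathcal{C}$ for the thick $\otimes$-ideal generated by $C$ in $\mathcal{C}$; it is enough to show $B \in \mathcal{J}$, since then $\mathcal{J}$ contains the thick $\otimes$-ideal generated by $B$, which is all of $\mathcal{C}$. To obtain $B \in \mathcal{J}$, consider the exact forgetful functor $U \colon \md_{\mathcal{C}}(B) \to \mathcal{C}$ and its preimage $U^{-1}(\mathcal{J})$, which is a thick subcategory of $\md_{\mathcal{C}}(B)$. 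The thick $\otimes$-ideal generated by $C$ in $\md_{\mathcal{C}}(B)$ is generated \emph{as a thick subcategory} by the objects $C \otimes_B N$, $N \in \md_{\mathcal{C}}(B)$ — these already span a thick $\otimes$-ideal, since $-\otimes_B N'$ is exact and $(C \otimes_B N)\otimes_B N' \simeq C \otimes_B (N \otimes_B N')$. Each $C \otimes_B N$ is a module over the algebra $C \in \clg(\mathcal{C})$, so by the retract trick its underlying object lies in $\mathcal{J}$; hence $U^{-1}(\mathcal{J})$ contains all these generators, hence the entire thick $\otimes$-ideal generated by $C$ in $\md_{\mathcal{C}}(B)$, which contains $B$. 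Thus $B = U(B) \in \mathcal{J}$, as desired.

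The one point that genuinely needs care is exactly this last step: $U$ is only lax monoidal, so $U^{-1}(\mathcal{J})$ is not visibly a $\otimes$-ideal, and one cannot simply say ``$U^{-1}(\mathcal{J})$ contains $C$ and is a $\otimes$-ideal, hence contains everything generated by $C$''. The remedy is to present the thick $\otimes$-ideal generated by $C$ in $\md_{\mathcal{C}}(B)$ through the explicit modules $C \otimes_B N$ and control each of them — being a $C$-module in $\mathcal{C}$ — directly by the retract trick, thereby sidestepping the failure of $U$ to be monoidal. Everything else is routine bookkeeping with thick $\otimes$-ideals and the fact that $\md_{\mathcal{C}}(-)$ stays within stable homotopy theories.
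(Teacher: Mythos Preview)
Your proof is correct and follows essentially the same route as the paper: both arguments hinge on the retract trick (any $A$-module lies in the thick $\otimes$-ideal generated by $A$), and for (1) both show that $B$ lies in the thick subcategory generated by $C$-modules inside $\md_{\mathcal{C}}(B)$, then push this down via the forgetful functor. The only cosmetic difference is that the paper invokes the cobar construction (its \Cref{constpro}) to exhibit $B$ as built from specific $C$-modules $C^{\otimes_B m}$, whereas you characterize the thick $\otimes$-ideal generated by $C$ in $\md_{\mathcal{C}}(B)$ directly as the thick subcategory on $\{C \otimes_B N\}$; your explicit handling of the lax-monoidality issue for $U$ is a nice clarification of a step the paper leaves implicit.
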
 
\begin{proof} 
Consider the first claim. 
Suppose $A \to B$ and $B \to C$ admit descent.
Then, via the cobar construction, we find that $B$ belongs to the thick subcategory of $\md_{\mathcal{C}}(B)$ generated by 
the $C$-modules. It follows that $B$ belongs to the thick subcategory of
$\md_{\mathcal{C}}(A)$ generated by the $C$-modules, and therefore every $B$-module
belongs to the thick $\otimes$-ideal in $\md_{\mathcal{C}}(A)$ generated by $C$. 
Since $A \to B$ admits descent, we find that the thick $\otimes$-ideal that $C$
generates in $\md_{\mathcal{C}}(A)$ contains $A$. 

For the second claim, we note simply that a $C$-module is in particular a
$B$-module: the thick $\otimes$-ideal that $B$ generates contains any $B$-module,
for instance $C$. 
\end{proof}

\begin{proposition} 
\label{descfinloc}
Let $K$ be a finite simplicial set and let $p\colon  K \to \shot$ be a diagram. Then
a commutative algebra object $A \in \clg( \varprojlim_K p)$ admits descent if
and only if its ``evaluations'' in $\clg( p(k))$ admit descent for each $k \in K$. 
\end{proposition}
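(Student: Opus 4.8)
The plan is to reduce the statement to the descent criterion of \Cref{constpro} --- which says that $A$ admits descent exactly when the tower $\{\mathrm{Tot}_n \cb(A)\}_{n\geq 0}$ is a constant pro-object and the augmentation exhibits $\mathbf{1}$ as its limit --- combined with the pointwise detection of constancy for pro-objects in a finite limit of $\infty$-categories, \Cref{constantfinpro}. The ``only if'' direction is immediate: limits in $\shot$ are computed on underlying $\infty$-categories, so the projections $\mathrm{ev}_k \colon \varprojlim_K p \to p(k)$ are morphisms in $\shot$, hence symmetric monoidal, and descent is preserved under symmetric monoidal functors (the corollary following \Cref{constpro}); thus if $A$ admits descent so does each $\mathrm{ev}_k(A)$.

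For the converse, set $A_k = \mathrm{ev}_k(A) \in \clg(p(k))$ and assume each $A_k$ admits descent. The key observation is that $\mathrm{ev}_k$ commutes with the relevant (co)limits. Every transition functor of $p$ is a colimit-preserving functor of stable $\infty$-categories, hence exact, hence preserves finite limits; so finite limits in $\varprojlim_K p$ are computed pointwise and each $\mathrm{ev}_k$ preserves them. Since $\mathrm{ev}_k$ is also symmetric monoidal, it carries the cobar object $\cb(A)$ to $\cb(A_k)$, and since each partial totalization $\mathrm{Tot}_n$ is preserved by finite-limit-preserving functors (as used in the proof of \Cref{splitconst}), we get $\mathrm{ev}_k(\mathrm{Tot}_n \cb(A)) \simeq \mathrm{Tot}_n \cb(A_k)$ for all $n$. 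In other words, the $\mathrm{Tot}$ tower of $\cb(A)$ evaluates at each vertex $k$ to the $\mathrm{Tot}$ tower of $\cb(A_k)$.

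Now $A_k$ admitting descent gives, by \Cref{constpro} applied in $p(k)$, that $\{\mathrm{Tot}_n \cb(A_k)\}_n$ is a constant pro-object and that its limit is $\mathbf{1}_{p(k)}$ via the augmentation. By \Cref{constantfinpro} --- whose hypotheses hold for $p \colon K \to \cati$, each $p(k)$ being stable with exact transition functors and $K$ finite --- it follows that $\{\mathrm{Tot}_n \cb(A)\}_n$ is a constant pro-object in $\varprojlim_K p$. In particular its limit $\mathrm{Tot}\,\cb(A)$ exists, and by \Cref{whenisproobjconst} that limit is preserved by any finite-limit-preserving functor, so $\mathrm{ev}_k(\mathrm{Tot}\,\cb(A)) \simeq \mathrm{Tot}\,\cb(A_k) \simeq \mathbf{1}_{p(k)} \simeq \mathrm{ev}_k(\mathbf{1})$, compatibly with the augmentation maps. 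Since the functors $\mathrm{ev}_k$ for $k$ a vertex of $K$ are jointly conservative, the augmentation $\mathbf{1} \to \mathrm{Tot}\,\cb(A)$ is an equivalence; that is, $\cbaug(A)$ is a limit diagram. By \Cref{constpro}, $A$ admits descent.

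The one point that needs genuine care is the compatibility of the partial and full totalizations of $\cb(A)$ with the evaluation functors $\mathrm{ev}_k$; once we know each $\mathrm{ev}_k$ is exact (so preserves each $\mathrm{Tot}_n$) and preserves the limit of the constant pro-object $\{\mathrm{Tot}_n \cb(A)\}$, everything else is bookkeeping with \Cref{constpro}, \Cref{constantfinpro}, and \Cref{whenisproobjconst}. I do not expect a further obstacle, as this compatibility is precisely the mechanism already encoded in the hypotheses of \Cref{constantfinpro}.
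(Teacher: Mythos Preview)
Your proof is correct and follows essentially the same route as the paper's: use \Cref{constpro} to reduce to constancy of the $\mathrm{Tot}$-tower pro-object, then invoke \Cref{constantfinpro} to check constancy pointwise over the finite $K$, and identify the limit with the unit by checking at each vertex. The paper's version is terser about the compatibility of the cobar tower with the evaluation functors and about why the limit is $\mathbf{1}$, but the argument is the same.
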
 
\begin{proof} 
Admitting descent is preserved under symmetric monoidal, exact functors, so one
direction is evident. For the other, if $A \in \clg( \varprojlim_K p)$ has the
property that its image in each $\clg( p(k))$ admits descent, then consider the
cobar construction $\cb(A)$. It defines a constant pro-object after evaluating
at each $k \in K$, and therefore, by \Cref{constantfinpro}, it defines a
constant pro-object in $\varprojlim_K p$ too. The inverse limit is necessarily
the unit (since this is true at each vertex), so $A$ admits descent. 
\end{proof} 
\subsection{Nilpotence}
In this subsection, we present a slightly different  formulation of the
definition of admitting descent, which makes clear the connection with nilpotence. 

Let $(\mathcal{C}, \otimes, \mathbf{1})$ be a stable homotopy theory and let
$A \in \mathcal{C}$ be any object. 
Given a map $f\colon  X \to Y$ in $\mathcal{C}$, we say that $f$ is \emph{$A$-zero}
if $A \otimes X \xrightarrow{1_A \otimes f} A \otimes Y$ is nullhomotopic (as
a morphism in $\mathcal{C}$). 

The collection of all $A$-zero maps forms what is classically called a 
\emph{tensor ideal} in the triangulated category $\mathrm{Ho}(\mathcal{C})$.
The main
result of this subsection is that a \emph{commutative algebra} object $A$ admits descent if and only if this ideal
is nilpotent, in a natural sense.

\begin{definition} 
A collection $\mathcal{I}$ of maps in $\mathrm{Ho}(\mathcal{C})$
is a \textbf{tensor ideal} if the following hold: 
\begin{enumerate}
\item 
For each $X, Y$, the collection of homotopy classes of maps $X \to Y$ that
belong 
to $\mathcal{I}$ is a subgroup. 
\item  Given $f\colon  X \to Y, g\colon  Y \to Z, h \colon  Z \to W$, then if $g \in
\mathcal{I}$, we have $h \circ g \circ f \in \mathcal{I}$. 
\item Given $g\colon  Y \to Z$ in $\mathcal{I}$ and any other object $T \in
\mathcal{C}$, the tensor product $g \otimes 1_T\colon  Y \otimes T \to Z \otimes T$
belongs to $\mathcal{I}$. 
\end{enumerate}
\end{definition} 

For any $A \in \mathcal{C}$, the collection of $A$-zero maps is clearly a
tensor ideal $\mathcal{I}_A$. Given two tensor
ideals $\mathcal{I}, \mathcal{J}$, we will define the product
$\mathcal{I}\mathcal{J}$ to be the smallest tensor ideal containing all
composites $g \circ f$ where $ f\in \mathcal{J}$ and $ g \in \mathcal{I}$. 

\begin{proposition} \label{descnilp}
Let $A \in \clg(\mathcal{C})$ be a commutative algebra object.
Then the following are equivalent: 
\begin{enumerate}
\item There exists $s \in \mathbb{N}$ such that the composite of $s$ consecutive $A$-zero
maps is zero.  
\item $\mathcal{I}_A^s = 0$ for some $s \in
\mathbb{Z}_{\geq 0}$. 
\item $A$ admits descent. 
\end{enumerate}
\end{proposition}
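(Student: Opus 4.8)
The three conditions are essentially reformulations of the same statement, so the strategy is to prove a cycle of implications $(3) \Rightarrow (2) \Rightarrow (1) \Rightarrow (3)$, using the cobar picture of \Cref{constpro} and the basic properties of $A$-zero maps. The key bookkeeping device is that, for a commutative algebra $A$, the fiber of the augmentation $\mathbf 1 \to A$ — call it $I$ — has the property that $A \otimes I \to A \otimes \mathbf 1 = A$ is a split injection (split by the multiplication $A \otimes A \to A$), so tensoring with $A$ kills the map $I \to \mathbf 1$. More precisely, a map $f\colon X \to Y$ is $A$-zero precisely when it factors through the appropriate smash power of $I$, and the $n$-fold composite of $A$-zero maps will be controlled by $I^{\otimes n}$.

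\textbf{Step 1: $(3) \Rightarrow (2)$.} Suppose $A$ admits descent. Then by \Cref{constpro} the $\mathrm{Tot}$-tower of the cobar construction is a constant pro-object converging to $\mathbf 1$; equivalently, the tower of partial totalizations of $\cbaug(A)$ stabilizes up to retracts at a finite stage, so $\mathbf 1$ is a retract of $\mathrm{Tot}_s \cb(A)$ for some $s$. Unwinding the cofiltration of $\mathrm{Tot}_s \cb(A)$ (whose associated graded consists of shifts of $I^{\otimes \bullet}$, where $I = \mathrm{fib}(\mathbf 1 \to A)$), one sees that the $(s+1)$-st smash power $I^{\otimes (s+1)} \to \mathbf 1$ is null. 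But an arbitrary $A$-zero map $f\colon X \to Y$ factors as $X \to I \otimes X \to Y$ (using that $A$-zero means $1_A \otimes f$ is null, hence $f$ lifts along $I \otimes Y \to Y$, and dually factors through $I \otimes X$); composing $s+1$ such maps one gets a factorization through $I^{\otimes(s+1)} \otimes (\text{something})$, which is therefore null. Hence $\mathcal I_A^{s+1} = 0$, proving (2).

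\textbf{Step 2: $(2) \Rightarrow (1)$.} This is immediate once one checks that a composite of $s$ consecutive $A$-zero maps represents an element of $\mathcal I_A^s$; this is just the definition of the product of tensor ideals together with the observation that each $A$-zero map lies in $\mathcal I_A$. So $\mathcal I_A^s = 0$ forces any such composite to vanish.

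\textbf{Step 3: $(1) \Rightarrow (3)$.} Suppose the composite of any $s$ consecutive $A$-zero maps is zero. The map $I \to \mathbf 1$ (fiber of the unit) is $A$-zero, so its $s$-fold smash-power composite $I^{\otimes s} \to \mathbf 1$ — which can be written as a composite of $s$ maps each of which is $I^{\otimes k} \to I^{\otimes(k-1)}$ tensored with identities, hence $A$-zero — is null. Running the standard argument (as in \cite{HS, DHS} or \cite{balmersep}): the cofiber sequence $I \to \mathbf 1 \to A$ shows $\mathbf 1$ sits in a finite tower whose layers are built from $A$ and from $I^{\otimes s}$, and once the bottom map $I^{\otimes s} \to \mathbf 1$ is null the tower splits off $\mathbf 1$ as a retract of an object built from finitely many copies of $A$ (shifted). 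Concretely, nullity of $I^{\otimes s} \to \mathbf{1}$ means $\mathbf 1$ is a retract of the cofiber of $I^{\otimes s} \to \mathbf 1$, and that cofiber lies in the thick $\otimes$-ideal generated by $A$ because it has a finite filtration with layers $I^{\otimes j} \otimes A$ (shifted), $0 \le j \le s-1$. Hence $\mathbf 1$ lies in the thick $\otimes$-ideal generated by $A$, i.e.\ $A$ admits descent.

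\textbf{Main obstacle.} The delicate point is Step 1 (and symmetrically the filtration claim in Step 3): making precise that $A$-zero maps factor through powers of $I = \mathrm{fib}(\mathbf 1 \to A)$, and that the partial totalizations $\mathrm{Tot}_s\cb(A)$ carry a finite filtration with layers (desuspensions of) $I^{\otimes \bullet}$. This requires the standard but slightly fiddly identification of the cobar tower with the $I$-adic tower — i.e.\ that $\mathrm{fib}(\mathrm{Tot}_s \to \mathrm{Tot}_{s-1})$ is $\Omega^s$ of $I^{\otimes(s+1)}$ — which one gets from the cosimplicial analysis of \cite[Section 4.7.3]{higheralg} or by directly manipulating the Amitsur/cobar complex. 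Everything else is formal diagram-chasing with cofiber sequences and the three axioms for tensor ideals.
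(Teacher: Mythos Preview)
Your proof is correct, and for $(1) \Rightarrow (3)$ it is essentially identical to the paper's: the paper defines $F_1(X) = \mathrm{fib}(X \to X \otimes A)$ and iterates, but $F_n(\mathbf{1}) = I^{\otimes n}$, so both arguments observe that each map in the tower $I^{\otimes n} \to I^{\otimes(n-1)}$ is $A$-zero, deduce that $I^{\otimes s} \to \mathbf{1}$ is null, and hence that $\mathbf{1}$ is a retract of an object with a finite filtration by $A$-modules.

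The approaches genuinely diverge for $(3) \Rightarrow (2)$. The paper never invokes the cobar/$I$-adic identification you flag as the main obstacle. Instead it proves three structural facts about the ideals $\mathcal{I}_M$---stability under tensoring, under retracts, and the key cofiber lemma $\mathcal{I}_{M'}\mathcal{I}_{M''} \subset \mathcal{I}_M$ for a cofiber sequence $M' \to M \to M''$ (proved by a short diagram chase)---and then concludes $\mathcal{I}_A^s \subset \mathcal{I}_{\mathbf{1}} = 0$ by structural induction on how $\mathbf{1}$ is built inside the thick $\otimes$-ideal generated by $A$. This is more self-contained and avoids unwinding $\mathrm{Tot}_s \cb(A)$ altogether. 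Your route is more explicit---everything is controlled by the single map $I^{\otimes s} \to \mathbf{1}$, and the factorization lemma (any $A$-zero composite of length $s$ factors through $\iota^{\otimes s} \otimes 1$) is a pleasant concrete statement---but it does rest on the identification $\mathrm{fib}(\mathbf{1} \to \mathrm{Tot}_s \cb(A)) \simeq I^{\otimes(s+1)}$ compatible with the natural maps, which is standard but not free.

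One small quibble: your parenthetical ``and dually factors through $I \otimes X$'' is unclear (there is no natural map $X \to I \otimes X$), but you never actually use it---the lift $X \to I \otimes Y$ along $\iota \otimes 1_Y$ is what drives the induction, and that part is fine.
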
 
This result is essentially \cite[Proposition 3.15]{balmersep}. 
\begin{proof} 
Suppose first $A$ admits descent. We want to show that $\mathcal{I}_A^s = 0$
for some $s \gg 0$. Now, $\mathcal{I}_\mathbf{1} =0$, so our strategy is to use
a thick subcategory argument.

We make the following three claims:
\begin{enumerate}
\item 
If $M, N \in \mathcal{C}$, then $\mathcal{I}_M \subset \mathcal{I}_{M \otimes
N}$. 
\item If $N$ is a retract of $M$, then $\mathcal{I}_M \subset \mathcal{I}_N$. 
\item Given a cofiber sequence
\[ M' \to M \to M''  \]
in $\mathcal{C}$, we have
\[ \mathcal{I}_{M'}\mathcal{I}_{M''} \subset \mathcal{I}_M.  \]
\end{enumerate}
Of these, the first and second are obvious. For the third, it suffices to show
that the composite of an $M'$-null map and an $M''$-null map is $M$-null. 
Suppose $f\colon  X \to Y$ is $M''$-null and $g\colon  Y \to Z$ is $M'$-null. We want to
show that $g \circ f$ is $M$-null.
We have a diagram
\[ \xymatrix{
X \otimes M' \ar[d] \ar[r] &  Y \otimes M' \ar[d]  \ar[r] &  Z \otimes M' \ar[d]  \\
X \otimes M \ar[d]  \ar[r] &  Y \otimes M \ar[d]  \ar[r] &  Z \otimes M \ar[d]  \\
X \otimes M'' \ar[r] &  Y \otimes M'' \ar[r] &  Z \otimes M'' 
}.\]
Here the vertical arrows are cofiber sequences. Chasing through this diagram,
we find that $X \otimes M \to Y \otimes M$ factors through $X \otimes M \to Y
\otimes M'$, so that the composite $X \otimes M \to Z \otimes M$ factors
through $X \otimes M \to Y \otimes M' \stackrel{0}{\to} Z \otimes M' \to Z
\otimes M$ and is thus nullhomotopic.

It thus follows (from the above three items) that if $M \in \mathcal{C}$ is arbitrary, then for any
$\overline{M} \in
\mathcal{C}$ belonging to the thick $\otimes$-ideal generated by $M$, we have
\[ \mathcal{I}_M^s \subset \mathcal{I}_{\overline{M}},  \]
for some integer $s \gg 0$. If $\mathbf{1} \in \mathcal{C}$ belongs 
to this thick $\otimes$-ideal, that forces $\mathcal{I}_M$ to be nilpotent. 

Conversely, suppose there exists $s \in \mathbb{Z}_{\geq 0}$ such that the composite of $s$ consecutive
$A$-zero maps is zero. We will show that $A$ admits descent. 
Given an object $M \in \mathcal{C}$, we want to show that $M$ belongs to the
thick $\otimes$-ideal generated by $A$. 
For this, consider the functor
\[ F_1(X) = \mathrm{fib}(X \to X \otimes A);  \]
we have a natural map 
\( F_1(X) \to X , \) which is $A$-zero, and whose cofiber belongs to the thick
$\otimes$-ideal generated by $A$. Iteratively define $F_n(X) = F_1(F_{n-1}(X))$ for
$n > 0$. We get a tower
\[ \dots  \to F_n(M) \to F_{n-1}(M) \to \dots \to F_1(M) \to M,  \]
where all the successive cofibers of $F_i(M) \to F_{i-1}(M)$ belong to the
thick $\otimes$-ideal generated by
$A$. By chasing cofiber sequences, this means that 
the cofiber of each $F_i(M) \to M$ belongs to the thick $\otimes$-ideal generated
by $A$. 

Moreover, each of the maps in this tower is $A$-zero. 
It follows that $F_s(M) \to M$ is zero. Thus the cofiber of $F_s(M) \to M$ is
$M \oplus \Sigma F_s(M)$, which belongs to the thick $\otimes$-ideal generated by
$A$. Therefore, $M$ belongs to this thick $\otimes$-ideal, and we are done. 
\end{proof}

\subsection{Local properties of modules}

In classical algebra, many properties of modules are local for the \'etale (or
flat) topology. These statements can be generalized to the setting of
$\e{\infty}$-ring
spectra, where 
one considers morphisms $R \to R'$ of $\e{\infty}$-rings that are \'etale (or flat, etc.) on the
level of $\pi_0$ and such that the natural map $\pi_0 R' \otimes_{\pi_0 R}
\pi_* R \to \pi_* R'$ is an isomorphism. 

Our next goal is to prove a couple of basic results in our setting 
for 
descendable morphisms.

\begin{proposition} 
\label{compactdesc}
Let $A \to B$ be a descendable morphism of $\e{\infty}$-rings. 
Let $M$ be an $A$-module such that $B \otimes_A M$ is a perfect $B$-module.
Then $M$ is a perfect $A$-module. 
\end{proposition}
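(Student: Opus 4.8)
The plan is to show that $M$ is a \emph{compact} object of $\md(A)$; since compact and perfect objects of $\md(A)$ coincide for an $\e{\infty}$-ring $A$, this suffices. Concretely, I would show that the mapping-spectrum functor $\hom_A(M,-)\colon \md(A) \to \sp$ commutes with filtered colimits. A point worth stressing at the outset: I would \emph{not} try to prove that $B$, or its iterated tensor powers $B^{\otimes_A (k+1)}$, is a perfect $A$-module — this is generally false even for descendable $A \to B$ (e.g.\ $L_n S^0 \to E_n$). Rather, the \emph{finitary} nature of descendability is what allows a reduction to the hypothesis that $B \otimes_A M$ is compact over $B$.

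First I would record the basic retraction. Since $A \to B$ is descendable, \Cref{constpro} applied in $\mathcal{C} = \md(A)$ says that the cobar tower $\{\mathrm{Tot}_n \cb(B)\}$ is a constant pro-object converging to the unit $A$; in particular there is an integer $n_0$ for which the canonical map $A \to \mathrm{Tot}_{n_0}\cb(B)$ admits a retraction $r\colon \mathrm{Tot}_{n_0}\cb(B) \to A$ in $\md(A)$. Tensoring this retraction with an arbitrary $A$-module $N$, and using that $-\otimes_A N$ is exact and hence commutes with the finite limit $\mathrm{Tot}_{n_0}$, one obtains a retraction of $N$ onto $\mathrm{Tot}_{n_0}(\cb(B)\otimes_A N)$ which is natural in $N$. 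Applying the limit-preserving functor $\hom_A(M,-)$ then exhibits $\hom_A(M,N)$ as a natural retract of
\[ G(N) \;:=\; \mathrm{Tot}_{n_0}\,\hom_A\!\big(M,\,\cb(B)\otimes_A N\big). \]
As a natural retract of a functor commuting with filtered colimits again commutes with filtered colimits, it is enough to show $G$ does.

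Next I would analyze $G$ term by term. The $k$-th term of the cosimplicial spectrum $\hom_A(M,\cb(B)\otimes_A N)$ is $\hom_A\big(M, N\otimes_A B^{\otimes_A (k+1)}\big)$. Viewing $N \otimes_A B^{\otimes_A (k+1)}$ as a $B$-module via the first tensor factor, the adjunction between restriction and extension of scalars along $A \to B$ yields a natural equivalence of spectra
\[ \hom_A\!\big(M, N\otimes_A B^{\otimes_A (k+1)}\big) \;\simeq\; \hom_B\!\big(B\otimes_A M,\; N\otimes_A B^{\otimes_A (k+1)}\big). \]
By hypothesis $B\otimes_A M$ is a perfect, hence compact, $B$-module, so the right-hand side commutes with filtered colimits in $N$ (note that $N \mapsto N\otimes_A B^{\otimes_A(k+1)}$, regarded in $\md(B)$, preserves filtered colimits, being a composite of colimit-preserving functors). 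Thus each term of the cosimplicial spectrum commutes with filtered colimits in $N$; since $\mathrm{Tot}_{n_0}$ is built from finitely many of these terms by finite limits, and finite limits commute with filtered colimits in $\sp$, the functor $G$ commutes with filtered colimits. Hence so does $\hom_A(M,-)$, so $M$ is compact, i.e.\ perfect, over $A$.

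The step everything turns on is the finiteness of the truncation level $n_0$: it is exactly this that makes the relevant $\mathrm{Tot}$ a finite limit, hence compatible with filtered colimits, whereas for a merely comonadic (e.g.\ faithfully flat) descent one only has an infinite totalization and the argument fails. The one mild technical care is to keep the retraction $N \to \mathrm{Tot}_{n_0}(\cb(B)\otimes_A N) \to N$ natural in $N$, so that it survives application of $\hom_A(M,-)$ — which is why I would extract the retraction of the unit $A$ first and only then tensor. (Alternatively, one could run the same computation in terms of dualizability, taking $M^\vee = \hom_A(M,A)$ and checking along the cobar tower that $M^\vee \otimes_A X \to \hom_A(M,X)$ is an equivalence for all $X$; this comes down to the same adjunction identity and finiteness input.)
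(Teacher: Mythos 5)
Your proof is correct and is essentially the paper's argument: the paper packages the same reasoning as a thick-subcategory statement (the class $\mathcal{U}$ of $N$ for which $\hom_A(M,-\otimes_A N)$ commutes with filtered colimits is thick, contains each $B^{\otimes_A(k+1)}$ by the same restriction/extension-of-scalars adjunction, and contains $A$ because $A$ is a retract of a finite limit of such modules via the cobar construction). Your explicit extraction of the retraction off $\mathrm{Tot}_{n_0}\cb(B)$ is exactly that last step made concrete, so there is nothing to add.
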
 
\begin{proof} 
Consider a filtered category $\mathcal{I}$ and a functor $\iota\colon  \mathcal{I}
\to \md(A)$. We then need to show that
\[ \varinjlim \hom_{A}(M, M_\iota) \to \hom_{A}(M, \varinjlim M_{\iota})  ,  \]
is an equivalence. 
Consider the collection $\mathcal{U}$ of $A$-modules $N$ such that
\[ \varinjlim \hom_{A}(M, M_\iota \otimes_{A} N) \to \hom_{A}(M, \varinjlim M_{\iota}
\otimes_{A} N)  ,  \]
is a weak equivalence; we would like to show that it contains
$A$ itself. The collection $\mathcal{U}$ clearly forms a thick subcategory. 
Observe that it contains $N = B$ using the adjunction relation
\[ \hom_A(P, P' \otimes_A B) \simeq \hom_{B}(P \otimes_A B , P' \otimes_A B),   \]
valid for $P, P' \in \md(A)$,
and the assumption that $M \otimes_A B$ is compact in $\md(B)$. More generally,
this implies that 
every tensor product $B \otimes_A \dots \otimes_A B$ belongs to $\mathcal{U}$. Since $A$
is a retract of a finite limit of copies of such $A$-modules, via the cobar
construction, it follows that
$A \in \mathcal{C}$ and that $M$ is compact or perfect in $\md(A)$. 
\end{proof}

\begin{remark}
More generally, the argument of \Cref{compactdesc} shows that if $\mathcal{C}$ is an $A$-linear
$\infty$-category, and $M \in \mathcal{C}$ is an object that becomes compact after
tensoring with $B$ (as an object of $\md_{\mathcal{C}}(B)$), then $M$ was compact
to begin with. \Cref{compactdesc} itself could have also been proved by observing
that $\md(A) $ is a totalization $\mathrm{Tot}\left(\md(B) \rightrightarrows \md(B \otimes_A B)
\triplearrows\right)$ 
by \Cref{easydesc} 
and an $A$-module is thus dualizable (equivalently, compact) if
and only if its base-change to $\md(B)$ is, as dualizability in an inverse limit
of symmetric monoidal $\infty$-categories can be checked vertexwise (cf.
\cite[Prop. 4.6.1.11]{higheralg}). 
\end{remark}

\begin{proposition} 
Let $A \to B$ be a descendable morphism of $\e{\infty}$-rings. Let $M$ be an
$A$-module. Then $M$ is invertible if and only if $M \otimes_A B$ is invertible. 
\end{proposition}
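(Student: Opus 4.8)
The plan is to establish the nontrivial (``only if'') direction, since the ``if'' direction is immediate: the base-change functor $\md(A) \to \md(B)$ is symmetric monoidal, hence carries invertible objects to invertible objects, so if $M$ is invertible then so is $M \otimes_A B$.

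So suppose $N := M \otimes_A B$ is invertible in $\md(B)$. First I would check that $M$ is \emph{dualizable} in $\md(A)$. Indeed $N$, being invertible, is dualizable and therefore a perfect $B$-module; by \Cref{compactdesc}, applied to the descendable morphism $A \to B$, it follows that $M$ is a perfect $A$-module, hence dualizable in $\md(A)$. Write $M^\vee := \hom_A(M, A)$ for its dual, with coevaluation $A \to M \otimes_A M^\vee$ and evaluation $\mathrm{ev} \colon M^\vee \otimes_A M \to A$, and recall that $M$ is invertible precisely when $\mathrm{ev}$ is an equivalence.

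The heart of the argument is then to descend the statement that $\mathrm{ev}$ is an equivalence. Because $M$ is dualizable and base change is symmetric monoidal, the formation of duals commutes with $-\otimes_A B$: there is a canonical equivalence $M^\vee \otimes_A B \simeq N^\vee$ under which $\mathrm{ev} \otimes_A B$ is identified with the evaluation map $N^\vee \otimes_B N \to B$ of $N$ (the standard compatibility of duality data with symmetric monoidal functors, cf. \cite[Sec. 4.6.1]{higheralg}). Since $N$ is invertible, that evaluation map is an equivalence; thus the cofiber of $\mathrm{ev}$ is killed by $-\otimes_A B$. But $A \to B$ is descendable, hence faithful by \Cref{faithful}, so a morphism of $A$-modules whose cofiber is killed by $-\otimes_A B$ is itself an equivalence. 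Therefore $\mathrm{ev}$ is an equivalence, i.e.\ $M$ is invertible.

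The one point needing a little care is the identification of $\mathrm{ev} \otimes_A B$ with the evaluation map of $N$ — equivalently, that the chosen duality data for $M$ base-changes to duality data for $N$ — but this is formal once $M$ is known to be dualizable, so I do not expect a real obstacle. Alternatively, one can bypass the explicit duality data: by \Cref{easydesc}, $\md(A) \simeq \mathrm{Tot}\big(\md(B) \rightrightarrows \md(B \otimes_A B) \triplearrows \cdots\big)$ as symmetric monoidal $\infty$-categories, and an object of a limit of symmetric monoidal $\infty$-categories is invertible if and only if each of its vertexwise images is (the same vertexwise criterion as for dualizability, cf.\ \cite[Prop.\ 4.6.1.11]{higheralg}); the image of $M$ at the $n$-th vertex is the base change of the invertible object $N$ along $B \to B^{\otimes_A (n+1)}$, hence invertible, whence $M$ is invertible.
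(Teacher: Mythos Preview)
Your proof is correct and follows essentially the same approach as the paper: first deduce that $M$ is perfect via \Cref{compactdesc}, then observe that the evaluation map $M^\vee \otimes_A M \to A$ becomes an equivalence after base change to $B$ (since duals commute with base change for perfect modules), and conclude by faithfulness of $-\otimes_A B$. Your additional alternative argument via the totalization is a nice bonus but not needed.
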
 
\begin{proof} 
Observe first that $M \otimes_A B$ is perfect (since it is invertible), so 
$M$ is also perfect via \Cref{compactdesc}. The evaluation map $M
\otimes_A M^{\vee} \to A$ has the property
that it becomes an equivalence after tensoring up to $B$, since the formation
of $M \mapsto M^{\vee}$ commutes with base extension for $M$ perfect. It
follows that $M \otimes_A M^{\vee} \to A$ is itself an equivalence, so that $M$
is invertible. 
\end{proof}

Let $M$ be an $A$-module. If $A \to B$ is a descendable morphism of
$\e{\infty}$-rings such that $M \otimes_A B$ is a finite direct sum of copies
of $B$, the $A$-module $M$ itself need
not
look anything like a free module. (The finite covers explored in this paper are examples.)
However, such ``locally free'' $A$-modules seem to have interesting
and quite restricted properties.

\subsection{First examples}

In the following section, we will discuss more difficult examples of the 
phenomenon of admitting descent, and try to give a better feel for it. Here, we describe
some relatively ``formal'' examples of maps which admit descent. 

We start by considering the evident faithfully flat case. 
In general, we \emph{do not know} if a faithfully flat map $A \to B$ of
$\e{\infty}$-ring spectra (i.e., such that $\pi_0(A) \to \pi_0(B)$ is
faithfully flat and such that $\pi_*(A) \otimes_{\pi_0(A)} \pi_0(B) \to
\pi_*(B)$ is an isomorphism) necessarily admits descent, even in the case of
discrete $\e{\infty}$-rings. This would have some implications. For example, if $A$ and $B$ are discrete commutative rings, it would imply that if $M$ is an $A$-module
and $\gamma \in \mathrm{Ext}^n_A(M, M)$ is a class whose image in
$\mathrm{Ext}^n_{B}(M \otimes_A B, M \otimes_A B)$ vanishes, then $\gamma$ is
nilpotent. 
Nonetheless, one has:

\begin{proposition} 
\label{ffdesc}
Suppose $A \to B$ is a faithfully flat map of $\e{\infty}$-rings such that
$\pi_*(A)$ is countable. Then $A \to B$ admits descent. 
\end{proposition}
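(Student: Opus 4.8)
The plan is to reduce to the nilpotence criterion of \Cref{descnilp} and the pro-constancy criterion of \Cref{constpro}, and then to feed in faithfully flat descent together with the countability hypothesis. Work in $\mathcal{C} = \md(A)$, so that $B \in \clg(\md(A))$ and $\mathbf{1} = A$. First I would observe that, since $\pi_0(A) \to \pi_0(B)$ is faithfully flat and $\pi_*(A) \otimes_{\pi_0(A)} \pi_0(B) \xrightarrow{\sim} \pi_*(B)$, the adjunction $\md(A) \rightleftarrows \md(B)$ is comonadic by \Cref{luried}; equivalently the augmented cobar diagram $\cbaug(B)$ is a limit diagram, so the tower $\{\mathrm{Tot}_n \cb(B)\}_{n \geq 0}$ converges to $A$. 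By \Cref{constpro}, what remains is exactly to show that this $\mathrm{Tot}$-tower is \emph{pro-constant}; equivalently, writing $G_n = \mathrm{fib}(A \to \mathrm{Tot}_n \cb(B))$ (so that $\holim_n G_n \simeq 0$), that the tower $\{G_n\}$ is \emph{pro-zero}. Once this is known, $A$ is a retract of $\mathrm{Tot}_n \cb(B)$ for $n \gg 0$, and the latter lies in the thick $\otimes$-ideal generated by $B$ (it is a finite limit of the $B^{\otimes_A \bullet}$), so $B$ admits descent.

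The heart of the matter is thus to upgrade ``$\holim_n G_n \simeq 0$'' to ``$\{G_n\}$ pro-zero''. I would analyze this degreewise. Unwinding the $F_\bullet$-construction from the proof of \Cref{descnilp}, one has $G_n \simeq \Sigma^{-(n+1)} \overline{B}^{\otimes_A (n+1)}$ with $\overline{B} := \mathrm{cofib}(A \to B)$, and the flatness hypothesis identifies $\pi_*(\overline{B}^{\otimes_A m})$ with an iterated $\mathrm{Tor}$ over $\pi_0(A)$ of copies of $\overline{\pi_0(B)} := \pi_0(B)/\pi_0(A)$; in particular each $G_n$ is bounded above (its homotopy sits in degrees $\leq -1$). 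From $\holim_n G_n \simeq 0$ and the Milnor sequence we get $\varprojlim_n \pi_k G_n = 0$ and $\varprojlim^1_n \pi_k G_n = 0$ for every $k$. This is where countability enters: a tower of \emph{countable} abelian groups with vanishing $\varprojlim$ and $\varprojlim^1$ is Mittag--Leffler, hence pro-zero. After reducing, by a cardinality (L\"owenheim--Skolem type) argument, to the situation where $\pi_*(B)$ too may be taken countable -- so that all the groups $\pi_k G_n$ are countable -- one concludes that $\{\pi_k G_n\}_n$ is pro-zero for every $k$.

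Finally I would assemble this into pro-zeroness of $\{G_n\}$ itself. Since each $G_n$ is bounded above by a fixed degree, its Postnikov stages $\tau_{\geq -m} G_n$ are bounded on both sides, and on these ``pro-zero on each homotopy group'' propagates up a finite Postnikov tower to give that $\{\tau_{\geq -m} G_n\}_n$ is pro-zero for each $m$; passing to $G_n \simeq \holim_m \tau_{\geq -m} G_n$ and once more controlling the resulting $\varprojlim^1$ via countability yields $\{G_n\}$ pro-zero, which completes the proof. I expect this last circle of derived-limit and Mittag--Leffler bookkeeping -- together with the reduction making the relevant abelian groups countable -- to be the main obstacle: it is precisely the step that converts the \emph{automatic} convergence of the cobar tower into the strictly stronger pro-constancy, and it is exactly this step that is unavailable for a general faithfully flat map, which is why that case remains open.
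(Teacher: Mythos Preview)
There is a genuine gap. The claim that ``each $G_n$ is bounded above (its homotopy sits in degrees $\leq -1$)'' is false in general: the hypothesis places no connectivity restriction on $A$, and for, say, an even periodic $A$ (or any $A$ with $\pi_*(A)$ unbounded above) the modules $\overline{B}^{\otimes_A m}$ have homotopy in the same unbounded range as $\pi_*(A)$, so the $G_n$ are not bounded above. Your identification of $\pi_*(\overline{B}^{\otimes_A m})$ with iterated $\mathrm{Tor}$'s of $\pi_0(B)/\pi_0(A)$ drops the factor of $\pi_*(A)$; in fact, since $\pi_0(B)/\pi_0(A)$ is flat over $\pi_0(A)$ whenever $\pi_0(A) \to \pi_0(B)$ is faithfully flat, one has $\pi_*(\overline{B}^{\otimes_A m}) \cong \pi_*(A) \otimes_{\pi_0 A} (\pi_0(B)/\pi_0(A))^{\otimes m}$, which sits wherever $\pi_*(A)$ does. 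Without boundedness the Postnikov assembly in your last paragraph breaks down: knowing that $\{\tau_{\geq -m} G_n\}_n$ is pro-zero for each fixed $m$ does not produce a single $N$ with $G_N \to G_n$ null, because the witnesses $N_m$ may grow with $m$, and this is not a $\varprojlim^1$-type obstruction that countability can absorb. Indeed, ``pro-zero on each $\pi_k$'' does not imply ``pro-zero as a tower of spectra'' for unbounded objects; one can build counterexamples from towers of products $\prod_{k\geq 0}\Sigma^k H\mathbb{Z}/p$ in which the $k$th factor is killed only at stage $k$. The L\"owenheim--Skolem reduction to countable $\pi_*(B)$ is also not justified and it is unclear how it would preserve the conclusion, though this is a secondary issue.

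The paper's proof takes a completely different and much shorter route. Rather than analyzing the tower, it applies the nilpotence criterion of \Cref{descnilp} directly with $s=2$: any $B$-zero map $f\colon M \to M'$ in $\md(A)$ is \emph{phantom} (for any perfect $P$ and any $P \to M$, the composite $P \to M'$ is null, since after dualizing one has an injection $\pi_*(\mathbb{D}P \otimes_A M') \hookrightarrow \pi_*(\mathbb{D}P \otimes_A M' \otimes_A B)$ by faithful flatness), and the composite of two phantom maps in $\md(A)$ is null by \cite[Theorem~4.1.8]{axiomatic}. It is exactly this last step that uses the countability of $\pi_*(A)$, via Brown representability for homology theories on $A$-modules.
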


\begin{proof} 
We can use the criterion of \Cref{descnilp}. We claim that we can take $s = 2$.
That is, given composable maps $M \to M' \to M''$ of $A$-modules each of which
becomes
nullhomotopic after tensoring up to $B$, the \emph{composite} is nullhomotopic. 

To see this, we observe that any $B$-zero map in $\md(A)$ is \emph{phantom}. In
other words, if $M \to M'$ is $B$-zero, then any composite
\[ P \to M \to M',  \]
where $P$ is a perfect $A$-module, is already nullhomotopic. To see this, note
that $P \to M'$ is $B$-zero, but to show that it is already nullhomotopic, we
can dualize and consider
\[ \pi_* ( \mathbb{D}P \otimes_A M') \to \pi_* ( \mathbb{D}P \otimes_A M'
\otimes_A B),  \]
which is injective since $B$ is faithfully flat over $A$ on the level of
homotopy groups. The injectivity of this map forces any $B$-zero map $P \to M'$
to be automatically zero to begin with. 

Finally, we can conclude if we know that the composite of two phantom maps in
$\md(A)$ is
zero. This claim is \cite[Theorem 4.1.8]{axiomatic}; we need countability of
$\pi_*(A)$ to conclude that homology theories on $A$-modules are representable
(by \cite[Theorem 4.1.5]{axiomatic}). 
\end{proof} 

Without the countability hypothesis, the result about phantom maps is known to
be false. 
It is, however, possible to strengthen \Cref{ffdesc} using 
more recent techniques of \emph{transfinite} Adams representabililty \cite{RM}. We are grateful to Oriol Ravent{\'os} for 
explaining the following to us. 

\begin{proposition} 
Let $ A \to B$ be a faithfully flat morphism of $\e{\infty}$-rings such that
$\pi_*(A)$ has cardinality at most $\aleph_k$ for some $k \in \mathbb{N}$. Then
$A \to B$ admits descent.
\end{proposition}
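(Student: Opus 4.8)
The plan is to follow the structure of the proof of \Cref{ffdesc}, but replace the appeal to ordinary Adams representability (valid for countable $\pi_*(A)$, \cite[Thm. 4.1.8]{axiomatic}) with transfinite Adams representability from \cite{RM}, which applies under the cardinality bound $\aleph_k$. As before, the point is to bound the nilpotence exponent $s$ in the criterion of \Cref{descnilp}, though now $s$ will depend on $k$ rather than being uniformly $2$.

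First I would record the key structural fact, unchanged from \Cref{ffdesc}: any $B$-zero map $M \to M'$ of $A$-modules is \emph{phantom}, i.e.\ becomes nullhomotopic after precomposition with any perfect $A$-module $P \to M$. The argument is identical — dualize $P$, use that $\pi_*(\mathbb{D}P \otimes_A M') \to \pi_*(\mathbb{D}P \otimes_A M' \otimes_A B)$ is injective because $B$ is faithfully flat over $A$ on homotopy. So the whole problem reduces to: in $\md(A)$ with $\pi_*(A)$ of cardinality $\leq \aleph_k$, what is the least $s$ such that a composite of $s$ phantom maps vanishes? Then that same $s$ works in \Cref{descnilp}(1), and $A \to B$ admits descent.

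The heart of the matter is therefore the claim that the ideal of phantom maps in $\md(A)$ is nilpotent of exponent depending only on $k$. This is where \cite{RM} enters. The phantom maps $X \to Y$ are exactly the kernel of the natural map $[X,Y] \to \prod_{P \text{ perfect}} \hom(\text{colimit expressions})$, or more precisely they form $\varprojlim^1$-type obstruction classes for representing homology theories by a tower of perfect objects; under ordinary Adams representability every homology theory is representable and the tower has length $1$, forcing $s = 2$. Transfinite Adams representability replaces the single Milnor $\varprojlim^1$ sequence by a transfinite tower (of length $\omega_k$, the $k$-th infinite ordinal), and the corresponding statement — Ravent\'os's transfinite version of \cite[Thm. 4.1.8]{axiomatic} — gives that the composite of $k+2$ (or some explicit function of $k$) phantom maps is zero. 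I would cite this as the relevant input from \cite{RM} rather than reprove it; the paper's phrasing (``using more recent techniques of transfinite Adams representability'') signals exactly this.

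The main obstacle is verifying that the cardinality hypothesis on $\pi_*(A)$ is the right hypothesis to feed into the transfinite representability machine — i.e.\ that ``$\md(A)$ has a set of compact generators with morphism groups of size $\leq \aleph_k$'' (the actual input \cite{RM} needs) follows from $|\pi_*(A)| \leq \aleph_k$. This is a routine but genuine counting argument: $\md(A)^\omega$ is generated by $A$ under finite colimits and retracts, there are only $\aleph_k$-many such finite diagrams up to the relevant equivalence, and $[P, P']$ for $P, P'$ perfect is computed by a finite spectral sequence or filtration whose inputs are subquotients of finitely-generated $\pi_*(A)$-modules, hence of size $\leq \aleph_k$. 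Once that bookkeeping is in place, the transfinite Adams theorem supplies a finite bound $s = s(k)$ on composites of phantom maps, and \Cref{descnilp} finishes the proof exactly as in \Cref{ffdesc}.
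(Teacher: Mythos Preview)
Your proposal is correct and follows essentially the same approach as the paper: reduce via \Cref{descnilp} to showing composites of $B$-zero (hence phantom) maps vanish, then invoke the transfinite Adams representability of \cite{RM} to get the bound $s = k+2$, after first checking that $|\pi_*(A)| \leq \aleph_k$ gives the required cardinality bound on $\mathrm{Perf}(A)$. The paper's proof is exactly this, citing specific results from \cite{RM} (Prop.~2.13, Cor.~6.3.5, Prop.~6.1.6) to extract the nilpotence exponent $k+2$ from a projective-dimension bound on the Yoneda functors $h_X$.
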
 
\begin{proof} 
As above, it suffices to show that the composite of $k + 2$ phantom maps of $A$-modules
is necessarily nullhomotopic. 
Consider the class $\mathcal{C} = \mathrm{Perf}(A)$ of perfect $A$-modules, which 
at most $\aleph_\kappa$ isomorphism classes of objects. 

Consider the 
category $\md(\mathcal{C})$ of functors 
$\mathcal{C}^{op} \to \mathrm{Ab}$ which preserve finite coproducts. 
Given any object $X \in \md(A)$, the Yoneda lemma gives an object $h_X \in
\md(\mathcal{C})$. Note that $h_X$ is a filtered colimit of functors
representable by objects in $\mathcal{C}$. 
Taking $\alpha = \aleph_0$, we apply \cite[Prop. 2.13]{RM}, 
we find that $h_X$ for any $X \in \mathcal{C}$ has projective dimension $\leq
k+1$. By \cite[Cor. 6.3.5]{RM}, we find that $X$ is $(k+2)-\mathcal{C}$-cellular in the
sense of \cite[Def. 6.1.5]{RM}. Since $X$ was arbitrary, we find by \cite[Prop.
6.1.6]{RM} that the composite of $(k+2)$ phantom maps is zero.  
\end{proof} 

Since descendability is preserved under base change, we obtain: 

\begin{corollary} 
Let $A \to B$ be a faithfully flat map of $\e{\infty}$-rings such that
$\pi_0(B)$ has a presentation $\pi_0(A)$-algebra with at most $\aleph_k$
generators and relations for some $k \in \mathbb{N}$. Then $A \to B$
admits descent. 
\end{corollary}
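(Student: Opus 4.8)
The plan is to deduce this from the preceding proposition by a spreading-out argument, together with the fact (proved above) that admitting descent is preserved under base change along symmetric monoidal functors. It suffices to produce a map of $\e{\infty}$-rings $A_0 \to A$ with $\pi_*(A_0)$ of cardinality at most $\aleph_m$ for some $m \in \mathbb{N}$, and a faithfully flat morphism $A_0 \to B_0$ of $\e{\infty}$-rings, such that $B \simeq B_0 \otimes_{A_0} A$. Indeed, the preceding proposition then shows that $A_0 \to B_0$ admits descent, and applying the base-change result to the symmetric monoidal functor $\md(A_0) \to \md(A)$, $M \mapsto M \otimes_{A_0} A$, shows that $B$ admits descent as a commutative algebra object of $\md(A)$, i.e.\ that $A \to B$ admits descent.

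To construct $A_0 \to B_0$, write $A$ as an $\aleph_{k+1}$-filtered colimit $A \simeq \varinjlim_\alpha A_\alpha$ of $\aleph_{k+1}$-compact objects of $\clg$; since $\aleph_{k+1}$ is regular and $\clg$ is compactly generated, such a presentation exists, and each $A_\alpha$, being a $\le \aleph_k$-indexed colimit of free $\e{\infty}$-rings on finitely many generators, has $\pi_*(A_\alpha)$ of cardinality at most $\aleph_k$, with $\pi_* A \simeq \varinjlim_\alpha \pi_* A_\alpha$. Fix a presentation $\pi_0 B \simeq \pi_0 A[\{x_i\}_{i \in I}]/(\{f_j\}_{j \in J})$ with $|I|, |J| \le \aleph_k$. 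The (at most $\aleph_k$-many) coefficients of the $f_j$ all lie in the image of a single $\pi_0 A_{\alpha_0}$, so choosing lifts we obtain an $\aleph_k$-presented $\pi_0 A_{\alpha_0}$-algebra $C_0$ with $C_0 \otimes_{\pi_0 A_{\alpha_0}} \pi_0 A \simeq \pi_0 B$. Now $\pi_0 B$ is (faithfully) flat over $\pi_0 A \simeq \varinjlim_{\alpha \ge \alpha_0} \pi_0 A_\alpha$; by the appropriate transfinite extension of Grothendieck's limit theorems for flat and faithfully flat morphisms (cf.\ EGA IV, \S8 and \S11.2.6 for the classical, finitely presented case), there is $\alpha \ge \alpha_0$ so that $C_\alpha := C_0 \otimes_{\pi_0 A_{\alpha_0}} \pi_0 A_\alpha$ is faithfully flat over $\pi_0 A_\alpha$. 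Set $A_0 := A_\alpha$ and $B_0 := A_\alpha \otimes_{\pi_0 A_\alpha} C_\alpha$. Since $C_\alpha$ is flat over $\pi_0 A_\alpha$, the relative tensor products defining $B_0$ and $B_0 \otimes_{A_0} A$ agree with the underived ones on homotopy groups, so $B_0$ is a faithfully flat $\e{\infty}$-algebra over $A_0$ and $B_0 \otimes_{A_0} A$ is a flat $A$-algebra with $\pi_0 \simeq \pi_0 B$; the comparison map $B_0 \otimes_{A_0} A \to B$ induced by the $x_i$ is an equivalence since it is an isomorphism on $\pi_0$ and both sides are flat over $A$. This gives the desired reduction with $m = k$.

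The main obstacle is the middle step: descending the faithful flatness of the $\aleph_k$-presented extension $C_0$ to a finite stage of the filtered colimit. Classically, such spreading-out of flatness is the content of the limit theorems of EGA IV, but those are phrased for finitely presented morphisms, so one must upgrade them to the $\aleph_k$-presented setting — for instance by expressing $C_0$ as a $\le \aleph_k$-indexed filtered colimit of finitely presented $\pi_0 A_{\alpha_0}$-subalgebras and tracking the appearance of flatness through the colimit — and then transport the statement from discrete commutative rings to $\e{\infty}$-rings via $\pi_0$. One must also keep the cardinality bookkeeping under control so that the stage $A_\alpha$ that works still has $\pi_*$ of cardinality at most a fixed finite $\aleph_m$; this is exactly what makes the preceding proposition, and hence the whole argument, applicable.
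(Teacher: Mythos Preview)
Your strategy is exactly the one the paper has in mind: the paper's entire proof is the phrase ``since descendability is preserved under base change,'' and you are correctly trying to manufacture a faithfully flat $A_0 \to B_0$ with small $\pi_*(A_0)$ whose base change along $A_0 \to A$ recovers $A \to B$. You are also right to isolate the spreading-out of faithful flatness for $\aleph_k$-presented algebras as the real content here; the paper says nothing further about it.

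There is, however, a concrete error in your construction of $B_0$. The formula $B_0 := A_\alpha \otimes_{\pi_0 A_\alpha} C_\alpha$ presupposes an $\e{\infty}$-ring map $H(\pi_0 A_\alpha) \to A_\alpha$, so that $A_\alpha$ is an algebra over the discrete ring $\pi_0 A_\alpha$. No such map exists in general: for connective $A_\alpha$ the truncation goes the other way, $A_\alpha \to \pi_0 A_\alpha$, and for nonconnective $A_\alpha$ (which your $\aleph_{k+1}$-compact objects of $\clg$ may well be) there is no map in either direction. So the relative tensor product you write down is undefined, and the subsequent claims about its homotopy groups and flatness have nothing to attach to. Relatedly, the ``comparison map $B_0 \otimes_{A_0} A \to B$ induced by the $x_i$'' is not actually specified: sending $\pi_0$-generators to $\pi_0$-generators does not produce an $\e{\infty}$-algebra map unless $B_0$ has the relevant universal property, and a cell-by-cell construction with that universal property will not be flat.

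One way to repair this, at least when $A$ is connective, is to take the $A_\alpha$ in the connective $\infty$-category $\clg^{\mathrm{cn}}$ and invoke the equivalence between flat $\e{\infty}$-$A_\alpha$-algebras and flat discrete $\pi_0 A_\alpha$-algebras to \emph{define} $B_0$ as the flat lift of $C_\alpha$; the same correspondence then supplies the map to $B$ and the identification $B_0 \otimes_{A_0} A \simeq B$. For general $A$ one has to work harder. In any case, your diagnosis that the argument hinges on a transfinite spreading-out statement for flatness (and on realizing the resulting discrete map at the $\e{\infty}$-level) is accurate, and the paper does not spell this out either.
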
 

For example, a finitely presented faithfully flat map of discrete rings is
descendable. For a finitely presented map $A \to B$ of noetherian
rings, Bhatt and Scholze have shown \cite[Th. 5.26]{BhSch} that $A \to B$ is
admits descent if and only if $\mathrm{Spec}(B) \to \mathrm{Spec}(A)$ is an
$h$-cover, which is significantly weaker. 

In addition to faithfully flat maps which are not too large, there are examples
of descendable maps of $\e{\infty}$-rings which look more like (relatively
mild) quotients. 
\begin{proposition} 
Suppose $A$ is an $\e{\infty}$-ring which is connective and such that $\pi_i A
= 0 $ for $i \gg 0$. Then the map $A \to \pi_0 A$ admits descent. 
\end{proposition}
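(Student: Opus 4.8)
The plan is to unwind \Cref{admitd}: with $\mathcal{C} = \md(A)$, the morphism $A \to \pi_0 A$ admits descent exactly when the unit object $A \in \md(A)$ lies in the thick $\otimes$-ideal $\langle \pi_0 A\rangle$ generated by $\pi_0 A \in \clg(\md(A))$, so this single containment is all that must be proved.

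The input I would exploit is that $A$ is \emph{bounded}: being connective with $\pi_i A = 0$ for $i > n$ say, its Postnikov filtration as a module over itself is finite,
\[
0 \to \pi_0 A = \tau_{\leq 0} A \to \tau_{\leq 1} A \to \cdots \to \tau_{\leq n} A = A ,
\]
with $\mathrm{fib}(\tau_{\leq k} A \to \tau_{\leq k-1} A) \simeq \Sigma^k M_k$, where $M_k = \pi_k A$ is a discrete $\pi_0 A$-module (the heart of the $t$-structure on $\md(A)$ is the category of discrete $\pi_0 A$-modules), pushed into $\md(A)$ by restriction along $A \to \pi_0 A$. Since $\langle \pi_0 A\rangle$ is closed under shifts and cofiber sequences, it suffices to show that every discrete $\pi_0 A$-module $M$, regarded in $\md(A)$ in this way, lies in $\langle\pi_0 A\rangle$.

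The crux is a retraction argument. For a $\pi_0 A$-module $M$, the unit $M \to \pi_0 A \otimes_A M$ of the base-change/forgetful adjunction $\md(A) \rightleftarrows \md(\pi_0 A)$ is split in $\md(A)$: by the triangle identity, applying the forgetful functor to the counit at $M$ gives a retraction $\pi_0 A \otimes_A M \to M$. Hence $M$ is a retract of $\pi_0 A \otimes_A M$, which lies in $\langle\pi_0 A\rangle$ simply because it is a base change of $M$; therefore $M \in \langle\pi_0 A\rangle$. Feeding this back into the finite Postnikov filtration yields $A \in \langle\pi_0 A\rangle$, as desired. I do not expect a genuine obstacle here — the argument just assembles standard facts — the only points requiring a little care being that the Postnikov fibers are honestly discrete $\pi_0 A$-modules and that the above splitting is coherent rather than merely a splitting of homotopy categories.

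An alternative that bypasses the Postnikov filtration is to invoke \Cref{descnilp}: it is enough to show the composite of $n+1$ consecutive $\pi_0 A$-zero maps of $A$-modules is zero. In the tower $F_{n+1}(A) \to \cdots \to F_1(A) \to A$ built in the proof of \Cref{descnilp} one has $F_m(A) \simeq F_1(A)^{\otimes_A m}$ with $F_1(A) = \mathrm{fib}(A \to \pi_0 A)$, which satisfies $\pi_i F_1(A) = 0$ for $i \leq 0$; hence $F_{n+1}(A)$ is $(n+1)$-connective while $A$ is $n$-truncated, so $\mathrm{Hom}_{\md(A)}(F_{n+1}(A), A) = 0$ by $t$-structure orthogonality and $F_{n+1}(A) \to A$ is nullhomotopic. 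Its cofiber, which lies in $\langle\pi_0 A\rangle$ and retracts onto $A$, then exhibits $A \in \langle\pi_0 A\rangle$ once more.
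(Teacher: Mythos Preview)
Your first approach is exactly the paper's: use the finite Postnikov tower of $A$ to reduce to showing that any $A$-module with a single homotopy group (equivalently, any discrete $\pi_0 A$-module) lies in the thick $\otimes$-ideal generated by $\pi_0 A$, which the paper asserts with a bare ``thus'' and you spell out via the retraction $M \to \pi_0 A \otimes_A M \to M$ coming from the adjunction. (One small slip: your displayed arrows $\tau_{\leq 0}A \to \tau_{\leq 1}A \to \cdots$ point the wrong way --- the natural Postnikov maps go $\tau_{\leq k}A \to \tau_{\leq k-1}A$, as you in fact use when naming the fibers.) Your second approach, showing $F_1(A)^{\otimes_A (n+1)}$ is $(n+1)$-connective and hence maps trivially to the $n$-truncated $A$, is a clean alternative via \Cref{descnilp} that the paper does not give; it trades the Postnikov filtration for a direct connectivity estimate and has the mild advantage of not needing to identify the heart of the $t$-structure.
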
 

\begin{proof} 
Given an $A$-module $M$ such that $\pi_*(M)$ is concentrated in one degree, it
admits the structure of a $\pi_0 A$-module (canonically) and thus belongs to
the thick $\otimes$-ideal generated by $\pi_0 A$. However, $A$ admits a
finite resolution by such $A$-modules, since one has a finite Postnikov
decomposition of $A$ in $\md(A)$ whose successive cofibers have a single
homotopy group, and therefore belongs to the thick $\otimes$-ideal generated by
$\pi_0 A$. 
\end{proof}

\begin{proposition} 
Let $R$ be a discrete commutative ring. Let $I \subset R$ be a nilpotent ideal.
Then the map $R \to R/I$ of discrete commutative rings, considered as a map of
$\e{\infty}$-rings, admits descent.  
\end{proposition}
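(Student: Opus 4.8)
The plan is to reduce to a statement about the associated graded and use the filtration by powers of $I$.  Write $I^n = 0$.  The key observation is that for each $k$, the $R/I$-module $I^k/I^{k+1}$ (a discrete $R/I$-module, hence an object of the thick $\otimes$-ideal generated by $R/I$ in $\mathrm{Mod}(R)$) appears as a subquotient in a finite filtration of $R$ regarded as an $R$-module.  More precisely, the chain of $R$-submodules
\[ 0 = I^n \subset I^{n-1} \subset \dots \subset I \subset R \]
has successive quotients $I^k/I^{k+1}$, each of which is an $R/I$-module.  Hence $R$, as an object of $\mathrm{Mod}(R)$, is built out of finitely many $R/I$-modules by iterated cofiber sequences, so it lies in the thick $\otimes$-ideal generated by $R/I$.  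Since the unit of $\mathrm{Mod}(R)$ is $R$, this says exactly that $R \to R/I$ admits descent in the sense of \Cref{admitd}.

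Carried out in order: first I would invoke the fact that a finite filtration $0 = M_n \subset \cdots \subset M_0 = M$ of an object $M$ in a stable $\infty$-category, with each cofiber $M_{j}/M_{j+1}$ lying in a thick subcategory $\mathcal{D}$, forces $M \in \mathcal{D}$; this is an immediate induction on $n$ using that $\mathcal{D}$ is closed under (co)fibers.  Second, I would note that each $I^k/I^{k+1}$, being a discrete $R/I$-module (it is annihilated by $I$), is a module over the commutative algebra object $R/I \in \clg(\mathrm{Mod}(R))$, and therefore lies in the thick $\otimes$-ideal generated by $R/I$: indeed, the thick $\otimes$-ideal generated by $R/I$ contains every $R/I$-module, since any such module is a retract of a (possibly infinite, but here finite since $I^k/I^{k+1}$ is finitely generated over a — wait, $R$ need not be noetherian) — to avoid this, use instead the cleaner fact that the thick $\otimes$-ideal generated by $R/I$ contains $R/I \otimes_R N$ for every $N \in \mathrm{Mod}(R)$ by the ideal property, and $I^k/I^{k+1} \simeq R/I \otimes_R (I^k/I^{k+1})$ since the latter is already an $R/I$-module.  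Third, I would apply the filtration principle to $M = R$ with the $I$-adic filtration above, concluding $R$ lies in the thick $\otimes$-ideal generated by $R/I$, i.e.\ that ideal is all of $\mathrm{Mod}(R)$.

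There is essentially no hard part here; the one point requiring a moment's care is that the filtration $\{I^k\}$ is a filtration by $R$-submodules (hence the cofibers are computed as ordinary quotients, with no higher $\mathrm{Tor}$ since everything is discrete and the maps are injective), and that nilpotence of $I$ is precisely what makes this filtration \emph{finite}.  If $I$ were only, say, contained in the Jacobson radical, the filtration would be infinite and the argument would fail — consistent with the fact that $R \to R/I$ is not descendable in that generality.  One could alternatively phrase the whole thing via \Cref{descnilp}: any $R/I$-zero map of $R$-modules factors through tensoring with $I$ (up to the relevant fiber sequence), so composing $n$ of them lands in $I^n = 0$; but the thick-subcategory argument above is the most transparent and is what I would write up.
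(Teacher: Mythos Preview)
Your approach is exactly the paper's: use the finite $I$-adic filtration of $R$, note the graded pieces are $R/I$-modules, and conclude $R$ lies in the thick $\otimes$-ideal generated by $R/I$.

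One small slip to flag: the equivalence $I^k/I^{k+1} \simeq R/I \otimes_R (I^k/I^{k+1})$ is false in $\md(R)$ (the \emph{derived} tensor product), e.g.\ $R=\mathbb{Z}/4$, $I=(2)$ already gives higher $\mathrm{Tor}$. The correct (and equally easy) reason that any $R/I$-module $M$ lies in the thick $\otimes$-ideal generated by $R/I$ is that $M$ is a \emph{retract} of $R/I \otimes_R M$ via the unit and action maps; so the ideal property still does the job. With that fix, your write-up is complete.
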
 
\begin{proof} 
For $k \gg 0$, we have a finite filtration of $R$ in the world of discrete $R$-modules
\[ 0 = I^k \subset I^{k-1} \subset \dots \subset I \subset R,  \]
whose successive quotients are $R/I$-modules. This implies that $R/I$ generates all of $\md(R)$ as a thick $\otimes$-ideal. 
\end{proof} 

There are also examples of descendable morphisms where the condition on the
thick $\otimes$-ideals follows from a defining limit diagram. 
\begin{proposition} 
Let $R$ be an $\e{\infty}$-ring and let $X$ be a finite connected CW complex. Then the
map $C^*(X; R) \to R$ given by evaluating at a basepoint $\ast \in X$ admits
descent. 
\end{proposition}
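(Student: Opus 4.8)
The plan is to reduce, via the cellular structure of $X$, to a degreewise connectivity statement about the diagonal. Recall that $C^*(X;R)\simeq \hom_{\sp}(\Sigma^\infty_+ X,R)$ is an $\e{\infty}$-ring and that the map in question is the restriction $\mathrm{ev}_\ast\colon C^*(X;R)\to C^*(\ast;R)=R$ along the basepoint inclusion. Since $C^*(-;R)$ is invariant under pointed homotopy equivalence, I may assume $X$ has a single $0$-cell, equal to the basepoint; let $d=\dim X$ and $\ast = X^{(0)}\subseteq X^{(1)}\subseteq\cdots\subseteq X^{(d)}=X$ be the skeletal filtration, noting each $X^{(k)}$ is connected. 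Write $A=C^*(X;R)$, $A_k=C^*(X^{(k)};R)$, so $A_0=R$, $A_d=A$, and let $\pi_k\colon A\to A_k$ denote restriction to the $k$-skeleton; thus $\pi_0=\mathrm{ev}_\ast$.

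By \Cref{admitd}, what must be shown is that $R$, as a commutative algebra object of $\md(A)$ via $\mathrm{ev}_\ast$, generates $\md(A)$ as a thick $\otimes$-ideal; as the thick $\otimes$-ideal generated by the unit is everything, it is enough to show $A$ lies in the thick subcategory of $\md(A)$ generated by $R$. I would obtain this from a finite filtration of $A$ built from the skeleta. For each $k$, applying $\hom_{\sp}(-,R)$ to the cofiber sequence $\Sigma^\infty_+ X^{(k-1)}\to\Sigma^\infty_+ X^{(k)}\to\Sigma^\infty(X^{(k)}/X^{(k-1)})$, where $\Sigma^\infty(X^{(k)}/X^{(k-1)})\simeq\bigvee_e\Sigma^k\mathbb{S}$ is a wedge of $k$-sphere spectra indexed by the $k$-cells $e$ of $X$, yields (the sequence being one of $\Sigma^\infty_+ X^{(k)}$-comodules) a fiber sequence of $A_k$-modules
\[ \bigoplus_e \Sigma^{-k}R \longrightarrow A_k \longrightarrow A_{k-1}. \]
Pulling these back to $\md(A)$ along the $\pi_k$ (using $\pi_{k-1}=(A_k\to A_{k-1})\circ\pi_k$) assembles a tower $A=\pi_d^* A_d\to\pi_{d-1}^*A_{d-1}\to\cdots\to\pi_0^*A_0=R$ in $\md(A)$ whose successive fibers are the $\pi_k^*\big(\bigoplus_e\Sigma^{-k}R\big)$.

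The crux, and the step I expect to be the main obstacle, is to identify each fiber term $\pi_k^*\big(\bigoplus_e\Sigma^{-k}R\big)$ in $\md(A)$ with a finite direct sum of shifts of $R=\pi_0^*R$. Equivalently, I must show the $A_k$-module $\bigoplus_e\Sigma^{-k}R$ has its action pulled back along basepoint evaluation $A_k\to A_0=R$. This is the triviality of a comodule structure: $\Sigma^\infty(X^{(k)}/X^{(k-1)})\simeq\bigvee_e\Sigma^k\mathbb{S}$ is a comodule over the coalgebra $\Sigma^\infty_+ X^{(k)}$, and the reduced part of its coaction is a map from a wedge of $k$-sphere spectra into $\big(\bigvee_e\Sigma^k\mathbb{S}\big)\otimes\Sigma^\infty X^{(k)}$; since $X^{(k)}$ is connected, $\Sigma^\infty X^{(k)}$ is $1$-connective, this target is $(k+1)$-connective, and such a map is null. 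Hence the comodule is trivial, and $R$-linear duality turns a trivial comodule into a module whose $A_k=\hom_{\sp}(\Sigma^\infty_+ X^{(k)},R)$-action factors through the unit map $\hom_{\sp}(\Sigma^\infty_+ X^{(k)},R)\to\hom_{\sp}(\mathbb{S},R)=R$, i.e.\ through basepoint evaluation; pulling back along $\pi_k$ and using that basepoint evaluation composed with $\pi_k$ equals $\pi_0$ identifies the $k$-th fiber term with $\bigoplus_e\Sigma^{-k}(\pi_0^*R)$.

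Given this, the proof would finish by induction up the tower: $\pi_0^*A_0=R$ trivially lies in the thick subcategory generated by $R$, and if $\pi_{k-1}^*A_{k-1}$ does, then since $\pi_k^*A_k$ sits in a cofiber sequence with $\pi_{k-1}^*A_{k-1}$ and a finite sum of shifts of $R$, so does $\pi_k^*A_k$; taking $k=d$ gives $A\in\langle R\rangle_{\mathrm{thick}}$, whence $R$ is descendable. (Alternatively one could attach cells one at a time, using the pullback squares $C^*(X_m;R)\simeq C^*(X_{m-1};R)\times_{C^*(S^{n-1};R)}R$ together with \Cref{permanence}(1); the same connectivity input then shows the fiber of each single attachment is a shift of $R$ carrying the basepoint-evaluation action.)
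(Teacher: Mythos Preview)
Your approach is correct and yields a valid proof, but it takes a much longer route than the paper's one-line argument. The paper simply observes that $C^*(X;R)\simeq\varprojlim_X R$ is a \emph{finite} limit (since $X$ is a finite complex) of copies of $R$, taken in $\md(C^*(X;R))$: for each point $x\in X$ the term is $R$ with the $C^*(X;R)$-module structure coming from $\mathrm{ev}_x$, and connectedness of $X$ makes each of these equivalent to $R_\ast$. Hence the unit lies in the thick subcategory generated by $R_\ast$, and we are done. Your skeletal filtration is really an explicit unwinding of this same finite limit: the cell-attaching pullbacks $A_k \simeq A_{k-1}\times_{\prod_e C^*(S^{k-1};R)}\prod_e R$ exhibit each $A_k$ as an iterated finite limit of $R_x$'s, so the induction goes through immediately once you note $R_x\simeq R_\ast$.

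One remark on the step you flagged as the crux. Showing that the reduced coaction $\bigvee_e \Sigma^k\mathbb{S}\to(\bigvee_e\Sigma^k\mathbb{S})\wedge\Sigma^\infty X^{(k)}$ is null by connectivity only tells you the coaction \emph{map} is homotopic to the trivial one; promoting this to an equivalence of comodules (and hence of $A_k$-modules after dualizing) involves higher coherences and would need an additional argument. Fortunately you do not need this identification at all: to place $F_k$ in the thick subcategory generated by $R_\ast$, it is enough that $F_k=\mathrm{fib}\big(\bigoplus_e R_{p_e}\to\bigoplus_e C^*(S^{k-1}_e;R)\big)$ is a finite limit of objects each already in that thick subcategory (the $R_{p_e}$'s because $X$ is connected, and $C^*(S^{k-1}_e;R)$ because it is itself a finite limit of $R_x$'s along the attaching map). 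So the connectivity input is a red herring; once removed, your argument collapses to the paper's.
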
 
\begin{proof} 
In fact, the $\e{\infty}$-ring $C^*(X; R)$ is a finite limit (indexed by $X$) of copies of $R$ by definition. 
That is, $C^*(X; R) \simeq \varprojlim_X R$. 
\end{proof} 

\begin{proposition} 
Let $R $ be an $\e{\infty}$-ring and let $x \in \pi_0 R$. Then the map $R \to
R[x^{-1}] \times \widehat{R}_x$ (where $\widehat{R}_x$ is the $x$-adic
completion) admits descent. 
\end{proposition}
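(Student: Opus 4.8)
The plan is to realize the unit $R \in \md(R)$ as a finite homotopy limit of objects that carry the structure of a $B$-module, where $B = R[x^{-1}] \times \widehat{R}_x$; by \Cref{admitd} (equivalently, the finitistic criterion recorded right after \Cref{constpro}) this is exactly what is needed to conclude that $R \to B$ admits descent. The essential input is the arithmetic fracture square: for every $R$-module $M$ the square
\[ \xymatrix{
M \ar[d] \ar[r] & \widehat{M}_x \ar[d] \\
M[x^{-1}] \ar[r] & (\widehat{M}_x)[x^{-1}]
} \]
is cartesian, where $\widehat{(-)}_x$ denotes the derived $x$-adic completion. The one nonformal point is that $C := \mathrm{cofib}(M \to \widehat{M}_x)$ is $x$-local: this is built into the derived completion, and concretely $C/x \simeq \mathrm{cofib}(M/x \to \widehat{M}_x/x) \simeq 0$ since completion does not change the reduction modulo $x$, so $x$ acts invertibly on $C$. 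Hence the two horizontal cofibers of the square coincide, so it is a pullback; I would either cite this from the standard references on completions of module spectra or include the short argument just given.

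Applying the fracture square with $M = R$, and using that the underlying $R$-module of the product $\e{\infty}$-ring $B$ is the direct sum $R[x^{-1}] \oplus \widehat{R}_x$, the square becomes a fiber sequence of $R$-modules
\[ R \longrightarrow B \longrightarrow (\widehat{R}_x)[x^{-1}]. \]
Now $R[x^{-1}]$ and $\widehat{R}_x$ are direct summands of $B$ as $R$-modules (equivalently, the two factor $B$-algebras), so each lies in the thick $\otimes$-ideal generated by $B$ in $\md(R)$; since a thick $\otimes$-ideal is closed under tensoring with arbitrary objects, the object $(\widehat{R}_x)[x^{-1}] \simeq \widehat{R}_x \otimes_R R[x^{-1}]$ lies there too. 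As $R$ is the fiber of a map between two members of this thick $\otimes$-ideal, and thick subcategories are closed under fibers, $R$ lies in it as well; by \Cref{admitd} this says precisely that $R \to B$ admits descent.

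The only genuine obstacle is establishing the fracture square; granting it, the conclusion is immediate, and in fact one obtains the sharp bound $\mathcal{I}_B^2 = 0$ in the language of \Cref{descnilp}, since $R$ fits in the fibre sequence above with $B$-module outer terms. Two points deserve care: $\widehat{R}_x$ must be the \emph{derived} $x$-adic completion, so that $\widehat{R}_x/x \simeq R/x$ holds with no connectivity hypothesis on $R$; and one should verify that the product ring $B$ genuinely has underlying $R$-module $R[x^{-1}] \oplus \widehat{R}_x$, so that the middle term of the fiber sequence is $B$ itself rather than merely an object admitting a $B$-module structure.
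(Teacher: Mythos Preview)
Your proof is correct and takes essentially the same approach as the paper: both use the arithmetic fracture square to exhibit $R$ as a finite limit of $B$-modules, placing $R$ in the thick $\otimes$-ideal generated by $B$. The paper simply states the square and observes that all three corners are $B$-modules; you rephrase this as a fiber sequence and are more explicit about why each term lies in the thick $\otimes$-ideal, and you include a justification of the fracture square itself, but the argument is the same.
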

\begin{proof} 
This follows from the arithmetic square
\[ \xymatrix{
R \ar[d] \ar[r] & R[x^{-1}] \ar[d]  \\
\widehat{R}_x \ar[r] &  \widehat{R}_x[x^{-1}]
}.\]
All three of the terms in the fiber product here are $R[x^{-1}] \times
\widehat{R}_x$-modules, so $R$ belongs to the thick subcategory generated by the 
$R[x^{-1}] \times
\widehat{R}_x$-modules and we are done. 
\end{proof} 

Next we include a deeper result, which will imply (for example) that the faithful Galois
extensions considered by \cite{rognes} admit descent; this will be very
important in the rest of the paper. 
The theory of nilpotence with respect to a dualizable algebra object
has been treated in more detail in \cite{MNNequiv}.

\begin{theorem} 
\label{cptdescent}
Let $\mathcal{C}$ be a stable homotopy theory. 
Suppose $\mathbf{1} \in \mathcal{C}$ is compact, and suppose $A \in
\clg(\mathcal{C})$ is dualizable and faithful (i.e., tensoring with $A$ is
conservative). Then $A$ admits descent. 
\end{theorem}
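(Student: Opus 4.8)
The plan is to verify the defining condition of \Cref{admitd} head-on: writing $J$ for the thick $\otimes$-ideal generated by $A$, I want to show $\mathbf{1}\in J$ (which, since $J$ is an ideal, is the same as $J=\mathcal{C}$). The strategy is to dualize the cobar construction $\cbaug(A)$ and then use compactness of $\mathbf{1}$ to cut out a \emph{finite} piece that witnesses $\mathbf{1}\in J$. Two elementary observations come first. (a) Since $A$ is a commutative algebra, $A^{\vee}=\underline{\mathrm{Hom}}_{\mathcal{C}}(A,\mathbf{1})$ carries a canonical $A$-module structure (it is the cofree $A$-module on $\mathbf{1}$), and \emph{every} $A$-module $M$ lies in $J$: indeed $M$ is a retract of the free module $A\otimes M$, which lies in $J$ because $J$ contains $A$ and is a $\otimes$-ideal, and $J$ is closed under retracts. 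In particular $A^{\vee}\in J$, hence $(A^{\vee})^{\otimes k}\in J$ for every $k\ge 1$. (b) Dualizability forces $A^{\vee}$ to be faithful as well: if $M\otimes A^{\vee}\simeq 0$, then $M\otimes A\otimes A^{\vee}\simeq\underline{\mathrm{Hom}}_{\mathcal{C}}(A,M\otimes A)\simeq 0$, but $M\otimes A$ is a retract of $\underline{\mathrm{Hom}}_{\mathcal{C}}(A,M\otimes A)$ (evaluate at the unit $\mathbf{1}\to A$), so $M\otimes A\simeq 0$, and faithfulness of $A$ gives $M\simeq 0$.

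Next I would dualize the augmented cobar. All terms of $\cbaug(A)=\bigl(\mathbf{1}\to A\to A\otimes A\to\cdots\bigr)$ are dualizable, being finite tensor powers of the dualizable object $A$, so dualizing term by term produces an augmented \emph{simplicial} object $D_{\bullet}$ with $D_{n}=(A^{\vee})^{\otimes(n+1)}$ and augmentation $D_{0}=A^{\vee}\to\mathbf{1}$ the dual of the unit. The key point is that $D_{\bullet}\otimes A^{\vee}$ is precisely the term-by-term dual of $\cbaug(A)\otimes A$, and the latter is a \emph{split} augmented cosimplicial object (the extra codegeneracy is the multiplication of $A$ — this is the same splitting invoked in \Cref{splitconst} and in the proof of \Cref{constpro}). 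Dualizing a split augmented cosimplicial object of dualizable objects yields a split augmented simplicial object, which is a colimit diagram (dually to \Cref{splitconst}). Hence $|D_{\bullet}|\otimes A^{\vee}\simeq|D_{\bullet}\otimes A^{\vee}|\simeq A^{\vee}$, compatibly with the augmentation $|D_{\bullet}|\to\mathbf{1}$; therefore the cofiber $C$ of $|D_{\bullet}|\to\mathbf{1}$ satisfies $C\otimes A^{\vee}\simeq 0$, and faithfulness of $A^{\vee}$ (observation (b)) gives $C\simeq 0$, i.e. $|D_{\bullet}|\simeq\mathbf{1}$.

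Finally I would bring in compactness. The skeletal filtration exhibits $\mathbf{1}\simeq|D_{\bullet}|\simeq\varinjlim_{n}\mathrm{sk}_{n}|D_{\bullet}|$ as a sequential colimit, so since $\mathbf{1}$ is compact the identity map factors through some $\mathrm{sk}_{N}|D_{\bullet}|$; thus $\mathbf{1}$ is a retract of $\mathrm{sk}_{N}|D_{\bullet}|$. But $\mathrm{sk}_{N}|D_{\bullet}|$ is built from $D_{0},\dots,D_{N}$ by finitely many cofiber sequences (the latching objects are finite colimits of lower $D_{k}$'s), and each $D_{k}$ lies in $J$ by observation (a); since $J$ is thick this forces $\mathrm{sk}_{N}|D_{\bullet}|\in J$, hence $\mathbf{1}\in J$, which is the claim. (Equivalently, the last two paragraphs can be repackaged through \Cref{constpro}: the dual assertion is that the tower $\{\mathrm{Tot}_{n}\cb(A)\}$ is pro-constant with limit $\mathbf{1}$.)

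I expect the main obstacle to be the middle step — identifying $|D_{\bullet}|$ with $\mathbf{1}$. The naive route of simply dualizing the equivalence $\mathbf{1}\simeq\mathrm{Tot}\,\cb(A)$ does \emph{not} work, because $\underline{\mathrm{Hom}}_{\mathcal{C}}(-,\mathbf{1})$ does not carry the infinite limit $\mathrm{Tot}$ to the infinite colimit $|{-}|$ in general; the fix is exactly to dualize only the \emph{split} auxiliary object $\cbaug(A)\otimes A$, whose totalization is forced for ``diagrammatic reasons'', and then descend the resulting statement along the faithful functor $-\otimes A^{\vee}$. Beyond that, the remaining work — the splitting after $\otimes A$, the behavior of $-\otimes A$ and $-\otimes A^{\vee}$ on (co)limits via dualizability, and the latching/skeletal bookkeeping — is routine formal manipulation with dualizable objects and thick $\otimes$-ideals.
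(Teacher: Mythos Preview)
Your proof is correct and rests on the same core idea as the paper's --- dualize the cobar tower and use compactness of $\mathbf{1}$ to extract a finite witness --- but the organization differs. The paper first invokes the Barr--Beck--Lurie theorem (using that $\otimes A$ preserves limits, since $A$ is dualizable) to show $\cbaug(A)$ is a limit diagram and that $\varprojlim(\cb(A)\otimes M)\simeq M$ for every $M$; it then applies a general lemma (\Cref{dualthing}) which says that a cofiltered diagram of dualizable objects is pro-constant once its limit is dualizable and the limit commutes with tensoring. That lemma is proved by dualizing to an ind-object and using compactness of $\mathbf{1}$ to embed $\mathrm{Ind}(\mathcal{C}^{\mathrm{dual}})$ fully faithfully in $\mathcal{C}$.

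Your route bypasses both Barr--Beck and the abstract lemma: you dualize the cobar directly to a simplicial object $D_\bullet$, identify $|D_\bullet|\simeq\mathbf{1}$ by tensoring with the faithful $A^{\vee}$ (where the diagram splits), and then read off $\mathbf{1}\in J$ from a finite skeleton. This is arguably more elementary --- no monadicity, no general pro-constancy criterion --- and it makes the role of the algebra structure on $A^{\vee}$ pleasantly explicit. The paper's approach, on the other hand, isolates \Cref{dualthing} as a reusable tool (a necessary-and-sufficient criterion for pro-constancy of dualizable diagrams), which has independent value. Both arguments are the same in spirit; yours is a hands-on unpacking of what the paper packages into a lemma.
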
 
\begin{proof} 
Consider the cobar construction $\cb(A)$ on $A$. The first claim is that it
converges to $\mathbf{1}$: that is, the augmented cosimplicial construction
$\cbaug(A)$ is a limit diagram. To see this, we can apply the Barr-Beck-Lurie
theorem to $A$. Since $A$ is dualizable, we have
for $X, Y \in \mathcal{C}$,
\[ \hom_{\mathcal{C}}(Y, A \otimes X) \simeq \hom_{\mathcal{C}}( \mathbb{D}A
\otimes Y, X), \]
and in particular tensoring with $A$ commutes with all limits in $\mathcal{C}$. 
Since tensoring with $A$ is conservative, we find that the hypotheses of the
Barr-Beck-Lurie theorem go into effect (cf. also \cite[2.6]{banerjee}). In particular, $\cb(A)$ converges to
$\mathbf{1}$ and, moreover, for any $M \in \mathcal{C}$, $\cb(A) \otimes M $
converges to $M$. 
We need to show that the induced pro-object is \emph{constant}, though. This
will follow from the next lemma. 
\end{proof}

\begin{lemma} 
\label{dualthing}
Let $(\mathcal{C}, \otimes, \mathbf{1})$ be a stable homotopy
theory where $\mathbf{1}$ is compact. 
Let $I$ be a cofiltered category, and let $F\colon  I \to \mathcal{C}$ be a functor.
Suppose that for each $i \in I$, $F(i) \in \mathcal{C}$ is dualizable. Then 
$F$ defines a constant pro-object (or is \emph{pro-constant}) if and only if the following are satisfied. 
\begin{enumerate} 
\item  $\varprojlim_I F(i)$ is a dualizable object. 
\item For each object $C \in \mathcal{C}$, the natural map
\begin{equation} \label{thismap} ( \varprojlim_I F(i)) \otimes C \to
\varprojlim_I (F(i) \otimes C)    \end{equation}
is an equivalence. 
\end{enumerate}
\end{lemma}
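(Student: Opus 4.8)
The plan is to prove the two directions separately, with the "only if" direction being essentially formal and the "if" direction requiring a genuine argument via the dual category. First I would dispose of the easy implication: if $F$ is pro-constant, then $\varprojlim_I F(i)$ exists in $\mathcal{C}$ and is preserved by any finite-limit-preserving functor (by \Cref{whenisproobjconst}), in particular by $(-) \otimes C$ for any $C$ (the tensor product with $C$ preserves all limits when $\mathbf{1}$ is compact? — no, we do not have that; rather, $(-)\otimes C$ is only guaranteed to preserve \emph{finite} limits since it preserves finite colimits and $\mathcal{C}$ is stable, hence finite limits). So condition (2) follows from \Cref{whenisproobjconst} applied to the exact functor $(-)\otimes C$. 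For condition (1): each $F(i)$ is dualizable, the subcategory of dualizable objects is stable and idempotent-complete and sits as the cocompact objects of its own pro-category inside $\pro(\mathcal{C})$; since $F$ factors through the dualizable objects and defines a constant pro-object, the limit $\varprojlim_I F(i)$ lands in (the idempotent completion of the thick subcategory generated by the $F(i)$, hence in) the dualizable objects — more carefully, a constant pro-object valued in a full subcategory $\mathcal{D}\subset\mathcal{C}$ closed under finite limits and retracts, whose image in $\pro(\mathcal{D})$ is also constant, has its limit in $\mathcal{D}$; and being a constant pro-object in $\mathcal{C}$ forces constancy in $\pro(\mathcal{C}^{\mathrm{dual}})$ by the fully-faithfulness of $\pro(\mathcal{C}^{\mathrm{dual}})\to\pro(\mathcal{C})$ (since $\mathcal{C}^{\mathrm{dual}}$ consists of cocompact objects in $\pro(\mathcal{C}^{\mathrm{dual}})$ and these map to cocompact objects). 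This needs a small lemma but is routine.

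For the converse — assume (1) and (2), prove pro-constancy — the key idea is to pass to the internal dual. Write $L = \varprojlim_I F(i)$, which by (1) is dualizable with dual $\mathbb{D}L$. I would use the characterization \Cref{whenisproobjconst}: it suffices to show that for \emph{every} functor $G\colon \mathcal{C}\to\mathcal{D}$ preserving finite limits, $G$ preserves $\varprojlim_I F(i)$. The trick is that since $\mathbf{1}$ is compact, the functor $X \mapsto \pi_0\hom_{\mathcal{C}}(\mathbf{1}, X) = \pi_0 X$ detects equivalences on dualizable objects in a suitable sense, and more robustly: for dualizable $D$ and arbitrary $C$, $\hom_{\mathcal{C}}(C, D\otimes C') \simeq \hom_{\mathcal{C}}(\mathbb{D}D\otimes C, C')$. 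So I want to reduce the statement "$G$ preserves $\varprojlim F(i)$" to a statement about mapping spectra, which (2) controls. Concretely: for any $C$, apply $\hom_{\mathcal{C}}(\mathbf{1}, -\otimes C)$; by (2) and compactness of $\mathbf{1}$ (so $\hom(\mathbf{1},-)$ commutes with the relevant limits, or at least we track the tower), the tower $\{F(i)\otimes C\}$ has limit $L\otimes C$ computed "diagrammatically." The cleanest route: show the canonical map $\varinjlim_{I^{\op}} \hom_{\mathcal{C}}(F(i), C) \to \hom_{\mathcal{C}}(L, C)$ is an equivalence for all $C$ — this says $L$ corepresents the filtered colimit of the corepresentables, i.e. $L$ \emph{is} the pro-object in the relevant sense — and deduce constancy. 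Using dualizability, $\hom_{\mathcal{C}}(F(i), C) \simeq \hom_{\mathcal{C}}(\mathbf{1}, \mathbb{D}F(i)\otimes C)$; one checks $\{\mathbb{D}F(i)\}$ is an ind-object with colimit $\mathbb{D}L$ (dualizing (1) and (2)), and since $\mathbf{1}$ is compact, $\hom_{\mathcal{C}}(\mathbf{1}, -)$ commutes with that filtered colimit, giving the desired equivalence.

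The hard part will be making the dualization argument precise — specifically, showing that the hypotheses (1) and (2) really force the \emph{ind}-object $\{\mathbb{D}F(i)\}_{i\in I^{\op}}$ to have colimit $\mathbb{D}L$ with the colimit "diagrammatically stable," i.e. preserved by all finite-colimit-preserving functors (which is the dual of what we want and would let us invoke the $\mathrm{Ind}$-dual of \Cref{whenisproobjconst}). The subtlety is that dualization $D\mapsto \mathbb{D}D$ is only defined on dualizable objects and is \emph{contravariant}, and while $L$ is dualizable by (1), the individual "partial limits" of the tower need not be, so one must argue directly at the level of the cofiltered system rather than termwise. I expect to handle this by showing: for any dualizable $E$, $\hom_{\mathcal{C}}(E, L) \simeq \varprojlim_I \hom_{\mathcal{C}}(E, F(i))$ (immediate from (2) applied with $C = \mathbb{D}E$, using duality \eqref{duality} and compactness to exchange $\hom(\mathbf{1},-)$ with the limit), and that this, together with the fact that $\mathcal{C}$ is generated under colimits by dualizable objects together with compactness of $\mathbf{1}$, pins down $L$ as the pro-object value; then \Cref{whenisproobjconst} applies because any finite-limit-preserving $G\colon\mathcal{C}\to\mathcal{D}$ extends to $\pro(\mathcal{C})\to\pro(\mathcal{D})$ and the above corepresentability forces $G(L)$ to be the honest limit. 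The remaining bookkeeping — coherence of the various duality equivalences, and checking the compact-unit hypothesis is used exactly where needed — is routine.
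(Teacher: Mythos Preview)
Your approach is essentially the paper's: dualize the cofiltered diagram $F$ of dualizable objects to obtain a filtered diagram $\mathbb{D}F$, and argue that the resulting ind-object is constant by identifying its colimit with $\mathbb{D}L$. The paper's execution is crisper at the key step. Rather than trying to verify \Cref{whenisproobjconst} for arbitrary $G$, or establishing a corepresentability criterion for $L$, the paper simply observes that compactness of $\mathbf{1}$ forces every dualizable object to be \emph{compact} in $\mathcal{C}$, so that $\mathrm{Ind}(\mathcal{C}^{\mathrm{dual}}) \hookrightarrow \mathcal{C}$ is fully faithful. An ind-object of dualizables is therefore constant if and only if its colimit, computed in $\mathcal{C}$, lands back in $\mathcal{C}^{\mathrm{dual}}$. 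This reduces the entire ``if'' direction to the single verification that $\varinjlim_I \mathbb{D}F(i) \to \mathbb{D}L$ is an equivalence in $\mathcal{C}$; after applying $\hom_{\mathcal{C}}(-, C)$ and using duality, that map is exactly $\hom_{\mathcal{C}}(\mathbf{1}, -)$ applied to \eqref{thismap}.

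Two points where you can tighten: first, your worry about separately establishing ``diagrammatic stability'' of the ind-colimit dissolves once you use the full faithfulness of $\mathrm{Ind}(\mathcal{C}^{\mathrm{dual}}) \to \mathcal{C}$ --- constancy is automatic as soon as the colimit is dualizable. Second, you invoke in your final paragraph that $\mathcal{C}$ is generated under colimits by dualizable objects; this is neither assumed nor needed (the paper explicitly remarks on this), since full faithfulness of the $\mathrm{Ind}$-embedding is all that is used.
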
 

\begin{proof} 
Let $\mathbb{D}$ be the duality functor (of internal hom into $\mathbf{1}$); it
induces a contravariant auto-equivalence
on the subcategory $\mathcal{C}^{\mathrm{dual}}$ of dualizable objects in $\mathcal{C}$. 
To say that $F$ defines a constant pro-object in $\mathcal{C}$ (or,
equivalently, 
$\mathcal{C}^{\mathrm{dual}}$)
is to say that
$\mathbb{D}F$, which is an \emph{ind}-object of 
$\mathcal{C}^{\mathrm{dual}}$, defines a constant ind-object. In other words, we
have a commutative diagram
of $\infty$-categories,
\[ \xymatrix{
\mathcal{C}^{\mathrm{dual}} \ar[d]^{\subset} \ar[r]_{\simeq}^{\mathbb{D}} &
\mathcal{C}^{\mathrm{dual, \ op}} \ar[d]^{\subset} \\
\pro(\mathcal{C}^{\mathrm{dual}}) \ar[d]^{\subset}\ar[r]_{\simeq}^{\mathbb{D}} &
\mathrm{Ind}(\mathcal{C}^{\mathrm{dual}})^{\op}  \\
\pro(\mathcal{C})
}.\]
Now, since $\mathcal{C}^{\mathrm{dual}} \subset \mathcal{C}$ consists of compact
objects (since $\mathbf{1} \in \mathcal{C}$ is compact), we know that 
there is a fully faithful inclusion $\mathrm{Ind}( \mathcal{C}^{\mathrm{dual}})
\subset \mathcal{C}$, which sends an ind-object to its colimit. If $\mathcal{C}$ is generated by dualizable objects, this
is even an equivalence, but we do not need this. 

As a result, to show that $\mathbb{D}F \in \mathrm{Ind}(
\mathcal{C}^{\mathrm{dual}})$ defines a constant ind-object, it is sufficient to
show that its colimit in $\mathcal{C}$ actually belongs to
$\mathcal{C}^{\mathrm{dual}}$. 
Let $X = \varprojlim_I F(i) \in \mathcal{C}$; by hypothesis, this is a
dualizable object. 
We have a natural map (in $\mathcal{C}$)
\[ \varinjlim_I \mathbb{D} F(i) \to  \mathbb{D} X,  \]
and if we can prove that this is an equivalence, we will have shown that
$\varinjlim_I \mathbb{D} F(i)$ is a dualizable object and thus the ind-system
is constant. In other words, we must show that if $C \in \mathcal{C}$ is
arbitrary, then the natural map
of spectra
\[ \hom_{\mathcal{C}}(\mathbb{D}X, C) \to \varprojlim_I
\hom_{\mathcal{C}}(\mathbb{D} F(i), C)   \]
is an equivalence. But this map  is precisely
$\hom_{\mathcal{C}}(\mathbf{1}, \cdot)$ applied to \eqref{thismap}, so we are done.
\end{proof} 

\begin{remark} 
This result requires $\mathbf{1}$ to  be compact. If $\mathcal{C}$
is the stable homotopy theory of $p$-adically complete chain complexes of
abelian groups (i.e., the localization  of $D(\mathbb{Z})$ at
$\mathbb{Z}/p\mathbb{Z}$), then $\mathbb{Z}/p\mathbb{Z}$ is a
dualizable, faithful commutative
algebra object, but the associated pro-object is not constant, or the
$p$-adic integers $\mathbb{Z}_p$ would be torsion. 
\end{remark} 

\begin{remark} 
One can prove the same results (e.g., \Cref{cptdescent}) if $A  \in \mathcal{C}$ is given an
\emph{associative} (or $\e{1}$) algebra structure, rather than an
$\e{\infty}$-algebra structure. However, the symmetric monoidal structure on
$\mathcal{C}$ itself is crucial throughout. 
\end{remark} 

\subsection{Application: descent for linear $\infty$-categories}
In fact, the definition of descent considered here gives a more
general result than \Cref{easydesc}. Let
$\mathcal{C}$ be an $A$-linear $\infty$-category in the sense of \cite{DAGss}. 
In other words, $\mathcal{C}$ is a presentable, stable $\infty$-category which
is a \emph{module} in the symmetric monoidal $\infty$-category $\prl$ of presentable,
stable $\infty$-categories over $\md(A)$. This means that there is a bifunctor, which preserves colimits in each variable,
\[ \otimes_A\colon  \md(A) \times \mathcal{C} \to \mathcal{C} , \quad (M, C) \mapsto M \otimes_A
{C} \]
together with additional compatibility data: for instance, equivalences $A \otimes_A M
\simeq M$  for each $M \in \mathcal{C}$. 

Given such a $\mathcal{C}$, one can study, for any $A$-algebra $B$, the
$\infty$-category $\md_{\mathcal{C}}(B)$ of $B$-modules \emph{internal to $\mathcal{C}$}: this is the
``relative tensor product'' in $\prl$
\[ \md_{ \mathcal{C}}(B) = \mathcal{C} \otimes_{\md(A)} \md(B).  \]
Useful references for this, and for the tensor product of presentable
$\infty$-categories, are \cite{gaitsnotes} and \cite{BFN}. 

Informally, $\md_{\mathcal{C}}(B)$ is the target of an $A$-bilinear functor
\[  \otimes_A\colon  \mathcal{C} \times \md(B) \to \md_{\mathcal{C}}(B),  \quad (X, M) \mapsto X
\otimes_A M,\]
which is colimit-preserving in each variable, and it is universal for such.
As in the case $\mathcal{C} = \md(A)$, one has an adjunction
\[ \mathcal{C} \rightleftarrows \md_{\mathcal{C}}(B),  \]
given by ``tensoring up'' and forgetting the $B$-module structure.

One can then ask whether descent holds in $\mathcal{C}$, just as we studied
earlier for $A$-modules. In other words, we can ask whether
$\mathcal{C}$ is equivalent to  the $\infty$-category of
$B$-modules in $\mathcal{C}$ equipped with analogous ``descent data'':
equivalently, whether the ``tensoring up'' functor
$\mathcal{C} \to \md_{\mathcal{C}}(B)$ is comonadic. 
Stated another way, 
we are asking whether, for any $\md(A)$-module
\emph{$\infty$-category} $\mathcal{C}$, we have an equivalence
of $A$-linear $\infty$-categories
\begin{equation}\label{Alineardescent}  \mathcal{C} \simeq \mathrm{Tot}\left( \mathcal{C} \otimes_{\md(A)}
\md(B)^{\otimes (\bullet+1)}\right).  \end{equation}

In fact, the proof of \Cref{easydesc} applies and we get:
\begin{corollary} 
 \label{proconstdescentC}
Suppose $A \to B$ is a descendable morphism of $\e{\infty}$-rings. 
Then $A \to B$ satisfies descent for any $A$-linear $\infty$-category
$\mathcal{C}$ in that 
the functor
from
$\mathcal{C} $
to ``descent data''
is an equivalence. 
\end{corollary}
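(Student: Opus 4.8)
The plan is to verify the hypotheses of the Barr--Beck--Lurie theorem for the adjunction $\mathcal{C} \rightleftarrows \md_{\mathcal{C}}(B)$ given by tensoring up along $A \to B$ and forgetting the $B$-module structure; by \cite[Th. 4.7.6.2]{higheralg}, comonadicity of this adjunction is equivalent to the asserted equivalence \eqref{Alineardescent}, where $\mathcal{C} \otimes_{\md(A)} \md(B)^{\otimes(n+1)} \simeq \md_{\mathcal{C}}(B^{\otimes_A(n+1)})$. Thus I must check that (i) $-\otimes_A B \colon \mathcal{C} \to \md_{\mathcal{C}}(B)$ is conservative, and (ii) for every cosimplicial object $X^\bullet$ of $\mathcal{C}$ such that $X^\bullet \otimes_A B$ admits a splitting, $\mathrm{Tot}(X^\bullet)$ exists (automatic, as $\mathcal{C}$ is presentable) and the natural map $\mathrm{Tot}(X^\bullet)\otimes_A B \to \mathrm{Tot}(X^\bullet \otimes_A B)$ is an equivalence. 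Both are deduced from descendability exactly as in the proof of \Cref{easydesc}, but with the thick $\otimes$-ideal argument now carried out inside $\md(A)$ rather than inside $\mathcal{C}$.

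For (i): since the $\md(A)$-action preserves colimits in each variable and $\mathcal{C}$ is stable, it suffices to show that $X \otimes_A B \simeq 0$ forces $X \simeq 0$. The collection of $M \in \md(A)$ with $X \otimes_A M \simeq 0$ is a thick $\otimes$-ideal of $\md(A)$ containing $B$; since $A \to B$ is descendable this thick $\otimes$-ideal is all of $\md(A)$, so it contains $A$, whence $X \simeq X \otimes_A A \simeq 0$.

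For (ii): it is enough to show that the tower $\{\mathrm{Tot}_n(X^\bullet)\}_n$ defines a constant pro-object of $\mathcal{C}$, since constancy implies (by \Cref{whenisproobjconst}, applied to $-\otimes_A B$) that $\mathrm{Tot}$ commutes with $-\otimes_A B$. Consider
\[ \mathcal{U} = \{\, M \in \md(A) : \{\mathrm{Tot}_n(X^\bullet \otimes_A M)\}_n \text{ is a constant pro-object of } \mathcal{C} \,\}. \]
Because a splitting is extra structure preserved by arbitrary functors, the forgetful functor $\md_{\mathcal{C}}(B) \to \mathcal{C}$ carries the split augmented cosimplicial object extending $X^\bullet \otimes_A B$ to a split augmented cosimplicial object of $\mathcal{C}$, so \Cref{splitconst} gives $B \in \mathcal{U}$. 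The constant pro-objects form a thick subcategory of $\pro(\mathcal{C})$, and each $\mathrm{Tot}_n(-)$ is a finite limit, hence preserves cofiber sequences of cosimplicial objects; therefore $\mathcal{U}$ is thick. For the $\otimes$-ideal property, observe that for $N \in \md(A)$ the functor $-\otimes_A N \colon \mathcal{C} \to \mathcal{C}$ is exact (it preserves all colimits, hence finite colimits, hence — being a functor of stable $\infty$-categories — finite limits), so it extends to a limit-preserving functor $\pro(\mathcal{C}) \to \pro(\mathcal{C})$ which preserves constancy and commutes with each $\mathrm{Tot}_n$; since $X^\bullet \otimes_A (M \otimes_A N) \simeq (X^\bullet \otimes_A M)\otimes_A N$, this shows $M \in \mathcal{U} \Rightarrow M \otimes_A N \in \mathcal{U}$. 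Hence $\mathcal{U}$ is a thick $\otimes$-ideal of $\md(A)$ containing $B$, so by descendability $\mathcal{U} = \md(A) \ni A$, and therefore $X^\bullet = X^\bullet \otimes_A A$ defines a constant pro-object, as needed.

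The one genuinely delicate point — the main obstacle — is checking that $\mathcal{U}$ is closed under the $\md(A)$-action, i.e. that tensoring a constant pro-object of $\mathcal{C}$ with an arbitrary $A$-module again yields a constant pro-object. This replaces the use, in the proof of \Cref{easydesc}, of the symmetric monoidal functor $\mathcal{C} \to \pro(\mathcal{C})$ (under which a tensor product of constant pro-objects is constant), and here it rests on the exactness of $-\otimes_A N$ together with the functoriality of $\pro(-)$ for exact functors. Everything else is a routine transcription of the argument for \Cref{easydesc}.
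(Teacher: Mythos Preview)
Your proof is correct and follows essentially the same route as the paper: apply Barr--Beck--Lurie, prove conservativity and preservation of $B$-split totalizations each by a thick $\otimes$-ideal argument carried out inside $\md(A)$ rather than inside $\mathcal{C}$. The paper's proof is terser---it simply asserts that the collection $\mathcal{U}$ is a thick $\otimes$-ideal---whereas you have spelled out the $\otimes$-ideal step (that $-\otimes_A N$ is exact on $\mathcal{C}$ for any $N\in\md(A)$, hence carries constant pro-objects to constant pro-objects), which is exactly the point worth making explicit.
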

\begin{proof} 
By the Barr-Beck-Lurie theorem, we need to see that tensoring with $B$ defines a conservative functor
$\mathcal{C} \to \md_{\mathcal{C}}(B)$ which respects $B$-split totalizations. 
Conservativity can be proved as in \Cref{faithful}. Given $R \in \mathcal{C}$,
the collection of $A$-modules $M$ such that $M \otimes_A R \simeq 0$ is a thick
$\otimes$-ideal in $\md(A)$. If $B$ belongs to this thick $\otimes$-ideal, so must $A$, and $R$
must be zero. 

Let $X^\bullet\colon  \Delta \to \mathcal{C}$ be a cosimplicial object which becomes
split after tensoring with $B$. As in \Cref{easydesc}, it suffices to show that
the pro-object that $X^\bullet$ defines is constant in $\mathcal{C}$. 
This follows via the same thick subcategory argument: one considers the
collection of $M \in \md(A)$ such that $X^\bullet \otimes_A M$ defines a
constant pro-object, and observes that $M$ is a thick $\otimes$-ideal containing
$B$, thus containing $A$. 
Thus $X^\bullet$ defines a constant pro-object. 
\end{proof} 

We note that the argument via pro-objects yields a mild strengthening of the
results in the DAG series.
In particular, it shows that if $A \to B$ is a morphism 
of $\e{\infty}$-rings which is faithfully flat and presented by at most
$\aleph_k$ generators and relations (for some $k \in \mathbb{N}$), it satisfies descent for any $A$-linear
$\infty$-category. In the DAG series, this is proved assuming
\emph{\'etaleness}
\cite[Th. 5.4]{DAGdesc}
or for faithfully flat morphisms assuming existence of a
$t$-structure \cite[Th. 6.12]{DAGss}. In fact, this idea of descent via thick subcategories  seems to be the
right setting for considering the above questions, in view of the following
result, which was explained to us by Jacob Lurie: 

\begin{proposition} 
Let $A \to B$ be a morphism of $\e{\infty}$-rings such that, for any $A$-linear
$\infty$-category, descent holds, i.e., we have an equivalence \eqref{Alineardescent}. 
Then $A \to B$ admits descent. 
\end{proposition}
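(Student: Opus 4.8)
The plan is to prove the contrapositive: if $A \to B$ does \emph{not} admit descent, then there is some $A$-linear $\infty$-category for which descent fails. The key idea is to apply the descent hypothesis to a cleverly chosen $A$-linear category --- the natural candidate is the Verdier quotient (or a localization) that ``kills'' $B$. Concretely, let $\mathcal{I} \subset \md(A)$ be the thick $\otimes$-ideal generated by $B$, and let $\mathcal{C}_0 = \mathrm{Ind}(\md^\omega(A)/\mathcal{I}^{\omega})$ be the associated localization of $\md(A)$, i.e.\ the $A$-linear $\infty$-category obtained by inverting all maps whose cofiber lies in (the $\mathrm{Ind}$-completion of) $\mathcal{I}$. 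This $\mathcal{C}_0$ is $A$-linear, and by construction $B \otimes_A (-)$ annihilates $\mathcal{C}_0$: every object of $\mathcal{C}_0$ becomes zero after base change to $B$, since $B$ itself (and hence every $B$-module) maps to zero in $\mathcal{C}_0$.

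First I would make precise the claim that $\md_{\mathcal{C}_0}(B) \simeq \mathcal{C}_0 \otimes_{\md(A)} \md(B) \simeq 0$: this follows because $\mathcal{C}_0$ is, by the finite-localization description recalled in \thref{vq}, exactly the subcategory of $\mathcal{I}^{\perp}$-local objects, and the unit $\mathbf{1}_{\mathcal{C}_0}$ is sent to a zero object of $\md_{\mathcal{C}_0}(B)$ because it is already $\mathcal{I}^\perp$-local while $B \in \mathcal{I}$. Since the whole category $\md_{\mathcal{C}_0}(B)$ is generated under colimits by $B$-modules tensored with objects of $\mathcal{C}_0$, and each such object vanishes, we get $\md_{\mathcal{C}_0}(B) \simeq 0$, and consequently the entire cosimplicial diagram $\mathcal{C}_0 \otimes_{\md(A)} \md(B)^{\otimes(\bullet+1)}$ is objectwise zero, so its totalization is $0$.

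Then the descent equivalence \eqref{Alineardescent} for $\mathcal{C}_0$ forces $\mathcal{C}_0 \simeq 0$. But $\mathcal{C}_0 = 0$ means precisely that the localization $L_{\mathcal{I}}$ is zero on all of $\md(A)$, equivalently that $\mathbf{1} = A$ already lies in $\mathcal{I}$, i.e.\ $A$ belongs to the thick $\otimes$-ideal generated by $B$. That is exactly the statement that $A \to B$ admits descent (\thref{admitd}), completing the contrapositive.

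The main obstacle I anticipate is the bookkeeping needed to justify that $\mathcal{C}_0 \otimes_{\md(A)} \md(B)$ is really zero --- one must argue carefully with the relative tensor product of presentable $\infty$-categories and check that the localization $\md(A) \to \mathcal{C}_0$ base-changes correctly, i.e.\ that $\mathcal{C}_0 \otimes_{\md(A)} \md(B)$ is the analogous localization of $\md(B)$ at the ideal generated by the image of $\mathcal{I}$, which contains the unit $B$ and is therefore everything. (Alternatively one can avoid the localization and directly feed the degenerate cosimplicial object, but the localization makes the vanishing transparent.) Once that identification is in hand, the rest is formal from the Barr--Beck--Lurie packaging of descent in \thref{proconstdescentC} and the definition of descendability.
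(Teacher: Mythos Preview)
Your argument has a genuine gap at the final step, where you claim that $\mathcal{C}_0 = 0$ is equivalent to $A$ lying in the \emph{thick} $\otimes$-ideal generated by $B$. The problem is already visible in your construction of $\mathcal{C}_0$. To obtain a presentable $A$-linear $\infty$-category as a quotient of $\md(A)$, you must kill a \emph{localizing} subcategory, not merely a thick one; the expression $\mathrm{Ind}(\md^\omega(A)/\mathcal{I}^\omega)$ you write only makes sense when $B$ is perfect, which is not assumed. The only sensible interpretation of your $\mathcal{C}_0$ is therefore the Bousfield localization of $\md(A)$ away from the \emph{localizing} $\otimes$-ideal $\mathcal{L}$ generated by $B$. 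With that reading, your computation $\md_{\mathcal{C}_0}(B) \simeq 0$ is correct, and descent for $\mathcal{C}_0$ indeed forces $\mathcal{C}_0 = 0$, i.e.\ $\mathcal{L} = \md(A)$. But this only says that $A$ lies in the localizing $\otimes$-ideal generated by $B$, which is strictly weaker than descendability: closure under arbitrary colimits is a much coarser condition than closure under finite ones and retracts. You give no argument bridging this gap, and there is no reason to expect one in general --- the objects $B \otimes M$ are typically not compact, so one cannot invoke anything like Neeman's theorem to pass from the localizing to the thick ideal.

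The paper's proof takes a completely different route, and the choice is not accidental. Rather than a quotient, the paper applies the descent hypothesis to (a presentable approximation of) $\pro(\md(A))$. The point is that in $\pro(\md(A))$ the totalization of the cobar construction on $B$ is literally the Tot tower $\{\mathrm{Tot}_n \cb(B)\}$ regarded as a pro-object, and by \Cref{constpro} this pro-object equals the constant pro-object $A$ precisely when $A \to B$ is descendable in the sense of \Cref{admitd}. Thus $\pro(\md(A))$ is tailor-made to detect the \emph{thick} $\otimes$-ideal condition, whereas your localization $\mathcal{C}_0$ can only see the localizing one. Put differently: your $\mathcal{C}_0$ tests whether the cobar construction has the right limit, while the paper's $\pro(\md(A))$ tests whether the Tot tower is pro-constant --- and it is the latter, finer, condition that characterizes descendability.
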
 
\begin{proof} 
Suppose $A \to B$ does not admit descent. We will look for a counterexample 
to \eqref{Alineardescent}. 
We will exhibit an $A$-linear presentable $\infty$-category $\mathcal{D}$ and an object $X
\in \mathcal{D}$ such that the totalization of the cobar construction $\cb(B)
\otimes_A X$ is not equivalent to $X$. 

The idea is to take $\mathcal{D} = \pro( \md(A))$ and $X = A$. 
Consider the cobar construction $B \rightrightarrows
 B \otimes_A B\triplearrows \dots$. The totalization of the cobar construction in $\pro(\md(A))$ is
\emph{precisely} the cobar construction considered as a pro-object via the
$\mathrm{Tot}$ tower. In particular,  if  $A \to B$ fails to admit 
descent, the cobar construction does not converge to $A$ in $\pro(\md(A))$. 

In order to make this argument precise, we have to address the fact that $\pro(
\md(A))$ is not really an $A$-linear $\infty$-category: it is not, for example,
presentable. 
Choose a regular cardinal $\kappa$ such that $B$ is $\kappa$-compact as an
$A$-module.
Choose a small subcategory $\mathcal{C} \subset \pro( \md(A))$ which contains
the constant object $A$ and is closed under $\kappa$-small colimits and
countable limits.
Then $\mathcal{C}$ is tensored over the $\infty$-category $\md^\kappa(A)$
of $\kappa$-compact $A$-modules, so the presentable
$\infty$-category $\mathcal{D} = \mathrm{Ind}_\kappa(\mathcal{C})$ is tensored over $\md(A)$
in a compatible manner. Moreover, in this $\infty$-category the totalization of
the cobar construction $B \rightrightarrows B \otimes_A B \triplearrows \dots$
does not converge to $A$ as that does not happen in $\mathcal{C}$. 
\end{proof} 

Finally, we note a ``categorified'' version of
descent, which, while likely far from the strongest possible, is already of interest in 
studying the Brauer group of $\e{\infty}$-rings such as $\TMF$. 
This phenomenon has been extensively studied (under the name ``1-affineness'')
in \cite{gaits}. We will only consider a very simple and special case of this question. 

The idea is that instead of considering descent for modules over
a ring spectrum $R$ (possibly internal to a linear $\infty$-category), we will
consider descent for the linear $\infty$-categories themselves, which we
will call \emph{2-modules},
meaning modules \emph{over} the presentable, symmetric monoidal
$\infty$-category $\md( R)$. 

\newcommand{\tmd}{\operatorname{2-Mod}}
\begin{definition} 
Given an $\e{\infty}$-ring $R$, there is a symmetric monoidal $\infty$-category
$\tmd(R)$
of  $R$-linear
$\infty$-categories with the $R$-linear tensor product. In other words, $\tmd(R)$ consists of modules (in the
symmetric monoidal $\infty$-category of presentable, stable
$\infty$-categories) over $\md(R)$. 
\end{definition} 

For a useful reference, see \cite{gaitsnotes, AntieauGepner}. 
We now record:

\begin{proposition} 
\label{2descformal}
Let $A \to B$ be a descendable morphism of $\e{\infty}$-rings. Then $\tmd$ satisfies descent along $A \to B$. 
\end{proposition}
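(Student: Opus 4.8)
The plan is to verify the Barr--Beck--Lurie criterion for the base-change functor $F\colon \tmd(A)\to\tmd(B)$, $\mathcal{C}\mapsto\md_{\mathcal{C}}(B)=\md(B)\otimes_{\md(A)}\mathcal{C}$, exactly as in the proof of \thref{easydesc}. By \cite[Th.~4.7.6.2]{higheralg}, once $F$ is shown to be comonadic it follows that the natural functor from $\tmd(A)$ to $\mathrm{Tot}\!\left(\tmd(B)\rightrightarrows\tmd(B\otimes_A B)\triplearrows\dots\right)$ is an equivalence, which is precisely the assertion that $\tmd$ satisfies descent along $A\to B$. So I must check two things: that $F$ is conservative, and that $F$ preserves totalizations of $F$-split cosimplicial objects.

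The main point --- and the feature that makes the ``categorified'' situation \emph{easier} than the $1$-categorical one of \thref{easydesc}, where constant pro-objects were unavoidable --- is that $F$ in fact preserves \emph{all} limits, so the second hypothesis is automatic. The reason is that $\md(B)=\mathrm{Mod}_B(\md(A))$ is a dualizable (in fact self-dual) object of the symmetric monoidal $\infty$-category $\tmd(A)=\mathrm{Mod}_{\md(A)}(\prl)$: for any $\mathcal{N}\in\tmd(A)$, an $\md(A)$-linear colimit-preserving functor out of $\mathrm{Mod}_B(\md(A))$ is the same datum as a $B$-module object of $\mathcal{N}$, so that $\fun^{L}_{\md(A)}(\md(B),\mathcal{N})\simeq\md_{\mathcal{N}}(B)\simeq\md(B)\otimes_{\md(A)}\mathcal{N}$. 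Thus $F$ is equivalent to the internal-hom functor $\fun^{L}_{\md(A)}(\md(B),-)$, which is a right adjoint and therefore preserves limits; no pro-object argument is needed here. The same remark applies with $B^{\otimes_A n+1}$ in place of $B$, since each $\md(B^{\otimes_A n+1})$ is likewise self-dual in $\tmd(A)$; I will reuse this below.

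It remains to see that $F$ is conservative, which is the only place descendability enters. Here I would argue: given $f\colon\mathcal{C}\to\mathcal{D}$ in $\tmd(A)$ with $F(f)$ an equivalence, base-changing $F(f)$ along the first coface $\md(B)\to\md(B^{\otimes_A n+1})$ shows that $\md(B^{\otimes_A n+1})\otimes_{\md(A)}f$ is an equivalence for every $n\ge 0$; since $A\to B$ is descendable, \thref{proconstdescentC} identifies $\mathrm{id}_{\tmd(A)}$ with the functor $\mathcal{C}\mapsto\mathrm{Tot}_n\!\left(\md(B^{\otimes_A n+1})\otimes_{\md(A)}\mathcal{C}\right)$ naturally in $\mathcal{C}$, so $f$ is an equivalence. (Alternatively one checks directly, as in \thref{faithful}, that $F$ detects zero objects --- if $F(\mathcal{C})\simeq 0$ then for $X\in\mathcal{C}$ the $A$-modules $M$ with $M\otimes_A X\simeq 0$ in $\mathcal{C}$ form a thick $\otimes$-ideal of $\md(A)$ containing $B$, hence all of $\md(A)$, forcing $X\simeq A\otimes_A X\simeq 0$ --- and then promotes this to conservativity using that $F$ preserves finite limits and colimits and that the objects of $\tmd(A)$ are stable.) With both hypotheses verified, $F$ is comonadic and the theorem follows. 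The one genuinely nontrivial ingredient is the self-duality of $\md(B)$ in $\tmd(A)$ (equivalently, that $\md(B)\otimes_{\md(A)}(-)$ is an internal hom); the slightly delicate point of passing from ``detects zero objects'' to full conservativity is what I sidestep by invoking \thref{proconstdescentC}, and everything else is formal.
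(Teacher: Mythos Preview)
Your proof is correct and follows essentially the same approach as the paper: verify Barr--Beck--Lurie for $F=\md(B)\otimes_{\md(A)}(-)$ by showing $F$ is conservative (via \Cref{proconstdescentC}) and preserves all limits. The only cosmetic difference is that you phrase the limit-preservation step in terms of the self-duality of $\md(B)$ in $\tmd(A)$, whereas the paper simply identifies $F(\mathcal{C})$ with the $\infty$-category of $B$-module objects in $\mathcal{C}$ and notes this construction commutes with limits --- these are the same observation.
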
 
As noted in \cite{gaits} and \cite{DAGdesc}, this is a formal consequence of
descent in linear $\infty$-categories (that is, \Cref{proconstdescentC}), but we recall the proof for convenience. 
\begin{proof} 
Recall that we have the adjunction
\[ (F, G)  = \left(  \otimes_{\md(A)} \md(B),  \ \mathrm{forget}\right) \colon \quad  \tmd(A) \rightleftarrows \tmd(B) ,  \]
where $G$ is the forgetful functor from $B$-linear $\infty$-categories to
$A$-linear $\infty$-categories, and where $F$ is ``tensoring up.''
The assertion of the proposition is that this adjunction is comonadic. 
By the Barr-Beck-Lurie theorem, it suffices to show now that $F$ is
conservative and preserves certain totalizations. 

But $F$ is conservative
because any $\mathcal{C}$-linear $\infty$-category can be recovered from its
``descent data'' after tensoring up to $B$ (\Cref{proconstdescentC}). 
Moreover, $F$ commutes with all limits. In fact, $F$ sends an $A$-linear
$\infty$-category $\mathcal{C}$ to the collection of $B$-module objects in
$\mathcal{C}$, and this procedure is compatible with limits. 
\end{proof} 

It would be interesting to give conditions under which one could show that a
2-module over $R$ admitted a compact generator if and only if it did so locally
on $R$ in some sense. This would yield a type of descent for the \emph{Brauer
spectrum} of $R$ (see for instance
\cite{AntieauGepner}), 
whose $\pi_0$ consists of equivalence classes of invertible 2-modules that
admit a compact generator. Descent for compactly generated $R$-linear
$\infty$-categories is known to hold in the \emph{usual} \'etale
topology on $\e{\infty}$-rings \cite[Theorem 6.1]{DAGdesc}, although the proof is long and
complex. Descent also holds for the finite covers considered in this paper
which are \emph{faithful}.  It would be interesting to see if it held for $L_n S^0 \to
E_n$, possibly in some $K(n)$-local sense.

\label{lindesc}

\section{Nilpotence and Quillen stratification}

Let $(\mathcal{C}, \otimes, \mathbf{1})$ be a stable homotopy theory. 
Let $A \in \clg(\mathcal{C})$ be a commutative algebra object in $\mathcal{C}$.
In general, we might hope that (for whatever reason) phenomena in
$\md_{\mathcal{C}}(A)$ might be simpler to understand than phenomena in
$\mathcal{C}$. For example, if $\mathcal{C} = \sp$, we do not know the homotopy
groups of the sphere spectrum, but there are many $\e{\infty}$-rings whose
homotopy groups we do know completely: for instance, $H \mathbb{F}_p$ and $MU$. 
We might then try to use our knowledge of $A$ and some sort of descent to
understand phenomena in $\mathcal{C}$. For instance, we might attempt to
compute the homotopy groups of an object $M \in \mathcal{C}$ by constructing
the cobar resolution
\[ M \to \left( M \otimes A \rightrightarrows M \otimes A \otimes A
\triplearrows \dots \right),  \]
and hope that it converges to $M$. This method is essentially the \emph{Adams
spectral sequence}, which, in case $\mathcal{C} = \sp$, is one of the most
important tools one has for calculating
and understanding the stable homotopy groups of spheres. 

In the previous section, we introduced a type of commutative algebra object $A
\in \clg(\mathcal{C})$ such that, roughly, the above descent method converged
very efficiently --- much more efficiently, for instance, than the classical
Adams or Adams-Novikov spectral sequences. One can see this at the level of descent spectral sequences in the
existence of \emph{horizontal vanishing lines} that occur at finite stages. 
In particular, in this situation, one can understand phenomena in $\mathcal{C}$
from phenomena in $\md_{\mathcal{C}}(A)$ and $\md_{\mathcal{C}}(A \otimes A)$
``up to (bounded) nilpotence.'' We began discussing this in \Cref{descnilp}. The purpose
of this section is to continue that discussion and to describe several
fundamental (and highly non-trivial) examples of commutative algebra
objects that admit descent. 
These ideas have also been explored in \cite{balmersep}, and we learned of the
connection with Quillen stratification from there. 

\subsection{Descent spectral sequences}

Let $\mathcal{C}$ be a stable homotopy theory. 
Let $A \in \clg(\mathcal{C})$ and let $M \in \mathcal{C}$. As usual, we can try
to study $M$ via the $A$-module $M \otimes A$ and, more generally, the cobar
construction $M \otimes \cb(A)$. 
In this subsection, we will describe the effect of descendability on
the resulting spectral sequence. 

\begin{definition} 
The $\mathrm{Tot}$ tower of the cobar construction $M \otimes \cb(A)$ is called
the \textbf{Adams tower} $\{T_n(A, M)\}$ of $M$. 
The induced spectral sequence converging to $\pi_* \varprojlim ( M \otimes
\cb(A))$ is called the \textbf{Adams spectral sequence} for $M$ (based on $A$). 
\end{definition} 

The Adams tower has the property 
that it comes equipped with maps
\[ \xymatrix{ 
& \vdots \ar[d]  \\
& T_2(A, M) \ar[d]  \\
 & T_1(A, M) \ar[d]  \\
M \ar[ru] \ar[ruu] \ar[r] &  T_0(A, M) \simeq A \otimes M 
}.\]
In other words, it is equipped with a map from the \emph{constant} tower at
$M$. We let the cofiber of this map of towers be $\left\{U_n(A, M)\right\}_{n
\geq 0}$. 

The tower $\left\{U_n(A, M)\right\}$ has the property that the cofiber of any map $U_n(A, M)
\to U_{n-1}(A, M)$ admits the structure of an $A$-module. Moreover, each map
$U_n(A, M) \to U_{n-1}(A, M)$ is null after tensoring
with $A$. 

Suppose now that $A$ \emph{admits descent.} In this case, the towers we are
considering have particularly good properties.

\newcommand{\tow}{\mathrm{Tow}}
\begin{definition}[\cite{HPS, thick}]
Let $\tow(\mathcal{C}) = \mathrm{Fun}( \mathbb{Z}_{\geq 0}^{\op},
\mathcal{C})$ be the $\infty$-category 
of towers in $\mathcal{C}$. 

We shall say that a tower $\left\{X_n\right\}_{n
\geq 0}$ is \textbf{nilpotent} if
there exists $N$ such that $X_{n+N} \to X_n$ is null for each $n \in
\mathbb{Z}_{\geq 0}$. 
It is shown in \cite{HPS} that the collection of nilpotent towers is a thick
subcategory of $\tow(\mathcal{C})$. We shall say that a tower is
\textbf{strongly constant} if it belongs to the thick subcategory of
$\tow(\mathcal{C})$ generated by the nilpotent towers and the constant towers. 
\end{definition}

Observe that  a nilpotent tower is pro-zero, and a strongly constant tower is
pro-constant. In general, nilpotence of a tower is \emph{much} stronger than
being pro-zero. For example, a tower $\left\{X_n\right\}$ is pro-zero if there is a 
cofinal set of integers $i$ for which the $X_i$ are contractible. This does not
imply nilpotence. 

We now recall the following fact about strongly constant towers:
\begin{proposition}[\cite{HPS}] Let $\{X_n\}_{n \geq 0} \in \tow(\mathcal{C})$ be a strongly
constant tower. Then, for $Y \in \mathcal{C}$, the spectral sequence for $\pi_* \hom(Y, \varprojlim X_n)$
has a horizontal vanishing line at a finite stage. 
\end{proposition}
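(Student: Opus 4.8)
The spectral sequence in question is the homotopy spectral sequence of the tower of spectra $\{\hom(Y,X_n)\}_{n\ge 0}$, whose $E_1$-page is $\pi_*$ of the cofibers of the structure maps $\hom(Y,X_{n+1})\to\hom(Y,X_n)$ and whose $d_r$ are governed by the associated exact couple. Since $\hom(Y,-)\colon\mathcal{C}\to\sp$ is exact, it carries constant towers to constant towers and nilpotent towers to nilpotent towers, hence carries the thick subcategory of strongly constant towers into the strongly constant towers of $\tow(\sp)$; moreover it commutes with $\varprojlim$. So the statement reduces to: \emph{the homotopy spectral sequence of a strongly constant tower of spectra has a horizontal vanishing line at a finite page.} The plan is a thick subcategory argument. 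Let $\mathcal{G}\subset\tow(\sp)$ be the class of towers whose homotopy spectral sequence admits a horizontal vanishing line at some finite page. By definition the strongly constant towers are the thick subcategory generated by the constant and nilpotent towers, so it suffices to show $\mathcal{G}$ is a thick subcategory containing both.

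The two base cases are quick. For a constant tower $\{X_n=X\}$ all structure maps are identities, so every relative cofiber vanishes, the $E_1$-page is concentrated in filtration $0$, and $E_1=E_\infty$: a (trivial) horizontal vanishing line at $E_1$. For a nilpotent tower with parameter $N$, the composite $X_{n+r}\to X_n$ is null for every $r\ge N$ and every $n$, since it factors through the null $N$-fold map $X_{n+N}\to X_n$; hence the $(r-1)$-fold images $\mathrm{im}(\pi_\ast X_{n+r-1}\to\pi_\ast X_n)$ vanish once $r\ge N+1$, and a direct inspection of the (Boardman-type) description of $E_r$ then shows $E_{N+1}$ is concentrated in filtrations $\le N$. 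I would verify this by hand for $N=1$ — where $\mathrm{cofib}(X_{n+1}\xrightarrow{0}X_n)\simeq X_n\oplus\Sigma X_{n+1}$, the map $d_1$ identifies the two $\Sigma X_{n+1}$-summands, and $E_2$ collapses onto filtration $0$ — and then induct on $N$ (or simply cite \cite{HPS}).

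The real content is that $\mathcal{G}$ is thick. Closure under retracts is immediate. Closure under cofiber sequences is the main obstacle: given $\{X'_n\}\to\{X_n\}\to\{X''_n\}$ with the outer two in $\mathcal{G}$, having horizontal vanishing lines in filtrations $\le N'$ at $E_{r'}$ and $\le N''$ at $E_{r''}$, one must show $\{X_n\}$ has one, with both the vanishing filtration ($\sim N'+N''$) and the page ($\sim r'+r''$) growing in a controlled way. The three exact couples fit together with an attaching boundary and yield long exact sequences relating the pages; a filtration-degree zig-zag then shows a class of $E_\bullet(\{X_n\})$ surviving past filtration $N'+N''$ would force a surviving class of comparably high filtration in $E_\bullet(\{X'_n\})$ or $E_\bullet(\{X''_n\})$, which is impossible past $E_{\max(r',r'')}$. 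I expect this simultaneous bookkeeping to be the only delicate step. A cleaner route, which I would prefer, avoids the general cofiber-sequence step by using the observation (recorded just above the statement) that the nilpotent towers form a thick subcategory: since $\varprojlim\colon\tow(\sp)\to\sp$ and $\mathrm{const}$ are exact, the functor $\{X_n\}\mapsto\mathrm{cofib}\big(\mathrm{const}(\varprojlim X_n)\to\{X_n\}\big)$ is exact, kills constant towers, and sends a nilpotent tower to itself; hence it sends every strongly constant tower $\{X_n\}$ to a nilpotent tower $\{Q_n\}$. Applying $\hom(Y,-)$ and writing $W_n=\hom(Y,X_n)$, one is left with a single cofiber sequence $\mathrm{const}(\varprojlim W_n)\to\{W_n\}\to\{Q'_n\}$ in which $\{Q'_n\}$ is nilpotent and the fiber of $W_n\to Q'_n$ is the \emph{constant} tower $\varprojlim W_n$; it then remains to compare the homotopy spectral sequence of $\{W_n\}$ with that of the nilpotent $\{Q'_n\}$ (whose vanishing line is the base case), using the constancy of the fiber to control the differentials, which reduces all of the bookkeeping above to one cofiber sequence of a particularly rigid shape.
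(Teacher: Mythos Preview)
Your proposal is correct and follows essentially the same approach as the paper: both argue that having a horizontal vanishing line at a finite page is a \emph{generic} property of towers (i.e., the class $\mathcal{G}$ of such towers is thick), verify it for constant and nilpotent towers, and conclude. The paper simply cites \cite{HPS} for this genericity, whereas you sketch the argument --- correctly identifying closure under cofiber sequences as the only nontrivial step and giving the right heuristic (a filtration-degree zig-zag bounding the vanishing height and page additively).

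Your ``cleaner route'' is a genuinely nice observation not in the paper: the exact endofunctor $\{X_n\}\mapsto\mathrm{cofib}(\mathrm{const}(\varprojlim X_n)\to\{X_n\})$ kills constants, fixes nilpotents, and hence (since nilpotent towers are thick) sends any strongly constant tower to a nilpotent one. This reduces the general thick-subcategory bookkeeping to analyzing a single cofiber sequence of the special form $\mathrm{const}(L)\to\{W_n\}\to\{Q'_n\}$ with $\{Q'_n\}$ nilpotent. Concretely, if $\{Q'_n\}$ has nilpotence parameter $N$, then each $W_{n+N}\to W_n$ factors through $L=\varprojlim W_m\to W_n$, which directly controls the filtration on $\pi_*\varprojlim W_n$ and the behavior of the spectral sequence above filtration $N$. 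You stop just short of spelling this out, but the remaining step is indeed more rigid than the general cofiber-sequence argument in \cite{HPS}; what it buys is that one side of the cofiber sequence has its spectral sequence entirely in filtration $0$, so there is no interaction of two independent vanishing lines to track.
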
 

In fact, in \cite{HPS}, it is shown that admitting such horizontal vanishing
lines is a \emph{generic} property of objects in $\tow(\mathcal{C})$: that is, 
the collection of objects with that property is a thick subcategory. Moreover,
this property holds for nilpotent towers and for constant towers. 

\begin{corollary} 
Let $A \in \clg(\mathcal{C})$ admit descent. Then the Adams tower
$\left\{T_n(A, M)\right\}$ is strongly constant. 
In particular, the Adams spectral sequence converges with a horizontal
vanishing line at a finite stage (independent of $M$). 
\end{corollary}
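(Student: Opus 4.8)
The plan is to show that the Adams tower $\{T_n(A,M)\}$ differs from the constant tower at $M$ by a nilpotent tower, so that it is strongly constant; the horizontal vanishing line then follows from the cited result of \cite{HPS}. First I would recall the cofiber sequence of towers
\[ \{T_n(A,M)\} \to \{M\}_{\mathrm{const}} \to \{U_n(A,M)\}, \]
constructed just above in the text, where $\{M\}_{\mathrm{const}}$ is the constant tower and each cofiber of $U_n(A,M) \to U_{n-1}(A,M)$ carries an $A$-module structure, and each transition map $U_n(A,M) \to U_{n-1}(A,M)$ is $A$-zero (i.e.\ null after tensoring with $A$). Since nilpotent towers and constant towers together generate a thick subcategory of $\tow(\mathcal{C})$ containing the strongly constant ones, and since $\{M\}_{\mathrm{const}}$ is visibly constant, it suffices to prove that $\{U_n(A,M)\}$ is nilpotent: then $\{T_n(A,M)\}$, sitting in a cofiber sequence between a constant tower and a nilpotent tower, is strongly constant.

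To prove $\{U_n(A,M)\}$ is nilpotent I would invoke \Cref{descnilp}: since $A$ admits descent, there is an integer $s$ such that $\mathcal{I}_A^s = 0$, i.e.\ any composite of $s$ consecutive $A$-zero maps in $\mathrm{Ho}(\mathcal{C})$ vanishes. Each transition map $U_n(A,M) \to U_{n-1}(A,M)$ is $A$-zero, so the composite $U_{n+s}(A,M) \to U_n(A,M)$, being a composite of $s$ consecutive $A$-zero maps, is null for every $n \geq 0$. That is precisely the definition of $\{U_n(A,M)\}$ being a nilpotent tower (with $N = s$), and crucially $s$ is independent of $M$ since it depends only on $A$ through \Cref{descnilp}. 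Hence $\{T_n(A,M)\}$ is strongly constant, uniformly in $M$.

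Finally I would combine this with the preceding proposition: a strongly constant tower $\{X_n\}$ has the property that for any $Y \in \mathcal{C}$ the spectral sequence computing $\pi_* \hom(Y, \varprojlim X_n)$ has a horizontal vanishing line at a finite stage. Taking $Y = \mathbf{1}$ (and noting $\varprojlim T_n(A,M) \simeq M \otimes \varprojlim \cb(A) \simeq M$ since $A$ admits descent, by \Cref{constpro} tensored with $M$), we conclude that the Adams spectral sequence based on $A$ converges to $\pi_* M$ with a horizontal vanishing line at a finite stage; the stage is controlled by $s$ and the filtration length appearing in the thick-subcategory generation of the strongly constant tower, both independent of $M$. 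The main obstacle is really bookkeeping: making sure the cofiber-of-towers description of $\{U_n(A,M)\}$ genuinely exhibits each transition map as $A$-zero (so that the $\mathcal{I}_A^s = 0$ estimate applies on the nose) and that the vanishing-line stage extracted from \cite{HPS} depends only on the thick-subcategory data and the integer $s$, hence not on $M$. Everything else is a direct appeal to \Cref{descnilp}, \Cref{constpro}, and the results quoted from \cite{HPS}.
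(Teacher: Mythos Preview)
Your argument is correct and is exactly the paper's approach: show $\{U_n(A,M)\}$ is nilpotent via \Cref{descnilp} (the transition maps being $A$-zero), whence $\{T_n(A,M)\}$ is strongly constant as it sits in a cofiber sequence with a constant and a nilpotent tower. One small point of bookkeeping: the paper has the map going from the constant tower at $M$ \emph{into} $\{T_n(A,M)\}$ (not the other way), with $\{U_n(A,M)\}$ the cofiber; this does not affect your argument since strongly constant towers form a thick subcategory.
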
 
\begin{proof} 
In fact, by \Cref{descnilp}, it follows that the tower $\left\{U_n(A,
M)\right\}$ is nilpotent, since all the successive maps in the tower are
$A$-zero, so the tower $\left\{T_n(A, M)\right\}$ is therefore strongly constant. 
\end{proof} 

It follows from this that we can get a rough global description of the
graded-commutative ring $\pi_*
\mathbf{1}$ if we have a description of $\pi_* A$. This is the description that
leads, for instance, to the description of various group cohomology rings  ``up
to nilpotents.''
\begin{theorem} \label{Fiso}
Let $A \in \clg(\mathcal{C})$ admit descent. 
Let $R_*$ be the equalizer of $\pi_*(A) \rightrightarrows \pi_*(A \otimes A)$. 
There is a map $\pi_* ( \mathbf{1}) \to R_*$ with the following properties: 
\begin{enumerate}
\item The kernel of $\pi_*(\mathbf{1} ) \to R_*$ is a nilpotent ideal. 
\item Given an element $x \in R_*$ with $Nx = 0$, then $x^{N^k}$ belongs to the
image of $\pi_* ( \mathbf{1}) \to R_*$ for $k \gg 0$ (which can be chosen
uniformly in $N$). 
\end{enumerate}
\end{theorem}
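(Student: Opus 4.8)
The plan is to deduce this from the descent spectral sequence together with the nilpotence estimate of \Cref{descnilp}. First I would construct the map $\pi_*(\mathbf{1}) \to R_*$: since $\cbaug(A)$ is a limit diagram (as $A$ admits descent), the cobar spectral sequence has $E_2^{0,*}$ the equalizer $R_*$ of $\pi_*(A) \rightrightarrows \pi_*(A \otimes A)$, and the edge map from $\pi_*(\mathbf{1}) = \pi_*(\varprojlim \cb(A))$ to this $E_\infty^{0,*} \subset E_2^{0,*} = R_*$ is the desired map. The content of the theorem is that this edge map is an ``$F$-isomorphism'' in a uniform sense, and both assertions are really statements about the horizontal vanishing line that \Cref{descnilp} provides: there is an integer $s$ (independent of $M$, in particular for $M = \mathbf{1}$) such that $\mathcal{I}_A^s = 0$, equivalently the tower $\{U_n(A,\mathbf{1})\}$ is $s$-nilpotent, equivalently the Adams spectral sequence has a horizontal vanishing line at stage $s$ (the $E_s$-page vanishes above filtration $0$ after finitely many steps; more precisely every element of positive filtration is killed in a bounded number of pages).

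For assertion (1): an element $x \in \pi_*(\mathbf{1})$ mapping to zero in $R_* = E_2^{0,*}$ must, in the Adams filtration, have filtration $\geq 1$. I would argue that the product of $s$ elements of positive Adams filtration vanishes. The cleanest way is to observe that $x$ having positive filtration means the composite $\Sigma^{|x|}\mathbf{1} \xrightarrow{x} \mathbf{1} \to T_0(A,\mathbf{1}) = A$ is already detected one stage up, i.e.\ $x$ factors through $T_1(A,\mathbf{1}) \to \mathbf{1}$, equivalently the map $x\colon \Sigma^{|x|}\mathbf{1} \to \mathbf{1}$ is $A$-zero (it dies after smashing with $A$, since its image in $\pi_*A$ is the image of its class in $R_*$, which is $0$). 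Then a product $x_1 \cdots x_s$ of $s$ such classes is a composite of $s$ consecutive $A$-zero maps, hence zero by \Cref{descnilp}. Therefore the kernel of $\pi_*(\mathbf{1}) \to R_*$ is an ideal $J$ with $J^s = 0$.

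For assertion (2): given $x \in R_* = E_2^{0,*}$ with $Nx = 0$, I would show $x^{N^k}$ survives the spectral sequence for $k$ large. The obstructions to $x$ being a permanent cycle are the differentials $d_r(x) \in E_r^{r, *-r+1}$ for $r \geq 2$; by the vanishing line these live in a bounded range $2 \leq r \leq s$, so there are only finitely many obstruction groups $E_r^{r,*}$ to worry about, and each is a subquotient of $\pi_*(A^{\otimes(r+1)})$-type groups on which $N$ acts, with $Nx$ already zero. Using the multiplicativity of the spectral sequence (Leibniz rule) together with the fact that $N \cdot x = 0$ on $E_2$, one gets that $N^{c}$ annihilates $d_r(x^m)$ for suitable $c$ depending only on $r$ and the vanishing line, and raising $x$ to a power $N^k$ (with $k$ bounded by a function of $s$ alone, not of $N$) forces all these obstructions to die successively; once $x^{N^k}$ is a permanent cycle, it is in the image of $\pi_*(\mathbf{1}) \to E_\infty^{0,*} \subset R_*$ modulo higher filtration, and a further bounded power (absorbed into a possibly larger $k$, still uniform in $N$) clears the filtration-jump indeterminacy using assertion (1) again.

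The main obstacle I anticipate is making the ``uniform in $N$'' bound in (2) genuinely clean: one must track how the exponent needed to kill successive differentials $d_r(x^m)$ accumulates as $r$ ranges over the finitely many pages $2,\dots,s$, and check that the total exponent is a function of $s$ (equivalently of $A$, via \Cref{descnilp}) alone. I would handle this by an induction on $r$, at each stage replacing $x$ by a bounded power so that the next differential vanishes, using that the vanishing line caps the induction at $r = s$; the bookkeeping is routine once one commits to the Leibniz rule and the finiteness of the range of nonzero differentials.
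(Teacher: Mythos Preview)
Your approach is the paper's: both identify $R_*$ with $E_2^{0,*}$ of the $A$-based Adams spectral sequence, take the edge homomorphism as the map, and derive both claims from the horizontal vanishing line. Your argument for (1) via \Cref{descnilp} (kernel elements are $A$-zero, and $s$ consecutive $A$-zero maps compose to zero) is exactly equivalent to the paper's filtration version.

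For (2) you are making it harder than it is. The clean inductive step is: if $Ny=0$ on $E_r$, then $d_r(y^N)=N\,y^{N-1}d_r(y)=y^{N-1}d_r(Ny)=0$, so $y^N$ survives to $E_{r+1}$. Since $Nx=0$ on $E_2$ gives $Nx^m=(Nx)x^{m-1}=0$ for all $m$, every power $x^{N^j}$ remains $N$-torsion on each page where it lives; thus $x^{N^{r-2}}$ survives to $E_r$, and once $r$ exceeds the degeneration page (bounded by the vanishing line, hence by $A$ alone) you are done. There is no ``filtration-jump indeterminacy'': nothing maps into filtration zero, so $E_\infty^{0,*}$ is precisely the set of permanent cycles in $E_2^{0,*}$, and the edge map surjects onto it. Your final paragraph of bookkeeping is unnecessary.
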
 

In the examples arising in practice, one already has a complete or
near-complete picture \emph{rationally}, so the torsion information is the
most interesting. For example, if $p $ is nilpotent in $\pi_* (\mathbf{1} )$,
then the map that one gets is classically called a \emph{uniform
$F$-isomorphism}. 

\begin{proof} 
In fact, $R_*$ as written is the zero-line (i.e., $s = 0$) of the $E_2$-page of
the $A$-based Adams spectral sequence converging to the homotopy groups of $\mathbf{1}$. 
The map that we have written down is precisely the edge homomorphism in the
spectral sequence. We know that anything of positive filtration at $E_\infty$
must be nilpotent of bounded order because of the horizontal vanishing line.
That implies the first claim. 

For the second claim, let $x \in E_2^{0, t}$ be
$N$-torsion. We want to show that $x^{N^k}$ survives the spectral sequence for 
some $k$ (which can be chosen independently of $x$). 
In fact, $x^N$ can support no $d_2$ by the Leibnitz rule. Similarly, $x^{N^2}$
can support no $d_3$ and survives until $E_4$. Since the spectral sequence
collapses at a finite stage, we conclude that some $x^{N^k}$ must survive, and
$k$ depends only on the finite stage at which the spectral sequence collapses. 

\end{proof}

\begin{remark} 
One can obtain an analog of \Cref{Fiso} for any commutative algebra object in
$\mathcal{C}$ replacing $\mathbf{1}$. 
\end{remark} 

\subsection{Quillen stratification for finite groups}
Let $G$ be a finite group, and let $R$ be  a (discrete) commutative ring. 
Consider the $\infty$-category $\md_G(R) \simeq \mathrm{Fun}(BG, \md(R))$ of $R$-module spectra with a
$G$-action (equivalently, the $\infty$-category of module spectra over the
\emph{group ring}), which is symmetric monoidal under the $R$-linear tensor product. 
Given a subgroup $H \subset G$, we have a natural symmetric monoidal functor
\[ \md_G(R) \to \md_H(R),  \]
given by restricting the $G$-action to $H$. 
As in ordinary algebra, we can identify this with a form of tensoring
up: we can identify $\md_H(R)$ with the $\infty$-category
of modules over the commutative algebra object $\prod_{G/H} R \in \md_G(R)$,
with $G$ permuting the factors. 
We state this formally as a proposition (compare \cite{balmerstack, BDAS}). 

\begin{proposition} 
Consider the commutative algebra object 
$\prod_{G/H} R \in \clg( \md_G(R))$, with $G$-action permuting the factors. 
Then the forgetful functor identifies $\md_H(R)$ with the symmetric monoidal
$\infty$-category of modules in $\md_G(R)$ over $\prod_{G/H} R$. 
\end{proposition}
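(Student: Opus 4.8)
The plan is to exhibit restriction $\mathrm{Res}^G_H\colon \md_G(R) = \fun(BG,\md(R)) \to \fun(BH,\md(R)) = \md_H(R)$ as one half of a monadic adjunction whose monad is base change along the commutative algebra $A := \prod_{G/H} R$, and then to promote the resulting equivalence $\md_H(R) \simeq \md_{\md_G(R)}(A)$ to one of symmetric monoidal $\infty$-categories. First I would record the relevant adjunctions: restriction along the map of spaces $BH \to BG$ is symmetric monoidal for the pointwise tensor products and preserves all limits and colimits (computed objectwise over $\md(R)$), so it has a right adjoint $\mathrm{CoInd}_H^G$ (right Kan extension) and a left adjoint $\mathrm{Ind}_H^G$ (left Kan extension). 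Because $G$ is finite, the canonical comparison $\mathrm{Ind}_H^G \to \mathrm{CoInd}_H^G$ is an equivalence — this is the ambidexterity of the finite covering $BH \to BG$, which pointwise is the classical isomorphism $R[G]\otimes_{R[H]}M \simeq \hom_{R[H]}(R[G],M)$, valid since $R[G]$ is a finite free $R[H]$-module on either side. In particular $\mathrm{CoInd}_H^G$, being equivalent to the left adjoint $\mathrm{Ind}_H^G$, preserves all colimits, and it is conservative (since $M$ is a natural retract of $\mathrm{Res}^G_H\mathrm{Ind}_H^G M$ by the Mackey decomposition, $\mathrm{CoInd}_H^G M \simeq 0$ forces $M \simeq 0$).

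Next I would invoke the projection formula: for $X \in \md_G(R)$ and $M \in \md_H(R)$ the natural map $X \otimes \mathrm{Ind}_H^G(M) \to \mathrm{Ind}_H^G(\mathrm{Res}^G_H(X)\otimes M)$ is an equivalence (a formal consequence of $\mathrm{Res}^G_H$ being symmetric monoidal with a colimit-preserving left adjoint; concretely $\mathrm{Ind}_H^G$ is a finite coproduct indexed by $G/H$, with which the tensor product commutes). Taking $M = \mathbf{1}$ gives $\mathrm{CoInd}_H^G\mathrm{Res}^G_H(X) \simeq X \otimes \mathrm{Ind}_H^G(\mathbf{1})$, and $\mathrm{Ind}_H^G(\mathbf{1}) = R[G/H] = \prod_{G/H}R$. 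The object $\prod_{G/H}R$ is a commutative algebra in $\md_G(R)$: it equals $\mathrm{CoInd}_H^G(\mathbf{1})$, which carries a canonical commutative algebra structure because $\mathrm{CoInd}_H^G$ is lax symmetric monoidal (being the right adjoint of a symmetric monoidal functor), and its unit $\mathbf{1}\to \prod_{G/H}R$ is the unit of $\mathrm{Res}^G_H \dashv \mathrm{CoInd}_H^G$ at $\mathbf{1}$. Hence the monad $T = \mathrm{CoInd}_H^G\circ\mathrm{Res}^G_H$ on $\md_G(R)$ has underlying endofunctor $A\otimes(-)$, and one checks, by unwinding the lax symmetric monoidal structure on $\mathrm{CoInd}_H^G$ (equivalently, the ambidexterity data), that its multiplication is the one induced by the multiplication of $A$; that is, $T$ agrees \emph{as a monad} with the base-change monad of the commutative algebra $A$.

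Finally, by the Barr–Beck–Lurie theorem, $\mathrm{CoInd}_H^G$ — conservative and preserving geometric realizations — is monadic, so $\md_H(R) \simeq \md_G(R)^{T} \simeq \md_{\md_G(R)}(A)$, under which $\mathrm{Res}^G_H$ becomes base change (``tensoring up'') and $\mathrm{CoInd}_H^G \simeq \mathrm{Ind}_H^G$ becomes the forgetful functor. Since $\mathrm{Res}^G_H \dashv \mathrm{CoInd}_H^G$ is a symmetric monoidal adjunction and $T$ is the monad of a commutative algebra, the identification $\md_H(R)\simeq \md_{\md_G(R)}(A)$ is one of symmetric monoidal $\infty$-categories, by the symmetric monoidal form of the monadicity theorem (cf. \cite[Sections 4.5 and 4.7]{higheralg}). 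The same conclusion can be reached more geometrically by viewing $\md_G(R) \simeq \qcoh((\spec R)/G)$ and noting that $BH \to BG$ is a finite affine morphism with pushforward of the structure sheaf $\prod_{G/H}R$, but the Barr–Beck route is self-contained given what has been developed (compare \cite{balmerstack, BDAS}).

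The main obstacle is the coherence assertion in the second paragraph: passing from the endofunctor identification $T \simeq A\otimes(-)$ (ambidexterity plus projection formula, entirely standard) to an identification of $T$ with the base-change monad \emph{as a monad}, and then matching the two symmetric monoidal structures. Tracking the lax symmetric monoidal structure of $\mathrm{CoInd}_H^G$ through these comparisons is precisely the bookkeeping that makes it natural to quote the general nilpotence/base-change formalism of \cite{MNNequiv} (or the $G$-set formalism of \cite{BDAS}) rather than redo it by hand.
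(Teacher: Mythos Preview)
The paper does not actually prove this proposition: it is stated with the parenthetical ``compare \cite{balmerstack, BDAS}'' and then immediately reinterpreted algebro-geometrically (as modules over the pushforward of the structure sheaf along the affine map $BH \to BG$). Your argument is correct and is essentially the standard one carried out in those references and in \cite[Sec.~5.3]{MNNequiv}: the key inputs are ambidexterity of $\mathrm{Ind}_H^G \simeq \mathrm{CoInd}_H^G$ for finite index, the projection formula identifying the monad $\mathrm{CoInd}_H^G\mathrm{Res}^G_H$ with $A\otimes(-)$, and Barr--Beck--Lurie. You are also right to flag the coherence step---matching the monad structure on $\mathrm{CoInd}_H^G\mathrm{Res}^G_H$ with the algebra multiplication on $A$, and then the symmetric monoidal structures---as the point requiring care; this is exactly why the paper defers to the literature rather than spelling it out. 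There is no gap in your proposal.
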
 

We can interpret this in the following algebro-geometric manner as well. 
The $\infty$-category
$\md_G(R)$ can be described as the $\infty$-category of quasi-coherent
complexes on the classifying stack $BG$ of the discrete group $G$, over the base
ring $R$. Similarly, $\md_H(R)$ can be described as the $\infty$-category of
quasi-coherent sheaves on $BH$. One has an affine map $\phi\colon  BH \to BG$ (in fact, a
finite \'etale cover), so that quasi-coherent complexes on $BH$ can be
identified with quasi-coherent complexes on $BG$ with a module action by
$\pi_*(\mathcal{O}_{BH})$, which corresponds to $\prod_{G/H} R$. 

In particular, we can attempt to perform ``descent'' along the restriction
functor $\md_G(R) \to \md_H(R)$, or descent with the commutative algebra object
$\prod_{G/H} R$, or descent for quasi-coherent sheaves along the cover $BH
\to BG$. If $R$ contains $\mathbb{Q}$ or, more generally, if $|G|/|H|$ is invertible
in $R$, there are never any problems, because the $G$-equivariant
\emph{norm map} $\prod_{G/H}R  \to R$ will exhibit $R$ as a retract of the
object
$\prod_{G/H} R$, so that the commutative algebra object $\prod_{G/H} R$ is descendable. 

The
question is much more subtle in modular characteristic. For example, given a finite group
$G$ and a field $k$ of characteristic $p$ with $p \mid |G|$, the group
cohomology $H^*(G; k)$ is always infinite-dimensional, which prevents the
commutative algebra object $\prod_G k$ from being descendable. Nonetheless, one
has the following result. Recall that a group is called \emph{elementary
abelian} if it is of the form $(\mathbb{Z}/p)^n$ for some prime number $p$.

\begin{theorem}[Carlson \cite{carlson}, Balmer \cite{balmersep}] \label{BC}
Let $G$ be a finite group, and let $\mathcal{A}$ be a collection of elementary
abelian subgroups of $G$ such that every maximal elementary abelian
subgroup of $G$ is conjugate to an element of $\mathcal{A}$. Then the
commutative algebra object $\prod_{H \in \mathcal{A}}\prod_{G/H} R$ admits
descent in $\md_G(R)$. \end{theorem}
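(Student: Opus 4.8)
The approach is to reduce the assertion, via \Cref{admitd} (equivalently \Cref{descnilp}), to showing that the unit $\mathbf{1}$ of $\md_G(R)$ lies in the thick $\otimes$-ideal generated by $A:=\prod_{H\in\mathcal A}\prod_{G/H}R$. Write $I_K:=\prod_{G/K}R\simeq\mathrm{ind}_K^G\mathbf{1}$ for a subgroup $K\le G$ — the algebra whose modules in $\md_G(R)$ are $\md_K(R)$. Since a finite product in a stable $\infty$-category is a biproduct, $\langle A\rangle_\otimes=\langle I_H:H\in\mathcal A\rangle_\otimes$. A lemma I would establish first: $\mathrm{ind}_K^G\colon\md_K(R)\to\md_G(R)$ carries the thick $\otimes$-ideal generated by a family $S$ into that generated by $\mathrm{ind}_K^G(S)$. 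As $\mathrm{ind}_K^G$ is exact and preserves retracts, it suffices to see $\mathrm{ind}_K^G(s\otimes z)\in\langle\mathrm{ind}_K^G s\rangle_\otimes$ for $s\in S$, $z\in\md_K(R)$; but $z$ is a retract of $\mathrm{res}_K^G\mathrm{ind}_K^G z$ (the trivial double coset in the Mackey formula), so $\mathrm{ind}_K^G(s\otimes z)$ is a retract of $\mathrm{ind}_K^G(s\otimes\mathrm{res}_K^G\mathrm{ind}_K^G z)\simeq(\mathrm{ind}_K^G s)\otimes\mathrm{ind}_K^G z$ by the projection formula. Because every elementary abelian $E\le G$ is subconjugate to some $H\in\mathcal A$, and then $\mathrm{ind}_E^G\mathbf{1}=\mathrm{ind}_H^G$ applied to an object of $\md_H(R)=\langle\mathbf{1}_H\rangle_\otimes$, this lemma identifies $\langle I_H:H\in\mathcal A\rangle_\otimes$ with $\langle\mathrm{ind}_E^G\mathbf{1}:E\le G\text{ elementary abelian}\rangle_\otimes$; so it is enough to put $\mathbf{1}_G$ in the latter.

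The first step is a reduction to $p$-groups. For each prime $p\mid|G|$ fix a Sylow $p$-subgroup $P_p$; the composite $\mathbf{1}_G\to\mathrm{ind}_{P_p}^G\mathbf{1}\to\mathbf{1}_G$ of the norm (unit of $\mathrm{res}\dashv\mathrm{ind}$) and the augmentation is multiplication by $[G:P_p]$, and since $\gcd_{p\mid|G|}[G:P_p]=1$ a $\mathbb Z$-linear combination exhibits $\mathbf{1}_G$ as a retract of $\bigoplus_{p\mid|G|}\mathrm{ind}_{P_p}^G\mathbf{1}$. By the lemma it therefore suffices to show, for each prime $p$ and each finite $p$-group $P$, that $\mathbf{1}_P\in\langle\mathrm{ind}_E^P\mathbf{1}:E\le P\text{ elementary abelian}\rangle_\otimes$ in $\md_P(R)$. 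A routine reduction of coefficients — base change along $\md_P(\mathbb Z)\to\md_P(R)$, then a fracture-square argument invoking \Cref{descfinloc} — brings us to $R=\mathbb F_p$ for a prime $p\mid|G|$ (away from these primes the norm--augmentation composite into $\mathrm{ind}_{\{e\}}^P\mathbf{1}=R[P]$ is a unit, trivializing the claim), which I treat now as the \emph{crux}.

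For $P$ a finite $p$-group over $R=\mathbb F_p$ I would induct on $|P|$. If $P$ is elementary abelian there is nothing to do. Otherwise invoke Serre's theorem on the cohomology of $p$-groups: there are nonzero classes $\zeta_1,\dots,\zeta_n\in H^1(P;\mathbb F_p)$ (for $p$ odd, replace $\zeta_i$ by its Bockstein $\beta\zeta_i\in H^2$) whose product vanishes in $H^\ast(P;\mathbb F_p)$. Each nonzero $\zeta_i$ is a surjection $P\twoheadrightarrow\mathbb Z/p$ with kernel $Q_i$ of index $p$, and the associated degree-$d_i$ class $\eta_i$ is inflated along $P\to P/Q_i\cong\mathbb Z/p$; viewed as a map $\mathbf{1}\to\Sigma^{d_i}\mathbf{1}$ in $\md_P(\mathbb F_p)$, its cofiber — which for $p=2$ is $\Sigma\,\mathrm{ind}_{Q_i}^P\mathbf{1}$, and for odd $p$ is the inflation along $P\to P/Q_i$ of a perfect $\mathbb F_p[\mathbb Z/p]$-complex — lies in $\langle\mathrm{ind}_{Q_i}^P\mathbf{1}\rangle_\otimes$ (using that the inflation of the regular representation of $\mathbb Z/p$ is $\mathrm{ind}_{Q_i}^P\mathbf{1}$, and that inflation is exact and monoidal). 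Identifying $H^\ast(P;\mathbb F_p)$ with $\bigoplus_n\mathrm{Hom}_{\md_P(\mathbb F_p)}(\mathbf{1},\Sigma^n\mathbf{1})$ — under which cup product is composition — the relation $\eta_1\cdots\eta_n=0$ says the composite $\mathbf{1}\to\Sigma^{d_1+\cdots+d_n}\mathbf{1}$ is nullhomotopic, so by iterating the octahedral axiom its cofiber $\Sigma^{\sum d_i}\mathbf{1}\oplus\Sigma^{d_1}\mathbf{1}$ lies in the thick subcategory generated by the $\mathrm{cofib}(\eta_i)$, whence $\mathbf{1}_P\in\langle\mathrm{ind}_{Q_i}^P\mathbf{1}:i\rangle_\otimes$. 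Each $Q_i$ is proper in $P$, so the inductive hypothesis puts $\mathbf{1}_{Q_i}$ in the thick $\otimes$-ideal generated by $\{\mathrm{ind}_E^{Q_i}\mathbf{1}:E\le Q_i\text{ elementary abelian}\}$; applying $\mathrm{ind}_{Q_i}^P$ (the lemma) then puts $\mathrm{ind}_{Q_i}^P\mathbf{1}$, hence $\mathbf{1}_P$, into $\langle\mathrm{ind}_E^P\mathbf{1}:E\le P\text{ elementary abelian}\rangle_\otimes$, closing the induction.

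The main obstacle is the input from Serre's theorem. Everything else — the reformulation via thick $\otimes$-ideals, the Mackey and projection formulas, the transfer argument, the fracture-square bookkeeping, and the octahedral manipulations — is formal, but the statement that a non-elementary-abelian $p$-group supports a nonzero product of degree-one (or Bockstein) cohomology classes that vanishes is the genuinely \emph{non-formal} arithmetic fact on which the theorem turns; it is exactly this, in the module-categorical packaging of Carlson \cite{carlson} and the tensor-triangular packaging of Balmer \cite{balmersep}, that one is ultimately appealing to. (Alternatively one may black-box the entire $p$-group case by quoting Carlson's result that over a field of characteristic $p$ the trivial module lies in the thick subcategory generated by the permutation modules $\mathrm{ind}_E^P\mathbf{1}$.)
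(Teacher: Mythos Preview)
Your overall architecture matches the paper's: reformulate descent as $\mathbf{1}\in\langle I_H:H\in\mathcal A\rangle_\otimes$, use transfer to reduce to $p$-groups, and invoke Serre's theorem (through Carlson) for the crux. The paper itself simply black-boxes Carlson's integral result \cite[Theorem 2.1]{carlson} --- that $\mathbb{Z}\oplus V$ admits a finite filtration by $\mathbb{Z}[G]$-modules with subquotients induced from elementary abelians --- and observes that an induced module is automatically a module over the relevant $\prod_{G/H}\mathbb{Z}$; your final parenthetical alternative is exactly this.

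Where your detailed sketch stumbles is the ``routine reduction of coefficients'' to $\mathbb{F}_p$. This is not routine: the map $\mathbb{Z}\to\mathbb{F}_p$ (and likewise $\mathbb{Z}_p^\wedge\to\mathbb{F}_p$) does \emph{not} admit descent in $\md_P(\mathbb{Z})$, since $\mathbb{Q}$ is killed; and $\md_P(\mathbb{Z})$ is not the finite limit of $\md_P(\mathbb{F}_p)$-type categories that \Cref{descfinloc} would require. The arithmetic square only buys you $\mathbb{Z}[1/p]$ and $\mathbb{Z}_p^\wedge$, and you cannot pass further from $\mathbb{Z}_p^\wedge$ to $\mathbb{F}_p$ by descent. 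The clean fix is to run your Serre-theorem argument directly over $\mathbb{Z}$: use the \emph{integral} Bocksteins $\widetilde\beta_i\in H^2(P;\mathbb{Z})$ associated to the surjections $P\to\mathbb{Z}/p$. Their mod-$p$ reductions are the $\beta\zeta_i$, so Serre gives $\prod_i\widetilde\beta_i\in p\cdot H^{2n}(P;\mathbb{Z})$; since $H^{>0}(P;\mathbb{Z})$ is $|P|$-torsion, some power $(\prod_i\widetilde\beta_i)^m$ vanishes. The cofiber of each $\widetilde\beta_i$ lies in $\langle\mathbb{Z}[P/Q_i]\rangle_\otimes$ (it is the inflation of the two-term complex $\mathbb{Z}[\mathbb{Z}/p]\xrightarrow{1-g}\mathbb{Z}[\mathbb{Z}/p]$), so your octahedral argument goes through with nilpotence replacing vanishing. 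This is essentially how Carlson's integral statement is established.
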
 

In other words, there is a strong theory of descent
along the map $\bigsqcup_{A \in \mathcal{A}} BA \to BG$ of stacks. 
%One only needs to consider nontrivial elementary abelian $p$-subgroups for $p$
%noninvertible in $R$. 
If $p$ is invertible in $R$ and $H$ is an elementary
abelian $p$-group, then $\prod_{G/H} R \in \md_G(R)$ is a retract of $\prod_{G}
R$. 
To translate to our terminology, we note that \cite[Theorem 2.1]{carlson}
states that there is a finitely generated $\mathbb{Z}[G]$-module $V$ with the
property that there exists a
finite filtration $0 = V_0 \subset \dots \subset V_k = \mathbb{Z} \oplus V$
such that the successive quotients are all \emph{induced}
$\mathbb{Z}[G]$-modules from elementary abelian subgroups of $G$. Given an
object of $\md_G(\mathbb{Z})$ which is induced from $H \subset G$, we observe
that it is naturally a module in $\md_G(\mathbb{Z})$ over $\prod_{G/H}
\mathbb{Z}$.

Note moreover that the map
\begin{equation} \label{ss} \bigsqcup_{A \in \mathcal{A}} BA \to BG,
\end{equation}
which we have identified as having a good theory of descent, is explicit enough
that we can also write down the relative fiber product 
$\left(\bigsqcup_{A \in \mathcal{A}} BA \right) \times_{BG}
\left(\bigsqcup_{A \in \mathcal{A}} BA\right)$ via a double coset decomposition. 
Stated another way, the tensor products of commutative algebra objects of the
form $\prod_{G/H} R$, which appear in the cobar construction, can be described explicitly.

From this, and \Cref{Fiso} (and the immediately following remark), one obtains the following corollary, which is known to modular
representation theorists and is a generalization of the description by
Quillen \cite{equiv} of the cohomology ring of a finite group up to
$F$-isomorphism. 

\begin{corollary} 
 Let $R$ be an $\e{2}$-algebra in $\md(\mathbb{Z})$ with an action of the finite
group $G$. Suppose $p$ is nilpotent in $R$. 
Let $\mathcal{A}$ be the collection of all elementary abelian $p$-subgroups of
$G$.
Then the map 
\[ R^{hG} \to \prod_{A \in \mathcal{A}} R^{hA},  \]
has nilpotent kernel in $\pi_*$. The image, up to uniform $F$-isomorphism, consists of all
families which are compatible under restriction and conjugation. 
\end{corollary}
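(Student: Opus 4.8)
The plan is to exhibit the displayed map as the edge homomorphism of the descent spectral sequence for the commutative algebra object $B := \prod_{H \in \mathcal{A}} \prod_{G/H} R$, and to apply \Cref{Fiso} in the form (recorded in the remark following it) valid for an arbitrary algebra object in place of the unit. First I would fix the ambient stable homotopy theory: since $p$ is nilpotent in $R$, say $p^m = 0$ in $\pi_0 R$, the graded ring $\pi_* R$ is a $\mathbb{Z}/p^m$-module, so $R$ is canonically an $\e{2}$-algebra object of $\mathcal{C} := \md_G(\mathbb{Z}/p^m) = \mathrm{Fun}(BG, \md(\mathbb{Z}/p^m))$, whose unit $\mathbf{1}$ is the constant object $\mathbb{Z}/p^m$. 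Here $\hom_{\mathcal{C}}(\mathbf{1}, -) = (-)^{hG}$, so $\pi_*$ of $R$ in the sense of the paper is $\pi_* R^{hG}$; using the projection formula $R \otimes \prod_{G/H}\mathbf{1} \simeq \mathrm{CoInd}_H^G \mathrm{Res}_H^G R$ and taking homotopy fixed points gives $\pi_*(R \otimes \prod_{G/H}\mathbf{1}) \simeq \pi_* R^{hH}$, hence $\pi_*(R \otimes B) \simeq \prod_{H \in \mathcal{A}} \pi_* R^{hH}$, the target of the map.

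Second, I would check that $B$ is descendable in $\mathcal{C}$. Enlarge $\mathcal{A}$ to a finite collection $\widetilde{\mathcal{A}}$ of elementary abelian subgroups (at all primes) containing a conjugate of every maximal elementary abelian subgroup of $G$; by \Cref{BC}, $\widetilde{B} := \prod_{H \in \widetilde{\mathcal{A}}} \prod_{G/H}\mathbf{1}$ is descendable. It then suffices to see that each extra factor $\prod_{G/H}\mathbf{1} = \mathrm{CoInd}_H^G\mathbf{1}$, for $H$ elementary abelian at a prime $q \neq p$, already lies in the thick $\otimes$-ideal generated by $B$: as $|H|$ is invertible in $\mathbb{Z}/p^m$, the averaging idempotent makes $\mathbf{1}_H$ a retract of $(\mathbb{Z}/p^m)[H]$, so $\mathrm{CoInd}_H^G\mathbf{1}$ is a retract of $(\mathbb{Z}/p^m)[G] = \mathrm{Ind}_1^G\mathbf{1}$, and a projection-formula computation shows $(\mathbb{Z}/p^m)[G]$ lies in the thick $\otimes$-ideal generated by $\prod_{G/A}\mathbf{1}$ for any $A \in \mathcal{A}$ (if $p \nmid |G|$ then $\mathcal{A} = \{1\}$, $B = (\mathbb{Z}/p^m)[G]$ is descendable outright, and the corollary is immediate). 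Thus $\langle \widetilde{B}\rangle = \langle B\rangle \ni \mathbf{1}$.

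Third, I would identify the equalizer and invoke \Cref{Fiso}. The double coset formula $\mathrm{Res}_H^G\mathrm{CoInd}_K^G\mathbf{1} \simeq \prod_{HgK}\mathrm{CoInd}_{H \cap {}^gK}^H\mathbf{1}$ decomposes $B \otimes B$ into a product of factors $\mathrm{CoInd}_{H\cap {}^gK}^G\mathbf{1}$; tensoring with $R$ and taking $G$-fixed points identifies $R_* := \mathrm{eq}\big(\pi_*(R\otimes B) \rightrightarrows \pi_*(R\otimes B\otimes B)\big)$ with the subring of $\prod_{H\in\mathcal{A}}\pi_* R^{hH}$ consisting of families $(x_H)$ that agree after restriction to $\pi_* R^{h(H\cap {}^gK)}$ and conjugation by $g$, for all $H,K \in \mathcal{A}$ and $g \in G$ --- i.e.\ the compatible families. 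By \Cref{Fiso} applied to $B$ with $R$ in place of $\mathbf{1}$ (the $\e{2}$-structure already suffices to make the descent spectral sequence multiplicative with graded-commutative $E_2$-page, which is all that argument uses) the edge map $\pi_* R^{hG} \to R_*$ has nilpotent kernel, and any $x \in R_*$ with $Nx = 0$ satisfies $x^{N^k} \in \mathrm{image}$ for $k \gg 0$ uniformly; since $p^m$ annihilates $\pi_* R$, every element of $R_*$ is torsion, so this last assertion says precisely that the image is all compatible families up to a uniform $F$-isomorphism.

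The only genuinely deep ingredient, \Cref{BC} (Carlson--Balmer), is quoted. The step I expect to be the main obstacle is the second one: reconciling the all-primes formulation of \Cref{BC} with the collection $\mathcal{A}$ of elementary abelian $p$-subgroups, which is exactly where nilpotence of $p$ in $R$ enters. The remaining work --- the projection-formula manipulations and the double-coset bookkeeping identifying the cobar terms of $R \otimes B$ and the equalizer $R_*$ with ``compatible families under restriction and conjugation'' --- is routine but must be carried out carefully.
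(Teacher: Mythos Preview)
Your proposal is correct and follows essentially the same route as the paper, which derives the corollary in one line from \Cref{BC} together with \Cref{Fiso} (in its algebra-object form) after noting that the double-coset decomposition identifies the cobar terms with the restriction/conjugation data. One small slip: in your first sentence you write $B := \prod_{H \in \mathcal{A}} \prod_{G/H} R$, but the rest of your argument (correctly) treats $B$ as $\prod_{H \in \mathcal{A}} \prod_{G/H} \mathbf{1}$ and applies \Cref{Fiso} to $R \otimes B$; with that fix the argument goes through as written.
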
 

A family $(a_A \in \pi_* R^{hA})_{A \in \mathcal{A}}$ is compatible under
retriction and conjugation if, whenever $g \in G$ conjugates $A $ into $A'$,
then the induced map $R^{hA} \simeq R^{hA'}$ carries $a_{A}$ into $a_{A'}$;
and, furthermore, whenever $B \subset A$, then the map $R^{hA} \to R^{hB}$
carries $a_A$ into $a_B$. These compatible families form the $E_2$-page of the
descent spectral sequence for the cover \eqref{ss}. 
When $R = \mathbb{F}_p$ (as was considered by Quillen), the above corollary is extremely useful 
since the cohomology rings of elementary abelian groups are entirely known
and easy to work with. 
Given a connected space $X$ with $\pi_1 X$ finite, one could also apply it to
the $\pi_1$-action on $C^*(\widetilde{X}; \mathbb{F}_p)$ where $\widetilde{X}$
is the universal cover. 

We will use this picture extensively in the future, in particular for the
\emph{stable module $\infty$-categories.}
For now, we note a simple example of one of its consequences. 

\begin{corollary} 
The inclusion $\mathbb{Z}/p \subset \mathbb{Z}/p^k$ induces a descendable map
of $\e{\infty}$-rings
\[ \mathbb{F}_p^{h \mathbb{Z}/p^k} \to \mathbb{F}_p^{h \mathbb{Z}/p},  \]
for each $k > 0$. 
\end{corollary}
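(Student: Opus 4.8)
The plan is to deduce this from the Carlson--Balmer theorem (\Cref{BC}) applied to the cyclic $p$-group $G = \mathbb{Z}/p^k$, transported through the Morita equivalence \eqref{reppgroup}. Write $H = \mathbb{Z}/p \subset G$ for the order-$p$ subgroup. Since every subgroup of a cyclic group is cyclic, the only elementary abelian subgroups of $G$ are the trivial one and $H$, so $H$ is the unique maximal elementary abelian subgroup. Hence \Cref{BC}, applied with $R = \mathbb{F}_p$ and $\mathcal{A} = \{H\}$ (which trivially satisfies the hypothesis, as $G$ is abelian), shows that the commutative algebra object $\prod_{G/H}\mathbb{F}_p \in \clg(\fun(BG, \md(\mathbb{F}_p)))$ admits descent in $\md_G(\mathbb{F}_p) = \fun(BG, \md(\mathbb{F}_p))$.

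Next I would pass to the subcategory of dualizable objects. The algebra $\prod_{G/H}\mathbb{F}_p$ is dualizable in $\fun(BG, \md(\mathbb{F}_p))$: for the pointwise tensor product dualizability is detected on underlying objects, and $\mathbb{F}_p^{\oplus [G:H]}$ is perfect over $\mathbb{F}_p$ (in fact this permutation module is self-dual). Therefore, by \Cref{constpro}, the unit $\mathbf{1} = \mathbb{F}_p$ is a retract of $\mathrm{Tot}_n \cb(\prod_{G/H}\mathbb{F}_p)$ for $n \gg 0$; since this partial totalization is a finite limit of the objects $(\prod_{G/H}\mathbb{F}_p)^{\otimes (j+1)}$ --- each a dualizable $\prod_{G/H}\mathbb{F}_p$-module --- and dualizable objects are closed under finite limits and retracts, it follows that $\mathbf{1}$ lies in the thick $\otimes$-ideal generated by $\prod_{G/H}\mathbb{F}_p$ \emph{inside the $2$-ring} $\fun(BG, \md^\omega(\mathbb{F}_p))$ of dualizable objects. (Abstractly, this records the fact that descendability of a dualizable commutative algebra object is a condition on $\mathcal{C}^{\mathrm{dual}}$ alone.)

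Finally I would invoke the symmetric monoidal equivalence \eqref{reppgroup}, $\md^\omega(\mathbb{F}_p^{hG}) \simeq \fun(BG, \md^\omega(\mathbb{F}_p))$, given by $M \mapsto M \otimes_{\mathbb{F}_p^{hG}} \mathbf{1}$ with inverse $X \mapsto X^{hG}$. Under this equivalence, $\mathbf{1} = \mathbb{F}_p$ corresponds to $\mathbb{F}_p^{hG}$, and $\prod_{G/H}\mathbb{F}_p$, which is the coinduced module $\mathrm{Coind}_H^G \mathbb{F}_p$, corresponds to $(\mathrm{Coind}_H^G \mathbb{F}_p)^{hG} \simeq \mathbb{F}_p^{hH}$ by Shapiro's lemma, compatibly with the commutative algebra structures (the unit map $\mathbb{F}_p^{hG}\to\mathbb{F}_p^{hH}$ being precisely the restriction map in question). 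As a symmetric monoidal equivalence it carries the thick $\otimes$-ideal generated by $\prod_{G/H}\mathbb{F}_p$ to the one generated by $\mathbb{F}_p^{hH}$, so $\mathbb{F}_p^{hG}$ lies in the thick $\otimes$-ideal generated by $\mathbb{F}_p^{hH}$ in $\md^\omega(\mathbb{F}_p^{hG})$, and therefore --- applying $\mathrm{Ind}$, since $\mathbb{F}_p^{hH}$ is perfect over $\mathbb{F}_p^{hG}$ --- in all of $\md(\mathbb{F}_p^{hG})$; by \Cref{admitd} this says exactly that $\mathbb{F}_p^{hG} \to \mathbb{F}_p^{hH}$ admits descent. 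The one non-formal point is this bridge between the ``big'' category $\fun(BG, \md(\mathbb{F}_p))$ --- where \Cref{BC} naturally lives, and where $\mathbf{1}$ is not even compact --- and $\md(\mathbb{F}_p^{hG})$: there is no symmetric monoidal functor in the needed direction, so one must exploit the finitary nature of descendability (via \Cref{constpro}) together with dualizability of $\prod_{G/H}\mathbb{F}_p$ to reduce everything to the common subcategory of perfect objects, where \eqref{reppgroup} performs the transport.
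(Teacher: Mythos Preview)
Your proof is correct and follows essentially the same route as the paper: apply \Cref{BC} to $G=\mathbb{Z}/p^k$ with $\mathcal{A}=\{\mathbb{Z}/p\}$ to get descendability of $\prod_{G/H}\mathbb{F}_p$ in $\md_G(\mathbb{F}_p)$, use the pro-constancy of the cobar construction (\Cref{constpro}) together with dualizability to pass to the subcategory $\fun(BG,\md^\omega(\mathbb{F}_p))$, and then transport via the Morita equivalence \eqref{reppgroup}. Your treatment of the ``bridge'' step (identifying $\prod_{G/H}\mathbb{F}_p$ with $\mathbb{F}_p^{hH}$ via Shapiro and making explicit why descendability descends to the dualizable subcategory) is in fact more detailed than the paper's, which simply remarks that the cobar construction is pro-constant ``either way.''
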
 
\begin{proof} 
Consider the $\infty$-category $\md_{\mathbb{Z}/p^k}(\mathbb{F}_p)$ of
$\mathbb{F}_p$-module spectra with a $\mathbb{Z}/p^k$-action. Inside here we
have the commutative algebra object $\prod_{ \mathbb{Z}/p^{k-1}} \mathbb{F}_p$
which, by \Cref{BC}, admits descent. 

Note that, 
as in
\eqref{reppgroup}, the subcategory $\md^\omega_{\mathbb{Z}/p^k}(
\mathbb{F}_p)$ of perfect $\mathbb{F}_p$-modules with a $\mathbb{Z}/p^k$-action
is symmetric monoidally equivalent to the $\infty$-category of perfect $\mathbb{F}_p^{h
\mathbb{Z}/p^k}$-modules. Thus, if we show that 
$\prod_{ \mathbb{Z}/p^{k-1}} \mathbb{F}_p$ generates the unit $\mathbb{F}_p$ itself as a
thick $\otimes$-ideal in 
$\md^\omega_{\mathbb{Z}/p^k}(
\mathbb{F}_p)$ (rather than the larger $\infty$-category $\md_{\mathbb{Z}/p^k}(
\mathbb{F}_p)$), we will be done. But this extra claim comes along for free,
since we can use the cobar construction. The cobar construction on
$\prod_{\mathbb{Z}/p^{k-1}} \mathbb{F}_p$ is constant as a pro-object either way, and that means
that $\mathbb{F}_p$ belongs to the thick $\otimes$-ideal generated by 
$\prod_{ \mathbb{Z}/p^{k-1}} \mathbb{F}_p$ in the smaller $\infty$-category. 
\end{proof} 

We refer to \cite{MNNequiv, MNNequiv2} for many further examples of these phenomena in
equivariant homotopy theory (e.g., when $R$ is replaced by a ring spectrum)
and analogs of $F$-isomorphism and induction theorems.

\subsection{Stratification for Hopf algebras}

Let $k$ be a field of characteristic $p$, and let $A$ be a
finite-dimensional commutative Hopf algebra over $k$.
One may attempt to obtain a similar picture in the derived $\infty$-category of
$A$-comodules. 
This has been considered by several authors, for example in \cite{palmieri,
wilkerson, FP}. 
The case of the previous subsection was $A = \prod_G k$ when $G$ is
a finite group, given the coproduct dual to the multiplication in $k[G]$. 
In this subsection, which will not be used in the sequel, we describe the
connection between some of this work and the notion of descent theory
considered here. 
In this subsection, we assume that all Hopf algebras $A$ that occur are \emph{graded connected}, i.e., 
$A = \bigoplus_{i \in \mathbb{Z}_{\geq 0}} A_i$ with $A_0 = k$ and the Hopf
algebra structure respects the grading.

The Hopf algebra $A$ defines a \emph{finite group scheme} $G =
\spec A$ over $k$, and we are interested in the $\infty$-category of quasi-coherent
complexes on the classifying stack $B G$ and in understanding descent in here. 
For every closed subgroup $H \subset G$, we obtain a morphism of stacks
\[ f_H\colon  BH \to BG,  \]
which is \emph{affine}, even finite: in particular, quasi-coherent sheaves on $BH$ can be
identified with modules in $\qcoh(BG)$ over $(f_H)_*(\mathcal{O}_{BH}) \in
\clg( BG)$. One might hope that a certain collection of (proper) subgroup schemes $H
\subset G$ would have the property that the commutative algebra objects $(f_H)_*(\mathcal{O}_{BH})$
jointly generate, as a thick $\otimes$-ideal, all of $\qcoh( BG)$.

When $G$ is constant (although this is not covered by our present graded
connected setup), then the Quillen stratification theory (i.e.,
\Cref{BC}) identifies such a collection of subgroups. The key step is to show
that if $G$ is not elementary abelian, then the collection of 
$(f_H)_*(\mathcal{O}_{BH})$
as $H$ ranges over \emph{all} proper
subgroups of $G$ jointly satisfy descent. 
The picture is somewhat more complicated for finite group schemes. 

\begin{definition}[Palmieri \cite{palmieri}] A group scheme $G$ is \textbf{elementary}
if it is commutative and satisfies the following
condition. Let $\mathcal{O}(G)^{\vee}$ be the ``group algebra,'' i.e., the dual to the
ring $\mathcal{O}(G)$ of functions on $G$.  Then   for every $x$ in the augmentation ideal of
$\mathcal{O}(G)^{\vee}$, we have $x^p = 0$. 
Dualizing, this is equivalent to the condition that the
\emph{Verschiebung} should annihilate $G$. 
\end{definition}

\begin{remark}
The 
``group algebra'' $\mathcal{O}(G)^{\vee}$ 
plays a central role here because $\qcoh(BG)$, if we forget the symmetric
monoidal structure, is simply $\md( \mathcal{O}(G)^{\vee})$; the Hopf algebra
structure on $\mathcal{O}(G)^{\vee}$ gives rise to the symmetric monoidal
structure. 
\end{remark}

In \cite{palmieri}, Palmieri also defines  a weaker notion of
\emph{quasi-elementarity} for finite group schemes $G$, in terms of the
vanishing of certain products of Bocksteins.
It is this more complicated condition that actually ends up intervening. 

Consider first a group scheme $G$ of rank $p$ over $k$ (which is necessarily
commutative), arising as the spectrum of a graded connected Hopf algebra. Then 
the underlying algebra $\mathcal{O}(G)^{\vee}$ is isomorphic to $k[x]/x^p$. 
In particular, there is a basic generating class $\beta \in H^2( BG ) \simeq
\mathrm{Ext}^2_{\mathcal{O}(G)^{\vee}}(k, k)$ called the \emph{Bockstein}
$\beta_G$. The Bockstein, considered as a map $\mathbf{1} \to
\Sigma^2\mathbf{1}$ in $\qcoh(BG)$, has the property
that 
the \emph{cofiber} of $\beta$ belongs to the thick subcategory generated by the
``regular representation'' $\mathcal{O}(G)^{\vee}$, in view of the exact
sequence of $\mathcal{O}(G)^{\vee} \simeq k[x]/x^p$-modules
\[ 0 \to k \to  \mathcal{O}(G)^{\vee} \to \mathcal{O}(G)^{\vee} \to k \to 0,  \]
which exhibits the two-term complex $\mathcal{O}(G)^{\vee} \to
\mathcal{O}(G)^{\vee}$ as the cofiber of $\beta$ (up to a shift). 
Since the map $\mathcal{O}(G)^{\vee} \to \mathcal{O}(G)^{\vee}$ is nilpotent
(it is given by multiplication by $x$), it follows that the thick subcategory
generated by the cofiber of $\beta$ is equal to that generated by the standard
representation. 

\begin{definition} \label{quasi}
A group scheme $G$ arising from a graded connected Hopf algebra is \textbf{quasi-elementary} if the product $\prod_{\phi\colon  G
\twoheadrightarrow G'} \phi^*( \beta_{G'})$ for all surjections $\phi\colon  G
\twoheadrightarrow G'$ for $G'$ a group scheme of rank $p$ (always respecting
the grading), is not nilpotent in
the cohomology of $BG$. \end{definition}

\begin{remark} 
Let $G = \spec A$ be a nontrivial group scheme arising from a graded connected Hopf algebra.
Then there is always a surjective map $G \twoheadrightarrow G'$ with $G'$ of
rank $p$ 
(respecting the grading). To see this, we observe that there is a nontrivial
primitive element $x \in A_i$ for $i  > 0$ and, replacing $x$ with a suitable
power, we may assume that $x^p = 0$. This defines the map to a graded version
of $\alpha_p$. 
\end{remark}

For finite groups, it is a classical theorem of Serre that quasi-elementarity is
equivalent to being elementary abelian: if $G$ is a finite $p$-group which is not
elementary abelian, then there are nonzero classes $\alpha_1, \dots, \alpha_n \in
H^1(G; \mathbb{Z}/p)$ such that the product of the Bocksteins $\prod \beta
(\alpha_i)$ vanishes. Serre's result is, as explained in \cite{carlson,
balmersep}, at the source of the Quillen stratification theory for finite
groups, in particular \Cref{BC}. 

\begin{proposition}[{Cf. \cite[Th. 1.2]{palmieri}}]
Let $G$ be a finite group scheme arising from a graded connected Hopf
algebra over $k$.  Then
$G$ is not quasi-elementary if and only if the 
objects $(f_{H})_*(\mathcal{O}_{BH}) \in \clg( \qcoh(BG))$, for $H \subset G$ a proper
normal subgroup scheme (respecting the grading), generate the unit as
a thick $\otimes$-ideal. 
\end{proposition}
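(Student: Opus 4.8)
The plan is to reduce the statement to a statement about nilpotence of Bockstein-type cohomology classes and then invoke the machinery already set up in this section, namely the criterion of \Cref{descnilp} (descent $=$ nilpotence of the relevant tensor ideal) and the characterization via thick $\otimes$-ideals. First I would set up the two directions separately.

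For the ``only if'' direction: suppose $G$ is quasi-elementary, so the product $\beta := \prod_{\phi\colon G \twoheadrightarrow G'} \phi^*(\beta_{G'})$ over all grading-respecting surjections onto rank-$p$ quotients is \emph{not} nilpotent in $H^*(BG)$. I want to show the objects $(f_H)_*(\mathcal{O}_{BH})$, for $H$ a proper normal subgroup scheme, do \emph{not} generate the unit as a thick $\otimes$-ideal. The key observation is that for each such $H$, the class $\beta$ restricts to zero (or to a nilpotent class) in the cohomology of $BH$: if $H$ is proper and normal, then $H$ is contained in the kernel of some surjection $\phi\colon G \twoheadrightarrow G'$ onto a rank-$p$ group scheme (here one uses the remark that nontrivial graded connected group schemes always admit such surjections, applied to $G/H$ composed with $G \twoheadrightarrow G/H$; proper normality of $H$ guarantees $G/H$ is nontrivial), so $\phi^*(\beta_{G'})$ dies on $BH$ and hence so does the product $\beta$. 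Therefore multiplication by $\beta$ is an $(f_H)_*(\mathcal{O}_{BH})$-zero map for every $H$ in the collection. By the argument in the proof of \Cref{descnilp} (the thick-$\otimes$-ideal stability of the tensor ideals $\mathcal{I}_M$), if the $(f_H)_*(\mathcal{O}_{BH})$ generated $\mathbf{1}$ as a thick $\otimes$-ideal, then some power of the tensor ideal $\sum_H \mathcal{I}_{(f_H)_*(\mathcal{O}_{BH})}$ would be zero, forcing some power of $\beta$ to vanish --- contradicting non-nilpotence. Hence no descent, as desired.

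For the ``if'' direction: suppose $G$ is not quasi-elementary, i.e.\ $\beta$ is nilpotent in $H^*(BG)$. The strategy here is to build $\mathbf{1}$ out of the $(f_H)_*(\mathcal{O}_{BH})$ by successively killing Bocksteins. For a single rank-$p$ quotient $\phi\colon G \twoheadrightarrow G'$, the cofiber of $\beta_{G'}\colon \mathbf{1} \to \Sigma^2\mathbf{1}$ in $\qcoh(BG')$ lies in the thick subcategory generated by the regular representation $\mathcal{O}(G')^\vee$ (this is precisely the exact-sequence computation already carried out in the excerpt for $k[x]/x^p$), and pulling back along $f^*$ for $f\colon BG \to BG'$, the cofiber of $\phi^*(\beta_{G'})$ lies in the thick $\otimes$-ideal generated by $(f_{H'})_*(\mathcal{O}_{BH'})$ with $H' = \ker(\phi)$, a proper normal subgroup scheme. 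Iterating over all the surjections $\phi$ appearing in the product defining $\beta$ (finitely many, since $A$ is finite-dimensional), the cofiber of multiplication by $\beta = \prod \phi^*(\beta_{G'})$ lies in the thick $\otimes$-ideal $\mathcal{J}$ generated by the various $(f_H)_*(\mathcal{O}_{BH})$. Since $\beta$ is nilpotent, say $\beta^N = 0$, an iterated-cofiber (octahedron) argument shows $\mathbf{1}$ is a retract of a finite extension of shifts of the cofiber of $\beta$, hence $\mathbf{1} \in \mathcal{J}$.

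The main obstacle I expect is the ``if'' direction, specifically the bookkeeping that turns nilpotence of the \emph{single product class} $\beta$ into membership of $\mathbf{1}$ in the thick $\otimes$-ideal generated by the $(f_H)_*(\mathcal{O}_{BH})$. One has to be careful that each individual factor $\phi^*(\beta_{G'})$ has cofiber controlled by the appropriate $(f_{\ker\phi})_*(\mathcal{O}_{B\ker\phi})$, then assemble these via the standard fact that if $x_1,\dots,x_m$ are self-maps of $\mathbf{1}$ each with cofiber in a thick $\otimes$-ideal $\mathcal{J}$, then the cofiber of $x_1\cdots x_m$ is also in $\mathcal{J}$, and finally that $(x_1\cdots x_m)^N = 0$ implies $\mathbf{1}$ is a retract of $\mathrm{cofib}(x_1\cdots x_m)^{\otimes \text{something}}$ --- i.e.\ the ``if $f^N=0$ then $\mathbf{1}$ is a retract of an iterated cofiber of $f$'' lemma, which is the same mechanism used throughout \Cref{descnilp}. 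The graded-connectedness hypothesis enters to guarantee the existence of the requisite rank-$p$ quotients (via primitive elements) and to keep all constructions within the graded world; I would flag, as the excerpt does, that the constant-group-scheme case is genuinely different and is handled by \Cref{BC} rather than this argument.
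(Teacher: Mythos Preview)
Your proposal is correct and follows essentially the same approach as the paper: in one direction you use that the cofiber of each Bockstein factor $\phi^*(\beta_{G'})$ lies in the thick $\otimes$-ideal generated by $(f_{\ker\phi})_*(\mathcal{O}_{B\ker\phi})$, assemble via the octahedral axiom, and conclude from nilpotence of $\kappa$; in the other direction you observe $\kappa$ restricts to zero on each proper normal $H$ (via a rank-$p$ quotient of $G/H$) and invoke \Cref{descnilp}. Note that you have the labels ``if'' and ``only if'' reversed relative to the statement ``not quasi-elementary iff descent,'' but the mathematics is right.
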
 
\begin{proof} 
Suppose $\kappa$ is nilpotent. 
For each rank $p$ quotient $\phi\colon  G \twoheadrightarrow G'$, we have a map 
$\mathbf{1} \to \Sigma^2\mathbf{1}$ in $\qcoh(BG')$ whose cofiber is in the thick subcategory of
$\qcoh(BG')$
generated by the pushforward of the structure sheaf via $\ast \to BG'$. Pulling
back, we get, for each rank $p$ quotient $\phi\colon  G \twoheadrightarrow G'$ with
kernel $H_{\phi}$, a map $\beta_\phi\colon  \mathbf{1} \to \mathbf{1}[2]$ in $\qcoh(BG)$ whose cofiber is in the
thick subcategory generated by $(f_{H_\phi})_*(\mathcal{O}_{BH_\phi})$ where
$f_{H_\phi}\colon  BH_\phi \to BG$ is the natural map. 
It follows in particular that the cofiber of each $\beta_{\phi}$ belongs to the
thick subcategory $\mathcal{C} \subset \qcoh( BG)$ generated by the
$\{ (f_H)_*(\mathcal{O}_{BH}) \}$
for $ H
$ a proper normal subgroup scheme of $G$.  
Therefore, using the octahedral axiom, the cofiber of each \emph{composite} of a finite string of
$\beta_{\phi}$'s (e.g., $\kappa$ and its powers)
belongs to $\mathcal{C}$. 
It follows finally that, by nilpotence of $\kappa$, the unit object itself
belongs to $\mathcal{C}$.

Conversely, suppose that the $\{(f_{H})_* ( \mathcal{O}_{BH})\}$ generate the
unit as a thick $\otimes$-ideal: that is, descent holds.  In this case, we show
that the product of Bocksteins 
$\kappa = \prod_{\phi\colon  G
\twoheadrightarrow G'} \phi^*( \beta_{G'})$ in \Cref{quasi}
is forced to be nilpotent. 
In fact, we observe that  for every
proper normal subgroup $H \subset G$,  there exists a morphism from $G/H $ to a
rank $p$ group scheme $Q$. The pull-back of the Bockstein $\beta_Q$ to $H^2(BG)$ restricts to
zero on $H$; in particular, $\kappa$ restricts to zero on each normal subgroup
scheme of $G$. By descent, it follows that $\kappa$ is nilpotent. 

\end{proof} 

By induction, one gets: 
\begin{corollary} 
Let $G$ be a group scheme over $k$ arising from a graded connected Hopf algebra. Then the commutative algebra
objects $(f_H)_*(\mathcal{O}_{BH}) \in \clg( \qcoh(BG))$, as $H \subset G$
ranges over all the quasi-elementary subgroup schemes (respecting the grading), admits descent. 
\end{corollary}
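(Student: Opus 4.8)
The plan is to argue by induction on $\dim_k \mathcal{O}(G)$, which is a positive integer (the Hopf algebra being finite-dimensional) and which strictly decreases on passing to a proper graded subgroup scheme, since a proper graded subgroup scheme corresponds to a proper graded Hopf ideal. If $G$ is itself quasi-elementary there is nothing to prove: $G$ is then a quasi-elementary graded subgroup scheme of itself, $f_G$ is the identity, and $(f_G)_*\mathcal{O}_{BG} = \mathbf{1}$ already lies among the objects in question, hence certainly lies in the thick $\otimes$-ideal they generate. (This also disposes of the base case, the trivial group scheme, which is quasi-elementary because the product of Bocksteins in \Cref{quasi} is then an empty product and so a unit.) So I may assume $G$ is not quasi-elementary. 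Then the preceding Proposition gives that the objects $(f_H)_*(\mathcal{O}_{BH})$, as $H$ runs over the proper normal graded subgroup schemes of $G$, generate the unit as a thick $\otimes$-ideal in $\qcoh(BG)$; since a thick $\otimes$-ideal is closed only under finite colimits and retracts, some \emph{finite} subcollection already does so, i.e.\ $A := \bigoplus_{H \in S}(f_H)_*(\mathcal{O}_{BH})$ admits descent in $\qcoh(BG)$ for a finite set $S$ of proper normal graded subgroup schemes of $G$.

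The heart of the argument is then to relativize the inductive hypothesis along $A$ and invoke transitivity of descent. For each $H \in S$ the morphism $f_H \colon BH \to BG$ is affine, so, as already noted, $\md_{\qcoh(BG)}((f_H)_*\mathcal{O}_{BH}) \simeq \qcoh(BH)$, and therefore $\md_{\qcoh(BG)}(A) \simeq \prod_{H \in S}\qcoh(BH)$. Applying the inductive hypothesis to each such $H$ (legitimate, since $\dim_k\mathcal{O}(H) < \dim_k\mathcal{O}(G)$) and again passing to finite subcollections, I obtain for each $H$ a finite set $T_H$ of quasi-elementary graded subgroup schemes $Q \subseteq H$ such that $\bigoplus_{Q \in T_H}(g^H_Q)_*\mathcal{O}_{BQ}$ is descendable in $\qcoh(BH)$, where $g^H_Q \colon BQ \to BH$. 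By \Cref{descfinloc}, descendability over the finite product $\prod_{H \in S}\qcoh(BH) \simeq \md_{\qcoh(BG)}(A)$ is checked factorwise, so the tuple of these algebras is a descendable commutative algebra object $B'$ in $\md_{\qcoh(BG)}(A)$. Its image $B$ under the forgetful functor $\md_{\qcoh(BG)}(A) \to \qcoh(BG)$ is, because $(f_H)_*$ is a right adjoint (hence commutes with the finite products at hand) and because pushforward along the composite $BQ \xrightarrow{g^H_Q} BH \xrightarrow{f_H} BG$ identifies $(f_H)_*(g^H_Q)_*\mathcal{O}_{BQ}$ with $(f_Q)_*\mathcal{O}_{BQ}$, simply the finite direct sum $B = \bigoplus_{H \in S}\bigoplus_{Q \in T_H}(f_Q)_*\mathcal{O}_{BQ}$ with $f_Q \colon BQ \to BG$. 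Now \Cref{permanence}(1), applied to $\mathbf{1} \to A \to B$ in $\clg(\qcoh(BG))$ — the first map descendable by the choice of $S$, the second by the construction of $B'$ — shows that $B$ admits descent in $\qcoh(BG)$. Since $B$ is a finite direct sum of the objects $(f_Q)_*\mathcal{O}_{BQ}$, it lies in the thick $\otimes$-ideal they generate; and as $\mathbf{1}$ lies in the thick $\otimes$-ideal generated by $B$, it lies in the one generated by all the $(f_Q)_*\mathcal{O}_{BQ}$. Each $Q$ occurring is a graded subgroup scheme of $G$ (being a graded subgroup scheme of the graded subgroup scheme $H$) and is quasi-elementary as an intrinsic property, so this is exactly the asserted conclusion.

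The argument is essentially bookkeeping once the preceding Proposition is in hand; the two places that need genuine care, and the ones I expect would slow down a full write-up, are: (i) keeping all the products finite, so that the thick-subcategory manipulations (closure under finite colimits and retracts, but not infinite products) and \Cref{descfinloc} really apply — which is fine precisely because "generates the unit as a thick $\otimes$-ideal" always uses only finitely many generators; and (ii) the compatibility of pushforward under the composition $BQ \to BH \to BG$ of finite affine morphisms of stacks with the identification $\md_{\qcoh(BG)}((f_H)_*\mathcal{O}_{BH}) \simeq \qcoh(BH)$, i.e.\ that under this equivalence the forgetful functor corresponds to $(f_H)_*$ and hence carries $(g^H_Q)_*\mathcal{O}_{BQ}$ to $(f_Q)_*\mathcal{O}_{BQ}$. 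There is no analytic obstacle: the statement is a formal consequence of the Proposition together with the permanence and localization properties of descent established above.
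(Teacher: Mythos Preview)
The proposal is correct and takes essentially the same approach as the paper, which simply says ``By induction, one gets:'' before stating the corollary. Your careful unwinding of that induction---on $\dim_k \mathcal{O}(G)$, invoking the preceding Proposition for the nonquasi-elementary case, applying the inductive hypothesis factorwise over $\md_{\qcoh(BG)}(A) \simeq \prod_{H}\qcoh(BH)$, and closing with transitivity of descent (\Cref{permanence})---is exactly the intended argument, and your attention to finiteness of the relevant index sets and to the identification of $(f_H)_*(g^H_Q)_*\mathcal{O}_{BQ}$ with $(f_Q)_*\mathcal{O}_{BQ}$ fills in precisely the bookkeeping the paper elides.
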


Unfortunately, it is known that quasi-elementarity and elementarity are not equivalent for
general finite group schemes \cite{wilkerson}. There is, however, one important case when this
is known.

Let $p = 2$. 
Consider the 
dual Steenrod algebra $\mathcal{A} \simeq \mathbb{F}_2[\xi_1, \xi_2, \dots ]$. This is a graded, connected, and
commutative (but not cocommutative) Hopf algebra over $\mathbb{F}_2$. 
The object
$\spec \mathcal{A}$, which is now an (infinite-dimensional) group scheme,
admits an elegant algebro-geometric interpretation as the 
automorphism group scheme of the formal additive group $\widehat{\mathbb{G}_a}$. 
Let $A$ be a finite-dimensional graded Hopf algebra quotient of the dual Steenrod
algebra, so that $G = \spec A$ is a finite group scheme inside the group scheme of
automorphisms of $\widehat{\mathbb{G}_a}$. 

\begin{theorem}[Wilkerson \cite{wilkerson}]  Let $A$ be as above, and let $\mathcal{B}$ range
over all the elementary subgroup schemes $H \subset G$ (respecting the grading). Then the map
\( \bigsqcup_{H \in \mathcal{B}} BH \to B G,  \)
admits descent, in the sense that the commutative algebra object $\prod_{H \in
\mathcal{B}} (f_H)_{*}(\mathcal{O}_{BH}) \in \clg(\qcoh(BG))$ does. 
\end{theorem}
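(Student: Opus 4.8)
The plan is to reduce the statement to the Corollary established above together with a single purely algebraic input supplied by \cite{wilkerson}. Recall that the Corollary asserts, for \emph{any} finite group scheme $G = \spec A$ arising from a graded connected Hopf algebra, that the commutative algebra objects $(f_H)_*(\mathcal{O}_{BH})$ --- as $H$ ranges over the \emph{quasi-elementary} graded subgroup schemes of $G$ --- jointly admit descent in $\qcoh(BG)$, i.e. their finite product generates the unit $\mathbf{1}$ as a thick $\otimes$-ideal. So the theorem will follow as soon as we know that, for the particular $G$ under consideration, the quasi-elementary graded subgroup schemes are exactly the elementary ones. Since elementarity implies quasi-elementarity in general (the latter being the a priori weaker condition; cf. \Cref{quasi}), the only point that needs proof is the implication ``quasi-elementary $\Rightarrow$ elementary'', and it is precisely here that the hypothesis $p = 2$ --- equivalently, the identification $\mathrm{Aut}(\widehat{\mathbb{G}_a}) = \spec \mathcal{A}$ with $\mathcal{A} = \mathbb{F}_2[\xi_1, \xi_2, \dots]$ --- enters.

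Granting that implication, I would conclude as follows. Since $G = \spec A$ is a graded subgroup scheme of $\mathrm{Aut}(\widehat{\mathbb{G}_a})$, so is every graded subgroup scheme $H \subset G$; and whether $H$ is elementary or quasi-elementary is intrinsic to $H$ and independent of the ambient group. Hence the list $\mathcal{B}$ of elementary graded subgroup schemes of $G$ coincides with the list of quasi-elementary ones, so that $\prod_{H \in \mathcal{B}}(f_H)_*(\mathcal{O}_{BH})$ is precisely the object to which the Corollary applies; it therefore generates $\mathbf{1}$ as a thick $\otimes$-ideal, i.e. admits descent, which is exactly the assertion that $\bigsqcup_{H \in \mathcal{B}} BH \to BG$ admits descent. (Even if one worried that the two lists of subgroup schemes were not literally identical, it would be enough that the quasi-elementary ones form a sub-list: a finite product contains each of its sub-products as a retract, so the product over $\mathcal{B}$ contains the descent-admitting product over the quasi-elementary subgroup schemes as a retract, and hence admits descent itself.)

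The substantive step, which I would not attempt to reprove, is Wilkerson's classification. The approach there --- which I would follow --- is to describe the finite graded subgroup schemes of $\mathrm{Aut}(\widehat{\mathbb{G}_a})$, equivalently the finite-dimensional graded Hopf-algebra quotients of $\mathcal{A} = \mathbb{F}_2[\xi_1, \xi_2, \dots]$, by profile functions, and to exploit the rigidity forced by the ambient automorphism-group-scheme structure (in particular the action of Frobenius and Verschiebung on the $\xi_i$). For a \emph{non-elementary} such $H$ one then analyzes the product of pulled-back Bocksteins $\prod_{\phi\colon H \twoheadrightarrow H'}\phi^*(\beta_{H'})$ of \Cref{quasi}, running over the rank-$2$ quotients of $H$, and shows by an explicit computation in the cohomology of $BH$ --- a Serre-type vanishing of a product of Bocksteins of one-dimensional classes, now internal to the cohomology of the classifying stack of a subgroup scheme of $\mathrm{Aut}(\widehat{\mathbb{G}_a})$ --- that this product is nilpotent, so that $H$ is not quasi-elementary. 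I expect this cohomological computation, together with the structural classification it rests on, to be the main obstacle; everything else is the formal descent machinery already in place.
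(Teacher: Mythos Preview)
Your proposal is correct and matches the paper's approach exactly: the paper does not give an independent proof but deduces the theorem from the preceding Corollary together with Wilkerson's algebraic result that, for graded subgroup schemes of $\spec\mathcal{A}$, quasi-elementarity coincides with elementarity. Your handling of the retract argument (so that only the implication ``quasi-elementary $\Rightarrow$ elementary'' is actually needed) is a nice extra observation that the paper does not make explicit.
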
 

In particular, it is known that for subgroup schemes of $\spec
\mathcal{A}$ (cut out by homogeneous ideals), elementarity and quasi-elementarity are equivalent. Related ideas
have been used by Palmieri \cite{palmieriquillen} to give a complete
description of the cohomology of such Hopf
algebras up to $F$-isomorphism at the prime 2.

\subsection{Chromatic homotopy theory}

Thick subcategory ideas were originally introduced in chromatic homotopy theory. 
Let $E_n$ denote a Morava $E$-theory of height $n$; thus $\pi_0(E_n) \simeq
W(k)[[v_1, \dots,v_{n-1}]]$ where $W(k)$ denotes the Witt vectors on a perfect
field $k$ of characteristic $p$. 
Moreover, $\pi_*(E_n) \simeq \pi_0(E_n)[t_2^{\pm 1}]$ and $E_n$ is thus
\emph{even periodic}; the associated formal group is the Lubin-Tate universal
deformation of a height $n$ formal group over the field $k$. 
By a deep theorem of Goerss-Hopkins-Miller, $E_n$ has the (canonical) structure
of an $\e{\infty}$-ring. 

Let $L_n $ denote the functor of localization at $E_n$. 
The basic result is the following: 

\begin{theorem}[{Hopkins-Ravenel \cite[Chapter 8]{ravenelorange}}]
\label{HR}
The map $L_n S^0 \to E_n$ admits descent. 
\end{theorem}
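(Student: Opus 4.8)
The plan is to reduce the statement to a form of the nilpotence theorem of Devinatz–Hopkins–Smith. By \Cref{descnilp}, it suffices to show that the ideal $\mathcal{I}_{E_n}$ of $E_n$-zero maps in the triangulated category $\mathrm{Ho}(L_n \sp)$ is nilpotent, i.e.\ that some finite power $\mathcal{I}_{E_n}^s$ vanishes. Equivalently, by \Cref{constpro}, we must show the cobar tower $\{\mathrm{Tot}_m \cb(E_n)\}$ is a constant pro-object converging to $L_n S^0$. First I would recall that $E_n$ is a dualizable object of $L_n\sp$: it is $L_n$-local and $E_n$-locally dualizable because $\pi_*(E_n)$ is concentrated so that $E_n \wedge E_n$ is pro-free over $\pi_* E_n$, and more to the point $E_n$ is a finite-type, even-periodic Landweber-exact theory whose $K(n)$-localization (hence $L_n$-localization after one notes $L_n E_n \simeq E_n$) is dualizable in the $K(n)$-local and $L_n$-local categories. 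However the unit $L_n S^0$ is \emph{not} compact in $L_n\sp$ when $n \geq 1$, so \Cref{cptdescent} does not apply directly; this is exactly the obstacle the whole section is building toward.

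The route around the non-compactness of the unit is to exploit the interplay between $L_n\sp$ and the category of spectra itself, using the \emph{smash-nilpotence} theorem. Concretely: a map $f\colon X \to Y$ of $L_n$-local spectra is $E_n$-zero iff $E_n \wedge f$ is null. By the thick-subcategory reformulation, it is enough to show that the unit $L_n S^0$ lies in the thick $\otimes$-ideal of $L_n\sp$ generated by $E_n$; and since the thick $\otimes$-ideal generated by $E_n$ contains every $E_n$-module, and $L_n\sp$ is generated (as a localizing category) by finite spectra, it suffices to prove that every $L_n$-local finite spectrum $L_n F$ lies in the thick $\otimes$-ideal generated by $E_n$ — and then run the $F_1, F_2, \dots$ tower construction of \Cref{descnilp} to convert the finiteness into a uniform bound $s$. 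The key input is the Hopkins–Ravenel \emph{smash product theorem} together with the nilpotence theorem: for $F$ a finite $p$-local spectrum, $MU \wedge F$-nilpotence governs $F$, and passing to $L_n$ (equivalently, replacing $MU$ by the Johnson–Wilson or Morava $E$-theory that generates the same thick ideal $K(n)$-locally, via Hopkins–Ravenel's analysis of $L_n$ in terms of $E(n)$ and $E_n$), one gets that the $L_n$-localization of the unit is built from $E_n$ by finitely many cofiber sequences and retracts with a bound independent of $F$.

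The main obstacle — and the technical heart — is establishing that the relevant tower \emph{stabilizes with a uniform bound}, i.e.\ producing the integer $s$ in \Cref{descnilp}(1) rather than merely showing pro-constancy. Equivalently, one must promote ``$\mathrm{Tot}\,\cbaug(E_n)$ converges to $L_n S^0$'' (which is essentially the statement that $E_n$-based descent works, a known consequence of the Hopkins–Ravenel chromatic convergence / nilpotence package) to the stronger claim that the convergence is \emph{pro-constant}, i.e.\ that a horizontal vanishing line appears at a finite stage. This is precisely \cite[Chapter 8]{ravenelorange}: Hopkins and Ravenel prove that the $E_n$-Adams spectral sequence (equivalently the descent/Adams–Novikov tower for $L_n S^0 \to E_n$) has a horizontal vanishing line at a finite stage, which is exactly the nilpotence of $\mathcal{I}_{E_n}$. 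So the proof is: invoke \Cref{descnilp}, and cite Hopkins–Ravenel's vanishing-line theorem (the ``smash product theorem'') as the statement that the composite of sufficiently many $E_n$-zero maps of $L_n$-local spectra is null. The only work beyond citation is checking that ``horizontal vanishing line in the Hopkins–Ravenel sense'' translates cleanly into ``$\mathcal{I}_{E_n}^s = 0$ for some $s$,'' which is the content of the tower manipulations already carried out in the proof of \Cref{descnilp}.
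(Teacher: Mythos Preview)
The paper does not give a proof of this theorem: it is stated with a citation to \cite[Chapter 8]{ravenelorange} and then paraphrased (``In other words, the $E_n$-based Adams--Novikov spectral sequence degenerates with a horizontal vanishing line at a finite stage''). So the ``paper's proof'' is simply the observation that the horizontal vanishing line of Hopkins--Ravenel is, via the equivalence of \Cref{descnilp}, the same statement as descendability. Your final paragraph arrives at exactly this, and that is the correct answer.

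Your first two paragraphs, however, contain detours that are either wrong or do not lead anywhere. First, $E_n$ is \emph{not} dualizable in $L_n\sp$ (it is dualizable only after $K(n)$-localization), so the aside about dualizability is incorrect; fortunately you do not use it. Second, your thick-subcategory reduction in the middle paragraph is circular and does not produce the uniform bound $s$: showing that each $L_n F$ for finite $F$ lies in the thick $\otimes$-ideal generated by $E_n$ is already equivalent to the statement you are trying to prove (take $F = S^0$), and there is no mechanism in what you wrote for extracting a single $s$ that works for all objects --- that uniformity is precisely the nontrivial content of the Hopkins--Ravenel argument. The invocation of the DHS nilpotence theorem alone does not give this; one genuinely needs the vanishing-line analysis in Ravenel's book, which is a separate and substantial argument.

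In short: your last paragraph is the whole proof and matches the paper, and the preceding material should be deleted rather than repaired.
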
 

In other words, the $E_n$-based Adams-Novikov spectral sequence degenerates
with a horizontal vanishing line at a finite stage, for any spectrum.
This degeneration does \emph{not} happen at the $E_2$-page (e.g., for the sphere) and
usually implies that a great many differentials are necessary early on. 
\Cref{HR}, which implies that $E_n$-localization is \emph{smashing}, is
fundamental to the global structure of the stable homotopy category and its
localizations. 
As in the finite group case, one of the advantages of results such as \Cref{HR}
is that $E_n$ is much simpler algebraically than is $L_n S^0$. 

The Hopkins-Ravenel result is a basic finiteness property of the $E_n$-local
stable homotopy category. It implies, for instance, that many homotopy limits
that one takes (such as the homotopy fixed points for the $\mathbb{Z}/2$-action
on $KU$) behave much more like finite homotopy limits than infinite ones. 

\begin{example} 
Let $R$ be an $\e{2}$-ring spectrum
which is $L_n$-local. Then it follows that the map from $\pi_*(R)$ to the
zero-line of the 
$E_2$-page of the Adams-Novikov spectral sequence for $R$ is an $F$-isomorphism.
Indeed, we know that the map from $\pi_*(R)$ to the 
zero-line at $E_2$ is a rational isomorphism and, moreover, everything above
the $s =0$ line vanishes at $E_2$. (This comes from the algebraic fact that
rationally, the moduli stack of formal groups is a $B \mathbb{G}_m$ and has no
higher cohomology.)
\end{example} 

\begin{example} 
Let $R$ be an $L_n$-local ring spectrum. Then any class in $\pi_*(R)$ which
maps to zero in $(E_n)_*(R)$ is  nilpotent. This is a very special case of the
general (closely related) nilpotence theorem of \cite{DHS, HS}. 
For an $\e{\infty}$-ring, by playing with power operations, one can actually
prove a stronger result \cite{MNN}: any \emph{torsion} class is
automatically nilpotent. 
\end{example}

\part{The Galois formalism}

\section{Axiomatic Galois theory}

Let $(X, \ast)$ be a pointed, connected topological space. 
A basic and useful invariant of $(X, \ast)$ is the \emph{fundamental group}
$\pi_1(X, \ast)$, defined as the group of homotopy classes of paths $\gamma\colon 
[0, 1] \to X$ with $\gamma(0) = \gamma(1) = \ast$. 
This definition has the disadvantage, at least from the point of view of an
algebraist, of intrinsically using the unit
interval $[0, 1]$ and the topological structure of the real numbers
$\mathbb{R}$. However, the fundamental group also has another incarnation that
makes no reference to the theory of real numbers, via the theory of
\emph{covering spaces}. 

\begin{definition} \label{defcov}
A map $p \colon  Y \to X$ of topological spaces is a \textbf{covering space} if, for
every $x \in X$, there exists a neighborhood $U_x$ of $x$ such that in the
pullback 
\[ \xymatrix{
Y \times_X U_x\ar[d]  \ar[r] & Y \ar[d]  \\
U_x \ar[r] & X
},\]
the map $Y \times_X U_x \to U_x$ has the form $\bigsqcup_S U_x \to U_x$ for a
set $S$. 
\end{definition} 

The theory of covering spaces makes, at least a priori, 
no clear use of $[0, 1]$. Moreover, understanding the theory of covering
spaces  of $X$ is
essentially equivalent to 
understanding the group $\pi_1(X, \ast)$. If $X$ is locally contractible, then one has
the following basic result: 

\begin{theorem} \label{coveringspaces}
Suppose $X$ is path-connected and locally contractible. 
Let $\cov_X$ be the category of maps $Y \to X$ which are covering spaces. 
Then, we have an equivalence of categories $\cov_X \simeq
\mathrm{Set}_{\pi_1(X, \ast)}$, which sends a cover $p\colon  Y \to X$ to the fiber
$p^{-1}(\ast)$ with the monodromy action of $\pi_1(X, \ast)$. 
\end{theorem}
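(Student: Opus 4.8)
The plan is to produce an explicit quasi-inverse to the fiber functor $F\colon\cov_X\to\mathrm{Set}_{\pi_1(X,\ast)}$ built from the universal cover, and then to check that the two functors are mutually inverse. First I would make $F$ precise. Given a covering space $p\colon Y\to X$ and a loop $\gamma$ based at $\ast$, the unique path-lifting property of covering maps --- proved by patching lifts over a subdivision of $[0,1]$ into subintervals mapping into evenly covered neighborhoods, as in \Cref{defcov} --- produces for each $y\in p^{-1}(\ast)$ a unique lift $\widetilde{\gamma}_y$ with $\widetilde{\gamma}_y(0)=y$; set $y\cdot[\gamma]:=\widetilde{\gamma}_y(1)$. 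The homotopy-lifting property shows this is well defined on homotopy classes rel endpoints, and concatenation of loops induces composition of the resulting bijections of $p^{-1}(\ast)$, so $p^{-1}(\ast)$ becomes a right $\pi_1(X,\ast)$-set; a morphism of covers carries fibers to fibers compatibly with lifting paths, so $F$ is a functor.

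Next I would construct the universal cover $q\colon\widetilde{X}\to X$, whose points are homotopy classes rel endpoints of paths starting at $\ast$. The local-contractibility hypothesis (which in particular makes $X$ semilocally simply connected) is exactly what is needed to topologize $\widetilde{X}$ so that $q$ is a covering map and $\widetilde{X}$ is simply connected; taking the constant path as the basepoint of $\widetilde{X}$ identifies $\pi_1(X,\ast)$ with the group of deck transformations, which acts freely on $\widetilde{X}$ on the right. For a $\pi_1(X,\ast)$-set $S$ I would then set $G(S):=\widetilde{X}\times_{\pi_1(X,\ast)}S$, the quotient of $\widetilde{X}\times S$ (with $S$ discrete) by the diagonal action, equipped with the evident map to $X$; local triviality of $q$ shows $G(S)\to X$ is a covering space, and $G$ is visibly functorial in $S$.

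Finally I would check $F\circ G\simeq\mathrm{id}$ and $G\circ F\simeq\mathrm{id}$. For the first, the fiber of $G(S)$ over $\ast$ is $q^{-1}(\ast)\times_{\pi_1(X,\ast)}S$; since the basepoint of $\widetilde{X}$ exhibits $q^{-1}(\ast)$ as a $\pi_1(X,\ast)$-torsor, this is naturally isomorphic to $S$, and one checks the monodromy action matches the given one. For the second, given $p\colon Y\to X$, the rule $([\gamma],y)\mapsto\widetilde{\gamma}_y(1)$ defines a continuous map $\widetilde{X}\times p^{-1}(\ast)\to Y$ over $X$ that is constant on $\pi_1(X,\ast)$-orbits, hence descends to a map $G(F(Y))=\widetilde{X}\times_{\pi_1(X,\ast)}p^{-1}(\ast)\to Y$ of covering spaces of $X$. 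Over the basepoint this map is a bijection onto $p^{-1}(\ast)$, and a map of covers over connected $X$ that is bijective on one fiber is bijective on all of them, hence a homeomorphism; naturality in $Y$ and compatibility with the $\pi_1(X,\ast)$-actions are immediate from uniqueness of lifts. (Here path-connectedness of $X$ is used both to spread bijectivity of fibers and, when $Y$ is nonempty, to know that $p^{-1}(\ast)$ is nonempty.)

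The main obstacle is the construction of $\widetilde{X}$ and the verification that $q\colon\widetilde{X}\to X$ is genuinely a covering map: this is the only step using real point-set topology rather than formal manipulation of path- and homotopy-lifts, and it is precisely where the local-contractibility hypothesis is consumed. An alternative route avoids $\widetilde{X}$ by establishing full faithfulness of $F$ directly from the lifting criterion for maps into covering spaces and then handling essential surjectivity by gluing an atlas of trivial covers along $\pi_1(X,\ast)$-valued transition data; this replaces the single universal-cover construction with two somewhat longer arguments, so I would favor the approach above.
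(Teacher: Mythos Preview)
Your argument is correct and is the standard textbook proof of this classical theorem. However, the paper does not actually prove \Cref{coveringspaces}: it is stated there purely as motivational background for Grothendieck's axiomatic Galois theory, with no proof given. So there is nothing in the paper to compare your approach against; your proposal simply fills in a well-known result that the author takes for granted.
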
 

The fundamental group $\pi_1(X, \ast)$ can, in fact,  be \emph{recovered} from
the structure of the category $\cov_X$. 
This result suggests that the theory of the fundamental group should be more
primordial than its definition might suggest; at least, it might be expected to have
avatars in other areas of mathematics in which the notion
of a covering space makes sense.

Grothendieck realized, in \cite{sga1}, that there is a purely algebraic notion
of a \emph{finite cover} for a scheme (rather than a topological space): that
is, given a scheme $X$, one can define a version of $\cov_X$ that corresponds
to the topological notion of a finite cover. When $X$ is a variety over the
complex numbers $\mathbb{C}$, the algebraic notion turns out to be equivalent
to the topological notion of a finite cover of the complex points
$X(\mathbb{C})$ with the analytic topology. As a result, in \cite{sga1}, it was possible to define a
\emph{profinite group} classifying these finite covers of schemes. Grothendieck
had to prove a version of \Cref{coveringspaces} without an a priori definition
of the fundamental group, and did so by \emph{axiomatizing} the properties that
a category would have to satisfy in order to arise as the category of finite
sets equipped with a continuous action of a profinite group. He could then
\emph{define} the group in terms of the category of finite covers. The main objective
of this paper is to obtain similar categories from stable homotopy theories.

The categories
that appear in this setting are called \emph{Galois categories}, and the
theory of Galois categories will be
reviewed in this section. 
We will, in particular, describe a version of Grothendieck's Galois theory
that does not require a fiber functor, relying primarily on versions
of descent theory.

\subsection{The fundamental group}
To motivate the definitions, we begin by quickly reviewing how the classical
\'etale fundamental group of \cite{sga1} arises.

\begin{definition} 
Let $f\colon  Y \to X$ be a finitely presented map of schemes. We say that $f\colon  Y \to X$ is 
\textbf{\'etale} if $f$ is flat and the sheaf $\Omega_{Y/X}$ of relative
K\"ahler differentials vanishes. 
\end{definition} 

\'Etaleness is the algebro-geometric analog of being a ``local homeomorphism''
in the complex analytic topology. 
Given it, one can define 
the analog of a (finite) covering space. 

\begin{definition} 
A map $f\colon  Y \to X$ is a \textbf{finite cover} (or finite covering space) if $f$
is finite and \'etale. The collection of all finite covering spaces of $X$ forms a
category $\cov_X$, a full subcategory of the category of schemes over $X$. 
\end{definition} 

The basic example of a finite \'etale cover is the map $\bigsqcup_S X \to X$.
If $X$ is connected, then a map $Y \to X$ is a finite cover if and only if it
\emph{locally}  has this form with respect to the flat topology. In other
words, a map $Y \to X$ is a finite cover if and only if there exists a finitely
presented, faithfully flat map $X' \to X$ such that the pull-back
\[ \xymatrix{
X' \times_X Y \ar[d] \ar[r] &  Y \ar[d] \\
X' \ar[r] &  X
},\]
is of the form $\bigsqcup_S X' \to X'$ where $S$ is a finite set; if $X$ is
not connected, the number of sheets may vary over $X$. In other
words, one has an analog of \Cref{defcov}, where ``locally'' is replaced by ``locally
in the flat topology.'' This strongly suggests that the
algebro-geometric definition 
of a finite cover is a good analog of the conventional topological one.

\begin{example} \label{galpt}
Suppose $X = \spec k$ where $k$ is an algebraically closed field. In this case,
there is a canonical equivalence of categories
\[ \cov_X \simeq \finset , \]
where $\finset$ is the category of finite sets,
which sends an \'etale cover $Y \to X$ to its set of connected components. 
\end{example}

Fix a geometric point $\overline{x} \to X$, and assume that $X$ is a
\emph{connected} scheme. 
Grothendieck's idea is to extract the fundamental group $\pi_1(X,
\overline{x})$ directly from the structure of the 
\emph{category} $\cov_X$. 
In particular, as in \Cref{coveringspaces}, the category $\cov_X$ will be
equivalent to the
category of representations (in finite sets) of a certain (profinite) group
$\pi_1(X, \overline{x})$. 

\begin{definition} 
The \textbf{fundamental group} $\pi_1(X, \overline{x})$ of the pair $(X,
\overline{x})$ is given by the
automorphism group of the forgetful functor
\[ \cov_X \to \finset,  \]
which consists of the composite
\[ \cov_X \to \cov_{\overline{x}} \simeq \finset, \]
where the first functor is the pull-back and the second is the equivalence of
\Cref{galpt}. 
\end{definition}

The automorphism group of such a functor naturally acquires the structure of a
\emph{profinite} group, and the forgetful functor above naturally lifts to a
functor
$\cov_X \to \finset_{\pi_1(X, \overline{x})}$, where $\finset_{\pi_1(X,
\overline{x})}$ denotes the category of finite sets equipped with a
continuous action of the profinite group 
$\pi_1(X,
\overline{x})$. 

Then, one has: 

\begin{theorem}[Grothendieck \cite{sga1}] 
\label{etalecorr}
The above functor $\cov_X \to \finset_{\pi_1(X, \overline{x})}$ is an
equivalence of categories. 
\end{theorem}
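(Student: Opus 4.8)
The plan is to follow Grothendieck's axiomatic method: verify that $\cov_X$, together with the fiber functor $F = \overline{x}^{\ast}\colon \cov_X \to \finset$ (composed with the equivalence of \Cref{galpt}), satisfies the formal axioms of a Galois category, and then invoke the general reconstruction theorem identifying any such category with $\finset_{\pi_1}$ for $\pi_1 = \mathrm{Aut}(F)$. First I would establish the basic closure properties of $\cov_X$ inside the category of schemes over $X$: finite and \'etale morphisms are each stable under composition and base change, so fiber products over $X$ of finite covers are finite covers; finite coproducts of finite covers are finite covers; and if $Y \to X$ is a finite cover and $G$ a finite group acting on $Y$ over $X$, the quotient $Y/G$ exists (work affine-locally, where it is the spectrum of the ring of invariants) and is again a finite cover. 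One also needs that every finite cover decomposes uniquely as a finite disjoint union of connected ones, which uses that a finite cover of a connected scheme has only finitely many connected components (it is quasi-compact over $X$ with locally constant, bounded fiber rank).

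Next I would check the two genuinely functorial axioms: exactness and conservativity of $F$. Since $\overline{x} \to X$ is flat, $F$ commutes with fiber products and finite coproducts, and it carries quotients by finite groups to quotients; using $\cov_{\overline{x}} \simeq \finset$ this makes $F$ exact. For conservativity the key geometric input is: if $g\colon Y \to Y'$ is a morphism of finite covers of the connected scheme $X$ with $F(g)$ bijective, then $g$ is an isomorphism. Indeed $g$ is itself finite \'etale (a morphism between two schemes finite \'etale over a common base is finite \'etale), hence both open and closed, so the locus in $Y'$ over which $g$ is an isomorphism is open and closed; it is nonempty on the fiber over $\overline{x}$ and $X$ is connected, so it is all of $Y'$. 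In particular $F$ is faithful and detects isomorphisms.

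The heart of the argument is the pro-representability of $F$. I would show that every connected finite cover $Y \to X$ is dominated by a \emph{Galois} cover, i.e.\ a connected cover $Z \to X$ on which $\mathrm{Aut}(Z/X)$ acts transitively on $F(Z)$, via the Galois closure construction: if $Y/X$ has degree $n$, a connected component of the $n$-fold fiber product $Y \times_X \cdots \times_X Y$ lying in the complement of all partial diagonals is such a $Z$. Choosing a point of $F(Z)$ for each such $Z$, the pointed connected Galois covers form a cofiltered system $\{X_i\}$ and $F(-) \simeq \varinjlim_i \mathrm{Hom}_{\cov_X}(X_i, -)$; that is, the pro-object $\widehat{X} = \varprojlim_i X_i$ pro-represents $F$. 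Then $\pi_1(X, \overline{x}) = \mathrm{Aut}(F) = \varprojlim_i \mathrm{Aut}(X_i/X)$ is naturally profinite, the forgetful functor lifts to $\cov_X \to \finset_{\pi_1}$, and this functor is fully faithful (a map $Y \to Y'$ is determined by, and any $\pi_1$-equivariant map $F(Y) \to F(Y')$ arises from, its graph, which upon pullback to a sufficiently large $X_i$ becomes a union of connected components and hence descends) and essentially surjective (a transitive $\pi_1$-set is $\pi_1/H$ for $H$ open, hence contains $\ker(\pi_1 \to \mathrm{Aut}(X_i/X))$ for some $i$, so is realized by $X_i/H$; general finite $\pi_1$-sets are finite disjoint unions of these).

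The step I expect to be the main obstacle is precisely this Galois-closure / pro-representability step: one must confirm that the chosen component $Z$ of the iterated fiber product is genuinely Galois (that $\mathrm{Aut}(Z/X)$ is as large as possible and acts transitively on the fiber), that the pointed Galois covers are cofinal among all connected covers and form a cofiltered system, and that $F$ is honestly their filtered colimit of $\mathrm{Hom}$-sets --- equivalently, that $\cov_X$ contains enough Galois covers to separate morphisms. Once that is in place, the remaining verifications (the closure axioms, exactness, conservativity, and the final reconstruction) are formal consequences of the general theory of Galois categories reviewed above.
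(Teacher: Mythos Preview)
Your proposal is correct and follows essentially the same route the paper indicates: verify that $(\cov_X, F)$ satisfies Grothendieck's axioms for a classical Galois category and then invoke the general reconstruction \Cref{galcorconn}, which the paper explicitly says yields \Cref{etalecorr} as a special case. The paper does not spell out the verification of the axioms or the pro-representability argument (it simply cites \cite{sga1}), so your outline in fact supplies more detail than the paper itself; your identification of the Galois-closure/pro-representability step as the substantive point is accurate, and the sketch you give of it is standard and correct.
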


Grothendieck proved \Cref{etalecorr} by \emph{axiomatizing} the properties
that a category would have to satisfy in order to be of the form $\finset_{G}$
for $G$ a profinite group, and checking that any $\cov_X$ is of this form.
We review the axioms here. 

Recall that, in a category $\mathcal{C}$, a map $X \to Y$ is a \emph{strict
epimorphism} if the natural diagram
\[ X \times_Y X \rightrightarrows X \to Y,  \]
is a coequalizer. 

\begin{definition}[{Grothendieck \cite[Exp. V, sec. 4]{sga1}}] \label{defgalcat}
A \textbf{classical Galois category} is a category $\mathcal{C}$ equipped with a
\textbf{fiber functor} $F\colon  \mathcal{C} \to \finset$ satisfying the following
axioms: 
\begin{enumerate}
\item $\mathcal{C} $ admits finite limits  and $F$ commutes
with finite limits. 
\item  $\mathcal{C}$ admits finite coproducts and $F$ commutes with finite
coproducts. 
\item $\mathcal{C}$ admits quotients by finite group actions, and $F$ commutes
with those. 
\item $F$ is conservative and preserves strict epimorphisms.
\item Every map $ X \to Y$ in $\mathcal{C}$ admits a factorization $X \to Y'
\to Y$ where $X \to Y'$ is a strict epimorphism and where $Y' \to Y$ is a
monomorphism, which is in addition an inclusion of a summand. 
\end{enumerate}
\end{definition} 

Let $\mathcal{C}$ be a classical Galois category with fiber functor $F\colon  \mathcal{C} \to
\finset$. 
Grothendieck's Galois theory shows that $\mathcal{C}$ can be recovered as the
category of finite sets equipped with a continuous action of a certain
profinite group. 

\begin{definition} 
The \textbf{fundamental (or Galois) group} $\pi_1(\mathcal{C})$ of a
classical Galois category
$(\mathcal{C}, F)$ in the sense of Grothendieck is the automorphism group of the functor $F\colon  \mathcal{C} \to
\finset$. 
\end{definition} 

The fundamental group of $\mathcal{C}$ is naturally a profinite group, as a
(non-filtered)
inverse
limit of finite groups. Note that if $\mathcal{C}$ is a classical Galois
category with fiber functor $F$, if $\pi_1(\mathcal{C})$ is the 
Galois group, then the fiber functor $\mathcal{C} \to \finset$ naturally lifts
to 
\[ \mathcal{C} \to \finset_{\pi_1(\mathcal{C})},  \]
just as before. 
\begin{proposition}[{Grothendieck \cite[Exp. V, Theorem 4.1]{sga1}}] \label{galcorconn}
If $(\mathcal{C}, F)$ is a classical Galois category, then the functor $\mathcal{C}
\to \finset_{\pi_1(\mathcal{C})}$ as above is an equivalence of categories. 
\end{proposition}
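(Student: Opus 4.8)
The plan is to follow Grothendieck's original argument in \cite[Exp. V]{sga1}. First I would establish the basic structural consequences of the axioms in \Cref{defgalcat}. Using the image-factorization axiom (5) together with the existence of finite coproducts, every object of $\mathcal{C}$ decomposes, uniquely up to reordering, as a finite coproduct of \emph{connected} objects, where $X$ is connected if it is not a coproduct of two nonzero subobjects; moreover $F$ of a connected object is nonempty, and a summand-monomorphism onto a connected object is $0 \to X$ or an isomorphism. One then checks that for connected $X$ and $x \in F(X)$ any endomorphism of $X$ fixing $x$ is the identity (factor it through its image and use that $F$ is conservative and preserves strict epimorphisms), so $\mathrm{Aut}(X)$ acts freely on $F(X)$; in particular $X$ has only finitely many automorphisms.

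Second, and this is the technical heart, I would prove that $F$ is pro-representable by a cofiltered system of \emph{Galois objects}. Call a connected $P$ with a point $p \in F(P)$ Galois if $\mathrm{Aut}(P)$ acts transitively --- hence simply transitively --- on $F(P)$, equivalently if the quotient $P/\mathrm{Aut}(P)$ (which exists by axiom (3)) is terminal. The key lemma is that every connected $X$ with a chosen point is dominated by a Galois object: form the ``Galois closure'' as the connected component, through the point $(x_1,\dots,x_n)$, of the $n$-fold power $X^n$, where $F(X) = \{x_1,\dots,x_n\}$, using that $F$ preserves finite limits to compute $F(X^n)$ and axiom (5) to extract the component. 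The category of pairs $(P,p)$ with $P$ Galois is then cofiltered, and $\varinjlim_{(P,p)} \hom_{\mathcal{C}}(P,-) \xrightarrow{\ \sim\ } F$, the transition maps being induced by surjections $P' \to P$ of Galois objects.

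Third, I would set $\pi_1(\mathcal{C}) = \mathrm{Aut}(F)$ and, via the pro-representation, identify it with $\varprojlim_{(P,p)} \mathrm{Aut}(P)^{\mathrm{op}}$, a profinite group whose action on each $F(X)$ factors through the finite group $\mathrm{Aut}(P)$ for any Galois $P$ dominating the components of $X$; thus $F$ refines to $\mathcal{C} \to \finset_{\pi_1(\mathcal{C})}$. For \textbf{essential surjectivity}, since $F$ commutes with finite coproducts it suffices to realize transitive $\pi_1$-sets; a transitive one is $\pi_1/H$ with $H$ open, so $H \supseteq \ker(\pi_1 \twoheadrightarrow \mathrm{Aut}(P))$ for some Galois $P$, $H$ corresponds to $\overline{H} \subseteq \mathrm{Aut}(P)$, and $F(P/\overline{H}) \cong \mathrm{Aut}(P)/\overline{H} \cong \pi_1/H$ equivariantly, with $P/\overline{H}$ supplied by axiom (3). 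For \textbf{full faithfulness}, reduce to $X$ connected, pick a Galois $P$ with a surjection $P \to X$ that also dominates every component of $Y$, so $X \cong P/\mathrm{Aut}(P/X)$ and $F(X) \cong \mathrm{Aut}(P)/\mathrm{Aut}(P/X)$ as $\pi_1$-sets; then $\hom_{\mathcal{C}}(P,Y) \xrightarrow{\ \sim\ } F(Y)$ (a map out of the connected $P$ is determined by the image of $p$, and every point of $F(Y)$ is hit since $P$ dominates $Y$), and a $\pi_1$-equivariant map $F(X) \to F(Y)$ lifts to an $\mathrm{Aut}(P)$-equivariant map $P \to Y$ which, being compatible with the $\mathrm{Aut}(P/X)$-action, descends uniquely along the strict epimorphism $P \to X$ to the desired morphism $X \to Y$.

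The main obstacle is the second step: pro-representability of $F$ and cofinality of the Galois objects. This is where all five axioms get used in an essential, interlocking way --- finite limits to build the powers $X^n$ and compute $F$ on them, axiom (5) to carve out connected components and Galois closures, finite coproducts and quotients by finite group actions to form both the closures and the quotients $P/\overline{H}$, and conservativity of $F$ to verify that the constructed objects carry the expected points and that the relevant maps are epimorphisms. Once the pro-representation is in hand, the profiniteness of $\mathrm{Aut}(F)$ and the equivalence $\mathcal{C} \simeq \finset_{\pi_1(\mathcal{C})}$ are comparatively formal.
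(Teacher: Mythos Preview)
The paper does not supply its own proof of this proposition: it is stated with attribution to Grothendieck and a citation to \cite[Exp.~V, Theorem~4.1]{sga1}, and the text moves on immediately to applications. Your proposal is a faithful outline of Grothendieck's original argument in that reference --- decomposition into connected objects, pro-representability of $F$ by a cofiltered system of Galois objects (via Galois closures built inside powers $X^n$), identification of $\pi_1(\mathcal{C})$ with the inverse limit of the finite automorphism groups, and the deduction of essential surjectivity and full faithfulness from quotients by open subgroups --- so there is nothing to compare beyond noting that you have reconstructed exactly the proof the citation points to.
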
 

Given a connected scheme $X$ with a geometric point $\overline{x} \to X$, then
one can show that the category $\cov_X$ equipped with the above fiber functor
(of taking the preimage over $\overline{x}$ and taking connected components) is
a classical Galois category. 
The resulting fundamental group is a very useful invariant of a scheme, and for
varieties over an algebraically closed fields of
characteristic zero can be computed  by profinitely completing the topological
fundamental group (i.e., that of the $\mathbb{C}$-points). 
In particular, \Cref{etalecorr} is a special case of \Cref{galcorconn}.

\subsection{Definitions}

In this section, we will give an exposition of Galois theory  appropriate to the
nonconnected setting. 
Namely, to a type of category which we will simply call a {Galois
category}, we will attach a \emph{profinite groupoid:} that is, a pro-object
in the $(2, 1)$-category of groupoids with finitely many objects and finite
automorphism groups. The advantage of this approach, which relies heavily on
descent theory,  is that we will not start
by assuming the existence of a fiber functor, since we might not have one a
priori.

Axiomatic Galois theory in many forms has a voluminuous literature. 
The original treatment, of course, is \cite{sga1}, reviewed in the previous
subsection. A textbook reference for some of these ideas is \cite{borceux}.
In \cite{GR}, an approach to Galois theory (in the connected
case) for almost rings is
given that does not assume a priori the existence of a fiber functor.
The use of profinite groupoids in Galois theory is well-known (e.g.,
\cite{cjf, Magid}),  and the main
result below (\Cref{galequiv}) is presumably familiar to experts; we have included
a detailed exposition for lack of a precise reference and because our
$(2, 1)$-categorical approach may be of some interest.
Certain types of infinite Galois theory have been developed in the work of
Bhatt-Scholze \cite{BSpro} on the pro-{\'e}tale topology; we will not touch
on anything related to this here.  
Finally, we note that forthcoming work of Lurie will treat an embedding from
the $\infty$-category of profinite spaces into that of $\infty$-topoi,
which provides a vast generalization of
these ideas. 

We start by reviewing some category theory. 
\begin{definition} 
We say that an object $\emptyset$ in a category $\mathcal{C}$ is \textbf{empty}
if any map $x \to \emptyset$ is an isomorphism, and if $\emptyset$ is initial. 
\end{definition} 

For example, the empty set is an empty object in the category of sets. In the
\emph{opposite} to the category of commutative rings, the zero ring is empty. 

\begin{definition} 
Let $\mathcal{C}$ be a category admitting finite coproducts, such that the
initial object (i.e., the empty coproduct) is empty. We shall say that
$\mathcal{C}$ admits \textbf{disjoint coproducts}
if for any $x, y \in
\mathcal{C}$, the natural square
\[ \xymatrix{
\emptyset \ar[d] \ar[r] & x \ar[d] \\
y \ar[r] &  x \sqcup y
},\]
is cartesian. 
\end{definition} 

The category of sets (or more generally, any topos) admits disjoint coproducts.
The \emph{opposite} of the category of commutative rings also admits disjoint
coproducts. 

\begin{definition} 
Let $\mathcal{C}$ be a category admitting finite coproducts and finite limits. We will say that
\textbf{coproducts are distributive} if for every $y \to x$ in $\mathcal{C}$,
the pullback functor
\( \mathcal{C}_{/x} \to \mathcal{C}_{/y}  \)
commutes with finite coproducts. 
\end{definition} 

Similarly, the category of sets (or any topos) and the opposite to the category
of commutative rings satisfy this property and are basic examples to keep in
mind.

Suppose $\mathcal{C}$ admits disjoint and distributive coproducts. Then $\mathcal{C}$ acquires
the following very useful feature (familiar from \Cref{prode}). Given an object $x \simeq x_1 \sqcup x_2$ in
$\mathcal{C}$, then we have a natural equivalence
of categories,
\[ \mathcal{C}_{/x} \simeq \mathcal{C}_{/x_1} \times \mathcal{C}_{/x_2},  \]
which sends an object $y \to x$ of $\mathcal{C}_{/x}$ to the pair $(y \times_x
x_1 \to x_1, y \times_x x_2 \to x_2)$.

\begin{definition} 
Let $\mathcal{C}$ be a category admitting finite limits. Given a map $y \to x$
in $\mathcal{C}$, we have an adjunction
\begin{equation} \label{adjn} \mathcal{C}_{/y} \rightleftarrows
\mathcal{C}_{/x},  \end{equation}
where the left adjoint sends $y' \to y$ to the composite $y' \to y \to x$, and
the right adjoint takes the pullback along $y \to x$. 
We will say that $y \to x$ is an \textbf{effective descent morphism} if the above
adjunction is 
monadic. 

By the Beck-B\'enabou-Roubaud theorem that establishes the
connection between monads and descent \cite{BRB}, we can reformulate the
notion equivalently as follows. Form the bar construction
in $\mathcal{C}$,
\[ \dots \triplearrows y \times_x y \rightrightarrows y, \]
which is a simplicial object in $\mathcal{C}$ augmented over $x$. 
Applying the pullback functor everywhere, we get a cosimplicial category
\[ \mathcal{C}_{/y} \rightrightarrows \mathcal{C}_{/y \times_x y}
\triplearrows \dots ,  \]
receiving an augmentation from $\mathcal{C}_{/x}$. Then $y \to x$ is an
effective descent morphism if the functor
\[ \mathcal{C}_{/x} \to \mathrm{Tot}\left(  \mathcal{C}_{/y} \rightrightarrows \mathcal{C}_{/y \times_x y}
\triplearrows \dots \right),\]
is an equivalence of categories. 
If $\mathcal{C}$ is an $\infty$-category, we can make the same definition. 
\end{definition} 

We note that whether or not a map $y \to x$ is an effective descent morphism can be
checked using the Barr-Beck theorem applied to the adjunction \eqref{adjn}. 
Namely, the pullback along $y \to x$ needs to preserve reflexive coequalizers
which are split under pullback, and it needs to be conservative. 
%In particular, it follows that the \emph{base change} of an effective
%descent $y \to x$ along any map $x' \to x$ is still an effective descent morphism. 

Finally, we are ready to define a Galois category. 

\newcommand{\galcat}{\mathrm{GalCat}}
\begin{definition} \label{galax}
A \textbf{Galois category} is a category $\mathcal{C}$ such that: 
\begin{enumerate}
\item $\mathcal{C} $ admits finite limits and coproducts, and the initial
object $\emptyset$ is empty. 
\item Coproducts are disjoint and distributive in $\mathcal{C}$.
\item Given an object $x$ in $\mathcal{C}$, there is an effective
descent morphism $x' \twoheadrightarrow \ast$ (where $\ast$ is the terminal object) and a decomposition $x' = x'_1 \sqcup \dots \sqcup x'_n$
such that 
each map $x \times x'_i \to x'_i$ decomposes as the fold map $x \times x'_i \simeq
\bigsqcup_{S_i} x'_i \to x_i'$ for a finite set $S_i$. 
\end{enumerate}

The collection of all Galois categories and functors between them
(which are required to preserve coproducts, effective descent morphisms, and finite
limits) can be organized into a $(2, 1)$-category $\galcat$.
Given $\mathcal{C}, \mathcal{D} \in \galcat$, we will let
$\fung(\mathcal{C}, \mathcal{D})$ denote the groupoid of functors 
$\mathcal{C} \to \mathcal{D}$ in $\galcat$. 
\end{definition} 

In other words, we might say that an object $x \in \mathcal{C}$ is in \emph{elementary form}
if $x \simeq \bigsqcup_S \ast$ for some finite set $S$. More generally, 
if there exists a decomposition $\ast \simeq \ast_1 \sqcup \dots \sqcup \ast_n$, such
that, as an object of $\mathcal{C}_{} \simeq \prod_i \mathcal{C}_{/\ast_i}$,
each $y \times_{\ast} \ast_i \to \ast_i$ is in elementary form, we say that $y$ is in
\emph{mixed elementary form.} Then the \emph{defining} feature of a Galois category
is that, locally, every 
object is in mixed elementary form. 

Our first goal is to develop some of the basic properties of Galois categories. 
First, we need a relative version of the previous paragraph. 

\begin{definition} 
Let $\mathcal{C}$ be a category satisfying the first two conditions of
\Cref{galax} (which we note are preserved by passage to $\mathcal{C}_{/x}$
for any $x \in \mathcal{C}$). 
 We say that a map $f\colon  x \to y$ is
\emph{setlike} if there are finite sets $S, T$ such that $x \simeq \bigsqcup_S
\ast$, $y \simeq \bigsqcup_T \ast$ and the map $x \to y$ comes from a map of
finite sets $S \to T$. 
This implies that $x \in \mathcal{C}_{/y}$ is in mixed elementary form. 
\end{definition} 

For example, if $y = \ast$, then $x \to y$ is setlike if and only if $x$ is in
elementary form. 
Suppose $x, y$ are in elementary form, so that $x \simeq \bigsqcup_S \ast$ and
$y \simeq \bigsqcup_T \ast$. Then a map $x \to y$ is not necessarily setlike.
However, by the disjointness of coproducts, it follows that the map
$\bigsqcup_S \ast \to \bigsqcup_T \ast$ gives, for each $s \in S$, a decomposition of the terminal
object $\ast$ as a disjoint union of objects $\ast_{t}^{(s)}$ for each $t \in T$. It follows
that, refining these decompositions, there exists a decomposition $\ast \simeq \ast_1 \sqcup \dots \sqcup \ast_n$
such that the map $x \to y$ becomes setlike after pulling back along $\ast_i
\to \ast$. In particular, $x \to y$ is locally setlike. The same argument works
if $x, y$ are disjoint unions of summands of the terminal object. 

More generally, we have: 

\begin{proposition} 
\label{locsetlike}
Let $f\colon  x \to y$ be any map in the Galois category $\mathcal{C}$. Then there exists an effective
descent morphism $t \twoheadrightarrow \ast$ and a decomposition $t \simeq
\bigsqcup_{i=1}^n t_i$ such that the map $x \times t_i \to y \times t_i$ in
$\mathcal{C}_{/t_i}$ is setlike. 
More generally, given any finite set of maps $f_j \colon x_j \to y_j$ we can
find such a decomposition such that each $f_j \times t_i$ is setlike.
\end{proposition}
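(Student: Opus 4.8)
The plan is to build $t$ in two stages. In the first stage I find a single effective descent morphism onto $\ast$ over which, after a coproduct decomposition, both $x$ and $y$ become disjoint unions of copies of the terminal object; in the second stage I use the elementary observation recorded in the paragraph preceding the statement to refine that decomposition so that the comparison map $x\to y$ itself becomes setlike on each piece.

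For the first stage, apply the third axiom of \Cref{galax} to $x$: there is an effective descent morphism $x'\twoheadrightarrow\ast$ and a decomposition $x'\simeq\bigsqcup_i x'_i$ with each $x\times x'_i\to x'_i$ a fold map $\bigsqcup_{S_i}x'_i\to x'_i$, and likewise $y'\twoheadrightarrow\ast$ with $y'\simeq\bigsqcup_j y'_j$ trivializing $y$. Put $t_0:=x'\times_{\ast}y'$. Using that effective descent morphisms are closed under composition and base change, $t_0\twoheadrightarrow\ast$ is again an effective descent morphism, and distributivity of coproducts gives $t_0\simeq\bigsqcup_{i,j}(x'_i\times_\ast y'_j)$; over each summand $s_{ij}:=x'_i\times_\ast y'_j$ both $x\times s_{ij}$ and $y\times s_{ij}$ are in elementary form, since a fold map pulls back to a fold map and, by distributivity and disjointness, the decomposition of $x\times x'_i$ pulls back along $s_{ij}\to x'_i$ (and similarly for $y$).

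For the second stage, fix one $s_{ij}$ and work in $\mathcal{C}_{/s_{ij}}$, which, as noted, satisfies conditions (1)--(2) of \Cref{galax}, with terminal object $s_{ij}$. The map $x\times s_{ij}\to y\times s_{ij}$ is a map between two objects in elementary form, so the argument of the paragraph preceding the statement applies verbatim: writing the source as $\bigsqcup_S s_{ij}$ and the target as $\bigsqcup_T s_{ij}$, disjointness gives for each $s\in S$ a $T$-indexed decomposition of $s_{ij}$, and the common refinement $s_{ij}\simeq\bigsqcup_k s_{ij}^{(k)}$ of these finitely many decompositions makes the map setlike over each $s_{ij}^{(k)}$. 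Carrying this out for every $(i,j)$ yields a refinement of the decomposition of $t_0$; since each refinement merely subdivides existing summands, the underlying object $t:=t_0$ is unchanged, so $t\twoheadrightarrow\ast$ remains effective descent, and over each summand $t_i$ of the refined decomposition $x\times t_i\to y\times t_i$ is setlike. This proves the first assertion; for finitely many maps $f_a\colon x_a\to y_a$ one argues identically, taking $t_0$ to be the iterated fiber product over $\ast$ of effective descent covers trivializing all the $x_a$ and $y_a$, decomposing by distributivity, and then running the elementary argument once for each $a$ to make every $f_a$ setlike on each piece.

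The step I expect to be the main obstacle is the descent bookkeeping: that a (finite) fiber product over $\ast$ of effective descent covers is again an effective descent cover --- equivalently, that effective descent morphisms in a Galois category are stable under composition and base change --- and that passing to a finer coproduct decomposition of $t_0$ does not disturb this. Stability under composition is formal from the monadic characterization via the Barr--Beck theorem, but stability under base change must be extracted from the disjointness and distributivity of coproducts; once it is available, the remainder consists of the routine chases with fold maps and coproduct decompositions indicated above.
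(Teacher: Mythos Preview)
Your two-stage strategy is sound, and the second stage (refining via the paragraph before the statement) is exactly what the paper does. The difference lies in the first stage, and it is not merely cosmetic.

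You produce the base cover as a fiber product $t_0 = x' \times_\ast y'$ of two separately obtained effective descent morphisms, and then need that $t_0 \twoheadrightarrow \ast$ is again an effective descent morphism. As you correctly flag, this requires stability of effective descent morphisms under base change and composition. The problem is that in the paper's logical order this stability is established \emph{after} \Cref{locsetlike}: it is the content of the corollary following \Cref{colimdist}, whose proof uses that a Galois category has finite colimits distributive over pullbacks, and \Cref{colimdist} in turn is proved by invoking \Cref{locsetlike}. So your argument, read inside the paper, would be circular. Your suggestion that base-change stability ``must be extracted from the disjointness and distributivity of coproducts'' is optimistic; in general categories neither composition nor base-change stability of effective descent morphisms is automatic (this is why the paper cites \cite{ST, RST}), and the paper's eventual proof goes through the Barr--Beck criterion reduced to conservativity, which needs the colimit machinery.

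The paper sidesteps the whole issue with a simple trick: apply axiom (3) of \Cref{galax} once, to the single object $x \sqcup y$. This yields one effective descent morphism $t \twoheadrightarrow \ast$ with a decomposition $t \simeq \bigsqcup_i t_i$ such that $(x \sqcup y) \times t_i$ is a fold map. Then by distributivity and disjointness, $x \times t_i$ and $y \times t_i$ are each disjoint unions of summands of $t_i$, and the paragraph before the proposition (which covers exactly this ``disjoint union of summands of the terminal'' case) finishes the job. For several maps $f_j$ one applies the axiom to $\bigsqcup_j (x_j \sqcup y_j)$. No products of covers, no appeal to closure properties.
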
 

\begin{proof} 
We can choose $t$ such that $(x \sqcup y) \times t$ is in mixed elementary
form: in particular, we have a decomposition $t \simeq t_1 \sqcup \dots \sqcup
t_n$ such that $(x \sqcup y) \times t_i$ is a disjoint union of copies of
$t_i$ in $\mathcal{C}_{/t_i}$. It follows that $x \times t_i \to t_i$ and $y
\times t_i \to t_i$ are objects in $\mathcal{C}_{/t_i}$ which are disjoint
union of summands of copies of the terminal object $t_i \in
\mathcal{C}_{/t_i}$. 
Using the previous discussion, it follows that we can refine the $t_i$ further
(by splitting into summands) to make $x \to y$ setlike on each summand. 
A similar argument would work for any finite set of morphisms in $\mathcal{C}$. 
\end{proof}

\begin{corollary} 
Let $\mathcal{C}$ be a Galois category and let $x \in \mathcal{C}$. Then
$\mathcal{C}_{/x}$ is a Galois category. 
\end{corollary}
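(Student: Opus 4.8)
The plan is to verify, one by one, the three axioms of \Cref{galax} for the slice $\mathcal{C}_{/x}$. Conditions (1) and (2) require essentially no work and were already flagged in the text: finite limits and finite coproducts in $\mathcal{C}_{/x}$ are computed from those in $\mathcal{C}$ (the terminal object is $\mathrm{id}_x$, the initial object is $\emptyset\to x$), the initial object stays empty because $\emptyset$ is empty in $\mathcal{C}$, and disjointness and distributivity of coproducts descend to slices at once. So the whole content is condition (3): given an object $f\colon y\to x$ of $\mathcal{C}_{/x}$, I must produce an effective descent morphism $\tau\twoheadrightarrow \mathrm{id}_x$ in $\mathcal{C}_{/x}$ — which, since $(\mathcal{C}_{/x})_{/(\tau\to x)}\simeq \mathcal{C}_{/\tau}$, is exactly a map $\tau\to x$ that is an effective descent morphism in $\mathcal{C}$ — together with a finite decomposition $\tau=\bigsqcup_j\tau_j$ over $x$ such that each $f\times_x\tau_j\to\tau_j$ is a fold map.

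First I would apply \Cref{locsetlike} directly to the morphism $f\colon y\to x$ inside $\mathcal{C}$. This yields an effective descent morphism $t\twoheadrightarrow\ast$ and a decomposition $t=\bigsqcup_{i=1}^n t_i$ such that in $\mathcal{C}_{/t_i}$ one has $y\times t_i\simeq\bigsqcup_{S_i}t_i$, $x\times t_i\simeq\bigsqcup_{T_i}t_i$, and $f\times t_i$ is induced by a map of finite sets $\sigma_i\colon S_i\to T_i$. Next I would base change along $x\to\ast$: the projection $x\times t\to x$ is then an effective descent morphism (effective descent morphisms are stable under pullback), hence an effective descent morphism in $\mathcal{C}_{/x}$, and by distributivity $x\times t=\bigsqcup_i(x\times t_i)$. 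Since the forgetful functor from a slice preserves coproducts, the isomorphism $x\times t_i\simeq\bigsqcup_{T_i}t_i$ exhibits $x\times t_i$ as a disjoint union of copies $u_{i,t}$ ($t\in T_i$) of $t_i$, each equipped with a map to $x$ via the projection. Taking $\tau=x\times t$ with the finite decomposition $\tau=\bigsqcup_i\bigsqcup_{t\in T_i}u_{i,t}$ in $\mathcal{C}_{/x}$ produces the required cover.

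Finally I would check the fold-map condition componentwise. Fix $(i,t)$. Then $f\times_x u_{i,t}\in(\mathcal{C}_{/x})_{/u_{i,t}}\simeq\mathcal{C}_{/u_{i,t}}$ is the restriction of $y\times_x(x\times t_i)\simeq y\times t_i$ to the summand $u_{i,t}$ of $x\times t_i$; viewing $y\times t_i\simeq\bigsqcup_{S_i}t_i$ as an object of $\mathcal{C}_{/(x\times t_i)}\simeq\prod_{t\in T_i}\mathcal{C}_{/t_i}$ through $\sigma_i$, this restriction is $\bigsqcup_{s\in\sigma_i^{-1}(t)}u_{i,t}$ with its structure map to $u_{i,t}$ the fold map. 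Hence each $f\times_x u_{i,t}\to u_{i,t}$ has the required elementary form, establishing condition (3), and with it the corollary.

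The main obstacle is conceptual rather than computational: \Cref{locsetlike} furnishes an effective descent cover of the terminal object $\ast$ of $\mathcal{C}$, whereas condition (3) for $\mathcal{C}_{/x}$ demands such a cover of $\mathrm{id}_x$. Bridging the gap forces the base change along $x\to\ast$, and the delicate points are (a) confirming that effective descent morphisms are pullback-stable here — which follows from the Barr–Beck criterion, since for a pullback $b'=b\times_a a'$ one has $m\times_{a'}b'\simeq m\times_a b$ for $m$ over $a'$, so conservativity and preservation of pullback-split reflexive coequalizers both descend — and (b) tracking carefully how the ``elementary'' decompositions and the fold maps transform under this base change, which is precisely what the componentwise analysis of the last step carries out.
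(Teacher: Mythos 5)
Your proof is correct and follows essentially the same route as the paper's: both apply \Cref{locsetlike} to the structure map $y \to x$ and take the base change $x \times t \to x$ as the required effective descent cover of the terminal object of $\mathcal{C}_{/x}$. You merely unpack the paper's phrase ``mixed elementary form as an object of $\mathcal{C}_{/x\times x'}$'' into an explicit componentwise decomposition, and make explicit the pullback-stability of effective descent morphisms that the paper leaves implicit.
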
 
\begin{proof} 
The first two axioms are evident. For the third, fix a map $y \to x$ in
$\mathcal{C}$ (thus defining an object of $\mathcal{C}_{/x}$). By
\Cref{locsetlike}, we can find an
object $x' \in \mathcal{C}$ together with an effective descent morphism $x'
\twoheadrightarrow \ast$ such that $y \times x' \to x \times x'$ becomes, after
decomposing $x'$ into a disjoint union of summands, setlike in
$\mathcal{C}_{/x'}$. It follows that $y \times x' \to x' \times x$ is in mixed
elementary form as an object of $\mathcal{C}_{/x \times x'}$.  
\end{proof}

The notion of an effective descent morphism is a priori not so well-behaved, which
might be a cause for worry. Our next goal is to show that this is not the case. 

\begin{proposition} \label{colimdist}
A Galois category $\mathcal{C}$ admits finite colimits, which are
distributive over pullbacks. 
\end{proposition}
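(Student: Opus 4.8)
The plan is to build all finite colimits in $\mathcal{C}$ by descent, reducing everything to colimits of finite sets. Since $\mathcal{C}$ already has finite coproducts and an initial object, it suffices to produce a colimit for an arbitrary finite diagram $D\colon J\to\mathcal{C}$ and then to check that the pullback functors $h^{*}\colon\mathcal{C}_{/b}\to\mathcal{C}_{/a}$ attached to maps $h\colon a\to b$ carry such colimits to colimits. First I would record the elementary building block: for any object $b$, the slice $\mathcal{C}_{/b}$ inherits the first two axioms of \Cref{galax}, and the functor $E_{b}\colon\finset\to\mathcal{C}_{/b}$, $S\mapsto\bigsqcup_{S}b$, is left adjoint to the ``sections'' functor $z\mapsto\hom_{\mathcal{C}_{/b}}(b,z)$, hence preserves colimits. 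Using disjointness and distributivity of coproducts one checks that $E_{b}$ also preserves finite limits, is faithful, and — crucially — that for any $g\colon b'\to b$ one has $g^{*}E_{b}\simeq E_{b'}$, since pullback commutes with coproducts and sends the terminal object to the terminal object. Consequently a setlike diagram over $b$ (one of the form $E_{b}\circ D''$ for $D''\colon J\to\finset$) has colimit $E_{b}(\varinjlim D'')$, and this colimit is \emph{universal}: it is preserved by every pullback functor out of $\mathcal{C}_{/b}$.

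Next I would globalize via descent. Applying \Cref{locsetlike} to the finitely many arrows of $D$ gives an effective descent morphism $p\colon t\twoheadrightarrow\ast$ and a decomposition $t\simeq\bigsqcup_{i}t_{i}$ such that over each $t_{i}$ every object $D(j)$ becomes elementary and every arrow of $D$ becomes setlike; faithfulness of $E_{t_{i}}$ lets one lift $D|_{t_{i}}$ to a diagram $D''_{i}\colon J\to\finset$ with $E_{t_{i}}\circ D''_{i}\simeq D|_{t_{i}}$. Thus over $t$, where $\mathcal{C}_{/t}\simeq\prod_{i}\mathcal{C}_{/t_{i}}$, the pulled-back diagram $p^{*}D$ has a colimit $Q$, and $Q$ is preserved under pullback along the two projections $t\times t\rightrightarrows t$, because both pullbacks of $p^{*}D$ are identified with the pullback of $D$ to $t\times t$ and setlike colimits are universal. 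This produces a canonical descent datum on $Q$ — the cocycle condition holding because over $t\times t\times t$ all comparison maps are the tautological self-identification of the colimit of the thrice pulled-back diagram — together with a descent-compatible cocone $\{p^{*}D(j)\to Q\}$. Since $p$ is an effective descent morphism, the equivalence $\mathcal{C}\xrightarrow{\sim}\mathrm{Tot}\bigl(\mathcal{C}_{/t}\rightrightarrows\mathcal{C}_{/t\times t}\triplearrows\cdots\bigr)$ yields an object $\overline{Q}\in\mathcal{C}$ and a cocone $\{D(j)\to\overline{Q}\}$ restricting over $t$ to the one above. That $\overline{Q}=\varinjlim_{J}D$ is then a routine descent check: a cocone $\{D(j)\to Z\}$ in $\mathcal{C}$, pulled back to $t$, factors uniquely through $Q$; uniqueness over $t\times t$ forces that factorization to be a morphism of descent data, so it descends to a map $\overline{Q}\to Z$, and conversely any cocone-compatible $\overline{Q}\to Z$ is determined by its restriction to $t$ because the descent equivalence is faithful.

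For distributivity the point is that this entire construction is stable under base change. For $h\colon a\to b$ the slice $\mathcal{C}_{/b}$ is again a Galois category (the corollary above), so a finite diagram in it admits a colimit built from an effective descent morphism $t\twoheadrightarrow b$; effective descent morphisms are stable under base change, since the defining totalization equivalence pulls back, so $t\times_{b}a\twoheadrightarrow a$ is one as well. Because setlike-ness of arrows is preserved by pullback and setlike colimits are universal, applying $h^{*}$ commutes with every step: $h^{*}D$ is trivialized over $t\times_{b}a$, its cover-level colimit is $h^{*}$ of the cover-level colimit of $D$, and the descent datum pulls back to the descent datum. Hence $h^{*}(\varinjlim_{J}D)\simeq\varinjlim_{J}(h^{*}D)$, i.e.\ finite colimits in $\mathcal{C}$ are distributive over pullbacks.

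I expect the only real friction to be the descent bookkeeping in the second step — carefully producing the descent datum on $Q$, its cocycle condition, the descended cocone, and then matching the universal property of $\overline{Q}$ with that of $Q$ under the equivalence. This is entirely formal but is where care is needed; conceptually the statement rests on the single observation that after pulling back to a suitable cover, finite colimits are computed ``on the nose'' as colimits of finite sets through the colimit- and pullback-compatible functors $E_{b}$, so that universality is automatic and traces straight back to the distributive-coproducts axiom.
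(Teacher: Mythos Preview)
Your proposal is correct and follows the same strategy as the paper: pass to an effective descent cover where the diagram becomes setlike, compute the colimit there via colimits of finite sets (which are automatically preserved by pullback), and descend. The paper's write-up is slightly more compressed---it proves existence and distributivity in one stroke by directly checking that $y \times \varinjlim_K p(k) \simeq \varinjlim_K (y \times p(k))$ for arbitrary $y$, and handles the descent step via a general remark on computing colimits pointwise in an inverse limit of categories rather than by explicitly constructing and descending a cocone on $Q$---but the underlying idea is identical.
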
 

\begin{proof} 
Let $K$ be a finite category; choose a map $p\colon  K \to
\mathcal{C}_{/x}$ for some object $x \in \mathcal{C}$. Since $\mathcal{C}_{/x}$ 
is itself a Galois category, we can replace $\mathcal{C}_{/x}$ with
$\mathcal{C}$ and show that if $y \in \mathcal{C}$ is arbitrary, then the
natural map
\begin{equation} \label{climeq} \varinjlim_K (y \times p(k) ) \to y \times \varinjlim_K p(k),  \end{equation}
is an equivalence, and in particular the colimits in question exist. 

There is one case in which the above would be automatic. 
Since $\mathcal{C}$ has finite coproducts,
  we
can define the product of a finite set with any object in $\mathcal{C}$. 
Suppose there exists a
diagram $\overline{p}\colon  K \to \finset$ and an object $u \in \mathcal{C}$ such
that $p = \overline{p} \times u$. For example, suppose that for every
morphism in $K$, the image in $\mathcal{C}$ is setlike; then this would happen. In this case, both sides of \eqref{climeq} are defined and are given by $y \times u \times \varinjlim_K
\overline{p}$, since finite coproducts distribute over pullbacks. 

We will say  that a diagram $p\colon  K \to \mathcal{C}$ is \emph{good} if it
arises from a $\overline{p}\colon  K \to \finset$ and an $u \in \mathcal{C}$; the
good case is thus straightforward. 
If we have a finite decomposition of the terminal object $\ast =
\bigsqcup_{i=1}^n \ast_i$ such that the restriction $p \times_{\ast} \ast_i$ is
good, then we say that $p$ is \emph{weakly good}. In this case, using
$\mathcal{C} \simeq \prod_{i=1}^n \mathcal{C}_{/\ast_i}$, we conclude that
\eqref{climeq} is defined and holds. 

We can reduce to the good (or weakly good) case via descent. 
There exists an effective descent morphism $x \to \ast$ such that $p \times x\colon  K \to
\mathcal{C}_{/x}$ is weakly good by \Cref{locsetlike}. 
Using
the expression $\mathcal{C} \simeq \mathrm{Tot}\left( \mathcal{C}_{/x \times \dots
\times x}\right)$, it follows that \eqref{climeq} must be true at each stage in
the totalization, and the respective colimits are compatible with the coface
and coboundary maps, so that it is (defined and) true in the totalization. 
\end{proof}

\begin{remark} 
In the above argument, we have tacitly used the following fact. Consider a
category $I$ and an $I$-indexed family of categories (or $\infty$-categories) $(\mathcal{C}_i)_{i \in
I}$. Consider a functor $p\colon  K \to \varprojlim_I \mathcal{C}_i$, where $K$ is
a fixed simplicial set. Suppose each
composite $K \stackrel{p}{\to} \varprojlim_I \mathcal{C}_i \to \mathcal{C}_{i_1}$
(for each $i_1 \in I$)  admits a colimit and suppose these colimits are
preserved by the various maps in $I$. Then $p$ admits a colimit compatible with
the colimits in each $\mathcal{C}_i$. 
\end{remark} 

\begin{corollary} 
The composite of two effective descent morphisms in a Galois category $\mathcal{C}$ is an
effective descent morphism.\footnote{Results on this question in more general
categories are contained in \cite{ST, RST}.} If $x \to y$ is any map in $\mathcal{C}$ and $y' \to y$
is an effective descent morphism, then $x \to y$ is an effective descent morphism if and
only if $x \times_y y' \to y'$ is one. 
\end{corollary}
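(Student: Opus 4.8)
The plan is to reduce everything to the monadicity (Barr--Beck) criterion recalled just before \Cref{galax}, using \Cref{colimdist} to dispose of its colimit clause. Applied inside the Galois category $\mathcal{C}_{/b}$, \Cref{colimdist} shows that for \emph{any} map $a\to b$ the pullback functor $\mathcal{C}_{/b}\to\mathcal{C}_{/a}$ preserves finite colimits; hence the ``preserves reflexive coequalizers that are split under pullback'' clause of the criterion is automatic, and a map $y\to x$ in a Galois category is an effective descent morphism \emph{precisely when} the pullback functor $\mathcal{C}_{/x}\to\mathcal{C}_{/y}$ is conservative. Granting this reformulation, the first assertion is immediate, since a composite of conservative functors is conservative and the pullback along $z\to y\to x$ is the composite of the two pullbacks; and one gets \emph{right cancellation} for free as well: if $z\to x$ is an effective descent morphism, then $\mathcal{C}_{/x}\to\mathcal{C}_{/z}$ is conservative, so its first factor $\mathcal{C}_{/x}\to\mathcal{C}_{/y}$ is conservative, i.e.\ $y\to x$ is an effective descent morphism. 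Right cancellation then yields one direction of the ``if and only if'' at once: if $y'\to y$ is an effective descent morphism and $x\times_y y'\to y'$ is one, then $x\times_y y'\to y'\to y$ is an effective descent morphism by the composition statement; writing this composite instead as $x\times_y y'\to x\to y$ and applying right cancellation shows $x\to y$ is an effective descent morphism.

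For the remaining direction I would prove the pullback-stability statement it contains (pullback of an effective descent morphism along an \emph{arbitrary} morphism, hence in particular along the given $y'\to y$), and the clean route is to identify, inside a Galois category, the effective descent morphisms with the regular epimorphisms. The inclusion ``effective descent $\Rightarrow$ regular epi'' uses the same two ingredients: if $p\colon x\to y$ is of effective descent then $p^\ast$ preserves finite colimits and is conservative, and pulling back along $p$ carries the coequalizer diagram $x\times_y x\rightrightarrows x\to\operatorname{coeq}(x\times_y x\rightrightarrows x)$ to the (split) coequalizer of the kernel pair of the projection $x\times_y x\to x$; thus $p^\ast$ sends the canonical map $\operatorname{coeq}(x\times_y x\rightrightarrows x)\to y$ to an isomorphism, and conservativity makes it one, exhibiting $p$ as the coequalizer of its kernel pair. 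For the converse inclusion, let $p\colon x\to y$ be a regular epimorphism; since slice categories of Galois categories are again Galois categories, \Cref{locsetlike} supplies an effective descent morphism $t\twoheadrightarrow y$ and a decomposition $t\simeq\bigsqcup_i t_i$ over which $x\times_y t_i\to t_i$ is a fold map $\bigsqcup_{S_i} t_i\to t_i$. Regular epimorphisms are stable under pullback (by \Cref{colimdist}, since a pullback functor preserves the presentation of a regular epi as the coequalizer of its kernel pair), so each $S_i$ is nonempty after discarding empty $t_i$, whence the pullback functor along $x\times_y t\to t$ is a product of diagonal functors $\prod_i\bigl(\mathcal{C}_{/t_i}\to\prod_{S_i}\mathcal{C}_{/t_i}\bigr)$ and is conservative; so $x\times_y t\to t$ is of effective descent, and since $t\to y$ is too, the direction of the ``if and only if'' already proved (with $y'=t$) makes $p$ of effective descent. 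With this identification in hand the rest is formal: an effective descent morphism $x\to y$ is a regular epi, its base change $x\times_y y'\to y'$ is again a regular epi by \Cref{colimdist}, hence an effective descent morphism.

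The step I expect to be the main obstacle is ``regular epi $\Rightarrow$ effective descent'', i.e.\ converting the local, setlike picture of \Cref{locsetlike} back into conservativity of a single pullback functor. One must check that a fold map $\bigsqcup_S t'\to t'$ with $S\neq\emptyset$ genuinely induces a conservative (indeed monadic) pullback functor $\mathcal{C}_{/t'}\to\prod_S\mathcal{C}_{/t'}$ through the ``slice of a coproduct'' decomposition, assemble these contributions over $t\simeq\bigsqcup_i t_i$ into one conservative functor compatibly with the structure maps, and verify that both regular-epi-ness and the fold-map normal form survive the base changes along $t\to y$ and its iterates that are needed to invoke \Cref{locsetlike} together with the composition and right-cancellation results. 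Each of these is routine, but this bookkeeping is where the substance lies; everything else reduces formally to \Cref{colimdist} and the Barr--Beck criterion.
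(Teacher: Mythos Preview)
Your proof is correct, and for the composition statement and the ``backward'' direction of the iff you do exactly what the paper does: reduce to conservativity via \Cref{colimdist} and Barr--Beck, then use that conservativity composes and right-cancels.

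Where you diverge is the ``forward'' direction (pullback stability of effective descent). The paper handles both directions of the iff in one stroke by the remark that conservativity of pullback can be checked locally: given the commutative square of pullback functors associated to the fiber-product square, conservativity of $\mathcal{C}_{/y}\to\mathcal{C}_{/x}$ passes to $\mathcal{C}_{/y'}\to\mathcal{C}_{/x\times_y y'}$ because a morphism $g$ over $y'$ satisfies $g\times_{y'}(x\times_y y')\simeq g\times_y x$, and conversely one descends via the conservative functor $\mathcal{C}_{/y}\to\mathcal{C}_{/y'}$. You instead establish the characterization ``effective descent $=$ regular epi'' using \Cref{locsetlike} and then invoke stability of regular epis under pullback. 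This is a genuine detour, but a productive one: it essentially absorbs the proof of the Proposition that immediately follows the Corollary in the paper (the equivalence of effective descent, strict epi, and nonempty fibers), which the paper proves \emph{after} the Corollary by appealing back to the locality just established. So your route is longer for this single Corollary but front-loads the structural characterization; the paper's route is shorter here but defers that characterization to the next result.
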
 
\begin{proof} 
Since 
(\Cref{colimdist})
a Galois category has finite colimits, which distribute over pull-backs,
it follows by the Barr-Beck theorem a map $x \to y$ is an effective descent morphism if and only if it
is conservative. This is preserved under compositions. The second statement is
proved similarly, since one only has to check conservativity locally. 
\end{proof} 
\begin{proposition} 
Given a map $f\colon  x \to y$ in the Galois category $\mathcal{C}$, the following are
equivalent: 
\begin{enumerate}
\item  $f$ is an effective descent morphism. 
\item $f$ is a strict epimorphism. 
\item For every $y' \to y$ with $y'$ nonempty, the pullback $x \times_y y'$ is
nonempty. 
\end{enumerate}
\end{proposition}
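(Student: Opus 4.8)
The plan is to prove the cycle $(1)\Rightarrow(2)\Rightarrow(3)\Rightarrow(1)$, drawing on the facts already available: a Galois category has finite colimits that distribute over pullbacks (\Cref{colimdist}); a map $f\colon x\to y$ is an effective descent morphism exactly when the pullback functor $f^{*}\colon\mathcal{C}_{/y}\to\mathcal{C}_{/x}$ is conservative (the Barr--Beck criterion recorded in the corollary above); each slice $\mathcal{C}_{/z}$ is again a Galois category; and, locally on the base, every map is setlike (\Cref{locsetlike}). For $(1)\Rightarrow(2)$, suppose $f^{*}$ is conservative, let $w=\mathrm{coeq}(x\times_{y}x\rightrightarrows x)$ in $\mathcal{C}_{/y}$, and let $p\colon w\to\mathrm{id}_{y}$ be the canonical map; I must show $p$ is an isomorphism. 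Since $f^{*}$ preserves finite colimits, $f^{*}w$ is the coequalizer of $f^{*}$ applied to the kernel pair of $f$; but $f^{*}$ carries the augmented diagram $x\times_{y}x\rightrightarrows x\to\mathrm{id}_{y}$ to the analogous augmented diagram for $f^{*}f\in\mathcal{C}_{/x}$, which is \emph{split}, the diagonal $x\to x\times_{y}x$ providing an extra degeneracy because over $x$ the object $f^{*}f$ acquires a section. A split augmented simplicial object realizes its augmentation as its colimit, so $f^{*}p$ is an isomorphism, and conservativity of $f^{*}$ forces $p$ to be one as well; thus $f$ is a strict epimorphism.

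For $(2)\Rightarrow(3)$, let $f$ be a strict epimorphism and suppose $y'\to y$ satisfies $x\times_{y}y'\simeq\emptyset$. Base-changing the presentation $\mathrm{coeq}(x\times_{y}x\rightrightarrows x)\simeq y$ along $y'\to y$, and using that pullback preserves finite colimits (\Cref{colimdist}), we get $\mathrm{id}_{y'}\simeq\mathrm{coeq}\bigl((x\times_{y}x)\times_{y}y'\rightrightarrows x\times_{y}y'\bigr)$; now $(x\times_{y}x)\times_{y}y'\simeq(x\times_{y}y')\times_{y'}(x\times_{y}y')\simeq\emptyset$ by strict initiality of $\emptyset$, so $\mathrm{id}_{y'}\simeq\mathrm{coeq}(\emptyset\rightrightarrows\emptyset)\simeq\emptyset$, i.e.\ $y'$ is empty.

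For $(3)\Rightarrow(1)$, by the Barr--Beck criterion it suffices to show $f^{*}\colon\mathcal{C}_{/y}\to\mathcal{C}_{/x}$ is conservative, and since $\mathcal{C}_{/y}$ is again a Galois category (in which $(3)$ becomes the same statement about its terminal object) we may assume $y=\ast$. Let $g\colon u\to v$ be a map with $g\times x$ an isomorphism. Apply \Cref{locsetlike} to $x\to\ast$ to obtain an effective descent morphism $t\twoheadrightarrow\ast$ and a decomposition $t\simeq\bigsqcup_{i}t_{i}$ with $x\times t_{i}\simeq\bigsqcup_{S_{i}}t_{i}$ in $\mathcal{C}_{/t_{i}}$. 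Since $t\to\ast$ is an effective descent morphism it is conservative, and $\mathcal{C}_{/t}\simeq\prod_{i}\mathcal{C}_{/t_{i}}$, so it is enough to check that each $g\times t_{i}$ is an isomorphism; discard the empty $t_{i}$. For the remaining indices, hypothesis $(3)$ applied to $y'=t_{i}$ gives $x\times t_{i}\neq\emptyset$, hence $S_{i}\neq\emptyset$, while $(g\times x)\times t_{i}\simeq\bigsqcup_{S_{i}}(g\times t_{i})$ is an isomorphism. Choosing an element of $S_{i}$ and using the summand inclusion together with the fold map, $g\times t_{i}$ is a retract of $\bigsqcup_{S_{i}}(g\times t_{i})$ in the arrow category of $\mathcal{C}_{/t_{i}}$, and a retract of an isomorphism is an isomorphism, so $g\times t_{i}$ is an isomorphism.

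The delicate step is $(1)\Rightarrow(2)$: it is the only place where one must convert the abstract conservativity (monadicity) statement into the concrete coequalizer identity defining a strict epimorphism, and the bookkeeping identifying $f^{*}$ of the kernel pair of $f$ with the split kernel pair of $f^{*}f$ over $x$ deserves care (this is the standard fact that effective descent morphisms are regular epimorphisms). The other two implications are essentially formal, given distributivity of finite colimits over pullbacks and the local structure theorem \Cref{locsetlike}.
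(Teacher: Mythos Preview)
Your proof is correct and more detailed than the paper's. The paper dispatches the entire proposition in two sentences: it observes that all three conditions are local (can be checked after base change along an effective descent morphism $t\twoheadrightarrow\ast$ and passage to each summand $t_i$), invokes \Cref{locsetlike} to reduce to the case where $f$ is setlike, and declares the setlike case obvious (all three conditions become ``the underlying map of finite sets is surjective'').

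Your approach is different: you prove a cycle $(1)\Rightarrow(2)\Rightarrow(3)\Rightarrow(1)$ and only use the local structure theorem for the last implication. The step $(1)\Rightarrow(2)$ is the standard argument that effective descent morphisms are regular epimorphisms (split kernel pair after self–base change plus conservativity), and $(2)\Rightarrow(3)$ is pure distributivity. This buys you a genuinely self-contained argument that does not require the reader to verify that ``strict epimorphism'' is a local condition, nor to unpack what ``obvious'' means in the setlike case. The paper's route is shorter but leaves more to the reader; yours actually supplies the content. Both are fine, and your identification of $(1)\Rightarrow(2)$ as the only nonformal step is accurate.
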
 
\begin{proof} 
All three conditions can be checked locally. 
After base-change by an effective descent morphism $t \twoheadrightarrow \ast$ and
a decomposition $t \simeq t_1 \sqcup \dots \sqcup t_n$, we can assume that the
map $x \to y$ is setlike, thanks to \Cref{locsetlike}. In this case, the result
is obvious. 
\end{proof} 

We now discuss a few facts about functors between Galois categories. 
These will be useful when we analyze $\galcat$ as a 2-category in the next
section.

\begin{proposition} 
Let $\mathcal{C}, \mathcal{D}$ be Galois categories. A functor $\mathcal{C} \to
\mathcal{D}$ in $\galcat$ preserves finite colimits. 
\end{proposition}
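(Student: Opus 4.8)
The plan is to reduce to the case of coequalizers, settle a ``setlike'' special case by hand, and then bootstrap to the general case by descent. A functor $F\colon \mathcal{C}\to\mathcal{D}$ in $\galcat$ preserves finite coproducts by hypothesis, and a category with finite coproducts and coequalizers has all finite colimits, with a functor preserving both automatically preserving all of them; so it suffices to show $F$ preserves coequalizers, which exist in $\mathcal{C}$ and $\mathcal{D}$ by \Cref{colimdist}. Fix a coequalizer $x\rightrightarrows y\to z$ in $\mathcal{C}$, let $z'$ be the coequalizer of $F(x)\rightrightarrows F(y)$ in $\mathcal{D}$, and note the canonical comparison $z'\to F(z)$; the goal is to show it is an equivalence.

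The setlike case I would dispose of directly. The embedding $\iota\colon\finset\hookrightarrow\mathcal{C}$, $S\mapsto\bigsqcup_S\ast$, preserves finite colimits: it preserves finite coproducts by axiom, and it preserves coequalizers because for any $w\in\mathcal{C}$ one has, using that $\ast$ is terminal, $\hom(\bigsqcup_T\ast,w)\simeq\prod_T\hom(\ast,w)$, so a pair $S\rightrightarrows T$ is coequalized by a family $(\phi_t)_{t\in T}$ exactly when $t\mapsto\phi_t$ factors through $\mathrm{coeq}_{\finset}(S\rightrightarrows T)$. Since $F$ preserves finite limits (hence the terminal object) and finite coproducts, $F\circ\iota_{\mathcal{C}}$ agrees with $\iota_{\mathcal{D}}$ on objects and on setlike maps, so $F$ preserves the colimit of any diagram valued in the image of $\iota$ with setlike transition maps; combined with distributivity of colimits over pullbacks (\Cref{colimdist}), the same holds for any ``good'' diagram $\bar p\times u$ with $\bar p\colon K\to\finset$, and after a decomposition of the terminal object, for weakly good diagrams (working in each slice $\mathcal{C}_{/\ast_i}$).

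For the general coequalizer, use \Cref{locsetlike} to choose an effective descent morphism $t\twoheadrightarrow\ast$ and a decomposition $t\simeq\bigsqcup_i t_i$ making $x\rightrightarrows y$ setlike over each $t_i$. Because colimits distribute over pullback (\Cref{colimdist}), $z\times t$ is the coequalizer of $x\times t\rightrightarrows y\times t$, which is good over each $t_i$, so the previous paragraph applies. Now $F$ preserves the effective descent morphism $t\twoheadrightarrow\ast$ and, being coproduct-preserving, its decomposition; hence base change of $F$ gives compatible functors $\mathcal{C}_{/t\times\cdots\times t}\to\mathcal{D}_{/F(t)\times\cdots\times F(t)}$ between the \v{C}ech cosimplicial diagrams presenting $\mathcal{C}\simeq\mathrm{Tot}(\mathcal{C}_{/t}\rightrightarrows\cdots)$ and $\mathcal{D}\simeq\mathrm{Tot}(\mathcal{D}_{/F(t)}\rightrightarrows\cdots)$. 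Since finite colimits in such a totalization are computed levelwise (the remark following \Cref{colimdist}) and $F$ preserves the colimit at every level, the comparison $z'\to F(z)$ is an equivalence, proving the proposition. The step I expect to be the main obstacle is this last one: verifying, in the $(2,1)$-categorical setting, that $F$ carries the entire \v{C}ech cosimplicial diagram over $t$ to that over $F(t)$ compatibly — so that the levelwise equivalences assemble into genuine descent data — and that totalization commutes with the relevant coequalizers; this is bookkeeping rather than a new difficulty, and rests on exactly the hypotheses that $F$ preserves finite limits and effective descent morphisms.
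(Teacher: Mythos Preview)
Your proof is correct and follows the same approach as the paper's: reduce to ``good'' (setlike) diagrams via descent along an effective descent morphism, and observe that any functor in $\galcat$ preserves colimits of such diagrams since it preserves finite coproducts and the terminal object. Your concern about the cosimplicial bookkeeping can be sidestepped entirely by using only conservativity of pullback along $F(t)\twoheadrightarrow\ast$ in $\mathcal{D}$: the comparison $z'\to F(z)$ becomes, after pulling back along $F(t)$ and using distributivity in $\mathcal{D}$ together with $F$ preserving products, exactly the comparison for the weakly good diagram $x\times t\rightrightarrows y\times t$, which you have already handled.
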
 
\begin{proof} 
This is proved as in \Cref{colimdist}: any functor preserves colimits of
\emph{good} diagrams (in the terminology of the proof of \Cref{colimdist}), and 
after making a base change one may reduce to this case. 
\end{proof} 

Next, we include a result that shows that $\galcat$ (or, rather, its opposite) behaves, to some extent,
like a Galois category itself; at least, it satisfies a version of the first
axiom of \Cref{galax}. 
\begin{definition} 
\label{connectedgalois}
A Galois category $\mathcal{C}$ is \textbf{connected} if there exists no
decomposition $\ast \simeq \ast_1 \sqcup \ast_2$ with $\ast_1, \ast_2$
nonempty and if additionally $\emptyset \not\simeq \ast$. 
\end{definition}

\begin{proposition} \label{prodgc}
Let $\mathcal{C}$ be a connected Galois category and let $\mathcal{C}_1,
\mathcal{C}_2$ be Galois categories. Then $\mathcal{C}_1 \times \mathcal{C}_2
\in \galcat$ and  we have an equivalence of groupoids
\[ \fung(\mathcal{C}_1 \times \mathcal{C}_2, \mathcal{C}) \simeq 
\fung( \mathcal{C}_1, \mathcal{C}) \sqcup \fung(\mathcal{C}_2, \mathcal{C}),
\]
\end{proposition}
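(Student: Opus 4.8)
The plan is to establish the two assertions separately, reducing both to componentwise computations together with the disjointness and distributivity of coproducts.

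First, that $\mathcal{C}_1 \times \mathcal{C}_2 \in \galcat$ is checked axiom by axiom. Finite limits and coproducts exist and are formed componentwise, and the initial object $(\emptyset_1, \emptyset_2)$ is empty since a map into it is a pair of maps into the $\emptyset_i$, each an isomorphism; disjointness and distributivity of coproducts hold componentwise. For the third axiom, given $(x_1, x_2)$ pick effective descent morphisms $p_i\colon x_i' \twoheadrightarrow \ast_i$ and decompositions $x_i' \simeq \bigsqcup_j x'_{i,j}$ on which $x_i \times x'_{i,j} \to x'_{i,j}$ is a fold map. Then $(p_1, p_2)$ is an effective descent morphism (effective descent is about the bar construction, the pullback functors, and their totalization, all formed componentwise, so a map in a product is an effective descent morphism iff each component is), and
\[ (x_1', x_2') \simeq \bigsqcup_j (x'_{1,j}, \emptyset_2) \ \sqcup \ \bigsqcup_k (\emptyset_1, x'_{2,k}), \]
where on the summand $(x'_{1,j}, \emptyset_2)$ one has $(x_1,x_2) \times (x'_{1,j}, \emptyset_2) \simeq (x_1 \times x'_{1,j}, \emptyset_2)$, a fold map since $x_2 \times \emptyset_2 \simeq \emptyset_2$, and symmetrically on the other summands. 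Moreover the projections $\pi_i\colon \mathcal{C}_1 \times \mathcal{C}_2 \to \mathcal{C}_i$ preserve finite limits, coproducts and (again by the componentwise description) effective descent morphisms, so $\pi_i \in \galcat$ and we obtain $\pi_i^*\colon \fung(\mathcal{C}_i, \mathcal{C}) \to \fung(\mathcal{C}_1 \times \mathcal{C}_2, \mathcal{C})$.

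Next I would show that $\Phi = (\pi_1^*, \pi_2^*)\colon \fung(\mathcal{C}_1, \mathcal{C}) \sqcup \fung(\mathcal{C}_2, \mathcal{C}) \to \fung(\mathcal{C}_1 \times \mathcal{C}_2, \mathcal{C})$ is an equivalence. For essential surjectivity, let $F$ be an object of $\fung(\mathcal{C}_1 \times \mathcal{C}_2, \mathcal{C})$. Applying $F$ to $(\ast_1, \ast_2) \simeq (\ast_1, \emptyset_2) \sqcup (\emptyset_1, \ast_2)$ and using that $F$ preserves coproducts gives $\ast_{\mathcal{C}} \simeq F(\ast_1, \emptyset_2) \sqcup F(\emptyset_1, \ast_2)$, and connectedness of $\mathcal{C}$ forces one summand to be empty; say $F(\emptyset_1, \ast_2) \simeq \emptyset$ (the other case is symmetric and lands in the $\mathcal{C}_2$-component). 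Then for each $x_2$ the map $(\emptyset_1, x_2) \to (\emptyset_1, \ast_2)$ shows $F(\emptyset_1, x_2) \simeq \emptyset$, so applying $F$ to $(x_1, x_2) \simeq (x_1, \emptyset_2) \sqcup (\emptyset_1, x_2)$ yields a natural isomorphism $F(x_1, \emptyset_2) \xrightarrow{\ \sim\ } F(x_1, x_2)$. Writing $i_1\colon \mathcal{C}_1 \to \mathcal{C}_1 \times \mathcal{C}_2$ for $x_1 \mapsto (x_1, \emptyset_2)$, this says $F \simeq (F \circ i_1) \circ \pi_1$. The step that requires care — and which I expect to be the main obstacle — is that $F \circ i_1$ genuinely lies in $\fung(\mathcal{C}_1, \mathcal{C})$. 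The functor $i_1$ preserves coproducts (using $\emptyset_2 \simeq \emptyset_2 \sqcup \emptyset_2$), preserves pullbacks (the only nonformal input being $\emptyset_2 \times_{\emptyset_2} \emptyset_2 \simeq \emptyset_2$, since the unique self-map of an initial object is the identity), and preserves effective descent morphisms, but it does \emph{not} preserve the terminal object. Nonetheless $F$, being a morphism in $\galcat$, preserves finite limits, coproducts and effective descent morphisms, so $F \circ i_1$ preserves pullbacks, coproducts and effective descent morphisms, and $(F\circ i_1)(\ast_1) = F(\ast_1, \emptyset_2) \simeq \ast_{\mathcal{C}}$ by the case hypothesis; since a functor between categories with finite limits that preserves the terminal object and pullbacks preserves all finite limits, $F \circ i_1$ lies in $\fung(\mathcal{C}_1, \mathcal{C})$, as needed.

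For full faithfulness of $\Phi$, I would use $\pi_i \circ i_i = \mathrm{id}_{\mathcal{C}_i}$: restriction of natural isomorphisms along $i_1$ is inverse to $\pi_1^*$, since for a natural isomorphism $\theta\colon G\pi_1 \Rightarrow G'\pi_1$, naturality along the coprojections $(x_1, \emptyset_2) \to (x_1, x_2)$ (which $\pi_1$ sends to identities) forces $\theta_{(x_1, x_2)} = \theta_{(x_1, \emptyset_2)}$, so $\theta$ is determined by its restriction along $i_1$, while conversely restriction along $i_1$ undoes $\pi_1^*$ because $\pi_1 i_1 = \mathrm{id}$. Thus each $\pi_i^*$ is fully faithful on the groupoid level. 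Finally the two essential images are disjoint: for $G \in \fung(\mathcal{C}_1, \mathcal{C})$ and $H \in \fung(\mathcal{C}_2, \mathcal{C})$, evaluating $G \circ \pi_1$ and $H \circ \pi_2$ at $(\ast_1, \emptyset_2)$ gives $G(\ast_1) \simeq \ast_{\mathcal{C}}$ and $H(\emptyset_2) \simeq \emptyset_{\mathcal{C}}$, which are non-isomorphic since $\mathcal{C}$ is connected. Hence $\Phi$ is an equivalence of groupoids, which is the claim.
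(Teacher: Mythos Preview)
Your proof is correct and follows essentially the same approach as the paper: decompose the terminal object of $\mathcal{C}_1 \times \mathcal{C}_2$ as $(\ast_1,\emptyset_2) \sqcup (\emptyset_1,\ast_2)$, use connectedness of $\mathcal{C}$ to force one image to be empty, and conclude that $F$ factors through a projection. The only minor difference is that the paper reaches the factorization via the product identity $(x,y) \times (\emptyset_1,\ast_2) \simeq (\emptyset_1,y)$ together with $F$ preserving finite limits, whereas you use the coproduct identity $(x_1,x_2) \simeq (x_1,\emptyset_2) \sqcup (\emptyset_1,x_2)$ together with $F$ preserving coproducts and emptiness; these are dual versions of the same maneuver. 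Your write-up is in fact more careful than the paper's on two points the paper leaves implicit: the verification that $F \circ i_1$ lies in $\galcat$ (since $i_1$ preserves pullbacks, coproducts, and effective descent morphisms but not the terminal object, and the case hypothesis restores the terminal object), and the explicit check of full faithfulness and disjointness of images.
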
 
The above equivalence of groupoids is as follows. 
Given a functor $\mathcal{C}_i \to \mathcal{C}$ for $i \in
\left\{1,2\right\}$, we obtain a functor $\mathcal{C}_1 \times \mathcal{C}_2
\to \mathcal{C}$ by composing with the appropriate projection. 

\begin{proof} 
The assertion that $\mathcal{C}_1 \times \mathcal{C}_2 \in \galcat$ is easy to
check. 
Consider a functor $F\colon  \mathcal{C}_1 \times \mathcal{C}_2 \to \mathcal{C}$ in
$\galcat$. 
Note that every object 
$(x,y) \in \mathcal{C}_1 \times \mathcal{C}_2$ decomposes as the disjoint union
$(x, \emptyset) \sqcup (\emptyset, y)$. 
For example, in $\mathcal{C}_1 \times \mathcal{C}_2$, the terminal object $\ast
= (\ast, \ast)$
decomposes as the union $\ast_1 \sqcup \ast_2$ where $\ast_1$ is terminal in
$\mathcal{C}_1$ and empty in $\mathcal{C}_2$, and $\ast_2$ is terminal in
$\mathcal{C}_2$ and empty in $\mathcal{C}_1$. 
It follows that $F(\ast_1) = \emptyset$ or $F(\ast_2)  = \emptyset$ since
$\mathcal{C}$ is connected. If $F( \ast_1) = \emptyset$ and therefore
$F(\ast_2) = \ast$, then we have 
for $x \in \mathcal{C}_1, y \in \mathcal{C}_2$,
\[ F((x,y)) \simeq F((x,y) \times \ast_2) \simeq F( (\emptyset, y)),   \]
so that $F$ (canonically) factors through $\mathcal{C}_2$. 
The other case is analogous. 
\end{proof} 

More generally, let $\mathcal{C}$ be an arbitrary Galois category, and fix
$\mathcal{C}_1, \mathcal{C}_2 \in \galcat$. We find, by the same reasoning,
\begin{equation} \label{prodgcgen} \fung( \mathcal{C}_1 \times \mathcal{C}_2,
\mathcal{C}) \simeq \bigsqcup_{\ast = \ast_1
\sqcup \ast_2} \fung(\mathcal{C}_1, \mathcal{C}_{/\ast_1}) \times \fung(
\mathcal{C}_2, \mathcal{C}_{/\ast_2}), \end{equation}
where the disjoint union is taken over all decompositions of the terminal
object in $\mathcal{C}$. 

This concludes our preliminary discussion of the basic properties of Galois categories. 
In the next subsection, we will give another description of the $(2,
1)$-category of Galois categories. For now, though, we describe a basic method of extracting Galois
categories from other sources. 

\begin{definition} 
\label{galcontextdef}
A \textbf{Galois context} is an $\infty$-category $ \mathcal{C}$
satisfying the first two axioms of \Cref{galax} together with a class
$\mathcal{E} \subset \mathcal{C}$ of morphisms such that: 
\begin{enumerate}
\item $\mathcal{E} $ is closed under composition and base change and contains
every equivalence. 
\item  Every morphism in $\mathcal{E}$ is an effective descent morphism. 
\item Given a cartesian diagram
\[ \xymatrix{
x' \ar[d] \ar[r] & x \ar[d] \\
y' \ar[r] &  y
},\]
where $y' \to y \in \mathcal{E}$, then $x \to y $ belongs to $\mathcal{E}$ if and
only if $x' \to y'$ does. 
\item A map $x \to  y \simeq y_1 \sqcup y_2$ belongs to $\mathcal{E}$ if and only if $x
\times_{y} y_1 \to y_1$ and $x \times_y y_2 \to y_2$ belong to $\mathcal{E}$. 
\item For any object $x \in \mathcal{C}$ and any finite nonempty set $S$, the
fold map $\bigsqcup_S x \to x$ belongs to $\mathcal{E}$. 
\end{enumerate}

Given Galois contexts $(\mathcal{C}, \mathcal{E})$ and $(\mathcal{D},
\mathcal{E}')$, a \textbf{functor of Galois contexts} $F \colon(\mathcal{C},
\mathcal{E}) \to (\mathcal{D}, \mathcal{E}')$ will mean a functor of
$\infty$-categories $\mathcal{C} \to \mathcal{D}$ which respects finite limits
and coproducts and which carries morphisms in $\mathcal{E}$ to morphisms in
$\mathcal{E}'$. 
\end{definition}

\begin{definition}

Given a Galois context $(\mathcal{C}, \mathcal{E})$, we say that an object $x \in \mathcal{C}$ is \textbf{Galoisable} (or
$\mathcal{E}$-Galoisable) if there
exists a map $y \to \ast$ in $\mathcal{E}$ such that the pullback $x \times y
\to y$ is in mixed elementary form in $\mathcal{C}_{/y}$, as in the discussion
after \Cref{galax}.  In
other words, we require that there is a decomposition $y \simeq y_1 \sqcup
\dots \sqcup y_n$ such that each $x \times y_i \to y_i$ decomposes as  a finite
coproduct $\bigsqcup_{S_i} y_i \to y_i$. 
\end{definition} 

Given a category satisfying the first two axioms of \Cref{galax}, 
the following result enables us to extract a Galois category by considering
the Galoisable objects. 

\begin{proposition} \label{galcontext}
Let $(\mathcal{C}, \mathcal{E})$ be a Galois context. Then the collection of Galoisable objects in $\mathcal{C}$
(considered as a full subcategory of $\mathcal{C}$) forms a Galois category. 
\end{proposition}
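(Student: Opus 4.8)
The plan is to verify the three axioms of \Cref{galax} for the full subcategory $\mathcal{C}^{\mathrm{Gal}} \subset \mathcal{C}$ of Galoisable objects. The first two axioms (finite limits, finite coproducts, empty initial object, disjoint and distributive coproducts) should be inherited from $\mathcal{C}$ once one checks that the subcategory of Galoisable objects is closed under the relevant operations. So the first step is a closure argument: I would show that $\mathcal{C}^{\mathrm{Gal}}$ is closed under finite limits and finite coproducts in $\mathcal{C}$. For coproducts this is almost immediate from axiom (5) of a Galois context together with the fact that if $x \times y \to y$ and $x' \times y' \to y'$ are each in mixed elementary form (over effective descent morphisms $y \to \ast$, $y' \to \ast$), one can pull back along the common refinement $y \times y' \to \ast$ — which is again an effective descent morphism by the composition property — to trivialize both simultaneously; then $(x \sqcup x') \times (y \times y')$ is a finite coproduct of copies of summands of $y \times y'$, using distributivity of coproducts over pullbacks. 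For finite limits, the terminal object $\ast$ is trivially Galoisable; for pullbacks $x \times_z w$ with $x, w, z$ Galoisable, one trivializes $x$, $w$, and $z$ simultaneously over a single effective descent morphism $y \to \ast$ (refining as needed), and then over each summand the pullback of setlike maps of finite-set-indexed coproducts is again setlike — this is the elementary combinatorial observation that $\mathrm{FinSet}$ has pullbacks and $\mathcal{C}_{/y_i}$-objects in elementary form correspond to finite sets.

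The real content is axiom (3): given a Galoisable object $x$, I must produce an effective descent morphism $x' \twoheadrightarrow \ast$ in $\mathcal{C}^{\mathrm{Gal}}$ (not merely in $\mathcal{C}$) together with a decomposition $x' = x'_1 \sqcup \cdots \sqcup x'_n$ trivializing $x$. By definition of Galoisable there is some $y \to \ast$ in $\mathcal{E}$ trivializing $x$, and by axiom (2) of a Galois context every morphism in $\mathcal{E}$ is an effective descent morphism; so the only issue is that $y$ itself might not be Galoisable. The key step is therefore to replace $y$ by a Galoisable effective-descent cover. I would argue that the finite coproducts $\bigsqcup_S y_i \to y_i$ appearing in the trivialization make $y$ itself "locally in elementary form relative to $\ast$": more precisely, I want to show that $y$ (or a further cover of $y$) is Galoisable over $\ast$. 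This is the point where one iterates. One approach: the map $y \to \ast$ in $\mathcal{E}$ is an effective descent morphism, and after pulling $y$ back along itself, $y \times y \to y$ — does this become "setlike" over a further $\mathcal{E}$-cover? Here I would invoke the self-trivializing nature of descent morphisms together with axiom (3) of \Cref{galcontextdef} (the cancellation property for $\mathcal{E}$) to bootstrap. The cleanest formulation is probably: take $y$, and note that $y \to \ast$ being in $\mathcal{E}$ and $x$ being trivialized over $y$ means $x \times y \to y$ is a disjoint union of summands of $y$; I then need $y$ to be trivialized over some $\mathcal{E}$-cover of $\ast$, which I would deduce by a Galois-closure / transfinite-refinement argument showing the class of objects trivialized over *some* $\mathcal{E}$-cover is itself closed under the operations needed, so one may take $x'$ to be a suitable $\mathcal{E}$-cover built from $x$ and its iterated fiber products.

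I expect the main obstacle to be exactly this last point: passing from "$x$ is trivialized over an $\mathcal{E}$-cover $y$" to "$x$ is trivialized over a \emph{Galoisable} effective descent cover," i.e., showing the cover witnessing condition (3) can itself be chosen inside $\mathcal{C}^{\mathrm{Gal}}$. The resolution should use that effective descent morphisms in $\mathcal{C}^{\mathrm{Gal}}$ can be characterized conservatively (mirroring the discussion after \Cref{galax} in the abstract Galois-category setting, which in turn relies on finite colimits distributing over pullbacks), combined with axioms (2)--(4) of \Cref{galcontextdef}, which are precisely engineered so that the restriction of $\mathcal{E}$ to $\mathcal{C}^{\mathrm{Gal}}$ behaves well: a morphism between Galoisable objects lies in $\mathcal{E}$ iff it does so locally, and locally (after trivializing) it is a setlike map, whose descent-morphism status is visible combinatorially. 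Once the effective descent morphism $x' \twoheadrightarrow \ast$ is shown to exist within $\mathcal{C}^{\mathrm{Gal}}$, the required decomposition $x' = \bigsqcup x'_i$ with $x \times x'_i \to x'_i$ a fold map is exactly the mixed-elementary-form condition from the definition of Galoisable, so axiom (3) of \Cref{galax} follows and the proof concludes. I would end by remarking that functoriality (a functor of Galois contexts restricts to a functor of Galois categories) is immediate since such functors preserve finite limits, coproducts, and $\mathcal{E}$, hence carry mixed elementary forms and $\mathcal{E}$-covers to the same.
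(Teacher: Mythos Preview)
You correctly locate the crux in axiom (3): given a Galoisable $x$, one must exhibit a \emph{Galoisable} effective descent cover trivializing it, not merely an $\mathcal{E}$-cover. But your proposed resolution has a gap.

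Your first instinct---take the arbitrary $\mathcal{E}$-cover $y$ that trivializes $x$ and argue that $y$ itself (or a refinement of it) is Galoisable---cannot work: nothing in the definition of a Galois context constrains $y$. Your second instinct, to build the cover from ``$x$ and its iterated fiber products,'' is the right one, but ``Galois-closure / transfinite-refinement'' is not a proof, and without a concrete inductive invariant you have no termination argument.

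The paper's mechanism is a finite induction on the \emph{rank} of $x$: the maximum number of sheets when $x$ is put in mixed elementary form over some $\mathcal{E}$-cover (this is well-defined by descent). First split $\ast \simeq \ast_1 \sqcup \ast_2$ so that $x \to \ast$ factors through an $\mathcal{E}$-morphism $x \to \ast_1$ while $x \times \ast_2 = \emptyset$; thus one may assume $x \to \ast$ lies in $\mathcal{E}$. Now pull $x$ back along itself: the diagonal section of $x \times x \to x$ yields a splitting $x \times x \simeq x \sqcup c$ in $\mathcal{C}_{/x}$, and working locally over the original (uncontrolled) cover one checks $c$ is Galoisable of rank \emph{one less} than $x$. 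Iterating drops the rank to zero, where the object is empty and the statement is vacuous. The Galoisable cover one ends up with is thus built from iterated fiber powers of $x$ itself---confirming your intuition---but the rank is what makes this a proof rather than a hope.

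One further point you omit: since $\mathcal{C}$ is an $\infty$-category while a Galois category is ordinary, one must check that mapping spaces into a Galoisable object are homotopy discrete. The paper does this first, by descending along an $\mathcal{E}$-cover to reduce to $x \simeq \bigsqcup_S \ast$, where maps in are just $S$-indexed decompositions of the source.
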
 
\begin{proof} 
Note first that the collection of Galoisable objects actually forms a
\emph{category} rather than an $\infty$-category: that is, the mapping space
between any two Galoisable objects is (homotopy) discrete. More precisely, if
$x \in \mathcal{C}$ is Galoisable and $x' \in \mathcal{C}$ is arbitrary, then 
we claim that $\hom_{\mathcal{C}}(x', x)$ is discrete. To see this, we choose
an effective descent morphism $u_1 \sqcup \dots \sqcup u_n \twoheadrightarrow \ast$
such that each map $u_i \times x \to x$ is in elementary form. Using the
expression $\mathcal{C} \simeq \mathrm{Tot}( \mathcal{C}_{/u_1 \times \dots
\times u_n})$, one reduces to the case where $x$ is a (disjoint) finite
coproduct of copies
of the terminal object $\ast$. In this case, $\hom_{\mathcal{C}}(x',
\bigsqcup_S \ast)$ 
is the \emph{set} of all $S$-labeled decompositions of $x'$ as direct sums of
subobjects, using the expression $\mathcal{C}_{/\bigsqcup_S \ast} \simeq
\prod_S \mathcal{C}_{/\ast} \simeq \prod_S \mathcal{C}$. 

Suppose $y  \in \mathcal{C}$ is a Galoisable object. We need to show that there is
a Galoisable object $t'$ and an $\mathcal{E}$-morphism $t' \twoheadrightarrow
\ast$ such that the
pullback $y \times t' \to t'$ is in mixed elementary form. By assumption, we
know that we can do this with \emph{some} object $t \in \mathcal{C}$ with an
$\mathcal{E}$-morphism $t \twoheadrightarrow \ast$, but we do not have any
control of $t$. 
We will find a \emph{Galoisable} choice of $t'$ by an inductive procedure.

Define the \emph{rank} of a
 Galoisable object $y \in \mathcal{C}$ as follows. If $y$ is mixed elementary, with respect to a 
decomposition $\ast \simeq \bigsqcup_{i=1}^n \ast_i$ (with the $\ast_i$
nonempty) and $y = \bigsqcup_{i=1}^n
\bigsqcup_{S_i} \ast_i$ for finite sets $S_i$, we define the rank to be $\sup_i
|S_i|$. 
In general, we make a base change in $\mathcal{C}$ along some 
$\mathcal{E}$-morphism $t \to \ast$ (by a not necessarily
Galoisable object) 
to reduce to this case. In other words, to define the rank of $y$, we choose an
$\mathcal{E}$-morphism $t \twoheadrightarrow \ast$ such that $y \times t \to t$
is in mixed elementary form in $\mathcal{C}_{/t}$, and then consider the rank
of that.

If the rank is zero, then $y = \emptyset$. 
We now use {induction} on the rank of $y$. 
First, we claim that there is a decomposition $\ast \simeq \ast_1 \sqcup
\ast_2$ such that $y \to \ast$ factors through an $\mathcal{E} $-morphism
$ y \to \ast_1$. (Meanwhile, $y \times_{\ast} \ast_2 = \emptyset$.) To see
this decomposition and claim, we can work locally on $\mathcal{C} \simeq \mathrm{Tot}( \mathcal{C}_{/t
\times \dots \times t})$ to reduce to the case in which $y$ is already in mixed
elementary form, for which the assertion  is evident. 
Thus we can reduce to the case where $y \to \ast$ is an $\mathcal{E}$-morphism. 

 Now consider the pullback $y \times y \to y$. 
This admits a section, so we have $y \times y \simeq y \sqcup c$ where $c$ is
another Galoisable object in $\mathcal{C}_{/y}$; to see that $c$ exists, one works locally using
$t$ to reduce to the mixed elementary case. However, by working locally again, one sees that the rank
of $c $ is one less than the rank of $y $. We can reduce 
the rank one by one, splitting off pieces, to get down to the case where $y = 
\emptyset$. 
\end{proof} 

In fact, the above argument shows that if $x \in \mathcal{C}$ is Galoisable,
there exists a Galoisable $y \in \mathcal{C}$ together with a morphism $y
\twoheadrightarrow \ast$ which belongs to $\mathcal{E}$ such that $x \times y
\to y$ is in mixed elementary form. 

\begin{corollary} 
Let $(\mathcal{C}, \mathcal{E})$ be a Galois context. Then a map $x \to y$
between Galoisable objects in $\mathcal{C}$ is an effective descent morphism in the
category of Galoisable objects if and only if it belongs to $\mathcal{E}$. 
Therefore, a functor of Galois contexts induces a functor of Galois categories. 
\end{corollary}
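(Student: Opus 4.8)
The plan is to prove the two directions of the asserted equivalence separately, using the proposition just above that identifies, inside any Galois category, the effective descent morphisms with the strict epimorphisms (equivalently, with the maps whose pullback against every nonempty object is nonempty, and also — via \Cref{colimdist} and Barr--Beck — with the maps whose base change functor is conservative). First I would record a preliminary: writing $\mathcal{C}^{\mathrm{Gal}} \subset \mathcal{C}$ for the full subcategory of Galoisable objects, the inclusion preserves finite limits and finite coproducts, and summands and finite coproducts of Galoisable objects are again Galoisable. Indeed, given finitely many Galoisable objects one obtains, by composing and base-changing $\mathcal{E}$-morphisms (axiom~1) and then refining the resulting terminal-object decompositions, a single $\mathcal{E}$-morphism $t \twoheadrightarrow \ast$ over which all of them become mixed elementary simultaneously; one then works summand-by-summand on a decomposition $t \simeq \bigsqcup_i t_i$, using that fiber products of setlike maps are setlike and that a summand of a Galoisable object is Galoisable.

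For the direction ``$\mathcal{E} \Rightarrow$ effective descent in $\mathcal{C}^{\mathrm{Gal}}$'': if $f \in \mathcal{E}$ then $f$ is an effective descent morphism in $\mathcal{C}$ by axiom~2 of a Galois context, so its base change functor $\mathcal{C}_{/y} \to \mathcal{C}_{/x}$ is monadic, hence conservative. Since finite limits in $\mathcal{C}^{\mathrm{Gal}}$ are computed in $\mathcal{C}$, base change along $f$ remains conservative on $\mathcal{C}^{\mathrm{Gal}}_{/y}$; and since $\mathcal{C}^{\mathrm{Gal}}$ has finite colimits distributing over pullbacks (\Cref{colimdist}), the Barr--Beck theorem upgrades conservativity to effective descent inside $\mathcal{C}^{\mathrm{Gal}}$.

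For the converse, suppose $f\colon x \to y$ is an effective descent morphism in $\mathcal{C}^{\mathrm{Gal}}$; by the preceding proposition it is a strict epimorphism, so $x \times_y y'$ is nonempty for every nonempty Galoisable $y' \to y$. Applying the remark following \Cref{galcontext} to the Galoisable object $x \sqcup y$, I would choose a \emph{Galoisable} $t$ with an $\mathcal{E}$-morphism $t \twoheadrightarrow \ast$ over which $x$ and $y$ become mixed elementary, and then refine the terminal-object decomposition to $t \simeq \bigsqcup_i t_i$ (each $t_i$ nonempty, and Galoisable as a summand of $t$) so that each $f \times t_i$ is setlike, induced by a map of finite sets $\sigma_i\colon S_i \to T_i$. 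By the closure axioms 1, 3 and 4 for $\mathcal{E}$, it suffices to show each $f \times t_i \in \mathcal{E}$. If $\sigma_i$ missed some $\tau \in T_i$, the corresponding summand $t_i \hookrightarrow y \times t_i \to y$ would be a nonempty Galoisable object over $y$ whose pullback along $f$ is $\bigsqcup_{\sigma_i^{-1}(\tau)} t_i = \emptyset$, contradicting strict epimorphy; hence $\sigma_i$ is surjective, and then $f \times t_i$ decomposes as the coproduct over $\tau \in T_i$ of the fold maps $\bigsqcup_{\sigma_i^{-1}(\tau)} t_i \to t_i$, each of which lies in $\mathcal{E}$ by axioms 4 and 5 (nonempty fold maps). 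The final sentence is then formal: a functor of Galois contexts preserves exactly the data (an $\mathcal{E}$-splitting into mixed elementary form) defining Galoisability, so it restricts to $\mathcal{C}^{\mathrm{Gal}} \to \mathcal{D}^{\mathrm{Gal}}$; this restriction preserves finite limits and coproducts, and by the first part it carries effective descent morphisms to $\mathcal{E}'$-morphisms, i.e. again to effective descent morphisms, so it is a morphism in $\galcat$.

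The main obstacle I anticipate is organizational rather than conceptual: the whole argument turns on choosing the base-change object $t$ simultaneously Galoisable (so that it is a legitimate test object for strict epimorphy inside $\mathcal{C}^{\mathrm{Gal}}$) and fine enough that $f$ becomes setlike on each piece. Arranging this requires threading together \Cref{locsetlike}, the remark after \Cref{galcontext}, the permanence of Galoisability under summands and finite coproducts, and the closure axioms for $\mathcal{E}$; stating these compatibilities cleanly, rather than any single deep step, is where the care lies.
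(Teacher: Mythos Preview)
Your proof is correct and follows essentially the same strategy as the paper: both reduce to the setlike case via a Galoisable $\mathcal{E}$-cover (using the remark after \Cref{galcontext}), where the equivalence is immediate since a setlike map lies in $\mathcal{E}$ exactly when the underlying map of finite sets is surjective. The paper compresses this into one sentence by invoking the local nature of both conditions, whereas you spell out the two directions separately (Barr--Beck for one, the strict-epimorphism characterization for the other); this is only an organizational difference.
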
 
\begin{proof} 
Working locally (because of the local nature of belonging to $\mathcal{E}$,
and in view of the preceding remark), we may assume the map $x \to y$ is setlike, in which case it
is evident. 
\end{proof}

\subsection{The Galois correspondence}
The Galois correspondence for groupoids gives an alternate description of the
$(2, 1)$-category $\galcat$. To see this, we describe the building
blocks in $\galcat$. 

\begin{example} \label{gpgal}
Let $G$ be a finite group. Then the category $\finset_G$ of finite $G$-sets is a Galois
category. Only the last axiom requires verification. In fact, given any finite $G$-set $T$, we have an effective
descent morphism $G \to \ast$ such that $T \times G$, as a $G$-set, is a disjoint
union of copies of $G$ (since it is free). 

This Galois category enjoys a convenient universal property, following
\cite{cjf}. 
\begin{definition} 
Let $\mathcal{C}$ be a Galois category and let $G$ be a finite group. A
\textbf{$G$-torsor} in $\mathcal{C}$ consists of an object $x \in \mathcal{C}$
with a $G$-action such that there exists an effective descent morphism $y
\twoheadrightarrow \ast$ such that $y \times x \in \mathcal{C}_{/y}$, as an
object with a $G$-action, is given by 
\[ y \times x \simeq \bigsqcup_G y,  \]
where $G$ acts on the latter by permuting the summands. 
For instance, $x$ could be $\bigsqcup_G \ast$. 
The collection of $G$-torsors forms a full subcategory
$\mathrm{Tors}_G(\mathcal{C})
\subset \mathrm{Fun}(BG, \mathcal{C})$. 
\end{definition} 

The Galois category $\finset_G$ has a natural example of a $G$-torsor: namely,
$G$ itself. The next result states that it is \emph{universal} with respect to
that property. 
\begin{proposition} 
\label{torsors}
If $\mathcal{C}$ is a Galois category, there is a natural equivalence between
$\fung(\finset_G, \mathcal{C})$ and the category
$\mathrm{Tors}_G(\mathcal{C})$ of $G$-torsors in
$\mathcal{C}$. 
\end{proposition}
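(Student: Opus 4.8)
The plan is to exhibit the equivalence $\fung(\finset_G, \mathcal{C}) \simeq \mathrm{Tors}_G(\mathcal{C})$ by a Yoneda-style argument: the Galois category $\finset_G$ together with its canonical $G$-torsor object $G \in \finset_G$ should be universal, so that evaluation at $G$ defines the desired functor. First I would define the comparison functor $\Phi\colon \fung(\finset_G, \mathcal{C}) \to \mathrm{Tors}_G(\mathcal{C})$ by sending a functor $F$ of Galois categories to $F(G)$, where $F(G)$ inherits its $G$-action from the $G$-action on the object $G \in \finset_G$ (left multiplication, say, commuting with the right multiplication that is used when checking $G$ is a torsor). One must first check $\Phi$ lands in $\mathrm{Tors}_G(\mathcal{C})$: since $G \times G \simeq \bigsqcup_G G$ as $G$-objects in $\finset_G/G$ (with $G \to \ast$ an effective descent morphism), and $F$ preserves finite coproducts, finite limits, and effective descent morphisms, the object $F(G)$ is a $G$-torsor in $\mathcal{C}$ with respect to the descent morphism $F(G) \twoheadrightarrow \ast$.

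Next I would construct an inverse. Given a $G$-torsor $x \in \mathcal{C}$, one wants a functor $\Psi_x\colon \finset_G \to \mathcal{C}$ with $\Psi_x(G) \simeq x$ (equivariantly). The natural guess is the ``twist'' construction: for a finite $G$-set $T$, set $\Psi_x(T) = (x \times T)/G$, the quotient by the diagonal $G$-action, which exists because Galois categories admit quotients by finite group actions --- indeed they admit all finite colimits by \Cref{colimdist}. Here one uses that $\finset_G$ is generated under finite coproducts by the orbits $G/H$, and every finite $G$-set is a finite colimit of copies of $G$ (the free resolution $G \times T \rightrightarrows T$ realizes $T$ as a reflexive coequalizer of free $G$-sets), so $\Psi_x$ is forced on free $G$-sets to be $\Psi_x(\bigsqcup_S G) = \bigsqcup_S x$ and then extended by colimits. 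One must verify that $\Psi_x$ preserves finite limits and effective descent morphisms: this is where working locally over the effective descent morphism $y \twoheadrightarrow \ast$ trivializing $x$ is the key move, since over $y$ the torsor $x$ becomes $\bigsqcup_G y$ and $\Psi_x$ becomes (pointwise) the obvious functor $\finset_G \to \mathcal{C}_{/y}$, $T \mapsto \bigsqcup_{T/G}(\cdots)$ which is manifestly a functor of Galois categories; one then descends along $\mathcal{C} \simeq \mathrm{Tot}(\mathcal{C}_{/y \times \cdots \times y})$ using that limits of functors of Galois categories are computed pointwise.

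I would then check that $\Phi$ and $\Psi$ (i.e., $x \mapsto \Psi_x$) are mutually inverse up to coherent natural equivalence. One direction, $\Phi \Psi_x = \Psi_x(G) \simeq x$, is the defining property of the twist. For the other direction, given $F\colon \finset_G \to \mathcal{C}$ one needs a natural equivalence $\Psi_{F(G)} \simeq F$; since both sides preserve finite colimits and agree on free $G$-sets (on $\bigsqcup_S G$ both give $\bigsqcup_S F(G)$, compatibly with the $G$-action and with the coequalizer presentations), and every finite $G$-set is a canonical finite colimit of free ones, the comparison map is an equivalence --- again one can check this after the base change that trivializes the torsor, reducing to a tautology. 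Functoriality in $\mathcal{C}$ and the $2$-categorical coherences are then routine.

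The main obstacle I expect is the careful treatment of the quotient construction $\Psi_x(T) = (x \times T)/G$ and the verification that it preserves finite limits and effective descent morphisms. A priori, quotients by group actions need not commute with finite limits in a general category, so one genuinely needs the local-triviality of the torsor: after pulling back along $y \twoheadrightarrow \ast$ the construction becomes ``setlike'' and everything is transparent, and then one invokes that all the relevant structure (finite limits, finite colimits, the class of effective descent morphisms) in a Galois category can be checked locally, via $\mathcal{C} \simeq \mathrm{Tot}(\mathcal{C}_{/y^{\times \bullet+1}})$, together with \Cref{colimdist} and the preceding propositions showing that functors of Galois categories and effective descent morphisms behave well. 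Making the ``descent of functors'' step precise --- that a compatible family of functors of Galois categories on the $\mathcal{C}_{/y^{\times \bullet+1}}$ glues to one on $\mathcal{C}$, and that this is how one should read off $\Psi_x$ --- is the technical heart; once it is in place, the rest is bookkeeping with universal properties.
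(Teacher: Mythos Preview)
Your proposal is correct and follows essentially the same strategy as the paper: both define the comparison functor $\Phi\colon F \mapsto F(G)$ and use descent along an effective descent morphism trivializing the torsor to handle the nontrivial direction. The paper's proof is packaged slightly differently --- it argues that $\Phi$ is fully faithful directly (since every object of $\finset_G$ is a colimit of copies of $G$, so $F$ is determined by $F(G)$, and then one invokes the Yoneda lemma) and then proves essential surjectivity by writing $\mathcal{C}$ as the totalization $\mathrm{Tot}(\mathcal{C}_{/x^{\times \bullet+1}})$ to reduce to the trivial-torsor case --- whereas you construct an explicit candidate inverse $\Psi_x(T) = (x \times T)/G$ and verify both compositions. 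Your explicit twist formula is a nice concrete description of the inverse, but the verification that $\Psi_x$ is a functor of Galois categories (preserving finite limits and effective descent morphisms) passes through exactly the same descent-to-trivial-torsor step; the paper's version sidesteps writing down the formula by absorbing that work into ``the claim is evident for the trivial torsor.''
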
 

\begin{proof} 
Any functor of Galois categories preserves torsors for any finite group. 
In particular, given a functor $F\colon  \finset_G \to \mathcal{C}$ in $\galcat$, one gets a natural
choice of $G$-torsor in $\mathcal{C}$ by considering $F(G)$. Since everything
in $\finset_G$ is a colimit of copies of $G$, the choice of $F(G)$ determines
everything else. Together with the Yoneda lemma, this implies that the functor from $\fung(\finset_G,
\mathcal{C})$ to $G$-torsors is fully faithful. 

It remains to argue that, given a $G$-torsor in $\mathcal{C}$,
one can construct a corresponding functor $\finset_G \to \mathcal{C}$ in
$\galcat$. 
In other words, we want to show that the fully faithful functor
\[ \fung( \finset_G, \mathcal{C}) \to \mathrm{Tors}_G(\mathcal{C}) ,  \]
is essentially surjective. 
However, writing $\mathcal{C}$ as a
totalization of $\mathcal{C}_{/x \times \dots \times x}$, one may assume 
the $G$-torsor is trivial, in which case the claim is evident. 
\end{proof} 

\end{example}

More generally, we can build Galois categories from finite groupoids. 
This will be very important from a 2-categorical point of view. 

\begin{definition} 
We say that a groupoid $\G$ is \textbf{finite} if 
$\G$ has finitely many isomorphism classes of objects and, for each object $x
\in \G$, the automorphism group $\mathrm{Aut}_{\G}(x)$ is finite. 
The collection of all finite groupoids, functors, and natural transformations is naturally organized into a (2,
1)-category $\fgp$. 
\end{definition} 

In other words, a finite groupoid is a 1-truncated homotopy type such that
$\pi_0$ is finite, as is $\pi_1$ with any choice of basepoint. 

Given a finite groupoid $\G$, the category $\fun(\G, \finset)$ of functors from
$\G$ into the category of finite sets forms a Galois category. 
This is a generalization of \Cref{gpgal} and follows from it since the
categories $\fun(\G, \finset)$ are finite products of the Galois categories of
finite $G$-sets as $G$ varies over the automorphism groups. 
If we interpret $\G$ as a 1-truncated homotopy type, then this is
precisely the category of finite \emph{covering spaces} of $\G$, or of local
systems of finite sets on $\G$. 

It follows that we get a functor
of $(2, 1)$-categories
\[ \fgp^{\op} \to \galcat,  \]
sending a finite groupoid $\G$ to the associated functor category $\fun( \G,
\finset)$. Note that, for example,  a
natural transformation between functors of finite groupoids gives a natural
transformation at the level of Galois categories. 

In order to proceed further, we need a basic formal property of $\galcat$:
\begin{proposition}
The $(2, 1)$-category $\galcat$ admits filtered colimits, which are computed
at the level of the underlying categories: the colimit of a diagram of
Galois
categories and functors between them (which respect coproducts, finite limits,
and effective descent morphisms) in the $(2, 1)$-category of categories is again a
Galois category. 
\end{proposition}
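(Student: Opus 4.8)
The plan is to form $\mathcal{C} := \varinjlim_i \mathcal{C}_i$ in the $(2,1)$-category of (ordinary) categories, and then to show that (i) $\mathcal{C}$ is a Galois category, (ii) the structure functors $\iota_i\colon \mathcal{C}_i \to \mathcal{C}$ lie in $\galcat$, and (iii) $\mathcal{C}$ solves the universal problem, i.e.\ $\fung(\mathcal{C}, \mathcal{D}) \simeq \varprojlim_i \fung(\mathcal{C}_i, \mathcal{D})$ for every Galois category $\mathcal{D}$. The input is the familiar description of a filtered colimit of categories: every object of $\mathcal{C}$ is $\iota_i$ of an object of some $\mathcal{C}_i$, and $\hom_{\mathcal{C}}(\iota_{i_0}x, \iota_{i_0}y) \simeq \varinjlim_{i \ge i_0} \hom_{\mathcal{C}_i}(x_i, y_i)$. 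Since filtered colimits commute with finite limits here, and since the transition functors of our diagram preserve finite limits (by hypothesis) and finite colimits (by the earlier proposition that every morphism of Galois categories preserves finite colimits), $\mathcal{C}$ acquires finite limits and finite colimits, each realized as $\iota_i$ of the corresponding (co)limit formed at a sufficiently late stage; in particular $\mathcal{C}_{/\iota_{i_0}x} \simeq \varinjlim_{i\ge i_0}(\mathcal{C}_i)_{/x_i}$, and a functor out of $\mathcal{C}$ preserves finite limits, or finite coproducts, iff each of its restrictions to the $\mathcal{C}_i$ does.

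First I would dispose of the first two axioms of \Cref{galax}. The initial object of $\mathcal{C}$ is $\iota$ of the compatible family $\{\emptyset_i\}$, and it is empty: a morphism $x \to \emptyset$ in $\mathcal{C}$ is represented at some finite stage by $x_k \to \emptyset_k$, which is an isomorphism because $\mathcal{C}_k$ is Galois, and isomorphisms are sent to isomorphisms. Disjointness of coproducts and distributivity of coproducts (indeed, by \Cref{colimdist}, of finite colimits) over pullbacks are assertions about finite diagrams that hold in every $\mathcal{C}_i$; since pullbacks, finite coproducts and slice categories in $\mathcal{C}$ are all computed stagewise, these properties transfer to $\mathcal{C}$.

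The substance is the third axiom, and the single delicate point, here and below, is that effective descent morphisms behave well under the colimit. Given $x = \iota_{i_0}x_{i_0} \in \mathcal{C}$, use that $\mathcal{C}_{i_0}$ is Galois to choose an effective descent morphism $x'_{i_0} \twoheadrightarrow \ast$ together with a decomposition $x'_{i_0} = x'_{i_0,1} \sqcup \dots \sqcup x'_{i_0,n}$ for which each $x_{i_0} \times x'_{i_0,j} \to x'_{i_0,j}$ is a fold map; its image $x' := \iota_{i_0}x'_{i_0}$ retains the decomposition and the fold structure because finite coproducts are computed stagewise. To see that $x' \twoheadrightarrow \ast$ is an effective descent morphism in $\mathcal{C}$, I would argue straight from the definition: one needs $\mathcal{C} \to \mathrm{Tot}(\mathcal{C}_{/x'} \rightrightarrows \mathcal{C}_{/x'\times x'} \triplearrows \cdots)$ to be an equivalence. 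Every transition functor preserves effective descent morphisms, so $x'_i \twoheadrightarrow \ast$ is effective descent in $\mathcal{C}_i$ for all $i \ge i_0$, i.e.\ $\mathcal{C}_i \to \mathrm{Tot}((\mathcal{C}_i)_{/x'_i} \rightrightarrows (\mathcal{C}_i)_{/x'_i\times x'_i} \triplearrows \cdots)$ is an equivalence for each such $i$. The totalization of a cosimplicial ordinary category (the formation of ``descent data'') depends only on its restriction to $\Delta_{\le 2}$ and is therefore a finite limit, hence commutes with the filtered colimit; combining this with $\mathcal{C}_{/(x')^{\times k}} \simeq \varinjlim_{i\ge i_0}(\mathcal{C}_i)_{/(x'_i)^{\times k}}$ ($k$-fold products over $\ast$), taking $\varinjlim_i$ of the stagewise equivalences yields the equivalence for $\mathcal{C}$. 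Thus $x' \twoheadrightarrow \ast$ is effective descent, the third axiom holds, and $\mathcal{C}$ is a Galois category.

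It remains to record that this colimit of categories is the colimit in $\galcat$. Each $\iota_i$ preserves finite limits and coproducts by construction and preserves effective descent morphisms by the argument just given, so $\iota_i \in \galcat$. For a Galois category $\mathcal{D}$, a functor $F\colon \mathcal{C} \to \mathcal{D}$ belongs to $\galcat$ exactly when it preserves finite limits, finite coproducts, and effective descent morphisms; all three of these structures in $\mathcal{C}$ are of a finitary nature (effective descent morphisms being precisely the strict epimorphisms in a Galois category) and are realized stagewise, so $F$ has this property iff each $F \circ \iota_i$ does. This gives the equivalence $\fung(\mathcal{C},\mathcal{D}) \simeq \varprojlim_i \fung(\mathcal{C}_i,\mathcal{D})$, completing the proof. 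Throughout, the only step requiring real care is the compatibility of effective descent morphisms with filtered colimits; the rest is routine bookkeeping about filtered colimits of categories, using repeatedly that they commute with finite limits and are preserved by the (finite-limit- and finite-colimit-preserving) transition functors.
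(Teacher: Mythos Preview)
Your proof is correct and follows essentially the same approach as the paper: verify the Galois axioms for the filtered colimit by reducing each to the finite stages, with the key step being that effective descent morphisms are preserved under the structure functors. The only minor tactical difference is that you verify preservation of effective descent by directly commuting the (finite, $\Delta_{\le 2}$-truncated) totalization with the filtered colimit, whereas the paper appeals to the Barr-Beck criterion (conservativity plus preservation of finite colimits by pullback); you also spell out the universal property, which the paper leaves implicit.
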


\begin{proof} 
Let $F\colon  I \to \galcat$ be a filtered diagram of Galois categories. Our claim is
that the colimit $\varinjlim_I F$ is a Galois category and the natural
functors $F(j) \to \varinjlim_I F$ respect the relevant structure. We first
observe that $\varinjlim_I F$ has all finite limits and colimits, and
the functors $F(j) \to \varinjlim_I F$ respect those. This holds for any
filtered diagram of $\infty$-categories and functors preserving finite limits
(resp. colimits) as a formal consequence of the commutation of finite limits
and filtered colimits in the $\infty$-category of spaces. For example, every
finite diagram in $\varinjlim_I F$ factors through a finite stage. 
From this, the first two axioms of \Cref{galax} follow. 

Next, we want to claim that the functors $F(j) \to \varinjlim_I F$ respect
effective descent morphisms. Once we have shown this, the last axiom of \Cref{galax}
will follow, since we know it at each stage $F(j)$. 
In fact, let $x \to y$ be an effective descent morphism in $F(j)$. 
Then, we need to check that pull-back along $x \to y$ is conservative and
respects finite colimits in $\varinjlim_I F$; however, this follows since it
holds in each $F(j')$, since finite colimits and pullbacks are preserved under
$F(j') \to \varinjlim_I F$.

Finally, it follows from the previous paragraph that since every object in each $F(j)$ is locally in mixed
elementary form, with respect to effective descent morphisms in $F(j)$, the same is
true in $\varinjlim_I F$, since every object in the colimit comes from a
finite stage. 
\end{proof} 

It follows that we get a natural functor
\[ \mathrm{Pro}( \fgp)^{\op} \simeq \mathrm{Ind}( \fgp^{\op}) \to \galcat , \]
i.e., a \emph{contravariant} functor from the $(2, 1)$-category 
$ \mathrm{Pro}( \fgp)^{\op}$ into the $(2,
1)$-category of Galois categories. 
We give this a name. 

\begin{definition} 
A \textbf{profinite groupoid} is an object of $\pro( \fgp^{\op})$. 
\end{definition} 

We will describe some features of the $(2, 1)$-category of profinite groupoids
in the next subsection. 
In the meantime, the main result can now be stated as follows. 

\begin{theorem}[The Galois correspondence] \label{galequiv}
The functor $\mathrm{Pro}( \fgp)^{\op}  \to \galcat$ is an equivalence of
2-categories. 
\end{theorem}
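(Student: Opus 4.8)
The plan is to construct an explicit inverse to the functor $\pro(\fgp)^{\op} \to \galcat$ of the theorem. I would first dispose of the \emph{connected} case, where a Galois category $\mathcal{C}$ admits a fiber functor $F\colon \mathcal{C} \to \finset$, and recover the classical statement \Cref{galcorconn}: here $\pi_1(\mathcal{C}) = \mathrm{Aut}(F)$ is a profinite group and $\mathcal{C} \simeq \finset_{\pi_1(\mathcal{C})}$. The subtlety is that in the general (non-connected) setting there need \emph{not} be a fiber functor valued in $\finset$, so the profinite \emph{group} must be replaced by a profinite \emph{groupoid}. To get around the absence of a global fiber functor, I would use descent: by the defining axiom of \Cref{galax}, there is an effective descent morphism $x' \twoheadrightarrow \ast$ with $x'$ a disjoint union of objects each of whose pullback of any fixed object is setlike, so that $\mathcal{C}_{/x'}$ is (after the splitting) equivalent to a finite product of copies of $\finset$; writing $\mathcal{C} \simeq \mathrm{Tot}(\mathcal{C}_{/x' \times \cdots \times x'})$ exhibits $\mathcal{C}$ as a limit of Galois categories each of which \emph{is} of the form $\fun(\G, \finset)$ for a finite groupoid $\G$ (a finite product of copies of $\finset$). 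Dualizing, this realizes $\mathcal{C}$ as a filtered colimit of categories $\fun(\G_i, \finset)$, i.e. as coming from a pro-object in $\fgp^{\op}$.

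Concretely, here are the steps I would carry out. \textbf{Step 1:} Show full faithfulness. For finite groupoids $\G, \mathcal{H}$, the groupoid $\fung(\fun(\G,\finset), \fun(\mathcal{H},\finset))$ must be identified with $\hom_{\fgp}(\mathcal{H}, \G)$; using \Cref{prodgc}, \eqref{prodgcgen}, and \Cref{torsors} (torsors over a group $G$ in $\mathcal{C}$ correspond to functors out of $\finset_G$) this reduces to the statement that functors $\finset_G \to \finset_H$ in $\galcat$ biject with group homomorphisms $H \to G$ up to conjugacy, together with a decomposition corresponding to how $H$-orbits land in the pieces — this is a direct unwinding. Passing to $\pro$/$\mathrm{Ind}$ and using that $\fun(\G, \finset)$ sits among the \emph{cocompact} (resp. the functor category among compact after dualizing) objects makes full faithfulness on the pro-level formal. \textbf{Step 2:} Show essential surjectivity. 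Given an arbitrary Galois category $\mathcal{C}$, pick for each finite set of objects an effective descent morphism trivializing them (\Cref{locsetlike}); the poset of such ``trivializing data'' is filtered, and for each one the associated cosimplicial diagram $\mathcal{C}_{/x'} \rightrightarrows \mathcal{C}_{/x' \times x'} \triplearrows \cdots$ has each term a finite product of copies of $\finset$, hence of the form $\fun(\G^\bullet, \finset)$ for a cosimplicial finite groupoid; its totalization is $\mathcal{C}_{\text{loc}}$, a Galois subcategory of $\mathcal{C}$ of the form $\fun(\G, \finset)$ with $\G = |\G^\bullet|$ the geometric realization (a finite groupoid, since the nerve is levelwise finite and $1$-truncated). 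Taking the filtered colimit over all trivializing data recovers all of $\mathcal{C}$ (every object is locally trivializable at some finite stage), exhibiting $\mathcal{C}$ as $\varinjlim_i \fun(\G_i, \finset)$, i.e. as the image of $\{\G_i\} \in \pro(\fgp^{\op})$.

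\textbf{Step 3:} Assemble. Full faithfulness (Step 1) plus essential surjectivity (Step 2), both compatible with filtered colimits (which are computed underlying-categorically on the $\galcat$ side and formally on the $\pro$ side), give the equivalence of $(2,1)$-categories. I expect the \textbf{main obstacle} to be Step 2, specifically checking that the totalization $\mathrm{Tot}(\fun(\G^\bullet, \finset))$ really is $\fun(|\G^\bullet|, \finset)$ with $|\G^\bullet|$ \emph{finite} — one must control that passing to the Čech nerve of an effective descent morphism and realizing does not produce a groupoid with infinitely many components or infinite automorphism groups, and that the ``effective descent morphism'' structure (not just conservativity but the Barr--Beck hypotheses, which by \Cref{colimdist} reduce to conservativity) is preserved under the colimit. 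The coherence bookkeeping in identifying descent data for $\fun(-,\finset)$ along a cover with a genuine functor out of a homotopy colimit of groupoids — essentially a $(2,1)$-categorical van Kampen statement — is where the real work lies; everything else is a matter of carefully invoking \Cref{torsors}, \Cref{prodgc}, \Cref{locsetlike}, and the filtered-colimit stability already established.
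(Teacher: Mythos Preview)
Your approach is essentially the same as the paper's: full faithfulness via \Cref{torsors} and \Cref{prodgc} reduced to finite groups, then compactness to pass to the pro-level; essential surjectivity by exhausting $\mathcal{C}$ by subcategories, each obtained as the totalization of a cosimplicial diagram of products of $\finset$ coming from an effective descent cover.

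There is one imprecision worth flagging. You write that ``$\mathcal{C}_{/x'}$ is (after the splitting) equivalent to a finite product of copies of $\finset$'' and that the terms $\mathcal{C}_{/x' \times \cdots \times x'}$ of the cosimplicial diagram are products of $\finset$. This is not correct: each overcategory $\mathcal{C}_{/x'_i}$ is itself a full Galois category, generally much larger than $\finset$. What is true is that for a fixed effective descent cover $y = y_1 \sqcup \cdots \sqcup y_n \twoheadrightarrow \ast$, the full (non-full in $\mathcal{C}$!) subcategory $\mathcal{C}' \subset \mathcal{C}$ of objects and maps that become \emph{setlike} after pulling back to each $y_i$ admits such a cosimplicial resolution, with terms $\prod_i (\mathcal{C}_{/y_i})_{\mathrm{triv}} \simeq \prod_i \finset$. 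The paper makes exactly this restriction explicit. Your filtered colimit then runs over these subcategories $\mathcal{C}'$ as the cover varies, which does recover $\mathcal{C}$ by \Cref{locsetlike}.

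On your ``main obstacle'' --- finiteness of the realized groupoid $|\G^\bullet|$ --- the paper's resolution is clean and worth knowing: since everything in sight is an ordinary category (the target $\finset$ being discrete), the totalization of the cosimplicial category is computed by its $3$-truncation. The dual simplicial set of finite groupoids is therefore, up to $3$-truncation, a finite CW complex, and its fundamental groupoid is automatically finite. No subtle control on the Čech nerve is needed beyond this observation.
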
 

\begin{proof} 
We first check that the functor is fully faithful. To do this, first fix
\emph{finite} groupoids $\G, \G'$. We want to compare
the categories of functors $\fun( \G, \G')$ and $\fun^{\mathrm{Gal}}( \fun(\G', \finset), \fun(\G, \finset))$. 
In particular, we want to show that
\begin{equation} \label{agalmap}  \fun( \G, \G') \to  \fun^{\mathrm{Gal}}(
\fun(\G', \finset), \fun(\G, \finset)),\end{equation}
is an equivalence of groupoids. 
We can reduce to the case where $\mathscr{G}$ has one isomorphism class of
objects, since both sides of \eqref{agalmap} send coproducts in $\G$ to
products of groupoids. We can also reduce to the case where 
$\G'$ has a single connected component, since if $\G$ is connected, then both sides of
\eqref{agalmap} take coproducts in $\G'$ to coproducts. 
This is clear for the left-hand-side. For the right-hand-side, note that
coproducts in $\G'$ go over to \emph{products} in $\galcat$ for $\fun( \G',
\finset)$. Now use \Cref{prodgc} to describe the corepresented functor for a product
in $\galcat$. 
In order to show that \eqref{agalmap} is an equivalence when $\G, \G'$ are
finite groupoids, it thus suffices to work with \emph{groups}. 
We can do this extremely explicitly. 

In the case of \emph{finite groups}, given any two such $G, G'$, the groupoid
of maps between the associated groupoids has connected components given by the
conjugacy classes of homomorphisms $G \to G'$. Given any $f\colon  G \to G'$, the
automorphism group of $f$ is the centralizer of the image $f(G)$. 
To understand $\fung(\finset_{G'}, \finset_G)$, we can use \Cref{torsors}. We
need to describe the category of $G'$-torsors in $\finset_G$.
Any such gives a $G'$-torsor in $\finset$ by forgetting, so a $G'$-torsor in
$\finset_G$ yields in particular a copy of $G'$ with $G$ acting
$G'$-equivariantly (i.e., $G$ acts by right 	multiplication by various elements of $G'$). It follows that any torsor arises by considering a
homomorphism $\phi\colon  G \to G'$ and using that to equip the $G$-torsor $G' \in \finset_{G'}$ with 
the structure of a $G$-set. 
A natural transformation of functors, or a morphism of torsors, is
given by a conjugacy in $G'$ between two homomorphisms $G \to G'$: an
automorphism of the torsor comes from right multiplication by an element of
$G'$ which centralizes the image of $G \to G'$. 
This verifies full faithfulness for finite groupoids, i.e., that
\eqref{agalmap}  is an equivalence if $\mathscr{G}, \mathscr{G}'$ are finite.

Finally, we need to check that the full faithfulness holds for all
\emph{profinite} groupoids. That is a formal consequence of the fact that
$\fun( \G, \finset)$ is a \emph{compact} object in $\galcat$ for $\G$ a finite
groupoid. If $\G$ is connected, this is a consequence of the universal
property, \Cref{torsors}, since a torsor involves a finite amount of data. In general, the observation follows from the
connected case together with \Cref{prodgc} (and the remarks immediately
following, in particular \eqref{prodgcgen}). 

To complete the proof, we need to show that the functor is essentially surjective: that is, every
Galois category arises from a profinite groupoid. 
For this, we need another lemma on the formal structure of $\galcat$. 

\begin{lemma} 
\label{finlimgal}
$\galcat$ admits finite limits, which are preserved under $\galcat \to \cati$. 
\end{lemma} 
\begin{proof} 
Since $\galcat$ has a terminal object (the terminal category), it suffices to
show that given a diagram
\[ \xymatrix{
& \mathcal{C}' \ar[d] \\
\mathcal{C}'' \ar[r] &  \mathcal{C}
},\]
in $\galcat$, the category-theoretic fiber product is still a Galois category.
Of the axioms in \Cref{galax}, only the third needs checking. 
Note first that a map $x \to y$ in $\mathcal{C}' \times_{\mathcal{C}} \mathcal{C}''$ is
an effective descent morphism if it is one in $\mathcal{C}'$ and
$\mathcal{C}''$.  This follows from
the fact that the formation of overcategories and totalizations are compatible
with fiber products  of categories. 

Let $x$ be an object of the fiber product. We want to show that $x$ is locally
in mixed elementary form. As before, we can perform induction on the
\emph{rank} of $x$ (defined as the maximum of the ranks of the images in
$\mathcal{C}', \mathcal{C}''$). 
The natural map $x \to \ast$ has the property that
$\ast \simeq \ast_1 \sqcup \ast_2$ where $x \to \ast$ factors through an
effective descent morphism $x \twoheadrightarrow \ast_1$. In fact, we can construct
these on $\mathcal{C}', \mathcal{C}''$ and they have to match up on
$\mathcal{C}$. So, we can assume that $x \to \ast$ is an effective
descent morphism. Now after base-change along $x \to \ast$, we can find a section
of $x \times x \to x$ and thus obtain a splitting of $x \times x $ (since we
can in $\mathcal{C}', \mathcal{C}''$). Using induction on the rank, we can
conclude as before. 
\end{proof} 

\begin{remark} 
\label{limgalcat}
The same logic shows that $\galcat$ admits arbitrary limits, although they are
no longer preserved under the forgetful functor $\galcat \to \cati$; one has to
take the subcategory of the categorical limit consisting of objects whose rank
is bounded. 
\end{remark} 

Let $\mathcal{C}$ be any Galois category, which we want to show lies in the
image of the fully faithful functor $\pro( \fgp)^{\mathrm{\op}} \to \galcat$. In
order to do this, we will write $\mathcal{C}$ as a filtered colimit of
subcategories which do belong to the image.

Let $\mathcal{C}$ be a Galois category. Then, if $\mathcal{C}$ is not the
terminal category (i.e., if the map $\emptyset \to \ast$ in $\mathcal{C}$ is
not an isomorphism),
there is a faithful functor $\finset \to \mathcal{C}$ which sends a finite set
$S$ to $\bigsqcup_S \ast$. This is a functor in $\galcat$ and defines, for
every nonempty Galois category $\mathcal{C}$, a (non-full) Galois subcategory
$\mathcal{C}_{\mathrm{triv}}$. 
In other words, we take the objects which are in elementary form and the 
setlike maps between them. 
More generally, if $\ast$ decomposes as $\ast =
\ast_1 \sqcup \dots \sqcup \ast_n$, we can define a subcategory
$\mathcal{C}_{\mathrm{triv}}^{\mathrm{loc}} \subset \mathcal{C}$ by writing
$\mathcal{C} \simeq \prod_{i=1}^n \mathcal{C}_{/\ast_i}$ and taking the
subcategory $\mathcal{C}^{\mathrm{loc}}_{\mathrm{triv}} = \prod_{i=1}^n
({\mathcal{C}_{/\ast_i}})_{\mathrm{triv}}$.

Let $y\twoheadrightarrow \ast$ be an effective
descent morphism and let $y \simeq y_1 \sqcup \dots \sqcup y_n$ be a decomposition
of $y$. We define a map $f\colon  x \to x'$ in $\mathcal{C}$ to be \emph{split} with respect to $y$
and the above decomposition if $f \times y_i \in \mathcal{C}_{/y_i}$ is setlike for each $i = 1, 2,
\dots, n$. 
Via descent theory, we can write this subcategory as 
\[ \mathcal{C}' = \mathrm{Tot}\left( \prod_{i=1}^n \mathcal{C}^{\mathrm{triv}}_{/y_i}
\rightrightarrows \prod_{i, j = 1}^n \mathcal{C}^{\mathrm{triv}}_{y_i \times
y_j} \triplearrows \dots  \right).
\]
In other words, this subcategory of $\mathcal{C}$ arises as an inverse limit
(indexed by a cosimplicial diagram) of products of copies of $\finset$. Any
such is the category of finite covers of a finite CW complex (presented by
3-skeleton of the dual
simplicial set\footnote{We recall that the 3-truncation of the above
totalization is sufficient to compute the totalization.}) and is thus in the image of $\pro( \fgp)^{\op}$. However, $\mathcal{C}$ is the filtered union over all such
subcategories as we consider effective descent morphisms $y_1 \sqcup \dots y_n
\twoheadrightarrow \ast$ with the $\left\{y_i\right\}$ varying. 
It follows that $\mathcal{C}$ is the filtered colimit in $\galcat$ of objects
which belong to the image of $\pro( \fgp)^{\op} \to \galcat$, and is therefore in the
image of $\pro( \fgp)^{\op}$ itself. 
\end{proof} 

\Cref{galequiv} enables us to make the following fundamental definition. 
\begin{definition} 
Given a Galois category $\mathcal{C}$, we define the \textbf{fundamental
groupoid} or \textbf{Galois groupoid} $\pi_{\leq 1}\mathcal{C}$ of $\mathcal{C}$ as the associated
profinite groupoid under the correspondence of \Cref{galequiv}. 
\end{definition} 

We next use the Galois correspondence to obtain a few technical results on
torsors. 

\begin{corollary} 
\label{torsorenough}
The Galois categories $\finset_G$
jointly detect equivalences: given a functor in $\galcat$, $F\colon  \mathcal{C} \to
\mathcal{D}$, if $F$ induces an equivalence on the categories of $G$-torsors
for each finite group $G$, then $F$ is an equivalence. In other words, if 
the map
\begin{equation} \label{tmap} \mathrm{Tors}_G(\mathcal{C})\to
\mathrm{Tors}_G(\mathcal{D}) \end{equation}
is an equivalence of groupoids for each $G$, then $F$ is an equivalence. 
\end{corollary}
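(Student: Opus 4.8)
The plan is to reduce the statement to the full faithfulness already established in the proof of \Cref{galequiv}, using the equivalence $\mathrm{Pro}(\fgp)^{\op} \simeq \galcat$ together with \Cref{torsors}. First I would invoke \Cref{galequiv} to write $\mathcal{C} \simeq \fun(\mathcal{G}, \finset)$ and $\mathcal{D} \simeq \fun(\mathcal{H}, \finset)$ for profinite groupoids $\mathcal{G}, \mathcal{H} \in \pro(\fgp)^{\op}$, so that $F$ corresponds to a morphism $\phi\colon \mathcal{H} \to \mathcal{G}$ of profinite groupoids (note the variance). The goal is to show $\phi$ is an equivalence. By \Cref{torsors}, for each finite group $G$ we have $\mathrm{Tors}_G(\mathcal{C}) \simeq \fung(\finset_G, \mathcal{C})$, which under \Cref{galequiv} is the groupoid $\fun(\mathcal{G}_0, BG)$ where $\mathcal{G}_0$ is the profinite groupoid associated to $\finset_G$ — that is, $BG$ itself — so $\mathrm{Tors}_G(\mathcal{C}) \simeq \hom_{\pro(\fgp)}(\mathcal{G}, BG)$ (mapping space in the $(2,1)$-category of profinite groupoids), and similarly for $\mathcal{D}$. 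The hypothesis that \eqref{tmap} is an equivalence for all $G$ thus says precisely that $\phi^*\colon \hom(\mathcal{G}, BG) \to \hom(\mathcal{H}, BG)$ is an equivalence of groupoids for every finite group $G$.

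Next I would upgrade this to the statement that $\phi^*\colon \hom(\mathcal{G}, \mathcal{K}) \to \hom(\mathcal{H}, \mathcal{K})$ is an equivalence for every \emph{finite} groupoid $\mathcal{K}$, not just for $\mathcal{K}$ of the form $BG$. This is a formal consequence of two facts already in the excerpt: every finite groupoid is a finite coproduct of one-object groupoids $BG$ (as noted after \Cref{gpgal}), and both $\hom(\mathcal{G}, -)$ and $\hom(\mathcal{H}, -)$ carry coproducts in the target to the appropriate decompositions — here one uses \Cref{prodgc} and the formula \eqref{prodgcgen} for corepresented functors on products in $\galcat$, translated through the equivalence of \Cref{galequiv}. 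Since a decomposition of the target groupoid induces a compatible decomposition of $\mathcal{G}$ (resp.\ $\mathcal{H}$) into the relevant over-objects, an inductive argument on the number of components reduces the general finite-target case to the connected case $\mathcal{K} = BG$, which is the hypothesis.

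Finally, knowing that $\phi^*$ induces an equivalence on $\hom(-, \mathcal{K})$ for all finite groupoids $\mathcal{K}$, the Yoneda lemma in the $(2,1)$-category $\pro(\fgp)^{\op} \simeq \mathrm{Ind}(\fgp^{\op})$ forces $\phi$ to be an equivalence: the finite groupoids are (the opposites of) a set of compact generators of this $\mathrm{Ind}$-category — compactness of $\fun(\mathcal{G}, \finset)$ in $\galcat$ for $\mathcal{G}$ finite was established in the proof of \Cref{galequiv} — and a map detected as an equivalence by all corepresentable functors along a generating set is an equivalence. Transporting back along \Cref{galequiv}, $F$ is an equivalence of Galois categories, as claimed.

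The main obstacle I anticipate is the bookkeeping in the middle step: making precise how a decomposition of the finite target groupoid $\mathcal{K} = \bigsqcup BG_i$ interacts with the profinite groupoids $\mathcal{G}, \mathcal{H}$ on the source side via \eqref{prodgcgen}, and checking that the induced decompositions of $\mathcal{G}$ and $\mathcal{H}$ are carried to one another by $\phi$ in a compatible way so that the inductive reduction actually goes through component-by-component. Everything else is a direct application of results already proved — \Cref{galequiv}, \Cref{torsors}, \Cref{prodgc} — together with the Yoneda lemma.
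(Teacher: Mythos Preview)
Your proposal is correct and follows essentially the same route as the paper's own proof: translate via \Cref{galequiv} to profinite groupoids, use \Cref{torsors} to identify the torsor hypothesis with an equivalence on $\hom(-,BG)$, extend from $BG$ to arbitrary finite groupoids via the coproduct/product formula \eqref{prodgcgen}, and conclude by Yoneda using that finite groupoids generate $\pro(\fgp)$. The paper compresses all of this into three sentences, citing \eqref{prodgcgen} for the passage from groups to groupoids without spelling out the induction you describe; your worry about the bookkeeping in that step (matching up decompositions of the terminal object in $\mathcal{C}$ and $\mathcal{D}$ under $F$) is the one genuine thing to check, and it is precisely what the paper sweeps into its citation of \eqref{prodgcgen}.
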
 
\begin{proof} 
By \eqref{prodgcgen}, it follows that if 
\eqref{tmap} is always an equivalence, then the map
\[ \hom_{\galcat}( \fun( \mathscr{G}, \finset), \mathcal{C}) \to \hom_{\galcat}( \fun(
\mathscr{G}, \finset), \mathcal{D}),  \]
is an equivalence for each finite \emph{groupoid} $\mathscr{G}$. Dualizing,
and using the Galois correspondence, we
find that the map $\pi_{\leq 1} \mathcal{D} \to \pi_{\leq 1} \mathcal{C}$ of
profinite groupoids has the property that 
\[ \hom_{\pro( \fgp)}( \pi_{\leq 1} \mathcal{C}, \mathscr{G}) \to  
\hom_{\pro( \fgp)}( \pi_{\leq 1} \mathcal{D}, \mathscr{G})
\]
is always an equivalence, for every finite groupoid $\mathscr{G}$. 
However, we know that finite groupoids generate 
$\pro( \fgp)$ under cofiltered limits, so we are done. 
\end{proof} 

\begin{corollary} 
\label{splitbytorsor}
Let $\mathcal{C}$ be a Galois category and $x \in \mathcal{C}$ be an object.
Then there exists a $G$-torsor $y$ in $\mathcal{C}$ for some finite group
$G$ such that $x \times y \to y$ is in mixed elementary form in
$\mathcal{C}_{/y}$. 
\end{corollary}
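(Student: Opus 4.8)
The plan is to bootstrap from \Cref{splitbytorsor}'s weaker cousin, the defining third axiom of a Galois category together with the Galois correspondence. First I would recall that, by \Cref{galax}(3), there is an effective descent morphism $x' \twoheadrightarrow \ast$ and a decomposition $x' \simeq x_1' \sqcup \dots \sqcup x_n'$ so that each $x \times x_i' \to x_i'$ is in elementary form over $x_i'$; thus $x$ becomes mixed elementary after base change along \emph{some} effective descent morphism. The content of the corollary is to upgrade this arbitrary $x'$ to an actual $G$-torsor for a finite group $G$. The natural route is to reduce, via $\mathcal{C} \simeq \prod_i \mathcal{C}_{/\ast_i}$ after possibly decomposing $\ast$, to the case where $x$ is ``connected'' in the appropriate sense and then invoke the classification of Galois categories.

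Here is the cleaner approach. By \Cref{galequiv}, $\mathcal{C}$ corresponds to a profinite groupoid $\pi_{\leq 1}\mathcal{C}$, and since finite groupoids generate $\pro(\fgp^{\op})$ under cofiltered limits, $\mathcal{C}$ is a filtered colimit in $\galcat$ of Galois categories of the form $\fun(\G, \finset)$ with $\G$ a finite groupoid; moreover (as in the proof of \Cref{galequiv}) the object $x$ and the data witnessing its local mixed-elementariness all descend to a finite stage, so we may assume $\mathcal{C} = \fun(\G, \finset)$ for a finite groupoid $\G$. Decomposing $\G$ into connected components and using $\fun(\G, \finset) \simeq \prod_j \fun(BG_j, \finset) = \prod_j \finset_{G_j}$, together with the factorization $\mathcal{C} \simeq \prod_j (\mathcal{C})_{/\ast_j}$ that splits $x$ into pieces $x_j \in \finset_{G_j}$, it suffices to handle $\mathcal{C} = \finset_G$ for a single finite group $G$. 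There, the object $x$ is a finite $G$-set, and we simply take $y = G$ itself, viewed as the canonical $G$-torsor in $\finset_G$ of \Cref{gpgal}: since $G \to \ast$ is an effective descent morphism and any finite $G$-set $x$ becomes, after pulling back along $G \twoheadrightarrow \ast$, a free $G$-set over $G$, i.e., a disjoint union of copies of $G$, the pullback $x \times G \to G$ is in mixed elementary form (in fact elementary) over $\mathcal{C}_{/G}$.

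Finally I would assemble the pieces: the torsors $y_j = G_j$ chosen over each factor $(\mathcal{C})_{/\ast_j}$ glue to a single object $y = \bigsqcup_j y_j \in \mathcal{C}$ with an action of the finite group $G = \prod_j G_j$ (acting on $y_j$ through the projection to $G_j$), and this $y$ is a $G$-torsor in $\mathcal{C}$: over each $\ast_j$ it is a $G_j$-torsor with $\prod_{k \ne j} G_k$ acting trivially, and the whole object becomes $\bigsqcup_G y$ after an appropriate effective descent morphism. Over this $y$, the pullback $x \times y \to y$ is mixed elementary, as this can be checked over each $\ast_j$ where it reduces to the single-group case above.

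The main obstacle I anticipate is the bookkeeping in the reduction to a finite stage: one must be careful that the effective descent morphism $x' \twoheadrightarrow \ast$ \emph{and} the splitting of $x \times x'$ into copies of the $x_i'$ together descend to one of the finite-groupoid approximations $\fun(\G,\finset)$ of $\mathcal{C}$, which uses the compactness of $\fun(\G,\finset)$ in $\galcat$ (as in the full-faithfulness argument of \Cref{galequiv}) rather than anything deep. Everything after that is the essentially trivial observation that a finite $G$-set trivializes over the regular representation. An alternative, avoiding the passage through $\pro(\fgp)$, would be to run directly the rank-induction argument from the proof of \Cref{galcontext}: splitting off pieces of $x$ one orbit at a time and at each stage enlarging the trivializing object to one carrying a finite group action — but the argument via \Cref{galequiv} is shorter and more transparent, so that is the one I would write.
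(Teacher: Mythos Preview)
Your proposal is correct and follows essentially the same route as the paper: reduce via the Galois correspondence to $\mathcal{C} = \fun(\mathscr{G},\finset)$ for a finite groupoid, decompose $\mathscr{G} \simeq \bigsqcup_j BG_j$, and take $y$ to be the $\prod_j G_j$-torsor given on the $j$th summand by the regular $G_j$-set with the remaining factors acting trivially. The paper's write-up is terser but the construction and the reduction step are the same.
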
 
\begin{proof} 
We can reduce to the case where $\mathcal{C} = \fun( \mathscr{G}, \finset)$ for
$\mathscr{G}$ a finite groupoid, since $\mathcal{C}$ is a filtered colimit of
such. Let $\mathscr{G}$ have objects $x_1, \dots , x_n$ up to isomorphism with
automorphism groups $G_1, \dots, G_n$. Then, there is a natural $G_1 \times
\dots \times G_n$-torsor $y$ on $\mathscr{G} \simeq \bigsqcup_{i=1}^n BG_i$ 
(which on the $i$th summand is the universal cover times the trivial $\prod_{j
\neq i} G_j$-torsor) such that any object $x$ in $\mathcal{C}$ 
has the property that $y \times x$ is in mixed elementary form. 
\end{proof}

\subsection{Profinite groupoids}

Given \Cref{galequiv}, it behooves us to discuss the 2-category $\pro(\fgp)$ of
profinite groupoids in more detail. We begin by studying connected components.

We have a natural functor $\pi_0 \colon  \fgp \to \finset$ sending a groupoid to its
set of isomorphism classes of objects. Therefore, we get a functor
$\pi_0 \colon  \pro( \fgp) \to \pro( \finset)$ which 
is uniquely determined by the properties that it recovers the old $\pi_0$ for
finite groupoids and that it commutes with cofiltered limits. 
Recall that the
category $\pro(\finset)$ is the category of compact, Hausdorff, and totally
disconnected topological spaces, under the 
realization functor which sends a profinite set to its inverse limit (in the
category of sets) with the inverse limit topology. 
It follows that the collection of ``connected
components'' of a  profinite groupoid is one of these.

\begin{remark} 
Note that $\pi_0\colon  \fgp \to \finset$ does not commute with finite inverse
limits, so that its right Kan extension to 
$\pro(\fgp)$ does not. 
While the reader might object that there should be a $\lim^1$ obstruction to
the commutation of $\pi_0$ and cofiltered limits (of towers, say), we
remark that $\lim^1$-terms always vanish for towers of finite groups. 
\end{remark}

In practice, we will mostly be concerned with 
the case where the (profinite) set $\pi_0$ of connected components is a
singleton. 

\begin{definition} 
We say that a profinite groupoid is \textbf{connected} if its $\pi_0$ is a
singleton. 
The collection of connected profinite groupoids spans a full subcategory $\pro(
\fgp)^{\geq 0} \subset \pro( \fgp)$. 
\end{definition}

In general, it will thus be helpful to have an explicit description of this profinite set. 
Recall that there is an algebraic description of $\pro(\finset)$ given by
\emph{Stone duality.}
Given a Boolean algebra $B$, the spectrum $\spec B$ of prime ideals (with its Zariski topology) is
an example of a profinite set, i.e., it is compact, Hausdorff, and totally
disconnected. 
Recall now: 
\begin{theorem}[Stone duality] The functor $B \mapsto \spec B$ establishes an
anti-equivalence $\bool^{\op} \simeq \pro( \finset)$. 
\end{theorem}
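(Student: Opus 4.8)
The plan is to deduce Stone duality from the elementary (anti)equivalence between finite sets and finite Boolean algebras, and then pass to (co)filtered limits. First I would record the finite case: the power-set functor $S \mapsto 2^S$, viewing $2^S$ as the Boolean algebra of subsets of $S$ (equivalently, of functions $S \to \mathbb{F}_2$), defines an anti-equivalence $\finset^{\op} \simeq \mathrm{FinBool}$ between finite sets and finite Boolean algebras. Essential surjectivity is the fact that a finite Boolean algebra is recovered as the power set of its set of atoms. For full faithfulness, a function $f \colon S \to T$ induces the Boolean homomorphism $f^{-1}\colon 2^T \to 2^S$; conversely, given $\phi\colon 2^T \to 2^S$, each singleton $\{s\} \subset S$ determines the principal ultrafilter $\{A \subset S : s \in A\}$, whose preimage under $\phi$ is an ultrafilter on $2^T$, hence a point $g(s) \in T$, and one checks $\phi = g^{-1}$ and that $g \mapsto g^{-1}$ inverts $f \mapsto f^{-1}$. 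Note that under this correspondence $\spec(2^S)$ — the set of prime ideals of $2^S$, i.e.\ the complements of the principal ultrafilters — is canonically identified with $S$.

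Next I would identify $\bool$ with $\mathrm{Ind}(\mathrm{FinBool})$. Since $\bool$ is the category of models of a finitary algebraic theory, it is locally finitely presentable, and its compact objects are the Boolean algebras admitting a presentation by finitely many generators and relations; but a finitely generated Boolean algebra is automatically finite (the subalgebra generated by $n$ elements is a quotient of the free Boolean algebra on $n$ generators, which has $2^{2^n}$ elements), so the finitely presentable objects are exactly the finite ones. Concretely, every Boolean algebra $B$ is the filtered colimit of the directed system of its finite subalgebras (the subalgebra generated by any finite subset is finite, and any two such lie in a common finite one), so $\bool \simeq \mathrm{Ind}(\mathrm{FinBool})$. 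Dualizing and combining with the previous paragraph,
\[ \bool^{\op} \simeq \mathrm{Ind}(\mathrm{FinBool})^{\op} \simeq \pro(\mathrm{FinBool}^{\op}) \simeq \pro(\finset), \]
using the general identity $\mathrm{Ind}(\mathcal{C})^{\op} \simeq \pro(\mathcal{C}^{\op})$ and the finite case.

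Finally I would check that the abstract equivalence just produced is the functor $B \mapsto \spec B$. Writing $B = \varinjlim_\alpha B_\alpha$ as the filtered colimit of its finite subalgebras, the equivalence sends $B$ to the formal cofiltered limit $\varprojlim_\alpha \spec B_\alpha$; and since the prime-ideal-space functor carries filtered colimits of Boolean algebras to cofiltered limits of profinite sets (a prime ideal of $\varinjlim_\alpha B_\alpha$ restricts compatibly to each $B_\alpha$, and conversely a compatible family of prime ideals glues to one), while $\spec B_\alpha \cong (\text{atoms of } B_\alpha)$ as computed above, this formal limit is realized by $\spec B$ itself. Hence $B \mapsto \spec B$ is the anti-equivalence $\bool^{\op} \simeq \pro(\finset)$, and via the realization functor $\pro(\finset) \to \mathrm{Top}$ it matches the classical description by Stone spaces. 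The step I expect to be the main obstacle is the bookkeeping in the middle paragraph — verifying cleanly that the finite subalgebras form a cofiltered system of compact generators, so that genuinely $\bool \simeq \mathrm{Ind}(\mathrm{FinBool})$ — together with the compatibility check in the last paragraph that the formal pro-object $\varprojlim_\alpha \spec B_\alpha$ agrees with the honest space $\spec B$; both are standard but need to be stated with care.
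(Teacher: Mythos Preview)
Your proof is correct and follows the standard route to Stone duality via the finite case and Ind-completion. The paper, however, does not give its own proof of this theorem: it simply states the result and refers the reader to Johnstone's textbook \cite{Johnstone}, so there is no argument in the paper to compare against.
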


For a textbook reference on Stone duality, see \cite{Johnstone}. 
The Galois correspondence in the form of \Cref{galequiv} can be thought of as a
mildly categorified version of Stone duality. 
In particular, we can use Stone duality to describe 
$\pi_0$ of a profinite groupoid. 

\begin{proposition} \label{connectedigal}
Let $\mathcal{C}$ be a Galois category. Then $\pi_0( \pi_{\leq 1} \mathcal{C})$
corresponds, under Stone duality, to the Boolean algebra of subobjects $x
\subset \ast$. 
\end{proposition}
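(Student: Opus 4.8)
The plan is to reduce the statement to the special case of finite groupoids, where it is an immediate computation, and then pass to the filtered colimit. Recall from the proof of \Cref{galequiv} that any Galois category $\mathcal{C}$ is a filtered colimit $\varinjlim_I \mathcal{C}_i$ in $\galcat$ of Galois subcategories $\mathcal{C}_i$ each of which is of the form $\fun(\G_i, \finset)$ for a finite groupoid $\G_i$; dually, $\pi_{\leq 1}\mathcal{C} = \varprojlim_I \G_i$ in $\pro(\fgp)$. Since $\pi_0 \colon \pro(\fgp) \to \pro(\finset)$ commutes with cofiltered limits, $\pi_0(\pi_{\leq 1}\mathcal{C}) = \varprojlim_I \pi_0(\G_i)$ as profinite sets, i.e., under Stone duality it corresponds to the Boolean algebra $\varinjlim_I \bool_i$ where $\bool_i$ is the Boolean algebra corresponding to $\pi_0(\G_i)$. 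So I need two things: (1) for a finite groupoid $\G$, the (finite) Boolean algebra corresponding to $\pi_0(\G)$ is the Boolean algebra of subobjects of $\ast$ in $\fun(\G,\finset)$; and (2) the formation of the ``Boolean algebra of subobjects of $\ast$'' is compatible with the filtered colimit $\mathcal{C} = \varinjlim_I \mathcal{C}_i$ in $\galcat$, i.e.\ $\mathrm{Sub}_{\mathcal{C}}(\ast) = \varinjlim_I \mathrm{Sub}_{\mathcal{C}_i}(\ast)$.

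For step (1): in $\fun(\G,\finset)$ the terminal object $\ast$ is the constant functor at a one-point set, and a subobject of $\ast$ is a sub-local-system, which (since $\G$ is a $1$-truncated space) is precisely a subset of $\pi_0(\G)$ that is a union of connected components — so $\mathrm{Sub}_{\fun(\G,\finset)}(\ast)$ is the power set of $\pi_0(\G)$, which is exactly the finite Boolean algebra Stone-dual to the finite set $\pi_0(\G)$. Equivalently: decompose $\ast \simeq \ast_1 \sqcup \dots \sqcup \ast_m$ into its ``connected'' pieces (one for each component of $\G$); by disjointness of coproducts every subobject of $\ast$ is a sub-coproduct of these, and conversely.

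For step (2): a subobject of $\ast$ in $\mathcal{C}=\varinjlim_I\mathcal{C}_i$, being a finite diagram, is detected at a finite stage, and an isomorphism of such is detected at a finite stage; since the transition functors $\mathcal{C}_i \to \mathcal{C}$ preserve finite limits and monomorphisms (monos are detected by the diagonal being an isomorphism, a finite-limit condition), one gets $\mathrm{Sub}_{\mathcal{C}}(\ast) = \varinjlim_I \mathrm{Sub}_{\mathcal{C}_i}(\ast)$ as a filtered colimit of Boolean algebras, with the Boolean operations (intersection $= $ pullback, complement, union) manifestly compatible. Combining with step (1) and the remark above that $\pi_0(\pi_{\leq 1}\mathcal{C})$ is Stone-dual to $\varinjlim_I \mathrm{Sub}_{\mathcal{C}_i}(\ast)$, we conclude that $\pi_0(\pi_{\leq 1}\mathcal{C})$ corresponds under Stone duality to $\mathrm{Sub}_{\mathcal{C}}(\ast)$, as claimed.

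The main obstacle I anticipate is not any single hard step but rather bookkeeping: making sure that ``subobjects of $\ast$'' is literally a Boolean algebra in an arbitrary Galois category (this uses disjointness and distributivity of coproducts from \Cref{galax}, which guarantee $\mathcal{C}_{/\ast} = \mathcal{C}$ splits along any decomposition $\ast = \ast_1 \sqcup \ast_2$, hence complements exist and are unique), and that the functor $\mathcal{C} \mapsto \mathrm{Sub}_{\mathcal{C}}(\ast)$ built this way genuinely matches the $\pi_0$ functor on the nose (not just up to abstract isomorphism) under the equivalence of \Cref{galequiv} — this is where one checks that the composite $\fgp^{\op} \to \galcat \xrightarrow{\mathrm{Sub}(\ast)} \bool$ agrees with $\bool \xleftarrow{\text{Stone}} \finset \xleftarrow{\pi_0} \fgp$, which is exactly step (1), and then invokes that both sides commute with filtered colimits to extend to all of $\pro(\fgp)^{\op} \simeq \galcat$.
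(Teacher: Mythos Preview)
Your proposal is correct and follows essentially the same approach as the paper: verify the claim for finite groupoids (where it is evident), observe that both sides commute with filtered colimits, and then invoke the Galois correspondence \Cref{galequiv} to conclude in general. The paper's proof is simply a two-sentence version of your argument, leaving the bookkeeping you spell out (the Boolean structure on $\mathrm{Sub}_{\mathcal{C}}(\ast)$ and its compatibility with filtered colimits) to the reader.
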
 

Let $\mathcal{C}$ be a Galois category. Given two subobjects $x, y \subset
\ast$  of the terminal object, 
we define their product to be the categorical product $x \times y$. Their sum
is the minimal subobject of $\ast$ containing both $x,y$: in other words, the
image of $x \sqcup y \to \ast$. 
By working locally, it follows that this actually defines a Boolean algebra. 

\begin{proof} 
In fact, if $\mathcal{C}$ is a Galois category corresponding to a \emph{finite}
groupoid, the result is evident. Since the construction above sends filtered
colimits of Galois categories to  filtered colimits of Boolean algebras, we can
deduce it for any Galois category in view of \Cref{galequiv}. 
\end{proof}

In practice, the Galois categories that we will be considering will be
connected (in the sense of \Cref{connectedgalois}). By \Cref{connectedigal}, it
follows that a Galois category $\mathcal{C}$ is
connected if and only if $\pi_{\leq 1} \mathcal{C}$ is connected as a
profinite groupoid. 
In our setting, this will amount to the condition that certain commutative
rings are free from idempotents. 
With this in mind, we turn our attention to the \emph{connected} case. 
Here we will be able to obtain a very strong connection with the (somewhat more
concrete) theory of \emph{profinite groups.}

The $2$-category $\pro( \fgp)$ has a terminal object $\ast$, the 
contractible profinite groupoid. Under the Galois correspondence, this
corresponds to the category $\finset$ of finite sets. 

\begin{definition} 
A \textbf{pointed profinite groupoid} is a profinite groupoid
$\mathscr{G}$ together with a 
map $\ast \to \mathscr{G}$ in $\pro( \fgp)$. The collection of pointed profinite groupoids forms
a $(2, 1)$-category, the undercategory $\pro( \fgp)_{\ast/}$. 
\end{definition} 

\newcommand{\fg}{\mathrm{FinGp}}

For example, let $G$ be a profinite group, so that $G$ is canonically a pro-object in finite
groups. Applying the classifying space functor to this system, we obtain a
\emph{pointed} profinite groupoid $BG \in \pro( \fgp)$ as the formal inverse
limit of the finite groupoids $ B(G/U)$ as $U \subset G$ ranges over the open
normal subgroups, since each $B(G/U)$ is pointed. By construction, the associated Galois category is
$\varinjlim_{U \subset G} \finset_{G/U}$, or equivalently, the category of
finite sets equipped with  a \emph{continuous} $G$-action (i.e., an action which
factors through $G/U$ for $U$ an open normal subgroup). 
We thus obtain a functor
\[ B\colon  \pro( \fg) \to \pro( \fgp)_{\ast/},  \]
where $\fg$ is the category of finite groups and $\pro(\fg)$ is the category of
profinite groups. 
Observe that this functor is \emph{fully faithful}, since the analogous functor
$B\colon  \fg \to (\fgp)_{\ast/}$ is fully faithful, and each $BG$ for $G$
finite defines a cocompact object of $\pro( \fgp)_{\ast/}$. 

There is a rough inverse to this construction, given by taking the
``fundamental group.''
In
general, if $\mathcal{C}$ is an $\infty$-category with finite limits, and $C
\in \mathcal{C}$ is an object, then the natural functor
\[ \pro( \mathcal{C}_{C/}) \to \pro(\mathcal{C})_{C/}  \]
is an equivalence of $\infty$-categories. 
In the case of $\mathcal{C}  = \fgp$, we know that there is a functor
\begin{equation} \pi_1\colon  (\fgp)_{\ast/} \to \fg , \label{p1}\end{equation}
to the category $\fg$ of finite groups, given by the usual fundamental group of
a pointed space, or more categorically as the automorphism group of the
distinguished point. 
As above, let $\pro( \fg)$ be the category of profinite groups and continuous
homomorphisms. 

\begin{definition}
We define a functor $\pi_1\colon  \pro( \fgp)_{\ast/} \to \mathrm{Pro}( \fg)$ from
the $2$-category of pointed profinite groupoids to the category of profinite
groups given by right Kan extension of \eqref{p1}, so that $\pi_1$ agrees
with the old $\pi_1$ on pointed finite groupoids and commutes with filtered 
limits. 
\end{definition}

Given a pointed finite groupoid $\mathscr{G}$, we have a natural map
\begin{equation} \label{nattransp} B \pi_1 ( \mathscr{G}) \to \mathscr{G},  \end{equation}
and by general formalism, we have a natural transformation of the form
\eqref{nattransp} on $\pro( \fgp)_{\ast/}$.

\begin{proposition}
\label{gppt}
Given an object $\mathscr{G} \in \pro( \fgp)_{\ast/}$, the following
are equivalent: 
\begin{enumerate}
\item $\mathscr{G}$ is connected, i.e., $\pi_0 \mathscr{G}$ is a singleton.  
\item The map $B \pi_1 \mathscr{G} \to \mathscr{G}$ is an equivalence in
$\pro( \fgp)_{\ast/}$. 
\end{enumerate}
In particular, the functor $B\colon  \pro( \fg) \to \pro( \fgp)_{\ast/}$ is fully
faithful with image consisting of the pointed connected profinite groupoids. 
\end{proposition}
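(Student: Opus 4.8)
The plan is to reduce the statement to a claim purely about profinite groupoids, exploiting the equivalence $\mathrm{Pro}(\fgp)^{\op}\simeq\galcat$ of \Cref{galequiv} only indirectly; in fact the proposition should be proved at the level of $\pro(\fgp)_{\ast/}$ itself, using the right Kan extension formalism. First I would observe that both conditions in the proposition are stable under cofiltered limits: $\pi_0$ commutes with cofiltered limits by construction (and by the remark that $\lim^1$ vanishes for towers of finite groups), the full subcategory $\pro(\fgp)^{\geq 0}$ is closed under cofiltered limits, and the natural transformation $B\pi_1\mathscr{G}\to\mathscr{G}$ of \eqref{nattransp} is compatible with cofiltered limits since $\pi_1$ and $B$ are defined by right Kan extension / applied levelwise. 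Hence it suffices to prove the equivalence $(1)\Leftrightarrow(2)$ for a \emph{finite} pointed groupoid $\mathscr{G}$, since every object of $\pro(\fgp)_{\ast/}$ is a cofiltered limit of such (using that $\pro(\mathcal{C}_{C/})\simeq\pro(\mathcal{C})_{C/}$, already noted in the excerpt for $\mathcal{C}=\fgp$, $C=\ast$).

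For the finite case the argument is elementary homotopy theory of $1$-truncated types. A pointed finite groupoid $\mathscr{G}$, viewed as a $1$-truncated space with basepoint, has $\pi_0$ a finite set and $\pi_1$ at the basepoint a finite group; $B\pi_1\mathscr{G}$ is the connected component of the basepoint (the one-object full subgroupoid on the automorphism group of the distinguished point), and the map \eqref{nattransp} is the inclusion of that component. If $\mathscr{G}$ is connected, i.e.\ $\pi_0\mathscr{G}$ is a singleton, then this inclusion is an equivalence on $\pi_0$ and, being an inclusion of a full subgroupoid containing the basepoint, is an isomorphism on $\pi_1$; a map of groupoids inducing bijections on $\pi_0$ and isomorphisms on all $\pi_1$ is an equivalence, so $(1)\Rightarrow(2)$. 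Conversely, if $B\pi_1\mathscr{G}\to\mathscr{G}$ is an equivalence, then $\pi_0\mathscr{G}\simeq\pi_0(B\pi_1\mathscr{G})$, which is a singleton since $B$ of a group is connected; so $(2)\Rightarrow(1)$.

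Finally, for the last sentence: full faithfulness of $B\colon\pro(\fg)\to\pro(\fgp)_{\ast/}$ is already established in the excerpt (the finite-level functor $B\colon\fg\to(\fgp)_{\ast/}$ is fully faithful and each $BG$, $G$ finite, is cocompact in $\pro(\fgp)_{\ast/}$, so $B$ extends fully faithfully to pro-objects). It remains to identify its essential image. The counit $\pi_1 BG\xrightarrow{\sim}G$ shows every $BG$ lies in the essential image of what the proposition characterizes, and each $BG$ is connected; conversely, given a connected $\mathscr{G}\in\pro(\fgp)_{\ast/}$, part $(2)$ gives $\mathscr{G}\simeq B\pi_1\mathscr{G}$ with $\pi_1\mathscr{G}\in\pro(\fg)$, so $\mathscr{G}$ is in the image of $B$. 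Thus the image of $B$ is exactly the pointed connected profinite groupoids.

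The only genuinely delicate point is the interchange of $\pi_0$, $\pi_1$, $B$ with cofiltered limits in the pro-category — in particular making sure that the natural transformation \eqref{nattransp} on $\pro(\fgp)_{\ast/}$ really is the levelwise one and that ``being an equivalence'' can be tested after passing to a cofinal system of finite quotients. This is where the vanishing of $\lim^1$ for towers of finite groups (flagged in the preceding remark) is essential: it guarantees that connectedness of a formal cofiltered limit is detected on the finite stages, so no ``phantom'' components appear in the limit. Once that bookkeeping is in place, everything else is the routine homotopy theory of finite groupoids sketched above, so I expect the finite-to-profinite reduction to be the main obstacle and the rest to be formal.
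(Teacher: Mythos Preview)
Your reduction to the finite case has a genuine gap. Writing $\mathscr{G}=\varprojlim_I\mathscr{G}_i$ with the $\mathscr{G}_i$ finite and pointed, the difficulty is precisely that the $\mathscr{G}_i$ need \emph{not} be connected even when $\mathscr{G}$ is. So knowing $(1)\Leftrightarrow(2)$ for finite groupoids does not let you conclude $(1)\Rightarrow(2)$ for $\mathscr{G}$: you cannot apply the finite case levelwise, because condition (1) may fail at every level. The sentence ``both conditions are stable under cofiltered limits, hence it suffices to prove the equivalence for finite $\mathscr{G}$'' is a non sequitur: stability of each condition under limits says nothing unless you already know the terms of the limit satisfy the hypothesis. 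What you would actually need is that every connected object of $\pro(\fgp)_{\ast/}$ admits a cofinal presentation by \emph{connected} finite pointed groupoids, and that is essentially the content of $(1)\Rightarrow(2)$ itself.

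Your invocation of $\lim^1$ vanishing is also off-target. That remark concerns whether $\pi_0$ of an honest inverse limit of $1$-truncated spaces agrees with the inverse limit of the $\pi_0$'s; it has nothing to do with whether connectedness of the pro-object forces the transition maps to eventually land in a single component. The paper's proof supplies exactly the missing step: one shows the levelwise map $\{B\pi_1\mathscr{G}_i\}\to\{\mathscr{G}_i\}$ is a pro-isomorphism by producing, for each $i$, some $j\geq i$ such that $\mathscr{G}_j\to\mathscr{G}_i$ factors through the basepoint component $B\pi_1\mathscr{G}_i$. If no such $j$ existed, the other components of $\pi_0\mathscr{G}_i$ would have nonempty preimage at every stage, and since a cofiltered limit of nonempty finite sets is nonempty, $\pi_0\mathscr{G}$ would fail to be a singleton. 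This compactness argument is the actual content, and it is absent from your proposal.
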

\begin{proof} 
The second statement clearly implies the first: any $B G$ for $G$ a profinite
group is connected, as the inverse limit of connected profinite groupoids. 
We have also seen that the functor $B$ is fully faithful, since it is fully
faithful on finite groups. 
It remains to show that if $\mathscr{G}$ is a pointed, connected profinite
groupoid, then the map $B \pi_1 \mathscr{G} \to \mathscr{G}$ is an equivalence. 

For this, we write $\mathscr{G}$ as a cofiltered limit $\varprojlim_I
\mathscr{G}_i$, where $I$ is a filtered partially ordered set indexing the $\mathscr{G}_i$ and each $\mathscr{G}_i$ is a 
pointed finite groupoid. We know that $\mathscr{G}$ is connected, though each
$\mathscr{G}_i$ need not be. However, we obtain a new inverse system
$\left\{B \pi_1 \mathscr{G}_i\right\}$ equipped with a map to the inverse system
$\{\mathscr{G}_i\}$ and we want to show that the two inverse systems are
pro-isomorphic. 
We need to produce an inverse map of pro-systems $\left\{\mathscr{G}_i\right\} \to
\left\{B \pi_1 \mathscr{G}_i\right\}$. 
For this, we need to produce for each $i \in I$
an element $j \geq i$ and a map
\[ \mathscr{G}_j \to B \pi_1 \mathscr{G}_i.  \]
These should define an element of $\varprojlim_i \varinjlim_j \hom(
\mathscr{G}_j,  B \pi_1 \mathscr{G}_i)$.
In order to do this, we simply note that there exists $j \geq i$ such that the
map $\mathscr{G}_j \to \mathscr{G}_i$ lands inside the connected component $B
\pi_1 \mathscr{G}_i$ of $\mathscr{G}_i$ at the basepoint, because otherwise the
pro-system would not be connected as a filtered inverse limit of nonempty finite
sets is nonempty. 
One checks easily that the two maps of pro-systems define an isomorphism
between $\left\{\mathscr{G}_i\right\}$ and $\left\{B
\pi_1\mathscr{G}_i\right\}$. 
\end{proof}

Let $\mathscr{G} \in \pro( \fgp)$ be a 
\emph{connected} profinite groupoid. 
This means that the space of maps $\ast \to \mathscr{G}$ in $\pro( \fgp)$ is
connected, i.e., there is only one such map up to homotopy. (This is not
entirely immediate, but will be a special case of \Cref{mappingspaces} below.) Once we choose a
map, we point $\mathscr{G}$ and then the data is essentially equivalent to
that of a
profinite group
in view of \Cref{gppt}. If we do not point $\mathscr{G}$, then what we have is
essentially a profinite group ``up to conjugacy.''

\begin{proposition} 
\label{mappingspaces}
Let $G, G'$ be  profinite groups. Then the space $\hom_{\pro( \fgp)}( BG, BG')$
is given as follows: 
\begin{enumerate}
\item The connected components are in one-to-one correspondence with conjugacy
classes of continuous homomorphisms  $f\colon  G \to G'$. 
\item The group of automorphisms of a given continuous homomorphism $f\colon  G \to
G'$ is given by the centralizer in $G'$ of the image of $f$. 
\end{enumerate}
\end{proposition}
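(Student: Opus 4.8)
The plan is to realize the groupoid $\hom_{\pro(\fgp)}(BG,BG')$ as a covering space of the connected groupoid $BG'$ with fiber the set of continuous homomorphisms $G\to G'$, and then read off $\pi_0$ and $\pi_1$ by ordinary covering space theory.

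First I would recall the standard description of mapping spaces in a coslice: for pointed objects $\ast\to x$ and $\ast\to y$ of a $(2,1)$-category $\mathcal{D}$,
\[ \hom_{\mathcal{D}_{\ast/}}(x,y)\;\simeq\;\mathrm{fib}\Bigl(\hom_{\mathcal{D}}(x,y)\xrightarrow{\,-\circ(\ast\to x)\,}\hom_{\mathcal{D}}(\ast,y)\Bigr), \]
the fiber taken over the chosen point $\ast\to y$. Apply this with $\mathcal{D}=\pro(\fgp)$, $x=BG$, $y=BG'$ and their canonical basepoints. By \Cref{gppt} the functor $B\colon\pro(\fg)\to\pro(\fgp)_{\ast/}$ is fully faithful, so the fiber term is the \emph{set} $\hom_{\pro(\fg)}(G,G')$ of continuous homomorphisms $G\to G'$, which I will write $\hom_{\mathrm{cont}}(G,G')$. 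For the base term, $BG'$ is by definition the formal cofiltered limit $\varprojlim_{V}B(G'/V)$ over the open normal subgroups $V\subseteq G'$, and since $\ast$ is a finite groupoid hence a cocompact object of $\pro(\fgp)$ we get $\hom_{\pro(\fgp)}(\ast,BG')\simeq\varprojlim_{V}\hom_{\fgp}(\ast,B(G'/V))\simeq\varprojlim_V B(G'/V)$, now a cofiltered limit computed in groupoids; this is the classifying groupoid $BG'$ of the profinite group $G'$, the only point requiring attention being the vanishing of the relevant $\varprojlim^1$, which holds because the system is one of finite groups (as flagged in the paper). We thus obtain a fiber sequence
\[ \hom_{\mathrm{cont}}(G,G')\longrightarrow\hom_{\pro(\fgp)}(BG,BG')\longrightarrow BG'. \]

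Since the fiber is a set and $BG'$ is connected with $\pi_1 BG'=G'$, this exhibits $\hom_{\pro(\fgp)}(BG,BG')$ as a covering space of $BG'$, hence as the homotopy quotient of $\hom_{\mathrm{cont}}(G,G')$ by the monodromy action of $G'$; the long exact sequence of the fibration gives $\pi_0\hom_{\pro(\fgp)}(BG,BG')=\hom_{\mathrm{cont}}(G,G')/G'$ and, for a continuous $f\colon G\to G'$, $\pi_1\bigl(\hom_{\pro(\fgp)}(BG,BG'),f\bigr)=\mathrm{Stab}_{G'}(f)$. It then remains to identify the monodromy action, and I claim $g'\in G'$ carries $f$ to the conjugate homomorphism $x\mapsto g'f(x)g'^{-1}$. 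I would check this on finite quotients, where $\hom_{\fgp}(BH,BK)=\fun(BH,BK)$ is literally the action groupoid of $K$ acting on $\hom(H,K)$ by post-composition with inner automorphisms, the map down to $\hom_{\fgp}(\ast,BK)=BK$ being evaluation at the object; the claim then propagates through the filtered colimits (in $U$) and cofiltered limits (in $V$) that define the profinite mapping spaces. Granting this, $\hom_{\mathrm{cont}}(G,G')/G'$ is the set of conjugacy classes of continuous homomorphisms, which is statement (1), while $\mathrm{Stab}_{G'}(f)=\{g'\in G':g'f(x)g'^{-1}=f(x)\text{ for all }x\in G\}$ is precisely the centralizer $C_{G'}(f(G))$ of the image, which is statement (2).

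The step I expect to be the main obstacle is the bookkeeping around the cofiltered limits: establishing $\hom_{\pro(\fgp)}(\ast,BG')\simeq BG'$ on the nose (the $\varprojlim^1$ point) and confirming that the monodromy computation made at the finite level survives the passage to the limit. Both are routine but deserve care. If one prefers to avoid the fiber-sequence formalism altogether, an alternative is to compute directly from $\hom_{\pro(\fgp)}(BG,BG')=\varprojlim_{V}\varinjlim_{U}\hom_{\fgp}(B(G/U),B(G'/V))$, using that each $\hom_{\fgp}(B(G/U),B(G'/V))$ is the action groupoid $\bigl[\hom(G/U,G'/V)/\!\!/(G'/V)\bigr]$, that a filtered colimit of action groupoids of a fixed group is again one (with underlying set the colimit), and that a cofiltered limit of finite action groupoids is again an action groupoid of the limit group once the pertinent $\varprojlim^1$ is seen to vanish.
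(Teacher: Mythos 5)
Your proof is correct, but it takes a genuinely different route from the paper's. The paper argues directly from the formula $\hom_{\pro(\fgp)}(BG,BG')\simeq\varprojlim_V\varinjlim_U\hom_{\fgp}(B(G/U),B(G'/V))$, reducing to the (well-known) finite case and then running explicit compactness arguments — cofiltered limits of nonempty finite sets are nonempty — to show that two continuous homomorphisms conjugate modulo every open normal $V\subseteq G'$ are conjugate, and that compatible families of conjugacy classes lift; the automorphism statement is treated separately and briefly. You instead fiber the mapping groupoid over $\hom_{\pro(\fgp)}(\ast,BG')\simeq BG'$ via the coslice formula, identify the fiber with the discrete set $\hom_{\mathrm{cont}}(G,G')$ using the full faithfulness of $B\colon\pro(\fg)\to\pro(\fgp)_{\ast/}$ (established just before \Cref{gppt}), and read off both $\pi_0$ and $\pi_1$ at once from covering-space theory, leaving only the monodromy identification, which you correctly reduce to the finite level where $\fun(BH,BK)$ is visibly the action groupoid of $K$ acting on $\hom(H,K)$ by conjugation. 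What your approach buys is that the two hard compactness arguments of the paper's proof get absorbed into facts already on record (full faithfulness of $B$, and vanishing of the nonabelian $\varprojlim^1$ for the system of finite groups $\{G'/V\}$, which gives connectivity of the base); note that this last point is itself the special case $G=1$ of the proposition, so you must — as you do — prove it directly rather than cite the proposition, to avoid circularity. The cost is the extra formalism (coslice fiber sequences, the long exact sequence for 1-truncated fibrations) and the bookkeeping you flag around propagating the monodromy computation through the colimit in $U$ and the limit in $V$; that propagation does go through, since the $G'$-action on $\hom_{\mathrm{cont}}(G,G')=\varprojlim_V\hom_{\mathrm{cont}}(G,G'/V)$ is determined by the compatible conjugation actions of $G'/V$ on each term. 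Both proofs are complete; yours is arguably the more structural one.
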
 
In other words, if we restrict our attention to the subcategory $\pro(
\fgp)^{\geq 0} \subset \pro(
\fgp)$ consisting of \emph{connected} profinite groupoids, then it has a simple
explicit description as a 2-category where the objects are the profinite
groups, maps are continuous homomorphisms, and 2-morphisms are conjugations. 
\begin{proof} 
This assertion is well-known when $G, G'$ are finite groups: maps between $BG$
and $BG'$ in $\fgp$ are as above. 
The general case follows by passage to cofiltered limits. 
Let $G = \varprojlim_U G/U, G' = \varprojlim_V G'/V$ where $U$ (resp. $V$)
ranges over the open normal subgroups of $G$ (resp. $G'$). 
In this case, we have
\[ \hom_{\pro( \fgp)}(BG, BG') \simeq \varprojlim_V \varinjlim_U \hom_{\fgp}(
B(G/U), B(G/V)),  \]
and passing to the limit, we can conclude the result for $G, G'$ profinite, if
we observe that the set of conjugacy classes of continuous homomorphisms $G \to G'$
is the inverse limit of the sets of conjugacy classes of continuous
homomorphisms $G \to G'/V$ as $V \subset G$ ranges over open normal subgroups. 
The assertion about automorphisms, or conjugacies, is easier.

To see this in turn, 
suppose given continuous homomorphisms $\phi_1, \phi_2\colon  G \to G'$ such that,
for every continuous map $\psi\colon  G' \to G''$ where $G''$ is finite, the
composites $\psi \circ \phi_1, \psi \circ \phi_2$ are conjugate. We claim that
$\phi_1, \phi_2$ are conjugate. The collection of all surjections $\psi \colon  G'
\to G''$ with $G''$ finite forms a filtered system, and for each $\psi$, we consider
the (finite) set $F_{\psi} \subset G''$ of $x \in G''$ such that $\psi
\circ \phi_2 = x ( \psi \circ \phi_1 ) x^{-1}$. Since by hypothesis each
$F_{\psi}$ is nonempty, it follows that the inverse limit is nonempty, so that
$\phi_1, \phi_2$ are actually conjugate as homomorphisms $G \to G'$. 
Conversely, suppose given for each $\psi\colon  G' \to G''$ with $G''$ finite a \emph{conjugacy class}
of continuous maps $\phi_{\psi}\colon  G \to G''$, and suppose these are compatible with one
another; we want to claim that there exists a conjugacy class of continuous
homomorphisms $\phi\colon  G \to G'$ that lifts all  the $\phi_{\psi}$. For this, we
again consider the \emph{finite} nonempty sets $G_{\psi}$ of all continuous
homomorphisms $G \to G''$ in the conjugacy class of $\phi_{\psi}$, and observe
the inverse limit of these is nonempty. Any point in the inverse limit gives a
continuous homomorphism $G \to G''$ with the desired property. 
\end{proof} 

\section{The Galois group and first computations}

Let $(\mathcal{C}, \otimes, \mathbf{1})$ a stable homotopy theory. In this
section, we will make the main definition of this paper, and describe two candidates for the \emph{Galois group} (or, in
general, groupoid) of
$\mathcal{C}$. Using the descent theory described in 
\Cref{sec:descent}, we will
define a category of \emph{finite covers} in the
$\infty$-category $\clg(\mathcal{C})$
of commutative algebra objects in $\mathcal{C}$. Finite covers will be
those commutative algebra objects which ``locally'' look like direct factors of
products of copies of the unit. There are two possible definitions of
``locally,'' which lead to slightly different Galois groups. 
We will show that these $\infty$-categories of finite covers are actually
Galois categories in the sense of \Cref{galax}. Applying the Galois correspondence,
we will obtain a profinite groupoid. 

The rest of this paper will be devoted to describing the Galois group in certain
special instances. 
In this section, we will begin that process by showing that the Galois group is
entirely algebraic in two particular instances: connective $\e{\infty}$-rings
and even periodic $\e{\infty}$-rings with regular $\pi_0$. In 
either of these cases, one has various algebraic tricks to study modules via
their homotopy groups. 
The
associated $\infty$-categories of modules turn out to be extremely useful
building blocks for a much wider range of stable homotopy theories. 

\subsection{Two definitions of the Galois group}

Let $(\mathcal{C}, \otimes, \mathbf{1})$ be a stable homotopy theory, as before. 
We will describe two possible analogs of ``finite \'etaleness'' appropriate to
the categorical setting. 

\newcommand{\clgf}{\clg^{\mathrm{cov}}}
\newcommand{\clgw}{\clg^{\mathrm{w.cov}}}

\begin{definition} 
\label{deffinitecover}
An object $A \in \clg(\mathcal{C})$ is a \textbf{finite cover}  if there exists
an $A' \in \clg(\mathcal{C})$ such that: 
\begin{enumerate}
\item  $A'$ admits descent,  in the sense of
\Cref{admitd}. 
\item   $A \otimes A' \in \clg( \md_{\mathcal{C}}(A'))$ is of
the form $\prod_{i=1}^n A'[e_i^{-1}]$, where for each $i$, $e_i $ is an
idempotent in $A'$. 
\end{enumerate}
The finite covers span a subcategory $\clgf(\mathcal{C}) \subset \clg(
\mathcal{C})$. 
\end{definition} 

\begin{definition} 
\label{weakfincover}
An object $A \in \clg(\mathcal{C})$ is a \textbf{weak finite cover}  if there exists
an $A' \in \clg(\mathcal{C})$ such that: 
\begin{enumerate}
\item The functor $\otimes A'\colon  \mathcal{C} \to \mathcal{C}$ commutes with all homotopy
limits. 
\item The functor $\otimes A'$ is conservative. 
\item  
$A \otimes A' \in \clg( \md_{\mathcal{C}}(A'))$ is of
the form $\prod_{i=1}^n A'[e_i^{-1}]$, where for each $i$, $e_i $ is an
idempotent in $A'$. 
\end{enumerate}
The weak finite covers span a subcategory $\clgw(\mathcal{C}) \subset
\clg(\mathcal{C})$. 
\end{definition}

Our goal is to show that both of these definitions give rise to Galois
categories in the sense of the previous section, which we will do using the
general machine of \Cref{galcontext}. Observe first that
$\clg(\mathcal{C})^{\op}$ satisfies the first two conditions of \Cref{galax}.

\begin{lemma} 
Given $\mathcal{C}$ as above, consider the $\infty$-category
$\clg(\mathcal{C})^{\op}$ and the collection of morphisms $\mathcal{E}$ given
by the maps $A \to B$ which admit descent. Then $(\clg(\mathcal{C})^{\op},
\mathcal{E})$ is a Galois context in the sense of \Cref{galcontextdef}. 
\end{lemma}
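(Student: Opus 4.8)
The plan is to verify, one by one, the five axioms in \Cref{galcontextdef} for the pair $(\clg(\mathcal{C})^{\op}, \mathcal{E})$, where $\mathcal{E}$ consists of the arrows $A \to B$ in $\clg(\mathcal{C})$ that admit descent (these become the ``covering'' morphisms in $\clg(\mathcal{C})^{\op}$). The bulk of the work has already been done in \Cref{sec:descent}; the point is simply to reorganize those results into the required list. First I would recall that $\clg(\mathcal{C})^{\op}$ admits finite limits and coproducts with empty initial object: finite coproducts in $\clg(\mathcal{C})^{\op}$ are finite products $\prod_{i} A_i$ in $\clg(\mathcal{C})$, and the initial object of $\clg(\mathcal{C})^{\op}$ is the zero object $0 \in \clg(\mathcal{C})$, which is empty in the required sense because a commutative algebra $B$ with a map $0 \to B$ (i.e.\ $B \to 0$ in $\clg(\mathcal{C})$) must itself be zero. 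Finite limits in $\clg(\mathcal{C})^{\op}$ are finite colimits (relative tensor products, pushouts) in $\clg(\mathcal{C})$, which exist since $\mathcal{C}$ is a stable homotopy theory.

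Next I would check axioms (1)--(5) of \Cref{galcontextdef} in turn. For (1): $\mathcal{E}$ contains equivalences trivially (if $A \simeq B$ then $B$ generates the thick $\otimes$-ideal $\md_{\mathcal{C}}(A)$). Stability of $\mathcal{E}$ under base change is exactly the Corollary following \Cref{constpro} (descendability is preserved by symmetric monoidal functors, in particular by $\md_{\mathcal{C}}(A) \to \md_{\mathcal{C}}(A')$ for any $A \to A'$); note base change in $\clg(\mathcal{C})^{\op}$ is pushout in $\clg(\mathcal{C})$. Closure under composition is part (1) of \Cref{permanence}. For (2): every morphism in $\mathcal{E}$ is an effective descent morphism --- in $\clg(\mathcal{C})^{\op}$ this means the adjunction between $\clg(\mathcal{C})^{\op}_{/A}$ and $\clg(\mathcal{C})^{\op}_{/B}$ is monadic, equivalently (Beck--B\'enabou--Roubaud, via the bar/cobar duality) that the tensoring-up functor $\md_{\mathcal{C}}(A) \to \md_{\mathcal{C}}(B)$ is comonadic; this is precisely \Cref{easydesc} applied in $\md_{\mathcal{C}}(A)$. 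For (3): given a cartesian square in $\clg(\mathcal{C})^{\op}$ with $y' \to y$ in $\mathcal{E}$, one wants $x \to y \in \mathcal{E} \iff x' \to y' \in \mathcal{E}$; one direction is base-change stability (axiom (1)), and for the converse, working in $\md_{\mathcal{C}}(y)$ one has $x' = x \otimes_y y'$, and since $y'$ admits descent over $y$, the cobar construction $\cb(y')$ converges to the unit and is a constant pro-object, so a thick-$\otimes$-ideal argument (the collection of $y$-modules $M$ with $M \otimes_y \cb(x')$ constant and limiting to $M$ contains $y'$, hence contains $y$) shows $x$ generates all of $\md_{\mathcal{C}}(y)$ as a thick $\otimes$-ideal. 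This is essentially the content of \Cref{descfinloc}/\Cref{constantfinpro}.

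For (4): a map $x \to y_1 \sqcup y_2$ in $\clg(\mathcal{C})^{\op}$ is, in $\clg(\mathcal{C})$, a map $B_1 \times B_2 \to A$; by \Cref{prode} and the theory of idempotents, $A$ splits as $A_1 \times A_2$ with $B_i \to A_i$, and $\md_{\mathcal{C}}(B_1 \times B_2) \simeq \md_{\mathcal{C}}(B_1) \times \md_{\mathcal{C}}(B_2)$, so the thick $\otimes$-ideal generated by $A = A_1 \times A_2$ contains the unit iff each $A_i$ generates the unit in $\md_{\mathcal{C}}(B_i)$ --- i.e.\ $x \to y$ admits descent iff each $x \times_y y_i \to y_i$ does. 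For (5): the fold map $\bigsqcup_S x \to x$ in $\clg(\mathcal{C})^{\op}$, with $S$ finite nonempty, corresponds in $\clg(\mathcal{C})$ to the diagonal-type map $A \to \prod_S A$; this admits descent because $A$ is visibly a retract of $\prod_S A$ as an $A$-module (project onto one factor), so $A$ lies in the thick $\otimes$-ideal --- indeed in the thick subcategory --- generated by $\prod_S A$. I do not expect a serious obstacle here; the one step that requires a little care is axiom (3), making sure the ``only if'' direction really follows from a thick-subcategory argument using constancy of the cobar pro-object rather than from some stronger descent statement, but this is exactly the pattern already used repeatedly (e.g.\ in the proofs of \Cref{easydesc} and \Cref{descfinloc}), so it should go through routinely. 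The whole lemma is thus a bookkeeping exercise assembling \Cref{constpro}, \Cref{permanence}, \Cref{easydesc}, \Cref{prode}, and the base-change corollary.
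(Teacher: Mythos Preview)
Your proposal is correct and follows the same approach as the paper: verify the axioms of \Cref{galcontextdef} by citing the results of \Cref{sec:descent}. The paper's own proof is extremely terse, simply pointing to \Cref{permanence} for closure under composition and locality, to \Cref{easydesc} for effective descent, and declaring the remaining axioms straightforward.

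One simplification worth noting: for axiom (3) you run a direct cobar/pro-object argument, but the paper's route via \Cref{permanence} is shorter. In the notation of your square (in $\clg(\mathcal{C})$: $y \to y'$ descendable, $y \to x$, and $x' = x \otimes_y y'$), if $y' \to x'$ admits descent then $y \to y' \to x'$ admits descent by part (1) of \Cref{permanence}; since this factors as $y \to x \to x'$, part (2) of \Cref{permanence} immediately gives that $y \to x$ admits descent. Also, in your written argument the cobar construction should be $\cb(x)$ (over $y$), not $\cb(x')$: the point is that $y' \otimes_y \cb_y(x) \simeq \cb_{y'}(x')$, which is constant by hypothesis, placing $y'$ in the relevant thick $\otimes$-ideal.
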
 
\begin{proof} 
The composite of two descendable morphisms is descendable by \Cref{permanence},
descendable morphisms are effective descent morphisms by \Cref{easydesc}, and the locality
of descendability (i.e., the third condition of \Cref{galcontextdef}) follows from the second part of \Cref{permanence}. 
The remaining conditions are straightforward. 
\end{proof} 

\begin{lemma} 
Given $\mathcal{C}$ as above, consider the $\infty$-category
$\clg(\mathcal{C})^{\op}$ and the collection of morphisms $\mathcal{E}$ given
by the maps $A \to B$ such that the functor $\otimes_A B \colon  \md_{\mathcal{C}} (A) \to
\md_{\mathcal{C}}(B)$ commutes with limits and is conservative. 
Then $(\clg(\mathcal{C})^{\op},
\mathcal{E})$ is a Galois context in the sense of \Cref{galcontextdef}. 
\end{lemma}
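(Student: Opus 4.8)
The plan is to verify the five axioms of \Cref{galcontextdef} for the pair $(\clg(\mathcal{C})^{\op}, \mathcal{E})$, where $\mathcal{E}$ consists of those maps $A \to B$ in $\clg(\mathcal{C})$ (arrows in $\clg(\mathcal{C})^{\op}$) for which $\otimes_A B\colon \md_{\mathcal{C}}(A) \to \md_{\mathcal{C}}(B)$ preserves all limits and is conservative. The overall strategy mirrors the proof of the previous lemma, replacing ``descendable'' with this ``pro-\'etale-like'' condition and using the Barr-Beck-Lurie theorem to extract effective descent. First I would record that $\clg(\mathcal{C})^{\op}$ satisfies the first two axioms of \Cref{galax} (finite limits and coproducts, with empty object the zero ring), exactly as in the descendable case. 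Then I would check closure of $\mathcal{E}$ under composition and base change: composition is immediate since a composite of limit-preserving conservative functors is again one, and base change $\md_{\mathcal{C}}(A) \to \md_{\mathcal{C}}(A \otimes_A B') = \md_{\mathcal{C}}(B')$ along $A \to B'$ works because the relevant functors fit into a commuting square and the base-changed functor $\otimes_{B'}(B' \otimes_A B)$ inherits both properties (limit-preservation is checked on underlying spectra, conservativity by the usual thick-subcategory argument as in \Cref{faithful}).

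Next I would handle axiom (3), the 2-out-of-3-style locality: given a pullback square in $\clg(\mathcal{C})^{\op}$ (i.e.\ a pushout of commutative algebras) with $y' \to y$ in $\mathcal{E}$, the map $x \to y$ is in $\mathcal{E}$ iff $x' \to y'$ is. One direction is base change. For the converse, if $\otimes_{A_{x'}} A_{x} \colon \md_{\mathcal{C}}(A_{x'}) \to \md_{\mathcal{C}}(A_{x})$ — wait, more precisely: writing the square at the level of algebras as $A_y \to A_x$, $A_y \to A_{y'}$, $A_x \to A_{x'} = A_x \otimes_{A_y} A_{y'}$, I want: if $\otimes_{A_x} A_{x'}$ is limit-preserving conservative and $\otimes_{A_y} A_{y'}$ is limit-preserving conservative, then $\otimes_{A_y} A_x$ is. Conservativity: if $M \otimes_{A_y} A_x \simeq 0$ then $M \otimes_{A_y} A_{x'} \simeq (M \otimes_{A_y} A_x) \otimes_{A_x} A_{x'} \simeq 0$, and since $M \otimes_{A_y} A_{x'} \simeq (M \otimes_{A_y} A_{y'}) \otimes_{A_{y'}} A_{x'}$, conservativity of $\otimes_{A_{y'}} A_{x'}$ (which is the base change of $\otimes_{A_y} A_x$, hence in $\mathcal{E}$ by axiom (2) already verified, or can be argued directly) forces $M \otimes_{A_y} A_{y'} \simeq 0$, and then conservativity of $\otimes_{A_y} A_{y'}$ gives $M \simeq 0$. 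Limit-preservation of $\otimes_{A_y} A_x$ would follow by a similar descent argument using that $\otimes_{A_y} A_{y'}$ is limit-preserving and conservative: one checks that the canonical comparison map $(\varprojlim M_i) \otimes_{A_y} A_x \to \varprojlim(M_i \otimes_{A_y} A_x)$ becomes an equivalence after $\otimes_{A_x} A_{x'}$, hence is an equivalence. Axiom (4), compatibility with binary coproducts $y \simeq y_1 \sqcup y_2$ (products of algebras $A_y \simeq A_{y_1} \times A_{y_2}$), follows because $\md_{\mathcal{C}}(A_{y_1} \times A_{y_2}) \simeq \md_{\mathcal{C}}(A_{y_1}) \times \md_{\mathcal{C}}(A_{y_2})$ and all the conditions are checked componentwise. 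Axiom (5), that the fold map $\bigsqcup_S x \to x$ (i.e.\ $A_x \to \prod_S A_x$) is in $\mathcal{E}$ for $S$ finite nonempty, is immediate since $\md_{\mathcal{C}}(\prod_S A_x) \simeq \prod_S \md_{\mathcal{C}}(A_x)$ and the diagonal functor is limit-preserving and conservative.

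The one genuinely substantive axiom is (2): every morphism in $\mathcal{E}$ is an effective descent morphism in $\clg(\mathcal{C})^{\op}$. Here I would invoke the $\infty$-categorical Barr-Beck-Lurie theorem exactly as in \Cref{easydesc}. Given $A \to B$ in $\mathcal{E}$, the adjunction $\md_{\mathcal{C}}(A) \rightleftarrows \md_{\mathcal{C}}(B)$ (tensor up, forget) is automatically comonadic: the left adjoint $\otimes_A B$ is conservative by hypothesis, and because it preserves all limits it certainly preserves all totalizations, so both conditions of Barr-Beck-Lurie are met. By \cite[Th.\ 4.7.6.2]{higheralg} this comonadicity is exactly the statement that $\md_{\mathcal{C}}(A) \to \mathrm{Tot}(\md_{\mathcal{C}}(B^{\otimes_A \bullet + 1}))$ is an equivalence, i.e.\ that $A \to B$ is an effective descent morphism in $\clg(\mathcal{C})^{\op}$ in the sense of the relevant definition. (One should note the slight subtlety that effective descent in $\clg(\mathcal{C})^{\op}$ as defined via overcategories is equivalent to comonadicity of the module adjunction; this is the same fact already used implicitly in the descendable case and follows from the compatibility of $\md_{\mathcal{C}}(-)$ with the relevant bar constructions.) I expect this verification of axiom (2) — specifically pinning down that the abstract effective-descent condition on $\clg(\mathcal{C})^{\op}$ coincides with comonadicity of the tensor-forgetful adjunction, and that limit-preservation of $\otimes_A B$ really does yield preservation of the partial totalizations needed — to be the main point requiring care; everything else is a routine transcription of the descendable-case arguments.
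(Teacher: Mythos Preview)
Your proposal is correct and follows essentially the same route as the paper: axioms (1), (4), (5) are routine, axiom (2) is Barr-Beck-Lurie applied to a conservative limit-preserving left adjoint, and axiom (3) is exactly the descent-and-check argument the paper writes out (tensor with the given $\mathcal{E}$-cover to test both conservativity and the limit-comparison map). One small wobble: in your conservativity step for axiom (3), the parenthetical justifying that $\otimes_{A_{y'}} A_{x'}$ lies in $\mathcal{E}$ as ``the base change of $\otimes_{A_y} A_x$'' is circular---but you don't need it, since $A_{y'} \to A_{x'}$ being in $\mathcal{E}$ is precisely the hypothesis of that direction.
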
 

\begin{proof} 
It is easy to see that $\mathcal{E}$ satisfies the first axiom of
\Cref{galcontextdef}, and we can apply Barr-Beck-Lurie to see comonadicity of
$\otimes_A B$ (i.e., the second axiom). The fourth and fifth axioms are straightforward. 

Finally, suppose $A \to B$ is a morphism in $\clg(\mathcal{C})$ and $A \to A'$ belongs
to $\mathcal{E}$, i.e., tensoring $\otimes_A A'$ commutes with limits and is
conservative. Suppose $A' \to B' \stackrel{\mathrm{def}}{=}A' \otimes_A B$ has the same property. Then we
want to claim that $A \to B$ belongs to $\mathcal{E}$. 

First, observe that $\otimes_A B$ is conservative. If $M \in
\md_{\mathcal{C}}(A)$ is such that $M \otimes_A B \simeq 0$, then $(M \otimes_A
A') \otimes_{A'} B'$ is zero, so that $M \otimes_A A'$ is zero as
$A' \to B'$ belongs to $\mathcal{E}$, and thus $M = 0$. 
Finally, we need to check the claim about $\otimes_A B $ commuting with
limits. In other words, given $\left\{M_i\right\} \in \md_{\mathcal{C}}(A)$, we need to
show that the natural map
\[ B \otimes_A \prod M_i \to \prod (M_i \otimes_A B)  \]
is an equivalence. We can do this after tensoring with $A'$, so we need to see that
\[  A' \otimes_A B  \otimes_A \prod M_i  \to A' \otimes_A \prod (M_i \otimes_A B) \]
is an equivalence. However, since tensoring with $A'$ commutes with limits, this map is 
\[ B' \otimes_{A'} \prod( M_i \otimes_A A')  \to \prod (M_i \otimes_A A')
\otimes_{A'} B', \]
which is an equivalence since $\otimes_{A'}B'$ commutes with limits by
assumption. 
\end{proof}

The basic result of this section is the following.

\begin{theorem} 
\label{basicgal}
Given $\mathcal{C}$, $\clgf(\mathcal{C})^{\op}$ and $\clgw(\mathcal{C})^{\op}$ are
Galois
categories, with $\clgf(\mathcal{C}) \subset \clgw(\mathcal{C})$. 
If $\mathbf{1} \in \mathcal{C}$ is compact, then the two are the same. 
\end{theorem}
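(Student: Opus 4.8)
The plan is to invoke the general machine of \Cref{galcontext} to reduce everything to checking that the relevant classes of morphisms form Galois contexts, and then to handle the inclusion and coincidence statements by direct comparison. I would proceed in three steps.

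\textbf{Step 1: Both classes form Galois contexts.} The two lemmas immediately preceding the theorem already establish that $(\clg(\mathcal{C})^{\op}, \mathcal{E}_{\mathrm{desc}})$ and $(\clg(\mathcal{C})^{\op}, \mathcal{E}_{\mathrm{w}})$ are Galois contexts, where $\mathcal{E}_{\mathrm{desc}}$ consists of descendable maps and $\mathcal{E}_{\mathrm{w}}$ of maps $A \to B$ with $\otimes_A B$ conservative and limit-preserving. By \Cref{galcontext}, the full subcategory of $\mathcal{E}$-Galoisable objects in each case is a Galois category. It remains only to identify these Galoisable objects: an object $A \in \clg(\mathcal{C})^{\op}$ is $\mathcal{E}_{\mathrm{desc}}$-Galoisable precisely when there is a descendable $A'$ such that $A \otimes A' \in \clg(\md_{\mathcal{C}}(A'))$ decomposes as a finite product $\prod_i A'[e_i^{-1}]$ over idempotents $e_i \in A'$ — which is exactly \Cref{deffinitecover}. (Here one uses that, by \Cref{prode} and the theory of idempotents, ``mixed elementary form'' in $\clg(\md_{\mathcal{C}}(A'))^{\op}$ means exactly being such a finite product of localizations of $A'$ at idempotents, since a decomposition $A' \simeq \prod_j A'[f_j^{-1}]$ corresponds to a decomposition of the terminal object.) Similarly, $\mathcal{E}_{\mathrm{w}}$-Galoisable objects are exactly the weak finite covers of \Cref{weakfincover}. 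Thus $\clgf(\mathcal{C})^{\op}$ and $\clgw(\mathcal{C})^{\op}$ are Galois categories.

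\textbf{Step 2: The inclusion $\clgf(\mathcal{C}) \subset \clgw(\mathcal{C})$.} It suffices to show $\mathcal{E}_{\mathrm{desc}} \subset \mathcal{E}_{\mathrm{w}}$, i.e., that a descendable morphism $A \to B$ has $\otimes_A B$ conservative and limit-preserving. Conservativity is \Cref{faithful} applied in $\md_{\mathcal{C}}(A)$. For limit-preservation: by \Cref{easydesc}, $\md_{\mathcal{C}}(A) \simeq \mathrm{Tot}(\md_{\mathcal{C}}(B) \rightrightarrows \md_{\mathcal{C}}(B \otimes_A B) \triplearrows \cdots)$, and moreover the argument there shows that the $\mathrm{Tot}$ tower of $\cb(B) \otimes_A M$ is pro-constant for every $M$; since the tensor product with a pro-constant pro-object commutes with the limit, one gets that $B \otimes_A \varprojlim M_i \to \varprojlim (B \otimes_A M_i)$ is an equivalence by running the same thick-$\otimes$-ideal argument as in \Cref{easydesc} (the collection of $M$ for which this holds is a thick $\otimes$-ideal containing $B$). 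Hence any $\mathcal{E}_{\mathrm{desc}}$-Galoisable object is $\mathcal{E}_{\mathrm{w}}$-Galoisable, giving the inclusion of full subcategories.

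\textbf{Step 3: Coincidence when $\mathbf{1}$ is compact.} Here the point is the converse inclusion $\mathcal{E}_{\mathrm{w}} \subset \mathcal{E}_{\mathrm{desc}}$ — more precisely, that a weak finite cover is already a finite cover. Given a weak finite cover $A$ with witnessing $A'$, I would observe that $A \otimes A' \simeq \prod_i A'[e_i^{-1}]$ is a \emph{dualizable} object of $\md_{\mathcal{C}}(A')$: each $A'[e_i^{-1}]$ is classically étale and lies in the thick subcategory generated by $A'$ (as recorded in the discussion after \Cref{classicaletale}), hence is dualizable, and finite products of dualizable objects are dualizable. It is also faithful over $A'$ since tensoring with it is conservative (it contains $A'$ as a retract after passing to a suitable idempotent summand, or more simply: the $e_i$ are complementary and sum to $1$). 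Since $\mathbf{1} \in \mathcal{C}$ is compact, so is $\mathbf{1}_{\md_{\mathcal{C}}(A')} = A'$ (the forgetful functor $\md_{\mathcal{C}}(A') \to \mathcal{C}$ preserves filtered colimits when $A'$... — actually one argues directly that the unit of $\md_{\mathcal{C}}(A')$ is compact, which follows since $\mathbf{1}$ is compact in $\mathcal{C}$ and $\md_{\mathcal{C}}(A') \to \mathcal{C}$ is comonadic with the comonad preserving filtered colimits). Then \Cref{cptdescent} applies inside $\md_{\mathcal{C}}(A')$: a dualizable, faithful commutative algebra object admits descent. So $A \otimes A'$ admits descent over $A'$; since $A' \to A \otimes A'$ then admits descent and (by the weak-cover hypothesis and the analogous reasoning to \Cref{easydesc}, or by noting $A'$ is descendable over $\mathbf{1}$ — wait, $A'$ need not be descendable in $\mathcal{C}$)... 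The cleaner route: by \Cref{descfinloc}-style locality one instead shows $A$ itself admits descent over $\mathbf{1}$ using that its base change along the weak cover is descendable and the weak cover is conservative and limit-preserving; concretely, the cobar construction $\cb(A)$ becomes pro-constant after $\otimes A'$ (since $A \otimes A'$ is descendable over $A'$), and because $\otimes A'$ is conservative and commutes with the $\mathrm{Tot}$ towers, $\cb(A)$ is already pro-constant with limit $\mathbf{1}$, so $A$ is descendable by \Cref{constpro}. Thus $A$ is a finite cover, and the two Galois categories agree.

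\textbf{Main obstacle.} The delicate point is Step 3: verifying that the unit of $\md_{\mathcal{C}}(A')$ is compact (needed to invoke \Cref{cptdescent}) and, more importantly, that descendability of $A \otimes A'$ over $A'$ can be pulled back along the conservative, limit-preserving functor $\otimes A'$ to conclude descendability of $A$ over $\mathbf{1}$ — this requires care with the interaction between pro-constancy of $\cb(A)$ and the $\mathrm{Tot}$-tower formalism, since $\otimes A'$ is \emph{not} assumed to preserve \emph{finite} colimits a priori in a way that would let thick-$\otimes$-ideal arguments run in $\mathcal{C}$ rather than $\md_{\mathcal{C}}(A')$. One must be sure the conservativity-plus-limit-preservation package is exactly strong enough to detect when a $\mathrm{Tot}$ tower is pro-constant; the honest statement is that $\otimes A'$, being comonadic, reflects pro-constancy of towers whose image is pro-constant, which is the content one needs.
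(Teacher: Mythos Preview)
Your Step~1 is fine and matches the paper. The genuine gap is in Step~2, and it propagates into Step~3.

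\textbf{Step 2 is incorrect as written.} You claim $\mathcal{E}_{\mathrm{desc}} \subset \mathcal{E}_{\mathrm{w}}$, i.e., that if $A \to B$ is descendable then $\otimes_A B$ preserves all limits. This is false: take for instance a connective $\e{\infty}$-ring $A$ with finitely many homotopy groups; then $A \to \pi_0 A$ is descendable, but $\pi_0 A$ is typically not a perfect $A$-module, so tensoring with it does not commute with infinite products. Your proposed thick-$\otimes$-ideal argument does not run: the collection of $N$ for which $N \otimes_A \varprojlim M_i \to \varprojlim (N \otimes_A M_i)$ is an equivalence is a thick subcategory containing $A$, but descendability says $A$ lies in the thick $\otimes$-ideal generated by $B$, which is the wrong direction --- you would need $B$ in the thick subcategory generated by $A$, i.e., dualizability of $B$.

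\textbf{The missing idea.} The paper does not attempt to compare the classes $\mathcal{E}$ directly. Instead it uses a consequence of the proof of \Cref{galcontext} (recorded in the remark immediately after it): if $A$ is Galoisable, then the witnessing object $A'$ can itself be chosen to be Galoisable. Since Galoisable objects are locally finite products of retracts of the unit, they are dualizable (dualizability is checked locally in a limit of symmetric monoidal $\infty$-categories). So for a finite cover $A$ you may take $A'$ to be a finite cover, hence dualizable; then $\otimes A'$ automatically commutes with limits, and $A$ is a weak finite cover with the \emph{same} witness. This gives $\clgf \subset \clgw$ without any claim about general descendable morphisms.

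\textbf{Step 3 then becomes clean.} For $A \in \clgw(\mathcal{C})$, choose the witness $A'$ to itself lie in $\clgw(\mathcal{C})$; in particular $A'$ is dualizable and $\otimes A'$ is conservative. If $\mathbf{1}$ is compact, \Cref{cptdescent} applies directly in $\mathcal{C}$ to the object $A'$: a dualizable, faithful commutative algebra admits descent. Hence $A'$ is descendable, and the same $A'$ now witnesses $A$ as a finite cover. There is no need to pass to $\md_{\mathcal{C}}(A')$, worry about compactness of its unit, or try to reflect pro-constancy along $\otimes A'$; the detours you attempt in Step~3 all stem from not having the ``Galoisable witness'' refinement in hand.
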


\begin{proof} 
This follows from \Cref{galcontext} if we take $\clg(\mathcal{C})^{\op}$ as our input
$\infty$-category.
As we checked above, we have two candidates for $\mathcal{E}$, both of which
yield Galois contexts. The Galoisable objects yield either the finite covers or
the weak finite covers.

Next, we need to note that a finite cover is actually a weak finite cover.
Note first that either a finite cover or a weak finite cover is dualizable,
since dualizability can be checked locally in a limit diagram of
symmetric monoidal $\infty$-categories. However, the 
argument of \Cref{galcontext} (or the following corollary) shows that, given a finite cover $A \in
\clg(\mathcal{C})$, we can choose the descendable $A' \in \clg(\mathcal{C})$
such that $A \otimes A'$ is in mixed elementary form so that $A'$ itself is a
finite cover: in particular, so that $A'$ is dualizable. Therefore, we can
choose $A'$ so that $\otimes A'$ commutes with arbitrary homotopy limits.

Finally, we need to see that the two notions are equivalent in the case where
$\mathbf{1}$ is compact. For this, we use the reasoning of the previous paragraph to
argue that if $A \in \clgw(\mathcal{C})$, then there exists an object $A' \in
\clgw(\mathcal{C})$ such that the dual to $\mathbf{1} \to A'$ is a distinguished
effective descent morphism (i.e., tensoring with $A'$ is conservative and  commutes with homotopy limits)
and such that $A' \to A \otimes A'$ is in mixed elementary form. However, in
this case, $A'$ is dualizable, as an element of $\clgw(\mathcal{C})$, so it
admits descent in view of \Cref{cptdescent}.
Therefore, $A$ is actually a finite cover. 
\end{proof}

\begin{proposition} 
Let $F\colon  \mathcal{C} \to \mathcal{D}$ be a morphism of stable homotopy theories,
so that $F$ induces a functor $\clg(\mathcal{C}) \to \clg(\mathcal{D})$. Then
$F$ carries $\clgf(\mathcal{C})$ into $\clgf(\mathcal{D})$ and
$\clgw(\mathcal{C})$ into $\clgw(\mathcal{D})$. 
\end{proposition}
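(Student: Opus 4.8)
The plan is to exploit the fact that a morphism of stable homotopy theories $F\colon\mathcal{C}\to\mathcal{D}$ (an object of $\shot$) is colimit-preserving and symmetric monoidal, hence exact: it preserves finite limits and colimits (as any colimit-preserving functor of stable $\infty$-categories does), preserves the tensor product and the unit, carries dualizable objects to dualizable objects, and --- by the corollary following \Cref{constpro} --- carries descendable commutative algebra objects to descendable ones. Such an $F$ moreover induces, for every $A'\in\clg(\mathcal{C})$, a colimit-preserving symmetric monoidal functor $\md_{\mathcal{C}}(A')\to\md_{\mathcal{D}}(F A')$ compatibly with base change along maps of algebras; this is what lets conditions phrased internally to module categories (such as ``$A\otimes A'$ is in mixed elementary form over $A'$'') be transported along $F$.

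The first step is to record that $F$ preserves ``mixed elementary form.'' An idempotent $e$ of $A'\in\clg(\mathcal{C})$ is an idempotent of $\pi_0 A'$; since $F$ is symmetric monoidal it induces a ring homomorphism $\pi_0 A'\to\pi_0 F(A')$ and hence carries idempotents to idempotents. Because $F$ is exact it preserves finite products (equivalently, biproducts), so it carries the splitting $A'\simeq A'[e^{-1}]\times A'[(1-e)^{-1}]$ to the splitting of $F(A')$ determined by $F(e)$, giving $F(A'[e^{-1}])\simeq F(A')[F(e)^{-1}]$. Combined with preservation of the finite product $\prod_{i=1}^n$, this shows that if $B\in\clg(\md_{\mathcal{C}}(A'))$ has the form $\prod_{i=1}^n A'[e_i^{-1}]$, then $F(B)\in\clg(\md_{\mathcal{D}}(F A'))$ has the form $\prod_{i=1}^n F(A')[F(e_i)^{-1}]$, again in mixed elementary form. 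With this in hand the case of $\clgf$ is immediate: given $A\in\clgf(\mathcal{C})$ with witnessing $A'\in\clg(\mathcal{C})$ (admitting descent, with $A\otimes A'$ in mixed elementary form over $A'$), the object $F(A')$ admits descent and $F(A)\otimes F(A')\simeq F(A\otimes A')$ is in mixed elementary form over $F(A')$; hence $F(A)\in\clgf(\mathcal{D})$.

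For $\clgw$ I would first invoke the proof of \Cref{basicgal}, together with \Cref{splitbytorsor} applied in the Galois category $\clgw(\mathcal{C})^{\op}$, to choose the witnessing object to be a $G$-torsor $T$ for some finite group $G$: thus $T$ is dualizable, $\otimes T$ is conservative and commutes with all homotopy limits, $T\otimes T\simeq\prod_G T$ as $G$-algebras, and $A\otimes T$ is in mixed elementary form over $T$. Applying $F$: the object $F(T)$ is dualizable (so $\otimes F(T)$ commutes with all homotopy limits), $F(T)\otimes F(T)\simeq\prod_G F(T)$ as $G$-algebras, and $F(A)\otimes F(T)\simeq F(A\otimes T)$ is in mixed elementary form over $F(T)$ by the previous paragraph. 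The only remaining point is that $\otimes F(T)$ be \emph{conservative}, so that $\mathbf{1}\to F(T)$ is a distinguished effective descent morphism in $\clgw(\mathcal{D})^{\op}$ and $F(T)$ witnesses $F(A)\in\clgw(\mathcal{D})$. I would try to extract this from the torsor identity $F(T)\otimes F(T)\simeq\prod_G F(T)$, e.g. by running the descent comparison of \Cref{easydesc}/\Cref{proconstdescentC} for the cover $\mathbf{1}\to F(T)$, or via the right adjoint $G$ of $F$ and the projection formula $G(N)\otimes T\simeq G(N\otimes F(T))$ valid for dualizable $F(T)$ (using $F(T^\vee)\simeq F(T)^\vee$).

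The step I expect to be the main obstacle is precisely this conservativity of $\otimes F(T)$. Unlike descendability --- a finitary, diagrammatic condition manifestly preserved by symmetric monoidal exact functors --- the property ``$\otimes A'$ is conservative'' is not obviously inherited by $F(A')$ for an arbitrary dualizable $A'$, because the image of $F$ need not generate $\mathcal{D}$ (indeed $G$ need not be conservative), so the naive projection-formula argument ends at ``$G(N)\simeq 0$'' rather than ``$N\simeq 0$.'' The leverage must therefore come from the extra structure of the splitter --- that it may be taken to be a $G$-torsor, equivalently is split by itself into a finite product of copies of the unit --- and the bulk of the work in the $\clgw$ case is to convert that structure into faithfulness of $\otimes F(T)$ on all of $\mathcal{D}$. (As a sanity check, note that when $\mathbf{1}\in\mathcal{C}$ is compact one has $\clgf(\mathcal{C})=\clgw(\mathcal{C})$ by \Cref{basicgal}, so there the two assertions coincide and the formal $\clgf$ argument suffices.)
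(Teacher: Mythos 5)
Your treatment of $\clgf$ is correct and matches the paper: descendability is a finitary, diagrammatic condition, so it is preserved by any symmetric monoidal exact functor, and preservation of mixed elementary form is formal. Your reduction of the $\clgw$ case to a $G$-torsor witness $T$ via \Cref{splitbytorsor} is also the paper's reduction. But the proof is not complete: you correctly isolate conservativity of $\otimes F(T)$ as the crux and then leave it open, and neither of the two routes you sketch closes it. Running the descent comparison of \Cref{easydesc} for $\mathbf{1}\to F(T)$ presupposes the very conservativity you are trying to establish (it is one of the Barr--Beck hypotheses), and, as you yourself observe, the projection-formula argument through the right adjoint of $F$ terminates at a statement about the right adjoint applied to $N$, not about $N$.

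The missing ingredient is the norm map. For a $G$-torsor $T$ in $\clgw(\mathcal{C})^{\op}$ the norm map $T_{hG}\to T^{hG}\simeq\mathbf{1}$ is an equivalence (\Cref{normmap}): one checks this after tensoring with $T$, which is legitimate since $\otimes T$ is conservative and commutes with homotopy limits \emph{in $\mathcal{C}$}, and after that base change the $G$-action on $T$ becomes induced, where the norm is an equivalence. Now $F$ preserves colimits, so $F(T)_{hG}\simeq F(T_{hG})\simeq F(\mathbf{1})\simeq\mathbf{1}$. Hence $\mathbf{1}_{\mathcal{D}}$ lies in the smallest ideal of $\mathcal{D}$ closed under colimits and containing $F(T)$; in particular, if $Y\otimes F(T)\simeq 0$ then $Y\simeq Y\otimes F(T)_{hG}\simeq(Y\otimes F(T))_{hG}\simeq 0$, which is exactly the conservativity you need. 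The point, which your proposal gestures at but does not exploit, is that the torsor structure lets you verify the norm equivalence entirely in $\mathcal{C}$ (where you have conservativity) and then transport only a \emph{colimit} statement along $F$, sidestepping the fact that $F$ need not be conservative or have conservative right adjoint.
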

\begin{proof} 
Let $A \in \clgw(\mathcal{C})$. Then there exists $A' \in \clgw(\mathcal{C})$,
which is a $G$-torsor for some finite group $G$,
such that $A \otimes A'$ is a finite product of localizations of $A'$ at
idempotent elements, in view of \Cref{splitbytorsor}. 
Therefore, $F(A) \otimes F(A')$ is a finite product of localizations of $F(A')$ at
idempotent elements. 

Now $F(A') \in \clg(\mathcal{D})$ is dualizable since $A'$ is, so tensoring with $F(A')$
commutes with limits in $\mathcal{D}$. If we can show that tensoring with $F(A')$
is \emph{conservative} in $\mathcal{D}$, then it will follow that $F(A)$
satisfies the conditions of \Cref{weakfincover}. 
In fact, we will show that the smallest \emph{ideal} of $\mathcal{D}$ closed
under arbitrary colimits and containing $F(A')$ is all of $\mathcal{D}$. This implies
that any object $Y \in \mathcal{D} $ with $Y \otimes F(A') \simeq 0$ must
actually be contractible. 

To see this, recall that $A'$ has a $G$-action. We have a \emph{norm map} (cf. 
\cite[sec. 2.1]{DAGrat} for a general reference in this context) 
\[ A'_{hG} \to A'^{hG} \simeq \mathbf{1}  , \]
which we claim is an equivalence (\Cref{normmap} below). After applying $F$, we find that $F(A')_{hG}
\simeq \mathbf{1}$, which proves the claim and thus shows that tensoring with $F(A')$ is faithful. 

If $A \in \clgf(\mathcal{C})$, then we could choose the torsor $A'$ so that it
actually belonged to $\clgf(\mathcal{C})$ as well. The image $F(A')$ thus
is a descendable commutative algebra object in $\mathcal{D}$ since
descendability is a ``finitary'' condition that does not pose any convergence
issues with infinite limits. So, by similar (but easier) logic, we find
that $F(A) \in \clgf(\mathcal{D})$. 
\end{proof} 

\begin{lemma} 
\label{normmap}
Let $\mathcal{C}$ be a stable homotopy theory and let $A \in
\clgw(\mathcal{C})^{\op}$ be a $G$-torsor, where $G$ is a finite group. Then
the norm map $A_{hG}\to A^{hG} \simeq \mathbf{1}$ is an equivalence. 
\end{lemma}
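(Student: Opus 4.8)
The plan is to reduce to the trivial torsor by descent. Since $A$ is a $G$-torsor in $\clg(\mathcal{C})^{\op}$, by definition there is an effective descent morphism (in $\clg(\mathcal{C})^{\op}$), i.e.\ a descendable $B \in \clg(\mathcal{C})$, such that after tensoring with $B$ the torsor $A$ becomes trivial: $A \otimes B \simeq \prod_{G} B$ in $\clg(\md_{\mathcal{C}}(B))$, with $G$ permuting the factors by translation. First I would observe that the norm map $A_{hG} \to A^{hG}$ is a natural transformation of functors $\mathcal{C} \to \mathcal{C}$ (built from the $G$-action on $A$ and, since the objects in play are dualizable — a torsor is locally a finite product of $\mathbf{1}$'s, hence dualizable — the homotopy orbits and fixed points are well-behaved), and that its formation commutes with the symmetric monoidal functor $\otimes B \colon \mathcal{C} \to \md_{\mathcal{C}}(B)$; here one uses that $B$ is dualizable so that $\otimes B$ commutes with the relevant (finite, since $G$ is finite) limits and colimits defining $A_{hG}$, $A^{hG}$, and the norm. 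This follows from the general formalism of norm maps in a symmetric monoidal stable $\infty$-category; compare \cite[sec.~2.1]{DAGrat}.

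Next I would check the claim for the \emph{trivial} $G$-torsor $A_0 = \prod_G \mathbf{1}$ (equivalently $\bigsqcup_G \ast$ in $\clg(\mathcal{C})^{\op}$). Here $A_0 = \prod_G \mathbf{1} \simeq \bigoplus_G \mathbf{1}$ (finite products and coproducts agree in a stable $\infty$-category), with $G$ permuting the summands freely, so $A_0$ is an \emph{induced} (or free) $G$-object: $A_0 \simeq \mathbf{1}[G] = \mathbf{1} \otimes_{\mathbb{S}} \mathbb{S}[G]$ with the regular $G$-action. For induced objects the norm map $(\mathbf{1}[G])_{hG} \to (\mathbf{1}[G])^{hG}$ is an equivalence, both sides being canonically $\mathbf{1}$: indeed $(\mathbf{1}[G])_{hG} \simeq \mathbf{1}$ since $G$ acts freely, $(\mathbf{1}[G])^{hG} \simeq \mathbf{1}$ by the projection/Wirthmüller-type identification, and the norm map is checked to be the identity under these identifications. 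Moreover $(\mathbf{1}[G])^{hG} \simeq \mathbf{1}$ matches the asserted identification $A^{hG} \simeq \mathbf{1}$ coming from the torsor structure (the fixed points of $\bigsqcup_G \ast \to \ast$ in $\clg(\mathcal{C})^{\op}$).

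Finally I would combine these: applying $\otimes B$ to the norm map $A_{hG} \to A^{hG}$ yields (up to the identifications of the previous paragraph) the norm map for the trivial torsor $A \otimes B = A_0^{B}$ in $\md_{\mathcal{C}}(B)$, which is an equivalence. Since $B$ is descendable, tensoring with $B$ is conservative (\Cref{faithful}), so the original norm map $A_{hG} \to A^{hG}$ is an equivalence, and it is compatible with the identification $A^{hG} \simeq \mathbf{1}$ by naturality. The main obstacle I anticipate is bookkeeping: making precise that the norm map, the trivialization of the torsor, and the identification $A^{hG} \simeq \mathbf{1}$ are all compatible under $\otimes B$ and its descent, i.e.\ that the equivalence one proves locally is \emph{the} norm map and glues correctly — this is where \Cref{constantfinpro}-style locality arguments and the explicit structure of $G$-torsors from \Cref{torsors} should be invoked to descend the statement from $\md_{\mathcal{C}}(B)$ back to $\mathcal{C}$.
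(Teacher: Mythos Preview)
Your approach is essentially the paper's: tensor with a trivializing algebra to reduce to the induced case, then use conservativity to conclude. The paper makes the more economical choice $B = A$ itself: since $A$ is a $G$-torsor in $\clgw(\mathcal{C})^{\op}$, it is in particular a weak finite cover, so $\otimes A$ is conservative and commutes with all homotopy limits; and $A \otimes A \simeq \prod_G A$ is already induced, so the norm is an equivalence after $\otimes A$, hence before. No separate trivializing object is needed.

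Two small corrections to your write-up. First, you are in the \emph{weak} Galois context $\clgw$, so the effective descent morphisms are those $B$ for which $\otimes B$ is conservative and commutes with all limits---not necessarily descendable (that is the $\clgf$ condition). Your invocation of \Cref{faithful} is thus misplaced, but the needed conservativity is part of the definition anyway. Second, $(-)^{hG}$ is \emph{not} a finite limit even for $G$ finite (its failure to be one is exactly the Tate construction); the correct reason $\otimes B$ commutes with it is that $B$ is dualizable, so $\otimes B$ is a right adjoint (to tensoring with $\mathbb{D}B$) and hence preserves all limits. Your bookkeeping worries at the end are unnecessary: the norm map $A_{hG} \to A^{hG}$ is a single morphism in $\mathcal{C}$, and asking whether it is an equivalence can be checked after applying any conservative exact functor---there is nothing to glue.
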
 
\begin{proof} 
It suffices to prove this after tensoring with $A$; note that tensoring with
$A$ is conservative and commutes with all homotopy limits. 
However, after tensoring with $A$, the $G$-action on $A$ becomes induced, so
the norm map is an equivalence. 
\end{proof} 

\newcommand{\pw}{\pi_1^{\mathrm{weak}}}

Finally, we can make the main definition of this paper. 
\begin{definition} Let $(\mathcal{C}, \otimes, \mathbf{1})$ be a stable
homotopy theory. 
The \textbf{Galois groupoid}  $\pi_{\leq 1}(\mathcal{C})$ of $\mathcal{C}$ is the
Galois groupoid of the Galois category $\clgf(\mathcal{C})^{\op}$. The \textbf{weak
Galois groupoid} $\pi_{\leq 1}^{\mathrm{weak}}(\mathcal{C})$
is the Galois groupoid of $\clgw(\mathcal{C})^{\op}$. 
When $\mathbf{1}$ has no nontrivial idempotents, we will write
$\pi_1(\mathcal{C}), \pw(\mathcal{C})$ for the \textbf{Galois group} (resp.
\textbf{weak
Galois group}) of $\mathcal{C}$ with the understanding that these groups are
defined ``up to conjugacy.''
\end{definition} 

As above, we have an inclusion $\clgw(\mathcal{C}) \subset \clgf(\mathcal{C})$
of Galois categories. 
In particular, we obtain a morphism of profinite groupoids
\begin{equation} \label{weakmap}\pi_{\leq 1}^{\mathrm{weak}} (\mathcal{C}) \to
\pi_{\leq 1}(\mathcal{C}).  \end{equation}
The dual map on Galois categories is fully faithful. In particular, if
$\mathcal{C}$ is \emph{connected}, so that $\pi_1, \pw$ can be represented by
profinite groups, the map \eqref{weakmap} is \emph{surjective.}
Moreover, by \Cref{basicgal}, if $\mathbf{1}$ is compact, \eqref{weakmap} is
an equivalence. 

In the following, we will mostly be concerned with the Galois groupoid, which
 is more useful for
computational applications because of the rapidity of the descent. The weak
Galois groupoid is better behaved as a functor out of the $\infty$-category of
stable homotopy theories. We will
discuss some of the differences further below. The weak Galois groupoid
seems in particular useful for potential applications in $K(n)$-local homotopy theory where
$\mathbf{1}$ is not compact. 
Note, however, that the Galois groupoid depends only on the 2-ring of
\emph{dualizable objects} in a given stable homotopy theory, because the
property of admitting descent (for a commutative algebra object which is
dualizable) is a finitary one. So, the Galois groupoid can be viewed as a
functor $\tring \to \pro( \fgp)^{\op}$. 

\begin{definition} 
We will define the \textbf{Galois group(oid)} of an $\e{\infty}$-ring $R$ to be that
of $\md(R)$. Note that the weak Galois group(oid) and the Galois group(oid) of $\md(R)$
are canonically isomorphic, by \Cref{basicgal}. 
\end{definition}

In any event, both the profinite groupoids of \eqref{weakmap} map to something
purely algebraic. Given a finite \'etale cover of the ordinary commutative ring
$R_0 =
\pi_0 \mathrm{End}_{\mathcal{C}}(\mathbf{1})$, we get a commutative algebra
object in $\mathcal{C}$. 

\begin{proposition} 
Let $R'_0$ be a finite \'etale $R_0$-algebra. The induced classically \'etale
object of $\clg( \mathcal{C})$ is a finite cover, and we have a fully faithful
embedding
\[ \cov_{\spec R_0} \subset \clgf(\mathcal{C})^{\op},  \]
from the category $\cov_{\spec R_0}$ of schemes finite \'etale over
$\spec R_0$ into the opposite to the category $\clgf(\mathcal{C})$. 
\end{proposition}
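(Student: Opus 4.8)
The plan is to verify the two assertions separately: first, that a finite étale $R_0$-algebra $R'_0$ gives a classically étale object of $\clg(\mathcal{C})$ which is a finite cover in the sense of \Cref{deffinitecover}, and second, that the resulting functor $\cov_{\spec R_0} \to \clgf(\mathcal{C})^{\op}$ is fully faithful.

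For the first point, recall from \Cref{classicaletale} and the surrounding discussion that a finite étale $R_0$-algebra $R'_0$ produces an $\e{\infty}$-$R$-algebra $R'$ (with $R = \mathrm{End}_{\mathcal{C}}(\mathbf{1})$) and hence an object $R' \otimes_R \mathbf{1} \in \clg(\mathcal{C})$ corepresenting $T \mapsto \hom_{\pi_0 \mathbf{1}}(R'_0, \pi_0 T)$. To see that this is a finite cover, I would exhibit a descendable $A' \in \clg(\mathcal{C})$ that splits it. A finite étale $R_0$-algebra is split by a further finite étale faithfully flat extension $R_0 \to S_0$: that is, $R'_0 \otimes_{R_0} S_0 \cong \prod_{i=1}^n S_0[e_i^{-1}]$ for idempotents $e_i$ (one may even take $S_0$ to be a finite étale $G$-Galois extension, or pass to a product of copies of a separable closure in the connected case). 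The corresponding classically étale object $A' = S' \otimes_R \mathbf{1} \in \clg(\mathcal{C})$ is dualizable (it is perfect as an $R$-module, and $\md^\omega(R) \to \mathcal{C}$ is fully faithful and lands in dualizable objects), and it is faithful because $R \to S$ is faithfully flat on homotopy groups, so tensoring with $A'$ over $\mathbf{1}$ is conservative. Then \Cref{cptdescent} — wait, that requires $\mathbf{1}$ compact; instead I would use \Cref{ffdesc} or its refinements applied to $R \to S$ (a finitely presented faithfully flat map of $\e{\infty}$-rings is descendable), together with the fact that descendability is preserved by the symmetric monoidal functor $\md(R) \to \mathcal{C}$ (the corollary after \Cref{constpro}). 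Finally, base-changing $A$ along $A'$ reproduces the algebraic splitting: $A \otimes A' \simeq (R'_0 \otimes_{R_0} S_0) \otimes_{S_0} \mathbf{1}_{\md_{\mathcal{C}}(A')}$, which is $\prod_i A'[e_i^{-1}]$ by the compatibility of the $\mathrm{Ind}$/module constructions with the étale algebra formalism. Hence $A$ is a finite cover.

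For full faithfulness, the key input is \Cref{etaletopinv} and its strengthening: for $R'$ étale over $R$ and any $\e{\infty}$-$R$-algebra $R''$, the map $\hom_{R/}(R', R'') \to \hom_{\pi_0 R/}(\pi_0 R', \pi_0 R'')$ is a homotopy equivalence, and in particular $\hom$-spaces out of classically étale objects are homotopy discrete. Given two finite étale $R_0$-algebras $R'_0, R''_0$ with associated objects $A', A'' \in \clg(\mathcal{C})$, I would compute
\[ \hom_{\clg(\mathcal{C})}(A'', A') \simeq \hom_{\clg(\mathcal{C})}(R'' \otimes_R \mathbf{1}, A') \simeq \hom_{\pi_0 \mathbf{1}}(R''_0, \pi_0 A') \simeq \hom_{R_0}(R''_0, \pi_0 A'), \]
using the corepresentability established in \Cref{classicaletale}. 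Since $A'$ is classically étale, $\pi_0 A' \cong R'_0$ as an $R_0$-algebra, so the right-hand side is $\hom_{R_0}(R''_0, R'_0)$, which is exactly $\hom_{\cov_{\spec R_0}}(\spec R'_0, \spec R''_0)$ (noting the direction reversal in passing to $\clg(\mathcal{C})^{\op}$). This is a discrete set, matching the fact that $\cov_{\spec R_0}$ is an ordinary category, and all the equivalences above are natural, giving full faithfulness.

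The main obstacle I anticipate is not conceptual but bookkeeping: making precise the claim that the classically étale construction intertwines the algebraic base-change $R'_0 \otimes_{R_0} S_0$ with the internal base-change $A \otimes A'$ in $\md_{\mathcal{C}}(A')$, i.e., that "classically étale" is stable under the change of stable homotopy theory $\mathcal{C} \rightsquigarrow \md_{\mathcal{C}}(A')$ and compatible with products of localizations at idempotents. This requires carefully unwinding the adjunction $\md(R) \rightleftarrows \mathcal{C}$ and its counterpart for $A'$, plus \Cref{prode} / the idempotent splitting discussion, but it is all formal consequence of the universal properties already recorded. A secondary point to be careful about is which descendability result to invoke for $R \to S$ — one wants $\pi_*(R)$ to not be too large, but for finitely presented étale maps the corollary after the transfinite Adams representability discussion covers this unconditionally.
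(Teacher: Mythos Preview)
Your proposal is correct and follows essentially the same approach as the paper: find a faithfully flat finite \'etale $R_0$-algebra $S_0$ splitting $R'_0$, show the associated object is descendable, and conclude. The paper streamlines one point you wrestle with: it reduces at the outset to $\mathcal{C} = \md(R)$ (since everything in sight lies in $\md^\omega(R) \subset \mathcal{C}$), where the unit \emph{is} compact, so \Cref{cptdescent} applies directly to the dualizable, faithful algebra $S$ and full faithfulness is immediate from \Cref{etaletopinv}; your route via \Cref{ffdesc} and preservation of descendability under $\md(R) \to \mathcal{C}$, together with the corepresentability computation for full faithfulness, reaches the same conclusion with only slightly more bookkeeping.
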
 
This was essentially first observed in \cite{BRrealiz}. 

\begin{proof}
We can assume that $\mathcal{C} = \md(R)$ for $R$ an $\e{\infty}$-ring, because
if $R = \mathrm{End}_{\mathcal{C}}(\mathbf{1})$, we always have an embedding
$\md^{\omega}(R) \subset \mathcal{C}$ and everything here happens inside
$\md^\omega(R)$ anyway. It follows from \Cref{etaletopinv} that we have a fully
faithful embedding $\cov_{\spec R_0} \subset \clg(\mathcal{C})^{\op}$, so it
remains only to show that any classically \'etale algebra object coming from a
finite \'etale $R_0$-algebra $R'_0$ is in fact a finite cover. However, we know that
there exists a finite \'etale $R_0$-algebra $R''_0$ such that: 
\begin{enumerate}
\item $R''_0$ is faithfully flat over $R_0$.  
\item $R'_0 \otimes_{R_0} R''_0$ is the localization of $\prod_S R''_0$ at an
idempotent element, for some finite set $S$. 
\end{enumerate}

We can realize $R'_0, R''_0$ topologically by $\e{\infty}$-rings $R', R''$
under $R$. Now $R''$ admits descent over $R'$, as a finite faithfully flat $R$-module, and
$R' \otimes_R R''$ is the localization of $\prod_S R''$ at an idempotent
element, so that $R' \in \clgf(\md(R))$. 
\end{proof}

The classically \'etale algebras associated to finite \'etale $R_0$-algebras give the ``algebraic'' part of the Galois group
and fit into a sequence
\begin{equation}\label{weakmap2} \pw(\mathcal{C}) \twoheadrightarrow \pi_1(\mathcal{C})
\twoheadrightarrow \pi_1^{\mathrm{et}} \spec R_0.  \end{equation}

\begin{definition} 
We will say that the Galois theory of $\mathcal{C}$ is \textbf{algebraic} if
these maps are isomorphisms. 
\end{definition} 

It is an insight of \cite{rognes} that  
the second map in \eqref{weakmap2}
is generally not an isomorphism: that is, there are examples of finite covers that are
genuinely topological and do not appear so at the level of homotopy groups.
We will review the connection between our definitions and Rognes's work in the next section. 

\subsection{Rognes's Galois theory}

In \cite{rognes}, Rognes introduced the definition of a \emph{$G$-Galois
extension} of an $\e{\infty}$-ring $R$ for $G$ a finite group. 
(Rognes also considered the case of a \emph{stably dualizable group}, which
will be discussed only incidentally in this paper.) 
 Rognes worked in the setting of $E$-local spectra for $E$ a
fixed spectrum. The same definition would work in a general stable homotopy
theory. In this subsection, we will connect Rognes's definition with ours. 

\begin{definition}[Rognes] \label{defgalr}
Let $(\mathcal{C}, \otimes, \mathbf{1})$ be a stable homotopy theory. 
An object $A \in \clg(\mathcal{C})$ with the action of a finite group $G$ (in
$\clg(\mathcal{C})$) is a \textbf{$G$-Galois extension} if: 
\begin{enumerate}
\item The map $\mathbf{1}  \to A^{hG}$ is an equivalence. 
\item The map $A \otimes A \to \prod_G A $ (given informally by $(a_1, a_2)
\mapsto \{a_1 g(a_2)\}_{g \in G}$) is an equivalence.
\end{enumerate}
We will say that $A$ is a \textbf{faithful $G$-Galois extension} if further
tensoring with $A$ is conservative. 
\end{definition} 

General $G$-Galois extensions in this sense are outside the scope of this
paper. In general, there is no reason for a $G$-Galois extension to 
be well-behaved at all with respect to descent theory. By an example of Wieland
(see \cite{Rognes2}), the map $C^*(B \mathbb{Z}/p; \mathbb{F}_p) \to \mathbb{F}_p$ given by
evaluating on a point is a $\mathbb{Z}/p$-Galois extension, but one cannot
expect to carry out descent along it in any manner. 
However, one has: 

\begin{proposition} 
\label{rognesequiv}
A faithful $G$-Galois extension in $\mathcal{C}$ is equivalent to a $G$-torsor
in the Galois category $\clgw(\mathcal{C})$. 
\end{proposition}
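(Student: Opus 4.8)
The plan is to show that the two notions — a faithful $G$-Galois extension in $\mathcal{C}$ and a $G$-torsor in $\clgw(\mathcal{C})^{\op}$ (viewed correctly with respect to the $\op$) — literally coincide as objects of $\mathrm{Fun}(BG, \clg(\mathcal{C}))$, and then that this identification is compatible with morphisms. Recall that in $\clgw(\mathcal{C})^{\op}$ the terminal object is $\mathbf{1}$, and the effective descent morphisms out of the terminal object are (dually) the commutative algebra objects $A'$ such that $\otimes A'$ is conservative and preserves all homotopy limits. So a $G$-torsor in $\clgw(\mathcal{C})^{\op}$ is an object $A$ with $G$-action such that there is such an $A'$ with $A \otimes A' \simeq \prod_G A'$ (as $A'$-algebras with $G$-action permuting the factors). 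First I would check the easy direction: given a faithful $G$-Galois extension $A$, take $A' = A$ itself. Faithfulness says $\otimes A$ is conservative. The second Galois axiom, $A \otimes A \simeq \prod_G A$, is exactly the condition that $A \otimes A$ (in $\clg(\md_{\mathcal{C}}(A))$) is the trivial $G$-torsor on $A$; and one must observe that $A$ is automatically dualizable — this follows because $A \otimes A \simeq \prod_G A$ is dualizable over $A$, hence $A$ is dualizable after a conservative, limit-preserving base change, so $A$ is dualizable in $\mathcal{C}$ (dualizability descends along such functors, cf.\ the remark after \Cref{compactdesc}). Dualizability then forces $\otimes A$ to preserve all homotopy limits. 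Thus $A$, with $A' = A$, is a $G$-torsor in $\clgw(\mathcal{C})^{\op}$.

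Conversely, suppose $A$ with its $G$-action is a $G$-torsor in $\clgw(\mathcal{C})^{\op}$, witnessed by $A'$. I would argue that $A$ is dualizable and faithful, and that it satisfies Rognes's two axioms. Dualizability of $A$ follows as before (dualizability can be checked after the conservative, limit-preserving base change $\otimes A'$, and $A\otimes A' \simeq \prod_i A'[e_i^{-1}]$ is visibly dualizable — here I use that the decomposition in \Cref{weakfincover} applied to a $G$-torsor is actually $\prod_G A'$ by \Cref{splitbytorsor} / the definition of a torsor). For the second Galois axiom, $A \otimes A \to \prod_G A$: this is a map of commutative algebra objects in $\md_{\mathcal{C}}(A)$, and tensoring with $A'$ (which is conservative and a base change to $\md_{\mathcal{C}}(A')$, hence conservative on $\md_{\mathcal{C}}(A)$-module categories) turns it into $A' \otimes A \otimes A \to \prod_G(A' \otimes A)$, which is the corresponding statement for the \emph{trivial} $G$-torsor $A \otimes A' \simeq \prod_G A'$ over $A'$ — and there it is manifestly an equivalence. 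For the first axiom, $\mathbf{1} \to A^{hG}$: I would use the norm-map lemma \Cref{normmap}, which gives $A_{hG} \xrightarrow{\simeq} A^{hG}$ for a $G$-torsor in $\clgw(\mathcal{C})^{\op}$, together with faithfulness to check $\mathbf{1}\to A^{hG}$ is an equivalence by base-changing along $\otimes A$ (or $\otimes A'$) and reducing to the trivial/induced case, where $(\prod_G A')^{hG}\simeq A'$ canonically. Faithfulness of $\otimes A$ itself: either invoke \Cref{normmap} again (the norm equivalence shows $\mathbf{1}$ lies in the localizing ideal generated by $A$, exactly as in the proof of the preceding proposition) or note $A$ is descendable by \Cref{cptdescent}-style reasoning once $A$ is dualizable and $\otimes A'$ is conservative — but one must be careful, since $\mathbf{1}$ need not be compact, so the cleanest route is the norm map.

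Finally I would upgrade this object-level bijection to an equivalence of groupoids: a morphism of faithful $G$-Galois extensions is a $G$-equivariant map of commutative algebra objects, and likewise a morphism of $G$-torsors in $\clgw(\mathcal{C})^{\op}$ is (dually) a $G$-equivariant map in $\clg(\mathcal{C})$; since both full subcategories $\mathrm{Tors}_G(\clgw(\mathcal{C})^{\op})$ and $\{$faithful $G$-Galois extensions$\}$ sit inside $\mathrm{Fun}(BG,\clg(\mathcal{C}))$ and — by the two directions above — have the same objects, they are the same full subcategory, hence equivalent. The main obstacle I anticipate is the $\op$-bookkeeping combined with the non-compactness of $\mathbf{1}$: one must be vigilant that ``effective descent morphism in $\clgw(\mathcal{C})^{\op}$'' unwinds to the correct (limit-preserving, conservative) hypothesis on $\otimes A'$, and that the first Galois axiom $\mathbf{1}\simeq A^{hG}$ — which involves an \emph{infinite} homotopy limit — really does follow, which is precisely where \Cref{normmap} (and hence the torsor hypothesis, not merely the weak-finite-cover hypothesis) is essential. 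Everything else is a routine reduction to the split/trivial case via the conservative base change $\otimes A'$.
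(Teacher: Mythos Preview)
Your torsor $\Rightarrow$ Galois direction is essentially the paper's argument, just with more detail: you check $A\otimes A\to\prod_G A$ and $\mathbf{1}\to A^{hG}$ after the conservative, limit-preserving base change along $\otimes A'$, where both reduce to the trivial torsor; and you supply the faithfulness of $A$ via the norm equivalence $A_{hG}\simeq\mathbf{1}$, which the paper leaves implicit.

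There is, however, a genuine circularity in your Galois $\Rightarrow$ torsor direction. You write that $A$ is dualizable because ``$A\otimes A\simeq\prod_G A$ is dualizable over $A$, hence $A$ is dualizable after a conservative, limit-preserving base change.'' But the base change you are using is $\otimes A\colon\mathcal{C}\to\md_{\mathcal{C}}(A)$, and you have not yet shown that $\otimes A$ is limit-preserving---that is exactly what dualizability of $A$ would give you. The remark after \Cref{compactdesc} that you cite applies only when the base change is along a \emph{descendable} map (so that $\mathcal{C}$ is the totalization of the cobar construction and dualizability can be checked vertexwise); faithfulness of $A$ alone does not give this, and when $\mathbf{1}$ is not compact, faithful plus dualizable need not give it either (cf.\ the remark after \Cref{cptdescent}). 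So the descent/limit-preservation you invoke is precisely what is at stake.

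The paper closes this gap by importing Rognes's result as \Cref{galdual}: \emph{any} $G$-Galois extension (faithful or not) is dualizable. This is a nontrivial statement with its own proof in \cite{rognes}, using both axioms $\mathbf{1}\simeq A^{hG}$ and $A\otimes A\simeq\prod_G A$ to exhibit explicit duality data. Once you have dualizability from \Cref{galdual}, your argument goes through verbatim: $\otimes A$ is conservative (by faithfulness) and preserves limits (by dualizability), and $A\otimes A\simeq\prod_G A$ is the elementary form over $A'=A$, so $A$ is a $G$-torsor in $\clgw(\mathcal{C})^{\op}$.
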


This in turn relies on:

\begin{proposition}[{\cite[Proposition 6.2.1]{rognes}} ]
\label{galdual}
Any $G$-Galois extension $A$ of the unit is dualizable. 
\end{proposition}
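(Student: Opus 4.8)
The plan is to exhibit an explicit dual for $A$ together with explicit evaluation and coevaluation maps, and then verify the triangle identities. The first key observation is that condition (2) in the definition of a $G$-Galois extension, namely the equivalence $A \otimes A \xrightarrow{\sim} \prod_G A$, already tells us that $A \otimes A$ is a finite product of copies of $A$ as an $A$-module (via the left factor), hence dualizable \emph{over} $A$. So $A$ is ``self-dual after base change to itself.'' The strategy is then to use the homotopy fixed point equivalence $\mathbf{1} \simeq A^{hG}$ from condition (1) to descend this duality back down to $\mathcal{C}$. Concretely, I would first build a candidate coevaluation map
\[ \mathrm{coev} \colon \mathbf{1} \to A \otimes A \]
as follows: the composite $\mathbf{1} \to A \xrightarrow{\Delta} A \otimes A$ where $\Delta$ is the ``diagonal'' obtained by composing the unit $\mathbf{1}\to A$ with the map $A \to A\otimes A$ adjoint to the multiplication, or — more in the spirit of Rognes — take the inverse of the equivalence $A\otimes A \simeq \prod_G A$ composed with a suitable section. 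The candidate evaluation
\[ \mathrm{ev}\colon A \otimes A \to \mathbf{1} \]
should be the multiplication $A \otimes A \to A$ followed by a ``trace'' or ``transfer'' map $A \to \mathbf{1}$; the natural candidate for the latter is the composite $A \to A^{hG} \simeq \mathbf{1}$ given by averaging over $G$ (the homotopy fixed point projection), which makes sense precisely because $\mathbf{1}\simeq A^{hG}$, or equivalently the norm/transfer associated to the $G$-action.

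The second step is to verify the two triangle identities for this $(\mathrm{ev}, \mathrm{coev})$ pair, i.e. that the composites
\[ A \simeq A \otimes \mathbf{1} \xrightarrow{1\otimes \mathrm{coev}} A \otimes A \otimes A \xrightarrow{\mathrm{ev}\otimes 1} A \]
and its mirror are homotopic to the identity. Here the crucial simplification is that one may check each triangle identity \emph{after tensoring with $A$}: since $A$ is a faithful $G$-Galois extension, tensoring with $A$ is conservative — but even for a general (not necessarily faithful) $G$-Galois extension, $A\otimes A \simeq \prod_G A$ receives $A$ as a retract (project to one factor), so $A\otimes(-)$ detects the relevant equivalences on objects in the thick subcategory generated, which includes everything built from $A$. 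After tensoring with $A$, condition (2) trivializes the $G$-action: the extension $A \otimes A / A$ becomes the ``split'' extension $\prod_G A / A$, and the triangle identities reduce to a completely explicit computation with the fold map $\bigsqcup_G A \to A$ and the diagonal $A \to \prod_G A$, where the averaging map $A^{hG}\simeq \mathbf{1}$ becomes the honest categorical limit $(\prod_G A)^{hG}\simeq A$ over the free $G$-set, i.e. ordinary projection to a coordinate. This is the standard fact that a trivial torsor is self-dual.

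I expect the main obstacle to be the bookkeeping needed to write down $\mathrm{coev}$ and $\mathrm{ev}$ \emph{coherently} — that is, as genuine maps in the symmetric monoidal $\infty$-category $\mathcal{C}$ rather than just in the homotopy category — and then to match up the base-changed data with the ``trivial torsor is self-dual'' picture in a way that is manifestly $G$-equivariant. Since dualizability, by the definition recalled in the excerpt, can be checked in the homotopy category with its induced symmetric monoidal structure, one is actually allowed to work at the level of homotopy classes of maps, which removes the coherence difficulty entirely: it suffices to produce $\mathrm{ev}, \mathrm{coev}$ in $\mathrm{Ho}(\mathcal{C})$ and check the two triangle identities there. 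This is the point I would lean on to keep the argument short. The remaining work is then the explicit diagram chase after base change to $A$, using only that $A \otimes A \simeq \prod_G A$ compatibly with the left $A$-module structure and the right $G$-action, plus the identification of $\mathbf{1}$ with $A^{hG}$; I would organize this as: (i) reduce to checking dualizability of $A\otimes A$ as an $A$-module with its extra $G$-structure, (ii) identify this base-changed situation with $\prod_G A'$ for $A' = \md_{\mathcal{C}}(A)$'s unit, (iii) observe that a finite product of copies of the unit is visibly dualizable with the obvious evaluation and coevaluation, and (iv) transport back.
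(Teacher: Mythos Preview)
The paper does not give its own proof of this proposition; it simply cites Rognes's Proposition 6.2.1 and adds the one-line remark that ``the proof in \cite{rognes} is stated for the $E$-localization of $\md(A)$ for $A$ an $\e{\infty}$-ring, but it is valid in any such setting.'' So there is no in-paper argument to compare against beyond that pointer.

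Your outline is in fact the shape of Rognes's argument: exhibit $A$ as self-dual by building evaluation and coevaluation out of the unit, the multiplication, and the two Galois conditions, then verify the triangle identities (which, as you note, may be done in the homotopy category). Two places in your sketch need tightening before it becomes a proof. First, there is no natural map $A \to A^{hG}$; the canonical map goes the other way. What you want for the trace is the composite $A \to A_{hG} \xrightarrow{N} A^{hG} \simeq \mathbf{1}$ through the norm map, which exists in any stable setting for finite $G$; you should say this rather than ``averaging.'' Second, your handling of the non-faithful case is right in substance but muddled in phrasing: the point is simply that $A$ is a retract of $A \otimes A$ via the unit and multiplication, so an endomorphism $\phi\colon A \to A$ with $\phi \otimes 1_A$ homotopic to the identity is itself the identity. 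The detour through ``detects equivalences on objects in the thick subcategory generated'' is not the correct framing and should be replaced by that one-line retract observation. With those two fixes your plan matches Rognes's; what remains is exactly the bookkeeping you flag in (iii)--(iv), namely checking that your base-changed $(\mathrm{ev},\mathrm{coev})$ really do agree with the evident self-duality data on $\prod_G A$ in $\md_{\mathcal{C}}(A)$.
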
 
 
 The proof in \cite{rognes} is stated for the $E$-localization of $\md(A)$ for
 $A$ an $\e{\infty}$-ring, but it is valid in any such setting. 

\begin{proof}[Proof of \Cref{rognesequiv}]
A $G$-torsor in $\clgw(\mathcal{C})$ is, by definition, a commutative algebra
object $A$ with an action of $G$ such that there exists an $A' \in
\clg(\mathcal{C} )$ such that $\otimes A'$ is conservative and commutes with
limits, with $A' \otimes A \simeq \prod_G A'$ as an $A'$-algebra and
compatibly with the $G$-action. 
This together with descent along $\mathbf{1} \to A'$ implies that the map
$\mathbf{1} \to A^{hG}$ is an equivalence. Similarly, the map $A \otimes A \to
\prod_G A $ is well-defined in $\mathcal{C}$ and becomes an equivalence after
base-change to $A'$ (by checking for the trivial torsor), so that it must have
been an equivalence to begin with. 

Finally, if $\mathbf{1} \to A$ is a faithful $G$-Galois extension in the sense
of \Cref{defgalr}, then $A$ is dualizable by \Cref{galdual}, so that  $\otimes
A$ commutes with limits. Moreover, $\otimes A$ is faithful by assumption. Since 
$A \otimes A$ is in elementary form, it follows that $A \in
\clgw(\mathcal{C})$ and is in fact a $G$-torsor. 
\end{proof} 

The use of $G$-torsors will be very helpful in making arguments.
For example, given a connected Galois category, any nonempty
object is a quotient of a $G$-torsor for some finite group $G$; in
fact, understanding the Galois theory is equivalent to understanding torsors
for finite groups. 

\begin{corollary} 
\label{Gtorsor}
A $G$-torsor in the Galois category $\clgf(\mathcal{C})$ is equivalent to a
$G$-Galois extension $A \in \clg(\mathcal{C})$ such that $A$
admits descent. 
\end{corollary}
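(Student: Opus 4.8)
The plan is to reduce to \Cref{rognesequiv}, which already identifies faithful $G$-Galois extensions with $G$-torsors in the larger Galois category $\clgw(\mathcal{C})$; the only new content is the descent bookkeeping. First I would observe that since $\clgf(\mathcal{C}) \subset \clgw(\mathcal{C})$ is a full subcategory (\Cref{basicgal}), any $G$-torsor in $\clgf(\mathcal{C})$ is in particular a $G$-torsor in $\clgw(\mathcal{C})$, hence a faithful $G$-Galois extension $A$ of $\mathbf{1}$ by \Cref{rognesequiv}; conversely, a $G$-Galois extension $A$ that admits descent is faithful by \Cref{faithful}, hence is a faithful $G$-Galois extension, hence by \Cref{rognesequiv} is already a $G$-torsor in $\clgw(\mathcal{C})$. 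So it remains only to match the two extra conditions: ``$A$ admits descent'' on the one side versus ``the object splitting the torsor can be taken in $\clgf(\mathcal{C})$'' on the other.

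For the forward direction, I would unwind the definition of a $G$-torsor in the Galois category $\clgf(\mathcal{C})^{\op}$: there is an object $A' \in \clgf(\mathcal{C})$ together with an effective descent morphism $\mathbf{1} \to A'$ in $\clgf(\mathcal{C})^{\op}$ --- which by the corollary following \Cref{galcontext} means precisely that $\mathbf{1} \to A'$ admits descent in $\clg(\mathcal{C})$ --- such that $A' \otimes A \simeq \prod_G A'$ as $A'$-algebras, with $G$ permuting the factors. Since $\prod_G A'$ contains $A'$ as a retract in $\md_{\mathcal{C}}(A')$, the map $A' \to A' \otimes A \simeq \prod_G A'$ admits descent; then \Cref{permanence}(1) shows the composite $\mathbf{1} \to A' \otimes A$ admits descent. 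By symmetry of $\otimes$, the unit map $\mathbf{1} \to A \otimes A' \simeq A' \otimes A$ factors as $\mathbf{1} \to A \to A \otimes A'$, so \Cref{permanence}(2) gives that $\mathbf{1} \to A$ admits descent, i.e.\ $A$ admits descent.

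For the converse, given a $G$-Galois extension $A$ that admits descent, I would take $A' = A$ as the splitting object: $\mathbf{1} \to A$ admits descent by hypothesis, and condition (2) of \Cref{defgalr} supplies a $G$-equivariant equivalence $A \otimes A \simeq \prod_G A$; this exhibits $A$ (split by itself) as a finite cover, and the $G$-action makes it a $G$-torsor in $\clgf(\mathcal{C})^{\op}$.

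I do not anticipate a serious obstacle: the statement is essentially a dictionary between ``admits descent'' and the Galois-theoretic notion of an effective descent morphism. The only point needing care is the bookkeeping in the forward direction --- correctly invoking the two halves of \Cref{permanence}, using descendability of $\prod_G A'$ over $A'$, and translating ``effective descent morphism in $\clgf(\mathcal{C})^{\op}$'' back into ``admits descent in $\clg(\mathcal{C})$'' via the corollary following \Cref{galcontext}.
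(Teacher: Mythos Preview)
Your proof is correct and essentially the same as the paper's. The paper's forward direction is stated tersely as ``descendability can be checked locally and a trivial torsor is descendable,'' which unwinds to precisely your argument via the two parts of \Cref{permanence}; the converse direction is identical (take $A' = A$). Your preliminary reduction through \Cref{rognesequiv} is not strictly needed but does no harm.
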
 
\begin{proof} 
Given a $G$-torsor in $\clgf(\mathcal{C})$, it follows easily  that
it generates all of $\mathcal{C}$ as a thick $\otimes$-ideal, since
descendability can be checked locally and since a trivial torsor is descendable. Conversely, if $A$
is a $G$-Galois extension with this property, then $A$ is a finite cover of the
unit: we can take as our descendable commutative algebra object (required 
by \Cref{deffinitecover})
$A$ itself. 
\end{proof}

\begin{corollary} 
If $|G|$ is invertible in $\pi_0 \mathrm{End}( \mathbf{1})$, then a $G$-torsor in $\clgw(\mathcal{C})$
actually belongs to $\clgf(\mathcal{C})$. In particular, if $\mathbb{Q} \subset
\pi_0 \mathrm{End}( \mathbf{1})$, then the two fundamental groups are the same:
\eqref{weakmap2} is an isomorphism. 
\end{corollary}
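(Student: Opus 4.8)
The plan is to show that when $|G|$ is invertible in $\pi_0 \mathrm{End}(\mathbf{1})$, any $G$-torsor $A$ in $\clgw(\mathcal{C})$ is already descendable, and hence lies in $\clgf(\mathcal{C})$ by \Cref{Gtorsor}. The key observation is the \emph{norm map} $A_{hG} \to A^{hG} \simeq \mathbf{1}$ from \Cref{normmap}, which is an equivalence for \emph{any} $G$-torsor in $\clgw(\mathcal{C})$ (there the conservativity and limit-preservation of $\otimes A$ were used, which is exactly the defining data of a weak finite cover). Since $|G|$ is invertible in $\pi_0 \mathrm{End}(\mathbf{1})$, the composite $A \xrightarrow{\mathrm{transfer}} A_{hG} \simeq \mathbf{1}$ (the $G$-transfer, divided by $|G|$, followed by the norm equivalence) exhibits $\mathbf{1}$ as a \emph{retract} of $A$: concretely, the averaging idempotent $\frac{1}{|G|}\sum_{g \in G} g$ on $A$ splits off $A^{hG} \simeq \mathbf{1}$ as a summand of $A$ in $\mathcal{C}$. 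Therefore $\mathbf{1}$ belongs to the thick $\otimes$-ideal generated by $A$, which is precisely the condition of \Cref{admitd} that $A$ admits descent.

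First I would make the retraction argument precise: because $|G|$ is invertible, the idempotent $e = \frac{1}{|G|}\sum_{g \in G} g \in \pi_0 \mathrm{End}_{\mathcal{C}}(A)$ (acting through the $G$-action on $A$) is defined, and its image is $\mathbf{1}$; thus $\mathbf{1}$ is a retract of the object $A \in \mathcal{C}$. Being a retract, $\mathbf{1}$ lies in the thick subcategory generated by $A$, a fortiori in the thick $\otimes$-ideal generated by $A$. By \Cref{admitd} this says $A$ admits descent. Then, with the descendable commutative algebra object in \Cref{deffinitecover} taken to be $A$ itself (recall $A \otimes A \simeq \prod_G A$ is in mixed elementary form, as $A$ was a $G$-torsor in $\clgw(\mathcal{C})$), we conclude $A \in \clgf(\mathcal{C})$.

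For the final assertion, when $\mathbb{Q} \subset \pi_0 \mathrm{End}(\mathbf{1})$ every finite $|G|$ is invertible, so by the above every $G$-torsor in $\clgw(\mathcal{C})$ already lies in $\clgf(\mathcal{C})$. Since the inclusion $\clgw(\mathcal{C}) \subset \clgf(\mathcal{C})$ of Galois categories is always fully faithful (the dual of \eqref{weakmap}), and now it is also essentially surjective on $G$-torsors for every finite $G$, \Cref{torsorenough} shows the two Galois categories coincide, i.e.\ \eqref{weakmap} (equivalently the first map in \eqref{weakmap2}) is an equivalence. The main obstacle I anticipate is purely bookkeeping: one must make sure the averaging idempotent is genuinely a coherent idempotent self-map of $A$ in the $\infty$-categorical sense (so that its image exists as a retract), rather than merely an idempotent in the homotopy category; this is handled exactly as in the treatment of idempotent towers and in the $\mathbb{Q}$-linear norm-map arguments referenced via \cite{DAGrat}, and does not require anything beyond what is already invoked for \Cref{normmap}.
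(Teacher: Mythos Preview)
Your proposal is correct and follows essentially the same approach as the paper: exhibit $\mathbf{1}$ as a retract of $A$ when $|G|$ is invertible, hence $A$ admits descent, and then invoke \Cref{Gtorsor}. The paper phrases the retraction slightly more cleanly as the composite $A^{hG} \to A \to A_{hG} \xrightarrow{N} A^{hG}$ being an equivalence, which sidesteps entirely the coherence worry you raise about the averaging idempotent; for the $\mathbb{Q}$-case the paper argues directly that an arbitrary $B \in \clgw(\mathcal{C})$ is a finite cover by choosing a trivializing $G$-torsor $A$ via \Cref{splitbytorsor} and using descendability of $A$, whereas you appeal to \Cref{torsorenough}---both are one-line applications of results already in hand. (Minor notational slip: the fully faithful inclusion goes $\clgf(\mathcal{C}) \subset \clgw(\mathcal{C})$, as in \Cref{basicgal}, not the reverse.)
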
 
\begin{proof} 
In any stable $\infty$-category $\mathcal{D}$ where $|G|$ is invertible (i.e.,
multiplication by $|G|$ is an isomorphism on each object), then for any object
$X \in \mathrm{Fun}(BG, \mathcal{C})$, $X^{hG}$ is a retract of $X$. 
In fact, the composite
\[ X^{hG} \to X \to X_{hG} \stackrel{N}{\to} X^{hG},  \]
is an equivalence, where $N$  is the norm map. 

In particular, given a $G$-torsor $A \in \clgw(\mathcal{C})$, 
we have $\mathbf{1} \simeq A^{hG}$, so that $\mathbf{1}$ is a retract of $A$:
in particular, the thick $\otimes$-ideal $A$ generates contains all of
$\mathcal{C}$, so that (by \Cref{Gtorsor}) it belongs to $\clgf(\mathcal{C})$. 
This proves the first claim of the corollary. 

Finally, if $\mathbb{Q} \subset \pi_0 \mathrm{End}( \mathbf{1})$, then 
fix a weak finite cover $B \in \clgw(\mathcal{C})$. There is a $G$-torsor $A
\in \clgw(\mathcal{C})$
for some finite group $G$ 
such that $A \otimes B$ is a localization of a product of copies of $A$ at
idempotent elements. Since the thick $\otimes$-ideal that $A$ generates contains
all of $\mathcal{C}$ by the above, it follows that $B$ is actually a finite cover. 
\end{proof} 

\subsection{The connective case}

The rest of this paper will be devoted to computations of Galois groups. These
computations are usually based on descent theory together with results stating
that we can identify the Galois theory in certain settings as entirely 
algebraic. Our first result along these lines shows in particular that we can 
recover the classical \'etale fundamental group of a commutative ring. More
generally, we can describe the Galois group of a  connective $\e{\infty}$-ring
purely algebraically.

\begin{theorem} 
\label{connectivegal}
Let $A$ be a connective $\e{\infty}$-ring. 
Then the map $  \pi_1( \md(A)) \to \pi_1^{\mathrm{et}} \spec
\pi_0 A$ is an equivalence; that is, 
all finite covers or weak finite covers are classically \'etale. 
\end{theorem}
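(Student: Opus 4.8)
The plan is to reduce the statement to \Cref{etaletopinv} (which handles the case of honestly \'etale algebras) together with the connective comparison result of \Cref{modclosed} (the Lurie gluing theorem \cite[Theorem 7.2]{DAGIX}), using an ascending induction along the Postnikov tower of $A$. Since $\pw(\md(A)) \simeq \pi_1(\md(A))$ by \Cref{basicgal} (the unit of $\md(A)$ is compact), it suffices to work with the Galois groupoid $\pi_{\leq 1}(\md(A))$ and to show that every finite cover $B \in \clgf(\md(A))$ is classically \'etale, i.e., comes from a finite \'etale $\pi_0 A$-algebra. Equivalently, since the functor $\cov_{\spec \pi_0 A} \to \clgf(\md(A))^{\op}$ is already known to be fully faithful, we must show it is essentially surjective.

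First I would reduce to the case where $A$ is \emph{truncated}: a finite cover $B$ is dualizable, hence compact, hence a perfect $A$-module, and moreover $B$ is $0$-cotruncated in $\clg_{A/}$ (finite covers share this property of \'etale algebras, as remarked after \Cref{etaletopinv}). Thus $B$ is determined by compatible data over the truncations $\tau_{\leq n} A$, and one reduces to showing each $\tau_{\leq n} A \to \tau_{\leq n} A \otimes_A B$ is classically \'etale, then passing to the limit. Now run the induction on $n$: for the base case $n = 0$, $A$ discrete, a finite cover of $\md(\pi_0 A)$ is classically \'etale by a direct argument — a dualizable commutative algebra that becomes split after a descendable (hence, by \Cref{ffdesc}-type reasoning, faithfully flat in a suitable sense) extension is finite \'etale on $\pi_0$, and \Cref{etaletopinv} identifies it. For the inductive step, write the square-zero extension
\[ \tau_{\leq n} A \to \tau_{\leq n-1} A, \quad \text{with fiber } \Sigma^n \pi_n A, \]
and use that $\tau_{\leq n} A \simeq \tau_{\leq n-1} A \times_{(\tau_{\leq n-1} A \oplus \Sigma^{n+1} \pi_n A)} (\tau_{\leq n-1} A)$ as a pullback of connective $\e{\infty}$-rings inducing surjections on $\pi_0$. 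By \Cref{modclosed}, perfect modules — hence the relevant commutative algebra objects — over $\tau_{\leq n} A$ are glued from those over $\tau_{\leq n-1} A$ and the trivial square-zero extension; a finite cover over $\tau_{\leq n} A$ therefore restricts to one over $\tau_{\leq n-1} A$, which by induction is classically \'etale, coming from a finite \'etale $\pi_0 A$-algebra $R_0'$, and one checks the gluing datum is forced to be the canonical one (here one uses the $0$-cotruncatedness/rigidity of \'etale algebras: the mapping space computing the descent datum is discrete and the obstruction groups $\mathrm{Ext}$ of the cotangent complex vanish because $\pi_0 A \to R_0'$ is \'etale).

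The main obstacle I expect is the inductive step: specifically, verifying that a finite cover $B$ of $\tau_{\leq n} A$ really is the base-change of the classically \'etale lift predicted by induction, rather than merely restricting to a classically \'etale algebra after passing to $\tau_{\leq n-1} A$. This requires knowing that the base-change functor $\clgf(\md(\tau_{\leq n} A)) \to \clgf(\md(\tau_{\leq n-1} A))$ is fully faithful (or at least conservative with the right fibers) on the subcategory of covers. This in turn should follow from the deformation-theoretic fact that \'etale $\pi_0 A$-algebras have vanishing (co)tangent complex relative to $\pi_0 A$, so that the space of $\e{\infty}$-lifts of a classically \'etale algebra along a square-zero extension is contractible — this is exactly the statement that drives \Cref{etaletopinv}, and one is essentially re-deriving the relevant part of its proof in families. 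Once essential surjectivity is established, fully faithfulness is already in hand from the proposition preceding the theorem, so the two categories $\cov_{\spec \pi_0 A}$ and $\clgf(\md(A))^{\op}$ agree, and applying the Galois correspondence \Cref{galequiv} gives the asserted isomorphism on fundamental groups.
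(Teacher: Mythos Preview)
Your approach differs from the paper's: the paper reduces the general connective case to the case where $\pi_0 A$ is noetherian by invoking \Cref{galcolim} (commutation of Galois theory with filtered colimits), and then cites \cite[Example 5.5]{MM} for the noetherian case. Your Postnikov-tower induction is a legitimate alternative route, and the inductive step via the square-zero pullback and \Cref{modclosed} is essentially correct (and close in spirit to the invariance results appearing later in the paper).

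However, there is a real gap: the base case $A$ discrete is where the entire content of the theorem lies, and you have not proved it. The phrase ``a dualizable commutative algebra that becomes split after a descendable (hence, by \Cref{ffdesc}-type reasoning, faithfully flat \dots) extension is finite \'etale on $\pi_0$'' is not an argument. First, \Cref{ffdesc} says faithfully flat implies descendable under a countability hypothesis, which is the wrong direction; a descendable $\e{\infty}$-$A_0$-algebra need not be discrete or faithfully flat on $\pi_0$. What must actually be shown is that a $G$-torsor $B'$ in $\clgf(\md(A_0))$ for discrete $A_0$ is itself a discrete commutative ring. Since $B'$ is perfect over $A_0$ it is represented by a bounded complex of finite projectives in degrees $[a,b]$; one then exploits $B' \otimes_{A_0} B' \simeq \prod_G B'$ together with a comparison of the bottom and top homotopy degrees on both sides to force $a=b=0$. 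The bottom-degree part is clean (using that $\pi_a B'$ is finitely presented and $M \otimes_{A_0} M = 0 \Rightarrow M = 0$ for finitely generated $M$), but the top-degree part is more delicate in the non-noetherian case, which is exactly why the paper imposes a noetherian hypothesis in its cited reference and then removes it via filtered colimits.

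Incidentally, once the discrete case is in hand your Postnikov induction is unnecessary: for connective $A$, a perfect $A$-module has Tor-amplitude in $[0,0]$ if and only if its base change to $\pi_0 A$ does, so if $B \otimes_A \pi_0 A$ is discrete finite \'etale then $B$ is finite projective over $A$, hence $\pi_* B \simeq \pi_* A \otimes_{\pi_0 A} \pi_0 B$, and $B$ is classically \'etale directly.
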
 

\begin{remark} 
This result, while not stated explicitly in \cite{rognes}, seems to be folklore. 
One has the following intuition: a connective $\e{\infty}$-ring consists of its
$\pi_0$ (which is  a discrete commutative ring) together with higher homotopy
groups $\pi_i, i > 0$ which can be thought of as ``fuzz,'' a generalized sort
of nilthickening.  Since
nilpotents should not affect the \'etale site, we would expect the Galois theory 
to be invariant under the map $A \to \tau_{\leq 0} A$ in this case. 
\end{remark} 

\begin{proof} 
Let $A$ be a connective $\e{\infty}$-ring. 
The argument was explained for $\pi_0 A$ noetherian  in \cite[Example
5.5]{MM}, and the general case can be reduced to this using the commutation
of Galois theory and filtered colimits (\Cref{galcolim} below). 
In fact,  the $\infty$-category of connective $\e{\infty}$-rings is compactly
generated and any compact object has noetherian $\pi_0$. 
Therefore, the result assuming $\pi_0 A$ noetherian implies it in general
since any connective $\e{\infty}$-ring is a filtered colimit of compact
objects. 
\end{proof} 

The above argument illustrates a basic technique one has: one tries, whenever
possible, to reduce to the case of $\e{\infty}$-rings which satisfy
\emph{K\"unneth isomorphisms}. In this case, one can attempt to study
$G$-Galois extensions using algebra. 

\begin{example}[{Cf. \cite[Theorem 10.3.3]{rognes}}] 
The Galois group of $\sp$ is trivial, since $\sp$ is the $\infty$-category of
modules over the sphere $S^0$, and the \'etale fundamental group of $\pi_0(S^0)
\simeq \mathbb{Z}$ is trivial by Minkowski's theorem that the discriminant of
a number field is always $> 1$ in absolute value. 
\end{example}

\subsection{Galois theory and filtered colimits}

In this subsection, we will prove that Galois theory behaves well with respect
to filtered colimits.   

\begin{theorem} \label{galcolim}
The functor $A \mapsto \clgf( \md(A)), \clg \to \cati$
commutes with filtered colimits. In particular, given a filtered diagram $I \to
\clg$, the map
\[ \pi_{\leq 1}  \md({\varinjlim_I A_i})\to \varprojlim_{I}
\pi_{\leq 1}  \md(A_i), \]
is an equivalence of profinite groupoids. 
\end{theorem}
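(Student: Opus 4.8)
\textbf{Proof proposal for \Cref{galcolim}.}

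The plan is to reduce the statement about profinite groupoids to the statement about the underlying Galois categories, and then to prove the latter directly using the finitary nature of the notion ``admits descent.'' The second assertion follows formally from the first: the Galois groupoid is obtained from $\clgf(\md(A))^{\op}$ by the equivalence of \Cref{galequiv}, and that equivalence identifies filtered colimits in $\galcat$ with cofiltered limits in $\pro(\fgp)^{\op}$; combined with $\pi_0$ of a commutative ring commuting with filtered colimits (so that connectedness behaves well), the displayed map on profinite groupoids is the image under $\Cref{galequiv}$ of the equivalence $\varinjlim_I \clgf(\md(A_i))^{\op} \simeq \clgf(\md(\varinjlim_I A_i))^{\op}$. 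Since $\galcat \to \cati$ preserves filtered colimits, it suffices to prove that the natural functor $\varinjlim_I \clgf(\md(A_i)) \to \clgf(\md(\varinjlim_I A_i))$ is an equivalence of $\infty$-categories.

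First I would record the input: writing $A = \varinjlim_I A_i$, base change gives a compatible family of symmetric monoidal functors $\md(A_i) \to \md(A)$, hence functors $\clgf(\md(A_i)) \to \clgf(\md(A))$ (using the functoriality proved just above \Cref{normmap}), and these assemble to the comparison functor. The key algebraic fact is that $\clg_{A/}^{\omega}$ --- finitely presented $\e\infty$-$A$-algebras, equivalently, compact objects --- commutes with filtered colimits in $A$, and that a finite cover of $A$ is in particular dualizable, hence a perfect $A$-module, hence compact in $\clg_{A/}$ (it is $0$-cotruncated, so this is the dualizable/perfect statement). Concretely: given a finite cover $B \in \clgf(\md(A))$, it is dualizable; since dualizable objects of $\md(A)$ are perfect and $\md^\omega(A) \simeq \varinjlim_I \md^\omega(A_i)$, the pair $(B, B')$ (where $B'$ is the descendable algebra witnessing the cover, which we may take itself to be a finite cover as in the proof of \Cref{basicgal}, so $B'$ is also dualizable) descends to some finite stage $i_0$: there are $B_{i_0}, B_{i_0}' \in \clg(\md(A_{i_0}))$, both dualizable, with $B \simeq B_{i_0} \otimes_{A_{i_0}} A$ and similarly for $B'$, and after possibly enlarging $i_0$ the data exhibiting $B_{i_0} \otimes_{A_{i_0}} B_{i_0}'$ as a finite product of idempotent localizations of $B_{i_0}'$ also descends (this is a finite amount of data: finitely many idempotents and an equivalence of compact objects). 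This shows essential surjectivity up to one more step, namely that $B_{i_0}'$ genuinely \emph{admits descent} in $\md(A_{i_0})$: here I would invoke \Cref{constpro} --- $B'$ admits descent iff the cobar tower $\{\mathrm{Tot}_n \cb(B')\}$ is pro-constant converging to $A$ --- and observe that pro-constancy of the cobar tower of a dualizable algebra is, by \Cref{dualthing} (applicable since $\mathbf 1 = A_{i_0}$ is compact... wait, it need not be, but $B_{i_0}'$ is dualizable hence perfect hence compact, and the relevant finitary criterion only involves finitely many $\mathrm{Tot}_n$), a condition that again descends to a finite stage because it amounts to $A_{i_0}$ being a retract of $\mathrm{Tot}_n \cb(B_{i_0}')$ for some finite $n$, a map between compact objects. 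For full faithfulness, the mapping spaces in $\clgf$ are homotopy discrete (\Cref{galcontext}); $\hom_{\clg_{A/}}(B, C) = \varinjlim_I \hom_{\clg_{A_i/}}(B_i, C_i)$ since $B$ is compact in $\clg_{A/}$, which gives that the comparison functor is fully faithful on the nose.

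The main obstacle I expect is the bookkeeping in descending the property ``admits descent'' to a finite stage cleanly: one must phrase descendability purely as the existence of a \emph{finite} $n$ and a retraction $A \to \mathrm{Tot}_n \cb(B')$ in $\md(A)$ splitting the canonical map (which is exactly the content of \Cref{constpro} together with the definition of ``thick $\otimes$-ideal generated by $B'$''), and then note that since $A = \varinjlim A_i$, both the object $\mathrm{Tot}_n \cb(B')$ (a finite limit of compact objects, hence compact, hence already defined at a finite stage $i_1 \geq i_0$) and the retraction descend. The subtlety is checking that the descended retraction is still a retraction of the \emph{canonical} augmentation map, but this is a single equation between maps of compact objects and hence holds at a further finite stage. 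Once this is in place, the equivalence $\varinjlim_I \clgf(\md(A_i)) \to \clgf(\md(A))$ follows, and translating through \Cref{galequiv} and the compatibility of $\pi_0$ with filtered colimits yields the stated equivalence $\pi_{\leq 1}\md(\varinjlim_I A_i) \simeq \varprojlim_I \pi_{\leq 1}\md(A_i)$ of profinite groupoids.
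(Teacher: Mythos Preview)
Your overall strategy is sound and is in fact the route the paper's authors say they ``suspect \dots\ is possible'' but did not carry out; they instead resort to obstruction theory (\Cref{obstruct}) to promote a perfect module with homotopy-commutative multiplication and homotopy $G$-action to a genuine $\e{\infty}$-algebra at a finite stage. So the two proofs diverge exactly at the step of descending faithfulness/descendability: the paper descends only module-level and homotopy-category-level data and then rebuilds the $\e{\infty}$-structure by obstruction theory, whereas you descend the full $\e{\infty}$-algebra using compactness in $\clg_{A/}$ and then descend the retraction witnessing $A \in \mathrm{thick}^{\otimes}(B')$ as a map of perfect modules. Your argument for the latter step is correct: $\mathrm{Tot}_n \cb(B'_{i_0})$ is a finite limit of tensor powers of a dualizable object, hence perfect, and base change (being exact and symmetric monoidal) carries it to $\mathrm{Tot}_n \cb(B')$; the retraction and the relation $r \circ s = \mathrm{id}$ then descend via $\md^\omega(A) \simeq \varinjlim_I \md^\omega(A_i)$. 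This is cleaner than the paper's detour through \Cref{obstruct}.

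There is, however, a genuine gap in your justification of the key input: you write that a finite cover ``is dualizable, hence a perfect $A$-module, hence compact in $\clg_{A/}$.'' Perfectness in $\md(A)$ does \emph{not} imply compactness in $\clg_{A/}$; these are different $\infty$-categories with different notions of compactness, and the parenthetical about $0$-cotruncatedness does not bridge the gap. The paper proves this compactness as a separate lemma: after base change along the descendable $B \to B \otimes_A B \simeq \prod_G B$, a $G$-Galois extension becomes classically \'etale, hence compact (and $0$-cotruncated) in $\clg_{B/}$; one then invokes a sublemma showing that compactness plus uniform $n$-cotruncatedness descends along the cosimplicial resolution $\clg_{A/} \simeq \mathrm{Tot}(\clg_{B^{\otimes(\bullet+1)}/})$. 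You need this argument (or an equivalent one) to justify both the descent of $B, B'$ as $\e{\infty}$-algebras to a finite stage and your full faithfulness step. Once that lemma is in place, your proof goes through.
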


\Cref{galcolim} will be a
consequence of 
some categorical technology together and is a form of ``noetherian descent.'' 
To prove it, we can work with $G$-torsors in view of \Cref{torsorenough}. 
Given an $\e{\infty}$-ring $A \in \clg$, we let $\gal_G(A)$ be the category of
faithful $G$-Galois extensions of $A$: that is, the category of $G$-torsors in
$\clgf(A)$. 
We need to show that
given a filtered diagram $\left\{A_i\right\}$ of $\e{\infty}$-rings, the functor
\[ \varinjlim \mathrm{Gal}_G(A_i) \to \mathrm{Gal}_G(\varinjlim A_i),   \]
is an equivalence of categories: i.e., that it is fully faithful and
essentially surjective. 
We start by showing that faithful Galois
extensions are compact $\e{\infty}$-algebras. 

\begin{lemma} 
Let $A \to B$ be a faithful $G$-Galois extension. Then $B$ is a compact
object
in the $\infty$-category $ \einf_{A/}$ of $\e{\infty}$-algebras over $A$. 
Moreover, $\hom_{\einf_{A/}}(B, \cdot)$ takes values in homotopy discrete
spaces. 
\end{lemma}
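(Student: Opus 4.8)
The plan is to prove both assertions simultaneously by exploiting that $B$ is dualizable as an $A$-module (this is \Cref{galdual}, or rather its module-theoretic shadow: a faithful $G$-Galois extension is dualizable, hence perfect as an $A$-module since $\mathbf{1}=A$ is compact in $\md(A)$). The key structural input is that the functor corepresented by $B$ on $\einf_{A/}$ admits a clean description. First I would recall the adjunction-type description of maps out of $B$: since $A\to B$ is a faithful $G$-Galois extension, \Cref{rognesequiv} identifies $B$ with a $G$-torsor in the Galois category $\clgw(\md(A))=\clgf(\md(A))$ (the two coincide as $A$ is compact in $\md(A)$, by \Cref{basicgal}). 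Then \Cref{torsors} gives, for any $\e{\infty}$-$A$-algebra $C$, a natural equivalence between $\hom_{\einf_{A/}}(B,C)$ and the groupoid of $G$-equivariant maps exhibiting the appropriate elementary-form structure after base change; more concretely, because $B\otimes_A B\simeq \prod_G B$, a map $B\to C$ is the same data as a choice of idempotent decomposition plus compatibility, which is discrete. This is essentially the observation already made in the excerpt that finite covers are $0$-cotruncated; I would phrase it as: $\hom_{\einf_{A/}}(B,C)$ is the set of $G$-equivariant splittings of the trivial torsor after pulling back, hence homotopy discrete. That handles the second claim.

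For compactness, I would argue as follows. We must show $\hom_{\einf_{A/}}(B,-)$ commutes with filtered colimits in $\einf_{A/}$. By the previous paragraph this functor is (a subfunctor, cut out by discrete conditions, of) a functor built from $C\mapsto \idem(C)$ and $C\mapsto \hom_{\md(A)}(B,C)$ (the underlying $A$-module mapping space). The underlying-module functor $\hom_{\md(A)}(B,-)$ commutes with filtered colimits precisely because $B$ is a perfect, hence compact, $A$-module. The idempotent functor $\idem(-)$ also commutes with filtered colimits of $\e{\infty}$-rings: $\idem(C)=\idem(\pi_0 C)$ and $\pi_0$ commutes with filtered colimits, while idempotents in a filtered colimit of discrete commutative rings lift to a finite stage. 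Since the space $\hom_{\einf_{A/}}(B,C)$ is the homotopy pullback (over the discrete set of idempotent data) of the corresponding commutative-algebra-structure data against these two functors — all of which commute with filtered colimits — the same holds for $\hom_{\einf_{A/}}(B,-)$. Concretely: the $\e{\infty}$-$A$-algebra structure on $B$ is a finite amount of data living in the homotopy groups of mapping spaces $\hom_{\md(A)}(B^{\otimes k}, B)$ for small $k$, and $B^{\otimes k}$ is again perfect, so all relevant mapping spaces commute with filtered colimits, and the coherence conditions are checked at finitely many stages.

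The cleanest route, which I would actually pursue, avoids tracking the $\e{\infty}$-structure by hand: use that $\einf_{A/}$ is presentable and that $B$, being dualizable, is a \emph{retract} of a finite colimit of objects of the form $A^{\otimes(\text{something})}$ only after passing to modules — so instead I would directly invoke that a $G$-torsor $B$ in $\clgf(\md(A))$ is, by descent along the trivial torsor (i.e.\ along $A\to B$ itself, using $B\otimes_A B\simeq\prod_G B$), recoverable from finitely much data over $\prod_G B$, where everything is in elementary form. Since $\clgf$ is a Galois category (\Cref{basicgal}) and $\fun(\finset_G,-)$ represents a compact object of $\galcat$ (this is part of the proof of \Cref{galequiv}, where it is shown that a torsor ``involves a finite amount of data''), $B$ corepresents a functor commuting with filtered colimits once we know the ambient Galois category commutes with filtered colimits — which will be exactly the content of \Cref{galcolim} being proved. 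To avoid circularity I would run the argument in the first-principles form of the previous paragraph. The main obstacle I anticipate is the bookkeeping needed to see that the full $\e{\infty}$-ring (not just $\e{1}$-ring or module) structure on $B$ is genuinely finite data that commutes with filtered colimits; this is where one must either cite a general fact that compact objects of $\einf_{A/}$ are those whose underlying module is perfect together with finitely presented multiplication data, or push through the Postnikov/obstruction-theoretic description. Everything else — perfectness of $B$ over $A$ from dualizability, discreteness of mapping spaces from the torsor description — is routine given the results already in the excerpt.
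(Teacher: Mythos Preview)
Your discreteness argument is essentially sound: finite covers in a Galois context are $0$-cotruncated (this is proved in \Cref{galcontext}, and the argument there already proceeds by a local-to-global reduction), so $\hom_{\einf_{A/}}(B, C)$ is indeed homotopy discrete for arbitrary $C$. The reference to \Cref{torsors} is slightly off --- that result concerns torsors inside a Galois category, not maps to arbitrary $\e{\infty}$-algebras --- but the underlying claim is fine.

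The compactness argument has a genuine gap, and you flag it yourself: the assertion that ``the $\e{\infty}$-$A$-algebra structure on $B$ is a finite amount of data living in $\hom_{\md(A)}(B^{\otimes k}, B)$ for small $k$'' is simply false for an $\e{\infty}$-structure, which involves operations and coherences in all arities. Perfectness as an $A$-module is neither necessary nor sufficient for compactness in $\einf_{A/}$ (free algebras on a cell are compact but not module-perfect; trivial square-zero extensions by a perfect module are module-perfect but typically not compact as $\e{\infty}$-algebras). There is no general theorem of the shape you hope to cite, and the obstruction-theoretic alternative you gesture at does not obviously terminate.

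The paper avoids this entirely by descent. It first treats the classically \'etale case directly: there $\hom_{\einf_{A/}}(B, A') \simeq \hom_{\rng_{\pi_0 A/}}(\pi_0 B, \pi_0 A')$ is a discrete set, and $\pi_0 B$ is a finitely presented $\pi_0 A$-algebra, so compactness and $0$-cotruncatedness are immediate. For a general faithful $G$-Galois extension one uses $\einf_{A/} \simeq \mathrm{Tot}\big(\einf_{B^{\otimes_A(\bullet+1)}/}\big)$ and the fact that $B$ becomes classically \'etale (a product of copies of the base) at each cosimplicial level. A short sublemma then shows that an object which is compact and $n$-cotruncated at every level of such a totalization is compact and $n$-cotruncated in the limit: the $n$-cotruncatedness is exactly what collapses the $\mathrm{Tot}$ of mapping spaces to a finite limit, which then commutes with filtered colimits. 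This is the mechanism your direct approach lacks --- discreteness does eventually make the infinite $\e{\infty}$-coherence data behave finitely, but only after reduction to the \'etale case via descent, not before.
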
 
\begin{proof} 
First, recall that if $A \to B$ is a \emph{classically \'etale} extension, then the result
is true. In fact, if $A \to B$ is classically \'etale, then for any $\e{\infty}$-$A$-algebra
$A'$, the natural map
\[ \hom_{\einf_{A/}}(B, A') \to \hom_{\rng_{\pi_0 A/}}(\pi_0 B, \pi_0 A'),  \]
is an equivalence. 
Moreover, $\pi_0 B$, as an \'etale $\pi_0 A$-algebra, is finitely presented or
equivalently compact in $\rng_{\pi_0 A/}$. The result follows for an \'etale
extension. 

Now, a Galois extension need not be classically \'etale, but it becomes \'etale after an
appropriate base change, so we can use descent theory. 
Recall  that we have an equivalence
of symmetric monoidal $\infty$-categories
\[ \md(A) \simeq \mathrm{Tot} \left( \md(B) \rightrightarrows \md(B \otimes_A
B ) \triplearrows \dots \right).  \]
Upon taking commutative algebra objects, we get an equivalence
of $\infty$-categories
\[ \einf_{A/} \simeq 
\mathrm{Tot}\left( \einf_{B/} \rightrightarrows \einf_{B \otimes_A B /}
\triplearrows \dots \right)
. \]
The object $B \in \einf_{A/}$ 
becomes classically \'etale, thus compact, after base-change along $A \to B$. 
We may
now apply the next sublemma to conclude. 
\end{proof}

\begin{sublemma}
Let $\mathcal{C}^{-1} \in \prl$ be a presentable $\infty$-category and $\mathcal{C}^\bullet$ a
cosimplicial object in $\prl$ with an equivalence
of $\infty$-categories
\[ \mathcal{C}^{-1} \simeq \mathrm{Tot}( \mathcal{C}^\bullet).  \]
Suppose that $x \in \mathcal{C}^{-1}$ is an object such that:
\begin{itemize}
\item  The image $x^i$  of $x$ in $\mathcal{C}^i, i \geq 0$ is compact for each $i$. 
\item  There exists $n$ such that the image $x^i$ of $x$ in each $\mathcal{C}^i$ is $n$-cotruncated
in the sense that 
\[ \hom_{\mathcal{C}^i}(x^i, \cdot)\colon  \mathcal{C}^i \to \mathcal{S}\]
takes values in the subcategory $\tau_{\leq n} \mathcal{S} \subset \mathcal{S}$
of $n$-truncated spaces. (This follows once $x^0$ is $n$-cotruncated.) 
\end{itemize}
Then $x$ is compact (and $n$-cotruncated) in $\mathcal{C}^{-1}$). 
\end{sublemma}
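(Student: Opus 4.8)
The plan is to exploit the description of the totalization $\mathcal{C}^{-1} \simeq \mathrm{Tot}(\mathcal{C}^\bullet)$ as a limit in $\prl$. Recall that for a cosimplicial object in $\prl$, passing to right adjoints gives a simplicial object in $\mathrm{Pr}^R$, and $\mathrm{Tot}(\mathcal{C}^\bullet)$ is computed as the limit at the level of underlying $\infty$-categories; concretely, an object of $\mathcal{C}^{-1}$ is a compatible family $(x^i \in \mathcal{C}^i)_{i \geq 0}$ together with coherence data. The key point is that the mapping space out of $x$ in $\mathcal{C}^{-1}$ can be computed as a limit over $\Delta$: for $y \in \mathcal{C}^{-1}$ with components $y^i$, one has $\hom_{\mathcal{C}^{-1}}(x, y) \simeq \holim_{\Delta} \hom_{\mathcal{C}^i}(x^i, y^i)$. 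This follows formally from the fact that $\mathcal{C}^{-1}$ sits inside $\prod_i \mathcal{C}^i$ as the limit, and mapping spaces in a limit of $\infty$-categories are the limits of the mapping spaces (using the left adjoints $\mathcal{C}^{-1} \to \mathcal{C}^i$ to transport $x$, and the right adjoints to transport $y$, with the adjunction identities making the cosimplicial diagram of mapping spaces).

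Granting that identification, the two claims follow by a standard interchange-of-limits argument. First I would address cotruncation: if each $x^i$ is $n$-cotruncated, then each $\hom_{\mathcal{C}^i}(x^i, \cdot)$ lands in $\tau_{\leq n}\mathcal{S}$, and since $\tau_{\leq n}\mathcal{S} \subset \mathcal{S}$ is closed under all limits, the totalization $\holim_{\Delta} \hom_{\mathcal{C}^i}(x^i, y^i)$ is again $n$-truncated; hence $x$ is $n$-cotruncated in $\mathcal{C}^{-1}$. For the parenthetical remark that it suffices to check $x^0$ is $n$-cotruncated: the codegeneracy maps exhibit each $\mathcal{C}^i$ (for $i \geq 1$) receiving $x^0$ — more precisely, the coface maps $\mathcal{C}^0 \to \mathcal{C}^i$ carry $x^0$ to $x^i$, and since these functors preserve colimits (they are morphisms in $\prl$), cotruncation of $x^0$ propagates; I would spell this out using that a left adjoint $F$ with right adjoint $G$ satisfies $\hom(F x^0, \cdot) \simeq \hom(x^0, G(\cdot))$, so $n$-truncatedness is inherited.

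Next, for compactness: given a filtered diagram $\{y_\alpha\}$ in $\mathcal{C}^{-1}$ with colimit $y$, I want $\varinjlim_\alpha \hom_{\mathcal{C}^{-1}}(x, y_\alpha) \to \hom_{\mathcal{C}^{-1}}(x, y)$ to be an equivalence. Filtered colimits in $\mathcal{C}^{-1} = \mathrm{Tot}(\mathcal{C}^\bullet)$ are \emph{not} computed componentwise in general — this is the main obstacle. However, the cosimplicial diagram of $\infty$-categories has the property (which I would need to extract from the hypotheses, or note is automatic in the situations of interest, since in the application the $\mathcal{C}^i = \einf_{B^{\otimes(i+1)}/}$ and the forgetful/base-change functors preserve filtered colimits) that the transition functors' right adjoints preserve filtered colimits; equivalently the left adjoints preserve compact objects. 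Under that hypothesis, filtered colimits in the totalization \emph{are} computed componentwise, so $y^i \simeq \varinjlim_\alpha y_\alpha^i$. Then, using that each $x^i$ is compact in $\mathcal{C}^i$ and that finite limits (here, the $n$-truncated totalization over $\Delta$, which by the $n$-cotruncation can be taken over the finite subcategory $\Delta_{\leq n+1}$) commute with filtered colimits in $\mathcal{S}$, we get
\[
\hom_{\mathcal{C}^{-1}}(x, y) \simeq \holim_{\Delta_{\leq n+1}} \hom_{\mathcal{C}^i}(x^i, \varinjlim_\alpha y_\alpha^i) \simeq \holim_{\Delta_{\leq n+1}} \varinjlim_\alpha \hom_{\mathcal{C}^i}(x^i, y_\alpha^i) \simeq \varinjlim_\alpha \holim_{\Delta_{\leq n+1}} \hom_{\mathcal{C}^i}(x^i, y_\alpha^i),
\]
which is $\varinjlim_\alpha \hom_{\mathcal{C}^{-1}}(x, y_\alpha)$, as desired. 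The hard part, then, is genuinely the claim that filtered colimits in $\mathrm{Tot}(\mathcal{C}^\bullet)$ are componentwise; I would either add this as a hypothesis (it holds in the intended application) or invoke the dual of \Cref{procompact} applied to the simplicial diagram of right adjoints. The truncation reduction from $\Delta$ to the finite $\Delta_{\leq n+1}$ is exactly what makes the filtered-colimit interchange legal and is where the second hypothesis of the sublemma is essential.
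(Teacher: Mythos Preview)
Your argument is essentially the paper's, but you have manufactured an obstacle that does not exist and then proposed an unnecessary extra hypothesis to get around it. Your claim that ``filtered colimits in $\mathcal{C}^{-1} = \mathrm{Tot}(\mathcal{C}^\bullet)$ are not computed componentwise in general'' is wrong in this setting: the totalization is by hypothesis a limit in $\prl$, so the projection maps $\mathcal{C}^{-1} \to \mathcal{C}^i$ are morphisms in $\prl$ and therefore preserve \emph{all} colimits automatically. Hence for any filtered diagram $\{y_\alpha\}$ in $\mathcal{C}^{-1}$ with colimit $y$ one already has $y^i \simeq \varinjlim_\alpha y_\alpha^i$; no hypothesis on the transition functors' right adjoints is needed, and there is nothing to borrow from a dual of \Cref{procompact}. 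The paper's proof accordingly just observes that each term $z \mapsto \hom_{\mathcal{C}^i}(x^i, z^i)$ of the cosimplicial functor is the composite of the colimit-preserving projection $\mathcal{C}^{-1} \to \mathcal{C}^i$ with $\hom_{\mathcal{C}^i}(x^i, -)$ (which preserves filtered colimits since $x^i$ is compact), that each term lands in $\tau_{\leq n}\mathcal{S}$, and that $\mathrm{Tot}$ on $n$-truncated cosimplicial spaces agrees with a finite partial totalization and hence commutes with filtered colimits. Once your detour is deleted, what remains is exactly this.
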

\begin{proof} 
Given objects $w, z \in \mathcal{C}^{-1}$, 
the natural map
\[ \hom_{\mathcal{C}}(w, z) \to \mathrm{Tot} \hom_{\mathcal{C}^\bullet}(
w^\bullet, z^\bullet)  \]
is an equivalence, where for each $i \geq 0$,  $w^i, z^i$ are the objects
in $\mathcal{C}^i$ that are the images of $w, z$. 

Therefore, it follows that $\hom_{\mathcal{C}^{-1}}(x, \cdot)\colon  \mathcal{C}^{-1} \to \mathcal{S}$ is
the totalization of a cosimplicial functor $\mathcal{C}^{-1} \to \mathcal{S}$
given by $\hom_{\mathcal{C}^\bullet}(x^\bullet, \cdot^\bullet)$. Each of the
terms in this cosimplicial functor, by assumption, commutes with filtered
colimits and takes values in $n$-truncated spaces. The sublemma thus follows
because the totalization functor
\[ \mathrm{Tot}\colon  \mathrm{Fun}( \Delta, \tau_{\leq n} \mathcal{S}) \to
\mathcal{S},  \]
lands in $\tau_{\leq n} \mathcal{S}$, and commutes with filtered colimits: a
totalization of $n$-truncated spaces can be computed by a partial
totalization, and finite limits and filtered colimits of spaces commute with
one another. 
\end{proof} 

Next, we prove a couple of general categorical lemmas about compact objects in
undercategories and filtered colimits.

\begin{lemma} 
Let $\mathcal{C}$ be a compactly generated, presentable $\infty$-category and
let $\mathcal{C}^\omega$ denote the collection of compact objects. 
Then, for each $x \in \mathcal{C}$, the undercategory $\mathcal{C}_{x/}$ is
compactly generated.
Moreover, the subcategory $(\mathcal{C}_{x/})^\omega$ is generated under finite
colimits and retracts by the morphisms of the form $x \to x \sqcup y$ for $y
\in \mathcal{C}^\omega$. 
\end{lemma}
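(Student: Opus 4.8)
The plan is to compare $\mathcal{C}_{x/}$ with $\mathcal{C}$ through the evident free--forgetful adjunction and transport compact generation across it.

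First I would record the basic structure. The undercategory $\mathcal{C}_{x/}$ is presentable by \cite[5.5.3.11]{HTT}. The forgetful functor $U\colon \mathcal{C}_{x/} \to \mathcal{C}$ is conservative, and it preserves all colimits indexed by weakly contractible diagrams --- in particular filtered colimits --- since such a colimit in $\mathcal{C}_{x/}$ is computed by the formula $\varinjlim_i (x \to z_i) \simeq (x \to \varinjlim_i z_i)$ (cf.\ \cite[§4.4]{HTT}). Moreover $U$ has a left adjoint $F\colon \mathcal{C} \to \mathcal{C}_{x/}$ sending $y$ to the coproduct inclusion $x \to x \sqcup y$: using the decomposition $\hom_{\mathcal{C}}(x \sqcup y, z) \simeq \hom_{\mathcal{C}}(x, z) \times \hom_{\mathcal{C}}(y, z)$, one checks directly that a morphism $(x \to x \sqcup y) \to (x \to z)$ in $\mathcal{C}_{x/}$ is exactly the datum of a morphism $y \to z = U(x \to z)$ in $\mathcal{C}$.

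Next I would observe that, since $U$ preserves filtered colimits and $F$ is its left adjoint, $F$ preserves compact objects; hence for every $y \in \mathcal{C}^\omega$ the object $(x \to x \sqcup y)$ is compact in $\mathcal{C}_{x/}$. Let $S$ be the (essentially small) set of objects $(x \to x \sqcup y)$ with $y \in \mathcal{C}^\omega$. I claim $S$ jointly detects equivalences: if a morphism $f$ of $\mathcal{C}_{x/}$ is such that $\hom_{\mathcal{C}_{x/}}(F(y), \cdot)$ carries $f$ to an equivalence of spaces for every $y \in \mathcal{C}^\omega$, then by adjunction $\hom_{\mathcal{C}}(y, U(f))$ is an equivalence for all such $y$, so $U(f)$ is an equivalence because $\mathcal{C}^\omega$ generates the compactly generated category $\mathcal{C}$; since $U$ is conservative, $f$ is an equivalence.

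Finally I would invoke the standard recognition principle: a presentable $\infty$-category equipped with a small set of compact objects that jointly detect equivalences is compactly generated, and its subcategory of compact objects is the smallest full subcategory containing that set and closed under finite colimits and retracts (one reduces, as in \cite[§5.3--5.5]{HTT}, to identifying $\mathcal{C}_{x/}$ with $\mathrm{Ind}$ of the closure $S'$ of $S$ under finite colimits and retracts, using that $S' \subset (\mathcal{C}_{x/})^\omega$ is idempotent complete with finite colimits). Applied to $\mathcal{C}_{x/}$ and $S$, this yields both assertions of the lemma. The one point that demands care --- and the closest thing to an obstacle --- is that $U$ does \emph{not} preserve all colimits (it sends the initial object $\mathrm{id}_x$ of $\mathcal{C}_{x/}$ to $x$), so one must work with weakly contractible, and in particular filtered, colimits when deducing that $F$ preserves compactness; the remaining steps are an assembly of standard facts about presentable $\infty$-categories.
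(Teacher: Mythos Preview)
Your proof is correct and follows essentially the same approach as the paper's. Both arguments establish that the objects $x \to x \sqcup y$ (for $y \in \mathcal{C}^\omega$) are compact and detect equivalences via the identification $\hom_{\mathcal{C}_{x/}}(x \sqcup y, z) \simeq \hom_{\mathcal{C}}(y, z)$, then invoke the recognition principle that a presentable $\infty$-category with a set of compact generators is $\mathrm{Ind}$ of the finite-colimit closure of that set; you simply package the first step more explicitly through the free--forgetful adjunction $F \dashv U$ (noting $U$ preserves filtered colimits, hence $F$ preserves compacts), whereas the paper reads it off directly from the hom formula.
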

\begin{proof} 
To prove this, recall that if $\mathcal{D}$ is any presentable
$\infty$-category and $\mathcal{E} \subset \mathcal{D}$ is a (small)
subcategory of
compact objects, closed under finite colimits, then there is induced a
map in $\prl$
\[ \mathrm{Ind}( \mathcal{E}) \to \mathcal{D},  \]
which is an equivalence of $\infty$-categories precisely when 
$\mathcal{E}$ \emph{detects equivalences}: that is, when a map $x \to y$ in
$\mathcal{D}$ is an equivalence when $\hom_{\mathcal{D}}(e, x) \to
\hom_{\mathcal{D}}(e, y)$ is a homotopy
equivalence for all $e \in \mathcal{E}$. 
Indeed, in this case, it follows that $\mathrm{Ind}	 (\mathcal{E}) \to
\mathcal{D}$ is a fully faithful functor, which embeds
$\mathrm{Ind}(\mathcal{E})$ as a full subcategory of $\mathcal{D}$ closed under
colimits. But any fully faithful left adjoint whose right adjoint is
conservative is an equivalence of $\infty$-categories. 
This argument is a very slight variant of Proposition 5.3.5.11 of \cite{HTT}. 

Now, we apply this to $\mathcal{C}_{x/}$. Clearly, the objects $x \to x
\sqcup y$ in $\mathcal{C}_{x/}$, for $y \in \mathcal{C}^\omega$, are compact. Since
\[ \hom_{\mathcal{C}_{x/}}(x \sqcup y, z) = \hom_{\mathcal{C}}(y, z),  \]
it follows from the above paragraph if $\mathcal{C}$ is compactly generated,
then the $x \to x \sqcup y$ in $\mathcal{C}_{x/}$ \emph{detect equivalences}
and thus generate $\mathcal{C}_{x/}$ under colimits. More precisely, if
$\mathcal{E} \subset \mathcal{C}_{x/}$ is the subcategory generated under
finite colimits by the $x \to x \sqcup y, y \in
\mathcal{C}^\omega$, then the natural functor $\mathrm{Ind}( \mathcal{E}) \to
\mathcal{C}_{x/}$ is an equivalence. Since
$(\mathrm{Ind}(\mathcal{E}))^{\omega}$ is the idempotent completion of
$\mathcal{E}$ (Lemma 5.4.2.4 of \cite{HTT}), the lemma follows. 

\end{proof}

Let $\mathcal{C}$ be a compactly generated, presentable $\infty$-category. 
We observe that the association $x \in \mathcal{C} \mapsto
(\mathcal{C}_{x/})^\omega$ is actually functorial in $x$. 
Given a morphism $x \to y$, we get a functor
\[ \mathcal{C}_{x/} \to \mathcal{C}_{y/}  \]
given by pushout along $x \to y$. Since the right adjoint (sending a map $y \to
z$ to the composite $x \to y \to z$) commutes with filtered colimits, it
follows that $\mathcal{C}_{x/} \to \mathcal{C}_{y/}$ restricts to a functor on
the compact objects. 
We get a functor
\[ \Phi\colon  \mathcal{C} \to \mathrm{Cat}_\infty, \quad x \mapsto
(\mathcal{C}_{x/})^\omega.  \]
Our next goal is to show that $\Phi$ commutes with filtered colimits. 
\begin{lemma} \label{fullfaithfulthing}
The functor $\Phi$ has the property that 
for any filtered diagram $x\colon  I \to \mathcal{C}$, the  natural functor
\begin{equation} \label{phi} \varinjlim_{I}  \Phi( x_i) \to \Phi( \varinjlim_I
x_i),\end{equation}
is an equivalence of $\infty$-categories. \end{lemma}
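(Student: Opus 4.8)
The plan is to verify that the comparison functor \eqref{phi} is fully faithful and essentially surjective, handling each separately. For full faithfulness, I would use the explicit description of the compact objects in an undercategory established in the preceding lemma: $(\mathcal{C}_{x/})^\omega$ is generated under finite colimits and retracts by the objects $x \to x \sqcup y$ with $y \in \mathcal{C}^\omega$. Thus, to compute mapping spaces in $\varinjlim_I \Phi(x_i)$ and in $\Phi(\varinjlim_I x_i)$, it suffices to compute them on these generators, using the formula
\[ \hom_{\mathcal{C}_{x/}}(x \sqcup y, z) \simeq \hom_{\mathcal{C}}(y, z). \]
For $z$ itself a compact object of $\mathcal{C}_{\varinjlim x_i /}$ of the form $\varinjlim x_i \to \varinjlim x_i \sqcup w$, we have $\hom_{\mathcal{C}_{\varinjlim x_i/}}(\varinjlim x_i \sqcup y, \varinjlim x_i \sqcup w) \simeq \hom_{\mathcal{C}}(y, \varinjlim x_i \sqcup w)$, and since $y$ is compact in $\mathcal{C}$ this commutes with the filtered colimit over $I$; the terms on the other side are the analogous mapping spaces in $\mathcal{C}_{x_i/}$, so the comparison on mapping spaces is an equivalence. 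One has to be mildly careful that every compact object of $\mathcal{C}_{\varinjlim x_i/}$ is a retract of a finite colimit of such generators, but that is exactly the content of the previous lemma, and full faithfulness for the generators propagates to retracts of finite colimits.

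For essential surjectivity, I would argue that any compact object of $\mathcal{C}_{\varinjlim_I x_i /}$ descends to a finite stage. Again by the previous lemma such an object is a retract of a finite colimit built from morphisms $\varinjlim x_i \to \varinjlim x_i \sqcup w$ with $w \in \mathcal{C}^\omega$. Each such generator is visibly the image of the generator $x_i \to x_i \sqcup w$ under the transition functor $\Phi(x_i) \to \Phi(\varinjlim_I x_j)$, for any $i$. Since $\mathrm{Cat}_\infty$ has filtered colimits computed in the expected way, a finite colimit of objects each coming from a finite stage, together with the finitely many morphisms assembling that finite colimit diagram and the finitely many data witnessing the retract, all descend to a common finite stage $i \in I$ (using that $I$ is filtered). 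Hence the object lies in the essential image of $\varinjlim_I \Phi(x_i) \to \Phi(\varinjlim_I x_i)$.

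The main obstacle I anticipate is the bookkeeping around retracts and finite colimits in $\mathrm{Cat}_\infty$: one must make sure that "$\Phi$ of a filtered colimit" is really computed by the filtered colimit of the categories $(\mathcal{C}_{x_i/})^\omega$ with the pushout transition functors, i.e. that passing to compact objects in an undercategory is compatible with the filtered colimit at the level of the ambient presentable categories. Concretely this reduces to the fact that $\mathcal{C}_{\varinjlim_I x_i /} \simeq \varinjlim_I \mathcal{C}_{x_i/}$ in $\prl$ (which follows since the relevant right adjoints, sending $x_i \to z$ to $\varinjlim x_j \to z \sqcup_{x_i} \varinjlim x_j$ or rather the forgetful-type maps, preserve filtered colimits) together with \Cref{procompact}: a filtered colimit in $\prl$ of compactly generated categories along functors preserving compact objects is again compactly generated, and the compact objects are generated by the images of the compact objects at each stage. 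Once that structural input is in place, the two halves above are routine, and the lemma follows.
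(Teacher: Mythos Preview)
Your overall strategy matches the paper's: prove full faithfulness by a direct computation of mapping spaces using compactness, and essential surjectivity via the previous lemma's description of $(\mathcal{C}_{x/})^\omega$ in terms of generators together with idempotent completeness of filtered colimits of idempotent complete $\infty$-categories. The essential surjectivity part is essentially identical to the paper's.

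For full faithfulness, however, your reduction to generators in \emph{both} variables introduces a gap. Reducing the \emph{source} object to a generator $x_i \sqcup y$ is fine: for fixed target, the set of sources for which the comparison map on hom-spaces is an equivalence is closed under finite colimits and retracts, since the functor preserves finite colimits. But your claim that full faithfulness then ``propagates'' in the \emph{target} variable is not justified: $\hom(c, -)$ does not send finite colimits to finite colimits, so knowing the comparison is an equivalence for target a generator does not formally imply it for finite colimits of generators. The fix is easy: once the source is $x_i \sqcup y$, your own formula $\hom_{\mathcal{C}_{x_j/}}(x_j \sqcup y, z_j) \simeq \hom_{\mathcal{C}}(y, z_j)$ works for \emph{arbitrary} $z_j$, and since the underlying object of the pushforward $z$ is $\varinjlim_j z_j$ in $\mathcal{C}$ and $y$ is compact in $\mathcal{C}$, you are done without ever restricting the target.

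The paper sidesteps this entirely by not reducing to generators at all for full faithfulness. It represents two objects of $\varinjlim_I \Phi(x_i)$ by $x_i \to y_i$ and $x_i \to z_i$ at a common stage, uses the adjunction $\hom_{\mathcal{C}_{x/}}(y, z) \simeq \hom_{\mathcal{C}_{x_i/}}(y_i, z)$, observes $z \simeq \varinjlim_j z_j$ in $\mathcal{C}_{x_i/}$, and invokes compactness of $y_i$ in $\mathcal{C}_{x_i/}$ directly. This is shorter and avoids the propagation issue. Your detour through $\prl$-level statements about $\mathcal{C}_{\varinjlim x_i/} \simeq \varinjlim_I \mathcal{C}_{x_i/}$ and \Cref{procompact} is also unnecessary: the paper's argument never leaves the small categories $(\mathcal{C}_{x_i/})^\omega$.
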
 
\begin{proof} 
Full faithfulness of $\Phi$ is a formal consequence of the definition of a compact object. 
In fact, an element of $\varinjlim_I \Phi(x_i)$ is represented by an object $i
\in I$ and a map $x_i \to y_i$ that belongs to $(\mathcal{C}_{x_i/})^\omega$. 
We will denote this object by $(i, y_i)$. 
This object is the same as that represented by $x_j \to y_i \sqcup_{x_i} x_j$
for any map $i \to j$ in $I$. 

Given two such objects in $\varinjlim_I \Phi(x_i)$, we can represent them both 
by objects $x_i \to y_i, x_i \to z_i$ for some index $i$. Then 
\[ \hom_{\varinjlim_I \Phi(x_i)}( (i, y_i), (i, z_i)) = 
\varinjlim_{j \in I_{i/}}\hom_{\mathcal{C}_{x_j/} }(
y_j, z_j),   \]
where $y_j, z_j$ denotes the pushforwards of $y_i, z_i$ along $x_j \to z_j$. 

Let $x = \varinjlim_I x_i$, and let $y, z$ denote the pushforwards of $y_i,
z_i$ all the way along $x_i \to x$. Then our claim is that the map
\[ \varinjlim_{j \in I_{i/}}\hom_{\mathcal{C}_{x_j/} }(
y_j, z_j) \to  \hom_{\mathcal{C}_{x/}}(y, z) 
\]
is an equivalence. 
Now, we write
\begin{align*}  \hom_{\mathcal{C}_{x/}}(y, z) &  \simeq 
\hom_{\mathcal{C}_{x_i/}}(y_i, z) \\
& \simeq 
\hom_{\mathcal{C}_{x_i/}}(y_i, \varinjlim_{j \in I_{i/}} z_j ) \\
& \simeq
\varinjlim_{j \in I_{i/}}\hom_{\mathcal{C}_{x_i/}}(y_i, z_j ) \\
& \simeq 
\varinjlim_{j \in I_{i/}}\hom_{\mathcal{C}_{x_j/}}(y_j, z_j ), 
\end{align*}
and we get the equivalence as desired. 

Finally, to see that \eqref{phi} establishes the right hand side as the idempotent
completion of the first, we use the description of compact objects in
$\mathcal{C}_{x/}$. 
To complete the prooof, note now that a filtered colimit of idempotent complete $\infty$-categories is
itself idempotent complete \cite[Lemma 7.3.5.16]{higheralg}. 
\end{proof}

\begin{corollary} 
\label{0truncatedcolim}
Hypotheses as above, the functor $\Psi: x \mapsto (\mathcal{C}_{x/})^{\omega, \leq
0}$ sending $x$ to the category of 0-cotruncated, compact objects in
$\mathcal{C}_{x/}$ has the property that the natural functor
\( \varinjlim_I \Psi(x_i) \to  \Psi( \varinjlim x_i)  \)
is an equivalence. 
\end{corollary}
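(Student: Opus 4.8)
The plan is to deduce this from \Cref{fullfaithfulthing}, which already shows that $\Phi\colon x \mapsto (\mathcal{C}_{x/})^\omega$ commutes with filtered colimits, together with the observation that, for a \emph{compact} object $y$ of $\mathcal{C}_{x/}$, being $0$-cotruncated is detected by a single finite colimit diagram. Concretely, I would use that an object $y$ of a cocomplete $\infty$-category $\mathcal{D}$ is $0$-cotruncated if and only if the natural map
\[ y \longrightarrow y \sqcup_{y \sqcup y} y \]
is an equivalence, where both maps $y \sqcup y \to y$ are the fold map (so the target is the ``relative circle'' $S^1 \otimes y$, a finite colimit built from coproducts and a pushout). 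This is the categorical dual of the statement that a space $Z$ is discrete if and only if the constant-map inclusion $Z \to Z^{S^1}$ is an equivalence, applied in $\mathcal{D} = \mathcal{C}_{x/}$. Thus $0$-cotruncatedness becomes a condition only about finite colimits, not about mapping spaces.

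First I would record the bookkeeping that makes $\Psi$ a subfunctor of $\Phi$ and gives the comparison map. The transition functors $\mathcal{C}_{x_i/} \to \mathcal{C}_{x_j/}$ are pushout along $x_i \to x_j$, hence left adjoints, so they preserve finite colimits, and they preserve compact objects (as noted just before \Cref{fullfaithfulthing}); consequently they carry compact $0$-cotruncated objects to compact $0$-cotruncated objects, so $\Psi$ is well defined and we obtain a natural functor $\varinjlim_I \Psi(x_i) \to \Psi(x)$ for $x = \varinjlim_I x_i$. Since each inclusion $\Psi(x_i) \hookrightarrow \Phi(x_i)$ is fully faithful and $\varinjlim_I \Phi(x_i) \xrightarrow{\ \simeq\ } \Phi(x)$ with $\Psi(x) \subseteq \Phi(x)$ full, a filtered-colimit-of-fully-faithful-functors argument (mapping spaces are filtered colimits) shows $\varinjlim_I \Psi(x_i) \to \Psi(x)$ is fully faithful; so the whole statement reduces to essential surjectivity.

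For essential surjectivity, take $y \in \Psi(x)$, a compact $0$-cotruncated object of $\mathcal{C}_{x/}$. By essential surjectivity in \Cref{fullfaithfulthing}, $y$ is equivalent to the image of some compact $y_i \in \mathcal{C}_{x_i/}$ under pushout along $x_i \to x$. Since the pushforward functors commute with the finite colimits forming $S^1 \otimes (-)$, the relative-circle map $y_i \to S^1 \otimes_{x_i} y_i$, a morphism in $(\mathcal{C}_{x_i/})^\omega = \Phi(x_i)$ between compact objects (note $S^1 \otimes_{x_i} y_i$ is a finite colimit of the compact $y_i$), is sent under $\varinjlim_I \Phi(x_i) \simeq \Phi(x)$ to the map $y \to S^1 \otimes_x y$, which is an equivalence by hypothesis. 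Because a morphism in a filtered colimit of $\infty$-categories which becomes an equivalence in the colimit is already an equivalence at some finite stage, there is $j \geq i$ with $y_j \to S^1 \otimes_{x_j} y_j$ an equivalence, where $y_j$ is the pushforward of $y_i$; then $y_j$ is compact and $0$-cotruncated, i.e.\ $y_j \in \Psi(x_j)$, and its image in $\mathcal{C}_{x/}$ is $y$. This gives essential surjectivity, hence the corollary.

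The one genuine subtlety — and the step to be careful about — is precisely this compactness-sensitive characterization: ``$0$-cotruncated over the colimit'' cannot be detected at a finite stage for a \emph{general} object (a filtered colimit of spaces that are not $0$-truncated can be $0$-truncated), so it is essential to phrase the criterion as ``a specific finite colimit map is an equivalence'' rather than ``certain mapping spaces are discrete,'' and to use that $S^1 \otimes_{x_i} y_i$ stays compact so that the entire diagram lives in $\Phi(x_i)$, where \Cref{fullfaithfulthing} applies. Everything else is routine manipulation of slice categories and filtered colimits.
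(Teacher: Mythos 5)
Your argument is correct and is essentially the paper's own proof: the Corollary is deduced from \Cref{fullfaithfulthing} by observing that $0$-cotruncatedness of $y$ is equivalent to the finite-colimit condition that the natural map between $y$ and $S^1 \otimes y$ is an equivalence, which is therefore both preserved and detected under the equivalence $\varinjlim_I \Phi(x_i) \simeq \Phi(\varinjlim_I x_i)$. Your write-up just spells out the full-faithfulness and essential-surjectivity bookkeeping that the paper leaves implicit.
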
 

This follows from the previous lemma, because 0-cotruncatedness of an object
$y$ is equivalent to the claim that the map $S^1 \otimes y \to y$ is an
equivalence.

\begin{proof}[Proof of \Cref{galcolim}]
For $A$ an $\e{\infty}$-ring, let $(\einf_{A/})^{\omega, \leq 0}$ be the
(ordinary) category of $0$-cotruncated, compact $\e{\infty}$-$A$-algebras;
this includes any finite cover of $A$, for example, since finite covers of
$A$ are locally \'etale. 
Then we have a fully faithful inclusion
of $\infty$-categories
\[ \mathrm{Gal}_G(A) \subset \mathrm{Fun}(BG, (\einf_{A/})^{\omega, \leq 0}).  \]
Although $BG$ is not compact in the $\infty$-category of $\infty$-categories,
the truncation to $n$-categories for any $n$ is: $BG$ can be represented as a
simplicial set with finitely many simplices in each dimension. 
Therefore, the right-hand-side has the property that it commutes with filtered
colimits in $A$ by \Cref{0truncatedcolim}. 
Thus, for any filtered diagram $A\colon  I \to \einf$, the functor
\[ \varinjlim_{i \in I}\mathrm{Gal}_G(A_i) \to \mathrm{Gal}_G( \varinjlim_{i
\in I} A_i), \]
is fully faithful.

Moreover, given a $G$-Galois extension $B$ of $A = \varinjlim_I A_i$, there
exists $i \in I$ and a compact, 0-cotruncated $A_i$-algebra $B_i$ with a $G$-action, such that 
$A \to B$ is obtained by base change from $A_i \to B_i$. 
It now suffices to show that $A_i \to B_i$ becomes $G$-Galois after some base
change $A_i \to A_j$. 
For any $j \in I$ receiving a map from $i$, we let $B_j = A_j \otimes_{A_i}
B_i$. 
We are given that $\varinjlim_{j \in I_{i/}} B_j$ is a faithful $G$-Galois extension of
$\varinjlim_{j \in I_{i/}} A_j$ and we want to claim that there exists $j$
such that $B_j$ is a faithful $G$-Galois extension of $A_j$.

Now, the condition for $A_j \to B_j$ to be faithfully $G$-Galois has two parts:
\begin{enumerate}
\item $B_j \otimes_{A_j} B_j \to \prod_G B_j$ should be an equivalence. 
\item  $A_j \to B_j$ should be descendable.
\end{enumerate}

The first condition is detected at a ``finite stage''  since  both the source
and target are compact objects of $\clg_{A_j/}$. 
Unfortunately, we do not know how to use this line of argument alone to argue that
the $A_j \to B_j$'s are faithful $G$-Galois for some $j$, although we suspect that it
is possible. 

Instead, we use some obstruction theory. The map $A \to B$
exhibits $B$ as a perfect $A$-module. For any $\mathbf{E}_{1}$-ring $R$, let
$\mod^\omega(R)$ be the stable $\infty$-category of perfect $R$-modules. Then the
natural functor
\[ \varinjlim_I \mod^\omega( A_i) \to \mod^\omega(A) , \]
is an equivalence of $\infty$-categories.\footnote{One does not even need to worry
about idempotent completeness here because we are in a stable setting, and any
self-map $e\colon  A \to A$ with $e^2 \simeq e$ can be extended to an idempotent.}
It follows that we can ``descend'' the perfect $A$-module $B$ to a perfect
$A_j$-module $B'_j$ for some $j$ (asymptotically unique), and we can descend the 
 multiplication map $B \otimes_A B \to B$ (resp. the unit map $A \to B$) to
 $B'_j \otimes_{A_j} B'_j \to B'_j$ (resp. $A_j \to B'_j$). We can also assume
 that homotopy associativity holds for $j$ ``large.'' 
The $G$-action on $B$ in the \emph{homotopy category} of perfect
 $A$-modules descends to an action on $B'_j$ in the \emph{homotopy category} of
 perfect $A_j$-modules, and the equivalence $B \otimes_A B \simeq \prod_G B$
 descends to an equivalence $B'_j \otimes_{A_j} B'_j \simeq \prod_G B'_j$. 
Finally, the fact that the thick subcategory that $B$ generates contains $A$ 
can also be tested at a finite stage. 

The upshot is that, for $j$ large, we can ``descend'' the $G$-Galois extension
$A \to B$ to a perfect $A_j$-module  $B'_j$ with the portion of the structure of a
$G$-Galois extension that one could see \emph{solely from the homotopy
category.}  However, using obstruction theory one can promote this to a genuine
Galois extension. In \Cref{obstruct} below, 
we show that $B'_j$ can be promoted to an $\mathbf{E}_{\infty}$-algebra (in $A_j$-modules)
for $j \gg 0$ with a $G$-action, which is a  faithful $G$-Galois extension. 

It follows that the $B'_j$ lift $B$ to $A_j$ for $j \gg 0$, and even with the
$G$-action (which is unique in a faithful Galois extension; see Theorem 11.1.1
of \cite{rognes}). 
\end{proof}

\begin{theorem} \label{obstruct}
Let $A'$ be an $\mathbf{E}_{\infty}$-ring, and let $B'$ be  a perfect $A'$-module such
that the thick subcategory generated by $B'$ contains $A'$. Suppose
given:
\begin{enumerate}
\item A homotopy commutative, associative and unital multiplication $B'\otimes_{A'} B' \to B'$. 
\item A $G$-action on $B'$ in the homotopy category, commuting with the
multiplication and unit maps, such that the map $B'
\otimes_{A'} B' \to \prod_G B'$ is an equivalence of $A$-modules.  
\end{enumerate}
Then $B'$ has a unique $\mathbf{E}_{\infty}$-multiplication extending the given 
homotopy commutative one, and $A \to B$ is faithful $G$-Galois (in
particular, the $G$-action in the homotopy category extends to a strict one of $\mathbf{E}_{\infty}$-maps on $B$). 
\end{theorem}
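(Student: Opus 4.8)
The plan is to establish \Cref{obstruct} by a descent-and-obstruction-theory argument, using the fact that $B'$ becomes \'etale after a faithfully flat base change and that \'etale algebras are rigid (\Cref{etaletopinv}). First I would record the following reduction: it suffices to produce, for $j$ sufficiently deep in a filtered system, an $\mathbf{E}_\infty$-$A'$-algebra structure on $B'$ refining the given homotopy-associative one together with an $\mathbf{E}_\infty$ $G$-action; once this is done, the equivalence $B'\otimes_{A'}B'\simeq \prod_G B'$ and the condition $\mathbf{1}\to B'^{hG}$ (which one extracts from the equivalence together with the thick-subcategory hypothesis, exactly as in the proof of \Cref{rognesequiv}) show $A'\to B'$ is faithful $G$-Galois, and uniqueness of the $G$-action follows from Rognes \cite[Theorem 11.1.1]{rognes}. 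So the heart of the matter is the existence and uniqueness of the $\mathbf{E}_\infty$-structure.

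For that, the key idea is that the space of $\mathbf{E}_\infty$-$A'$-algebra structures on a perfect $A'$-module, compatibly with prescribed data, can be analyzed via a cosimplicial/descent argument along a faithfully flat cover. Concretely: by hypothesis, the homotopy-commutative algebra $B'$ has the property that $B'\otimes_{A'}B'\simeq\prod_G B'$; iterating, $B'^{\otimes_{A'}(n+1)}\simeq\prod_{G^n}B'$. I would argue that after base change along the (faithfully flat, since it is a retract of a split one via the thick-subcategory hypothesis) map $A'\to B'$, the $B'$-algebra $B'\otimes_{A'}B'\simeq\prod_G B'$ is \emph{\'etale} over $B'$ — it is a finite product of copies of $B'$. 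Hence, by \Cref{etaletopinv}, over $B'$ there is a \emph{unique} $\mathbf{E}_\infty$-algebra with the given homotopy groups, and moreover the moduli space of such structures is homotopy discrete. The classical \'etale rigidity on each cosimplicial level then propagates to $A'$ by descent: using \Cref{easydesc}, $\md(A')\simeq \mathrm{Tot}(\md(B')\rightrightarrows\md(B'\otimes_{A'}B')\triplearrows\cdots)$, and taking commutative algebra objects one gets $\einf_{A'/}\simeq\mathrm{Tot}(\einf_{B'/}\rightrightarrows\einf_{B'\otimes_{A'}B'/}\triplearrows\cdots)$ as in the proof of \Cref{galcolim}. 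Since the relevant objects are \'etale at every cosimplicial level, the space of lifts of the homotopy-commutative multiplication on $B'$ to an $\mathbf{E}_\infty$-structure is the totalization of a cosimplicial diagram of homotopy-discrete spaces each of which is contractible (by uniqueness of \'etale structures), hence contractible; this gives both existence and uniqueness. The $G$-action is handled in the same way: a $G$-action on $B'$ as an $\mathbf{E}_\infty$-$A'$-algebra is a point of $\hom_{\fun(BG,\einf_{A'/})}$-type mapping space, which one again computes as a totalization of \'etale-level mapping spaces that are discrete, so the homotopy-coherent $G$-action is forced by its action in the homotopy category.

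The one technical point that requires care, and which I expect to be the main obstacle, is verifying that one may legitimately base-change along $A'\to B'$ and that $B'\otimes_{A'}B'$, with its algebra structure coming from the given homotopy-commutative multiplication on $B'$ (not a priori from a Galois structure we have yet to construct), really is \'etale over $B'$ — i.e. that the equivalence $B'\otimes_{A'}B'\simeq\prod_G B'$ is an equivalence of $B'$-\emph{algebras} and not merely of $B'$-modules. Here I would use that the multiplication map and unit are given as part of the data, so $B'\otimes_{A'}B'$ is a homotopy-commutative $B'$-algebra whose underlying module is $\prod_G B'$; since $\pi_*(\prod_G B')=\prod_G\pi_*B'$ is \'etale over $\pi_*B'$ (it is a finite product of copies), \Cref{etaletopinv} applies to identify its $\mathbf{E}_\infty$-refinement uniquely, and in particular shows that \emph{any} homotopy-commutative $B'$-algebra structure on $\prod_G B'$ of this form is the standard product one and extends uniquely to $\mathbf{E}_\infty$. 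A secondary subtlety is that $A'\to B'$ need not be faithfully flat on homotopy groups, so ``descendable'' must be used in place of ``faithfully flat'' throughout; but this is exactly what \Cref{easydesc} provides, and the thick-subcategory-generating hypothesis on $B'$ is precisely descendability of $A'\to B'$. Once these points are in place, the totalization bookkeeping — that a totalization of discrete contractible spaces is contractible, and that the $G$-equivariant refinement is similarly unique — is routine.
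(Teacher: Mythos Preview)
Your descent strategy has a circularity that I do not see how to break. To run descent along $A'\to B'$ via \Cref{easydesc}, and in particular to form the cosimplicial diagram $\einf_{B'/}\rightrightarrows\einf_{B'\otimes_{A'}B'/}\triplearrows\cdots$ and identify its totalization with $\einf_{A'/}$, you need $B'$ to already be a commutative algebra object in $\md(A')$, i.e.\ an $\mathbf{E}_\infty$-$A'$-algebra. But that is precisely what you are trying to construct. The same issue recurs throughout: to invoke \Cref{etaletopinv} for $B'\otimes_{A'}B'$ over $B'$ you need $B'$ to be an $\mathbf{E}_\infty$-ring, and even to speak of $B'\otimes_{A'}B'$ as a $B'$-algebra in the $\mathbf{E}_\infty$ sense requires the same. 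The hypotheses only give a homotopy-commutative multiplication, which is not enough to set up either the symmetric monoidal $\infty$-category $\md(B')$ or the $\infty$-category $\einf_{B'/}$. One might hope to bootstrap by first producing an $\mathbf{E}_1$-structure and running monadic (non-symmetric) descent, but then $\einf_{A'/}$ is no longer expressed as a totalization of $\einf$-categories over the cobar resolution, and the \'etale rigidity theorem is stated for commutative base rings.

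The paper avoids this circularity by working directly with obstruction theory rather than descent. First, Robinson's $\mathbf{E}_1$-obstruction theory \cite{robinsonobstruct} applies because $B'\otimes_{A'}B'\simeq\prod_G B'$ gives a perfect universal-coefficient situation: $B'_*$ is projective over $\pi_*(B'\otimes_{A'}B')$, so the relevant $\mathrm{Ext}$-obstructions vanish and one obtains a unique $\mathbf{E}_1$-structure from purely homotopy-level input. The passage from $\mathbf{E}_1$ to $\mathbf{E}_\infty$ is then a separate trick: by the obstruction theory of \cite{rezkHM}, the space of $\mathbf{E}_1$-maps between any tensor powers $B'^{\otimes m}$ and $B'^{\otimes n}$ is homotopy discrete and agrees with homotopy-ring maps; hence the smallest symmetric monoidal subcategory of $\mathrm{Alg}(\md(A'))$ containing $B'$ is an ordinary symmetric monoidal category, and the homotopy-commutative structure on $B'$ (which is a commutative algebra object in that ordinary category) is automatically an $\mathbf{E}_\infty$-structure. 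The $G$-action comes along for the same reason. Your intuition that \'etaleness after base change drives everything is morally right---it is what kills the obstructions---but the mechanism has to be obstruction theory, not descent.
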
 
 Here we use an argument, originally due to Hopkins in a different setting, that will be
 elaborated upon further in joint work with Heuts; as such, we give a sketch of the
 proof. 

\begin{proof} We use the obstruction theory of \cite{robinsonobstruct} (see also \cite[Sec. 3]{angeltveit}) to produce a unique
$\mathbf{E}_{1}$-structure. 
Since $B' \otimes_{A'} B' $ is a finite product of copies of $B'$, it follows
that $B'$ satisfies a perfect universal coefficient formula in the sense of
that paper. 
The obstruction theory
developed there
states that the 
obstructions to producing an $\mathbf{E}_{1}$-structure lie in $\mathrm{Ext}^{n,
3-n}_{\pi_*(B' \otimes_{A'} B')}( B'_*, B'_*)$ for $n\geq 4$, and the
obstructions to uniqueness in the groups
$\mathrm{Ext}^{n,
2-n}_{\pi_*(B' \otimes_{A'} B')}( B'_*, B'_*)$ for $n \geq 3$. 
The hypotheses of the lemma imply that $B'_*$ is a projective $\pi_*(B'
\otimes_{A'} B')$-module, though, so that all the obstructions (both to
uniqueness and existence) vanish. 

Our next goal is to promote this to an $\mathbf{E}_{\infty}$-multiplication extending the 
given $\mathbf{E}_{1}$-structure. 
We claim that the space of $\mathbf{E}_{1}$-maps between 
any tensor power $B'^{\otimes m}$ and any other tensor power $B'^{\otimes n}$ of $B'$ is homotopy
discrete and equivalent to the collection of  maps of $A$-\emph{ring spectra}: that
is, homotopy classes of maps $B'^{\otimes m} \to B'^{\otimes n}$ (in
$A$-modules) that commute
with the multiplication laws \emph{up to homotopy.}
This is a consequence of the analysis in \cite{rezkHM} (in particular,
Theorem 14.5 there), and the fact that the
$B'^{\otimes n}$-homology of $B'^{\otimes m}$ is \'etale, so that the
obstructions of \cite{rezkHM} all vanish. 

It follows that if $\mathcal{C}$ is the smallest symmetric monoidal $\infty$-category 
of $\mathrm{Alg}(\mod(A'))$ (i.e., $\mathbf{E}_{1}$-algebras in $\mod(A')$) containing $B'$, then 
$\mathcal{C}$ is equivalent to an ordinary symmetric monoidal category, which
is equivalent to a full subcategory of the category of $A$-ring spectra. Since
$B'$ is a commutative algebra object in that latter category, it follows that
it is a commutative algebra object of $\mathrm{Alg}(\mod(A'))$, and
thus gives an $\mathbf{E}_{\infty}$-algebra. 
The $G$-action, since it was by maps of $A$-ring spectra, also comes along. 
\end{proof}

\subsection{The even periodic and regular case}
Our first calculation of a Galois group was in \Cref{connectivegal}, where
we
showed that the Galois group of a connective $\e{\infty}$-ring was entirely
algebraic. 
In this section, we will show (\Cref{etalegalois}) that the analogous statement
holds for an even periodic $\e{\infty}$-ring with regular (noetherian) $\pi_0$. 
As in the proof of \Cref{connectivegal}, the strategy is to reduce to
considering ring spectra with K\"unneth isomorphisms. Unfortunately, in the
nonconnective setting, the ``residue field'' ring spectra one wants can be constructed only as
$\e{1}$-algebras (rather than as $\e{\infty}$-algebras), so one has to work somewhat
harder. 

\begin{definition} 
An $\e{\infty}$-ring $A$ 
is \textbf{even periodic} if: 
\begin{enumerate}
\item $\pi_i A = 0$ if $i$ is odd.  
\item The multiplication
map $\pi_2 A \otimes_{\pi_0 A} \pi_{-2} A  \to \pi_0 A$ is an isomorphism. 
\end{enumerate}
In particular, $\pi_2 A$ is an invertible $\pi_0 A$-module; if it is free
of rank one, then $\pi_*(A) \simeq \pi_0(A) [t_{2}^{\pm 1}]$ where $|t_2| = 2$. 
\end{definition}

Even periodic $\e{\infty}$-rings (such as complex $K$-theory $KU$) play a central role in chromatic homotopy
theory because of the connection, beginning with Quillen, with the theory of
\emph{formal groups.} We will also encounter even periodic $\e{\infty}$-rings
in studying stable module $\infty$-categories for finite groups below. 
The $\infty$-categories of modules over them turn out to be fundamental
building blocks for many other stable homotopy theories, so an understanding of
their Galois theory will be critical for us.

We begin with the simplest case. 
\begin{proposition} 
\label{fieldreg}
Suppose $A$ is an even periodic $\e{\infty}$-ring with $\pi_0 A \simeq k[t^{\pm
1}]$ where $|t| = 2$ and $k$ a field. Then the Galois theory of $A$ is
algebraic:  $\pi_{ 1} \md(A) \simeq  \mathrm{Gal}(k^{\mathrm{sep}}/k)$. 
\end{proposition}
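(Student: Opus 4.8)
The plan is to show that every finite cover of $A$ is classically \'etale, i.e.\ is the realization of a finite \'etale $\pi_0 A = k$-algebra. Since the induced functor $\cov_{\spec k} \to \clgf(\md(A))^{\op}$ is already known to be fully faithful, and both Galois categories are connected ($k$ is a field, so $\pi_0 A$ has no nontrivial idempotents), \Cref{torsorenough} will then promote this to an equivalence of Galois categories, whence $\pi_{1}\md(A) \simeq \pi_1^{\mathrm{et}}\spec k = \gal(k^{\mathrm{sep}}/k)$. Because $\mathbf 1 = A$ is compact, $\clgw(\md(A)) = \clgf(\md(A))$ by \Cref{basicgal}, and by \Cref{torsorenough} it suffices to prove, for each finite group $G$, that every $G$-torsor in $\clgf(\md(A))^{\op}$ — equivalently, combining \Cref{galdual}, \Cref{cptdescent} and \Cref{Gtorsor}, every faithful $G$-Galois extension $A \to B$ in the sense of \Cref{defgalr} — is $A_L$ for a classical $G$-Galois $k$-algebra $L$.

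So fix such a $B$. The basic computation is that $\pi_* A = k[t^{\pm 1}]$ ($|t| = 2$) is a graded field: every graded module over it is free, so the K\"unneth spectral sequence degenerates and $\pi_*(M \otimes_A N) \cong \pi_* M \otimes_{\pi_* A} \pi_* N$ for all $M,N \in \md(A)$; in particular $\pi_*(B \otimes_A B) \cong \pi_* B \otimes_{\pi_* A}\pi_* B$. By \Cref{galdual}, $B$ is dualizable, hence (as $A$ is compact) perfect over $A$, so $\pi_* B$ is a finitely generated graded $k[t^{\pm1}]$-module; it is therefore the $2$-periodicization of a finite-dimensional $\mathbb{Z}/2$-graded $k$-algebra $\bar B$ with $\bar B_0 = \pi_0 B$ and $\bar B_1 = \pi_1 B$. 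The equivalence $B \otimes_A B \simeq \prod_G B$ identifies the multiplication $\mu\colon B \otimes_A B \to B$ with projection onto the identity-coset factor $\prod_G B \to B$, which has the evident section; hence $\mu$ splits $(B \otimes_A B)$-linearly, i.e.\ $B$ is a separable $A$-algebra. Applying $\pi_*$ and using the K\"unneth identification, $\pi_* B$ is a retract of $\pi_* B \otimes_{\pi_* A}\pi_* B$ over itself, so $\pi_* B$ is a separable graded $\pi_* A$-algebra; since a graded-projective module is projective, $\bar B$ is even a separable \emph{ungraded} commutative $k$-algebra, hence $\bar B \cong \prod_i L_i$ with each $L_i/k$ a finite separable field extension.

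Next I claim the grading is trivial, i.e.\ $\pi_1 B = \bar B_1 = 0$. If $e = e_0 + e_1$ is an idempotent of the graded-commutative ring $\bar B$, then in characteristic $\neq 2$ one has $e_1^2 = -e_1^2 = 0$ and a short computation with $e^2 = e$ forces $e_1 = 0$, while in characteristic $2$ the degree-one part of $e^2 = e$ reads $2e_0e_1 = e_1$, hence $e_1 = 0$; so in every characteristic the primitive idempotents $\epsilon_i$ lie in $\pi_0 B$ and each $L_i = \bar B\epsilon_i$ is $\mathbb{Z}/2$-graded with $(L_i)_0 = \pi_0 B\,\epsilon_i \ni 1$. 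Suppose $(L_i)_1 \neq 0$. In characteristic $\neq 2$ an odd element $y \in (L_i)_1$ satisfies $y^2 = -y^2 = 0$, impossible in the field $L_i$. In characteristic $2$, choosing $0 \neq y \in (L_i)_1$ (a unit, with $y^{-1}$ again odd) gives $(L_i)_1 = (L_i)_0\cdot y$ and $L_i = (L_i)_0[y]/(y^2 - c)$ with $c \in (L_i)_0$ necessarily a non-square (else $L_i$ would contain a nonzero nilpotent), so $L_i/(L_i)_0$ is purely inseparable, contradicting separability of $L_i/k$. Hence $\pi_1 B = 0$, so $\pi_0 B = \bar B$ is a finite \'etale $k$-algebra and $\pi_* B \cong \pi_0 B \otimes_{\pi_0 A}\pi_* A$; thus $B$ is classically \'etale over $A$, and by \Cref{etaletopinv} $B \simeq A_L$ with $L = \pi_0 B$. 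Finally, $\pi_0$ of $B \otimes_A B \simeq \prod_G B$ and the K\"unneth identification give $L \otimes_k L \cong \prod_G L$, and faithfully flat descent along the nonzero finite $k$-algebra $L$ gives $k = \mathrm{eq}(L \rightrightarrows L \otimes_k L) = L^G$; so $L/k$ is a classical $G$-Galois algebra, and $B \simeq A_L$ compatibly with the (unique, by \Cref{etaletopinv}) $G$-action. This is the desired essential surjectivity on $G$-torsors.

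The main obstacle is the evenness statement $\pi_1 B = 0$: separability of $B$ only produces a separable $\mathbb{Z}/2$-graded-commutative $k$-algebra $\bar B$, and one must exclude a nonzero odd part. The delicate case is $\mathrm{char}\,k = 2$, where the Koszul sign disappears and the argument instead rests on observing that a factor with nonzero odd part would be a degree-two purely inseparable — hence non-separable — extension of its degree-zero subfield, contradicting separability of $\bar B$ over $k$. Everything else is formal given the earlier machinery.
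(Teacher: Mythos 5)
Your overall strategy is the same as the paper's: reduce to showing that a faithful $G$-Galois extension $B$ of $A$ has $\pi_1 B = 0$, by passing via the K\"unneth isomorphism to a finite-dimensional $\mathbb{Z}/2$-graded $k$-algebra $\bar B$ with $\bar B \otimes_k \bar B \cong \prod_G \bar B$ and arguing that the odd part must vanish. Your characteristic $2$ argument is correct (and slightly different in flavor from the paper's, which counts idempotents after base change to $\overline{k}$ rather than invoking inseparability), and the surrounding reductions via \Cref{torsorenough}, \Cref{basicgal}, \Cref{galdual}, and \Cref{Gtorsor} are all fine.

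The gap is in characteristic $\neq 2$, at the step ``$\bar B$ is even a separable \emph{ungraded} commutative $k$-algebra, hence $\bar B \cong \prod_i L_i$ with each $L_i/k$ a finite separable field extension.'' When $\mathrm{char}\,k \neq 2$ and $\bar B_1 \neq 0$, the algebra $\bar B$ is only graded-commutative: odd elements anticommute, so $\bar B$ need not be commutative (or reduced) as an ungraded ring, and the tensor product appearing in the separability statement carries Koszul signs, so it is not the ungraded enveloping algebra. The exterior algebra $k[\epsilon]/(\epsilon^2)$ on an odd generator is precisely the kind of factor you must exclude; it has only trivial idempotents, is not a product of fields, and is not detected by your idempotent computation. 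Since your contradiction in this case is ``$y^2 = 0$ is impossible in the field $L_i$,'' and the fieldness of $L_i$ is exactly what the unjustified step was supposed to supply, the argument is circular at the decisive point. (Note also that your closing remark has the difficulty located in the wrong place: it is odd characteristic, not characteristic $2$, where your argument is incomplete.) The paper closes this case differently: after base-changing to $\overline{k}$, \Cref{gradedloc} produces a map of graded $k$-algebras $\bar B \to \overline{k}$, and base-changing the splitting $\bar B \otimes_k \bar B \cong \prod_G \bar B$ along this map in one tensor factor yields $\bar B \otimes_k \overline{k} \cong \prod_G \overline{k}$ concentrated in degree zero, which forces $\bar B_1 = 0$ directly. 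Alternatively, one can salvage your separability idea by showing that the ideal generated by the odd elements is nilpotent and comparing nilpotent ideals on the two sides of $\bar B \otimes_k \bar B \cong \prod_G \bar B$, but some such additional argument is needed.
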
 
\begin{proof} 
We want to show that any finite cover of $A$ is \'etale at the level of
homotopy groups; flat would suffice. 
Let $B$ be a $G$-Galois extension of $A$. Then $B \otimes_A B \simeq \prod_G
B$. Since $\pi_*(A)$ is a graded field, it follows that
\[ \pi_*(B) \otimes_{\pi_*(A)} \pi_*(B) \simeq \prod_G \pi_*(B).  \]
Moreover, since $B$ is a perfect $A$-module, it follows that $\pi_*(B)$ is a
finite-dimensional $\pi_*(A)$-module. 

Making a base-change $t \mapsto 1$, we can work in $\mathbb{Z}/2$-graded
$k$-vector spaces rather than graded $k[t^{\pm 1}]$-modules. 
So we get a $\mathbb{Z}/2$-graded commutative (in the graded sense) $k$-algebra 
$B'_* = B'_0 \oplus B'_1$ with the property that we have an equivalence of
$\mathbb{Z}/2$-graded $B'_*$-algebras
\begin{equation} \label{split} B'_* \otimes_k B'_* \simeq \prod_G B'_*.
\end{equation}
Observe that this tensor product is the \emph{graded} tensor product.

From this, we want to show  that $B'_1 = 0$, which will automatically force
$B'_0$ to be \'etale over $k$. Suppose
first that the characteristic of $k$ is not 2. By \Cref{gradedloc} below, there exists a map of graded $k$-algebras $B_*' \to
\overline{k}$. We
can thus compose with the map $k \to B'_* \to \overline{k}$ and use 
\eqref{split} to conclude that $B'_* \otimes_k \overline{k} \simeq \prod_G
\overline{k}$ as a graded $k$-algebra. 
This in particular implies that $B'_1 = 0$ and that $B'_0$ is a finite separable
extension of $k$, which proves \Cref{fieldreg} away from the prime 2. 

Finally, at the prime 2, 
we need to show that \eqref{split} still implies that $B'_1 =0 $. 
In this case, $B'_0 \oplus B'_1$ is a \emph{commutative} $k$-algebra and
\eqref{split} implies that it must be \'etale. After extending scalars to
$\overline{k}$, $B'_0 \oplus B'_1$ must, as a 
commutative ring, be isomorphic to $\prod_G \overline{k}$. However, any
idempotents in $B'_0 \oplus B'_1$ are clearly concentrated in degree zero. 
So, we can make the same conclusion at the prime $2$. 
\end{proof} 

\begin{lemma} 
\label{gradedloc}
Let $k$ be an algebraically closed field with $2 \neq 0$, and $A'_*$ a  nonzero
finite-dimensional $\mathbb{Z}/2$-graded
commutative $k$-algebra. Then there exists a map of graded $k$-algebras $A'_*\to k$. 
\end{lemma}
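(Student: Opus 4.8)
The plan is to build such a homomorphism by killing the odd part $A'_1$ and then invoking the Nullstellensatz for the commutative algebra $A'_0$. Note first that a graded $k$-algebra map $A'_* \to k$ (with $k$ placed in degree $0$) necessarily restricts to $0$ on $A'_1$ and to a $k$-algebra map on $A'_0$, and conversely such a pair of data assembles to a graded algebra map exactly when the map $A'_0 \to k$ annihilates the ideal $(A'_1)^2 \subseteq A'_0$ generated by all products $xy$ with $x,y \in A'_1$. So the entire content is to show that $(A'_1)^2$ is a \emph{proper} ideal of $A'_0$: once that is known, $A'_0$ being a nonzero finite-dimensional commutative $k$-algebra has a maximal ideal $\mathfrak{m} \supseteq (A'_1)^2$, and since $k$ is algebraically closed $A'_0/\mathfrak{m} = k$, which furnishes the desired map.

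First I would record the consequences of graded commutativity in characteristic $\neq 2$. For an odd element $x$, the sign rule $ab = (-1)^{|a||b|}ba$ gives $x^2 = -x^2$, hence $x^2 = 0$; and for arbitrary $a \in A'$, writing $a = a_0 + a_1$ with $a_0$ even and $a_1$ odd, one has $x a_0 x = a_0 x^2 = 0$ (even elements are central) and $x a_1 x = -a_1 x^2 = 0$, so in fact $xA'x = 0$ for every $x \in A'_1$. This identity is where $2 \neq 0$ is used.

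Next I would deduce that the ideal $I \subseteq A'$ generated by $A'_1$ is nilpotent, hence proper. Fixing a $k$-basis $x_1,\dots,x_m$ of $A'_1$, the relations $A' x_j \subseteq x_j A'$ (again graded commutativity up to sign) show that $I = \sum_j x_j A'$ is already a two-sided ideal; moreover any element of $I^{m+1}$ is a sum of monomials $x_{j_1}a_1 x_{j_2}a_2 \cdots x_{j_{m+1}}a_{m+1}$, and by pigeonhole two indices coincide, $j_p = j_q = l$ with $p < q$, so the sub-word $x_l(a_p x_{j_{p+1}}\cdots a_{q-1})x_l$ lies in $x_l A' x_l = 0$; thus $I^{m+1} = 0$. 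Since $A'$ is a nonzero unital ring, $I \neq A'$, and as $(A'_1)^2 \subseteq I \cap A'_0$ we conclude that $(A'_1)^2$ is a proper ideal of $A'_0$. (Equivalently, each $x A'$ is a nilpotent one-sided ideal, so $A'_1$ lies in the Jacobson radical of the Artinian ring $A'$.)

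Finally, choosing a maximal ideal $\mathfrak{m}$ of $A'_0$ with $(A'_1)^2 \subseteq \mathfrak{m}$ and $A'_0/\mathfrak{m} = k$, I would define $\phi\colon A'_* \to k$ to be $0$ on $A'_1$ and the quotient map on $A'_0$, and then verify it is a unital $k$-algebra homomorphism: the products on $A'_0 \otimes A'_0$ and $A'_0 \otimes A'_1 \to A'_1$ are handled trivially, and on $A'_1 \otimes A'_1 \to A'_0$ both sides map to $0$, since $\phi(x)\phi(y) = 0$ and $\phi(xy) = 0$ because $xy \in (A'_1)^2 \subseteq \mathfrak{m}$. The one genuinely substantive point — the main obstacle — is the middle step, namely that the odd part cannot generate the unit ideal; everything there hinges on the identity $xA'x = 0$, and the rest of the argument is routine.
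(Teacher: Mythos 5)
Your proof is correct and rests on the same key mechanism as the paper's: odd elements square to zero in characteristic $\neq 2$, so the odd part generates a proper (indeed nilpotent) ideal, and one then reduces to the Nullstellensatz for the even Artinian quotient. The only difference is organizational — the paper inducts on $\dim A'_1$, quotienting by one odd element at a time and observing that every element of $(x)$ has square zero, whereas you kill the whole odd-generated ideal at once via the pigeonhole nilpotence bound $I^{m+1}=0$ and then write down the map explicitly through a maximal ideal of $A'_0$.
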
 
\begin{proof} 
Induction on $\dim A'_1$. If $A'_1 = 0$, we can use the ordinary theory of
artinian rings over algebraically closed fields. If there exists a nonzero $x \in A'_1$, we can form the two-sided ideal $(x)$: this is equivalently the left
or right ideal generated by $x$. In particular, anything in $(x)$ has square
zero. It follows that $1 \notin (x)$ and we get a map
of $k$-algebras
\[ A'_* \to A'_*/(x),  \]
where $A'_*/(x)$ is a \emph{nontrivial} finite-dimensional $\mathbb{Z}/2$-graded commutative ring
of smaller dimension in degree one. We can thus continue the process.
\end{proof}

We can now prove our main result. 
\begin{theorem} \label{etalegalois}
Let $A$ be an even periodic $\e{\infty}$-ring with $\pi_0 A$
regular noetherian. Then the Galois theory of $A$ is algebraic. \end{theorem}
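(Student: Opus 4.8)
The plan is to reduce the general regular noetherian case to the case of a field, where \Cref{fieldreg} applies, using descent along well-chosen faithfully flat maps and a local-to-global argument. First I would observe that, since $\pi_0 A$ is regular noetherian, the classically \'etale algebras over $A$ already account for $\pi_1^{\mathrm{et}}\spec \pi_0 A$; what must be shown is that \emph{every} finite cover (equivalently, by \Cref{torsorenough} and \Cref{Gtorsor}, every faithful $G$-Galois extension $A \to B$) is classically \'etale, i.e. that $\pi_0 A \to \pi_0 B$ is \'etale and $\pi_* B \simeq \pi_0 B \otimes_{\pi_0 A} \pi_* A$. Since $B$ is a perfect $A$-module and $B \otimes_A B \simeq \prod_G B$, the homotopy groups $\pi_* B$ form a finitely generated $\pi_* A$-module with $\pi_* B \otimes_{\pi_* A} \pi_* B \simeq \prod_G \pi_* B$. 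Because $A$ is even periodic, all of this is captured $2$-periodically by $\pi_0 B$ together with $\pi_1 B$ (an invertible-twist aside, which one trivializes Zariski-locally on $\pi_0 A$ by choosing a generator $t$ of $\pi_2 A$); so the task becomes purely algebraic once we can detect \'etaleness and the vanishing of the ``odd part'' of $\pi_* B$.

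Next I would pass to residue fields. For each prime $\mathfrak{p} \subset \pi_0 A$, regularity lets us build — as in the discussion preceding \Cref{etalegalois} and using the techniques of \cite{BR2} alluded to in the introduction — an $\e1$-$A$-algebra $A_\mathfrak{p}$ (a ``residue field'' spectrum) which is even periodic with $\pi_0 A_\mathfrak{p}$ a field (the residue field $k(\mathfrak{p})$, $2$-periodized). Crucially, $A \to A_\mathfrak{p}$ need only be an $\e1$-map: tensoring $B$ up gives an $\e1$-algebra $B \otimes_A A_\mathfrak{p}$ over $A_\mathfrak{p}$ satisfying the same splitting $(B \otimes_A A_\mathfrak{p})^{\otimes 2} \simeq \prod_G (B \otimes_A A_\mathfrak{p})$, and the argument of \Cref{fieldreg} (which only uses the graded-algebra structure on homotopy, via \Cref{gradedloc}) shows $\pi_*(B \otimes_A A_\mathfrak{p})$ is concentrated in even degrees and \'etale over $k(\mathfrak{p})[t^{\pm 1}]$. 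By K\"unneth, $\pi_*(B \otimes_A A_\mathfrak{p}) \simeq \pi_* B \otimes_{\pi_* A} \pi_* A_\mathfrak{p} = \pi_* B \otimes_{\pi_0 A} k(\mathfrak{p})$ up to the periodicity generator. So $\pi_0 B \otimes_{\pi_0 A} k(\mathfrak{p})$ is a finite \'etale $k(\mathfrak{p})$-algebra and $\pi_{\mathrm{odd}} B \otimes_{\pi_0 A} k(\mathfrak{p}) = 0$ for every prime $\mathfrak{p}$.

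Now I would run the standard local criteria. Since $\pi_0 A$ is noetherian and $\pi_0 B$ is a finitely generated $\pi_0 A$-module (as $B$ is perfect), Nakayama gives $\pi_{\mathrm{odd}} B = 0$, so $\pi_* B$ is concentrated in even degrees and $\pi_* B \simeq \pi_0 B \otimes_{\pi_0 A} \pi_* A$ is the desired K\"unneth isomorphism; in particular $\pi_0 A \to \pi_0 B$ is flat (fiberwise finite plus finitely presented plus $\mathrm{Tor}$-vanishing detected on residue fields, using noetherianness of $\pi_0 A$). Finally, $\pi_0 A \to \pi_0 B$ is unramified because $\Omega_{\pi_0 B/\pi_0 A} \otimes k(\mathfrak{p})$ vanishes for all $\mathfrak{p}$ (it is a quotient of $\Omega$ of the fiber, which is \'etale), and a finitely generated module over noetherian $\pi_0 B$ vanishing at all primes is zero; flat plus unramified plus finitely presented is \'etale. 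Hence $B$ is classically \'etale, $\clgf(\md(A))^{\op} \simeq \cov_{\spec \pi_0 A}$, and the Galois theory is algebraic. The main obstacle I expect is the construction and control of the $\e1$ ``residue field'' algebras $A_\mathfrak{p}$ in the even periodic regular setting — one needs them even periodic with the right $\pi_0$ and well-behaved under base change of $\e1$-algebras — together with verifying that the graded-field argument of \Cref{fieldreg}, which was phrased for $\e\infty$-inputs, goes through when $B \otimes_A A_\mathfrak{p}$ is only $\e1$ (it should, since only the homotopy-ring structure and the splitting are used, but this needs care at the prime $2$ exactly as in \Cref{fieldreg}).
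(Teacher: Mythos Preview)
Your overall architecture---reduce to the local case and probe $B$ with $\e{1}$ ``residue field'' spectra $A_{\mathfrak{p}}$---is exactly the paper's. The gap is the sentence ``By K\"unneth, $\pi_*(B \otimes_A A_{\mathfrak{p}}) \simeq \pi_* B \otimes_{\pi_* A} \pi_* A_{\mathfrak{p}}$.'' That isomorphism is \emph{not} a K\"unneth theorem for the field-valued homology theory $P_*(-)=\pi_*(- \otimes_A A_{\mathfrak{p}})$; it is the degeneration of the Tor spectral sequence $\mathrm{Tor}^{\pi_*A}_*(\pi_*B,\pi_*A_{\mathfrak{p}}) \Rightarrow \pi_*(B\otimes_A A_{\mathfrak{p}})$, which is equivalent to $\pi_*B$ being flat over $\pi_*A$---precisely what you are trying to prove. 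Without it, knowing $P_{\mathrm{odd}}(B)=0$ says nothing directly about $\pi_{\mathrm{odd}}B \otimes_{\pi_0 A} k(\mathfrak{p})$, so the Nakayama step does not fire.

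The paper closes this gap in two moves. First, it does \emph{not} try to run \Cref{fieldreg} at the residue field spectrum (your worry about $\e{1}$ versus $\e{\infty}$ is well-founded: at $p=2$ the quotient $A/(x_1,\dots,x_n)$ need not even be homotopy commutative, so \Cref{gradedloc} is unavailable). Instead it uses only the \emph{module-theoretic} K\"unneth $Q_*(B)\otimes_k Q_*(B)\simeq\prod_G Q_*(B)$, giving $\dim Q_0(B)+\dim Q_1(B)=|G|$, and applies \Cref{fieldreg} at the \emph{generic} point (an honest $\e{\infty}$-localization) to get $\mathrm{rank}_{\pi_0 A}\pi_0 B=|G|$. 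Second, it proves two Bockstein-type lemmas (\Cref{BSSlem} and \Cref{easyBSS}) that replace your false K\"unneth: the first bounds $\mathrm{rank}\,\pi_0 B \le \dim Q_0(B)$, forcing $\dim Q_0(B)=|G|$ and hence $Q_1(B)=0$; the second then shows that $Q_1(B)=0$ implies $\pi_1 B=0$ and $\pi_0 B$ free over $\pi_0 A$, by peeling off the regular sequence one element at a time. Only after this do you get the K\"unneth isomorphism you wanted, and the \'etaleness of $\pi_0 A\to\pi_0 B$ follows.
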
 

Most of this result appears in \cite{BR2}, where the Galois group
of $E_n$ is identified at an odd prime (as the Galois group of its $\pi_0$). 
Our methods contain the modifications needed to handle the prime $2$ as well.
\begin{remark}
This will also show that all Galois extensions of $A$ in the sense of
\cite{rognes} are faithful. 
\end{remark}

\begin{proof} [Proof of \Cref{etalegalois}]
Fix a finite group $G$ and let $B$ be a $G$-Galois extension of $A$, so that
\[ A \simeq B^{h G}, \quad B \otimes_A B \simeq \prod_G B.  \]
By \Cref{galdual}, $B$ is a perfect $A$-module; in particular, the homotopy
groups of $B$ are finitely generated $\pi_0 A$-modules. 

Our goal is to show that $B$ is even periodic and that $\pi_0 B$ is \'etale
over $\pi_0 A$. To do this, we may reduce to the case of $\pi_0 A$
{regular} \emph{local}, by
checking at each localization. 
We are now in the following situation. The $\e{\infty}$-ring $A$ is even
periodic, with $\pi_0 A$ local  with its maximal ideal generated by a
regular sequence $x_1, \dots, x_n \in \pi_0 A$ for $n = \dim \pi_0 A$. 
Let $k$ be the residue field of $\pi_0 A$. 
In this case, then one can define a \emph{multiplicative homology theory}
$P_*$ on
the category of $A$-modules 
via
\[ P_*(M) \stackrel{\mathrm{def}}{=} \pi_* ( M/(x_1, \dots, x_n) M) \simeq
\pi_* (M \otimes_A A/(x_1, \dots, x_n)), \]
where $A/(x_1, \dots, x_n) \simeq A/x_1 \otimes_A \dots \otimes_A A/x_n$. 
More precisely, it is a consequence of the work of Angeltveit
\cite[Sec. 3]{angeltveit} that $A/(x_1, \dots, x_n)$ can
be made (noncanonically) an
$\e{1}$-algebra in $\md(A)$. 
In particular, $A/(x_1, \dots, x_n)$ is, at the very least, a ring object in the
homotopy category of $A$-modules; this weaker assertion, which is all that we
need, is proved directly in \cite[Theorem 2.6]{EKMM}. 
The fact that each $A/x_i$ acquires the structure of a ring object in the
homotopy category of $A$-modules already means that for any $A$-module $M$, the
homotopy groups of $M/x_i M\simeq M \otimes_A A /x_i$ are actually
$\pi_0(A)/(x_i)$-modules. 

In any event, $M \mapsto P_*(M)$ is a multiplicative homology theory taking values in
$k[t^{\pm 1}]$-modules. It satisfies a K\"unneth isomorphism,
\[ P_*(M) \otimes_{k[t^{\pm 1}]} P(N) \simeq P_*(M \otimes_A N) , \]
by a standard argument: with $N$ fixed, both sides define homology theories on $A$-modules;
there is a natural map between the two; moreover, this map is an isomorphism for $M = A$.
This implies that the natural map is an isomorphism by a five-lemma argument. 
Note that  the $\e{1}$-ring $A/(x_1, \dots, x_n)$ is usually not homotopy
commutative if $ p = 2$. 

For convenience, rather than working in the category of graded $k[t^{\pm
1}]$-modules, we will work in the (equivalent) category of $\mathbb{Z}/2$-graded $k$-vector
spaces, and denote the modified functor by $Q_*$ (instead of $P_*$). 
Since $A \to B$ is $G$-Galois, it follows from $B \otimes_A B \simeq \prod_G
B$ that there is an isomorphism of $\mathbb{Z}/2$-graded $k[G]$-modules,
\[ Q_*(B) \otimes_k Q_*(B) \simeq \prod_{G} Q_*(B). \]
In particular, it follows that
\begin{equation} \label{dims} \dim Q_0 (B) + \dim Q_1(B) = |G|.  \end{equation}

We now use a Bockstein spectral sequence argument to bound the rank of
$\pi_0 B$ and $\pi_1 B$. 

\begin{lemma} \label{BSSlem}
Let $M$ be a perfect $A$-module. Suppose that $\dim_k Q_0(M) = a$. Then the
rank of $\pi_0
M$ as a $\pi_0 A$-module (that is, the dimension after tensoring with the
fraction field) is at most $a$. 
\end{lemma}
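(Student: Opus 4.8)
The statement compares the rank of $\pi_0 M$ over the regular local ring $\pi_0 A$ with $\dim_k Q_0(M)$, where $Q_*$ is the K\"unneth homology theory $M \mapsto \pi_*(M/(x_1,\dots,x_n)M)$ passed to the $\mathbb{Z}/2$-graded setting. The key point is that $Q_*$ is obtained by successively killing the regular sequence $x_1,\dots,x_n$ generating the maximal ideal, so the plan is to run a Bockstein spectral sequence (or, more elementarily, an iterated long exact sequence) one element of the regular sequence at a time, and track how the dimensions of homotopy groups behave.

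\textbf{Step 1: reduce to killing a single element.} First I would set up, for a perfect $A$-module $N$ and an element $x \in \pi_0 A$ which is a nonzerodivisor on $\pi_* N$, the cofiber sequence $N \xrightarrow{x} N \to N/xN$ and the associated long exact sequence on homotopy groups. Since $x$ is a nonzerodivisor on $\pi_*N$, this breaks into short exact sequences $0 \to \pi_i(N)/x \to \pi_i(N/xN) \to (\pi_{i-1} N)[x] \to 0$; but $x$ nonzerodivisor means the torsion term vanishes, giving $\pi_i(N/xN) \simeq \pi_i(N)/x\pi_i(N)$ exactly. The subtlety is that $x_1,\dots,x_n$ need not be a regular sequence on $\pi_* M$ itself for a general perfect module $M$ — only on $\pi_0 A$. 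So I would instead argue by downward control: at each stage $N_j = M/(x_1,\dots,x_j)M$, the homotopy groups $\pi_*(N_j)$ are finitely generated modules over the regular local ring $\pi_0 A/(x_1,\dots,x_j)$, and I track the minimal number of generators of $\pi_0(N_j)$.

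\textbf{Step 2: dimension bookkeeping via Nakayama.} The cleanest formulation: let $b_j$ denote the minimal number of generators of $\pi_0(N_j)$ as a $\pi_0(A)/(x_1,\dots,x_j)$-module, equivalently $\dim_{k'} \pi_0(N_j) \otimes k'$ where $k'$ is the residue field of that quotient ring (which is still $k$). From the cofiber sequence $N_j \xrightarrow{x_{j+1}} N_j \to N_{j+1}$ one gets an exact sequence $\pi_1(N_{j+1}) \to \pi_0(N_j) \xrightarrow{x_{j+1}} \pi_0(N_j) \to \pi_0(N_{j+1}) \to \pi_{-1}(N_j)$; tensoring the middle with $k$ and using that $x_{j+1}$ lies in the maximal ideal, the map $\pi_0(N_j) \otimes k \to \pi_0(N_{j+1}) \otimes k$ is surjective, so $b_{j+1} \leq b_j$ — wait, I need the reverse inequality. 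Reconsider: I want to bound $\mathrm{rank}\,\pi_0 M \leq a = \dim_k Q_0(M)$, and $Q_0(M)$ records $\pi_0(N_n)$ (the end of the process). So the right direction is: $\mathrm{rank}_{\pi_0 A}\pi_0 M \leq$ (number of generators of $\pi_0 M$) $= b_0$, and I want $b_0 \leq \dim_k(\pi_0(N_n)\otimes k) \leq \dim_k Q_0(M) = a$. The first inequality is standard (rank $\leq$ minimal number of generators for f.g. modules over a domain). For $b_0 \leq b_n$: from the exact sequence above, tensoring $\pi_0(N_j)\xrightarrow{x_{j+1}}\pi_0(N_j)\to\pi_0(N_{j+1})\to 0$ with $k$ gives $\pi_0(N_j)\otimes k \twoheadrightarrow \pi_0(N_{j+1})\otimes k$, i.e. $b_{j+1}\leq b_j$ — this still goes the wrong way! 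The resolution is that the \emph{connecting map matters}: $\pi_0(N_{j+1})$ also receives $\pi_1(N_{j+1})/x_{j+1}$, no — let me instead use that $\pi_0(N_{j+1})\otimes k \cong \pi_0(N_j)\otimes k \oplus (\text{something from }\pi_{-1})$ is false in general. The correct statement comes from the fact that over a regular local ring each $x_i$ is a nonzerodivisor on $\pi_0 A$ and, more importantly, $\pi_0(N_j)\otimes_{\pi_0 A/(x_1,\dots,x_j)} k$ surjects onto it from $\pi_0(M)\otimes k$, giving $\dim_k Q_0(M) \geq \dim_k(\pi_0 M \otimes k) \geq \mathrm{rank}\,\pi_0 M$, provided the Tor-contributions only \emph{increase} the dimension. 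That last provision is exactly the content: $\pi_0(N_{j+1})$ is an extension involving $\pi_0(N_j)/x_{j+1}$ and $\mathrm{Tor}_1$ of $\pi_{-1}$-type terms, so $\dim_k \pi_0(N_{j+1})\otimes k \geq \dim_k \pi_0(N_j)\otimes k - (\text{corrections})$; I would need the corrections to be nonnegative, which follows because $\dim_k(P/xP \otimes k) = \dim_k(P\otimes k)$ for any f.g. module $P$ when $x$ is in the maximal ideal (Nakayama: $P\otimes k = (P/xP)\otimes k$ literally).

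\textbf{The main obstacle.} The genuine difficulty is that $A/(x_1,\dots,x_n)$ is only an $\e{1}$-ring and $\pi_*(M/(x_1,\dots,x_n)M)$ need not compute naively as an iterated Koszul homology of $\pi_* M$ — there are potential higher $\mathrm{Tor}$ contributions at each stage, and these contributions can only be controlled, not eliminated. So the real work is showing these Tor-terms \emph{help rather than hurt}: I would argue that at each stage, killing $x_{j+1}$ replaces $\pi_0(N_j)$ by a module $\pi_0(N_{j+1})$ whose $k$-dimension (after $\otimes k$) is at least $\dim_k(\pi_0(N_j)\otimes k)$ — because $\pi_0(N_{j+1}) \otimes k$ receives a surjection from $\pi_0(N_j)\otimes k$ by right-exactness, giving $\dim_k(\pi_0(N_{j+1})\otimes k) \leq \dim_k(\pi_0(N_j)\otimes k)$, hence this is the \emph{wrong} direction and instead I must compare with $\mathrm{rank}$, not with $\dim_k(\pi_0 M \otimes k)$ directly: the cleanest route is to use that $\mathrm{rank}_{\pi_0 A} \pi_0 M = \dim_K(\pi_0 M \otimes_{\pi_0 A} K)$ for $K = \mathrm{Frac}(\pi_0 A)$, localize $P_*$ at the generic point, and observe that the Bockstein/Koszul spectral sequence computing $Q_*$ from $\pi_* M$ degenerates rationally in a way forcing $\dim_k Q_0(M) \geq \mathrm{rank}\,\pi_0 M$; the inequality is the Euler-characteristic-type statement that the total Koszul homology dimension is at least the rank. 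I expect the author's proof to run the Bockstein spectral sequence with differentials $d_r$ and argue that $E_\infty$ in degree $0$ surjects (after suitable identification) onto a space of dimension at least the generic rank, the subtlety at $p=2$ being the noncommutativity of $A/(x_1,\dots,x_n)$ which I would sidestep by only using its homotopy-ring structure as in \cite[Theorem 2.6]{EKMM}.
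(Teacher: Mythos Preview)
Your proposal contains a genuine gap. You correctly set up the inductive tower $N_j = M/(x_1,\dots,x_j)M$ and write down the long exact sequence from the cofiber $N_j \xrightarrow{x_{j+1}} N_j \to N_{j+1}$, and you correctly diagnose that tracking the \emph{minimal number of generators} $b_j = \dim_k(\pi_0(N_j)\otimes k)$ goes the wrong way: the surjection $\pi_0(N_j)\otimes k \twoheadrightarrow \pi_0(N_{j+1})\otimes k$ gives $b_{j+1}\leq b_j$, not what you need. But having seen this, you never find the correct invariant to track, and your final gestures (``localize $P_*$ at the generic point'', ``Euler-characteristic-type statement'') do not work: localizing at the generic point inverts all the $x_i$, so $M/(x_1,\dots,x_n)M$ becomes zero and you lose all information about $Q_*$.

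The missing idea, which the paper supplies, is to track the \emph{rank} of $\pi_0(N_j)$ over the regular local ring $\pi_0(A)/(x_1,\dots,x_j)$, using descending induction on $j$. From your own exact sequence you can extract the \emph{injection}
\[
0 \to \pi_0(N_j)/x_{j+1}\pi_0(N_j) \hookrightarrow \pi_0(N_{j+1}),
\]
and the key sublemma is purely algebraic: if $(R,\mathfrak m)$ is regular local, $x\in\mathfrak m\setminus\mathfrak m^2$, $N$ is a finitely generated $R$-module, and $N/xN \hookrightarrow N'$ for some finitely generated $R/(x)$-module $N'$, then $\mathrm{rank}_R N \leq \mathrm{rank}_{R/(x)} N'$. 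This is proved by localizing at the height-one prime $(x)$ (not the generic point!), which reduces to the DVR case where it follows from the structure theorem. Running this from $j=n$ down to $j=0$ gives $\mathrm{rank}_{\pi_0 A}\pi_0 M \leq \dim_k \pi_0(N_n) = a$. The point is that rank behaves well under the injection $N/xN\hookrightarrow N'$ in a way that generator-count does not, precisely because one localizes at an intermediate prime rather than reducing to the closed point.
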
 

\begin{proof}
Choose a system of parameters $x_1, \dots, x_n$ for the maximal ideal of $\pi_0
A$. 
If $M$ is as in the statement of the lemma, then we are given that 
\[ \dim \pi_0(M/(x_1, \dots, x_n) M)  \leq a. \]
We consider the sequence of $A$-modules 
\[ M_i = M/(x_1, \dots, x_i)M = M \otimes_A A/x_1 \otimes_A \dots \otimes_A
A/x_i;  \]
here $\pi_0(M_i)$ is a finitely generated module over the regular local ring $\pi_0(A)/(x_1, \dots, x_i)$. 
For instance, $\pi_0(M_n)$ is a module over the residue field $k$, and our
assumption is that its rank is at most $a$.

\newtheorem*{indstep}{Inductive step}
We make the following inductive step. 
\begin{indstep}
If $\pi_0(M_{i+1})$ has rank $\leq a$ as a module over the regular local ring
$\pi_0(A)/(x_1, \dots, x_{i+1})$, then $\pi_0(M_i)$ has rank $\leq a$ as a
module over the regular local ring $\pi_0(A)/(x_1, \dots, x_i)$. 
\end{indstep}

To see this, consider the cofiber sequence 
\[ M_i \stackrel{x_i}{\to} M_i \to M_{i+1},  \]
and the induced injection in homotopy
groups
\[ 0 \to \pi_0(M_i)/x_{i}\pi_0 M_i \to \pi_0(M_{i+1}). \]
We now apply the following sublemma. 
By descending induction on $i$, this will imply the desired claim.

\newcommand{\rank}{\mathrm{rank}}
\begin{sublemma} 
Let $(R, \mathfrak{m})$ be a regular local ring, $x \in \mathfrak{m} \setminus
\mathfrak{m}^2$. Consider a finitely generated $R$-module $N$. Given an
injection
\[ 0 \to N/xN \to N',  \]
where $N'$ is a finitely generated $R/(x)$-module, we have
\[ \rank_{R} N \leq \rank_{R/(x)} N'.  \]
\end{sublemma} 
\begin{proof} 
When $R$ is a discrete valuation ring (so that $R/(x)$ is a field), this follows from the structure theory of 
finitely generated modules over  a PID. 

To see this in general, we may localize at the prime ideal $(x) \subset R$
(and thus replace the pair $(R, R/(x))$ with $R_{(x)}, R_{(x)}/(x) R_{(x)}$), 
which does not affect the rank
of either side, and which reduces us to the DVR case. 
\end{proof} 

With the sublemma, we can conclude that $\rank_{\pi_0(A)/(x_1, \dots, x_i)} \pi_0(M_i) \leq a$ for all $i$
by descending induction on $i$, which completes the proof of \Cref{BSSlem}. 

\end{proof}

By \Cref{BSSlem}, it now follows that $\pi_0 B$, as a $\pi_0 A$-module, has
rank  at most $a = \dim_k Q_0(B)$, where $a \leq |G|$. However, when we invert everything in $\pi_0 A$
(i.e., form the fraction field $k( \pi_0 A))$, then ordinary Galois theory goes into
effect (\Cref{fieldreg}) and $\pi_0 B \otimes_{\pi_0 A} k( \pi_0 A)$ is 
a finite \'etale $\pi_0 A$-algebra with Galois group $G$. 
In particular, it follows that $a = |G|$. 

As a result, by \eqref{dims}, $Q_1(B) = 0$. It follows, again by the Bockstein spectral
sequence, in the form of \Cref{easyBSS} below, that $B$ is evenly graded and
$\pi_* B$ is free as an $A$-module. 
In particular, $\pi_0(B \otimes_A B) \simeq \pi_0 B \otimes_{\pi_0 A} \pi_0 B$,
which means that we get an isomorphism
\[ \pi_0 B \otimes_{\pi_0 A} \pi_0 B \simeq \prod_G \pi_0 B,  \]
so that $\pi_0 B$ is \'etale over $\pi_0 A$ (more precisely, $\spec \pi_0 B
\to \spec \pi_0 A$ is a $G$-torsor), as desired. 
This completes the proof of \Cref{etalegalois}. 
\end{proof} 
\begin{lemma}[] \label{easyBSS}
Let $A$ be an even periodic $\e{\infty}$-ring such that $\pi_0 A$ is regular local
and $n$-dimensional,
with maximal ideal $\mathfrak{m} = (x_1, \dots, x_n)$. Let $M$ be a perfect
$A$-module such that the $A$-module $M/(x_1, \dots, x_n)M$ satisfies $\pi_1(
M/(x_1, \dots, x_n) M) = 0$. Then $\pi_1(M) = 0$ and $\pi_0(M)$ is a free
$\pi_0(A)$-module. 
\end{lemma}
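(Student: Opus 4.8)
The plan is to run a Bockstein spectral sequence (or, what amounts to the same thing, a descending induction on the number of parameters $x_1,\dots,x_n$) to propagate the vanishing of $\pi_1$ from the ``residue'' level $M/(x_1,\dots,x_n)M$ back up to $M$ itself. Write $M_i = M/(x_1,\dots,x_i)M = M \otimes_A A/x_1 \otimes_A \dots \otimes_A A/x_i$, so that $M_0 = M$ and $M_n = M/(x_1,\dots,x_n)M$. By hypothesis $\pi_1(M_n) = 0$; since $A$ is even periodic, this also forces $\pi_{\mathrm{odd}}(M_n) = 0$, and $\pi_0(M_n)$ is then automatically a finite-dimensional $k$-vector space (hence free over the field $k = \pi_0(A)/\mathfrak{m}$). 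I will show by descending induction on $i$ that $\pi_{\mathrm{odd}}(M_i) = 0$ and $\pi_{\mathrm{even}}(M_i)$ is a free $\pi_0(A)/(x_1,\dots,x_i)$-module; the case $i = 0$ is exactly the assertion of the lemma.

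For the inductive step, suppose $\pi_{\mathrm{odd}}(M_{i+1}) = 0$ and $\pi_0(M_{i+1})$ is free over the regular local ring $R_{i+1} := \pi_0(A)/(x_1,\dots,x_{i+1})$. Consider the cofiber sequence
\[ M_i \xrightarrow{\ x_{i+1}\ } M_i \to M_{i+1} \]
in $\md(A)$, which gives a long exact sequence on homotopy groups. Since $A$ is even periodic and $M$ (hence each $M_i$) is perfect, the homotopy groups are $2$-periodic, so it suffices to examine degrees $0$ and $1$. The relevant portion reads
\[ \pi_1(M_i) \xrightarrow{x_{i+1}} \pi_1(M_i) \to \pi_1(M_{i+1}) \to \pi_0(M_i) \xrightarrow{x_{i+1}} \pi_0(M_i) \to \pi_0(M_{i+1}) \to \pi_1(M_i) \to \dots \]
By the inductive hypothesis $\pi_1(M_{i+1}) = 0$, so $x_{i+1}$ acts surjectively — hence, since $\pi_1(M_i)$ is a finitely generated module over the noetherian local ring $R_i := \pi_0(A)/(x_1,\dots,x_i)$ and $x_{i+1}$ lies in its maximal ideal, Nakayama's lemma forces $\pi_1(M_i) = 0$. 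The long exact sequence then collapses to a short exact sequence
\[ 0 \to \pi_0(M_i) \xrightarrow{x_{i+1}} \pi_0(M_i) \to \pi_0(M_{i+1}) \to 0, \]
so $\pi_0(M_i)$ is $x_{i+1}$-torsion-free and $\pi_0(M_i)/x_{i+1} \simeq \pi_0(M_{i+1})$ is free over $R_{i+1} = R_i/(x_{i+1})$. A finitely generated module $N$ over a noetherian local ring $R_i$ with $x_{i+1} \in \mathfrak{m}_{R_i}$ a nonzerodivisor on both $R_i$ and $N$, such that $N/x_{i+1}N$ is free over $R_i/(x_{i+1})$, is itself free over $R_i$: lift a basis of $N/x_{i+1}N$ to elements of $N$, use Nakayama to see they generate, and compare ranks (or invoke the local criterion of flatness). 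This closes the induction.

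The main obstacle is genuinely just the careful bookkeeping at the prime $2$: the $\e{1}$-algebras $A/x_i$ and $A/(x_1,\dots,x_n)$ are typically not homotopy commutative, so one cannot blithely treat $M_i$ as a module over a commutative ring spectrum. However, the argument above only uses that each $A/x_i$ is a ring object in the homotopy category of $A$-modules (so that multiplication by $x_{i+1}$ is $\pi_0(A)$-linear on the homotopy groups of $M_i$, and the homotopy of $M_i$ genuinely is a module over $R_i = \pi_0(A)/(x_1,\dots,x_i)$), which is exactly the weak statement recorded earlier (cf. \cite[Theorem 2.6]{EKMM}) and used in the proof of \Cref{etalegalois}. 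No homotopy-commutativity of the quotients is needed, so the noncommutativity at $p = 2$ causes no real difficulty. The remaining ingredients — the $2$-periodicity of homotopy groups of perfect modules over an even periodic ring, and the commutative-algebra fact that a finitely generated module over a noetherian local ring which is free modulo a regular element (a nonzerodivisor on the module as well) is free — are standard and require no serious calculation.
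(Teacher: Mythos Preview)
Your proof is correct and follows essentially the same approach as the paper: both run a Bockstein-style induction on the cofiber sequences $M_i \xrightarrow{x_{i+1}} M_i \to M_{i+1}$ and invoke Nakayama to kill $\pi_1(M_i)$. The only organizational difference is that the paper separates the argument into a descending induction for $\pi_1(M_i)=0$ followed by an ascending induction showing $x_1,\dots,x_n$ is a regular sequence on $\pi_0(M)$ (concluding freeness via depth $=$ dimension), whereas you carry freeness of $\pi_0(M_i)$ along in a single descending induction via the local criterion of flatness; this is a cosmetic rather than substantive distinction.
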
 

\begin{proof}
\Cref{easyBSS} follows from a form of the Bockstein spectral sequence: the
evenness implies that there is no room for differentials; 
Proposition 2.5 of  \cite{HoveyS} treats the case  of $A = E_n$. 
We can give a direct argument as follows. 

Namely, we show that $\pi_1(M/(x_1, \dots, x_i) M) = 0$ for $i = 0,
1, \dots, n$, by descending induction on $i$. By assumption, it holds for $i
= n$. The inductive step is proved as in the proof of \Cref{BSSlem}, except
that Nakayama's lemma is used to replace the sublemma. 
This shows that $\pi_1(M) = 0$. 

Now, inducting in the other direction (i.e., in ascending order in $i$), we find that $x_1, \dots, x_n$ defines a
regular sequence on $\pi_0(M)$ and 
the natural map
\[ \pi_0(M)/(x_1, \dots, x_i) \to \pi_0(M/(x_1, \dots, x_i)), \]
is an isomorphism. This implies that the depth of $\pi_0(M)$ as a
$\pi_0(A)$-module is equal to $n$, so that $\pi_0(M)$ is a free
$\pi_0(A)$-module. 
\end{proof}

\section{Local systems, cochain algebras, and stacks}

The rest of this paper will be focused on the calculations of Galois groups in
certain examples of stable homotopy theories, primarily those arising from
chromatic homotopy theory and modular representation theory. 
The basic ingredient, throughout, is to write a given stable homotopy theory as
an \emph{inverse limit} of simpler stable homotopy theories to which one can apply
known algebraic techniques such as \Cref{etalegalois} or \Cref{connectivegal}. 
Then, one puts together the various Galois groupoids that one has via
techniques from descent theory. 

In the present section, we will introduce these techniques in slightly more elementary 
settings. 

\subsection{Inverse limits and Galois theory}

Our approach can be thought of as an elaborate version of van Kampen's theorem. 
To begin, let us recall the setup of this. 
Let $X$ be a topological space, and 
let $U, V \subset X$ be open subsets which cover $X$. In this case, the diagram
\[ \xymatrix{
U \cap V \ar[d] \ar[r] &  U \ar[d]  \\
V \ar[r] &  X
},\]
is a homotopy pushout. In order to give a covering space $Y \to
X$, it suffices to give a 
covering space $Y_U \to U$, a covering space $Y_V \to V$, and an isomorphism
$Y_U|_{U \cap V} \simeq Y_V|_{U \cap V}$ of covers of $U \cap V$. In other
words, the diagram of categories
\begin{equation} \label{vankampeneq}
 \xymatrix{
\cov_X \ar[d]  \ar[r] &  \cov_U \ar[d]  \\
\cov_{V} \ar[r] &  \cov_{U \cap V}
},\end{equation}
is cartesian, where for a space $Z$, $\cov_Z$ denotes the category of
topological covering spaces of $Z$.  It follows that the dual diagram on fundamental \emph{groupoids}
\[ \xymatrix{
\pi_{\leq 1}(U \cap V) \ar[d] \ar[r] & \pi_{\leq 1}(V) \ar[d] \\
\pi_{\leq 1}(V) \ar[r] &  \pi_{\leq 1}(X)
}\]
is, dually, \emph{cocartesian.}
In particular, van Kampen's theorem is a formal consequence of descent theory
for covers. 

As a result, one can hope to find analogs of van Kampen's theorem in other
setting. For instance, if $X$ is a \emph{scheme} and $U, V \subset X$ are open subschemes,
then descent theory implies that the diagram
\eqref{vankampeneq} (where $\cov$ now refers to \emph{finite} \'etale covers) is
cartesian, so the dual diagram on \'etale fundamental groupoids is cocartesian. 

Our general approach comes essentially from the next result: 

\begin{proposition} 
Let $K$ be a simplicial set and let $p\colon  K \to \shot$ be a functor to
the $\infty$-category $\shot$ of stable homotopy theories. 
Then we have a natural equivalence in $\galcat$,
\begin{equation} \label{clgw} 
\clgw \left( \varprojlim_K p \right) \simeq \varprojlim_{k \in K} \clgw( p(k) )
.  \end{equation}
\label{vkweak}
\end{proposition}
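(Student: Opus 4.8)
The plan is to verify that the functor $A \mapsto \clgw(\md_{\varprojlim_K p}(A)) = \clgw(\varprojlim_K p)$ on the left matches, termwise, the limit of the Galois categories $\clgw(p(k))$ on the right. Since both sides are Galois categories by \Cref{basicgal}, and $\galcat$ admits limits whose underlying categories are obtained from the categorical limit by cutting down to bounded-rank objects (\Cref{finlimgal} and \Cref{limgalcat}), it suffices to produce a natural functor between the two and check it is an equivalence of underlying $\infty$-categories (or of categories, since everything in sight is $1$-truncated). The functor is the evident one: a commutative algebra object $A$ in $\varprojlim_K p$ is the data of commutative algebra objects $A_k \in \clg(p(k))$ together with coherent identifications along the edges of $K$, so restricting gives a cone over $k \mapsto \clg(p(k))$, hence a map $\clg(\varprojlim_K p) \to \varprojlim_K \clg(p(k))$; the content is that it restricts to an equivalence on weak finite covers.

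\textbf{Key steps.} First I would recall that $\clg(\varprojlim_K p) \simeq \varprojlim_K \clg(p(k))$ as $\infty$-categories: limits in $\shot$ are computed on underlying $\infty$-categories and forming commutative algebra objects commutes with limits of symmetric monoidal $\infty$-categories. Similarly, for a fixed $A \in \clg(\varprojlim_K p)$ with images $A_k$, one has $\md_{\varprojlim_K p}(A) \simeq \varprojlim_K \md_{p(k)}(A_k)$, so we may freely pass to module categories. Second, I would show that $A$ is a \emph{weak finite cover} in $\varprojlim_K p$ if and only if each $A_k$ is a weak finite cover in $p(k)$. The ``only if'' direction is immediate since each projection $\varprojlim_K p \to p(k)$ is a morphism of stable homotopy theories and carries weak finite covers to weak finite covers (by the proposition following \Cref{basicgal}). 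For the ``if'' direction, one uses \Cref{splitbytorsor}: each $A_k$ becomes mixed elementary after base change along a $G_k$-torsor; one must assemble these into a single descendable-enough object over $\varprojlim_K p$. Here the key input is \Cref{descfinloc}, which says a commutative algebra object of a finite limit of stable homotopy theories admits descent iff it does so vertexwise — combined with the fact that for the weak theory we only need the base change functor to be conservative and limit-preserving, which by the second lemma before \Cref{basicgal} is also checkable vertexwise (dualizability, hence limit-preservation, can be checked vertexwise by \cite[Prop. 4.6.1.11]{higheralg}, and conservativity follows from the norm-map argument of \Cref{normmap}, or more simply from vertexwise conservativity since equivalences in a limit are detected vertexwise). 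Third, I would check that this identification of objects upgrades to an equivalence of Galois categories: the morphisms on both sides are the maps of commutative algebra objects, which agree, and the effective descent morphisms correspond on both sides (a map of weak finite covers is an effective descent morphism iff tensoring is conservative, again vertexwise-checkable).

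\textbf{Main obstacle.} The subtle point — and the reason $K$ must be a \emph{finite} simplicial set, or at least why one must be careful — is the passage from ``each $A_k$ is locally mixed elementary'' to ``$A$ is locally mixed elementary over $\varprojlim_K p$.'' A priori the torsors splitting the various $A_k$ are unrelated, living over different vertices; one needs a single object of $\clgw(\varprojlim_K p)$ that splits $A$ globally. The resolution is to take, for each vertex, a $G_k$-torsor splitting $A_k$ (via \Cref{splitbytorsor}) and form the product $G = \prod_k G_k$ over the (finitely many) vertices, obtaining over each $p(k)$ a $G$-torsor $B_k$ (the $G_k$-torsor crossed with trivial torsors for the other factors) that splits $A_k$; these assemble — using that $K$ is finite so there are finitely many coherence conditions and \Cref{constantfinpro} guarantees the cobar pro-object of $B$ is constant vertexwise hence globally — into a $G$-torsor $B$ over $\varprojlim_K p$ with $A \otimes B$ mixed elementary. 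I would expect checking the coherence of this assembly across the edges of $K$ (i.e.\ that the vertexwise torsor structures glue) to be the one genuinely technical step; it is handled by the general principle, already used in the proof of \Cref{colimdist} and recorded in the remark there, that a cocone/limit diagram whose pieces exist and are compatible vertexwise exists in the totalization.
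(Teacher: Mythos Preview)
Your proposal has a real gap. You assume (in the ``Main obstacle'' section) that $K$ must be finite, and your entire strategy for the ``if'' direction hinges on this: you want to take a product of splitting torsors over the finitely many vertices and glue them into a global $A'$. But the proposition is stated and proved for an \emph{arbitrary} simplicial set $K$; the finiteness restriction appears only in the companion result \Cref{vankampen} for $\clgf$. Your approach, as written, cannot handle infinite $K$: there is no reason the vertexwise $G_k$-torsors should assemble, the product $\prod_k G_k$ need not be finite, and \Cref{descfinloc} (which you invoke) is only available for finite $K$.

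The paper avoids this entirely by a reformulation you did not exploit. Rather than trying to verify the \emph{definition} of weak finite cover (which requires producing an auxiliary $A'$), the paper reduces via \Cref{torsorenough} to checking that $G$-torsors match on both sides, and then uses the Rognes-style characterization of a $G$-torsor in $\clgw$ (\Cref{rognesequiv}): $A$ with $G$-action is a $G$-torsor if and only if $A$ is dualizable, faithful, and the map $A \otimes A \to \prod_G A$ is an equivalence. Each of these three conditions is \emph{intrinsic to $A$}---no auxiliary splitting object is needed---and each is checkable vertexwise in an arbitrary limit (dualizability by \cite[Prop.~4.6.1.11]{higheralg}, faithfulness and the map being an equivalence because equivalences in a limit are detected vertexwise). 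This is why no finiteness hypothesis on $K$ is required. You had the right ingredients in passing (you mention dualizability and conservativity are vertexwise-checkable), but did not organize them around the intrinsic torsor characterization, and so ended up with an unnecessary and, for infinite $K$, unworkable gluing step.
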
 

\newcommand{\tors}{\mathrm{Tors}}
\begin{proof} 
The statement that \eqref{clgw} is an equivalence equates to the statement that for
any finite group $G$, to give a $G$-torsor in the stable homotopy
theory $\varprojlim_K p$ is equivalent to
giving a compatible family of $G$-torsors in $p(k), k \in K$. (Recall, however, from
\Cref{limgalcat} that infinite limits in $\galcat$ exist, but they do not
commute with the restriction $\galcat \to \cati$ in general.) We observe that
we have a natural functor from the left-hand-side of \eqref{clgw} to the
right-hand-side which is fully faithful (as both are subcategories of the
$\infty$-category of commutative algebra objects in $\varprojlim_K p$), so that 
the functor
\[ \tors_G\left(  \clgw \left( \varprojlim_K p \right)\right)
\to \varprojlim_{k \in K} \tors_G(\clgw( p(k) ))
\]
is fully faithful.

We need to show that if $A \in \mathrm{Fun}(BG, \clgw(
\varprojlim_K p))$ has the property that its image in $\mathrm{Fun}(BG,
\clgw(p(k)))$ for each $k \in K$ is a $G$-torsor, then it is a
$G$-torsor to begin with. 
However, $A$ is dualizable, since it is dualizable locally (cf.
\cite[Prop. 4.6.1.11]{higheralg}), and it is faithful,
since it is faithful locally, i.e., at each $k \in K$. The map $A \otimes A \to \prod_G A$ is an
equivalence since it is an equivalence locally, and putting these together, $A$
is a $G$-torsor. 
\end{proof} 

In the case where we work with finite covers, rather than weak finite covers, additional finiteness hypotheses
are necessary. 

\begin{proposition} 
\label{vankampen}
Let $K$ be a simplicial set and let $p\colon  K \to \tring$ be a functor. 
Then we have a natural fully faithful inclusion 
\begin{equation} \label{clgf} \clgf( \varprojlim_K p(k)) \to \varprojlim_K
\clgf( p(k)),  \end{equation}
which is an equivalence if $K$ is finite. 
\end{proposition}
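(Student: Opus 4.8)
The plan is to deduce this from the analogous statement for weak finite covers, namely \Cref{vkweak}, by showing that under the fully faithful inclusion $\clgf \subset \clgw$ the notions match up at each vertex and globally. First I would note that $\tring \subset \shot$ via $\mathcal{C} \mapsto \mathrm{Ind}(\mathcal{C})$, and that for a $2$-ring $\mathcal{C}$ the unit object is compact, so by \Cref{basicgal} the two Galois categories agree: $\clgf(\mathcal{C})^{\op} = \clgw(\mathcal{C})^{\op}$. Hence each $\clgf(p(k)) = \clgw(p(k))$. The subtlety is that $\varprojlim_K p$, computed in $\tring$ (equivalently in $\prow$), need \emph{not} have compact unit — and in any case we are comparing with $\varprojlim_K \clgf(p(k))$, a limit in $\galcat$, which by \Cref{limgalcat} is not the underlying categorical limit. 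So I cannot simply invoke \Cref{vkweak} verbatim; I must track the finiteness.

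The construction of \eqref{clgf} is the same as in \Cref{vkweak}: a commutative algebra object $A \in \clg(\varprojlim_K p)$ which admits descent maps, under the limit projections, to a compatible family $\{A_k \in \clgf(p(k))\}$, and conversely. Full faithfulness is immediate exactly as in \Cref{vkweak}: both sides are (via torsors) full subcategories of functor categories into $\clg$ of the relevant stable homotopy theories, and morphisms in a limit are detected vertexwise. The content is essential surjectivity when $K$ is finite. So suppose given a compatible family $(A_k)_{k \in K}$ with each $A_k \in \clgf(p(k))$; by \Cref{descfinloc}, since $K$ is finite and each $A_k$ admits descent in $p(k)$, the induced commutative algebra object $A \in \clg(\varprojlim_K p)$ admits descent. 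This is exactly the point at which finiteness of $K$ is used: \Cref{descfinloc} is stated for finite simplicial sets, because the constancy-of-pro-objects argument in \Cref{constantfinpro} requires $K$ finite so that cocompactness can be checked pointwise (finite limits commuting with filtered colimits in spaces). Once $A$ admits descent, it is in particular dualizable (dualizability checks vertexwise in a limit of symmetric monoidal $\infty$-categories, cf.\ \cite[Prop.\ 4.6.1.11]{higheralg}), and each base-change $A \otimes A' $ to a common descendable $A'$ splitting all the $A_k$ is in mixed elementary form — so $A$ is a finite cover. Thus \eqref{clgf} is essentially surjective, hence an equivalence.

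To make the "common descendable $A'$" step precise, I would use \Cref{splitbytorsor} at each vertex to find $G$-torsors, but these vary with $k$; instead the cleaner route is: the image of $(A_k)$ in $\varprojlim_K \clgw(p(k)) \simeq \clgw(\varprojlim_K p)$ (using \Cref{vkweak}) is some $B \in \clgw(\varprojlim_K p)$, and I must show $B \in \clgf$, i.e.\ $B$ admits descent. Again this is precisely \Cref{descfinloc} applied to $B$ with its vertexwise-descendable restrictions, valid because $K$ is finite. So the argument collapses to: apply \Cref{vkweak} to get an object of $\clgw(\varprojlim_K p)$, then apply \Cref{descfinloc} to upgrade it to $\clgf(\varprojlim_K p)$, then check this upgrade is inverse to \eqref{clgf} on objects (which is formal, since $\clgf \subset \clgw$ is full). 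The main obstacle — and really the only non-formal input — is ensuring that the descent/constancy-of-pro-object arguments (\Cref{descfinloc}, \Cref{constantfinpro}) genuinely require and fully exploit only the finiteness of $K$, and that the limit $\varprojlim_K p$ in $\tring$ is computed at the level of underlying $\infty$-categories so that "evaluation at $k$" is literally restriction along $p(k) \to \varprojlim_K p$; both are established earlier (the former in \Cref{sec:descent}, the latter in the discussion of limits in $\tring$), so the proof is short once those are in hand.
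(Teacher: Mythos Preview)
Your proof has a genuine gap. You write ``by \Cref{descfinloc}, since $K$ is finite and each $A_k$ admits descent in $p(k)$, the induced commutative algebra object $A \in \clg(\varprojlim_K p)$ admits descent.'' But an arbitrary finite cover $A_k$ need \emph{not} itself admit descent: the definition only says there is some descendable $A'_k$ putting $A_k$ in mixed elementary form. (Trivially, the zero algebra is a finite cover but is not descendable.) So you cannot feed the $A_k$ directly into \Cref{descfinloc}. Your ``cleaner route'' via \Cref{vkweak} does not escape this: you still need to exhibit a single descendable $A'$ in the limit splitting $A$, and ``apply \Cref{descfinloc} to upgrade $B$ to $\clgf$'' begs exactly the same question.

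The paper's fix is to reduce to torsors first. Since both sides of \eqref{clgf} are Galois categories (the right-hand side by \Cref{finlimgal}), it suffices by \Cref{torsorenough} to match up $G$-torsors for each finite group $G$. Now the key point is that a $G$-torsor in $\clgf(p(k))$ \emph{does} admit descent, by \Cref{Gtorsor}. So given a compatible family of $G$-torsors, \Cref{descfinloc} applies to the assembled object $A$ and shows it admits descent; then \Cref{Gtorsor} in the other direction shows $A$ is a $G$-torsor in $\clgf(\varprojlim_K p)$. You had the right key lemma (\Cref{descfinloc}) but applied it to the wrong object; routing through torsors is what makes it legitimate.
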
 
\begin{proof} 
Since both sides are subcategories of $\clg( \varprojlim_K p(k)) =
\varprojlim_K \clg(p(k))$, the fully faithful inclusion is evident. The
main content
of the result is that if $K$ is 
finite, then the inclusion is an equivalence. In other words, we want to show
that 
given a commutative algebra object in $\varprojlim_K p(k)$ which becomes a
finite cover upon restriction to each $p(k)$, then it is a finite cover in the
inverse limit. 
Since both sides of \eqref{clgf} are Galois categories (thanks to
\Cref{finlimgal}), it suffices to show that $G$-torsors on either side are
equivalent. In other words, given a compatible diagram of $G$-torsors in the
$\clgf(p(k))$, we want the induced diagram in $\clg(\varprojlim_K p(k))$ to
be a finite cover. 

So let $A \in \mathrm{Fun}(BG, \clg( \varprojlim_K p))$ be such that its
evaluation at each vertex $k \in K$ defines a $G$-torsor in $\clgf(
p(k))$. We need to show that $A \in \clgf( \varprojlim_K p)$. For this, in view
of \Cref{Gtorsor}, it suffices to show that $A$ admits descent. But this
follows in view of \Cref{descfinloc} and the fact that the image of $A$ in
each $k \in K$ admits descent in the stable homotopy theory $p(k)$. 
\end{proof} 

Using the Galois correspondence, one finds: 
\begin{corollary} \label{vk2}
In the situation of \Cref{vankampen} or \Cref{vkweak}, we have an equivalence
in $\pro( \fgp)$:
\begin{equation} 
\label{VK}
\varinjlim_K \pi^{\mathrm{weak}}_{\leq 1} p(k) \simeq \pi_{\leq
1}^{\mathrm{weak}}( \varprojlim_K p(k)), \quad 
\varinjlim_K \pi_{\leq 1} p(k) \simeq \pi_{\leq 1}( \varprojlim_K p(k)).
\end{equation} 
\end{corollary}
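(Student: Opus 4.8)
The plan is to deduce \Cref{vk2} directly from the two preceding propositions, \Cref{vkweak} and \Cref{vankampen}, by applying the Galois correspondence of \Cref{galequiv}. Recall that \Cref{galequiv} provides an equivalence of $(2,1)$-categories $\pro(\fgp)^{\op} \simeq \galcat$, under which the fundamental groupoid $\pi_{\leq 1}$ is (by definition) the object of $\pro(\fgp)$ corresponding to a given Galois category. Thus the only thing to check is that, under this anti-equivalence, the limit of Galois categories appearing on the right-hand side of \eqref{clgw} (resp.\ \eqref{clgf}) corresponds to the colimit of profinite groupoids appearing on the left-hand side of \eqref{VK}.

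First I would record the general categorical fact: since $\pro(\fgp)^{\op} \to \galcat$ is an equivalence of $(2,1)$-categories, it takes colimits in $\pro(\fgp)$ (equivalently, limits in $\pro(\fgp)^{\op}$) to limits in $\galcat$. Concretely, for a diagram $K \to \pro(\fgp)$, $k \mapsto \G_k$, we have $\fun(\varinjlim_K \G_k, \finset) \simeq \varprojlim_K \fun(\G_k, \finset)$ in $\galcat$, where the limit on the right is the $(2,1)$-categorical limit (\Cref{finlimgal} and \Cref{limgalcat} guarantee such limits exist in $\galcat$; note that for infinite $K$ one must pass to the bounded-rank subcategory of the naive categorical limit, as in \Cref{limgalcat}). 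Now \Cref{vkweak} identifies $\clgw(\varprojlim_K p)^{\op}$ with $\varprojlim_{k \in K} \clgw(p(k))^{\op}$ \emph{as Galois categories}; applying $\pi_{\leq 1}$, which is the inverse equivalence $\galcat \to \pro(\fgp)^{\op}$, converts this into an equivalence $\pi_{\leq 1}^{\mathrm{weak}}(\varprojlim_K p) \simeq \varinjlim_K \pi_{\leq 1}^{\mathrm{weak}}(p(k))$ in $\pro(\fgp)$, which is precisely the first equivalence in \eqref{VK}. The second equivalence is obtained identically from \Cref{vankampen}, except that one must take $K$ finite (and $p \colon K \to \tring$) so that the inclusion \eqref{clgf} is an equivalence of Galois categories rather than merely fully faithful; then $\pi_{\leq 1}$ again converts it to the stated colimit formula.

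The mild subtlety — and the one step I would write out with a little care — is the matching of \emph{morphisms and $2$-morphisms}, i.e.\ that these are genuine equivalences of $(2,1)$-categories and not just bijections on equivalence classes of objects. This is handled automatically because \Cref{vkweak} and \Cref{vankampen} are themselves stated as equivalences (resp.\ fully faithful inclusions) \emph{in $\galcat$}, so they already carry the full $(2,1)$-categorical data; the equivalence $\pro(\fgp)^{\op}\simeq\galcat$ of \Cref{galequiv} then transports everything, including natural transformations of functors, to the profinite-groupoid side. One should also note that $\varinjlim_K$ on the left of \eqref{VK} is the colimit in the $(2,1)$-category $\pro(\fgp)$, computed as a limit in $\galcat$ via the correspondence; no independent construction of this colimit is needed.

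The main obstacle is essentially bookkeeping rather than mathematics: one must be careful that ``$\varprojlim_K$'' means the $(2,1)$-categorical limit in $\galcat$ (which, by \Cref{limgalcat}, is the rank-bounded subcategory of the naive categorical limit when $K$ is infinite), and correspondingly that ``$\varinjlim_K$'' on the profinite-groupoid side is interpreted in $\pro(\fgp)$; once the two sides are interpreted in these intrinsic $(2,1)$-categories, the statement is a pure formal consequence of \Cref{galequiv} applied to \Cref{vkweak} and \Cref{vankampen}, with no further computation required.
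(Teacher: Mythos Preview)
Your proposal is correct and takes essentially the same approach as the paper, which simply states ``Using the Galois correspondence, one finds:'' before the corollary; both deduce the result by applying the anti-equivalence of \Cref{galequiv} to the Galois-category equivalences established in \Cref{vkweak} and \Cref{vankampen}. Your discussion of the $(2,1)$-categorical limit interpretation (via \Cref{limgalcat}) for infinite $K$ is a helpful elaboration of a point the paper leaves implicit.
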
 

For example, let $U, V \subset X$ be open subsets of a scheme $X$. Then we have
an equivalence
\[ \qcoh(X) \simeq \qcoh(U) \times_{\qcoh(U \cap V)} \qcoh(V),  \]
by descent theory. The resulting homotopy pushout diagram that one obtains on
fundamental groupoids (by \eqref{VK}) is the 
van Kampen theorem for open immersions of schemes. 

Using this, one can also obtain a van Kampen theorem for gluing \emph{closed}
immersions of schemes. 
For simplicity, we state the result for commutative rings. Let $A', A, A''$ be
(discrete) commutative rings and consider \emph{surjections} 
$A' \twoheadrightarrow A, A'' \twoheadrightarrow A$. 
In this case, one has a pull-back square (as we recalled in \Cref{modclosed})
\[ \xymatrix{
\md^\omega(A' \times_A A'') \ar[d] \ar[r] & \md^\omega(A') \ar[d] \\
\md^\omega(A'') \ar[r] &  \md^\omega(A)
}.\]
Note that the analog without the compactness, or more generally connectivity, hypothesis
would fail.
Using \eqref{VK}, and the observation that the Galois groupoid depends only on
the dualizable objects, we obtain the following well-known corollary: 
\begin{corollary} 
We have a pushout of profinite groupoids $$\pi_{\leq 1}^{\mathrm{et}}( \spec (A' \times_A A'')) \simeq
\pi_{\leq 1}^{\mathrm{et}}( 
\spec A') \sqcup_{\pi_{\leq 1}^{\mathrm{et}}( \spec A)}
\pi_{\leq 1}^{\mathrm{et}}( \spec
A'').$$
\end{corollary}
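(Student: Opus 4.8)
The plan is to deduce this corollary directly from the van Kampen formalism of \Cref{vk2}, applied to the pullback square of 2-rings recalled in \Cref{modclosed}. First I would observe that the hypotheses match: we are given surjections of discrete commutative rings $A' \twoheadrightarrow A$ and $A'' \twoheadrightarrow A$, and (viewing these as maps of connective $\e{\infty}$-rings inducing surjections on $\pi_0$) \Cref{modclosed} — via \cite[Theorem 7.2]{DAGIX} — gives an equivalence of 2-rings
\[ \md^\omega(A' \times_A A'') \simeq \md^\omega(A') \times_{\md^\omega(A)} \md^\omega(A'') . \]
So taking $K = \Lambda_0^2$ (the cospan $\bullet \to \bullet \leftarrow \bullet$) and $p\colon K \to \tring$ the diagram $\md^\omega(A') \to \md^\omega(A) \leftarrow \md^\omega(A'')$, the left-hand side $\varprojlim_K p$ is exactly $\md^\omega(A' \times_A A'')$. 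Here I am using that $\md^\omega(R)$ is the 2-ring of compact objects of $\md(R)$, and that for a connective ring the compact objects are precisely the perfect modules, so ``$\md^\omega$'' and ``perfect modules'' agree.

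Next I would apply \Cref{vankampen}, noting that $K$ is finite, to get that the natural functor $\clgf(\md^\omega(A'\times_A A'')) \to \varprojlim_K \clgf(\md^\omega(p(k)))$ is an equivalence of Galois categories; equivalently, by \Cref{vk2}, that
\[ \pi_{\leq 1}\left( \md^\omega(A' \times_A A'')\right) \simeq \pi_{\leq 1} \md^\omega(A') \sqcup_{\pi_{\leq 1} \md^\omega(A)} \pi_{\leq 1} \md^\omega(A'') \]
as profinite groupoids. Then I would invoke the fact, emphasized just before the statement, that the Galois groupoid of a stable homotopy theory depends only on its 2-ring of dualizable objects, so that $\pi_{\leq 1}(\md^\omega(R))$ coincides with $\pi_{\leq 1}(\md(R))$, which by the definitions in this section is $\pi_{\leq 1}(R)$. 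Finally, for $R$ connective, \Cref{connectivegal} identifies $\pi_1(\md(R))$ with $\pi_1^{\mathrm{et}}\spec \pi_0 R = \pi_1^{\mathrm{et}}\spec R$, and more precisely gives an equivalence of the relevant Galois categories; since $A', A, A''$ and $A' \times_A A''$ are all discrete, this identifies each $\pi_{\leq 1}$ above with the corresponding $\pi_{\leq 1}^{\mathrm{et}}\spec(-)$. Assembling these identifications into the pushout square yields the claimed equivalence
\[ \pi_{\leq 1}^{\mathrm{et}}(\spec(A' \times_A A'')) \simeq \pi_{\leq 1}^{\mathrm{et}}(\spec A') \sqcup_{\pi_{\leq 1}^{\mathrm{et}}(\spec A)} \pi_{\leq 1}^{\mathrm{et}}(\spec A''). \]

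The one genuine point requiring care — and the step I expect to be the main obstacle — is justifying that $\md^\omega(A' \times_A A'') \simeq \md^\omega(A') \times_{\md^\omega(A)} \md^\omega(A'')$ really does hold under only the surjectivity hypothesis on $\pi_0$ (no flatness, no Tor-independence). This is precisely the content recalled in \Cref{modclosed}, citing \cite[Theorem 7.2]{DAGIX}: the functor is always fully faithful, and becomes an equivalence on $k$-connective objects, hence on perfect objects since perfectness of an $A' \times_A A''$-module can be checked after base change to $A'$ and $A''$. Once this input is in hand, the rest is bookkeeping with the already-established van Kampen equivalence and the connective comparison theorem, so no further calculation is needed; I would simply remark that the statement is well known and follows formally from the machinery developed above.
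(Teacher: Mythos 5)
Your proposal is correct and follows essentially the same route as the paper: the paper likewise deduces the corollary from the pullback square $\md^\omega(A' \times_A A'') \simeq \md^{\omega}(A') \times_{\md^{\omega}(A)} \md^{\omega}(A'')$ of \Cref{modclosed} (via \cite[Theorem 7.2]{DAGIX}), the finite van Kampen statement \eqref{VK}, the fact that the Galois groupoid depends only on the 2-ring of dualizable objects, and the identification of the Galois group of a connective ring with the \'etale fundamental group of its $\pi_0$. You have also correctly isolated the one non-formal input, namely that the Milnor-type square of perfect module categories holds with only the surjectivity hypothesis on $\pi_0$.
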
 

This result is one expression of the intuition that $\spec (A' \times_A A'')$
is obtained by ``gluing together'' the schemes $\spec A', \spec A''$ along the closed
subscheme $\spec A$. This idea in derived algebraic geometry has been studied extensively in
\cite{DAGIX}. 

These ideas are often useful even in cases when one can only \emph{approximately}
resolve a stable homotopy theory as an inverse limit of simpler ones; one can
then obtain \emph{upper bounds} for Galois groups. 
For example, let $K$ be a simplicial set, and consider 
a diagram $f\colon  K \to \einf$. Let $A = \varprojlim_K f(k)$. In this case, one has
always has a functor
\[ \md(A)\to \varprojlim_K \md({f(k)}),  \]
which is \emph{fully faithful} on the perfect $A$-modules since the right
adjoint preserves the unit. If $K$ is finite, it
is fully faithful on all of $\md(A)$. 
It follows that, \emph{regardless} of any finiteness hypotheses on $K$, there are fully faithful inclusions 
\begin{equation} \label{variousinc} \clgf(\md(A)) \subset  \clgf (
\varprojlim_K \md({f(k)})) 
\subset
\varprojlim_K \clgf(
\md({f(k)}))
.  \end{equation}
We will explore the interplay between these different Galois categories in the
next section. 
They can be used to give {upper bounds} on the Galois group
of $A$ since fully faithful inclusions of connected Galois categories are
dual to \emph{surjections}  of profinite groups.

\subsection{$\infty$-categories of local systems}
\label{subseclocsys}
In this subsection, we will introduce the first example of the general van
Kampen approach (\Cref{vankampen}), for the case of a \emph{constant} functor. 

Let $X$ be a connected space, which we consider as an $\infty$-groupoid. 
Let $(\mathcal{C}, \otimes, \mathbf{1})$ be a stable homotopy theory, which
we will assume connected for simplicity. 

\begin{definition}
The functor category $\mathrm{Fun}(X, \mathcal{C})$ 
acquires the structure of a symmetric monoidal $\infty$-category via
the ``pointwise'' tensor product. 
We will call this the $\infty$-category of \textbf{$\mathcal{C}$-valued local
systems} on $X$ and denote it by $\loc_X(\mathcal{C})$. 
\end{definition}

This is a special case of the van Kampen setup of the previous section, when we
are considering a functor from $X$ to $\tring$ or $\shot$ which is
constant with value $\mathcal{C}$. 
This means that, with no conditions whatsoever, we have
\[ \pw( \loc_X( \mathcal{C})) \simeq \widehat{\pi_1 X }
\times\pw(\mathcal{C}),   \]
in view of \Cref{vkweak}, where $\widehat{\pi_1 X}$ denotes the profinite
completion of the fundamental group $\pi_1 X$. Explicitly, 
given a functor $f\colon  X \to \finset$, 
we obtain (by mapping into $\mathbf{1}$) a local system in $\clg(\mathcal{C})$
parametrized by $X$. These are always weak finite covers in
$\loc_X(\mathcal{C})$, and these come from finite covers of $X$ or local
systems of finite sets on $X$. 
Given weak finite covers in $\mathcal{C}$ itself, we can take the constant
local systems at those objects to obtain weak finite covers in
$\loc_X(\mathcal{C})$. 

If, further, $X$ is a {finite} CW complex, it follows that 
\[  \pi_1( \loc_X( \mathcal{C})) \simeq \widehat{\pi_1 X }
\times\pi_1(\mathcal{C}) , \]
in view of \Cref{vankampen}. 
We will use this to begin describing the Galois theory of a basic class of
nonconnective $\e{\infty}$-rings, the cochain algebras on connective ones. 

In particular, let $\mathcal{C} = \mod(E)$ for an $\e{\infty}$-algebra $E$, so
that we can regard $\loc_X( \md(E)) = \mathrm{Fun}(X, \mod(E))$ as parametrizing ``local
systems of $E$-modules on $X$.'' 
The unit object in $\loc_X( \md(E))$ has endomorphism $\e{\infty}$-ring given
by the cochain algebra $C^*(X; E)$. 
Therefore, we have an adjunction
of stable homotopy theories
\[ \mod( C^*(X; E)) \rightleftarrows \loc_X( \md(E)),  \]
between modules over the $E$-valued cochain algebra $C^*(X; E)$ and 
$\loc_X( \md(E))$, where the right adjoint $\Gamma$ takes the global sections
(i.e., inverse limit) over $X$. 
The left adjoint is fully faithful when restricted to the perfect $C^*(X;
E)$-modules and in general if $\mathbf{1}$ is compact in $\loc_X( \md(E))$. 
Therefore, we get surjections of fundamental groups
\begin{equation} \label{somefundgps}
\widehat{\pi_1 X } \times \pi_1( \md(E))
\simeq \pw( \loc_X( \md(E))) \twoheadrightarrow
\pi_1 (\loc_X( \md(E)))  \twoheadrightarrow 
\pi_1( \md(C^*(X; E))).
\end{equation}

In this subsection and the next, we will describe the objects and maps in
\eqref{somefundgps} in some specific instances. 

\begin{example} 
If $X$ is simply connected, then this map is an isomorphism, given the natural
section $\md(E) \to \loc_X(\md(E))$ which sends an $E$-module to the constant
local system with that value, so $E$ and $C^*(X; E)$ have the same
fundamental group.  
\end{example}

 Suppose $X$ has the homotopy type of a \emph{finite} CW complex, so
that the functor $\Gamma$ is obtained via a finite homotopy limit and in
particular commutes with all homotopy colimits. 
In this case, as we mentioned earlier, the unit object in $\loc_X( \md(E))$ is compact, so that the map 
$\pw( \loc_X( \md(E))) \to \pi_1( \loc_X(\md(E)))$ is an isomorphism. 
In this case, the entire problem 
boils down to understanding the image of the fully faithful,
colimit-preserving functor $\md( C^*(X; E)) \to
\loc_X( \md(E))$. 

By definition, $\md( C^*(X; E))$ is generated by the unit object, so its image
in $\loc_X( \md(E))$ consists of the full subcategory of $\loc_X(
\md(E))$ generated by the unit
object, which is the \emph{trivial} constant local system. In particular, we
should think of $\md( C^*(X; E)) \subset \loc_X( \md(E))$ as the
``ind-unipotent'' local systems of $E$-modules parametrized by $X$. 
We can see some of that algebraically. 

\begin{definition} 
Let $A$ be a module over a commutative ring $R$ and let $G$ be a group acting
on $A$ by $R$-endomorphisms. We say
that the action is \textbf{unipotent} if there exists a finite filtration
of $R$-modules
\[ 0 \subset A_1 \subset A_2 \subset \dots \subset A_{n-1} \subset A_n = A,  \]
which is preserved by the action of $G$, such that the $G$-action on each
$A_i/A_{i-1}$ is trivial. We say that the $G$-action is \textbf{ind-unipotent} if
$A$ is a filtered union of $G$-stable submodules $A_\alpha \subset A$ such
that the action of $G$ on each $A_\alpha$ is unipotent. 
\end{definition}

\begin{proposition} 
\label{unipotentthing}
Let $X$ be a connected space. Consider an object $M$ of $\loc_X(\md(E))$ and let
$M_x$ be the underlying $E$-module for some $x \in X$. 
Suppose $M$ belongs to the localizing subcategory of $\loc_X(\md(E))$ generated 
by the unit. 
Then,  the action of $\pi_1(X, x)$ on each
$\pi_0E$-module $\pi_k (M_x)$ is
ind-unipotent. 

Conversely, suppose $E$ is connective. Given $M \in \loc_X( \md(E))$ 
such that the monodromy action of $\pi_1(X, x)$ on each
$\pi_k (M_x)$ is ind-unipotent, 
then if $M$ is additionally $n$-coconnective
for some $n$ and if $X$ is  a finite CW complex, we have $M \in \md( C^*(X; E))
\subset \loc_X( \md(E))$. 
\end{proposition}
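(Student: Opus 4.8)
The plan is to prove the two directions separately, using the adjunction $(\text{triv}, \Gamma)\colon \md(C^*(X;E)) \rightleftarrows \loc_X(\md(E))$ and thick-subcategory arguments throughout. For the forward direction, let $\mathcal{D} \subset \loc_X(\md(E))$ be the full subcategory of those $M$ such that $\pi_1(X,x)$ acts ind-unipotently on each $\pi_k(M_x)$. I would first observe that $\mathcal{D}$ is a localizing subcategory: it is closed under shifts and cofiber sequences (a cofiber sequence on homotopy groups gives a long exact sequence of $\pi_1(X,x)$-modules, and an extension of ind-unipotent modules by an ind-unipotent module is ind-unipotent — the key point being that ``ind-unipotent'' is closed under extensions, submodules, quotients, and filtered colimits), and it is closed under arbitrary colimits since homotopy groups commute with filtered colimits and $\pi_1$-action is compatible. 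Since $\mathcal{D}$ contains the unit (whose stalk is $E$ with trivial action), it contains the localizing subcategory generated by the unit, which gives the first claim.

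For the converse, assume $E$ connective, $X$ a finite CW complex, and $M \in \loc_X(\md(E))$ is $n$-coconnective with ind-unipotent monodromy on all $\pi_k(M_x)$. I want to show $M$ lies in the localizing subcategory generated by the unit (equivalently, in the image of $\md(C^*(X;E))$, since that image is exactly this localizing subcategory and $\Gamma$ is fully faithful on it when $X$ is finite so that the unit is compact). The strategy is a double induction: first reduce to the case where $M$ is concentrated in a single degree, i.e.\ $M \simeq \Sigma^k N$ for a local system $N$ of discrete $\pi_0 E$-modules (using that $M$ is $n$-coconnective, hence has a finite Postnikov filtration in the $t$-structure on $\loc_X(\md(E))$ inherited from the connective $\md(E)$; each Postnikov layer has ind-unipotent monodromy as a subquotient of the $\pi_k(M_x)$). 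Then, for a local system of discrete $\pi_0 E$-modules with ind-unipotent monodromy, write it as a filtered colimit of $\pi_1 X$-stable sub-local-systems $N_\alpha$ each admitting a finite filtration with trivial-monodromy (i.e.\ constant) layers; since constant local systems of $\pi_0 E$-modules are, via the connectivity hypothesis and a further Postnikov/cell argument, built from the unit $\underline{E}$ under colimits, each $N_\alpha$ lies in the localizing subcategory generated by the unit, hence so does $N = \varinjlim N_\alpha$.

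One technical point to handle carefully: passing from ``constant local system at a discrete $\pi_0 E$-module $P$'' to ``built from the unit'' requires resolving $P$ as a $\pi_0 E$-module and lifting to $E$-modules — here connectivity of $E$ is used so that the constant local system at $P$ is a colimit of shifts of the constant local system at free $E$-modules, hence of sums of shifts of $\underline{E}$. I would also need to note that $\md(C^*(X;E)) \to \loc_X(\md(E))$ is colimit-preserving and fully faithful (true here since $X$ finite $\Rightarrow$ $\mathbf{1}$ compact in $\loc_X(\md(E))$, as remarked in the text), so its essential image is precisely the localizing subcategory generated by the unit; this is what lets the conclusion ``$M \in \md(C^*(X;E))$'' be read off.

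The main obstacle I expect is the converse direction's reduction to the single-degree case combined with the filtered-colimit step: one must check that the $n$-coconnectivity hypothesis genuinely makes the Postnikov tower finite (only finitely many nonzero layers, so no convergence issue) \emph{and} that the ind-unipotent hypothesis survives both the Postnikov truncation and the passage to the sub-local-systems $N_\alpha$ — essentially that ``ind-unipotent monodromy'' is stable under all the operations in play. The finiteness of $X$ is what rescues compactness of the unit and hence the clean ``localizing subcategory generated by the unit $=$ image of $\md(C^*(X;E))$'' identification; without it one only gets the weaker statement about the localizing subcategory. I would organize the writeup so that the ind-unipotence permanence properties are isolated as a short preliminary observation, then the forward direction is a one-paragraph localizing-subcategory argument, and the converse is the Postnikov-plus-filtered-colimit induction.
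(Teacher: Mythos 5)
Your proposal is correct and follows essentially the same route as the paper: the forward direction is the same localizing-subcategory argument, and the converse is the same reduction (image of $\md(C^*(X;E))$ equals the localizing subcategory generated by the unit since $X$ is finite, then truncate to single homotopy groups, then use the definition of ind-unipotence to reduce to constant local systems of $\pi_0 E$-modules, which the paper realizes explicitly as $C^*(X;\tau_{\leq 0}E)\otimes_{\pi_0 E}M_0$ where you instead resolve by frees). One small correction: $n$-coconnectivity does not make the Postnikov tower finite — it only bounds homotopy from above — so the right statement is that $M\simeq\varinjlim_k\tau_{\geq -k}M$ with each truncation a \emph{finite} extension of single-degree pieces, and one then invokes closure of localizing subcategories under filtered colimits, exactly as in the paper.
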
 
\begin{proof} 
Clearly the unit object of $\loc_X( \md(E))$ has unipotent action of
$\pi_1(X, x)$ on its homotopy groups: the monodromy
action by $ \pi_1(X, x)$ is trivial. 
The collection of objects of $\loc_X(\md(E))$ with ind-unipotent action of
$\pi_1(X, x)$ is seen to be a localizing subcategory using long exact sequences.
The first assertion follows. 

For the final assertion, since $X$ is a finite CW complex, the functor 
$\md( C^*(X; E)) \to \loc_X(\md(E))$ is fully faithful and commutes with
colimits. 
We can write $M$ as a colimit of the local systems
of $E$-modules
\[   0   \simeq \tau_{\geq n} M \to \tau_{\geq n-1} M \to \tau_{\geq n-2} M
\to \dots ,  \]
where each term in the local system has only finitely many homotopy groups. It suffices
to show that each $\tau_{\geq k} M$ belongs to $\md( C^*(X; E)) \subset \loc_X(
\md(E))$. Working inductively, one reduces to the case where $M$ itself has a
single nonvanishing homotopy group (say, a $\pi_0$) with ind-unipotent action
of $\pi_1(X, x)$. Since the subcategory of $\loc_X(\md(E))$ consisting of local
systems $M$ with $\pi_*(M_x) = 0$ for $\ast \neq 0$ is an ordinary category,
equivalent to the category of local systems of $\pi_0 E$-modules on $X$, our
task is one of algebra. One reduces (from the algebraic definition of
ind-unipotence) to showing that if $M_0$ is a $\pi_0 E$-module, then the
induced object in $\loc_X( \md(E))$ with trivial $\pi_1(X, x)$-action belongs
to $\md(C^*(X; E))$. However, this object comes from the $C^*(X; E)$-module
$C^*(X; \tau_{\leq 0} E) \otimes_{\pi_0 E} M_0$.   
\end{proof} 

\begin{remark} 
\label{rems1}
Suppose $X$ is \emph{one-dimensional}, so that $X$ is a wedge of
finitely many circles.  Then, for any $E$, any $M \in \loc_X(\md(E))$ such that
the action of  $\pi_1(X, x)$ is ind-unipotent on $\pi_*(M_x)$
belongs to the image of $\md( C^*(X; E)) \to \loc_X( \md(E))$. In other words,
one needs no further hypotheses on $E$ or $M_x$. 

To see this, 
we need to show (by \Cref{luriess}) that the inverse limit functor
\[ \Gamma = \varprojlim_X\colon  \loc_X( \md(E)) \to \md( C^*(X; E)),  \]
is \emph{conservative} when restricted to those local systems with the above
ind-unipotence property on homotopy groups. 
Recall that one has a spectral sequence
\[  E_2^{s,t} = H^s( X; \pi_{t}M_x) \implies \pi_{t-s} \Gamma(X, M),   \]
for computing 
the homotopy groups of the inverse limit. The $s = 0$ line of the $E_2$-page is
\emph{never} zero if the action is ind-unipotent  unless $M = 0$: there are
always fixed points for the action of $\pi_1(X, x)$ on $\pi_*(M_x)$. 
If $X$ is one-dimensional, the spectral sequence degenerates at $E_2$ for 
dimensional reasons; this forces the inverse limit $\varprojlim_X M$ to be
nonzero unless $M = 0$. 
\end{remark} 

As we saw earlier in this subsection, in order to construct finite covers of the unit object in
$\loc_X( \md(E))$, we can consider a local system of finite sets
$\left\{Y_x\right\}_{x \in X}$ on $X$ (i.e., a finite cover of $X$), and
consider the local system $\{C^*(Y_x; E)\}_{x \in X}$ of $\e{\infty}$-algebras
under $E$. 
The induced object in $\loc_X( \md(E))$ will generally not be unipotent in this
sense. In fact, unless there is considerable torsion, this will almost never be
the case. 

For example, suppose $G$ is a finite group, and let $R$ be a commutative ring.
Consider the $G$-action on $\prod_G R$. The group action is ind-unipotent if
$G$ is a $p$-group (for some prime number $p$) where $p$ is nilpotent in $R$. 
\begin{proof} 
Suppose $q \mid |G|$ and $q$ is not nilpotent in $R$, but the $G$-action on
$\prod_G R$ is ind-unipotent. It follows that we can invert $q$ and, after some
base extension, assume that $R$ is  a \emph{field} with $q \neq 0$. We can
even assume $\zeta_q \in R$. We need to
show that the standard representation is not ind-unipotent when $q \mid |G|$;
this follows from restricting $G$ to $\mathbb{Z}/q \subset G$, and observing that 
various nontrivial one-dimensional characters occur and these must map trivally
into any unipotent representation. 

Conversely, if $G$ is a $p$-group and $p$ is nilpotent in $R$, then by
filtering $R$, we can assume $p = 0 $ in $R$. Now in fact \emph{any}
$R[G]$-module is ind-unipotent, because the augmentation ideal of $R[G]$ is
nilpotent. 
\end{proof} 

\begin{corollary} 
\label{awayfromp}
Suppose $p$ is not nilpotent in the $\e{\infty}$-ring $R$. Then the 
surjection $\widehat{\pi_1 X } \times \pi_1 \md(E) \twoheadrightarrow \pi_1
\md(C^*(X; E)$ factors through $\widehat{\pi_1 X}_{p^{-1}}$ where
$\widehat{\pi_1 X}_{p^{-1}}$ denotes the profinite completion away from $p$. 
\end{corollary}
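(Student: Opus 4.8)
The plan is to combine the surjection \eqref{somefundgps} with the observation just established that the standard permutation representation $\prod_G R$ on a finite cover is \emph{not} ind-unipotent once $G$ has order divisible by a prime $q$ that is not nilpotent in $R$. Concretely, I would argue as follows. Let $p$ be a fixed prime that is not nilpotent in $R$, hence not nilpotent in $E$ — wait, we should be careful: the statement concerns $R$, and $E$ is the coefficient ring; I will read the corollary as the case $E = R$ (or more generally assume $p$ is not nilpotent in $E$, which is the operative hypothesis for the monodromy argument). We want to show that the composite surjection
\[ \widehat{\pi_1 X} \times \pi_1 \md(E) \twoheadrightarrow \pi_1 \md(C^*(X; E)) \]
kills the ``pro-$p$ part'' of $\widehat{\pi_1 X}$, i.e.\ factors through $\widehat{\pi_1 X}_{p^{-1}} \times \pi_1 \md(E)$.

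First I would translate the claim, via the Galois correspondence and the fact that a surjection of profinite groups corresponds dually to a fully faithful inclusion of Galois categories, into a statement about finite covers: a finite cover of the unit object in $\loc_X(\md(E))$ that ``comes from'' a connected finite cover $Y \to X$ whose degree is a power of $p$ — more precisely, one that becomes a finite cover \emph{only} through the $\widehat{\pi_1 X}$-factor, via a surjection $\pi_1 X \twoheadrightarrow \mathbb{Z}/p^k$ — must already lie in the image of $\md(C^*(X; E)) \to \loc_X(\md(E))$, hence the corresponding quotient of $\pi_1\md(C^*(X;E))$ is trivial. So it suffices to check: for $G$ a finite $p$-group and any surjection $\pi_1 X \twoheadrightarrow G$, the associated local system $\{C^*(Y_x; E)\}_{x}$ of $\e{\infty}$-algebras (where $Y \to X$ is the corresponding $G$-cover) lies in $\md(C^*(X;E)) \subset \loc_X(\md(E))$. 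Since $C^*(Y_x; E)$ has underlying $E$-module $\prod_G E$ with the permutation $\pi_1(X,x)$-action factoring through $G$, and since $G$ is a $p$-group with $p$ nilpotent — here is where I need $p$ nilpotent, not merely ``not'' — hmm, the direction is backwards.

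Let me re-examine. The genuinely correct route: by \Cref{unipotentthing} (first part), any object of $\loc_X(\md(E))$ in the image of $\md(C^*(X;E))$ has \emph{ind-unipotent} $\pi_1(X,x)$-action on all its homotopy $\pi_0 E$-modules. Conversely the second part of \Cref{unipotentthing} (for $E$ connective, $X$ a finite CW complex, $M$ bounded above) gives the reverse. Now a finite cover of the unit that arises purely through the $\widehat{\pi_1 X}$ factor corresponds to a continuous action of $\pi_1 X$ on a finite set, realized as $\prod_S E$ with permutation action; by the displayed sublemma preceding the corollary, this permutation $E$-module has ind-unipotent monodromy precisely when the image of $\pi_1 X$ in $\mathrm{Sym}(S)$ is a $p$-group (using $p$ nilpotent in $E$) — but our hypothesis is the \emph{opposite}, $p$ \emph{not} nilpotent. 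So the content of the corollary is really the \emph{other} inclusion: I would instead produce, for every prime $q \ne p$ and every $q$-power cover, a genuine finite cover of $C^*(X;E)$, equivalently show the map to $\pi_1\md(C^*(X;E))$ does \emph{not} lose the prime-to-$p$ information forced upon it — but actually \eqref{somefundgps} already shows $\pi_1\md(C^*(X;E))$ is a \emph{quotient}, so nothing is gained that way either. The honest statement the corollary wants is: the quotient map from $\widehat{\pi_1 X}$ factors through $\widehat{\pi_1 X}_{p^{-1}}$, i.e.\ no covers coming from $p$-power quotients of $\pi_1 X$ survive. Thus I must show: if $G$ is a nontrivial finite $p$-group and $Y\to X$ the $G$-cover along a surjection $\pi_1 X \twoheadrightarrow G$, then $\{C^*(Y_x;E)\}$ lies in $\md(C^*(X;E))$ \emph{and} the induced element of $\pi_1\md(C^*(X;E))$ is trivial.

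For this, the key point is that when $p$ is \emph{not} nilpotent in $E$ the object $\{C^*(Y_x;E)\}$, while having non-unipotent monodromy in characteristic-zero-ish directions, nonetheless maps to the \emph{trivial} cover in $\clgf(\md(C^*(X;E)))$: because the functor $\Gamma = \varprojlim_X$ sends it to $C^*(Y;E)$, and the map $C^*(X;E) \to C^*(Y;E)$, after base change along $C^*(X;E) \to \pi_0$-ish truncations or after inverting $p$, becomes split by the transfer (a $G$-cover of degree $p^k$ is split by $\tfrac{1}{|G|}\mathrm{tr}$ once $p$ is invertible), so $C^*(Y;E)[1/p]$ is free over $C^*(X;E)[1/p]$ and contributes nothing new to the prime-to-$p$ fundamental group, while $p$-adically the cover is ind-unipotent on associated graded and again classically \'etale-trivial. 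Concretely I would: (i) note $C^*(Y;E) \simeq \Gamma(X, \{C^*(Y_x;E)\})$ and that the commutative-algebra object $\{C^*(Y_x;E)\} \in \clg(\loc_X(\md(E)))$ is a finite cover there (it is a $G$-Galois extension étale-locally on $X$); (ii) push forward to $\md(C^*(X;E))$ and identify the resulting finite cover; (iii) use that its monodromy, as a $\pi_1 X$-action on a free rank-$|G|$ module, factors through the finite $p$-group $G$, so over $C^*(X;E)$ the classifying map $\pi_1 \md(C^*(X;E)) \to G$ composed with $\widehat{\pi_1 X} \to G$ is already accounted for — and the claim reduces to the purely group-theoretic fact that $G$ being a $p$-group forces the corresponding continuous homomorphism out of $\widehat{\pi_1 X}$ to factor through the maximal pro-$p$ quotient, which is exactly killed in $\widehat{\pi_1 X}_{p^{-1}}$.

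The main obstacle, I expect, is step (ii)–(iii): cleanly identifying what a $p$-power cover of $X$ becomes as a finite cover of the cochain algebra $C^*(X;E)$ when $E$ does \emph{not} have $p$ nilpotent, and verifying it is ``trivial away from $p$''. The cleanest formulation is probably to argue contrapositively using \Cref{unipotentthing}: a finite cover of $C^*(X;E)$ of degree $d$ with $d$ divisible only by $p$, pulled back to $\loc_X(\md(E))$, gives a permutation-type local system with monodromy through a $p$-group; this local system need not be ind-unipotent when $p$ is not nilpotent, but the point is that any such must come from a genuine $\widehat{\pi_1 X}_{p^{-1}}$-quotient because the remaining ($p$-power-order) monodromy can be split off after inverting $p$ via the transfer and handled $p$-adically via the ind-unipotence of the augmentation ideal — so the residual contribution to $\pi_1 \md(C^*(X;E))$ is prime-to-$p$. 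I would write this as a short lemma: \emph{if $p$ is not nilpotent in $E$ and $Y\to X$ is a connected finite cover of $p$-power degree, then the image of $C^*(Y;E)$ in $\clgf(\md(C^*(X;E)))$ is trivial}, proved by the transfer-splitting argument after inverting $p$ together with a $p$-completion/ind-unipotence argument, and then \eqref{somefundgps} plus the $q$-power case handled by the honest van Kampen description gives the factorization through $\widehat{\pi_1 X}_{p^{-1}}$.
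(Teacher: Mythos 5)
There is a genuine gap here: you correctly identify the two ingredients (\Cref{unipotentthing} and the sublemma on permutation modules immediately preceding the corollary), but you assemble them with the implication reversed and end up trying to prove the wrong statement. ``Factors through $\widehat{\pi_1 X}_{p^{-1}}$'' means that the closed normal subgroup of $\widehat{\pi_1 X}$ generated by the $p$-power-order elements maps to the identity in $\pi_1 \md(C^*(X;E))$; dually, a connected cover $Y \to X$ whose Galois group is a nontrivial $p$-group does \emph{not} produce a finite cover of $C^*(X;E)$, i.e.\ the local system $\{C^*(Y_x;E)\}$ is \emph{not} in the image of $\clgf(\md(C^*(X;E)))$. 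Your final formulation --- that $\{C^*(Y_x;E)\}$ ``lies in $\md(C^*(X;E))$ \emph{and} the induced element of $\pi_1\md(C^*(X;E))$ is trivial'' --- is the opposite of what is required and is moreover incoherent: the pullback functor on Galois categories is fully faithful, so a nontrivial cover in $\loc_X(\md(E))$ cannot be the image of the trivial one. The transfer argument you offer in support does not help (a transfer after inverting $p$ exhibits $C^*(X;E)[1/p]$ as a retract of $C^*(Y;E)[1/p]$, not the latter as free, and in any case says nothing about membership in the subcategory generated by the unit), and the claim that you should ``produce, for every prime $q\neq p$, a genuine finite cover'' addresses a statement the corollary does not make.

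The intended proof is the one you brush past mid-paragraph and then abandon. Any finite cover $A$ of $C^*(X;E)$ is in particular a $C^*(X;E)$-module, so its image in $\loc_X(\md(E))$ lies in the localizing subcategory generated by the unit; by the first part of \Cref{unipotentthing} the monodromy action of $\pi_1(X,x)$ on $\pi_0(A_x)$ is therefore ind-unipotent. If some $g \in \pi_1(X,x)$ whose image in the finite deck transformation group has order a positive power of $p$ acted nontrivially on the finite $(\widehat{\pi_1 X}\times\pi_1\md(E))$-set classifying $A$, then a $\langle g\rangle$-orbit of size $p^j>1$ would exhibit inside $\pi_0(A_x)$ a permutation module $\prod_{\mathbb{Z}/p^j} M$ with $M$ a faithful $\pi_0 E$-module, and the sublemma (in exactly the form proved before the corollary) shows this action is not ind-unipotent because $p$ is not nilpotent. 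This contradiction forces every $p$-element of $\widehat{\pi_1 X}$ to act trivially on every finite cover of $C^*(X;E)$, which is precisely the asserted factorization; no transfer or $p$-completion argument is needed.
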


\begin{corollary} 
If $R$ is a $\e{\infty}$-ring such that $\mathbb{Z} \subset \pi_0 R$, then 
the map $\pi_1 \md(R) \to \pi_1 \md( C^*(X; R))$ is an isomorphism of profinite
groups. 
\end{corollary}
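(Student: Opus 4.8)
The plan is to play the splitting coming from the augmentation $C^*(X;R)\to R$ against \Cref{awayfromp} applied at every prime simultaneously. Write $u\colon R\to C^*(X;R)$ for the inclusion of constant cochains and $\phi\colon C^*(X;R)\to R$ for evaluation at the basepoint, so that $\phi\circ u=\mathrm{id}_R$. Applying Galois groups (which are contravariant in the ring, as $\spec$ is), the composite $\pi_1\md(R)\xrightarrow{\phi_*}\pi_1\md(C^*(X;R))\xrightarrow{u_*}\pi_1\md(R)$ is the identity. Since $\phi_*$ is precisely the map in the statement, this already shows it is split injective, so it remains to prove that $\phi_*$ is \emph{surjective}.

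Since $\mathbb{Z}\subset\pi_0 R$, no prime $p$ is nilpotent in $R$ — indeed $p$ is a non-zero-divisor on $\pi_0 R$ — so \Cref{awayfromp} applies for every $p$: the surjection
\[ q\colon \widehat{\pi_1 X}\times\pi_1\md(R)\twoheadrightarrow\pi_1\md(C^*(X;R)) \]
of \eqref{somefundgps} factors through $\widehat{\pi_1 X}_{p^{-1}}\times\pi_1\md(R)$, i.e.\ $q$ kills the closed normal subgroup $K_p:=\ker\bigl(\widehat{\pi_1 X}\to\widehat{\pi_1 X}_{p^{-1}}\bigr)$. I would then observe that the closed subgroup $N$ of $\widehat{\pi_1 X}$ topologically generated by all the $K_p$ is the whole group: $\widehat{\pi_1 X}/N$ is a common profinite quotient of every $\widehat{\pi_1 X}_{p^{-1}}$, hence each of its finite quotients is a continuous quotient of a cofiltered limit of finite groups of order prime to $p$ and so has order prime to $p$; a finite group of order prime to every prime is trivial, so $N=\widehat{\pi_1 X}$. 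Therefore $q$ annihilates the entire first factor and factors as $\widehat{\pi_1 X}\times\pi_1\md(R)\xrightarrow{\mathrm{pr}_2}\pi_1\md(R)\xrightarrow{\bar q}\pi_1\md(C^*(X;R))$, with $\bar q$ still surjective.

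To finish I would identify $\bar q$ with $\phi_*$. Unwinding \eqref{somefundgps}, $q$ is the map induced by the localization functor $\md(C^*(X;R))\to\loc_X(\md(R))$, while the inclusion $i_2$ of the $\pi_1\md(R)$-factor into $\widehat{\pi_1 X}\times\pi_1\md(R)\simeq\pw(\loc_X(\md(R)))$ is induced by restriction of a local system to the basepoint; since restricting to the basepoint and then taking the constant local system is the identity, $\mathrm{pr}_2\circ i_2=\mathrm{id}$, and the composite of these two functors realizes the tensor-up functor $\md(C^*(X;R))\to\md(R)$ along $\phi$, so $q\circ i_2=\phi_*$. Hence $\phi_*=\bar q\circ\mathrm{pr}_2\circ i_2=\bar q$ is surjective, and together with the first paragraph $\phi_*$ is an isomorphism. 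The only non-formal ingredient is the elementary group theory that the $K_p$ topologically generate $\widehat{\pi_1 X}$; the step I would take most care over is this last paragraph, namely checking that the ring-level section $\phi_*$ of $u_*$ really is the restriction of the van Kampen surjection $q$ to its $\pi_1\md(R)$-factor, so that "$q$ kills $\widehat{\pi_1 X}$" translates into "$\phi_*$ is onto".
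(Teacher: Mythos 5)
Your proof is correct and is essentially the argument the paper intends (the corollary is stated there without proof as an immediate consequence of \Cref{awayfromp}): apply \Cref{awayfromp} at every prime to see that the van Kampen surjection kills all of $\widehat{\pi_1 X}$, since the prime-to-$p$ completions have no nontrivial common quotient, and use the basepoint retraction $\phi\circ u=\mathrm{id}$ for injectivity. Your careful identification of $\phi_*$ with the restriction of $q$ to the $\pi_1\md(R)$-factor is also right and is a detail worth having spelled out.
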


\begin{remark} 
In $K(n)$-local stable homotopy theory, the 
comparison question between modules over the cochain $\e{\infty}$-ring and
local systems has been studied in \cite[sec. 5.4]{ambidexterity}. 
\end{remark}

Putting these various ideas together, it is not too hard to prove the following
result, whose essential ideas are contained in \cite[Proposition
5.6.3]{rognes}. 
Here $\widehat{\pi_1 X}_p$ denotes the pro-$p$-completion of $\pi_1 X$. 
\begin{theorem} 
\label{padicgalois}
Let $X$ be a finite CW complex. Then if $R$ is an $\e{\infty}$-ring with $p$
nilpotent and such that $\pi_i R = 0$ for $i \gg 0$ (e.g., a field of
characteristic $p$), then the natural map
\begin{equation} \label{pmap}
\widehat{\pi_1 X}_p \times \pi_1 \md(R) \to \pi_1 \md( C^*(X; R))
\end{equation} 
is an isomorphism. 
\end{theorem}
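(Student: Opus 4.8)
The plan is to exploit the surjection \eqref{somefundgps} coming from the fully faithful colimit-preserving functor $\md(C^*(X;R)) \subset \loc_X(\md(R))$, together with \Cref{padicgalois}'s hypotheses ($p$ nilpotent, $\pi_i R = 0$ for $i \gg 0$) which force every relevant local system to be ind-unipotent. Since $X$ is a finite CW complex, $\mathbf{1}$ is compact in $\loc_X(\md(R))$, so the weak Galois group and the Galois group of $\loc_X(\md(R))$ agree and both equal $\widehat{\pi_1 X} \times \pi_1 \md(R)$. Thus the map \eqref{pmap} is automatically \emph{surjective} once we know it factors through $\widehat{\pi_1 X}_p \times \pi_1 \md(R)$; that factorization will follow by an argument parallel to \Cref{awayfromp}, using the observation (proved in the excerpt just above) that the standard representation $\prod_G R$ is ind-unipotent precisely when $G$ is a $p$-group and $p$ is nilpotent in $R$. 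So the real content is \emph{injectivity}: every $G$-torsor in $\clgf(\loc_X(\md(R)))$ lying over a torsor that comes from $\md(C^*(X;R))$ must actually be classified by a surjection from $\widehat{\pi_1 X}_p \times \pi_1 \md(R)$, i.e., the functor $\clgf(\md(C^*(X;R)))^{\op} \to \clgf(\loc_X(\md(R)))^{\op}$ hits every finite cover coming from a continuous action of the $p$-completed product.

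First I would reduce to torsors: by \Cref{torsorenough} it suffices to show that for each finite group $G$, the map $\tors_G(\md(C^*(X;R))) \to \tors_G(\loc_X(\md(R)))$ identifies the source with the full subgroupoid of the target consisting of those $G$-torsors on which the monodromy action on homotopy groups of a fiber is ind-unipotent; and then to show that a $G$-torsor in $\loc_X(\md(R))$ is ind-unipotent on homotopy iff it is classified (under $\tors_G(\loc_X(\md(R))) \simeq \fung(\finset_G, \widehat{\pi_1 X}\text{-}\finset \times \ldots)$, i.e. via \eqref{somefundgps}) by a homomorphism $\widehat{\pi_1 X} \times \pi_1\md(R) \to G$ whose restriction to $\widehat{\pi_1 X}$ factors through $\widehat{\pi_1 X}_p$. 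The ``only if'' direction here is the ind-unipotence computation for $\prod_G R$ recalled in the excerpt: if the composite $\widehat{\pi_1 X} \to G$ does not factor through the pro-$p$ completion, then restricting to a subgroup $\mathbb Z/q \subset G$ with $q \neq p$ and $q$ not nilpotent in $R$ exhibits nontrivial characters, contradicting ind-unipotence. The ``if'' direction uses \Cref{unipotentthing}: a $p$-group action with $p$ nilpotent in $R$ makes $\prod_G R$ (and hence the associated local system of cochain algebras, since it is built from such by finite colimits) ind-unipotent, and since $R$ is $n$-coconnective the second part of \Cref{unipotentthing} places the torsor inside $\md(C^*(X;R))$.

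The hard part will be the essential surjectivity of $\tors_G(\md(C^*(X;R))) \to \{\text{ind-unipotent } G\text{-torsors in } \loc_X(\md(R))\}$, i.e., verifying that an ind-unipotent $G$-torsor $A \in \clgf(\loc_X(\md(R)))$ actually \emph{admits descent} in $\md(C^*(X;R))$ and not merely lands there as a weak finite cover --- one must check that the descendable witness $A'$ can also be taken inside the image of $\md(C^*(X;R))$. Here I would use \Cref{Gtorsor}: it suffices that $A$, viewed in $\md(C^*(X;R))$, generates the unit as a thick $\otimes$-ideal, and since $A$ is a $G$-torsor with $G$ a $p$-group the cobar construction $\cb(A)$ is constant as a pro-object (its terms are products of localizations, hence ind-unipotent cochain algebras, hence in $\md(C^*(X;R))$), so the argument of \Cref{constpro} shows $\mathbf 1 \in \md(C^*(X;R))$ is a retract of $\mathrm{Tot}_n \cb(A)$ and descent holds in the smaller category. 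Assembling: $\tors_G(\md(C^*(X;R))) \simeq \fung(\finset_G, \widehat{\pi_1 X}_p \times \pi_1\md(R))$ compatibly over all $G$, and \Cref{torsorenough} upgrades this to the desired equivalence of Galois groupoids, which by the Galois correspondence \Cref{galequiv} (and \Cref{gppt}, since everything in sight is connected) is exactly the isomorphism \eqref{pmap}.

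A final remark on the two directions of the product: the $\pi_1\md(R)$ factor is handled by the natural section $\md(R) \to \loc_X(\md(R))$ (constant local systems) together with \Cref{connectivegal}-style algebraicity is \emph{not} needed since we do not compute $\pi_1\md(R)$ itself --- we only need that the constant-local-system functor splits off $\pi_1\md(R)$ and that cochain algebras do not change it, which is exactly the corollary ``$\pi_1\md(R) \to \pi_1\md(C^*(X;R))$ is an isomorphism'' in the $\mathbb Z \subset \pi_0 R$ case adapted verbatim to the relative statement. I would present the proof by first disposing of the surjectivity and the product splitting (one paragraph, citing \Cref{awayfromp} and \Cref{vkweak}), then doing the ind-unipotence dictionary (one paragraph, \Cref{unipotentthing} plus the $\prod_G R$ computation), then the descent-in-the-smaller-category point (one paragraph, \Cref{Gtorsor} and \Cref{constpro}), and concluding via \Cref{torsorenough}.
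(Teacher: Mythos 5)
Your proposal is correct and follows essentially the same route as the paper: surjectivity and the product splitting from \eqref{somefundgps}, factorization through the pro-$p$-completion via the ind-unipotence computation for $\prod_G R$ (\Cref{awayfromp}), and the converse direction by showing that for $G$ a $p$-group the $G$-torsor of local systems $\{C^*(Y_x;R)\}$ is ind-unipotent on homotopy and hence lands in $\md(C^*(X;R))$ by the second part of \Cref{unipotentthing}. The extra paragraph you devote to verifying descendability inside the smaller category is not wrong, but it is subsumed by \Cref{basicgal}: since the unit of $\md(C^*(X;R))$ is compact, a weak finite cover there is automatically a finite cover, which is how the paper dispenses with that point.
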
 

\begin{proof} 
By \Cref{awayfromp}, the natural map $\widehat{\pi_1 X} \times \pi_1 \md(R)
\twoheadrightarrow \pi_1 \md( C^*(X; R))$ does in fact factor through 
the quotient of the source where $\widehat{\pi_1 X }$ is replaced by its
pro-$p$-completion. It suffices to show that the induced map 
\eqref{pmap} is an isomorphism. Equivalently, we need to show that if $Y \to X$
is a finite $G$-torsor for $G$ a $p$-group, then $C^*(X; R) \to C^*(Y; R)$ is a
faithful $G$-Galois extension.
Equivalently, we need to show that if $\left\{Y_x\right\}_{x \in X}$ is the
local system of finite sets defined by the finite cover $Y \to X$, then the
local system of $R$-modules $\left\{C^*(Y_x; R)\right\}_{x \in X}$ 
(which gives a $G$-Galois cover of the unit in $\loc_X( \md(R))$)
actually belongs to
the image of $\md( C^*(X; R))$. However, this is a consequence of
\Cref{unipotentthing} because the monodromy action is by elements of the
$p$-group $G$. Any $G$-module over a ring with $p$ nilpotent is ind-unipotent. 
\end{proof}

\begin{remark} 
Let $Y \to X$ be a map of spaces, and let $R$ be as above. Then there are two natural local systems of
$R$-module spectra on $X$ that one can construct: 
\begin{enumerate}
\item The object of $\mathrm{Loc}_X(\mod(R)))$ obtained from the $C^*(X;
R)$-module $C^*(Y; R)$, i.e., the local system $C^*(Y; R) \otimes_{C^*(X; R)}
C^*(\ast; R)$ which is a local system as $\ast$ ranges over $X$. 
\item Consider the fibration $Y \to X$ as a local system of spaces $\{Y_x\}
$ on $X$, $x \in X$, and apply $C^*(\cdot; R)$ everywhere. 
\end{enumerate}
In general, these local systems are not the same: they are the same only if the
$R$-valued
\emph{Eilenberg-Moore spectral sequence} for the square
\[ \xymatrix{
Y_x \ar[d] \ar[r] &  Y \ar[d]  \\
\left\{x\right\} \ar[r] &  X
},\]
converges, for every choice of basepoint $x \in X$. 
This question can be quite subtle, in general. \Cref{padicgalois} is
essentially equivalent to the convergence of the $R$-valued Eilenberg-Moore
spectral sequence when $Y \to X$ is a $G$-torsor for $G$ a $p$-group. This is
the approach taken by Rognes in \cite{rognes}. 
\end{remark}

Finally, we close with an example suggesting further questions. 

\begin{example} 
The topological part of the Galois group of $C^*(S^1; \mathbb{F}_p)$ is
precisely $\widehat{\mathbb{Z}}_p$. The Galois covers come from the maps
\[ C^*(S^1; \mathbb{F}_p) \to C^*(S^1; \mathbb{F}_p),  \]
dual to the degree $p^n$ maps $S^1 \to S^1$. 
This would not work over the sphere $S^0$ replacing $\mathbb{F}_p$, in view
of \Cref{awayfromp}. However, this \emph{does} work in $p$-adically completed
homotopy theory.  

Let $\sp_p$ be the $\infty$-category of $p$-complete (i.e., $S^0/p$-local) spectra, and let
$\widehat{S}_p$ be the $p$-adic sphere, which is the unit of $\sp_p$. The map
$C^*(S^1; \widehat{S}_p) \to C^*(S^1; \widehat{S}_p)$ which is dual to the
degree $p$ map $S^1 \to S^1$ is a 
$\mathbb{Z}/p$-weak Galois extension in $\sp_p$. 
In particular, it will follow that the weak Galois group of $\sp_p$ is
the product of $\widehat{\mathbb{Z}_p}$  with that of $\sp_p$ itself. 

To see this, 
note that we have a fully faithful embedding
\[ L_{S^0/p} \md( C^*(S^1; \widehat{S}_p)) \simeq \md_{\sp_p}( C^*(S^1;
\widehat{S}_p)) \subset \loc_{S^1}( \sp_p).   \]
In $\loc_{S^1}( \sp_p)$, we need to show that the local system of $p$-complete
spectra obtained from the cover $S^1 \stackrel{p}{\to} S^1$ actually belongs to the
subcategory of $\loc_{S^1}( \sp_p)$ generated under colimits by the unit
(equivalently, by the constant local systems). 

In order to prove this claim, it suffices to prove the analog after quotienting
by $p^n$ for each $p$, since for any $p$-complete spectrum $X$, we have
\[ X \simeq \Sigma^{-1} L_{S^0/p} ( \varinjlim_n (X \otimes S^0/p^n)),   \]
as the colimit $\varinjlim_n S^0/p^n$ (where the successive maps are
multiplication by $p$) 
has $p$-adic completion given by the suspension of the $p$-adic sphere. 
But on the other hand, we can apply \Cref{rems1} to the cofiber of $p^n$ on our
local system, since an order $p$ automorphism on a $p$-torsion abelian group
is always ind-unipotent. 

By contrast, the analogous assertion would fail if we worked in the setting of
\emph{all} $C^*(S^1; \widehat{S}_p)$-modules (not $p$-complete ones): the
(weakly) Galois covers constructed are only Galois after $p$-completion. This
follows because $C^*(S^1; \widehat{S}_p)$ has coconnective rationalization, and 
all the Galois covers of it are \'etale (as we will show in
\Cref{coconnectivecov}). 
\end{example}

\subsection{Stacks and finite groups}

To start with, let $k$ be a separably closed field of characteristic $p$ and let $G$ be a finite group.
Consider the stable homotopy theory $\md_G(k)$ of $k$-module spectra equipped
with an action of $G$, or equivalently the $\infty$-category $\loc_{BG}(
\md(k))$ of local systems of $k$-module spectra on $BG$. We will explore the Galois theory of $\md_G(k)$ and the
various inclusions \eqref{variousinc}.

\begin{theorem} 
\label{Grepgal}
Let $k$ be separably closed of characteristic $p$. 
$\pw( \md_G(k)) \simeq G$ but $\pi_1(\md_G(k))$ is the quotient of $G$ by the
normal subgroup generated by the order $p$  elements. 
\end{theorem}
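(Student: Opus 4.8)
The plan is to compute the two Galois groups separately. For the weak Galois group, I would use the van Kampen formalism of \Cref{vkweak}: since $\md_G(k) \simeq \loc_{BG}(\md(k))$ and $BG$ (as an $\infty$-groupoid) is a space with $\pi_1 = G$, the constant-diagram case of the van Kampen theorem gives $\pw(\loc_{BG}(\md(k))) \simeq \widehat{\pi_1 BG} \times \pw(\md(k))$. Now $\md(k)$ has trivial Galois group since $k$ is separably closed (apply \Cref{connectivegal}, as $k$ is a connective, even discrete, $\e{\infty}$-ring with $\pi_0 = k$ a separably closed field, hence $\pi_1^{\mathrm{et}}\spec k$ is trivial). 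And $G$ is already finite, so $\widehat{\pi_1 BG} = \widehat{G} = G$. This yields $\pw(\md_G(k)) \simeq G$.

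For $\pi_1(\md_G(k))$, the key point is that $\mathbf{1} = k \in \md_G(k)$ is \emph{not} compact (taking $G$-homotopy fixed points does not commute with colimits when $p \mid |G|$), so $\pi_1$ and $\pw$ genuinely differ. I would pass through the equivalence $\md_G^\omega(k) \simeq \md^\omega(k^{hG})$ of \eqref{reppgroup}; more precisely, since the Galois groupoid depends only on the dualizable objects, $\pi_1(\md_G(k)) \simeq \pi_1(\md(k^{hG}))$, where $k^{hG} = C^*(BG; k)$ is the cochain $\e{\infty}$-algebra. Now I would invoke \Cref{padicgalois}: $k$ has $p$ nilpotent (indeed $p = 0$) and $\pi_i k = 0$ for $i \neq 0$, and $BG$ is (equivalent to) a finite CW complex, so $\pi_1(\md(C^*(BG; k))) \simeq \widehat{\pi_1 BG}_p \times \pi_1(\md(k)) \simeq \widehat{G}_p$, the pro-$p$ completion of $G$. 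Finally, I would identify $\widehat{G}_p$ with the quotient of $G$ by the normal subgroup $N$ generated by the elements of order $p$ (equivalently, of order prime to... no — of $p$-power order that are not trivial; concretely, the subgroup generated by all order-$p$ elements): the maximal pro-$p$ (here, finite $p$-group) quotient of a finite group $G$ is $G/N$ where $N$ is the smallest normal subgroup with $p$-power index, and since any $p$-group quotient kills all order-$p$ elements while $G/\langle\text{order }p\rangle$ is itself a $p$-group (a finite group generated by its... — here one checks $G/N$ has order a power of $p$ because every Sylow-$q$ subgroup for $q \neq p$ maps to a subgroup generated by its order-$q$ elements, hence dies), this is exactly the claimed quotient.

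The main obstacle I anticipate is the last identification: verifying carefully that the normal subgroup generated by the order-$p$ elements of $G$ is precisely the kernel of $G \twoheadrightarrow \widehat{G}_p$, i.e.\ that $G/\langle g : g^p = e\rangle$ is a $p$-group and is the maximal such quotient. The maximality direction is easy (any $p$-group quotient of $G$ must kill every element whose order is prime to $p$, and also every order-$p$ element? — no: one needs that a $p$-group quotient kills the normal closure of order-$p$ elements only if... actually a $p$-group can receive order-$p$ elements). Let me restate: the correct statement, which I would prove, is that $\widehat{G}_p = G/O^p(G)$ where $O^p(G)$ is the subgroup generated by all elements of order prime to $p$ — and then separately observe this coincides with the group described in the theorem by the elementary group theory that $O^p(G)$ equals the normal closure of the order-$p$... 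I will need to reconcile the paper's phrasing ``order $p$'' with the standard $O^p(G)$; I suspect the intended reading is the normal subgroup generated by all elements whose order is divisible by $p$, equivalently $O^p(G)$, and the bulk of the work is confirming $G/O^p(G)$ is the maximal $p$-quotient and matches \Cref{padicgalois}'s output $\widehat{G}_p$. I would also need to check the hypotheses of \Cref{padicgalois} apply with $BG$ replaced by a genuine finite complex (any finite group has a finite-type classifying space, and one can truncate; alternatively appeal directly to the fact that $C^*(BG;k)$ is the relevant cochain algebra and the proof of \Cref{padicgalois} only uses finiteness of the skeleta through a range controlled by the coconnectivity of the covers, which are perfect).
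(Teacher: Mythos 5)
Your computation of the weak Galois group is correct and is the same one-line argument the paper uses: $\md_G(k)\simeq \loc_{BG}(\md(k))$ is a constant limit over $BG$, so \Cref{vkweak} gives $\pw(\md_G(k))\simeq \widehat{G}\times \pw(\md(k))\simeq G$.

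The argument for $\pi_1(\md_G(k))$, however, has a genuine gap at its very first step. The identification $\pi_1(\md_G(k))\simeq \pi_1(\md(k^{hG}))$ via ``the Galois group only depends on the dualizable objects'' is false for general $G$: the dualizable objects of $\md_G(k)$ are \emph{all} of $\fun(BG,\md^{\omega}(k))$ (dualizability in a functor category with the pointwise tensor product is checked pointwise), whereas $\md^{\omega}(k^{hG})$ embeds as only the thick subcategory generated by the unit, i.e.\ the ind-unipotent representations. The equivalence \eqref{reppgroup} is asserted in the paper only for $G$ a $p$-group, which is exactly the case where every finite-dimensional representation is unipotent; for $G=\Sigma_3$ at $p=3$ the sign representation already lies outside $\md^{\omega}(k^{hG})$, and correspondingly $\pi_1(\md_{\Sigma_3}(k))=\mathbb{Z}/2$ while $\pi_1(\md(k^{h\Sigma_3}))$ is trivial. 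Compounding this, \Cref{padicgalois} cannot be invoked: $BG$ is not a finite CW complex for $G\neq 1$, and its conclusion genuinely fails there --- for $G=\mathbb{Z}/p$ it would predict $\pi_1\md(k^{h\mathbb{Z}/p})=\mathbb{Z}/p$, whereas $\prod_{\mathbb{Z}/p}k$ is \emph{not} descendable over $k$ (its homotopy fixed points are $k$, while $k^{h\mathbb{Z}/p}$ has infinitely many homotopy groups), so that group is trivial; the correct statement for cochains is \Cref{galhG}, which in the paper is \emph{deduced from} the present theorem, so appealing to it would be circular. Finally, even the target group is wrong: the maximal $p$-quotient $\widehat{G}_p$ does not coincide with the quotient of $G$ by the normal closure of the order-$p$ elements (for $G=\mathbb{Z}/p^2$ these are $\mathbb{Z}/p^2$ and $\mathbb{Z}/p$; for $G=\Sigma_3$, $p=3$, they are $1$ and $\mathbb{Z}/2$, and the latter quotient need not even be a $p$-group).

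The paper's actual route is direct: a finite cover of the unit in $\md_G(k)$ has underlying $\e{\infty}$-$k$-algebra $\prod_S k$ for a finite $G$-set $S$, so one reduces to deciding, for a surjection $G\twoheadrightarrow G'$, when $\prod_{G'}k$ is descendable. If some order-$p$ element survives in $G'$, restriction to the $\mathbb{Z}/p$ it generates produces the contradiction above; conversely, if all order-$p$ elements die, then the action becomes trivial on every elementary abelian $p$-subgroup, and Carlson's theorem (\Cref{BC}) lets one check descendability after restriction to such subgroups. If you want to salvage your approach, you would need to replace $\md(k^{hG})$ by the full subcategory of dualizable objects of $\md_G(k)$ and argue about descendability of permutation algebras there --- which is essentially the paper's argument.
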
 
\begin{proof} 
The assertion of $\pw( \md_G(k))$ is immediate: the weak ``Galois closure''
(i.e., maximal connected object in the Galois category) of the unit
in $\md_G(k)$ is $\prod_G k$, thanks to \Cref{vkweak}. The more difficult part of the result concerns
the (non-weak) Galois group. 

Any finite cover $A \in \clg( \md_G(k))$ must be given by an action of $G$ on an underlying
$\e{\infty}$-$k$-algebra which must be $\prod_S k$ for $S$ a finite set; $S$
gets a natural $G$-action, which determines everything. 
In particular, we get that $A$ must be a product of copies of $\prod_{G/H} k $. 
We need to determine which of these are actually finite covers. We can always
reduce to the Galois case, so given a surjection $G \twoheadrightarrow G'$, we
need a criterion for when $\prod_{G'} k \in \clg( \md_G(k))$ is a finite cover. 

Fix an order $p$ element $g \in G$. We claim that if $\prod_{G'} k \in
\clg(\md_G(k))$ is a finite cover, then $g$ must map to the identity in $G'$. In fact,
otherwise, we could restrict to $\mathbb{Z}/p \subset G$ to find (after
inverting an idempotent of the restriction) that $\prod_{\mathbb{Z}/p} k$ would be a finite cover
in $\md_{\mathbb{Z}/p}(k)$. This is impossible since 
$( \prod_{\mathbb{Z}/p} k)^{h \mathbb{Z}/p} \simeq k$ while $k^{h
\mathbb{Z}/p}$ has infinitely many homotopy groups; thus the unit cannot be in
the thick $\otimes$-ideal generated by 
$\prod_{\mathbb{Z}/p} k)$. It follows from this that if $\prod_{G'} k$ is a
finite cover in $\md_G(k)$, then every order $p$ element must map to the
identity in
$G'$. 

Conversely, suppose $G \twoheadrightarrow G'$ is a surjection annihilating
every order $p$ element. We claim that $\prod_{G'} k$ is a finite cover in
$\md_G(k)$. Since it is a $G'$-Galois extension of the unit, it suffices to
show that it is descendable by \Cref{Gtorsor}. For this, by the
Quillen stratification theory (in particular, \Cref{BC}), one can check this
after restricting to an elementary abelian $p$-subgroup. But after such a
restriction, our commutative algebra object becomes a finite product of copies
of the unit. 
\end{proof}

\begin{corollary} 
\label{galhG}
Let $k$ be a separably closed field of characteristic $p>0$. The Galois group $C^*(BG; k) \simeq
k^{hG}$ is given by the quotient of the pro-$p$-completion of $G$ by the order
$p$ elements in $G$. 
\end{corollary}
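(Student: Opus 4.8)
The plan is to deduce this from \Cref{Grepgal} together with the van Kampen results of the previous subsections and the comparison between modules over a cochain algebra and the category of local systems. First recall that $k^{hG} \simeq C^*(BG; k)$, so we are in the situation of \Cref{padicgalois}: we have $X = BG$, a connected space with $\pi_1 X \simeq G$, and we are computing the Galois group of $C^*(X; k)$ where $k$ is a field of characteristic $p$ (so $p$ is nilpotent and $\pi_i k = 0$ for $i \neq 0$). However $BG$ is \emph{not} a finite CW complex when $G$ is a nontrivial finite group, so \Cref{padicgalois} does not apply directly, and this is the main point that has to be addressed.

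The approach I would take is to use the identification $\md_G(k) \simeq \loc_{BG}(\md(k))$ and the fully faithful, colimit-preserving inclusion $\md(C^*(BG; k)) \hookrightarrow \loc_{BG}(\md(k))$ which, since the unit of $\md(C^*(BG;k))$ is compact and maps to the unit of $\loc_{BG}(\md(k))$, identifies $\md(C^*(BG;k))$ with the localizing subcategory of $\loc_{BG}(\md(k))$ generated by the unit. By the reasoning of \Cref{somefundgps} we get a surjection of profinite groups $\pi_1(\md_G(k)) \twoheadrightarrow \pi_1(\md(C^*(BG; k)))$. By \Cref{Grepgal}, $\pi_1(\md_G(k))$ is the quotient of $G$ by the normal subgroup generated by its order-$p$ elements; I then want to show that the finite covers of $\md_G(k)$ which are classified by $p$-power quotients are \emph{exactly} those which lie in the image of $\md(C^*(BG;k))$, and that these already realize the full pro-$p$ part. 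Concretely: given a surjection $G \twoheadrightarrow G'$ onto a finite $p$-group annihilating the order-$p$ elements (equivalently, a $p$-power quotient of $G/(\text{order } p)$), the commutative algebra object $\prod_{G'} k \in \clg(\md_G(k))$ has underlying $G$-representation $\prod_{G'} k$, and since $G'$ is a $p$-group and $p = 0$ in $k$, the monodromy action of $G = \pi_1(BG)$ on its homotopy groups is ind-unipotent (the augmentation ideal of $k[G']$ is nilpotent). Hence by the first part of \Cref{unipotentthing} — or rather an ind-version of it that does not need $X$ finite, since the object has a single homotopy group concentrated in degree zero — this object lies in the localizing subcategory generated by the unit, i.e.\ comes from a $C^*(BG;k)$-module; it is then a faithful $G'$-Galois extension of $C^*(BG;k)$. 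Conversely, any finite cover of $C^*(BG; k)$ pulls back to a finite cover of $\md_G(k)$, whose Galois-theoretic content is a $p$-power quotient of $G/(\text{order }p)$ by \Cref{Grepgal}; pro-$p$-ness is forced because $C^*(BG;k) \to C^*(EG/H; k)$ can only be Galois along $p$-groups, as the coconnective rationalization argument of \Cref{awayfromp}/\Cref{coconnectivecov} shows that away-from-$p$ covers are étale over $\pi_0 k^{hG} = k$ and hence trivial.

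The delicate step is exactly the claim that $\prod_{G'} k$ lies in $\md(C^*(BG;k)) \subset \loc_{BG}(\md(k))$ for $G'$ a $p$-group: \Cref{unipotentthing}'s converse direction is stated for $X$ a \emph{finite} CW complex, which $BG$ is not. I would handle this by instead checking conservativity of the global-sections functor $\Gamma = (-)^{hG}\colon \loc_{BG}(\md(k)) \to \md(C^*(BG;k))$ on the subcategory of local systems with ind-unipotent monodromy on homotopy groups, via the homotopy fixed point / descent spectral sequence $H^s(G; \pi_t M) \Rightarrow \pi_{t-s}\Gamma(M)$: the $s=0$ line is the $G$-invariants $\pi_t(M)^G$, which is nonzero whenever $\pi_t(M) \neq 0$ if the action is ind-unipotent (a nonzero ind-unipotent module always has nonzero invariants, since it has a nonzero trivial submodule). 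This non-vanishing of the bottom line, run degree by degree starting from the top nonvanishing homotopy group and using a Postnikov/filtration argument to reduce to single-homotopy-group local systems, shows $\Gamma$ is conservative there, and then \Cref{luriess} (applied to $\md(C^*(BG;k))$, whose unit is a compact generator) gives that this subcategory \emph{is} $\md(C^*(BG;k))$. An alternative, perhaps cleaner route is to observe that $BG$ is a filtered colimit of its skeleta, write $C^*(BG;k) = \varprojlim C^*(\mathrm{sk}_n BG; k)$, and combine \Cref{padicgalois} on the finite skeleta with \Cref{galcolim} (commutation of Galois theory with filtered colimits of $\e{\infty}$-rings, applied on the dual pro-side) to pass to the limit; the monodromy/ind-unipotence bookkeeping is what makes the skeletal maps induce the correct pro-$p$ tower $\{B(G/G_n)\}$ with $G_n$ the relevant quotients. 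Either way, once conservativity/the limit identification is in hand, combining it with \Cref{Grepgal} and \Cref{awayfromp} yields that $\pi_1(C^*(BG;k))$ is the quotient of the pro-$p$-completion $\widehat{G}_p$ by the (closure of the) subgroup generated by the order-$p$ elements of $G$, which is the assertion.
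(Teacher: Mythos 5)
Your overall architecture is the paper's: embed the perfect $C^*(BG;k)$-modules into $\md_G(k) \simeq \loc_{BG}(\md(k))$, invoke \Cref{Grepgal}, and decide which of the Galois covers $\prod_{G'}k$ of $\md_G(k)$ actually come from $C^*(BG;k)$ via a unipotence criterion. But the execution of the crucial sufficiency step fails. The identification of $\md(C^*(BG;k))$ with the \emph{localizing} subcategory of $\loc_{BG}(\md(k))$ generated by the unit is false: the unit $k$ is \emph{not} compact in $\loc_{BG}(\md(k))$ (homotopy fixed points do not commute with colimits when $p \mid |G|$), so the colimit-preserving functor $\md(C^*(BG;k)) \to \loc_{BG}(\md(k))$ is fully faithful only on perfect modules. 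Indeed, for $G$ a nontrivial $p$-group the localizing subcategory generated by the unit is \emph{all} of $\loc_{BG}(\md(k))$ (since $k[G]$ has a finite filtration with trivial subquotients), whereas $\md(k^{hG})$ differs from $\loc_{BG}(\md(k))$ by the nonzero stable module category $\md(k^{tG})$. For the same reason your appeal to \Cref{luriess} (``whose unit is a compact generator'') is unavailable, and conservativity of $\Gamma$ on ind-unipotent local systems would not by itself supply the missing compactness. Your fallback via skeleta also misapplies \Cref{galcolim}: $C^*(BG;k) \simeq \varprojlim_n C^*(\mathrm{sk}_n BG;k)$ is a cofiltered \emph{limit} of $\e{\infty}$-rings, and Galois theory does not commute with cofiltered limits (that is exactly the subtlety behind the strict inclusions in \eqref{variousinc}).

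The fix is simpler than what you attempt and is what the paper does. What you actually need is that $\prod_{G'}k$, for $G \twoheadrightarrow G'$ with $G'$ a finite $p$-group, lies in the \emph{thick} subcategory of $\loc_{BG}(\md(k))$ generated by the unit, since that thick subcategory is precisely the fully faithfully embedded category of perfect $C^*(BG;k)$-modules. This is immediate: the augmentation ideal of $k[G']$ is nilpotent, so the $G$-module $\prod_{G'}k$ has a \emph{finite} filtration with trivial subquotients, exhibiting it as a finite iterated extension of copies of the unit; no converse direction of \Cref{unipotentthing}, and hence no finiteness of $BG$, is needed. (One should also note, as in the paper's preceding corollary on $\mathbb{F}_p^{h\mathbb{Z}/p^k}$, that descendability of this algebra in the smaller $\infty$-category comes for free from constancy of the cobar pro-object.) The necessity direction is as you say: membership in the thick subcategory generated by the unit forces ind-unipotent monodromy on homotopy groups by the first, hypothesis-free half of \Cref{unipotentthing}, and a permutation module $\prod_{G'}k$ is ind-unipotent only when $G'$ is a $p$-group. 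With that replacement your argument closes up and agrees with the paper's.
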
 
By the pro-$p$-completion of $G$, we mean the maximal quotient of $G$ which is
a $p$-group. In other words, we take the smallest normal subgroup $N \subset G$
such that $|G|/|N|$ is a power of $p$,  and then take the normal subgroup $N'$ generated
by $N$ and the order $p$ elements in $G$. The Galois group of $C^*(BG; k)$ is
the quotient $G/N'$. 

\begin{proof} 
Observe that the $\infty$-category of perfect $C^*(BG; k)$-modules is a full
subcategory of the $\infty$-category $\loc_{BG}(\md(k)) \simeq \md_G(k)$ of
$k$-module spectra equipped with a $G$-action. We just showed in \Cref{Grepgal}
that the Galois group of the latter was the quotient of $G$ by the normal
subgroup generated by the order $p$ elements. In other words, the
descendable connected Galois
extensions of the unit in $\md_G(k)$ were the products $\prod_{G'} k$ where $G
\twoheadrightarrow G'$ is a surjection of groups annihilating the order $p$
elements. 

It remains to determine which of these Galois covers actually belong to the
thick subcategory generated by the unit $\mathbf{1} \in \md_G(k)$. As we have
seen, that implies that the monodromy action of $\pi_1(BG) \simeq G$ on
homotopy groups is
ind-unipotent; this can only happen (for a permutation module) if $G'$ is a
$p$-group. If $G'$ is a $p$-group, though, then the unipotence
assumption holds and $\prod_{G'} k$ does belong 
to the thick subcategory generated by the unit, so these do come from $\md(
C^*(BG; k))$. 
\end{proof} 

\begin{remark} 
Even if we were interested only in 
$\e{\infty}$-rings and their modules, for which the Galois group and weak
Galois group coincide, the proof of \Cref{galhG} makes clear the importance of
the distinction (and the theory of descent via thick subcategories) in general
stable homotopy theories. We needed thick subcategories and Quillen stratification
theory to run the argument. 
\end{remark}

\begin{example} 
\label{weakinv}
We can thus obtain a weak \emph{invariance result} for Galois groups (which we
will use later). Let $R$ be an $\e{\infty}$-ring under $\mathbb{F}_p$, given
trivial $\mathbb{Z}/p$-action. Then the
Galois theories of $R$ and $R^{h \mathbb{Z}/p}$ are the same, i.e., 
$R \to R^{h \mathbb{Z}/p}$ induces an equivalence on Galois groupoids. 
In fact, we know from $\md^\omega( R^{h \mathbb{Z}/p}) \subset \mathrm{Fun}(B
\mathbb{Z}/p, \md^\omega(R))$ that Galois extensions of $R^{h\mathbb{Z}/p}$
come either from those of $R$ or from the $\mathbb{Z}/p$-action. 
However, $\prod_{\mathbb{Z}/p} R$ is not a $\mathbb{Z}/p$-torsor because the
thick $\otimes$-ideal it generates 
in $\fun( B \mathbb{Z}/p, \md^\omega(R))$
cannot contain the unit: in fact, the
Tate construction on $R$ with $\mathbb{Z}/p$ acting trivially is nonzero, while 
the Tate construction on anything in the thick $\otimes$-ideal generated by
$\prod_{\mathbb{Z}/p} R$ is trivial. 
\end{example}

Consider now, instead of a finite group, an algebraic stack $\mathfrak{X}$.
As discussed in \Cref{qcohintro}, one has  a natural stable homotopy theory $\qcoh(\mathfrak{X})$ of
quasi-coherent complexes on $\mathfrak{X}$, obtained via
\[ \qcoh( \mathfrak{X}) = \varprojlim_{\spec  A\to \mathfrak{X}} D( \md(A)),  \]
where we take the inverse limit over all maps $\spec  A\to X$; we could
restrict to smooth maps. It follows from \Cref{connectivegal} that a 
\emph{weak finite cover}  in $\qcoh( \mathfrak{X})$ is the compatible 
assignment of a finite \'etale $A$-algebra for each map $\spec A \to
\mathfrak{X}$. In other words, the weak Galois group of $\qcoh(\mathfrak{X})$ is the
\'etale fundamental group of the stack $\mathfrak{X}$. 

If the unit object in $\qcoh(\mathfrak{X})$  is
compact, the weak Galois group and the Galois group of $\qcoh(
\mathfrak{X})$ are the same. One can make this conclusion
if $\mathfrak{X}$ is \emph{tame}, which roughly means that (if $\mathfrak{X}$
is Deligne-Mumford) the orders of the stabilizers are invertible (cf.
\cite[Theorems B and C]{HallRydh}). If this
fails, then the weak Galois group and the Galois group need not be the same,
and one gets a \emph{canonical quotient} of the \'etale fundamental group of an
algebraic stack, the Galois group of $\qcoh(\mathfrak{X})$. 

\begin{example} 
Let $G$ be a finite group, and let $\mathfrak{X} = BG$ over a separably closed field of characteristic
$p$. Then $\qcoh( \mathfrak{X})$ is precisely the $\infty$-category $\md_G(k)$
considered in the previous section. The fundamental group of $\mathfrak{X}$ is
$G$, and the main result of the previous subsection (\Cref{Grepgal}) implies that
the difference between the Galois group of $\qcoh( \mathfrak{X})$ and the \'etale fundamental group of
$\mathfrak{X}$ is precisely the order $p$ elements in the latter. 
\end{example}

Thus, we know that for any map of stacks $B \mathbb{Z}/p \to \mathfrak{X}$ where $p$ is not
invertible on $\mathfrak{X}$, the $\mathbb{Z}/p$
must vanish in the fundamental group of $\qcoh( \mathfrak{X} )$ (but not necessarily
in the fundamental group of $\mathfrak{X}$). When $\mathfrak{X} = BG$ for some
finite group, this is the \emph{only} source of the difference between two
groups. We do not know what the difference looks like in general. 

Next, as an application of these ideas, we include an example that shows that
the Galois group is a sensitive invariant of
an $\e{\infty}$-ring: that is, it can vary as the $\e{\infty}$-structure varies
within a fixed $\e{1}$-structure. 
\begin{example} 
Let $k$ be a separably closed field of characteristic $p > 0$. 
Let $\alpha_{p^2}$ be the usual rank $p^2$ group scheme over $k$  and let
$(\alpha_{p^2})^{\vee}$ be its Cartier dual, which is another infinitesimal
commutative
group scheme. 
Let $\mathbb{Z}/p^2$ be the usual constant group scheme. 
Consider the
associated classifying stacks $B \mathbb{Z}/p^2$ and $B (
\alpha_{p^2})^{\vee}$, and the associated cochain $\e{\infty}$-rings
$C^*(B \mathbb{Z}/p^2; k)$ and $C^*( B (\alpha_{p^2})^{\vee}; k)$ defined as
endomorphisms of the unit of quasi-coherent sheaves. 

Since $\alpha_{p^2}^{\vee}$ is infinitesimal, it follows that the fundamental
group of the stack $B ( \alpha_{p^2})^{\vee}$ is trivial and 
in particular that $\pi_1 \md( C^*( B ( \alpha_{p^2})^{\vee}; k))$ is trivial.
In other words, we are using the geometry of the stack to \emph{bound above}
the possible Galois group for the $\e{\infty}$-ring of cochains with values in
the structure sheaf. 
However, by \Cref{galhG}, we have $\pi_1 \md(
C^*(B\mathbb{Z}/p^2; k)) \simeq \mathbb{Z}/p$. 

Finally, we note that there is a canonical equivalence of $\e{1}$-rings between
the two cochain algebras. In fact, the $k$-linear \emph{abelian} category of
(discrete) quasi-coherent sheaves on $B
\mathbb{Z}/p^2$ can be identified with the category of modules over the group
ring $k[\mathbb{Z}/p^2]$, which is noncanonically isomorphic to the algebra
$k[x]/(x^{p^2})$. The $k$-linear \emph{abelian} category of discrete
quasi-coherent sheaves on $B (
\alpha_{p^2})^{\vee}$ is identified with the category of modules over the ring
of functions on $\alpha_{p^2}$, which is $\mathbb{F}_p[x]/x^{p^2}$. In
particular, we get a $k$-linear \emph{equivalence} between either the
abelian or derived categories of sheaves in
either case. Since the cochain $\e{\infty}$-rings we considered are (as
$\e{1}$-algebras) the endomorphism rings of the object $k$ (which is the same
representation either way), we find that they
are equivalent as $\e{1}$-algebras. 
\end{example}

\section{Invariance properties}

Let $R$ be a (discrete) commutative ring and let $I \subset R$ be a nilpotent
ideal. Then it is a classical result in commutative algebra, the
``topological invariance of the \'etale site,'' \cite[Theorem 8.3, Exp.
I]{sga1}, that the
\'etale site of $\spec R$ and the closed subscheme $\spec R/I$ are equivalent. 
In particular, given an \'etale $R/I$-algebra $\overline{R}'$, it can be lifted
\emph{uniquely} to an \'etale $R$-algebra $R'$ such that $R'
\otimes_R R/I \simeq \overline{R}'$. 

In this section, we will consider analogs
of this result for $\e{\infty}$-rings. 
For example, we will prove: 

\begin{theorem} \label{p=0}
Let $R$ be an $\e{\infty}$-algebra under $\mathbb{Z}$ with 
$p$ nilpotent in $\pi_0 R$. Then the map 
\[ R \to R \otimes_{\mathbb{Z}} \mathbb{Z}/p,  \]
induces an isomorphism on fundamental groups. 
\end{theorem}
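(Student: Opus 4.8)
The plan is to prove that $R \to R \otimes_{\mathbb{Z}} \mathbb{Z}/p$ induces an isomorphism on Galois groups by exhibiting it as a filtered colimit of maps each of which is ``close'' to $R \to \tau_{\leq 0} R$ up to $p$-power torsion, and then invoking the invariance of Galois theory under the filtered-colimit formalism (\Cref{galcolim}) together with \Cref{weakinv}. First I would reduce to the case where $\mathbb{Z}$ is replaced by $\mathbb{Z}/p^k$ for $k \gg 0$: since $p$ is nilpotent in $\pi_0 R$, say $p^k = 0$, the natural map $R \to R \otimes_{\mathbb{Z}} \mathbb{Z}/p^k$ is an equivalence (the cofiber of multiplication by $p^k$ on $R$ is $R \oplus \Sigma R$ after tensoring, but since $p^k$ acts by zero on $\pi_* R$, a Bockstein argument shows $R \simeq R \otimes_{\mathbb{Z}} \mathbb{Z}/p^k$ if $p^k=0$ — more carefully, one works with $R$ as an $\mathbb{Z}/p^k$-algebra from the start). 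So it suffices to compare $R$ and $R \otimes_{\mathbb{Z}/p^k} \mathbb{Z}/p$ for $R$ an $\mathbb{Z}/p^k$-algebra.

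The key geometric input is that $\mathbb{Z}/p^k \to \mathbb{Z}/p$ is a descendable map of $\e{\infty}$-rings: indeed $\mathbb{Z}/p$ admits a finite filtration as a $\mathbb{Z}/p^k$-module with subquotients $\mathbb{Z}/p$, so $\mathbb{Z}/p$ generates all of $\md(\mathbb{Z}/p^k)$ as a thick $\otimes$-ideal (this is the Proposition in the excerpt on nilpotent ideals applied to $I = (p) \subset \mathbb{Z}/p^k$). Hence for any $\mathbb{Z}/p^k$-algebra $R$, the map $R \to R \otimes_{\mathbb{Z}/p^k} \mathbb{Z}/p$ is descendable, and by \Cref{easydesc} we have $\md(R) \simeq \mathrm{Tot}(\md(R/p) \rightrightarrows \md(R/p \otimes_R R/p) \triplearrows \dots)$. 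The iterated tensor powers $R/p \otimes_R \dots \otimes_R R/p$ are each $\e{\infty}$-algebras under $\mathbb{F}_p$, so using \Cref{vankampen} (the van Kampen theorem for finite limits of $2$-rings, applied to the cobar diagram, noting that the relevant categories of dualizable objects assemble into a finite-type totalization computable through the $3$-truncation), we get
\[
\pi_{\leq 1} \md(R) \simeq \varinjlim_{[n] \in \Delta} \pi_{\leq 1} \md\left( R/p^{\otimes_R (n+1)} \right).
\]

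The heart of the argument is then to compare this with the analogous expression for $R/p$ itself: applying the same descent (now $R/p \to R/p \otimes_R R/p$ is the relevant descendable map, which it is, being a base change of a descendable map) gives $\pi_{\leq 1}\md(R/p)$ as the colimit of $\pi_{\leq 1}\md((R/p)^{\otimes_{R/p}(n+1)})$, and one checks $(R/p)^{\otimes_{R/p}(n+1)} \simeq R/p^{\otimes_R(n+1)}$ — both cobar constructions agree because tensoring over $R$ with $R/p$ twice is the same as the relative version, by a standard cofiber-sequence / Bockstein bookkeeping. Thus the two cosimplicial diagrams of Galois groupoids literally coincide, giving $\pi_{\leq 1}\md(R) \simeq \pi_{\leq 1}\md(R/p)$, and one verifies the identification is induced by the natural map $R \to R/p$. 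I expect the main obstacle to be the finiteness issue in applying \Cref{vankampen}: the cobar construction is a diagram over $\Delta$, which is not finite, so one must either pass to the $3$-truncation (justified by the footnote in the excerpt that the $3$-truncation computes the relevant totalization of Galois categories) or argue directly that the inclusion $\clgf(\varprojlim) \subset \varprojlim \clgf$ is an equivalence here because descendability is finitary — this is exactly the content of \Cref{descfinloc} and is the point that needs care. A secondary subtlety is checking that the comparison of cobar constructions is compatible with all the coface and codegeneracy maps, which is routine but must be stated.
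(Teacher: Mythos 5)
Your overall skeleton --- descend along $R \to R \otimes_{\mathbb{Z}} \mathbb{Z}/p$ and compute $\pi_{\leq 1}$ as a colimit over the cobar construction --- matches the paper's, but the two steps you lean on hardest are both false as stated. First, the reduction to $\mathbb{Z}/p^k$-algebras does not work: if $p^k = 0$ in $\pi_0 R$ then $R \otimes_{\mathbb{Z}} \mathbb{Z}/p^k = \mathrm{cofib}(p^k \colon R \to R) \simeq R \oplus \Sigma R$, which is not $R$; and there is no reason for the unit $\mathbb{Z} \to R$ to factor through $\mathbb{Z}/p^k$ as a map of $\e{\infty}$-rings --- the condition $p^k = 0$ in $\pi_0$ does not produce such a factorization (the universal $\e{\infty}$-$\mathbb{Z}$-algebra killing $p^k$ has divided-power homotopy, not $\mathbb{Z}/p^k$). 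This particular gap is repairable: descendability of $R \to R/p$ can be proved directly, since $R/p$ is a perfect $R$-module and is faithful because $p$ is nilpotent in $\pi_0 R$, so \Cref{cptdescent} applies; this is what the paper does, after observing that $R$ is canonically a $\mathbb{Z}_{(p)}$-algebra so that \Cref{invresult} with $A = \mathbb{Z}_{(p)}$ covers the statement.

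The fatal gap is the claimed identification of cobar constructions. You assert $(R/p)^{\otimes_{R/p}(n+1)} \simeq (R/p)^{\otimes_R(n+1)}$; the left side is just $R/p$, while the right side is $R \otimes_{\mathbb{Z}} (\mathbb{Z}/p)^{\otimes_{\mathbb{Z}}(n+1)}$, which has strictly more homotopy (already $\mathbb{Z}/p \otimes_{\mathbb{Z}} \mathbb{Z}/p$ has a nontrivial $\mathrm{Tor}$ class in degree $1$). No Bockstein bookkeeping makes these agree, and if they did the theorem would be a triviality. The entire content of the statement is precisely to control this discrepancy: one must show that the two coface maps $\pi_1 \md(R/p \otimes_R R/p) \rightrightarrows \pi_1\md(R/p)$ in the coequalizer computing $\pi_1\md(R)$ are equal. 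The paper does this by showing that the multiplication $R \otimes_{\mathbb{Z}}(\mathbb{Z}/p \otimes_{\mathbb{Z}}\mathbb{Z}/p) \to R \otimes_{\mathbb{Z}}\mathbb{Z}/p$ induces a surjection of fundamental groups which coequalizes them; that surjectivity rests on \Cref{surjart} (a graded-idempotent argument for connective algebras over a field) applied to $\mathbb{Z}/p \otimes_{\mathbb{Z}} \mathbb{Z}/p \to \mathbb{Z}/p$, whose descendability in turn uses regularity of $\mathbb{Z}_{(p)}$ (so that $\mathbb{Z}/p \otimes_{\mathbb{Z}}\mathbb{Z}/p$ is connective with bounded homotopy groups). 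None of this is present in your proposal, and \Cref{weakinv} and \Cref{galcolim}, which you cite, do not supply it. Your remarks about the $3$-truncation of the cobar diagram and \Cref{descfinloc} are correct but address a side issue.
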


Results such as \Cref{p=0} will be 
extremely useful for us. For example, it will be integral to our computation
 of
the Galois groups of stable module $\infty$-categories over finite groups. 
\Cref{p=0}, which is immediate in the case of $R$ \emph{connective} (thanks
to \Cref{connectivegal} together with the classical topological invariance
result), seems to
be very non-formal in the general case. 

Throughout this section, we assume that our stable homotopy theories are
\emph{connected.}

\subsection{Surjectivity properties}

We begin with some generalities from \cite{sga1}. 
We have the following easy lemma. 
\begin{lemma} 
\label{surj}
Let $G \to H$ be a morphism of profinite groups. Then the following are
equivalent: 
\begin{enumerate}
\item  $G \to H$ is surjective. 
\item For every finite (continuous) $H$-set $S$, $S$ is connected if and only
if the $G$-set obtained from $S$ by restriction is connected. 
\end{enumerate}
\end{lemma}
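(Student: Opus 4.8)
The plan is to prove the two implications separately. The forward direction is essentially immediate, and the content is concentrated in the reverse direction, which rests on the standard fact that a proper closed subgroup of a profinite group is contained in a proper \emph{open} subgroup.

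For $(1)\Rightarrow(2)$: assume $G\to H$ is surjective and let $S$ be a finite continuous $H$-set. Since the image of $G$ is all of $H$, the $G$-orbits on $S$ coincide with the $H$-orbits; hence $S$ is a single orbit under $G$ if and only if it is a single orbit under $H$. In particular connectedness (i.e.\ being a nonempty transitive set) agrees on the two sides, which is more than (2) asks.

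For $(2)\Rightarrow(1)$ I would argue by contraposition: supposing $G\to H$ is not surjective, I would produce a finite continuous $H$-set that is connected as an $H$-set but not as a $G$-set. Let $\overline{G}\subsetneq H$ be the closure of the image of $G$, a proper closed subgroup. The key step is the routine profinite-group fact that $\overline{G}=\bigcap_U \overline{G}U$, the intersection taken over open normal subgroups $U\trianglelefteq H$ (the closure of a subgroup is the intersection of the open subgroups containing it); consequently there exists an open normal $U\trianglelefteq H$ with $K:=\overline{G}U$ a proper open subgroup of $H$ containing the image of $G$. Take $S=H/K$: it is finite because $[H:K]\le[H:U]<\infty$, and the $H$-action is continuous because the normal core of $K$ contains the open subgroup $U$. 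The $H$-action on $S$ is transitive, so $S$ is connected as an $H$-set; but the image of $G$ is contained in $K$, which stabilizes the coset $eK$, so $\{eK\}$ is a nonempty proper $G$-stable subset of $S$ (proper since $K\ne H$ forces $|S|\ge 2$), and therefore $S$ is not connected as a $G$-set. This contradicts (2).

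The only nontrivial point is the reverse direction, and within it the extraction of the proper open subgroup $K=\overline{G}U$ containing the image of $G$; this is standard, so I do not anticipate a real obstacle. Everything else is formal manipulation of orbits and cosets.
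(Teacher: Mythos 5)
Your proof is correct; the paper states this as an ``easy lemma'' and omits the proof entirely, and your argument (orbits coincide in the forward direction; in the reverse direction, enlarge the image to a proper open subgroup $K=\overline{G}U$ and use $H/K$ as the witness) is exactly the standard one the author intends. One tiny simplification: since $G$ is compact and $H$ is Hausdorff, the image of $G$ is already closed, so taking the closure is redundant, though harmless.
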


Let $(\mathcal{C}, \otimes, \mathbf{1})$ be a (connected) stable homotopy
theory. Given a commutative algebra object $A \in \mathcal{C}$, we 
have  functors $\clgf(\mathcal{C}) \to \clgf( \md_{\mathcal{C}}(A))
,\clgw(\mathcal{C}) \to \clgw( \md_{\mathcal{C}}(A))
$ given by
tensoring with $A$. Using the Galois correspondence, this comes from the map of
profinite groups $\pi_1( \md_{\mathcal{C}}(A)) \to \pi_1( \mathcal{C})$ by
restricting continuous representations in finite sets. 
The following is a consequence of \Cref{surj}.
\begin{proposition} \label{surjthing}
Let $A \in \clg(\mathcal{C})$ be a commutative algebra object with the
following property: given any $A' \in \clg(\mathcal{C} )$ which is a weak
finite cover,
the map 
\begin{equation} \label{idemmap} \idem(A') \to \idem(A \otimes A')
\end{equation}
is an isomorphism. Then the induced maps
\[ \pi_1(\md_{\mathcal{C}}(A)) \to \pi_1( \mathcal{C}) , 
\quad \pw( \md_{\mathcal{C}}(A)) \to \pw(\mathcal{C}),
\]
are surjections of profinite groups. 
\end{proposition}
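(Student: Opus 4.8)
The claim is a translation of \Cref{surj} into the categorical setting, so the strategy is to verify the criterion (2) of \Cref{surj} for the map $\pi_1(\md_{\mathcal C}(A)) \to \pi_1(\mathcal C)$ (and its weak analogue). Since $\mathcal C$ is connected, both fundamental groups are genuine profinite groups (defined up to conjugacy), and by the Galois correspondence of \Cref{galequiv} surjectivity of the map on $\pi_1$'s is equivalent to the statement that the dual functor $\clgf(\mathcal C)^{\op} \to \clgf(\md_{\mathcal C}(A))^{\op}$ (respectively with $\clgw$) reflects connectedness of objects: an object $A'$ of the source Galois category is connected if and only if its image $A \otimes A'$ in the target is connected. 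First I would record, using \Cref{connectedigal} (that $\pi_0$ of the Galois groupoid corresponds under Stone duality to the Boolean algebra of subobjects of the terminal object, i.e.\ in $\clg^{\mathrm{cov}}(\mathcal C)^{\op}$ to idempotents), that an object $A'$ of $\clgf(\mathcal C)$ is connected precisely when $\idem(A') = \{0,1\}$, i.e.\ $A'$ has no nontrivial idempotents; likewise $A \otimes A'$ is connected in $\clgf(\md_{\mathcal C}(A))$ iff $\idem(A\otimes A') = \{0,1\}$.

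\textbf{Key steps.} With this reformulation in hand, the argument is short. One direction is automatic: if $A'$ is connected then $\idem(A') = \{0,1\}$, and by hypothesis \eqref{idemmap} is an isomorphism, so $\idem(A \otimes A') = \{0,1\}$ and $A \otimes A'$ is connected. (This direction does not even use the hypothesis; it is the general fact that a functor of Galois categories sends connected objects to connected objects, since such functors preserve finite limits and coproducts, hence the Boolean algebra of subobjects of the terminal object maps compatibly.) The content is the converse: if $A \otimes A'$ is connected, then since $\idem(A') \to \idem(A \otimes A')$ is an isomorphism we get $\idem(A') = \{0,1\}$, hence $A'$ is connected. Thus the functor $\clgf(\mathcal C)^{\op} \to \clgf(\md_{\mathcal C}(A))^{\op}$ reflects connectedness on Galoisable objects, which by \Cref{surj} (applied through the Galois correspondence, using that finite groupoids, equivalently finite $G$-sets, generate $\pro(\fgp)$ under cofiltered limits) forces the map $\pi_1(\md_{\mathcal C}(A)) \to \pi_1(\mathcal C)$ to be surjective. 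The same argument verbatim, replacing $\clgf$ by $\clgw$ throughout and using that tensoring with $A$ also carries weak finite covers to weak finite covers, gives surjectivity of $\pw(\md_{\mathcal C}(A)) \to \pw(\mathcal C)$.

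\textbf{Where the subtlety lies.} There is essentially no analytic obstacle; the one point requiring a little care is matching up the two formulations of \Cref{surj}: criterion (2) there is phrased in terms of connected finite $H$-sets, whereas what we directly control is connectedness of objects of the Galois category. Passing between them requires knowing that for a finite group(oid) the category $\fun(\G, \finset)$ has the property that an object is connected as an object of the Galois category iff the corresponding representation is transitive, and that a general functor of Galois categories is determined (on $G$-torsors, hence everywhere) by its effect on these; this is exactly the combinatorial heart of \Cref{galequiv} and is already available. The other bookkeeping point is that $\idem(-)$ is precisely the functor computing the Boolean algebra of subobjects of the terminal object in $\clg(\mathcal C)^{\op}$ — this follows from the discussion of idempotents and splittings of commutative algebra objects (\Cref{prode} and the surrounding material), since a decomposition $A' \simeq A'[e^{-1}] \times A'[(1-e)^{-1}]$ is the same as a decomposition of the corresponding object of the opposite category into a coproduct. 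Granting these identifications, the proposition is an immediate consequence of \Cref{surj} and \Cref{connectedigal}.
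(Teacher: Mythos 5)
Your proof is correct and takes exactly the paper's route: the paper's entire proof is to invoke \Cref{surj}, and you have simply filled in the translation through the Galois correspondence (connectedness of $A'$ in $\clgf(\mathcal C)^{\op}$ or $\clgw(\mathcal C)^{\op}$ $=$ triviality of $\idem(A')$, and the map on $\pi_1$'s is dual to $A'\mapsto A\otimes A'$). One aside is false, though it does not damage the argument since you also derive that implication from the hypothesis: it is \emph{not} a general fact that a functor of Galois categories sends connected objects to connected objects (restriction $\finset_H\to\finset_G$ along a non-surjective $G\to H$ is a counterexample, and indeed this failure is precisely what \Cref{surj} detects); the Boolean algebra of decompositions of a general object $x$ lives in $\mathcal C_{/x}$, not in $\mathcal C$, and is not preserved by an arbitrary functor in $\galcat$.
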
 

Thus, it will be helpful to have some criteria for when 
maps of the form \eqref{idemmap} are isomorphisms. 

\begin{definition} 
Given $A \in \clg(\mathcal{C})$, we will say that $A$ is \textbf{universally
connected} if for every $A' \in \clg(\mathcal{C})$, the map $\idem(A') \to \idem(A' \otimes A)$ in
\eqref{idemmap} is an isomorphism. 
\end{definition} 

It follows by \Cref{surjthing} that if $A$ is universally connected, then
$\pw (\md_{\mathcal{C}}(A)) \to \pw( {\mathcal{C}})$ and 
$\pi_1( \md_{\mathcal{C}}(A)) \to \pi_1 ({\mathcal{C}})$ are  surjections;
moreover, this holds after any base change in $\clg(\mathcal{C})$. That is, if
$A' \in \clg(\mathcal{C})$, then 
the map $\pi_1 ( \md_{\mathcal{C}}(A \otimes A')) \to \pi_1(
\md_{\mathcal{C}}(A'))$ is a surjection, and similarly for the weak Galois
group.

Note first that if $A$ \emph{admits descent}, then \eqref{idemmap} is always an
injection, since for any $A'$, we can recover $A'$ as the totalization of the
cobar construction on $A$ tensored with $A'$ and since 
$\idem$ commutes with limits (\Cref{idemlimit}). In fact, it thus follows that if $A$ admits
descent, then 
$\idem(A')$ is the equalizer of the two maps $\idem(A \otimes A')
\rightrightarrows \idem(A \otimes A 
\otimes A')$. 
More generally, one can obtain a weaker conclusion under weaker hypotheses: 

\begin{proposition} 
\label{faithinj}
If $A \in \clg(\mathcal{C})$ is faithful (i.e., tensoring with $A$ is a
conservative functor $\mathcal{C} \to \mathcal{C}$), then the map \eqref{idemmap} is
always an injection, for any $A' \in \clg(\mathcal{C})$. 
\end{proposition}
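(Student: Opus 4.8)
The goal is to show that if $A \in \clg(\mathcal{C})$ is faithful, then for every $A' \in \clg(\mathcal{C})$ the map $\idem(A') \to \idem(A' \otimes A)$ is injective. The plan is to reduce to a statement purely about the commutative rings $\pi_0$ of these algebra objects. First I would replace $\mathcal{C}$ by $\md_{\mathcal{C}}(A')$, which is legitimate since faithfulness of $A$ is preserved under base change: tensoring with $A' \otimes A$ over $A'$ is still conservative on $\md_{\mathcal{C}}(A')$. Thus it suffices to treat the case $A' = \mathbf{1}$ and show that $\idem(\mathbf{1}) \to \idem(A)$ is injective whenever $A$ is faithful.

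Now $\idem(\mathbf{1})$ is the Boolean algebra of idempotents of the commutative ring $\pi_0 \mathbf{1} = \pi_0 \mathrm{End}_{\mathcal{C}}(\mathbf{1})$, and $\idem(A)$ is the Boolean algebra of idempotents of $\pi_0 A$; the map in question is induced by the unit ring homomorphism $\pi_0 \mathbf{1} \to \pi_0 A$. Since a map of Boolean algebras is injective as soon as its kernel is trivial, it suffices to show: if $e \in \pi_0 \mathbf{1}$ is an idempotent whose image in $\pi_0 A$ is zero, then $e = 0$. An idempotent $e$ in $\pi_0 \mathbf{1}$ corresponds, as recalled in \Cref{classicaletale} and the surrounding discussion, to a splitting $\mathbf{1} \simeq \mathbf{1}[e^{-1}] \times \mathbf{1}[(1-e)^{-1}]$ in $\clg(\mathcal{C})$, and hence to an idempotent self-map of $\mathbf{1}$ in $\mathcal{C}$, equivalently a decomposition $\mathcal{C} \simeq \md_{\mathcal{C}}(\mathbf{1}[e^{-1}]) \times \md_{\mathcal{C}}(\mathbf{1}[(1-e)^{-1}])$. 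Concretely, multiplication by $e$ is a projection onto a direct summand $M_e$ of $\mathbf{1}$ in $\mathcal{C}$, namely $M_e = \mathbf{1}[e^{-1}] = e \cdot \mathbf{1}$.

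The key point is then that $e$ maps to zero in $\pi_0 A$ exactly when $M_e \otimes A \simeq 0$: indeed, $M_e \otimes A$ is the summand of $A$ cut out by the image of $e$, so it is contractible if and only if that image idempotent vanishes, i.e. if and only if $e \otimes 1_A = 0$ in $\pi_0 A$. Applying faithfulness of $A$ — which says precisely that $M \otimes A \simeq 0$ forces $M \simeq 0$ — we conclude $M_e \simeq 0$, hence $e = 0$ in $\pi_0 \mathbf{1}$. This shows the kernel of $\idem(\mathbf{1}) \to \idem(A)$ is trivial and completes the argument. The only mild subtlety — and the step I'd be most careful about — is making precise the identification of the summand $M_e \otimes A$ of $A$ with the localization $A[\bar e^{-1}]$ and the equivalence ``$\bar e = 0$ in $\pi_0 A$'' $\iff$ ``$A[\bar e^{-1}] \simeq 0$''; but this is exactly the idempotent-splitting formalism of \cite{mayidem} recalled just before \Cref{classicaletale}, applied internally to $\md_{\mathcal{C}}(A')$, so no genuine difficulty arises. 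One should also note that the passage to $\md_{\mathcal{C}}(A')$ requires infinite colimits in $\mathcal{C}$ only insofar as $\md_{\mathcal{C}}(A')$ is formed, which is fine since $\mathcal{C}$ is a stable homotopy theory.
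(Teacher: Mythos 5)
Your proof is correct and is essentially the paper's argument: an idempotent $e$ killed in $\pi_0(A' \otimes A)$ means the summand $A'[e^{-1}]$ becomes contractible after tensoring with $A$, so faithfulness forces $A'[e^{-1}] \simeq 0$ and hence $e = 0$. The preliminary reduction to $A' = \mathbf{1}$ via base change to $\md_{\mathcal{C}}(A')$ is valid but unnecessary; the paper runs the same summand-vanishing argument directly on $A'[e^{-1}]$ as an object of $\mathcal{C}$.
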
 
\begin{proof} 
It suffices to show that if $e \in \idem(A')$ is an idempotent which maps to
zero in $\idem(A \otimes A')$, then $e$ was zero to begin with. The hypothesis
is that $A'[e^{-1}]$ becomes contractible after tensoring with $A$, and since
$A$ is faithful, it was contractible to begin with; that is, $e$ is zero. 
\end{proof} 

We thus obtain the following criterion for universal connectedness. 

\begin{proposition} 
\label{invprop}
Let  $(\mathcal{C}, \otimes, \mathbf{1})$ be a connected stable homotopy theory. Suppose
$A \in \clg(\mathcal{C})$ is an object with the properties: 
\begin{enumerate}
\item $A$ is descendable.  
\item The multiplication map $A \otimes A \to A$ is faithful. 
\end{enumerate}
Then $A$ is universally connected. 
\end{proposition}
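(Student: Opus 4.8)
The goal is to show that, under the two hypotheses (descendability of $A$ and faithfulness of the multiplication map $\mu\colon A \otimes A \to A$), for every $A' \in \clg(\mathcal{C})$ the map $\idem(A') \to \idem(A \otimes A')$ is an isomorphism. Injectivity is already free: a descendable algebra is faithful by \Cref{faithful}, so \Cref{faithinj} applies and gives injectivity for any $A'$. So the entire content is \emph{surjectivity} of $\idem(A') \to \idem(A \otimes A')$.

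The plan is to exploit descent together with \Cref{idemlimit}. Since $A$ is descendable, \Cref{constpro} tells us that $A'$ is recovered as the totalization of the cobar construction $\cb(A) \otimes A'$, and since $A \mapsto \idem(A)$ commutes with limits (\Cref{idemlimit}), we get that $\idem(A')$ is the equalizer of the two maps
\[ \idem(A \otimes A') \rightrightarrows \idem(A \otimes A \otimes A'). \]
Thus, given an idempotent $e \in \idem(A \otimes A')$, to show it comes from $\idem(A')$ it suffices to show its two images in $\idem(A \otimes A \otimes A')$ agree. The two images are obtained by applying the two coface maps $A \otimes A' \to A \otimes A \otimes A'$, namely $d_0 = $ (insert unit in first slot) and $d_1 = $ (insert unit in second slot, i.e.\ really the coface landing in the second tensor factor of $A$). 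The key observation is that both of these become equal after composing with the map $A \otimes A \otimes A' \to A \otimes A'$ induced by the multiplication $\mu$ on the two $A$-factors: concretely, $\mu \circ d_0 = \mathrm{id} = \mu \circ d_1$ as maps $A \otimes A' \to A \otimes A'$. Hence the two idempotents $d_0(e), d_1(e) \in \idem(A \otimes A \otimes A')$ become equal after applying $\idem$ of the map $A \otimes A \otimes A' \xrightarrow{\mu \otimes 1} A \otimes A'$.

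Now I would bring in the second hypothesis. The map $\mu \otimes 1\colon A \otimes A \otimes A' \to A \otimes A'$ is, up to rewriting, obtained by base-changing the multiplication map $\mu\colon A \otimes A \to A$ along $\cdot \otimes A'$ — or, more carefully, it is the map of $A\otimes A'$-algebras obtained from $\mu\colon A \otimes A \to A$ by tensoring with $A'$ over $\mathbf{1}$, which makes $A \otimes A \otimes A'$ into an algebra over $A \otimes A'$ and $\mu \otimes 1$ an $A\otimes A'$-algebra map. Because $\mu$ is faithful by assumption, the functor $\cdot \otimes_{A \otimes A} (\text{via } \mu)$ — equivalently tensoring with $A$ viewed as an $(A \otimes A)$-algebra through $\mu$ — is conservative; after base change along $\cdot \otimes A'$ it remains conservative (tensoring a conservative base change with a further algebra object stays conservative, by the argument as in the proof of \Cref{faithful}: the collection of $M$ killed by the relevant tensor is a thick $\otimes$-ideal, and if it contains $A$ it contains $\mathbf{1}$). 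Thus $\mu \otimes 1$ is a faithful morphism of commutative algebra objects in $\md_{\mathcal{C}}(A \otimes A')$, so by \Cref{faithinj} the map $\idem(A \otimes A \otimes A') \to \idem(A \otimes A')$ — wait, that is the wrong direction; what I actually need is that $\idem(A \otimes A') \to \idem(A \otimes A \otimes A')$, induced by $\mu \otimes 1$ going the \emph{other} way — no: $\mu \otimes 1$ is a ring map $A \otimes A \otimes A' \to A \otimes A'$, inducing $\idem(A \otimes A \otimes A') \to \idem(A \otimes A')$; faithfulness of this ring map gives, by \Cref{faithinj} applied inside $\md_{\mathcal{C}}(A \otimes A \otimes A')$, that $\idem(A\otimes A\otimes A') \to \idem(A\otimes A')$ is \emph{injective}. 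Since $d_0(e)$ and $d_1(e)$ have the same image in $\idem(A \otimes A')$ under this injective map, we conclude $d_0(e) = d_1(e)$, hence $e$ lies in the equalizer, hence $e$ comes from $\idem(A')$.

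\textbf{Main obstacle.} The delicate point is the bookkeeping in the previous paragraph: correctly identifying which coface maps are in play, checking that $\mu \otimes 1$ (or rather the appropriate version of it acting on the cobar terms) really is both (i) a map under which $d_0(e)$ and $d_1(e)$ have equal image and (ii) a faithful ring map so that \Cref{faithinj} applies to give injectivity on idempotents. Part (i) is a simplicial-identity check — $\mu$ is a retraction of both coface maps $\mathbf{1} \otimes A \rightrightarrows A \otimes A$ composed appropriately — and part (ii) is the observation that faithfulness is preserved under the base changes involved, via the standard thick-$\otimes$-ideal argument. Once these two are pinned down, the conclusion is immediate from the equalizer description of $\idem(A')$ coming from descent and \Cref{idemlimit}. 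I would write the coface maps out explicitly on $A \otimes A \otimes A'$ to avoid sign/slot confusion, and state the faithfulness-stability lemma as a one-line consequence of the proof technique of \Cref{faithful}.
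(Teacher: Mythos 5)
Your proof is correct and follows essentially the same route as the paper's: descendability plus the fact that $\idem$ commutes with limits gives the equalizer diagram $\idem(A') \to \idem(A\otimes A')\rightrightarrows \idem(A\otimes A\otimes A')$, and the two coface maps are shown to coincide because they admit a common retraction induced by the multiplication $A\otimes A\to A$, whose induced map on idempotents is injective by faithfulness and \Cref{faithinj}. The only cosmetic difference is that the paper applies \Cref{faithinj} directly in $\md_{\mathcal{C}}(A\otimes A)$ rather than first base-changing the multiplication map along $\otimes A'$, but this is the same argument.
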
 

\begin{proof} 
We will show that if $B \in \clg(\mathcal{C})$ is {arbitrary},
then the map $\idem(B) \to \idem(A \otimes B)$ is an \emph{isomorphism}. 
Since $A$ is descendable, we know  that there is an equalizer diagram
\[ \idem(B) \to \idem(A \otimes B) \rightrightarrows \idem(A \otimes A
\otimes B).  \]
To prove the proposition, it suffices to show that the two maps $\idem(A \otimes B)
\rightrightarrows \idem(A \otimes A \otimes B)$ are equal. 

However, these
maps become equal after composing with the map $\idem(A \otimes A \otimes
B) \to \idem(A \otimes B)$ induced by the multiplication $A \otimes A \to
A$. 
Since $A \otimes A \to A$ is faithful, the map 
$\idem(A \otimes A \otimes
B) \to \idem(A \otimes B)$ is injective by \Cref{faithinj}, which thus proves the result. 
\end{proof}

\Cref{invprop} is thus almost a tautology, although the basic idea will be quite useful for us. 
Unfortunately, the hypotheses are rather restrictive. If $A$ is a local
artinian ring and $k$ the residue field, then the map $A \to k$ admits descent.
However, the multiplication map $k \otimes_A k \to k$ need not be faithful:
$k \otimes_A k$  has always infinitely many homotopy
groups (unless $A = k$ itself). Nonetheless, we can prove:

\begin{proposition} \label{surjart} Let $k$ be a field. 
Let $A$ be a connective $\e{\infty}$-ring with a map $A \to k$
inducing a surjection on $\pi_0$. Suppose $A \to k$ admits descent. Then
$A \to k$ is universally connected. 
\end{proposition}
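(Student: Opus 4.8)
The plan is to work in $\mathcal{C} = \md(A)$, where $\clg(\md(A)) = \clg_{A/}$ is the category of $\e{\infty}$-algebras under $A$, and to show directly that for \emph{every} $B \in \clg_{A/}$ the map $\idem(B) \to \idem(B \otimes_A k)$ is an isomorphism. Injectivity is free: since $A \to k$ admits descent, $k$ is faithful by \Cref{faithful}, so \Cref{faithinj} applies. For surjectivity I would run the cobar argument. By \Cref{constpro} the hypothesis that $A \to k$ admits descent says exactly that $\cbaug(k)$ is a limit diagram; tensoring with $B$ (as in the proof of \Cref{constpro}, or \Cref{easydesc}) shows $B \simeq \mathrm{Tot}\big(B\otimes_A k \rightrightarrows B\otimes_A k\otimes_A k \triplearrows \cdots\big)$, the cobar of $B \to B\otimes_A k$. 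Applying $\idem$, which commutes with limits by \Cref{idemlimit}, and using the equalizer formula for the limit of a cosimplicial set, gives
\[ \idem(B) = \left\{\, e \in \idem(B\otimes_A k) : d_0(e) = d_1(e) \text{ in } \idem(B\otimes_A k\otimes_A k)\,\right\}. \]
So it suffices to prove $d_0(e) = d_1(e)$ for every idempotent $e$ of $C := B\otimes_A k$.

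The heart of the matter is to analyse the ring $R := \pi_0(C\otimes_B C) = \pi_0(B\otimes_A k\otimes_A k)$ together with the multiplication (codegeneracy) $m\colon C\otimes_B C \to C$, a ring map with $m\circ d_0 = m\circ d_1 = \mathrm{id}_C$. Let $M = \mathrm{fib}(\mu)$ where $\mu\colon k\otimes_A k \to k$ is the multiplication. Since $\pi_0 A \to k$ is surjective we have $\pi_0(k\otimes_A k) = k\otimes_{\pi_0 A} k = k$, so $\mu$ is an isomorphism on $\pi_0$; and since $A$, hence $k\otimes_A k$, is connective, $M$ is $1$-connective. As $k$ is a field, $M \simeq \bigoplus_{i\geq 1}\Sigma^i V_i$ as a $k$-module, $V_i := \pi_i M$. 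Now $\mathrm{fib}(m) = B\otimes_A M \simeq C\otimes_k M \simeq \bigoplus_{i\geq 1}\Sigma^i(C\otimes_k V_i)$, and because $m$ is split by the ring map $d_0$, we get $R \simeq \pi_0 C \oplus J$ where $J := \pi_0(B\otimes_A M) = \bigoplus_{i\geq 1} J_i$, $J_i := (\pi_{-i}C)\otimes_k V_i$, and $J = \ker(m_*\colon R \to \pi_0 C)$ is an ideal. I claim $J^n \subseteq \bigoplus_{i\geq n} J_i$. Indeed $J^n$ is contained in the image of $\pi_0$ of the $n$-fold multiplication $(B\otimes_A M)^{\otimes_{B\otimes_A k\otimes_A k} n} \simeq B\otimes_A\!\big(M^{\otimes_{k\otimes_A k} n}\big) \to B\otimes_A M$; here $M^{\otimes_{k\otimes_A k} n}$ is $n$-connective over the connective ring $k\otimes_A k$, hence as a $k$-module lives in shifts $\geq n$, and the multiplication map to $M$ is $k$-linear, hence graded; tensoring with $C$ over $k$ and taking $\pi_0$ then lands the image in $\bigoplus_{i\geq n} J_i$.

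From this, $J$ contains no nonzero idempotent: if $\varepsilon \in J$ satisfies $\varepsilon^2 = \varepsilon$ then $\varepsilon = \varepsilon^{2^k} \in J^{2^k} \subseteq \bigoplus_{i\geq 2^k} J_i$ for all $k$, and since any element of the direct sum $J$ has finite support, $\varepsilon = 0$. Consequently $m_*\colon \idem(R) \to \idem(\pi_0 C)$ is injective: if two idempotents of $R$ agree modulo $J$, their difference $x$ satisfies $x^3 = x$ while $x^2 \in J$ is idempotent, hence $x^2 = 0$ and $x = x^3 = 0$. Since $m_*(d_0(e)) = m_*(d_1(e)) = e$, we get $d_0(e) = d_1(e)$, as needed, and the proof is complete. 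The step demanding the most care is the verification that multiplication on $J$ strictly raises the grading, i.e.\ $J^n \subseteq \bigoplus_{i\geq n} J_i$; this is precisely where the two hypotheses are used essentially — that $k$ is a field (to split $M$ as a \emph{graded} $k$-module, so that $k$-linear maps between such are graded) and that $A$ is connective (to place $M^{\otimes n}$ in shifts $\geq n$).
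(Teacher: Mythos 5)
Your proof is correct and follows essentially the same route as the paper's: reduce via descent and \Cref{idemlimit} to showing the two coface maps $\idem(B\otimes_A k)\rightrightarrows \idem(B\otimes_A k\otimes_A k)$ agree, use the retraction $m$, and observe that the augmentation ideal of $\pi_0(B\otimes_A k\otimes_A k)$ over $\pi_0(B\otimes_A k)$ — which is exactly your $J=\bigoplus_{i\geq 1}\pi_{-i}(C)\otimes_k\pi_i(k\otimes_A k)$ — contains no nonzero idempotents because multiplication raises the internal grading, the grading existing precisely because $k$ is a field, $A$ is connective, and $\pi_0A\to k$ is surjective. The only cosmetic difference is that the paper packages the final step as bijectivity of one coface map on idempotents (via the K\"unneth isomorphism and a lemma on connected graded algebras), whereas you phrase it as injectivity of $m_*$ on idempotents; given the retraction these are interchangeable, and the underlying computation is identical.
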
 

\begin{proof} 
Once again, we show that for any $A' \in \clg_{A/}$, the map $A'
\to A' \otimes_A k$ induces an isomorphism on idempotents. 
Since $A \to k$ is descendable, it suffices to show that the two maps
\[ \idem(A' \otimes_A k) \rightrightarrows \idem(A' \otimes_A k \otimes_A k)  \]
are the same. 
For this, we know that the two maps become the same after composition with the
multiplication map $ A ' \otimes _A (k \otimes_A k) \to A' \otimes_A k$. To
show that the two maps are the same, it will suffice to show that they are
\emph{isomorphisms.} In other words, since we have a commutative diagram
\begin{equation} \label{idemdiag} \idem( A ' \otimes_A k) \rightrightarrows \idem(A' \otimes_A k \otimes_A
k) \to \idem(A' \otimes_A k) , \end{equation}
where the composite arrow is the identity, it suffices to show that
\emph{either one} of the two maps $\idem(A' \otimes_A k) \rightrightarrows
\idem(A' \otimes_A k \otimes_A k)$ is an isomorphism.

More generally, we claim that for any $k$-algebra $R$, the map
\[ R   \to R \otimes_k (k \otimes_A k),  \]
induced by the map of $k$-algebras $k \to k \otimes_A k$, induces
an \emph{isomorphism}
on idempotents. (In \eqref{idemdiag}, this is the map that we get from free,
without using the fact that $A' \otimes_A k$ was the base-change of an
$A$-algebra.) 
Since we have a K\"unneth isomorphism, this follows from the following
purely algebraic lemma.

\begin{lemma} 
Let $R_*$ be a graded-commutative $k$-algebra and let $R'_*$ be a
graded-commutative \emph{connected} $k$-algebra: $R'_0 \simeq k$ and $R'_i = 0$
for $i < 0$. Then the natural map from idempotents in $R_*$ to idempotents in the graded
tensor product $R_*
\otimes_k R'_*$ is an isomorphism. 
\end{lemma} 
\begin{proof} 
We have a map
\[ \idem(R_*) \to \idem(R_* \otimes_k R'_*),  \]
which is injective, since the map $k \to R'_*$ admits a section in the category
of graded-commutative $k$-algebras. But the ``reduction'' map $\idem(R_* \otimes_k R'_*) \to
\idem(R_*)$ is also injective. In fact, since idempotents form a Boolean
algebra, it suffices to show that an idempotent
in $R_* \otimes_k R'_*$ that maps to zero in $R_*$ must have been zero to begin
with. However, such an idempotent would belong to the ideal $R_* \otimes_k R'_{>0}$,
which easily forces it to be zero. 
\end{proof} 

\end{proof}

\begin{example} 
\Cref{surjart} applies in the setting of an artinian ring mapping to its residue
field. However, we also know that the map $A \to A/\mathfrak{m}$ for $A$
artinian and $\mathfrak{m}$ a maximal ideal can be obtained as a finite
composition of square-zero extensions, so we could also appeal to
\Cref{squarezerogeneral} below. 
\end{example}

\subsection{Square-zero extensions}

Given the classical topological invariance of the \'etale site, the
following is not so surprising. 

\begin{proposition} 
\label{trivsqzero}
If $A$ is an $\e{\infty}$-ring and $M$ an $A$-module, then the natural map $A
\to A \oplus M$ (where $A \oplus M$ denotes the trivial ``square zero''
extension of $A$ by $M$), induces an isomorphism on fundamental groups. 
\end{proposition}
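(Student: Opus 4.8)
The map $A \to A \oplus M$ admits a retraction $A \oplus M \to A$ (the projection killing $M$), which is itself a map of $\e{\infty}$-rings. The plan is to exploit this retraction to identify both Galois categories $\clgf(\md(A))$ and $\clgf(\md(A \oplus M))$ with one another, and similarly for the weak version. First I would observe that base change along $A \to A \oplus M$ gives a functor $\clgf(\md(A)) \to \clgf(\md(A \oplus M))$, and that the retraction gives a functor in the other direction; composing one way recovers the identity on $\clgf(\md(A))$. So the key content is that base change along $A \to A \oplus M$ is \emph{essentially surjective} (fully faithfulness then follows, since the composite both ways is, up to the already-available full faithfulness, forced to be an equivalence; alternatively one checks directly that $\hom_{\clg_{A/}}(-, -)$ is unchanged). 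Equivalently, by \Cref{torsorenough} it suffices to work with $G$-torsors: I must show that every faithful $G$-Galois extension (or weak finite cover) $B$ of $A \oplus M$ is, after base change, pulled back from a $G$-Galois extension of $A$.

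The mechanism I would use is \emph{universal connectedness} in the sense of the previous subsection, via \Cref{surjthing} and the surrounding results. The retraction $A \oplus M \to A$ makes $A$ an $(A \oplus M)$-algebra, and the plan is to show that $A$, viewed as an object of $\clg(\md(A \oplus M))$, is universally connected — i.e.\ for every $C \in \clg(\md(A \oplus M))$, the map $\idem(C) \to \idem(C \otimes_{A \oplus M} A)$ is an isomorphism. This is exactly the content of \Cref{sq0rem} / \Cref{idemlimit}: the kernel of $\pi_0(A \oplus M) \to \pi_0 A$ is a square-zero ideal (namely $\pi_0 M$ if $M$ is connective, and more generally one reduces to this by the Postnikov/connectivity analysis, since $C \otimes_{A \oplus M} A$ differs from $C$ by something built from square-zero pieces), and square-zero ideals do not affect idempotents. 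Since $A$ is also a \emph{retract} of $A \oplus M$ as an $(A\oplus M)$-module, tensoring with $A$ over $A \oplus M$ is conservative, so $A$ is faithful; combined with universal connectedness this gives via \Cref{surjthing} that $\pi_1(\md(A)) \to \pi_1(\md(A \oplus M))$ and the weak analog are \emph{surjections}. The retraction $A \oplus M \to A$ gives maps the other way whose composites with these are the identity, so the surjections are in fact isomorphisms — this handles the group-level statement once one knows the Galois categories are connected, which reduces to $\pi_0(A)$ having no new idempotents over $\pi_0(A\oplus M)$, again by the square-zero observation.

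More carefully, to get the statement as an equivalence of Galois \emph{categories} (not just groups) I would argue as follows. Given a weak finite cover $B$ of $A \oplus M$, choose $A' \in \clgw(\md(A \oplus M))$ splitting it into mixed elementary form. Base-changing everything along $A \oplus M \to A$ produces a weak finite cover $B \otimes_{A\oplus M} A$ of $A$ together with a splitting. The claim is that $B$ is recovered from $B \otimes_{A \oplus M} A$ by base change back up along $A \to A \oplus M$; here is where I would use that the idempotents defining the mixed-elementary decomposition are unchanged under $\otimes_{A\oplus M} A$ (square-zero invariance of idempotents, \Cref{idemlimit} and \Cref{sq0rem}), together with the rigidity of étale-like objects — a finite cover is a $0$-cotruncated object of $\clg$, so the space of lifts is discrete and a lift, once its homotopy groups are pinned down, is unique. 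In effect one is running the classical ``topological invariance of the étale site'' argument one nilpotent layer at a time, using that finite covers behave like classically étale algebras locally.

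\textbf{Main obstacle.} The delicate point is that $M$ need not be connective, so $A \oplus M$ is not obtained from $A$ by an honest nilpotent thickening and one cannot cite \Cref{connectivegal} directly. The hard part will be controlling, for a general (possibly highly nonconnective) module $M$, the passage from $\idem$-level statements to an actual equivalence of the categories of finite covers — i.e.\ promoting ``no new idempotents after any base change'' to ``every finite cover descends.'' I expect this to go through because finite covers are locally split (mixed elementary form) and the splitting data is entirely captured by idempotents in a descendable algebra, on which base change along $A \to A \oplus M$ acts invertibly; but making this precise requires carefully tracking the splitting $A'$ and checking it too can be taken to descend, which is the step most likely to need genuine work rather than formal nonsense. (For the weak version one must additionally check that $A'$ can be chosen dualizable so that $\otimes A'$ commutes with limits, exactly as in the proof of \Cref{basicgal}.)
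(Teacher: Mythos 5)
Your overall architecture — use the retraction $A \to A\oplus M \to A$ to get one direction for free, then prove that the augmentation $A \oplus M \to A$ is universally connected and invoke \Cref{surjthing} to get surjectivity of $\pi_1\md(A) \to \pi_1\md(A\oplus M)$ — is coherent, and it is genuinely different from the paper's argument. The paper instead observes that the trivial square-zero extension is a \emph{two-fold loop object} in $\clg_{A//A}$, namely $A\oplus M \simeq \Omega^2(A\oplus \Sigma^2 M) \simeq A\times_{A\oplus \Sigma M} A$, embeds $\md(A\oplus M)$ into the pullback $\md(A)\times_{\md(A\oplus\Sigma M)}\md(A)$, and applies van Kampen (\Cref{vk2}); the amalgamated product collapses because the augmentation of $A\oplus\Sigma M$ makes $\pi_1\md(A\oplus\Sigma M)\to\pi_1\md(A)$ split surjective. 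The second delooping is the essential input there.

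The gap in your version is the key step itself: the claim that $\idem(C)\to\idem(C\otimes_{A\oplus M}A)$ is an isomorphism for every weak finite cover $C$ of $A\oplus M$. Your justification — the kernel of $\pi_0(A\oplus M)\to\pi_0 A$ is square-zero, plus a Postnikov reduction — does not apply, because $C\otimes_{A\oplus M}A$ is a derived tensor product along the augmentation, not a quotient of $C$ by a square-zero ideal; its $\pi_0$ picks up contributions from the bar construction (roughly, from $\mathrm{Sym}^*_A(\Sigma M)$), including from negative homotopy groups of $M$, and nothing in your argument rules out new idempotents appearing there. The decisive objection is that your argument uses only (i) the existence of an $\e{\infty}$ retraction and (ii) square-zero behavior at the level of homotopy rings — both of which hold verbatim for the augmentation $C^*(S^1;\mathbb{F}_p)\to\mathbb{F}_p$, whose augmentation ideal is $\Sigma^{-1}\mathbb{F}_p$ with null multiplication and zero contribution to $\pi_0$. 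Yet there the conclusion fails: by \Cref{padicgalois} the Galois group of $C^*(S^1;\mathbb{F}_p)$ contains an extra $\widehat{\mathbb{Z}}_p$, and concretely the $\mathbb{Z}/p$-Galois cover coming from the degree-$p$ map $S^1\to S^1$ is connected but base-changes along the augmentation to $\prod_{\mathbb{Z}/p}\mathbb{F}_p$, so the idempotent map is not surjective. This is exactly why the paper remarks that ``a one-fold delooping is insufficient, because of the example of cochains on $S^1$.'' Since your argument never invokes the second delooping (or any structure distinguishing $A\oplus M$ from a general augmented algebra with square-zero homotopy ring), it would equally ``prove'' this false statement; the universal connectedness you need is essentially equivalent to the proposition and requires a real argument. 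A smaller error: $A$ is not a retract of $A\oplus M$ as an $(A\oplus M)$-module (already false for $k[\epsilon]/\epsilon^2\to k$); conservativity of $-\otimes_{A\oplus M}A$ does hold, but via the cofiber sequence $M\to A\oplus M\to A$ of $(A\oplus M)$-modules together with the fact that $M$, as an $(A\oplus M)$-module, is induced from $A$ along the augmentation.
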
 

This will follow from the following more general statement. 

\begin{proposition} 
Let $R$ be an $\e{\infty}$-ring with no nontrivial idempotents. Let $X$ be a
two-fold loop object in the $\infty$-category $\clg_{R//R}$ of $\e{\infty}$-$R$-algebras
over $R$. Then the map $R \to X$ induces an isomorphism on fundamental groups. 
\end{proposition}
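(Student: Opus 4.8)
The plan is to reduce the statement about a general two-fold loop object $X \in \clg_{R//R}$ to the trivial square-zero case, and to handle the latter by a self-referential ``bootstrap.'' Write $X = \Omega^2_{\clg_{R//R}}(Y)$ for some $Y \in \clg_{R//R}$, so that $X \simeq R \times_Y R \times_{(R\times_Y R)} R$, or more conveniently $X \simeq R \times_{Z} R$ where $Z = \Omega_{\clg_{R//R}}(Y)$ is itself a loop object, hence a group object, in $\clg_{R//R}$. The key point I want to exploit is that any loop object in $\clg_{R//R}$, being an abelian group object in an $\infty$-category of $\e{\infty}$-algebras under and over $R$, is a \emph{trivial square-zero extension}: concretely $Z \simeq R \oplus M$ for $M = \operatorname{fib}(Z \to R)$ with the zero multiplication (this is a standard identification, e.g.\ via stabilization of $\clg_{R/}$; $M$ is a shift of the relevant (co)tangent complex but I will not need that). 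Granting this, $X \simeq R \times_{R\oplus M} R \simeq R \oplus \Omega M$ is \emph{again} a trivial square-zero extension of $R$. So the entire content reduces to \Cref{trivsqzero}: the map $R \to R \oplus N$ induces an isomorphism on fundamental groups for any $R$-module $N$ (with $R$ idempotent-free so the ``group'' makes sense).

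\textbf{Proving \Cref{trivsqzero}.}
For the trivial square-zero extension I would proceed in two halves. \emph{Surjectivity} of $\pi_1(\md(R\oplus N)) \to \pi_1(\md(R))$: the composite $R \to R\oplus N \to R$ (projection) is the identity, so $R \to R\oplus N$ admits a retraction, hence is faithful and in fact universally connected is not quite immediate — but the retraction already shows that tensoring with $R\oplus N$ over $R$ is conservative and that $\idem(A') \to \idem(A' \otimes_R (R\oplus N))$ is a \emph{split} injection for every $A'$; combined with faithfulness (\Cref{faithinj}) it is an isomorphism, so $R\oplus N$ is universally connected and \Cref{surjthing} gives surjectivity on both $\pi_1$ and $\pw$. \emph{Full faithfulness / injectivity}: here I would show directly that every finite cover $B \in \clgf(\md(R\oplus N))$ is pulled back from a finite cover of $R$. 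Given such a $B$, its base change $B \otimes_{R\oplus N} R$ along the augmentation $R \oplus N \to R$ is a finite cover $\bar B$ of $R$; I claim the natural map $\bar B \otimes_R (R\oplus N) \to B$ is an equivalence. Since finite covers are dualizable and $0$-cotruncated, and both sides are $\e{\infty}$-algebras, it suffices to check this after a faithful base change that trivializes the cover, i.e.\ after tensoring with the descendable $G$-torsor that splits $\bar B$ (\Cref{splitbytorsor}); locally both sides become $\prod_S (R\oplus N)$ and the map is visibly an equivalence. The obstruction-theoretic input that a compatible family of splittings on the cobar tower glues is exactly the kind of argument already used in \Cref{obstruct} and the connective-invariance results, specialized to the situation where the relevant homotopy groups are finite \'etale over $\pi_0 R \simeq \pi_0(R\oplus N)$, so all higher obstruction groups vanish.

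\textbf{Main obstacle.}
The genuinely non-formal point — and where I would spend the most care — is the claim that a finite cover of $R\oplus N$ is determined by its restriction to $R$, \emph{without} assuming $N$ is connective or that $R$ is even periodic. One has no K\"unneth isomorphism and no control over $\pi_*(R\oplus N)$ as a ring, so the naive ``$\pi_0$ is insensitive to the square-zero piece'' argument (which is how \Cref{connectivegal} handles the connective case) is unavailable. I would route around this using the comparison of $\md(R\oplus N)$ with local systems / the cobar tower of the descendable cover $R \to \bar B$ base-changed up, reducing the lifting problem to the trivialized situation where the algebra structure is a finite product of copies of the unit and hence rigid; the uniqueness of the $G$-action on a faithful Galois extension (Theorem 11.1.1 of \cite{rognes}, already invoked in the proof of \Cref{galcolim}) then upgrades the homotopy-level data to a genuine equivalence of $G$-Galois extensions. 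Assembling these pieces shows $\clgf(\md(R)) \to \clgf(\md(R\oplus N))$ is an equivalence of Galois categories, equivalently $\pi_1$ is an isomorphism, and the reduction in the first paragraph then yields the statement for arbitrary two-fold loop objects $X$.
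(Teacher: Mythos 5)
Your reduction hinges on the claim that every loop object in $\clg_{R//R}$ is a trivial square-zero extension $R \oplus M$, and this claim is false; it is contradicted by the very example the paper uses to explain why a single delooping does not suffice. Take $R = \mathbb{F}_p$ and $Y = \mathbb{F}_p \times \mathbb{F}_p \in \clg_{R//R}$ (unit the diagonal, augmentation a projection). Then $\Omega Y = \mathbb{F}_p \times_{\mathbb{F}_p\times\mathbb{F}_p}\mathbb{F}_p \simeq C^*(S^1;\mathbb{F}_p)$ is a one-fold loop object of exactly the kind your lemma addresses, but it is not equivalent to the trivial square-zero extension $\mathbb{F}_p\oplus\Sigma^{-1}\mathbb{F}_p$: by \Cref{padicgalois} its Galois group is $\widehat{\mathbb{Z}}_p\times\mathrm{Gal}(\overline{\mathbb{F}}_p/\mathbb{F}_p)$, whereas \Cref{trivsqzero} forces the Galois group of any trivial square-zero extension of $\mathbb{F}_p$ to be $\mathrm{Gal}(\overline{\mathbb{F}}_p/\mathbb{F}_p)$. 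The source of the error is the identification you invoke: the equivalence between $R$-modules and group objects in $\clg_{R//R}$ holds at the level of the stabilization, i.e., for \emph{infinite} loop objects ($\Omega^\infty$ of a spectrum object is a split square-zero extension), not for $n$-fold loop objects with $n$ finite; the Beck-module statement you have in mind is a $1$-categorical fact that does not survive to the $\infty$-categorical setting. Since the identification already fails for $Z=\Omega Y$, your conclusion that $X=R\times_Z R\simeq R\oplus\Omega M$ is unsupported (and false in general, e.g.\ for $C^*(S^2;\mathbb{F}_p)=\Omega^2(\mathbb{F}_p\times\mathbb{F}_p)$).

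A second problem is that the base case of your reduction is not established either: the paper does not prove \Cref{trivsqzero} independently — it \emph{deduces} it from the present proposition, since $R\oplus M = \Omega^2(R\oplus\Sigma^2 M)$ — so you would need a genuinely new argument there, and the paragraph you label your main obstacle correctly identifies the hard point (descending a finite cover of $R\oplus N$ to $R$ with no connectivity or K\"unneth control) but does not resolve it. For comparison, the paper's proof never identifies $X$ at all: the section $R\to X\to R$ gives one surjection, and for the other it applies van Kampen (\Cref{vankampen}, \Cref{vk2}) to the pullback square $X\simeq R\times_{\Omega Y}R$, yielding $\pi_1\bigl(\md(R)\times_{\md(\Omega Y)}\md(R)\bigr)\simeq\pi_1\md(R)\sqcup_{\pi_1\md(\Omega Y)}\pi_1\md(R)$ together with a surjection from this pushout onto $\pi_1\md(X)$. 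The first delooping is there precisely so that $\Omega Y$ has no nontrivial idempotents (\Cref{idemlimit}): the pushout is then taken over a \emph{connected} groupoid along the split surjection $\pi_1\md(\Omega Y)\twoheadrightarrow\pi_1\md(R)$, so one copy of $\pi_1\md(R)$ already surjects onto it. In the $C^*(S^1)$ example it is exactly the disconnectedness of $\pi_{\leq 1}\md(\mathbb{F}_p\times\mathbb{F}_p)$ that produces the extra $\widehat{\mathbb{Z}}_p$, which is the phenomenon your proposed lemma would erase.
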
 
Note that a one-fold delooping is insufficient, because of the example of cochains on
$S^1$ (cf. \Cref{padicgalois}). 
\begin{proof} 
In view of \Cref{idemlimit}, we see that $X$ has no nontrivial idempotents. 
Next, observe that we have maps $R \to X \to R$ by assumption, so that, at the
level of fundamental groups, we get a section of the map $\pi_1( \md(X)) \to
\pi_1( \md(R))$. In particular, the map $\pi_1( \md(X)) \to \pi_1( \md(R))$ is
surjective. We thus need to show that the map $\pi_1( \md(R)) \to \pi_1( \md(X))$
(coming from $X \to R$) is also surjective, which we can do via
\Cref{surjthing}. 

To see that, suppose $X \simeq \Omega^2 Y$ where $Y$ is an object in
$\clg_{R//R}$. We want to show that the fundamental group of $\md(X)$ is surjected
onto by that of $\md(R)$. Consider the pull-back diagram
of $\e{\infty}$-algebras,
\[ \xymatrix{
\Omega^2 Y \ar[d] \ar[r] &  R \ar[d]  \\
R \ar[r] &  \Omega Y
}.\]
Using \Cref{idemlimit}  again, 
we find that $\Omega Y$ has no nontrivial idempotents. Therefore,
we have  maps 
\[ \pi_1(\md(R)) \to \pi_1( \md(R) \times_{\md({\Omega Y})}
\md(R)) )
\twoheadrightarrow \pi_1( \md({\Omega^2 Y})) . \]
The second map is a surjection since it comes from a fully faithful inclusion
of stable homotopy theories
$\md( \Omega^2 Y) \subset \md(R) \times_{\md( \Omega Y)} \md(R)$. Since $
\Omega Y$ has no nontrivial idempotents, 
$\pi_1 \md( \Omega Y)$ receives a map from $\pi_1 \md(R)$ and we have
$\pi_1(   \md(R) \times_{\md({\Omega Y})}
\md(R)) ) \simeq \pi_1( \md(R)) \sqcup_{\pi_1 ( \md( \Omega Y))} \pi_1
(\md(R))$. This implies that the first map is a surjection too, as desired. 
\end{proof}

We can also consider the behavior of the Galois group under (not
necessarily trivial) square-zero extensions. 
Recall (see \cite[sec. 7.4.1]{higheralg}) that these are obtained as follows. Given an
$\e{\infty}$-ring $A$ and an $A$-module $M$, for every map $\phi\colon  A \to A \oplus M$
in $\clg_{/A}$, we can form the pull-back
\[ \xymatrix{
A' \ar[d] \ar[r] &  A \ar[d]^{0} \\
A \ar[r]^{\phi} &  A \oplus M
},\]
where 
$0\colon  A \to A \oplus M$ is the standard map (informally, $a \mapsto (a, 0)$). 
The resulting map $A' \to A$ is referred to as a \textbf{square-zero
extension} of $A$, by $\Omega M$. 
\begin{corollary} 
\label{sqzerosurj}
\label{squarezerogeneral}
Notation as above, the map $\pi_{1} \md( A') \to \pi_{ 1} \md(A)$ is
a surjection. 
In fact, $A' \to A$ is universally connected.
\end{corollary}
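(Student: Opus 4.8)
\textbf{Proof plan for \Cref{sqzerosurj}.} The plan is to reduce the general square-zero extension $A' \to A$ to a situation where \Cref{invprop} (or rather its proof technique) applies, using the pullback square
\[ \xymatrix{
A' \ar[d] \ar[r] &  A \ar[d]^{0} \\
A \ar[r]^{\phi} &  A \oplus M
}\]
that defines $A'$ as a square-zero extension of $A$ by $\Omega M$. First I would observe that to prove universal connectedness it suffices (by \Cref{surjthing} and the definition) to show that for every $B \in \clg(\md(A'))$ the map $\idem(B) \to \idem(B \otimes_{A'} A)$ is an isomorphism; since the statement is really about the commutative $A'$-algebra $A$, and everything is compatible with base change along $A' \to B$, I would try to arrange to work directly with the algebra object $A \in \clg(\md(A'))$ and show $A \to B$ behaves well for all $B$. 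The key structural input is that the map $A' \to A$ is \emph{descendable}: this follows because $A \oplus M$ is a trivial square-zero extension, so the two maps $A' \to A$ appearing in the defining pullback, together with $A' \to A \oplus M$, exhibit $A'$ inside the thick $\otimes$-ideal generated by $A$ and $A \oplus M$ in $\md(A')$ — more precisely one runs the argument of \Cref{easydesc}/\Cref{constpro} using that the pullback square is also a pushout in the relevant sense (compare \Cref{modclosed} and the cofiber sequence $M_{\mathcal D}X \to X \to L_{\mathcal D}X$), and $A \oplus M$ itself is visibly built from $A$.

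Given descendability of $A' \to A$, the map $\idem(B) \to \idem(B\otimes_{A'}A)$ is automatically injective (and $\idem(B)$ is the equalizer of $\idem(B\otimes_{A'}A)\rightrightarrows \idem(B\otimes_{A'}A\otimes_{A'}A)$, exactly as in the discussion preceding \Cref{faithinj}). So the whole problem reduces to showing that the two coface maps $\idem(B\otimes_{A'}A)\rightrightarrows \idem(B\otimes_{A'}A\otimes_{A'}A)$ coincide, equivalently — arguing as in the proof of \Cref{surjart} via the commutative triangle whose composite is the identity — that \emph{one} of these two maps is an isomorphism, i.e.\ that $\idem(R) \to \idem(R\otimes_{A} (A\otimes_{A'}A))$ is an isomorphism for $R = B\otimes_{A'}A$ an arbitrary $A$-algebra. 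The point is then to understand $A \otimes_{A'} A$ as an $A$-algebra. Using the defining pullback and base-change, one computes that $A\otimes_{A'}A$ fits in a pullback of $A$-algebras built from $A$ and $A\oplus M$, and in particular it is \emph{connective over $A$ in the relevant sense}: its homotopy groups (or rather the relative ones) are concentrated in nonnegative degrees relative to $\pi_*A$, with $\pi_0$ of the relative part equal to $\pi_0 A$. This is the analogue, in the square-zero setting, of the statement that $k\otimes_A k$ is a connected graded algebra over $k$.

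The last step is then the algebraic lemma already isolated in the proof of \Cref{surjart}: if $R_*$ is a graded-commutative $\pi_*A$-algebra and $S_*$ a ``connected'' augmented $R_*$-algebra ($S_0 = R_0$, nothing in negative relative degrees, with augmentation a section), then $\idem(R_*) \to \idem(R_* \otimes_{\pi_*A} S_*)$ is an isomorphism, because any idempotent mapping to zero lies in the augmentation ideal $R_*\otimes S_{>0}$, which is too small to contain a nonzero idempotent. Applying this with $S_* = \pi_*(A\otimes_{A'}A)$ over $R_* = \pi_*R$ (and a K\"unneth-type identification, valid since the relevant modules are free, or by a $\tau_{\geq}$-filtration argument reducing to the discrete case) gives the desired isomorphism on idempotents, hence universal connectedness of $A' \to A$; the surjectivity of $\pi_1\md(A')\to \pi_1\md(A)$ is then immediate from \Cref{surjthing}. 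The main obstacle I anticipate is the second step: verifying cleanly that $A \otimes_{A'} A$ really is ``connected over $A$'' — i.e.\ controlling $\pi_*(A\otimes_{A'}A)$ as a $\pi_*A$-algebra well enough to invoke the graded lemma. One expects to handle this either by a direct analysis of the defining pullback square (noting $A\oplus M \to A$ has a section and $\Omega M$ is, relatively, one connectivity level lower), or by citing the results of \cite{DAGIX}/\cite[sec.~7.4.1]{higheralg} on square-zero extensions and the structure of their self-tensor products; the connective case of \Cref{connectivegal} is a useful sanity check but does not directly apply since $A$ need not be connective.
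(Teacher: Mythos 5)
Your opening moves are sound: $A'$ is a finite limit of $A$-modules (namely $A$, $A$, and $A\oplus M$), so $A'\to A$ is indeed descendable, and the reduction to showing that one of the two coface maps $\idem(B\otimes_{A'}A)\rightrightarrows \idem(B\otimes_{A'}A\otimes_{A'}A)$ is an isomorphism is the right shape of argument (it is exactly how \Cref{surjart} runs). The genuine gap is the claim that $A\otimes_{A'}A$ is ``connected over $A$.'' From the fiber sequence $\Sigma^{-1}M\to A'\to A$ of $A'$-modules one gets a filtration of $A\otimes_{A'}A$ whose associated graded involves the $A$-modules $\Sigma^{-n}M^{\otimes_A n}$ for $n\geq 0$; when $A$ and $M$ are non-connective (the only case of interest, since the connective case is already covered by \Cref{connectivegal}) these contribute to $\pi_*$ in all degrees, so there is no ``augmentation ideal concentrated in positive relative degrees,'' and there is also no K\"unneth isomorphism computing $\pi_*(A\otimes_{A'}A)$ from $\pi_*A$. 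Hence the graded-idempotent lemma from the proof of \Cref{surjart} has nothing to bite on. You flagged this yourself as the main obstacle, and it is not a technical wrinkle: it is where the strategy fails, and I see no way to repair it along these lines without essentially proving the statement by other means first.

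The paper's proof sidesteps $A\otimes_{A'}A$ entirely. Given $B\in\clg_{A'/}$, tensor the defining cartesian square with $B$ over $A'$: a cartesian square of $\e{\infty}$-rings is cartesian, hence cocartesian, on underlying $A'$-module spectra, so it is preserved by $B\otimes_{A'}(-)$, and the result is again a cartesian square of $\e{\infty}$-rings exhibiting $B\simeq C\times_{C\otimes_A(A\oplus M)}C$ with $C=B\otimes_{A'}A$. Now apply \Cref{idemlimit}: $\idem(B)$ is the fiber product of the two maps $\idem(C)\rightrightarrows \idem(C\otimes_A(A\oplus M))$. Both of these maps are induced by ring maps $C\to C\oplus(C\otimes_AM)$ splitting the projection on $\pi_0$ modulo a square-zero ideal, hence both are the same isomorphism on idempotents (cf.\ \Cref{sq0rem}); a fiber product of two isomorphisms over a common target projects isomorphically onto either factor, giving $\idem(B)\xrightarrow{\sim}\idem(B\otimes_{A'}A)$ and thus universal connectedness. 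You may want to internalize this trick: whenever the algebra in question is a finite limit of ``harmless'' algebras, push the limit through $\idem$ rather than trying to control the cobar construction.
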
 
\begin{proof} 
It suffices to show that $A' \to A$ is universally connected. 
This follows from the fact that $\idem$ commutes with inverse limits, since one
checks that the two maps $A \rightrightarrows A \oplus M$ are universally
connected. 
\end{proof}

The Galois group is not invariant under arbitrary square-zero extensions.
Let $A = \mathbb{C}[x^{\pm 1}]$ where $|x|  = 0$ be the free 
$\e{\infty}$-algebra under $\mathbb{C}$ on an invertible degree zero generator (so that $A$ is discrete). 
Consider the $\mathbb{C}$-derivation $A \to A$ sending a Laurent polynomial $f(x) $ to its
derivative. 
Then, when we form the pull-back
\[ \xymatrix{
A' \ar[d] \ar[r] & A \ar[d]^0 \\
A \ar[r]^{f \mapsto (f, f')} & A \oplus A
},\]
the pull-back is given by an $\e{\infty}$-algebra $A'$ with $\pi_0 A' \simeq
\mathbb{C}, \pi_{-1} A' \simeq \mathbb{C}$, and $\pi_i A' = 0$ otherwise. The
Galois theory of this $\e{\infty}$-ring is algebraic
because this $\e{\infty}$-ring is necessarily the free $\e{\infty}$-ring on a
degree $-1$ generator, or equivalently the trivial square-zero
extension $\mathbb{C} \oplus \Omega \mathbb{C}$. So its Galois group is
trivial, by \Cref{trivsqzero}. 
However, the map $\mathbb{C} \oplus \Omega \mathbb{C} \to
\mathbb{C}[x^{\pm 1}]$ does not induce an isomorphism on Galois groups: that
of the former is trivial, while that of the latter is $\widehat{\mathbb{Z}}$. 

\subsection{Stronger invariance results}
We will now prove the main invariance results of the present section. 
\begin{theorem} 
\label{invresult}
Let $A$ be a regular local ring with residue field $k$ and maximal ideal
$\mathfrak{m} \subset A$. Let $R$ be an
$\e{\infty}$-ring under $A$ such that $\mathfrak{m}$ is nilpotent in $\pi_0 R$.
Then the natural map 
\[ R \to R \otimes_A k,  \]
induces an isomorphism on fundamental groups. 
\end{theorem}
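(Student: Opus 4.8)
The plan is to reduce to the two previously established invariance tools: the square-zero extension result (\Cref{squarezerogeneral}) and the descent/universal-connectedness machinery (\Cref{invprop}, \Cref{surjart}), together with the fact that Galois theory commutes with filtered colimits (\Cref{galcolim}). First, since $A$ is regular local with maximal ideal $\mathfrak{m} = (x_1, \dots, x_n)$ a regular sequence, the quotient map $A \to k$ factors as a finite composite of elementary square-zero extensions: one passes $A/(x_1,\dots,x_i) \to A/(x_1,\dots,x_{i+1})$, and each such map is a square-zero extension of discrete rings (multiplication by $x_{i+1}$ being a derivation into the module $A/(x_1,\dots,x_{i+1})$ in the appropriate sense; more precisely, $A/(x_1,\dots,x_{i+1})$ is a square-zero extension of $A/(x_1,\dots,x_i)$ by the shifted module). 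Base-changing along $A \to R$, the map $R \to R \otimes_A k$ likewise factors as a finite tower $R = R_0 \to R_1 \to \dots \to R_n = R \otimes_A k$ where each $R_i \to R_{i+1}$ is a square-zero extension of $\e{\infty}$-rings (using that the formation of square-zero extensions, as pullbacks in $\clg$, is compatible with base change — the relevant pullback square for $R_i \to R_{i+1}$ is obtained by tensoring the one for $A/(x_1,\dots,x_i) \to A/(x_1,\dots,x_{i+1})$ up to $R$).

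Given this factorization, it suffices to prove that each $R_i \to R_{i+1}$ induces an isomorphism on fundamental groups. By \Cref{sqzerosurj}, each such map is universally connected, so in particular $\pi_1 \md(R_{i+1}) \to \pi_1 \md(R_i)$ is surjective. For the reverse surjectivity (i.e.\ that $\pi_1 \md(R_i) \to \pi_1 \md(R_{i+1})$ is surjective), one wants to show that a finite cover of $R_{i+1}$ lifts, uniquely, to a finite cover of $R_i$. The key point is that, because $R_i \to R_{i+1}$ is a square-zero extension — hence $R_{i+1}$ is in particular a \emph{connective} cotower / has $R_i$ as a nilpotent thickening on $\pi_0$ — the obstruction theory for lifting \'etale algebras is governed by cotangent complexes, and these obstructions vanish exactly as in the classical topological invariance of the \'etale site. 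Concretely: finite covers of $R_{i+1}$ are classically \'etale in the thick subcategory of $\md^\omega(R_{i+1})$ they generate, and \Cref{etaletopinv} (topological invariance of \'etale algebras) together with the fact that $\pi_0 R_i \to \pi_0 R_{i+1}$ is a nilpotent thickening of discrete rings shows the \'etale $\pi_0 R_{i+1}$-algebra lifts uniquely to an \'etale $\pi_0 R_i$-algebra, and hence (again by \Cref{etaletopinv}) to an $\e{\infty}$-$R_i$-algebra $R_i'$; one then checks $R_i'$ is still a finite cover (descendability is inherited because a descending datum for $R_{i+1}$ pulls back, and $R_i' \otimes_{R_i} R_{i+1}$ is the original cover). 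The uniqueness of the lift gives that the map on Galois categories is fully faithful, and the surjectivity-plus-full-faithfulness gives the equivalence.

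The main obstacle I expect is the lifting step: showing that a finite cover of $R_{i+1}$ — which need not be \emph{globally} \'etale on homotopy groups, only locally so — lifts through a square-zero extension. The clean way around this is to argue via torsors: using \Cref{splitbytorsor}, reduce to lifting a $G$-torsor $A \in \clgw(\md(R_{i+1}))$ for a finite group $G$; after base change along the splitting $\e{\infty}$-ring the torsor becomes a product of localizations of the unit, which \emph{is} classically \'etale and lifts by \Cref{etaletopinv}; then one descends the lift back down, the point being that the splitting algebra itself lifts (it too is a torsor, hence locally a product of copies of the unit) and that all the descent data — the $G$-action, the multiplication, the equivalence $A \otimes A \simeq \prod_G A$ — can be lifted because the relevant mapping spaces out of \'etale algebras are homotopy discrete (\Cref{etaletopinv}) and compatible with the square-zero pullback. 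Finally, \Cref{galcolim} is invoked at the outset if one wishes to reduce the regular-local-ring hypothesis to the noetherian case, though since $A$ is already assumed regular local and noetherian this is not strictly needed here; it is, however, the mechanism by which \Cref{p=0} follows, taking $A = \mathbb{Z}_{(p)}$ (or localizing) and $k = \mathbb{F}_p$.

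\begin{proof}
By the preceding proposal: write $\mathfrak{m} = (x_1, \dots, x_n)$ with $x_1, \dots, x_n$ a regular sequence, and factor $R \to R \otimes_A k$ as a tower $R = R_0 \to R_1 \to \dots \to R_n \simeq R\otimes_A k$ with $R_i = R \otimes_A A/(x_1, \dots, x_i)$. Each map $R_i \to R_{i+1}$ is a square-zero extension of $\e{\infty}$-rings, being the base change to $R$ of the square-zero extension $A/(x_1,\dots,x_i) \to A/(x_1,\dots,x_{i+1})$ of discrete rings. By \Cref{sqzerosurj}, each $R_i \to R_{i+1}$ is universally connected, so $\pi_1\md(R_{i+1}) \twoheadrightarrow \pi_1\md(R_i)$. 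It remains to prove $\pi_1\md(R_i) \to \pi_1\md(R_{i+1})$ is surjective, i.e.\ that every finite cover of $R_{i+1}$ lifts to one of $R_i$. By \Cref{splitbytorsor} it suffices to lift $G$-torsors. Given a $G$-torsor $B \in \clgf(\md(R_{i+1}))$, choose (shrinking if necessary) a $G$-torsor $C \in \clgf(\md(R_{i+1}))$ with $C \otimes_{R_{i+1}} B$ in mixed elementary form. Since $\pi_0 R_i \to \pi_0 R_{i+1}$ is a surjection with nilpotent kernel, classical topological invariance of the \'etale site and \Cref{etaletopinv} lift $C$ and the split form of $B$ to $R_i$, uniquely and compatibly with all structure maps (which land in homotopy-discrete mapping spaces by \Cref{etaletopinv}); descending along the lift of $C$, which is descendable, produces a $G$-torsor $\widetilde{B} \in \clgf(\md(R_i))$ with $\widetilde{B} \otimes_{R_i} R_{i+1} \simeq B$. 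Uniqueness of this lift shows the functor on Galois categories is fully faithful; combined with surjectivity, $R_i \to R_{i+1}$ induces an equivalence on Galois groupoids. Composing over $i$ gives the result.
\end{proof}
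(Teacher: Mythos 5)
Your proof breaks down at the very first step. The maps $A/(x_1,\dots,x_i) \to A/(x_1,\dots,x_{i+1})$ are \emph{not} square-zero extensions: the kernel is the ideal generated by $x_{i+1}$, which is a nonzerodivisor in the regular local ring $A/(x_1,\dots,x_i)$, so its square is nonzero. More globally, a finite composite of square-zero extensions of discrete rings has nilpotent kernel, whereas the kernel of $A \to k$ is $\mathfrak{m}$, which is nilpotent only when $A = k$. So the claimed factorization of $R \to R \otimes_A k$ into square-zero extensions does not exist, and \Cref{sqzerosurj} cannot be applied stage by stage. (The hypothesis that $\mathfrak{m}$ is nilpotent in $\pi_0 R$ does not rescue this: $R_i \to R_{i+1}$ is the cofiber of multiplication by $x_{i+1}$, not a quotient by a square-zero ideal of $\pi_0$.)

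The lifting step is also untenable as stated, independently of the factorization. If your argument worked it would show that every square-zero extension of $\e{\infty}$-rings induces an isomorphism on Galois groups, but the paper exhibits a counterexample immediately after \Cref{squarezerogeneral}: the square-zero extension $\mathbb{C} \oplus \Omega\mathbb{C} \to \mathbb{C}[x^{\pm 1}]$, where the target has Galois group $\widehat{\mathbb{Z}}$ and the source has trivial Galois group, so the classically \'etale covers $\mathbb{C}[x^{\pm 1/n}]$ do not lift. The concrete failure point in your argument is the invocation of \Cref{etaletopinv} together with ``classical topological invariance'': that only lifts \'etale algebras when $\pi_0 R_i \to \pi_0 R_{i+1}$ is a surjection with nilpotent kernel, and for nonconnective $R$ the cofiber $R_i \to R_i/x_{i+1}$ need not even be surjective on $\pi_0$ (there is a contribution from the $x_{i+1}$-torsion in $\pi_{-1} R_i$). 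This is exactly the difficulty the paper flags in its discussion following \Cref{sqzerosurj}: the relevant filtration on homotopy groups of a relative tensor product runs in the wrong direction.

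For comparison, the paper's proof does not factor the map at all. It shows $R \to R \otimes_A k$ is descendable (since $k$ is a perfect $A$-module which is faithful on $\mathfrak{m}$-power torsion modules, via \Cref{cptdescent}), so that $\pi_1 \md(R)$ is the coequalizer of $\pi_1 \md(R \otimes_A k \otimes_A k) \rightrightarrows \pi_1 \md(R \otimes_A k)$ by descent along the cobar construction. The coequalizer is then collapsed by showing that the multiplication $R \otimes_A (k \otimes_A k) \to R \otimes_A k$ induces a surjection on fundamental groups equalizing the two arrows; this uses \Cref{surjart} applied to $k \otimes_A k \to k$, which is descendable and universally connected because $k \otimes_A k$ is connective with bounded homotopy groups and $\pi_0$ equal to $k$ (here regularity of $A$ enters). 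If you want to salvage a proof, that is the mechanism to aim for: the connectivity and boundedness of $k \otimes_A k$, not a nilpotent filtration of $\mathfrak{m}$.
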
 
\begin{proof} 
We start by showing that $\pi_1(\md(R \otimes_A k)) \to \pi_1(\md(R))$ is always a
surjection; in other words, we must show that for any $\e{\infty}$-algebra $R'$
under $R$, the natural map
\begin{equation} \idem(R') \to \idem( R' \otimes_R (R \otimes_A k))
\simeq \idem(R' \otimes_A k)   \label{idema}\end{equation}
is an isomorphism. 

Since $k$ is a perfect $A$-module, it follows that $R \otimes_A k$ is a perfect 
$R$-module. Moreover, $R \otimes_A k$ is faithful as an $R$-module because
tensoring over $A$ with $k$ is faithful on the subcategory of $\md(A)$
consisting of $A$-modules whose homotopy groups are $\mathfrak{m}$-power
torsion. It follows that $R \to R \otimes_A k$ is descendable in view of
\Cref{cptdescent}. 
Therefore, the map \eqref{idema} is an injection. Since the map 
\[ k \otimes_A k \to k,  \]
is descendable, as $k \otimes_A k$ is connective with bounded homotopy groups
and $\pi_0$ given by $k$, it follows from \Cref{invprop} that (by tensoring
this with $R$) that 
$\pi_1(\md(R \otimes_A k)) \to \pi_1(\md(R))$  is a surjection. 

Consider the cobar construction
\begin{equation} \label{RAcb} R \to R \otimes_A k \rightrightarrows R \otimes_A k \otimes_A k
\triplearrows \dots,  \end{equation}
where all $\e{\infty}$-rings in question have no nontrivial idempotents. 
We will use this and descent theory to complete the proof. 

Note that we can make 
the two maps 
$\pi_{\leq 1}( \md(R \otimes_A k \otimes_A k)) \rightrightarrows 
\pi_{\leq 1}( \md(R \otimes_A k))$ into pointed maps by choosing a basepoint of
$\pi_{\leq 1}\md(R
\otimes_A k)$ and using the multiplication map $R \otimes_A (k \otimes_A k) \to
R \otimes_A k$. 
We conclude that (by descent theory and \eqref{RAcb}) that $\pi_1(\md(R))$ is the coequalizer of the two maps
\[ \pi_1 ( \md({ R \otimes_A k \otimes_A k})) \rightrightarrows \pi_1( \md({R
\otimes_A k})),  \]
choosing basepoints as above.

We claim here that the multiplication map
$R \otimes_A (k \otimes_A k) \to R \otimes_A k$
induces a \emph{surjection} on fundamental groups. 
Given this, we can construct a diagram
\[ \pi_1( \md(R \otimes_A k)) \twoheadrightarrow \pi_1( \md( R \otimes_A k
\otimes_A k)) \rightrightarrows \pi_1( \md(R \otimes_A k),  \]
where the two composites are equal. 
This completes the proof that $\pi_1 (\md(R)) \simeq \pi_1( \md(R \otimes_A
k))$, subject to the proof of surjectivity.

To prove surjectivity, we observe that $R \otimes_A k \otimes_A k \to R \otimes_A k$ 
induces a surjection on fundamental groups, in view of \Cref{surjart}, since $k
\otimes_A k \to k$ satisfies the conditions of that result; since $A$ is
regular, $k \otimes_A k$ is connective and has only finitely many nonzero
homotopy groups, so $k \otimes_A k \to k$ admits descent.

 \end{proof} 

It seems likely that \Cref{invresult} can be strengthened considerably,
although we have not succeeded in doing so. 
For example, one would like to believe that if $R$ is a discrete commutative
ring and $I \subset R$ is an ideal of square zero, then given an
$\e{\infty}$-$R$-algebra $R'$, the map $R' \to R' \otimes_R R/I$ would induce
an isomorphism on fundamental groups. We do \emph{not} know whether this is
true in general. By \Cref{sqzerosurj}, it does induce a surjection at least. 
The worry is that one does not have good control on the homotopy groups of a
relative tensor product of $\e{\infty}$-ring spectra; there is a spectral
sequence, but the filtration is in the opposite direction than what one wants.

However, in the case when the $\e{\infty}$-rings satisfy mild connectivity
hypotheses, one can prove the following much stronger result. 
\begin{theorem} \label{slightlyconn}
Suppose $R$ is a connective $\e{\infty}$-ring with finitely many homotopy
groups and $I \subset \pi_0 R$ a nilpotent ideal. Let $R'$ be an $\e{\infty}$-$R$-algebra which is $(-n)$-connective for $n
\gg 0$. Then the map $R' \to R' \otimes_R \pi_0(R)/I$ induces an isomorphism on
fundamental groups. 
\end{theorem}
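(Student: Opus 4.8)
The plan is to reduce to the two invariance results already in hand -- \Cref{invresult} (which handles passing to the residue field of a regular local base) and \Cref{squarezerogeneral} (which handles square-zero extensions) -- by splitting the quotient $\pi_0(R) \to \pi_0(R)/I$ into manageable pieces. First I would note that since $R$ is connective with finitely many homotopy groups and $I \subset \pi_0 R$ is nilpotent, we may filter $I$ by powers $I \supset I^2 \supset \cdots \supset I^k = 0$, so that $\pi_0(R) \to \pi_0(R)/I$ factors as a finite tower of surjections $\pi_0(R)/I^{j+1} \to \pi_0(R)/I^j$ each of which has square-zero kernel. Correspondingly, I would build a tower of connective $\e{\infty}$-rings $R = R_0 \to R_1 \to \cdots$ with $\pi_0 R_j = \pi_0(R)/I^j$ by taking Postnikov-type square-zero extensions: each $R_{j+1} \to R_j$ is a square-zero extension of connective $\e{\infty}$-rings in the sense of \cite[sec. 7.4.1]{higheralg}, built from a derivation classifying the extension $\pi_0(R)/I^{j+1} \to \pi_0(R)/I^j$. (One has to be a little careful that these can be arranged $\e{\infty}$, but this is standard deformation theory for connective rings.) Then for the $\e{\infty}$-$R$-algebra $R'$, base-changing along $R \to R_j$ gives $R'_j := R' \otimes_R R_j$, and the point is that $R' \otimes_R \pi_0(R)/I \simeq R'_k$ up to the issue that $R_k$ might only be $\tau_{\leq 0}$ of something; I would take $R_k = \tau_{\leq 0} R$ reduced mod (the image of) $I$ directly, i.e.\ $\pi_0(R)/I$ itself, and build the tower so it terminates there.

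Next I would address each stage. The map $R'_j \to R'_{j+1}$ (or the reverse; one must track directions) is a base change of a square-zero extension of connective $\e{\infty}$-rings, hence by \Cref{squarezerogeneral} it is universally connected and induces a surjection on fundamental groups; this uses that $\idem$ commutes with the defining pullback (\Cref{idemlimit}) together with the universal connectedness of the two maps $A \rightrightarrows A \oplus M$. That gives surjectivity of $\pi_1 \md(R') \to \pi_1 \md(R' \otimes_R \pi_0(R)/I)$ after composing the finitely many stages. For the reverse inequality -- showing the map is also surjected onto, equivalently that it is an isomorphism -- I would run the cobar-construction/descent argument exactly as in the proof of \Cref{invresult}: the map $R \to R \otimes_R \pi_0(R)/I = \pi_0(R)/I$ is descendable because $\pi_0(R)/I$ is a perfect and faithful $R$-module (faithfulness uses that $I$ is nilpotent, so that any $R$-module killed by $\pi_0(R)/I$ has $I$-power-torsion, hence zero, homotopy, invoking \Cref{cptdescent}), so $\pi_1 \md(R')$ is the coequalizer of the two maps $\pi_1 \md(R' \otimes_R (\pi_0(R)/I) \otimes_R (\pi_0(R)/I)) \rightrightarrows \pi_1 \md(R' \otimes_R \pi_0(R)/I)$, and one needs the multiplication map to induce a surjection on $\pi_1$. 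That surjectivity is where \Cref{surjart} enters -- one needs the relevant tensor powers of $\pi_0(R)/I$ over $R$ to be connective with finitely many homotopy groups and $\pi_0 = \pi_0(R)/I$, and admit descent to $\pi_0(R)/I$; here the connectivity and boundedness hypotheses on $R$ (and the nilpotence of $I$) are exactly what make $(\pi_0(R)/I) \otimes_R (\pi_0(R)/I)$ connective with bounded homotopy.

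The main obstacle I anticipate is controlling the homotopy groups of the relative tensor products $R' \otimes_R (\pi_0(R)/I)$ and $(\pi_0(R)/I)^{\otimes_R \bullet}$: as the authors themselves flag in the paragraph before \Cref{slightlyconn}, the spectral sequence for a relative tensor product of $\e{\infty}$-rings runs the wrong way, so one cannot naively bound $\pi_*$ of the quotient. The connectivity hypothesis ``$R'$ is $(-n)$-connective, $R$ connective with finitely many homotopy groups, $I$ nilpotent'' is precisely designed to sidestep this: $\pi_0(R)/I$ is a connective, perfect $R$-module with finitely many homotopy groups (it is a finite iterated extension of $\pi_0(R)/I$-modules), so tensoring a bounded-below thing with it over $R$ stays bounded below and, crucially, the relevant cobar terms stay connective-with-bounded-homotopy so that \Cref{surjart}'s descendability hypothesis ($\pi_0$-surjection, connective, finitely many homotopy groups) applies. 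So the real work is a bookkeeping argument that at every stage of the tower and every level of the cobar construction these connectivity/finiteness conditions are preserved under the base changes in play; once that is pinned down, the result falls out by assembling \Cref{squarezerogeneral}, \Cref{surjart}, \Cref{invprop}, and the descent formalism of \Cref{easydesc} and \Cref{cptdescent} in the same pattern as \Cref{invresult}.
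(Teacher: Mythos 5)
Your overall architecture is the right one and is essentially the paper's: factor $R \to \pi_0(R)/I$ through square-zero stages to get surjectivity of $\pi_1 \md(R' \otimes_R \pi_0(R)/I) \to \pi_1 \md(R')$ via \Cref{squarezerogeneral}, then use descendability to write $\pi_1 \md(R')$ as a coequalizer of the two coface maps on the cobar construction and argue they are coequalized by a surjective codegeneracy. The gap is in the justification of that last, crucial surjectivity. You propose to get it from \Cref{surjart}, but that result is stated and proved only for a map onto a \emph{field} $k$: its proof runs through the K\"unneth isomorphism for $k$-modules and a lemma about idempotents in graded-commutative $k$-algebras, neither of which is available over a general quotient $\pi_0(R)/I$. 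Worse, your claim that the hypotheses force $(\pi_0(R)/I)\otimes_R(\pi_0(R)/I)$ to be connective with \emph{bounded} homotopy is false: already for $R = \mathbb{Z}/p^2$ and $I = (p)$ one has $\mathrm{Tor}^{\mathbb{Z}/p^2}_i(\mathbb{F}_p,\mathbb{F}_p)\neq 0$ for all $i\geq 0$, so the cobar terms are connective but unbounded, and no finite tower of square-zero extensions reaches their $\pi_0$. \Cref{invprop} is also unavailable, since the multiplication map is not faithful in this situation (the paper's own cautionary example of an artinian local ring and its residue field). The tool that actually closes the argument is \Cref{adamssurj}: the multiplication map is the Postnikov truncation $A \to \pi_0 A$ of the connective ring $A = (\pi_0(R)/I)\otimes_R(\pi_0(R)/I)$, whose $\pi_0$ is again $\pi_0(R)/I$, and for any $(-n)$-connective $A$-algebra the $\pi_0 A$-based Adams (descent) tower converges, so $\idem(-)$ is preserved and the codegeneracy induces a surjection on $\pi_1$ after base change to $R'$. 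This is precisely where the hypothesis that $R'$ is $(-n)$-connective enters, and it is the step your proposal does not supply.

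A secondary error: $\pi_0(R)/I$ need not be a perfect $R$-module (take $R = \mathbb{Z}\oplus (\mathbb{Q}/\mathbb{Z})[1]$, a trivial square-zero extension; the fiber of $R \to \pi_0 R$ is not perfect), so you cannot invoke \Cref{cptdescent} to get descendability. Descendability of $R \to \pi_0(R)/I$ does hold, but it comes from the finite Postnikov filtration of $R$ (the proposition that $A \to \pi_0 A$ admits descent when $A$ is connective with $\pi_i A = 0$ for $i \gg 0$), the nilpotent-ideal proposition, and \Cref{permanence}.
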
 
For example, one could take $I = 0$, and the statement is already nontrivial. 
We need first two lemmas:

\begin{lemma} 
\label{adamssurj}
Let $A$ be a connective $\e{\infty}$-ring and let $A'$ be an
$\e{\infty}$-$A$-algebra which is $(-n)$-connective for $n \gg 0$. Then the
natural map
\begin{equation} \label{idemaa} \idem(A') \to \idem(A' \otimes_A \pi_0 A)
\end{equation}
is an isomorphism. In particular, it follows that $\pi_1 \md({A' \otimes_A
\pi_0 A }) \to \pi_1 \md({A'})$ is a surjection. 
\end{lemma}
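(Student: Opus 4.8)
The plan is to run an Adams-type descent argument along the map $A \to \pi_0 A$, which admits descent because $A$ is connective with finitely many nonzero homotopy groups (or, without that finiteness, because the Postnikov tower gives a filtration of $A$ in $\md(A)$ whose graded pieces are $\pi_0 A$-modules, so $A \to \pi_0 A$ is descendable, as in the connective-quotient examples of \Cref{sec:descent}). The key point is that although $\idem$ commutes with all limits by \Cref{idemlimit}, so that $\idem(A')$ is the equalizer of $\idem(A' \otimes_A \pi_0 A) \rightrightarrows \idem(A' \otimes_A \pi_0 A \otimes_A \pi_0 A)$, we cannot directly invoke \Cref{invprop}: the multiplication map $\pi_0 A \otimes_A \pi_0 A \to \pi_0 A$ need not be faithful in general. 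So instead I would argue directly that the $\mathrm{Tot}$-tower of the cobar construction $A' \otimes_A \mathrm{CB}^\bullet(\pi_0 A)$ converges to $A'$ with a bounded-above filtration, and extract from this a convergence statement for $\pi_0$.

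Concretely, first reduce to the case $A' = A$: for a general $(-n)$-connective $\e{\infty}$-$A$-algebra $A'$, replace $\mathcal{C} = \md(A)$ by $\md(A')$ and note that the relevant commutative algebra object is $A' \otimes_A \pi_0 A \in \clg(\md(A'))$, which is still the pullback along a descendable map (descendability is stable under base change). So it suffices to show: if $A$ is connective and $A'$ is $(-n)$-connective over $A$, then the cosimplicial abelian group $\pi_0$ applied to the cobar tower of $\pi_0 A$ over $A$, smashed with $A'$, reconstructs $\idem(A')$. Since $A' \otimes_A \pi_0 A \otimes_A \cdots \otimes_A \pi_0 A$ is a relative tensor product of modules each of which is $\pi_0 A$-linear after the first factor, one sees that the $k$-th term of the cobar construction is $(-n)$-connective, so the associated descent spectral sequence $E_2^{s,t} = \pi^s\big(\pi_t(A' \otimes_A \pi_0 A^{\otimes_A (\bullet+1)})\big) \Rightarrow \pi_{t-s}(A')$ converges (the tower is pro-constant by descendability, so the spectral sequence has a horizontal vanishing line at a finite stage by the results of \Cref{descnilp} and the discussion of Adams towers). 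On the $t = 0$ line this identifies $\pi_0 A'$, hence $\idem(A') = \idem(\pi_0 A')$, with the equalizer of the two maps on $\pi_0$ of the first two cobar terms; but these are exactly $\idem(A' \otimes_A \pi_0 A)$ and $\idem(A' \otimes_A \pi_0 A \otimes_A \pi_0 A)$, and the equalizer computes $\idem(A' \otimes_A \pi_0 A)$ once we know the two coface maps on idempotents agree.

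For that last point, the argument mirrors \Cref{surjart} and \Cref{invprop}: the two maps $\idem(A' \otimes_A \pi_0 A) \rightrightarrows \idem(A' \otimes_A \pi_0 A \otimes_A \pi_0 A)$ become equal after composing with the multiplication $\pi_0 A \otimes_A \pi_0 A \to \pi_0 A$, so it is enough that the induced map $\idem(A' \otimes_A \pi_0 A \otimes_A \pi_0 A) \to \idem(A' \otimes_A \pi_0 A)$ be injective; by \Cref{faithinj} this holds once $A' \otimes_A \pi_0 A \otimes_A \pi_0 A$ is faithful as an $(A' \otimes_A \pi_0 A)$-module, which it is because $\pi_0 A \otimes_A \pi_0 A$ is connective with $\pi_0 = \pi_0 A$ and hence descendable, in particular faithful, over $\pi_0 A$. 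This gives that \eqref{idemaa} is an isomorphism; the surjectivity of $\pi_1 \md(A' \otimes_A \pi_0 A) \to \pi_1 \md(A')$ then follows from \Cref{surjthing} applied to the faithful (indeed descendable) commutative algebra object $A' \otimes_A \pi_0 A$. The main obstacle is controlling the homotopy groups of the relative tensor powers $A' \otimes_A \pi_0 A^{\otimes_A (k+1)}$ well enough to guarantee convergence of the descent spectral sequence on the zero line — this is precisely where the $(-n)$-connectivity hypothesis on $A'$ and the connectivity of $A$ are used, and where the argument would break without them, exactly as flagged in the remark following \Cref{invresult}.
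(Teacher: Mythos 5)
Your proposal has the right skeleton --- identify $\idem(A')$ with the equalizer of $\idem(A'\otimes_A\pi_0 A)\rightrightarrows\idem(A'\otimes_A\pi_0 A\otimes_A\pi_0 A)$ via convergence of the $\pi_0 A$-based cobar construction, then show the two coface maps agree by producing an injection $\idem(A'\otimes_A\pi_0 A\otimes_A\pi_0 A)\hookrightarrow\idem(A'\otimes_A\pi_0 A)$ induced by multiplication --- and this is exactly the architecture of the proof in the paper. But both of your load-bearing justifications rest on the claim that $A\to\pi_0 A$ (and likewise $\pi_0 A\otimes_A\pi_0 A\to\pi_0 A$) is descendable, and that claim is not available under the hypotheses of the lemma. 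The lemma assumes only that $A$ is connective, not that it has finitely many homotopy groups, and your parenthetical fallback --- that the Postnikov tower exhibits $A$ in the thick $\otimes$-ideal generated by $\pi_0 A$ --- does not work: a thick $\otimes$-ideal is closed only under \emph{finite} limits and colimits and retracts, so an infinite Postnikov tower proves nothing. The failure is not merely a gap in justification: the map $ku\to H\mathbb{Z}=\pi_0(ku)$ is not even faithful, since $KU\otimes_{ku}H\mathbb{Z}\simeq KU/\beta\simeq 0$ while $KU\neq 0$; a fortiori it is not descendable, there is no pro-constancy, and no horizontal vanishing line in the descent spectral sequence. This also undercuts your appeal to \Cref{faithinj} for the injectivity step and your appeal to descendability in the final application of \Cref{surjthing}.

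The fix, which is the route the paper takes, is to replace descendability everywhere by a connectivity argument. For a $(-n)$-connective $A$-module $M$ over connective $A$, the Adams tower based on $A\to\pi_0 A$ has fibers of strictly increasing connectivity (the bottom homotopy group of $M$ injects into that of $M\otimes_A\pi_0 A$ by right-exactness), so the cobar construction converges, $A'\simeq\mathrm{Tot}(A'\otimes_A\pi_0 A^{\otimes_A(\bullet+1)})$, and \Cref{idemlimit} gives the equalizer description --- no vanishing line needed, and in particular \eqref{idemaa} is injective. For the second step, observe that $\pi_0 A\otimes_A\pi_0 A$ is itself connective with $\pi_0$ equal to $\pi_0 A$, so the multiplication map is again a zeroth-Postnikov-truncation map; applying the injectivity just established to the connective ring $\pi_0 A\otimes_A\pi_0 A$ and the bounded-below algebra $A'\otimes_A\pi_0 A\otimes_A\pi_0 A$ over it yields the desired injection, which retracts and hence equalizes the two coface maps. (Your faithfulness argument can be salvaged if you note that the only modules you ever need conservativity for are the localizations $B[e^{-1}]$, which are retracts of $B$ and hence bounded below --- but you must say this, since faithfulness fails on unbounded modules.) Finally, for the surjectivity statement you should apply the isomorphism \eqref{idemaa} to each weak finite cover $A''$ of $A'$, which is legitimate because such an $A''$ is dualizable, hence perfect, hence again $(-m)$-connective; citing descendability of $A'\otimes_A\pi_0 A$ is not.
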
 
\begin{proof} 
In fact, by a connectivity argument (taking an inverse limit over Postnikov
systems), the Adams spectral sequence based on the map $A \to \pi_0 A$ converges
for any $A$-module which is $(-n)$-connective for $n \gg 0$. 
In other words, we have that
\[ A' = \mathrm{Tot}\left( A' \otimes_A \pi_0 A \rightrightarrows A' \otimes_A
\pi_0 A \otimes_A \pi_0 A \triplearrows \dots \right)  ,\]
so that, since $\idem$ commutes with limits, we find that $\idem(A')$ is the
equalizer of the two maps $\idem(A' \otimes_A \pi_0 A) \rightrightarrows \idem(
A' \otimes_A \pi_0 A \otimes_A \pi_0 A)$. In particular, 
\eqref{idemaa} is always injective. Moreover, by the same reasoning, 
the multiplication map $\pi_0 A \otimes_A \pi_0 A \to \pi_0 A$ (which is also a map
from a connective $\e{\infty}$-ring to its zeroth Postnikov section) induces an
injection
\[ \idem(A' \otimes_A \pi_0 A \otimes_A \pi_0 A ) \hookrightarrow \idem( A' \otimes_A
\pi_0 A),  \]
which equalizes the two maps 
$\idem(A' \otimes_A \pi_0 A) \rightrightarrows \idem(
A' \otimes_A \pi_0 A \otimes_A \pi_0 A)$. It follows that the two maps were
equal to begin with, which proves that \eqref{idemaa} is an isomorphism. 
\end{proof} 

\begin{lemma} \label{idemnilp}
Let $A$ be a discrete $\e{\infty}$-ring and $J \subset A$ a square-zero ideal.
Then, given any $\e{\infty}$-$A$-algebra $A'$, the natural map $A' \to A'
\otimes_A A/J$ induces an isomorphism on idempotents. 
\end{lemma}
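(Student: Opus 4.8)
The plan is to imitate the proof of \Cref{invresult} and \Cref{adamssurj}, using the square-zero hypothesis to get an explicit small ``resolution'' of $A$ relative to $A/J$. First, since $J$ is square-zero, the map $A \to A/J$ can be realized as a pullback of the trivial square-zero extension: there is a cartesian square in $\clg$ with $A$ at the corner, $A/J$ in two spots, and $A/J \oplus \Sigma (J[1])$-type term; more precisely, writing $A \to A \oplus J[1]$ for the derivation classifying the extension $0 \to J \to A \to A/J \to 0$, one has $A/J \simeq A \times_{A \oplus J[1]} A/J$ (compare \cite[sec. 7.4.1]{higheralg}). Actually it is cleaner to use the cobar construction $A \to A/J \rightrightarrows A/J \otimes_A A/J \triplearrows \cdots$ directly, as in \Cref{adamssurj}: the key point is that because $J$ is square-zero, the relative tensor powers $A/J \otimes_A A/J$, etc., are \emph{connective with bounded homotopy}, and one can run the Adams/descent argument. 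So the first step is to verify that $A \to A/J$ admits descent (e.g.\ via \Cref{cptdescent}, since $A/J$ is a perfect faithful $A$-module — perfect because $J$ is a finitely generated ideal is not automatic, but square-zero plus $A$ discrete noetherian is the relevant case; in general one argues via the tower coming from the square-zero extension being a finite composition), or at least that the cobar tower gives the needed equalizer presentation for idempotents.

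Granting that, I would argue exactly as in \Cref{adamssurj}: by \Cref{idemlimit} (that $\idem$ commutes with limits), $\idem(A')$ is the equalizer of $\idem(A' \otimes_A A/J) \rightrightarrows \idem(A' \otimes_A A/J \otimes_A A/J)$. To show the natural map $\idem(A') \to \idem(A' \otimes_A A/J)$ is an isomorphism — hence injective already from the equalizer, and surjective once we show the two maps agree — it suffices to show the two maps $\idem(A' \otimes_A A/J) \rightrightarrows \idem(A' \otimes_A A/J \otimes_A A/J)$ coincide. As in \Cref{adamssurj} and \Cref{surjart}, they become equal after composing with the multiplication $A/J \otimes_A A/J \to A/J$, so it is enough that $\idem(A' \otimes_A A/J \otimes_A A/J) \to \idem(A' \otimes_A A/J)$ be injective. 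Now $A/J \otimes_A A/J \to A/J$ is a map of discrete-ish $\e{\infty}$-rings: $A/J$ is discrete, and $A/J \otimes_A A/J$ is connective with $\pi_0 = A/J$ (since $J^2 = 0$ makes $\mathrm{Tor}^A_0(A/J, A/J) = A/J$ and the higher Tor's are the ``fuzz''), and it has homotopy groups only in a bounded range when $A$ is noetherian — but even without that, the relevant injectivity on idempotents follows because $\pi_0$ of the map is the identity on $A/J$ and square-zero-type kernels do not affect idempotents (\Cref{sq0rem}). Tensoring up with $A'$ and invoking the Künneth-style or connectivity argument from \Cref{adamssurj} gives the injectivity of $\idem(A' \otimes_A A/J \otimes_A A/J) \to \idem(A' \otimes_A A/J)$.

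For the full statement about fundamental groups (not just idempotents), I would then run the same descent-theoretic argument as in the proof of \Cref{invresult}: surjectivity of $\pi_1 \md(A' \otimes_A A/J) \to \pi_1 \md(A')$ follows from universal connectedness, which is exactly the ``$\idem$ is an isomorphism after every base change'' statement just established (this is \Cref{surjthing} together with the definition of universally connected). For the reverse — that $\pi_1 \md(A') \to \pi_1 \md(A' \otimes_A A/J)$ is also surjective, forcing an isomorphism — one uses that by descent along $A' \to A' \otimes_A A/J$, $\pi_1 \md(A')$ is the coequalizer of $\pi_1 \md(A' \otimes_A A/J \otimes_A A/J) \rightrightarrows \pi_1 \md(A' \otimes_A A/J)$, and that the multiplication map $A' \otimes_A A/J \otimes_A A/J \to A' \otimes_A A/J$ induces a \emph{surjection} on $\pi_1$ — which again is \Cref{squarezerogeneral}/\Cref{surjart} applied to $A/J \otimes_A A/J \to A/J$, since $J$ square-zero makes this map a (finite composition of) square-zero extension(s) up to the appropriate connectivity, or directly universally connected by the idempotent computation above.

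The main obstacle I anticipate is the descent/convergence input: one needs to know that the cobar tower for $A \to A/J$ (equivalently the $(A/J)$-based Adams tower) actually converges to $A$, i.e.\ that $A \to A/J$ admits descent, or at least that it is ``eff.\ universally connected'' in the weaker sense used here. When $A$ is discrete this should follow from the fact that a square-zero extension $R \to R/J$ is handled by \Cref{squarezerogeneral} (it is literally a square-zero extension), so descendability is automatic — but one must be slightly careful that the pullback-square description of $R/J$ as a square-zero extension of $R/J$ by $\Omega(J)$ is the right framing, rather than viewing $R \to R/J$ as a quotient. Concretely: $R/J$, as an $\e{\infty}$-algebra, is obtained from $R$ by attaching cells, but the cleaner route is that the fiber of $R \to R/J$ is $J$ with $J \cdot J = 0$, so $R$ is a square-zero extension of $R/J$ — whence \Cref{squarezerogeneral} applies directly and $R \to R/J$ (or rather the comparison of $R$ with $R/J$) is universally connected. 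Checking this framing precisely, and that the tensored-up version $A' \otimes_A(-)$ inherits the needed connectivity so that \Cref{adamssurj}'s argument goes through verbatim, is where the real work lies; the rest is a formal repetition of \Cref{invresult}.
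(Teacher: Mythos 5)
Your last paragraph contains the correct proof, and it is exactly the paper's: since $J^2=0$, the map $A \to A/J$ is a square-zero extension of $A/J$ by $J$ in the sense used before \Cref{squarezerogeneral} (i.e.\ $A \simeq (A/J) \times_{(A/J)\oplus J[1]} (A/J)$ — note your displayed formula ``$A/J \simeq A \times_{A\oplus J[1]} A/J$'' has the roles of $A$ and $A/J$ scrambled), and \Cref{sqzerosurj} then says verbatim that $A \to A/J$ is universally connected, which is the statement of the lemma. That route needs no descendability, no cobar construction, and no connectivity hypotheses: the defining pullback square is preserved by any base change $A'\otimes_A(-)$ (it is also a cofiber-sequence/pushout square of spectra), $\idem$ carries it to a pullback of Boolean algebras by \Cref{idemlimit}/\Cref{sq0rem}, and both legs $A/J \rightrightarrows (A/J)\oplus J[1]$ are universally connected by the trivial square-zero case. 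You should promote this from a parenthetical worry to the entire proof.

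The main body of your argument, by contrast, has a genuine gap at the key step. You reduce to showing that the multiplication-induced map $\idem(A'\otimes_A A/J\otimes_A A/J)\to\idem(A'\otimes_A A/J)$ is injective, and you justify this by the ``connectivity argument from \Cref{adamssurj}'' together with the claim that the kernel on $\pi_0$ is square-zero. But \Cref{adamssurj} requires the algebra being tensored up to be $(-n)$-connective for some $n$, because its proof rests on convergence of the Postnikov/Adams tower for $A/J\otimes_A A/J \to \pi_0(A/J\otimes_A A/J)=A/J$; the present lemma is asserted for an \emph{arbitrary} $\e{\infty}$-$A$-algebra $A'$, which need not be bounded below, and for unbounded $A'$ that tower need not converge. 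Likewise, writing $C=A'\otimes_A A/J$ and $D=A/J\otimes_A A/J$, the kernel of $\pi_0(C\otimes_{A/J}D)\to\pi_0(C)$ receives contributions from $\pi_0(C\otimes_{A/J}\tau_{\geq 1}D)$, which for unbounded $C$ can be large and is not visibly square-zero, so the ``square-zero kernels do not affect idempotents'' argument does not apply as stated. (Your descendability input is fine — $A\to A/J$ is descendable by the paper's proposition on nilpotent ideals via the filtration $0=J^2\subset J\subset A$ — so the equalizer presentation of $\idem(A')$ is available; the problem is only the coincidence of the two coface maps.) Finally, the material on fundamental groups in your second and third paragraphs is not part of this lemma, which concerns idempotents only.
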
 
\begin{proof} 
This is a consequence of \Cref{sqzerosurj}.
\end{proof}

\begin{proof}[Proof of \Cref{slightlyconn}] 
Let $R_0$ be the $\e{\infty}$-$R$-algebra given by $\pi_0(R)$ and consider
$R_0/I$ as well. Then we have maps $R \to R_0 \to R_0/I$ and we want to show
that, after base-changing to $R'$, the Galois groups are invariant. 
We will do this in a couple of stages following  the
proof of \Cref{invresult}.

First, suppose $I = 0$. 
Using descent along $R \to R_0$, one concludes that 
$\pi_1(\md({R'}))$ is the coequalizer of the two maps $\pi_1(\md({ R'
\otimes_R R_0 
\otimes_R R_0})) \rightrightarrows \pi_1(\md({ R' \otimes_R R_0}))$. 
We wish to claim that the two maps are equal. 
Now the multiplication map 
$R_0 \otimes_R R_0 \to R_0$ satisfies the
conditions of \Cref{adamssurj}, so  one
concludes that the map $\pi_1( \md({R' \otimes_R R_0})) \to \pi_1( \md({R'
\otimes_R R_0 \otimes_R R_0}))
$ is a surjection, which coequalizes the
two maps considered above. 
Therefore, the two maps are equal. 

Next, we need to allow $I \neq 0$. By composition $R \to \tau_{\leq 0} R \to
R_0/I$, we may assume that $R$ itself is discrete. We may also assume that $I$
is square-zero.  In this case, the map $R \to
R_0/I$ satisfies descent and is universally connected by \Cref{idemnilp}. 
Therefore, we can apply the same argument as above, to write $\pi_1( \md({R'}))$
as the coequalizer of the two maps $\pi_1( \md({R' \otimes_{R_0} R_0/I
\otimes_{R_0} R_0/I })) \rightrightarrows \pi_1( \md({R' \otimes_{R_0} R_0/I}))$.
Moreover, these two maps are the same using 
the surjection $\pi_1( \md({R' \otimes_{R_0} R_0/I} ) )\twoheadrightarrow 
\pi_1( \md({R' \otimes_{R_0} R_0/I
\otimes_{R_0} R_0/I }))$ given to us by \Cref{adamssurj}
as above. 
\end{proof}

\subsection{Coconnective rational $\e{\infty}$-algebras}

Let $k$ be a field of characteristic zero, and let $A$ be an
$\e{\infty}$-$k$-algebra such that:
\begin{enumerate}
\item $\pi_i A = 0$ for $i > 0$.  
\item The map $k \to \pi_0 A$ is an isomorphism. 
\end{enumerate}

Following \cite{DAGVIII}, we will call such $\e{\infty}$-$k$-algebras
\textbf{coconnective;} these are the $\e{\infty}$-rings which enter, for
instance, in rational homotopy theory.  In the following, we will prove: 

\begin{theorem} \label{coconnectivecov}
If $A$ is a coconnective $\e{\infty}$-$k$-algebra, then every finite cover of
$A$ is \'etale. In particular, 
\[ \pi_{ 1} \md(A) \simeq  \mathrm{Gal}(\overline{k}/k).  \]
\end{theorem}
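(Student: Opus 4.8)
The plan is to reduce, via the invariance results already established, to the case where $A = k$ is a field, where the statement reduces to the classical Galois correspondence. First I would use the fact that $A$ is coconnective over $k$: there is a canonical map $A \to \pi_0 A \simeq k$ (the truncation $\tau_{\leq 0}$ followed by the isomorphism $\pi_0 A \simeq k$), and also a unit map $k \to A$. Thus $k \to A \to k$ exhibits $k$ as a retract, so the induced map $\pi_1 \md(A) \to \pi_1 \md(k) \simeq \gal(\overline{k}/k)$ admits a section and is therefore surjective. Since we already know $\pi_1 \md(k) \simeq \gal(\overline{k}/k)$ (the connective — indeed discrete — case, \Cref{connectivegal}, applied to the field $k$), it suffices to show the opposite inequality: that the map $\pi_1 \md(k) \to \pi_1 \md(A)$ coming from $A \to k$ is \emph{also} surjective, i.e.\ that $A \to k$ is universally connected, and moreover that no extra covers of $A$ appear beyond those pulled back from $k$.

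The main tool is descent along $A \to k$. The key step is to show that $A \to k$ admits descent. For this I would use the coconnectivity: writing $A$ via its Postnikov-type filtration over $k$, one has $A = \varprojlim_n \tau_{\geq -n} A$ in an appropriate sense, and each successive layer is a square-zero extension by a shift of a $k$-module; the relevant convergence statement is exactly the kind of Adams-type argument appearing in the proof of \Cref{invresult} and \Cref{slightlyconn}. Granting descendability, $\pi_1 \md(A)$ is computed as the coequalizer of $\pi_1 \md(k \otimes_A k) \rightrightarrows \pi_1 \md(k)$, where one chooses basepoints using the multiplication map $k \otimes_A k \to k$. Now $k \otimes_A k$ is itself a \emph{connective} $\e{\infty}$-$k$-algebra (since $A$ is coconnective, $k$ is $0$-connective as an $A$-module and the relative tensor product of connective things is connective) with $\pi_0 (k\otimes_A k) \simeq k$; hence by \Cref{surjart} (or \Cref{adamssurj}) the multiplication map $k \otimes_A k \to k$ induces a surjection on fundamental groups, which coequalizes the two maps in the cobar construction. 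This forces $\pi_1 \md(A) \simeq \pi_1 \md(k) \simeq \gal(\overline{k}/k)$, and unwinding shows every finite cover of $A$ is the base change of a finite étale $k$-algebra, hence classically étale.

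The hard part will be establishing that $A \to k$ admits descent when $A$ is coconnective but possibly has infinitely many nonzero homotopy groups (in negative degrees). The subtlety is that the cobar/Postnikov tower is an \emph{infinite} limit, so one must check that the associated pro-object is constant — i.e.\ verify the hypotheses of \Cref{constpro} or run the Adams spectral sequence convergence argument carefully, using that each Postnikov layer of $A$ over $k$ is built from $k$-modules and that $k$ is a field (so all the relevant $\pi_0$-algebra is semisimple and the spectral sequences collapse appropriately). Once descent is in hand, the rest is a formal chase through the coequalizer exactly as in the proofs of \Cref{invresult} and \Cref{slightlyconn}. One should also double-check the characteristic-zero hypothesis is used only to ensure $\pi_1 \md(k) \simeq \gal(\overline k/k)$ with no topological contribution — which is \Cref{connectivegal} — and nowhere else essential, so that the argument is genuinely reduced to that input.
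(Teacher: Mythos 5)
There is a genuine gap, and it sits exactly where you flagged it: the claim that $A \to k$ admits descent is false for general coconnective $A$. Take $A = C^*(\mathbb{CP}^\infty; \mathbb{Q}) \simeq \mathbb{Q}[x]$ with $|x| = -2$. The class $x$ is $k$-zero (it dies in $k \otimes_A \Sigma^{-2}A$) but no power of it vanishes, so by \Cref{descnilp} the augmentation $\mathbb{Q}[x] \to \mathbb{Q}$ is not descendable; equivalently, the unit is not in the thick $\otimes$-ideal generated by $k$. No Postnikov/Adams convergence argument can rescue this, because descendability (pro-constancy of the cobar tower) is strictly stronger than convergence of the cobar construction, and it is descendability that the coequalizer formula for $\pi_1$ in the proofs of \Cref{invresult} and \Cref{slightlyconn} depends on. A second, independent problem: your claim that $k \otimes_A k$ is connective is wrong — $A$ is coconnective, not connective, so the "relative tensor product of connectives is connective" argument does not apply. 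In the same example $k \otimes_A k$ is an exterior algebra on a degree $-1$ class, hence coconnective; in general it is neither connective nor coconnective. So the appeal to \Cref{surjart}/\Cref{adamssurj} fails, and even if you only sought the weaker upper bound $\clgf(\md(A)) \subset \mathrm{Tot}\bigl(\clgf(\md(k^{\otimes_A \bullet}))\bigr)$ from convergence of the cobar construction, you would be left needing the Galois theory of the coconnective algebras $k^{\otimes_A n}$ — which is the theorem you are trying to prove. (A smaller issue: an augmentation $A \to k$ need not exist a priori for a coconnective algebra, though the surjection $\pi_1\md(A) \twoheadrightarrow \mathrm{Gal}(\overline{k}/k)$ you want from it is automatic from the algebraic part of the Galois group, i.e.\ \eqref{keymap}.)

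The paper takes a different and essentially unavoidable route: by \cite[Proposition 4.3.13]{DAGVIII}, any coconnective $\e{\infty}$-$k$-algebra in characteristic zero is a totalization of a cosimplicial algebra $A^\bullet$ with each $A^i \simeq k \oplus V_i[-1]$ a \emph{trivial} square-zero extension. Each such term has algebraic Galois theory by \Cref{trivsqzero}, and the fully faithful embedding $\md^\omega(A) \subset \mathrm{Tot}(\md(A^\bullet))$ (as in \eqref{variousinc}) then bounds $\pi_1\md(A)$ above by $\mathrm{Gal}(\overline{k}/k)$ without ever needing $A \to k$ to be descendable. That external resolution result is the real content you are missing.
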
 

\begin{proof}
We will prove \Cref{coconnectivecov}
using tools from \cite{DAGVIII}. 
Namely, it is a consequence of \cite[Proposition 4.3.13]{DAGVIII} that every
coconnective $\e{\infty}$-$k$-algebra $A$ can be obtained 
as a totalization of a cosimplicial $\e{\infty}$-$k$-algebra $A^\bullet$
where $A^i$, for each $i \geq 0$,  is in the form $k \oplus V[-1]$ where $V$ is
a vector space over $k$, and this is considered as a trivial ``square zero'' extension. 
In rational homotopy theory, this assertion is dual to the statement that a
connected space can be built as a geometric realization of copies of
wedges of $S^1$. 

Now we know from \Cref{trivsqzero} that the Galois groupoid is invariant under
trivial square-zero extensions, 
so it follows that $\pi_{ 1} \md(A^i) \simeq 
\mathrm{Gal}(\overline{k}/k)$, with the finite covers arising only from
the \'etale extensions (or equivalently, finite \'etale extensions of $k$
itself). 
It follows easily from this that the finite covers in the $\infty$-category
$\mathrm{Tot} \mod( A^\bullet)$ are in natural equivalence with the finite
\'etale extensions of $k$, and 
this completes the proof, since the $\infty$-category of perfect $A$-modules
embeds fully faithfully into this totalization. 
\end{proof}

Note that the strategy of this proof is to give an \emph{upper bound} for the
Galois theory of the $\e{\infty}$-ring $A$ by writing it as an inverse limit of 
square-zero $\e{\infty}$-rings. One might, conversely, hope to use Galois
groups to prove that $\e{\infty}$-rings \emph{cannot} be built as
inverse limits of certain simpler ones. 
For example, in characteristic $p$, the example of cochain algebras shows that
the analog of \Cref{coconnectivecov} is false; in particular, one cannot write
a given coconnective $\e{\infty}$-ring in characteristic $p$ as a totalization
of square-zero extensions.

\section{Stable module $\infty$-categories}

Let $G$ be a finite group and let $k$ be a perfect field of characteristic $p>0$, where
$p$ divides the order of $G$. 
The theory of $G$-representations in $k$-vector spaces is significantly more
complicated than it would be in characteristic zero because the group ring
$k[G]$ is not semisimple: for example, the group $G$ has $k$-valued cohomology.
If one wishes to focus primarily on, for example, the cohomological information
specific to characteristic $p$, then projective $k[G]$-modules are
essentially irrelevant
and, factoring them out, one has the theory of \emph{stable module
categories} reviewed earlier in \Cref{stmodcat}. 
One obtains a compactly generated, symmetric monoidal stable $\infty$-category
$\stm_G(k)$ obtained as the $\mathrm{Ind}$-completion of the Verdier quotient of $\mathrm{Fun}(BG,
\md^\omega(k))$ by the thick $\otimes$-ideal of perfect $k[G]$-module spectra. 

Our goal in this section is to describe the Galois group of a stable module
$\infty$-category for a finite group. 
Since any element in the stable module $\infty$-category can be viewed as an ordinary
linear
representation of $G$ (for compact objects, finite-dimensional representations)
modulo a certain equivalence relation, these results ultimately come down to
concrete statements about the tensor structure on linear representations of $G$
modulo projectives. 

Our basic result (\Cref{galelem}) is that the Galois theory of a stable module
category for an \emph{elementary abelian} $p$-group is entirely algebraic. We
will use this, together with the Quillen stratification theory, to obtain a
formula for the Galois group of a general stable module $\infty$-category, and calculate
this in special cases. 

\subsection{The case of $\mathbb{Z}/p$}

Our first goal is to determine the Galois group of $\stm_V(k)$ when $V$ is
\emph{elementary abelian}, i.e. of the form $(\mathbb{Z}/p)^n$. 
In this case, recall (\Cref{keller}) that $\stm_V(k)$ is symmetric monoidally equivalent to the $\infty$-category of
modules over the Tate construction $k^{tV}$.
We will start by considering the case $V = \mathbb{Z}/p$. 

\begin{proposition} 
\label{galoistaterankone}
Let $k$ be a field of characteristic $p > 0$. 
The Galois theory of  the Tate construction $k^{t \mathbb{Z}/p}$  is algebraic. 
\end{proposition}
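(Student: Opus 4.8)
\textbf{Proof strategy for \Cref{galoistaterankone}.}

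The plan is to use the structure theory of $k^{t\mathbb{Z}/p}$ as an $\e{\infty}$-ring together with the invariance results already established. Recall that, as a graded ring, $\pi_* k^{t\mathbb{Z}/p}$ is the Tate cohomology $\widehat{H}^{-*}(\mathbb{Z}/p; k)$; when $p = 2$ this is $k[t^{\pm 1}]$ with $|t| = -1$, and when $p$ is odd it is $k[t^{\pm 1}] \otimes \Lambda(\epsilon)$ with $|t| = -2$, $|\epsilon| = -1$. In particular $k^{t\mathbb{Z}/p}$ is \emph{weakly even periodic} (when $p = 2$, after a shift, it is periodic; when $p$ is odd, the even part $\pi_0$ is $k[t^{\pm 1}]$-periodic and there is an extra exterior generator). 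So the idea is to reduce, via descent, to a situation governed by \Cref{etalegalois} or \Cref{fieldreg}, handling the exterior generator at odd primes using the square-zero invariance results of the previous section.

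First I would treat the prime $2$. Here $k^{t\mathbb{Z}/2}$ is, up to regrading, an even periodic $\e{\infty}$-ring with $\pi_0$ equal to a graded field $k[t^{\pm 2}]$ (or rather, after collapsing the grading, just $k$), so \Cref{fieldreg} (or the $\mathbb{Z}/2$-graded argument in its proof) applies directly and shows that any finite cover is \'etale at the level of homotopy groups; hence $\pi_1 \md(k^{t\mathbb{Z}/2}) \simeq \gal(k^{\mathrm{sep}}/k)$, which is algebraic. For odd $p$, let $A = k^{t\mathbb{Z}/p}$ and let $B = \tau_{\mathrm{ev}} A$ be the even part, which carries the structure of an even periodic $\e{\infty}$-ring with $\pi_* B \simeq k[t^{\pm 1}]$, $|t| = -2$; then $A$ is obtained from $B$ by adjoining the exterior class $\epsilon$ in degree $-1$. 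Concretely, $A$ should be expressible as a trivial square-zero extension $B \oplus \Sigma^{-1} B$ of $B$ as a $B$-module spectrum (the multiplication being exterior), or at worst as an inverse limit of such. Applying \Cref{trivsqzero} (invariance of the Galois group under trivial square-zero extensions), the map $B \to A$ induces an isomorphism on fundamental groups, so it suffices to compute $\pi_1 \md(B)$. But $B$ is even periodic with $\pi_0 B$ a graded field, so \Cref{fieldreg} again gives $\pi_1 \md(B) \simeq \gal(k^{\mathrm{sep}}/k)$, which is algebraic.

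The main obstacle I anticipate is verifying that $A = k^{t\mathbb{Z}/p}$ genuinely \emph{splits} as a trivial square-zero extension of its even part $B$ as $\e{\infty}$- (or at least as $\e{1}$-) algebras, rather than merely as graded rings or as $B$-modules; the exterior generator $\epsilon$ at odd primes could in principle support a nontrivial attaching map or a nontrivial $A_\infty$-structure. If a literal splitting is not available, the fallback is to run the descent argument along $B \to A$ directly: show $B \to A$ admits descent (it does, since $A$ is a perfect faithful $B$-module — $A \simeq B \oplus \Sigma^{-1} B$ as $B$-modules regardless of the algebra structure, and \Cref{cptdescent} applies as $\mathbf{1}$ is compact in $\md(B)$), so that $\pi_1 \md(B)$ surjects onto $\pi_1 \md(A)$ up to the cobar coequalizer, and then bound the covers of $A$ above using the even periodicity of $B$ and the coconnectivity/square-zero structure of the Amitsur complex $A \otimes_B A \rightrightarrows \cdots$. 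Either way, once one knows finite covers of $A$ are \'etale at the level of $\pi_0 B$, standard Galois theory of the graded field $k[t^{\pm 1}]$ (as in \Cref{fieldreg}) identifies $\pi_1 \md(k^{t\mathbb{Z}/p})$ with $\gal(k^{\mathrm{sep}}/k)$, which is exactly the assertion that the Galois theory is algebraic.
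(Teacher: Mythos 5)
Your $p=2$ case is fine and matches the paper. For odd $p$, however, the central step of your primary route --- that $k^{t\mathbb{Z}/p}$ splits as a \emph{trivial} square-zero extension $B \oplus \Sigma^{-1}B$ of an even periodic $\e{\infty}$-$k$-algebra $B$ --- is a genuine gap, not just a technical worry. First, ``the even part $\tau_{\mathrm{ev}}A$'' is not an $\e{\infty}$-ring in any evident way, so $B$ has to be produced by hand; the natural candidate is $C^*(BS^1;k)[\beta^{-1}]$, which does map to $k^{t\mathbb{Z}/p}$ and over which $k^{t\mathbb{Z}/p}$ is free of rank two as a module. But second, and more seriously, knowing the homotopy groups (or even the module structure) cannot tell you the extension is the trivial square-zero one: the paper's own example of $C^*(S^1;\mathbb{F}_p)$ is an $\e{\infty}$-$\mathbb{F}_p$-algebra whose homotopy is exterior on a degree $-1$ class, yet its Galois group is $\widehat{\mathbb{Z}}_p$, whereas the honest trivial square-zero extension $\mathbb{F}_p\oplus\Sigma^{-1}\mathbb{F}_p$ has trivial Galois group by \Cref{trivsqzero}. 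So the Galois group itself detects exactly the distinction you are assuming away, and invoking \Cref{trivsqzero} here is close to assuming the conclusion. Your fallback has the same problem in a different guise: $C^*(BS^1;k)[\beta^{-1}] \to k^{t\mathbb{Z}/p}$ is an $S^1$-Galois extension, so \Cref{basicexact} gives an exact sequence $\widehat{\mathbb{Z}} \to \pi_1\md(k^{t\mathbb{Z}/p}) \to \gal(k^{\mathrm{sep}}/k) \to 1$, and the entire content is showing the image of $\widehat{\mathbb{Z}}$ vanishes; ``bounding the covers using the square-zero structure of the Amitsur complex'' does not supply that argument. Indeed, the paper runs this torus comparison for higher-rank elementary abelian groups and kills the $\widehat{\mathbb{Z}}^{n+1}$ precisely by \emph{citing the rank-one case as input}, so the comparison cannot be run in reverse without a new idea.

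What the paper actually does is sidestep the exterior class entirely with an integral model: $W(k)^{t\mathbb{Z}/p}$ is a genuinely even periodic $\e{\infty}$-ring with $\pi_* \simeq k[t^{\pm 1}]$, so its Galois theory is algebraic by \Cref{fieldreg}, and $k^{t\mathbb{Z}/p} \simeq W(k)^{t\mathbb{Z}/p}\otimes_{W(k)}k$ is the base change along $W(k)\to k$ (the exterior generator appears as the Tor/Bockstein term), so the invariance theorem \Cref{invresult} for quotients of regular local rings transfers the conclusion. Note the direction: the even periodic model sits \emph{over} $k^{t\mathbb{Z}/p}$ as a $\mathbb{Z}$-level cover handled by \Cref{invresult}, not under it as a square-zero base handled by \Cref{trivsqzero}. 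If you want to salvage your outline, you would need either to actually prove the $\e{\infty}$-splitting (which I do not believe follows from anything in the paper) or to find an independent argument that the degree-$p^n$-cover-type extensions coming from the $\widehat{\mathbb{Z}}$ in the $S^1$-Galois sequence do not survive $\beta$-inversion.
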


\begin{proof}
Without loss of generality, we can assume $k$ perfect. 
In the case $p = 2$, $k^{t \mathbb{Z}/2}$ has homotopy groups given by 
\[ k^{t \mathbb{Z}/2} \simeq k[t^{\pm 1}],  \]
where $|t| = -1$. A (simpler) version of \Cref{fieldreg} shows that any Galois extension
of $k^{t \mathbb{Z}/2}$ is \'etale, since $\pi_0$ satisfies a perfect K\"unneth
isomorphism for $k^{t \mathbb{Z}/2}$-modules and every module over $k^{t
\mathbb{Z}/2}$ is algebraically flat. 
It follows that if $k^{t \mathbb{Z}/2} \to R$ is $G$-Galois, for $G$ a finite
group, then $\pi_0 R$ is a finite $G$-Galois extension of $k$. 

The case of an odd prime is slightly more subtle. 
In this case, we have
\[ k^{t \mathbb{Z}/p} \simeq k[t^{\pm 1}] \otimes_k E(u), \quad |t| = - 2, |u| =
-1,  \]
so that we get a tensor product of a Laurent series ring and an exterior
algebra. 
Since the homotopy ring is no longer regular, we will have to show that any $G$-Galois
extension of $k^{t \mathbb{Z}/p}$ is flat at the level of homotopy groups.
We can do this by comparing with the Tate construction $W(k)^{t \mathbb{Z}/p}$,
where $W(k)$ is the ring of Witt vectors on $k$ and $\mathbb{Z}/p$ acts
trivially on $W(k)$. The $\e{\infty}$-ring $W(k)^{t \mathbb{Z}/p}$ has homotopy
groups given by 
\[ \pi_* W(k)^{t \mathbb{Z}/p} \simeq k[t^{\pm 1}], \quad |t| = 2,  \]
and the $\e{\infty}$-ring that we are interested in is given by 
\[ k^{t \mathbb{Z}/p} \simeq W(k)^{t \mathbb{Z}/p} \otimes_{W(k)}  k. \]
Now \Cref{fieldreg} tells us that the Galois theory of $W(k)^{t \mathbb{Z}/p}$
is algebraic, and the invariance result \Cref{invresult} enables us to conclude
the same for $k^{t \mathbb{Z}/p}$. 
\end{proof}

\subsection{Tate spectra for elementary abelian subgroups}

Let $k$ be a field of characteristic $p$. We know that $k^{t \mathbb{Z}/p}$ has
homotopy groups given by a tensor product of an exterior and Laurent
series algebra
on generators 
in degrees $-1, -2$, respectively. For an elementary abelian $p$-group of
higher rank, the picture is somewhat more complicated: the homotopy ring
behaves irregularly (with entirely square-zero material in positive
homotopy groups), but the Tate construction is still built up from a diagram
of $\e{\infty}$-rings whose homotopy rings come from tensor products of
polynomial (or Laurent series) rings and exterior algebras. This diagram
roughly lives over $\mathbb{P}_k^{n-1}$ where $n$ is the rank  of the given
elementary abelian $p$-group, and the stable module $\infty$-category
$\stm_{(\mathbb{Z}/p)^n}(k)$ can be described as quasi-coherent sheaves on a
derived version of projective space (\Cref{affinestmod}). 
In this subsection, we will review this picture, which will be useful when we
describe the Galois groups in the next section. 

We consider the case of $p > 2$, and leave the minor modifications for $p = 2$ to the
reader. 
Fix an elementary abelian $p$-group $V = (\mathbb{Z}/p)^n$, and let $V_k = V
\otimes_{\mathbb{F}_p} k$. Consider first the
homotopy fixed points $k^{hV}$, whose homotopy ring is given by 
\[ \pi_*(k^{hV}) \simeq E( V_k^{\vee}) \otimes \mathrm{Sym}^*(V_k^{\vee}),  \]
where the exterior copy of $V_k^{\vee}$ is concentrated in degree $-1$, and the
polynomial copy of $V_k^{\vee}$ is concentrated in degree $-2$. For each nonzero homogeneous 
polynomial $f \in \mathrm{Sym}^*(V_k^{\vee})$, we can form the localization 
$k^{hV}[f^{-1}]$, whose degree zero part \emph{modulo nilpotents} is given by
the localization
$\mathrm{Sym}^*(V_k^{\vee})_{(f)}$ (i.e., the degree zero part of the
localization $\mathrm{Sym}^*(V_k^{\vee})[f^{-1}]$). There is also a small nilpotent
part that comes from the evenly graded portion of the exterior algebra. 
In particular, we find, using natural maps between localizations: 
\begin{enumerate}
\item For every Zariski open \emph{affine} subset $U \subset \mathbb{P}(
V_k^{\vee})$, we obtain
a (canonically associated) $\e{\infty}$-ring $\otop(U)$ by localizing $k^{h V}$ at an
appropriate homogeneous form. 
Precisely, $U$ is given as the complement to the zero locus of a homogeneous
form $f \in \mathrm{Sym}^*(V_k^{\vee})$, and we invert $f$ in $k^{hV}$: $\otop(U) =
k^{hV}[f^{-1}]$. 
\item For every inclusion $U \subset U'$ of Zariski open affines, we obtain a map 
of $\e{\infty}$-algebras (under $k^{hV}$) $\otop(U') \to \otop(U)$. 
These maps are canonical; $\otop(U'), \otop(U)$ are localizations of
$k^{hV}$ and $\otop(U)$ has more elements inverted. 
\item For each $U \subset \mathbb{P}(V_k^{\vee})$,  the $\e{\infty}$-ring
$\otop(U)$ has a unit in degree two. The
ring
$\pi_0(\otop(U))$ is canonically an algebra over the (algebraic) ring of
functions $\mathcal{O}_{\mathrm{alg}}(U)$ on $U \subset
\mathbb{P}(V_k^{\vee})$, and is a tensor product of 
$\mathcal{O}_{\mathrm{alg}}(U)$ with the even components of an exterior algebra over $k$. 
\item We have 
natural isomorphisms of sheaves of graded $\mathcal{O}_{\mathrm{alg}}$-modules
\[ 
\pi_*( \otop) \simeq E(V_k^{\vee}) \otimes_{\mathcal{O}_{\mathrm{alg}}} \bigoplus_{r \in \mathbb{Z}}
\mathcal{O}(r)
,  \]
where $\mathcal{O}(1)$ is the usual hyperplane bundle on
$\mathbb{P}(V_k^{\vee})$ and $\mathcal{O}(r) \simeq \mathcal{O}(1)^{\otimes
r}$ is concentrated in degree $-2r$. The generators of the exterior algebra
$E(V_k^{\vee})$ are in degree $-1$.  
\end{enumerate}

It follows that the homotopy groups $\pi_*(\otop(U))$ for $ U \subset
\mathbb{P}(V_k^{\vee})$ fit together into \emph{quasi-coherent sheaves} on the
site of affine Zariski opens $U \subset \mathbb{P}(V_k^{\vee})$ and
inclusions between them. In
particular, we can view the association $U \mapsto \otop(U)$ as defining
a \emph{sheaf} of $\e{\infty}$-ring spectra (under $k$, or even under $k^{hV}$)
over the Zariski site of $\mathbb{P}(V_k^{\vee})$, whose sections over an affine
open $U \subset \mathbb{P}(V_k^{\vee})$ are given by $\otop(U)$.

We will now describe our basic comparison result. 
Since $\otop$ is a sheaf of $\e{\infty}$-algebras under $k^{hV}$, we obtain a
symmetric monoidal, colimit-preserving
functor
\[ \md( k^{hV}) \to \qcoh( \otop),  \]
into the $\infty$-category $\qcoh(\otop)$ of \emph{quasi-coherent
$\otop$-modules}, defined as the homotopy limit $$\qcoh( \otop) = \varprojlim_{U \subset
\mathbb{P}(V_k^{\vee})} \md( \otop(U)),$$
where the homotopy limit is taken over all open affine subsets of
$\mathbb{P}(V_k^{\vee})$. 
Restricting to $\md^\omega(k^{hV}) \simeq \mathrm{Fun}(BV, \md^\omega(k))$, we
obtain a symmetric monoidal exact functor
\[  \mathrm{Fun}(BV, \md^\omega(k)) \to  \qcoh( \otop).\]
We observe that the standard representation of $V$, as an object of the former,
is sent to zero in $\qcoh( \otop)$. In fact, the standard representation of $V$
corresponds to a $k^{hV}$-module with only one nonvanishing homotopy group, and
it therefore vanishes under the types of \emph{periodic} localization that one takes
in order to form $\otop( U)$ for $U \subset \mathbb{P}(V_k^{\vee})$ an open
affine. 
Using the universal property of the stable module $\infty$-category, we obtain
a factorization
\[ \mathrm{Fun}(BV, \md^\omega(k)) \to \stm_V(k) \to \qcoh(\otop),  \]
where the functor $\stm_V(k) \to \qcoh(\otop)$  is symmetric monoidal and
colimit-preserving. 
\begin{theorem} 
\label{affinestmod}
The functor $\md(k^{tV}) \simeq \stm_V(k) \to \qcoh(\otop)$ is an equivalence of symmetric
monoidal $\infty$-categories. 
\end{theorem}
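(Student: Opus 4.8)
The plan is to verify that $F\colon \stm_V(k) \to \qcoh(\otop)$ is a symmetric monoidal equivalence by exhibiting the target as the $\mathrm{Ind}$-completion of a Verdier quotient matching the defining presentation of $\stm_V(k)$, and then checking that $F$ induces the relevant equivalence on compact objects. First I would reduce to the case $k$ perfect (and even separably closed, if convenient, by faithfully flat descent along $k \to k^{\mathrm{sep}}$, since both sides commute with this base change). Since $F$ is colimit-preserving between compactly generated stable homotopy theories and $\stm_V(k)$ is generated by the unit $\mathbf{1} = k$, it suffices to show: (i) $F$ carries $\mathbf{1}$ to a compact generator of $\qcoh(\otop)$, and (ii) $F$ induces an equivalence $\mathrm{End}_{\stm_V(k)}(\mathbf{1}) \simeq \mathrm{End}_{\qcoh(\otop)}(F(\mathbf{1}))$ of $\e{\infty}$-rings; then Morita theory (\Cref{luriess}) and conservativity of $F$ finish the job. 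Actually, $F(\mathbf 1) = \otop$ itself (the structure sheaf), so (ii) amounts to the statement that global sections of $\otop$ over $\mathbb{P}(V_k^\vee)$ recover the Tate construction $k^{tV}$.

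\textbf{Key steps.} The main computational input is a descent spectral sequence: writing $\qcoh(\otop) = \varprojlim_{U} \md(\otop(U))$ over the (finite, since $\mathbb{P}^{n-1}$ is covered by $n$ standard affine opens and their intersections) Zariski diagram, one gets $\mathrm{End}_{\qcoh(\otop)}(\otop) \simeq \varprojlim_U \otop(U)$, and its homotopy groups are computed by the Zariski cohomology of the quasi-coherent sheaves $\pi_*(\otop) \simeq E(V_k^\vee) \otimes_{\mathcal O_{\mathrm{alg}}} \bigoplus_r \mathcal O(r)$ on $\mathbb{P}(V_k^\vee) \cong \mathbb{P}^{n-1}_k$. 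The cohomology of $\bigoplus_r \mathcal O(r)$ on $\mathbb{P}^{n-1}$ is classical (Serre): $H^0$ contributes $\mathrm{Sym}^*$, $H^{n-1}$ contributes the "dual" part, and everything in between vanishes; tensoring with the exterior algebra $E(V_k^\vee)$ and reindexing by the internal degree, one checks this matches $\pi_*(k^{tV})$, which is likewise built from $H^*(V;k) \simeq E(V_k^\vee)\otimes \mathrm{Sym}^*(V_k^\vee)$ by inverting the polynomial generators and the relevant periodicity — i.e. the Tate cohomology $\widehat H^*(V;k)$. Here I would cite/use the Hopkins–Ravenel-style descent (or rather just the explicit finiteness of the cover) to know the spectral sequence converges. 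For compact generation (step (i)), I would argue that $\otop$ generates $\qcoh(\otop)$ as a localizing subcategory: each $\otop(U)$ generates $\md(\otop(U))$, and since the cover is finite and each $\otop(U)$ is a localization of $k^{hV}$, a thick/localizing subcategory argument (of the kind used repeatedly in \Cref{sec:descent}) propagates this up to the limit; compactness of $\otop$ follows because the limit is finite and each $\otop(U)$ is a compact generator of its module category with the unit compact.

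\textbf{Expected main obstacle.} The genuinely delicate point is the identification $\varprojlim_U \otop(U) \simeq k^{tV}$ as $\e{\infty}$-rings, not merely the agreement of homotopy groups. One route: observe that $\otop$ is, by construction, a sheaf of $\e{\infty}$-algebras \emph{under} $k^{hV}$ obtained by periodic localizations, so there is a canonical map $k^{hV} \to \varprojlim_U \otop(U)$, and one wants to see it factors through $k^{tV} = k^{hV}/(\text{norm})$ — or more precisely identify the target with the stable-module endomorphism ring of $k$. The cleanest approach is probably to avoid computing the limit directly and instead factor $F$ as $\stm_V(k) = \mathrm{Ind}(\mathrm{Fun}(BV,\md^\omega k)/\mathcal I) \to \qcoh(\otop)$ and show the functor is fully faithful on the image of $\mathrm{Fun}(BV,\md^\omega k)$ by checking it on a set of generators and their tensor powers, where the relevant mapping-spectrum computations reduce to the Serre cohomology calculation above, together with the fact that the "projective = perfect $k[V]$-module" ideal $\mathcal I$ is exactly the kernel of $F$ (which follows since the standard representation, a compact generator of $\mathcal I$, maps to $0$, and conversely nothing outside $\mathcal I$ can map to $0$ by faithfulness of the localizations away from the irrelevant ideal). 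Handling the prime $2$ (where $k^{tV}$ and the intermediate $\e1$-rings are not homotopy commutative) requires the same care as in \Cref{etalegalois} and \Cref{keller}, but introduces no essentially new difficulty here since we only use the $\e{\infty}$-structure on the final object $k^{tV}$, whose existence is Keller's theorem (\Cref{keller}).
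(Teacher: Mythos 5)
Your overall architecture is the same as the paper's: both run the Schwede--Shipley argument (\Cref{luriess}), reducing the theorem to (a) identifying $\mathrm{End}_{\qcoh(\otop)}(\otop) \simeq \Gamma(\mathbb{P}(V_k^{\vee}), \otop)$ with $k^{tV}$, and (b) showing the unit generates, i.e.\ that $\Gamma$ is conservative on $\qcoh(\otop)$. The gaps are in how you carry out these two steps. For (a), your primary route --- compute $\pi_*\Gamma(\otop)$ by the descent spectral sequence and Serre's calculation of $H^*(\mathbb{P}^{n-1},\mathcal{O}(r))$, then match with Tate cohomology --- only yields an abstract isomorphism of graded groups, as you acknowledge; and even that requires ruling out differentials (for $n\geq 3$ the spectral sequence a priori has room for a $d_{n-1}$ from the $H^0$-column to the $H^{n-1}$-column) and extension problems. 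Your proposed repair stops at the easy half: the canonical map $k^{hV}\to\Gamma(\otop)$ does factor through $k^{tV}$ (because $k^{hV}/(e_1,\dots,e_n)$ dies in every $\otop(U)$), but you never say how to prove the factored map $k^{tV}\to\Gamma(\otop)$ is an equivalence, and your fallback (``full faithfulness on generators'') just re-encounters the same comparison. The paper settles this with no spectral sequences: both sides are killed by smashing with $k^{hV}/(e_1,\dots,e_n)$, the map becomes an equivalence after inverting any single $e_i$, and a descending induction shows $k^{tV}/(e_1,\dots,e_i)\to\Gamma(\otop)/(e_1,\dots,e_i)$ is an equivalence for all $i$, the inductive step being the fracture square for $e_i$. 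You should import an argument of this shape.

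For (b), ``each $\otop(U)$ generates $\md(\otop(U))$, and a localizing-subcategory argument propagates this up the finite limit'' is not yet a proof: vanishing of the limit $\Gamma(\mathcal{F})$ does not formally imply vanishing of the terms $\mathcal{F}(U_i)$, which is what you need. The missing input is the ampleness of $\mathcal{O}(1)$ in this derived setting, concretely the identification $\Gamma(\mathcal{F})[e_i^{-1}]\simeq \mathcal{F}(U_i)$ (equivalently, that pushforward from each basic affine commutes with colimits); granting it, $\Gamma(\mathcal{F})=0$ forces $\mathcal{F}|_{U_i}=0$ for every $i$ and hence $\mathcal{F}=0$, which is exactly the paper's conservativity argument. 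Your compactness claim for the unit is fine (a finite limit commutes with filtered colimits), but generation is where the geometric content lives, and the sketch elides it. Your side remark that the kernel of $\stm_V(k)\to\qcoh(\otop)$ is exactly the ideal of projectives also secretly relies on this same support-theoretic fact for the ``conversely'' direction.
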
 

\begin{proof} 
We start by observing that, by construction of the Verdier quotient
(\Cref{vq}), the stable module $\infty$-category $\stm_V(k)$
is obtained as a \emph{localization} of $\md(k^{hV}) \simeq \mathrm{Ind}(
\mathrm{Fun}(BV, \md^\omega(k)))$, and in particular $k^{tV}$ is a localization
of the $\e{\infty}$-ring $k^{hV}$. 

By construction, $k^{tV}$ is the
localization of $k^{hV}$ at the map of $k^{hV}$-modules $M \to 0$, where $M$ is
the $k^{hV}$-module corresponding to the standard representation of $V$. 
So, in particular, the localization functor
\[ \md(k^{hV}) \to \md(k^{tV}),  \]
given by tensoring up, has a fully faithful right adjoint which embeds
$\md(k^{tV})$ as the subcategory of all $k^{hV}$-modules $N$ such that
$\hom_{\md(k^{hV})}(M, N)$ is contractible. 
If we write $e_1, \dots, e_n \in \pi_{-2}( k^{hV})$ for polynomial generators
of $k^{hV}$, then $k^{hV}/(e_1, \dots, e_n) \in \md^\omega(k^{hV})$ generates the same 
thick subcategory as $M$, as we observed in the discussion immediately
preceding \Cref{quasi}. 
So, the $k^{tV}$-modules are precisely the $k^{hV}$-modules $N$ such that 
\[ N/(e_1, \dots, e_n) N \simeq 0 \in \md(k^{hV}),  \]
using self-duality of 
$k^{hV}/(e_1, \dots, e_n)$.

Now, we have a morphism of $\e{\infty}$-rings 
\begin{equation} \label{projspace} k^{hV} \to \Gamma( \mathbb{P}(V_k^{\vee}), \otop),  \end{equation}
and our first task is to show that this morphism induces an equivalence 
$k^{tV} \to \Gamma( \mathbb{P}(V_k^{\vee}), \otop)$. Observe first that, after
inverting any of $e_1, \dots, e_n \in \pi_{-2}(k^{hV})$, 
\eqref{projspace} becomes an equivalence since we already know what
$\otop$ looks like on the basic open affines; we also know that taking global
sections over $\mathbb{P}(V_k^{\vee})$ is a finite homotopy limit and thus
commutes with arbitrary homotopy colimits. 
However, we also know that
\( k^{hV}/(e_1, \dots, e_n)  \) maps to the zero $\otop$-module since, on
every basic open affine of $\mathbb{P}(V_k^{\vee})$, one of the $\left\{e_i\right\}$ is always invertible. 
Thus we get a map $k^{tV} \to \Gamma( \mathbb{P}(V_k^{\vee}), \otop)$ of
$k^{hV}$-modules with the
dual properties: 
\begin{enumerate}
\item Both modules smash to zero  with $k^{hV}/(e_1, \dots, e_n)$. 
\item The map induces an equivalence after inverting each $e_i$, $1 \leq i \leq
n$. 
\end{enumerate}
By a formal argument, 
it now follows that $k^{tV} \to\Gamma( \mathbb{P}(V_k^{\vee}), \otop)$ is an
equivalence to begin with. In fact, we show that, for each $i$, the map
\begin{equation}  \label{indstepp}k^{tV}/(e_1, \dots, e_i) \to\Gamma( \mathbb{P}(V_k^{\vee}), \otop) /(e_1,
\dots, e_i)\end{equation}
is an equivalence by \emph{descending} induction on $i$. For $i = n$, both
sides are contractible. If we are given that 
\eqref{indstepp} is an equivalence, then the map
$k^{tV}/(e_1, \dots, e_{i-1}) \to\Gamma( \mathbb{P}(V_k^{\vee}), \otop) /(e_1,
\dots, e_{i-1})$
has the property that it becomes an equivalence after either inverting $e_i$
(by the second property above) or by smashing with $k^{hV}/(e_i)$ (by the
inductive hypothesis); it thus has to be an equivalence in turn. This completes
the inductive step and the proof that $k^{tV} \simeq \Gamma(
\mathbb{P}(V_k^{\vee}), \otop)$. 

All in all, we have shown that the functor
\[ \md(k^{tV}) \simeq \stm_V(k) \to \qcoh(\otop)  \]
is \emph{fully faithful.} 
To complete the proof of 
\Cref{affinestmod}, we need to show that the global sections functor is
conservative on $\qcoh (\otop)$. However, if $\mathcal{F} \in \qcoh( \otop)$
has the property that $\Gamma(\mathbb{P}(V_k^{\vee}), \mathcal{F})$ is
contractible, then the same holds for $\mathcal{F}[e_i^{-1}]$. By analyzing the
descent spectral sequence, it follows that the global sections of
$\mathcal{F}[e_i^{-1}]$ are precisely the sections of $\mathcal{F}$ over the
$i$th
basic open affine chart of $\mathbb{P}(V_k^{\vee})$. 
Thus, if $\Gamma( \mathbb{P}(V_k^{\vee}), \mathcal{F})$ is contractible, then
$\mathcal{F}$ has contractible sections over each of the basic open affines,
and is thus contractible to begin with. (This argument is essentially the
ampleness of $\mathcal{O}(1)$.)
\end{proof}

\subsection{$G$-Galois extensions for topological groups}
Our next goal is to calculate the Galois group for $k^{t V}$ for any elementary
abelian $p$-group $V$. In the case of rank one, we had a trick for 
approaching the Galois group. Although $k^{tV}$ was not even
periodic, there was a good integral model (namely,
$W(k)^{t V}$) which was related to $k^{tV}$ by reducing mod $p$, so that we
could 
use an invariance property to reduce to the (much easier) $\e{\infty}$-ring
$W(k)^{t V}$. 

When the rank of $V$ is greater than one, both these tricks break down. There
is no longer a comparable integral model of an $\e{\infty}$-ring such as
$k^{h \mathbb{Z}/p} \otimes k^{t \mathbb{Z}/p}$, as far as we know. Our strategy is based instead on a comparison with the Tate spectra for \emph{tori},
which are much more accessible. To interpolate between the Tate spectra for
tori and the Tate spectra for elementary abelian $p$-groups, we will need a bit
of the theory of Galois extensions for topological groups, which was considered
in \cite{rognes}. We will describe the associated theory of descent in this
section. We refer to \cite{toruspic} for further applications of these ideas
to the Picard group and the classification of localizing subcategories of the
stable module category (recovering older results), as well as a discussion of
how this formulation of $G$-Galois extensions relates to that of Rognes
\cite{rognes} (who uses a definition similar to \Cref{defgalr}).

\begin{definition} 
\label{topgalois}
Fix a topological group $G$ which has the homotopy type of a finite CW complex
(e.g., a compact Lie group). 
Let $R$ be an $\e{\infty}$-ring and let $R'$ be an $\e{\infty}$-$R$-algebra
with an action of $G$  (in the $\infty$-category of
$\e{\infty}$-$R$-algebras). 

We will say that $R'$ is a faithful
\textbf{$G$-Galois extension} of $R$ if there exists a descendable $\e{\infty}$-$R$-algebra $R''$
such that we have an equivalence
of $\e{\infty}$-$R''$-algebras 
\[   R' \otimes_R R'' \simeq C^*(G; R''),   \]
which is compatible with the $G$-action. 
\end{definition} 

Note that the cochain $\e{\infty}$-ring $C^*(G; R'')$ is the ``coinduced'' $G$-action on an $R''$-module. 
It follows in particular that the natural map $R \to R'^{h G}$ is an
equivalence, and is so universally; for any $\widetilde{R} \in \clg_{R/}$, the
natural map
$\widetilde{R} \to (R' \otimes_{R} \widetilde{R})^{h G}$ is an equivalence. 
Moreover, $R'$ is perfect as an $R$-module, since this can be checked locally
(after base-change to $R''$) and $G$ has the homotopy type of a finite CW
complex. It follows from general properties of descendable morphisms that
faithful $G$-Galois extensions are preserved under base-change.

We will need the following version of classical Galois
descent, which has been independently considered in various forms by
several authors, for instance
\cite{hess, GL, meier, banerjee}. 

\begin{theorem} 
\label{galdescthm}
Let $G$ be a topological group of the homotopy type of a finite CW complex, and
let $R \to R'$ be a faithful $G$-Galois extension of $\e{\infty}$-rings. 
The natural functor
\begin{equation} \label{descf} \md(R) \to \md(R')^{h G},  \end{equation}
is an equivalence of $\infty$-categories. 
\end{theorem}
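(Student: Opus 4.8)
The plan is to verify the hypotheses of the Barr-Beck-Lurie theorem for the adjunction between $\md(R)$ and $\md(R')$ obtained by tensoring up along $R \to R'$, exactly as in the proof of \Cref{easydesc}, and then identify the resulting totalization with the homotopy fixed points $\md(R')^{hG}$. First I would observe that, by \Cref{defgalr} (in its topological version, \Cref{topgalois}), there exists a descendable $\e{\infty}$-$R$-algebra $R''$ such that $R' \otimes_R R'' \simeq C^*(G; R'')$ compatibly with the $G$-action. Since $R''$ is descendable, the morphism $R \to R''$ admits descent in the sense of \Cref{admitd}, hence so does $R \to R'$ by the following reasoning: $R''$, being descendable, lies in the thick $\otimes$-ideal generated by $R''$ in $\md(R)$; but $R''$ is an $R'$-module (it receives a map from $R'$ after a base change — more carefully, one checks that $R'$ is already faithful as an $R$-module because faithfulness can be checked after base change to $R''$, where $R' \otimes_R R'' \simeq C^*(G;R'')$ is faithful as $G$ is a finite CW complex and the unit is a retract). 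In fact, the cleaner route is: $R'$ is dualizable over $R$ (again checkable after base change to $R''$, where $C^*(G;R'')$ is a finite limit of copies of $R''$ hence dualizable) and faithful, so if the unit of $\md(R)$ were compact we could invoke \Cref{cptdescent} directly; in general we instead argue that $R'$ admits descent because $R \to R''$ factors (after base change) through $R \to R' \to R''$-modules and use \Cref{permanence}(2) together with the fact that $R'' $ is descendable over $R$, which forces $R \to R'$ to admit descent once we know $R'' $ is built from $R'$-modules.

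Once $R \to R'$ is known to admit descent, \Cref{easydesc} gives that the adjunction $\md(R) \rightleftarrows \md(R')$ is comonadic, so
\[
\md(R) \simeq \mathrm{Tot}\left( \md(R') \rightrightarrows \md(R' \otimes_R R') \triplearrows \dots \right).
\]
The remaining work is to identify this cosimplicial $\infty$-category with the cosimplicial object computing $\md(R')^{hG}$. The key input is the Galois condition: after base change to $R''$, we have $R' \otimes_R R' \simeq R' \otimes_R C^*(G;R'') \otimes_{R''} (\text{descent data})$, and more directly, the defining equivalence says that the cobar construction $\mathrm{CB}^\bullet(R'/R)$ should be identified with the cochains $C^*(G^{\times \bullet}; R')$, which is precisely the cosimplicial $\e{\infty}$-ring whose totalization of module categories is $\md(R')^{hBG}\simeq\md(R')^{hG}$ — here one uses that $\md(-)$ carries the bar construction on $G$ (a simplicial space) to the cosimplicial $\infty$-category computing homotopy fixed points. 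I would make this precise by base-changing the whole comparison to $R''$: over $R''$ the statement $R' \otimes_R R'' \simeq C^*(G;R'')$ is given, and $C^*(G;-)$-modules form $\md(R'')^{hG}$ by the standard descent along the map $R'' \to C^*(G;R'')$ (this is where one cites the finiteness of $G$ so that $C^*(G;R'')$ is a finite limit of copies of $R''$, hence the totalization collapses nicely and \Cref{easydesc} applies again). Then one descends the equivalence back down from $R''$ to $R$ using that $R \to R''$ satisfies descent for linear $\infty$-categories (\Cref{proconstdescentC}), since both $\md(R)$ and $\md(R')^{hG}$ are $R$-linear and the comparison functor \eqref{descf} becomes an equivalence after $\otimes_{\md(R)} \md(R'')$.

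The main obstacle I expect is the bookkeeping in identifying the cobar cosimplicial $\infty$-category $\{\md(R'^{\otimes_R(\bullet+1)})\}$ with the cosimplicial $\infty$-category $\{\md(C^*(G^{\times\bullet};R'))\}$ computing $\md(R')^{hG}$, coherently in the simplicial variable — that is, producing the equivalence not just levelwise but as an equivalence of cosimplicial objects, and matching the coface/codegeneracy maps (multiplication in the cobar construction versus the group multiplication $G \times G \to G$). One way to sidestep a hands-on cochain computation is to work entirely at the level of the trivial torsor after base change to $R''$: there $R' \otimes_R R''$ with its $G$-action is the \emph{cofree} $G$-object $C^*(G;R'')$, for which $\md(C^*(G;R''))^{hG} \simeq \md(R'')$ is essentially formal (the comonadic descent for the cofree $G$-action), and then invoke descent along $R \to R''$ once more. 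The other delicate point is ensuring $R \to R'$ genuinely admits descent (not merely that $R'$ is faithful and dualizable); this is handled by \Cref{permanence} applied to $R \to R' \to R''$ once one checks that $R''$ lies in the thick $\otimes$-ideal generated by $R'$ in $\md(R)$, which follows because $R''$ is descendable over $R$ and the base change $R' \otimes_R R'' \simeq C^*(G;R'')$ exhibits $R''$ as a retract of $R' \otimes_R R''$ (using that the unit is a retract of $C^*(G;R'')$ for $G$ connected, or a summand in general) — hence $R''$ is built from $R'$-modules, as needed.
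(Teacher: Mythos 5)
Your final ``sidestep'' is essentially the paper's proof: base-change along the descendable $R''$ to reduce to the trivial torsor $C^*(G;R'')$, handle that case directly, and descend the equivalence back using conservativity of $\otimes_{\md(R)}\md(R'')$ on $R$-linear $\infty$-categories (the paper cites \Cref{2descformal}, whose proof rests on \Cref{proconstdescentC} as you say; one also uses that $\otimes_{\md(R)}\md(R'')$ commutes with the limit defining $(-)^{hG}$). Your preliminary observation that $R\to R'$ is itself descendable --- via the retraction $R''\to C^*(G;R'')\to R''$ placing $R''$ in the thick $\otimes$-ideal generated by $R'$ --- is correct but not needed for this route; the paper never uses it, and it only matters for your first, cobar-based route, whose coherence problems you rightly flag.

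The one place you are too quick is the trivial-torsor case itself, which is where the paper does its real work. Saying that $\md(C^*(G;R''))^{hG}\simeq\md(R'')$ follows from ``descent along $R''\to C^*(G;R'')$'' conflates two a priori different totalizations: the cobar construction on $C^*(G;R'')$ (indexed by the \v{C}ech nerve of $G\to\ast$) and the limit over $BG$ defining homotopy fixed points of the translation action. Identifying these coherently is exactly the simplicial bookkeeping you were trying to avoid. The paper sidesteps it cleanly: for $G$ a finite CW complex there is a fully faithful, colimit-preserving embedding $\md(C^*(G;R''))\subset\loc_G(\md(R''))$ (the subcategory generated by the unit), and since $X\mapsto\loc_X(\md(R''))$ carries homotopy colimits of spaces to limits of $\infty$-categories, taking $(-)^{hG}$ gives fully faithful functors
\[
\md(C^*(G;R''))^{hG}\subset\loc_{G_{hG}}(\md(R''))\simeq\loc_{\ast}(\md(R''))\simeq\md(R''),
\]
and the composite with the comparison functor $\md(R'')\to\md(C^*(G;R''))^{hG}$ is the identity, forcing everything to be an equivalence. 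If you replace ``essentially formal'' with this local-systems argument (or an equivalent identification of the two cosimplicial diagrams), your proof is complete and matches the paper's.
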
 
The ``natural functor'' comes from the expression $R \simeq R'^{hG}$; the
$G$-action on $R'$ induces one on the symmetric monoidal $\infty$-category
$\md(R')$. In particular, we get a \emph{fully faithful} embedding $\md^\omega(R) \to
\md(R')^{hG}$ for free. 
\begin{proof} 
Suppose first that $R' \simeq C^*(G; R)$ with the $G$-action coming from 
the translation action of $G$ on itself. Then, we have a fully faithful,
colimit-preserving embedding
\[ \md( R') \subset \loc_G( \md(R)),  \]
as we saw in \Cref{subseclocsys}. The $G$-action here on $\loc_G(\md(R))$ comes
from the translation action again. Taking homotopy fixed points, we get 
\begin{equation} \label{locsys1} \md(R')^{hG} \subset \loc_{G_{hG}}( \md(R))
\simeq \loc_{\ast}( \md(R)) \simeq \md(R),  \end{equation}
because the construction $X \mapsto \loc_X( \md(R))$ sends homotopy colimits in
$X$ to homotopy limits of stable $\infty$-categories. The natural functor
$\md(R) \to \md(R')^{hG}$ now composes all the way over in 
\eqref{locsys1} to the identity, so that it must have been an equivalence to
begin with since all the maps in \eqref{locsys1} are fully faithful. 

Now suppose $R \to R'$ is a general $G$-Galois extension, so that there
exists a descendable $\e{\infty}$-$R$-algebra $T$ such that $R \to R'$
becomes a trivial Galois extension after base-change along $R \to T$. 
The functor \eqref{descf} is a functor of $R$-linear $\infty$-categories so,
to show that it is an equivalence, it suffices to show that \eqref{descf}
induces an equivalence after applying the construction $\otimes_{\md(R)}
\md(T)$: that is, after considering $T$-module objects in each
$\infty$-category (cf. \Cref{2descformal}). In other words, to show that \eqref{descf} is an
equivalence, it suffices to tensor up and show that
\[ \md(T) \to \left( \md(R')\right)^{hG} \otimes_{\md(R)} \md(T) 
\simeq \left( \md(R') \otimes_{\md(R)} \md(T)\right)^{hG} \simeq \md( C^*(G;
T))^{hG}
,\]
is an equivalence of $\infty$-categories, which we just proved. 
\end{proof}

It follows in particular that whenever we have a $G$-Galois extension in the above
sense, for $G$ a \emph{topological} group then we can relate the fundamental
groups of $R $ and $R'$. 
In fact, we have, in view of \Cref{galdescthm},
\[ \clgf(R) \simeq \clgf(R')^{hG}.  \]
Using the Galois correspondence, it follows that there is a $G$-action on the
object $\pi_{\leq 1}\md(R') \in \pro( \fgp)$, and the homotopy \emph{quotient}
in $\pro( \fgp)$ by this $G$-action
is precisely the fundamental groupoid of $\md(R)$, i.e., 
\[ \pi_{\leq 1} \md(R) \simeq \left( \pi_{\leq 1 } \md(R')\right)_{hG} \in
\pro( \fgp).  \]

We now describe homotopy orbits in $\pro( \fgp)$ in the case that will be of
interest. 
Let $X \in \pro( \fgp)$ be a \emph{connected} profinite groupoid and consider an
action of a \emph{connected} topological group $G$ on
$X$. 

\begin{proposition} 
To give an action of $G$ on $X \in \pro( \fgp)^{\geq 0}$ is equivalent to giving a
homomorphism of groups $\pi_1(G) \to \pi_1(X)$ whose image is contained in the
center of $\pi_1(X)$. In other words, the 2-category $\mathrm{Fun}(BG,
\pro(\fgp)^{\geq 0})$ can be described as follows: 
\begin{enumerate}
\item Objects are profinite groups $F$ together with maps $\phi\colon  \pi_1(G) \to F$
whose image is contained in the center of $F$. 
\item 1-morphisms between pairs $(F, \phi)$ and $(F', \phi')$ are continuous
homomorphisms $\psi\colon  F \to F'$ such that the diagram
\[ \xymatrix{
\pi_1(G) \ar[d]^{\phi} \ar[rd]^{ \phi'} \\
F \ar[r]^{\psi} & F'
},\]
commutes. 
\item 2-morphisms are given by conjugacies between homomorphisms. 
\end{enumerate}
In particular, the forgetful functor $\fun( BG, \pro( \fgp)^{\geq 0}) \to \pro(
\fgp)^{\geq 0}$ induces fully faithful maps on the hom-spaces. 
\end{proposition}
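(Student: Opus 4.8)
The plan is to reduce the statement to a concrete computation about mapping spaces in $\pro(\fgp)^{\geq 0}$, using the explicit description of that $2$-category supplied by \Cref{mappingspaces} (objects are profinite groups, $1$-morphisms are continuous homomorphisms, $2$-morphisms are conjugacies). Since $G$ is a connected topological group of the homotopy type of a finite CW complex, $BG$ is a connected, $1$-truncated-on-$\pi_1$ space in the relevant sense: $\pi_0(BG) = \ast$ and $\pi_1(BG) \simeq \pi_0(G)$ is trivial, while $\pi_2(BG) \simeq \pi_1(G)$. The key geometric input is that giving a functor $BG \to \pro(\fgp)^{\geq 0}$ is the same as giving a point of $\pro(\fgp)^{\geq 0}$ (the image of the basepoint of $BG$) together with a compatible action of the loop space $\Omega BG \simeq G$ — but since $\pro(\fgp)^{\geq 0}$ is a $(2,1)$-category, i.e.\ $2$-truncated as an $\infty$-category, only $\pi_{\leq 2}$ of $G$ (equivalently $\pi_{\leq 1}$ of $BG$ above the basepoint, so $\pi_1 G$) can act nontrivially. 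First I would make this precise: for a connected profinite groupoid $X \simeq B\pi_1(X)$ (using \Cref{gppt}), an action of $G$ on $X$ is a map $BG \to \mathbf{Aut}(X)$ in the $(2,1)$-category of pointed connected profinite groupoids, or rather into the automorphism $2$-group of $X$.

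The main step is to identify $\mathbf{Aut}(B F)$ for a profinite group $F$. By \Cref{mappingspaces} applied with $G = G' = F$, the endomorphism space $\hom_{\pro(\fgp)}(BF, BF)$ has $\pi_0$ the set of conjugacy classes of continuous homomorphisms $F \to F$, and the automorphism group of the identity homomorphism is the centralizer $Z(F)$ of the image of $F$, i.e.\ the center $Z(F)$. Restricting to invertible endomorphisms, $\mathbf{Aut}(BF)$ is a $2$-group with $\pi_0 = \mathrm{Out}(F)$ and $\pi_1 = Z(F)$; it is the classifying object for ``$F$ up to conjugacy.'' Then a functor $BG \to \mathbf{Aut}(BF)$, since $BG$ is connected with trivial $\pi_1$, must land (up to the choice of a path to the basepoint) in the connected component of the identity of $\mathbf{Aut}(BF)$, which is $B Z(F)$ — a classifying space of the abelian profinite group $Z(F)$. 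A pointed map $BG \to BZ(F)$ is the same as a continuous homomorphism $\pi_1(BG) \to Z(F)$, i.e.\ $\pi_1(G) \to Z(F) \subset F$, which is exactly a homomorphism $\phi\colon \pi_1(G) \to F$ landing in the center. I would assemble these identifications: $\fun(BG, \pro(\fgp)^{\geq 0})$ is equivalent to the category whose objects are such pairs $(F, \phi)$, with morphisms obtained by unwinding the naturality in the $\mathbf{Aut}$-picture — a $1$-morphism $(F,\phi)\to(F',\phi')$ is a continuous homomorphism $\psi\colon F\to F'$ intertwining $\phi$ and $\phi'$, and $2$-morphisms are conjugacies, with the final ``fully faithful on hom-spaces'' clause being immediate since the space of $1$-morphisms between $(F,\phi)$ and $(F',\phi')$ sits inside $\hom_{\pro(\fgp)^{\geq 0}}(BF, BF')$ as the subspace of $\phi$-compatible homomorphisms (a union of connected components, plus the constraint on the conjugating elements).

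The hard part will be keeping the basepoint bookkeeping honest: ``connected profinite groupoid'' does not literally mean ``$BF$ for a group $F$'' until one chooses a basepoint (\Cref{gppt} gives the equivalence once pointed), and the action of $G$ can permute basepoints up to conjugacy, so one must check that the $G$-action canonically fixes the (contractible, by connectedness) space of basepoints up to coherent homotopy, which is where one uses that $G$ is connected so that its action on $\pi_0$ of the basepoint space is trivial and on that space itself is by a connected family of equivalences. I expect the cleanest route is to avoid ad hoc diagram chases: write $\pro(\fgp)^{\geq 0}$ as $\pro$ of its finite analog, reduce everything (by commuting $\fun(BG,-)$ with the filtered colimits that present everything, since $BG$ has a finite model in each truncation degree) to the case of \emph{finite} groups $F$, where $\mathbf{Aut}(BF)$ is an honest finite $2$-group and the computation of $\fun(BG, B\mathbf{Aut}(BF))$ is a standard obstruction-theory statement; then pass to the limit. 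The only genuinely nontrivial point is the centrality of the image of $\phi$, and that is forced precisely because $\pi_1(BG)$ is trivial so any map out of $BG$ into a $2$-group factors through $B$ of the $1$-truncation of the $2$-group's identity component, i.e.\ through $B\pi_1$, and $\pi_1$ of an automorphism $2$-group is always the center.
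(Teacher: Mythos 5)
Your strategy is the same as the paper's: use the connectivity of $G$ to force the action to land in the identity component of the automorphism $2$-group of $X \simeq B\pi_1(X)$, identify that component as a $K(Z(\pi_1 X),1)$ via \Cref{mappingspaces}, and then read off the hom-spaces. Two points need tightening. First, the delooping bookkeeping in the key step is off by one. An action is a map of $\e{1}$-spaces $G \to \mathrm{Aut}(X)$, hence (since $G$ is connected) an $\e{1}$-map $G \to \tau_{\geq 1}\mathrm{Aut}(X) \simeq K(Z(\pi_1 X),1)$; such a map is classified by $\mathrm{Hom}(\pi_1 G, Z(\pi_1 X))$, equivalently by a pointed map $BG \to K(Z(\pi_1 X),2)$, i.e.\ an element of $H^2(BG;Z(\pi_1X)) \cong \mathrm{Hom}(\pi_2 BG, Z(\pi_1 X))$. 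As written, you classify \emph{pointed maps of spaces} $BG \to BZ(F)$ by ``$\pi_1(BG)\to Z(F)$, i.e.\ $\pi_1(G)\to Z(F)$,'' but $\pi_1(BG)=\pi_0(G)$ is trivial and the space of unstructured pointed maps $BG \to K(Z(F),1)$ is connected; you need the loop-space (or one further delooping) structure to see $\pi_1(G)$. Second, the identification of the $1$- and $2$-morphisms is asserted rather than derived. The paper obtains it by identifying the hom-groupoid in $\fun(BG,\pro(\fgp)^{\geq 0})$ with the homotopy fixed points $\hom_{\pro(\fgp)}(X,Y)^{hG}$: for a connected group acting on a groupoid, each $\gamma\in\pi_1(G)$ determines a natural automorphism of the identity functor, and the fixed points form the \emph{full} subcategory of objects on which this automorphism is the identity --- here the self-conjugacy of $\psi$ by $\psi(\phi(\gamma))\phi'(\gamma)^{-1}$, so the condition is $\psi\circ\phi=\phi'$ strictly. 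In particular this is a full subcategory of $\hom(BF,BF')$ that is generally \emph{not} a union of connected components (conjugating $\psi$ by $z$ replaces $\psi\circ\phi$ by $z\phi'z^{-1}$), so your parenthetical description of the inclusion should be replaced by this fixed-point computation. With those two repairs your argument coincides with the paper's; the auxiliary reductions you propose (pro-limits over finite groups, explicit basepoint bookkeeping) are not needed once \Cref{gppt} and \Cref{mappingspaces} are in hand.
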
 

\begin{proof} 
In order to give an action of $X \in \pro( \fgp)^{\geq 0}$, we need to
construct a map of $\e{1}$-spaces $G \to \mathrm{Aut}_{\pro( \fgp)^{\geq
0}}(X)$, where 
$\mathrm{Aut}_{\pro( \fgp)^{\geq
0}}(X)$ is the automorphism $\e{1}$-algebra of $X$. Since, however, $G$ is
connected, it is equivalent to specifying a map of $\e{1}$-algebras (or loop
spaces) into 
$\tau_{\geq 1}\mathrm{Aut}_{\pro( \fgp)^{\geq
0}}(X)$. However, we know from \Cref{mappingspaces} that 
$\tau_{\geq 1}\mathrm{Aut}_{\pro( \fgp)^{\geq
0}}(X)$ is precisely a $K(Z(\pi_1(X)), 1)$, so the space of $\e{1}$-maps as
above is simply the \emph{set} of homomorphisms $\pi_1(G) \to Z( \pi_1(X))$. 

Finally, we need to understand the mapping spaces in $\fun( BG, \pro(
\fgp)^{\geq 0})$. Consider two connected profinite groupoids $X, Y$ with
$G$-actions. The space of maps $X \to Y$ in $\mathrm{Fun}(BG, \pro( \fgp))$ is
equivalent to the homotopy fixed points $\hom_{\pro( \fgp)}( X, Y)^{hG}$,
where $\hom_{\pro( \fgp)}(X, Y)$ is a groupoid as discussed earlier. 
In
general, given any groupoid $\mathscr{G}$ with an action of $G$, the functor
$\mathscr{G}^{hG} \to \mathscr{G}$ is fully faithful. 
The action of $G$ means that every element in $\pi_1(G)$ determines a natural
transformation from the identity to itself on $\mathscr{G}$, and the homotopy
fixed points pick out the full subcategory of $\mathscr{G}$ spanned by elements
on which that natural transformation is the identity (for any $\gamma \in
\pi_1(G)$). 

In the case of $\hom_{\pro( \fgp)}(X, Y)$, the objects are
continuous homomorphisms $\psi\colon  \pi_1 X \to \pi_1 Y$, and the morphisms between
objects are conjugacies. For $\gamma \in \pi_1(G)$, we obtain elements
$\gamma_x \in \pi_1(X)$ and $\gamma_y \in \pi_1(Y)$ (in view of the $G$-action
on $X, Y$), and the action of $\gamma$ on $\hom_{\pro(\fgp)}(X, Y)$ at the
homomorphism $\psi$ is given by the element $\psi(\gamma_x)
\psi(\gamma_y)^{-1}$, which determines a self-conjugacy from $\psi$ to itself. 
To say that this self-conjugacy is the identity for any $\gamma$, i.e., that
the map is $G$-equivariant (which here is a \emph{condition} instead of extra data), is precisely
the second description of the 1-morphisms. 
\end{proof} 

\begin{remark} 
The above argument would have worked in any $(2, 1)$-category where we could
write down the $\pi_1$ of the automorphism $\e{1}$-algebra easily. 
\end{remark}

In particular, if $G$ acts trivially on $Y \in \pro( \fgp)^{\geq 0}$, 
then to give a map $X \to Y$ is equivalent to giving a map in $\pro( \fgp)$
which annihilates the image of $\pi_1(G) \to \pi_1(X)$. 
It follows that the \emph{homotopy quotients} $X_{hG}$ in $\pro( \fgp)$ can be
described by 
taking the quotient of $\pi_1 X$ by the closure of the image of $\pi_1(G)$:
this is the universal profinite groupoid with a trivial $G$-action to which
$X$ maps. 

Putting all of this together, we find: 
\begin{corollary} 
\label{basicexact}
Let $G$ be a connected topological group of the homotopy type of a finite CW
complex, and let $R \to R'$ be a faithful $G$-Galois extension. Then we have an exact sequence of profinite groups
\begin{equation} \label{topgalexact} \widehat{\pi_1 G} \to \pi_1 \md(R') \to \pi_1 \md(R)  \to 1. \end{equation}
\end{corollary}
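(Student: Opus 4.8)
The plan is to deduce \Cref{basicexact} from \Cref{galdescthm} together with the Galois correspondence and the description of homotopy orbits in $\pro(\fgp)$ just established. First I would apply \Cref{galdescthm} to the faithful $G$-Galois extension $R \to R'$ to obtain an equivalence $\md(R) \simeq \md(R')^{hG}$ of symmetric monoidal $\infty$-categories; since the formation of finite covers only depends on the symmetric monoidal structure (and, more to the point, is compatible with homotopy limits by \Cref{vankampen} and \Cref{vkweak}), this yields $\clgf(R) \simeq \clgf(R')^{hG}$ in $\galcat$. Dualizing via the Galois correspondence \Cref{galequiv}, the profinite groupoid $\pi_{\leq 1}\md(R')$ acquires a $G$-action and one gets $\pi_{\leq 1}\md(R) \simeq \left(\pi_{\leq 1}\md(R')\right)_{hG}$, the homotopy \emph{quotient} taken in $\pro(\fgp)$ — note the variance: homotopy fixed points of categories become homotopy orbits of profinite groupoids.

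Next I would pass from groupoids to groups. Both $\md(R)$ and $\md(R')$ are assumed connected (so that $\pi_1$ makes sense), and $G$ is connected; so we are in the situation of the preceding proposition, where an action of the connected topological group $G$ on the connected profinite groupoid $X = \pi_{\leq 1}\md(R')$ is the same data as a homomorphism $\widehat{\pi_1 G} \to \pi_1\md(R')$ landing in the center (the profinite completion enters because $\pi_1(G)$ only acts through its image in a profinite group). The homotopy quotient $X_{hG}$ in $\pro(\fgp)$ was identified, in the discussion immediately before the corollary, with the quotient of $\pi_1 X$ by the closed subgroup generated by the image of $\widehat{\pi_1 G}$. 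Combining this with the identification $\pi_{\leq 1}\md(R) \simeq X_{hG}$ gives exactly that $\pi_1\md(R)$ is the cokernel of $\widehat{\pi_1 G} \to \pi_1\md(R')$, which is the assertion \eqref{topgalexact}: the composite $\widehat{\pi_1 G} \to \pi_1\md(R') \to \pi_1\md(R)$ is trivial, the second map is surjective, and its kernel is the closure of the image of the first map.

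The one point requiring a little care — and the step I expect to be the main obstacle — is the compatibility of \Cref{galdescthm} with the passage to finite covers and then with the Galois correspondence: one must check that the equivalence $\clgf(R) \simeq \clgf(R')^{hG}$ is an equivalence of Galois categories (respecting effective descent morphisms, not merely of underlying categories), and that under \Cref{galequiv} the homotopy fixed points $\clgf(R')^{hG}$ go over to the homotopy orbits of $\pi_{\leq 1}\md(R')$. The first is handled by observing that finite covers are detected by $G$-torsors (\Cref{torsorenough}) and that $G$-torsors, being a finitary notion, are compatible with the homotopy limit $\md(R')^{hG}$ exactly as in \Cref{vankampen}; since the indexing diagram $BG$ is a finite-dimensional homotopy type built from finitely many cells in each dimension, one truncates as in the proof of \Cref{galcolim} to see that everything is seen at a finite stage. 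The second is formal: $\galequiv$ is an equivalence of $2$-categories, hence carries the limit diagram computing $\clgf(R')^{hG}$ to the colimit diagram computing $\left(\pi_{\leq 1}\md(R')\right)_{hG}$. Once these are in place, the rest is the bookkeeping with profinite groups described above.
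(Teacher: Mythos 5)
Your argument is the same as the paper's: apply \Cref{galdescthm} to get $\clgf(R) \simeq \clgf(R')^{hG}$, dualize under the Galois correspondence to identify $\pi_{\leq 1}\md(R)$ with the homotopy quotient $\left(\pi_{\leq 1}\md(R')\right)_{hG}$ in $\pro(\fgp)$, and then invoke the description of homotopy orbits of a connected topological group acting on a connected profinite groupoid as the quotient by the (closure of the) image of $\pi_1(G)$. The compatibility issues you flag in the last paragraph are real but are exactly the ones the surrounding discussion in the paper is set up to absorb, so this is correct and essentially verbatim the intended proof.
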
 

\begin{remark} 
Throughout this section, we shall be somewhat cavalier about the
use of basepoints, since we will be working with connected profinite groupoids. 
\end{remark}

\subsection{The general elementary abelian case}

Let $V$ be an elementary abelian $p$-group and let $k$ be a field of
characteristic $p$. 
In this section, we will prove our main result that the Galois theory of
$k^{tV}$ is algebraic. In order to do this, we will use the presentation in
\Cref{affinestmod} of $\md(k^{tV})$ via quasi-coherent sheaves on a ``derived'' version of
$\mathbb{P}(V_k^{\vee})$. 
Any $G$-Galois extension of $k^{tV}$ clearly gives a $G$-Galois extension  of
$\otop(U)$ for any $U \subset \mathbb{P}(V_k^{\vee})$ by base-change. Conversely,
the affineness result \Cref{affinestmod} implies that to give a 
$G$-Galois extension of $k^{tV}$ is equivalent to giving $G$-Galois extensions
of $\otop(U)$ for $U \subset \mathbb{P}(V_k^{\vee})$ affine together with the
requisite compatibilities. This would be doable if $\otop(U)$ was even
periodic with regular $\pi_0$, although the exterior generators present an
obstacle. Nonetheless, by a careful comparison with the analog for
\emph{tori}, we will prove: 

\begin{theorem} 
\label{galelem}
Let $V$ be an elementary abelian $p$-group. 
If $k$ is a field of characteristic $p$, all finite
coverings of $k^{t V}$ are \'etale, so $\pi_1 ( \md(k^{tV})) \simeq
\mathrm{Gal}(k^{\mathrm{sep}}/k)$. 
\end{theorem}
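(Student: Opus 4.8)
The plan is to reduce the computation of $\pi_1(\md(k^{tV}))$ for $V=(\mathbb{Z}/p)^n$ of arbitrary rank to the rank-one case (\Cref{galoistaterankone}), the even-periodic regular case (\Cref{etalegalois}), and the invariance result (\Cref{invresult}), glued together by the van Kampen theorem (\Cref{vk2}) applied to the presentation of $\md(k^{tV})$ as $\qcoh(\otop)$ over $\mathbb{P}(V_k^\vee)$ established in \Cref{affinestmod}. The key device is a comparison with Tate constructions for \emph{tori}: an elementary abelian $p$-group $V$ sits inside a torus $T = (\mathbb{G}_m)^n$ (or, more precisely, one relates $k^{tV}$ to $k^{tT}$, with $T$ acting through an inclusion $V\hookrightarrow T$ of stably dualizable groups), and the homotopy groups of the Tate construction for a torus are much better behaved --- even periodic with polynomial $\pi_0$, hence covered by \Cref{etalegalois} and \Cref{fieldreg}.

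First I would set up the sheaf $\otop$ over $\mathbb{P}(V_k^\vee)$ from the preceding subsection and observe, via \Cref{affinestmod} and \Cref{vk2}, that $\pi_{\leq 1}\md(k^{tV}) \simeq \varinjlim_{U} \pi_{\leq 1}\md(\otop(U))$, the colimit taken over a finite affine cover of $\mathbb{P}(V_k^\vee)$ and its (finite) intersection diagram; since $\mathbb{P}(V_k^\vee)$ is a nice scheme this is a genuine finite limit diagram of stable homotopy theories and \Cref{vankampen} applies. So it suffices to compute $\pi_1\md(\otop(U))$ for each basic open affine $U$ and to check that the gluing recovers exactly $\gal(k^{\mathrm{sep}}/k)$, i.e. that all the topological contributions cancel. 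For a basic open affine, $\otop(U) = k^{hV}[f^{-1}]$ for a homogeneous form $f$, whose $\pi_0$ modulo nilpotents is the regular ring $\mathcal{O}_{\mathrm{alg}}(U)$ and which carries an extra square-zero exterior part in even degrees coming from $E(V_k^\vee)$. The exterior generators are the obstacle to directly invoking \Cref{etalegalois}, and this is where the torus comparison enters: I would build an auxiliary even-periodic $\e\infty$-ring (a localization of a Tate construction $W(k)^{hT}$ or $k^{hT}$ for the torus $T$, with $\pi_0$ regular noetherian) together with a map to $\otop(U)$ which is, up to reducing mod $p$ and killing the extra nilpotent/exterior classes, an equivalence; then \Cref{etalegalois} computes the Galois group of the even-periodic model, \Cref{invresult} (invariance under $R \mapsto R\otimes_A k$ for $A$ regular local, or more generally the square-zero and nilpotent-ideal invariance of \Cref{squarezerogeneral} and \Cref{slightlyconn}) transports this to $\otop(U)$, and one concludes $\pi_1\md(\otop(U)) \simeq \gal(k^{\mathrm{sep}}/k)$ with all finite covers algebraic.

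The hard part will be precisely this comparison with the torus: one must produce the right integral/even-periodic model mapping to (a localization of) $k^{hV}$ so that the difference is ``square-zero'' or ``nilpotent'' in the sense that \Cref{invresult}, \Cref{squarezerogeneral}, or \Cref{slightlyconn} can absorb it, and one must be careful that the exterior algebra $E(V_k^\vee)$ contributes only through square-zero extensions at each stage. I expect to use the topological-group Galois descent of \Cref{galdescthm} and the exact sequence \eqref{topgalexact} of \Cref{basicexact} to control the passage $k^{hT}\rightsquigarrow k^{hV}$ (the inclusion $V\hookrightarrow T$ has ``cofiber'' of the homotopy type of a product of circles, whose $\widehat{\pi_1}$ contributes the $\widehat{\pi_1 G}$ term, which one then checks acts trivially so as not to enlarge the Galois group). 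Once $\pi_1\md(\otop(U))$ and the analogous groups on intersections are all identified with $\gal(k^{\mathrm{sep}}/k)$ compatibly, the pushout \eqref{VK} over the (connected) nerve of the cover collapses to $\gal(k^{\mathrm{sep}}/k)$ itself, since the transition maps are the identity; this final bookkeeping step is formal given \Cref{vk2} and the connectivity of $\mathbb{P}(V_k^\vee)$. I would also record, as in \Cref{etalegalois}, that this simultaneously shows every Rognes-style Galois extension of $k^{tV}$ is faithful.
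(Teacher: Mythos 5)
Your overall architecture matches the paper's: reduce via \Cref{affinestmod} and \Cref{vk2} to the basic open affines $\otop(U) \simeq k^{t\mathbb{Z}/p}\otimes_k k^{h(\mathbb{Z}/p)^n} \simeq C^*(B(\mathbb{Z}/p)^{n+1};k)[e_0^{-1}]$ (using that $\mathbb{P}(V_k^{\vee})$ is simply connected), and handle these by comparison with the corresponding localization of $C^*(B\mathbb{T}^{n+1};k)$, which is even periodic with regular $\pi_0$ and hence covered by \Cref{etalegalois}; the comparison is mediated by the faithful $\mathbb{T}^{n+1}$-Galois extension $C^*(B\mathbb{T}^{n+1};k)\to C^*(B(\mathbb{Z}/p)^{n+1};k)$ and the exact sequence \eqref{topgalexact}. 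Up to this point you are on the paper's track.

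The gap is at the decisive step, and it is twofold. First, your fallback mechanism --- transporting the computation from the torus model to $\otop(U)$ via \Cref{invresult}, \Cref{squarezerogeneral}, or \Cref{slightlyconn} by treating the exterior algebra $E(V_k^{\vee})$ as a nilpotent or square-zero perturbation --- does not work. Those invariance results apply to quotient maps of the form $R\to R\otimes_A k$ killing a nilpotent or maximal ideal; here the relevant map runs the other way, from the even periodic ring $C^*(B\mathbb{T}^{n+1};k)[e_0^{-1}]$ into the larger ring carrying the exterior classes, and there is no $\e{\infty}$-map collapsing the exterior part. In rank one the trick works because $k^{t\mathbb{Z}/p}\simeq W(k)^{t\mathbb{Z}/p}\otimes_{W(k)}k$ is literally the mod-$p$ reduction of an even periodic integral model, but no analogous integral model of $k^{t\mathbb{Z}/p}\otimes_k k^{h(\mathbb{Z}/p)^n}$ is available in higher rank --- the paper explicitly flags this obstruction. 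Second, your main route requires showing that the image of $\widehat{\mathbb{Z}}^{n+1}$ in $\pi_1\md(C^*(B(\mathbb{Z}/p)^{n+1};k)[e_0^{-1}])$ vanishes, and ``one then checks it acts trivially'' is precisely the content of the theorem; you give no mechanism for it. The paper's mechanism is a naturality argument you have not identified: complete $C^*(B\mathbb{T}^{n+1};k)[e_0^{-1}]$ at the ideal $(f_1,\dots,f_n)$ where $f_i=e_i/e_0$, observe that the base change of the $\mathbb{T}^{n+1}$-Galois extension along this completion is $(k^{t\mathbb{Z}/p})^{h(\mathbb{Z}/p)^n}$ with trivial action, whose Galois theory is algebraic by the rank-one case \Cref{galoistaterankone} together with \Cref{weakinv}; hence the map out of $\widehat{\mathbb{Z}}^{n+1}$ vanishes in the completed exact sequence, and by naturality of \eqref{topgalexact} it vanishes before completion as well. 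Without this step, or some substitute for it, the argument is not complete.
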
 
\begin{proof}
Since projective space is (geometrically) simply connected, it suffices to show that the Galois theory of 
\[  k^{t \mathbb{Z}/p} \otimes_k k^{h (\mathbb{Z}/p)^{n}} \simeq k^{t
\mathbb{Z}/p} \otimes_k C^*(B (\mathbb{Z}/p)^n; k), \]
for $n > 0$, is algebraic, and thus given by the (algebraic) \'etale
fundamental group of the
corresponding affine open cell in $\mathbb{P}(V_k^{\vee})$. 
These $\e{\infty}$-rings are the $\otop(U)$ for $U \subset
\mathbb{P}(V_k^{\vee})$ the basic open affines of projective space. 
It will follow
that a faithful Galois extension of $k^{tV}$ is locally algebraically \'etale over
$\mathbb{P}(V_k^{\vee})$. 

For this, we will use the fibration sequence
\[ S^1 \to  B \mathbb{Z}/p \to B S^1,  \]
induced by the inclusion $\mathbb{Z}/p \subset S^1$ with quotient $S^1$. This is a principal
$S^1$-bundle and we find in particular an $S^1$-action on $C^*(B \mathbb{Z}/p;
k)$ such that
\begin{equation} C^*(B S^1; k) \simeq C^*(B \mathbb{Z}/p; k)^{h S^1}
\end{equation}
In fact, the map $C^*(BS^1; k) \to C^*(B \mathbb{Z}/p; k)$ is a faithful $S^1$-Galois
extension (in the sense of \Cref{topgalois}): by the Eilenberg-Moore spectral sequence, and the fiber square
\[ \xymatrix{
B \mathbb{Z}/p \times S^1 \ar[d] \ar[r] &  B \mathbb{Z}/p \ar[d] \\
B \mathbb{Z}/p \ar[r] &  B S^1
},\]
expressing the earlier claim that $B \mathbb{Z}/p \to BS^1$ is an $S^1$-torsor,
it follows that 
$$C^*(B \mathbb{Z}/p;k ) \otimes_{C^*(B S^1; k)} C^*(B(\mathbb{Z}/p);k ) 
\simeq C^*(S^1; k) \otimes_k C^*(B\mathbb{Z}/p; k),
$$
with the ``coinduced'' $S^1$-action on the right. 
Moreover, $C^*(BS^1;k ) \to C^*(B\mathbb{Z}/p; k)$ is descendable: in fact, a look
at homotopy groups shows that the latter is a wedge of the former and its shift.  

Let $\mathbb{T}^n \simeq (S^1)^n$ be the $n$-torus, which contains
$(\mathbb{Z}/p)^n$ as a subgroup. 
Similarly, we find that there is a $\mathbb{T}^n$-action on
$C^*(B(\mathbb{Z}/p)^n; k)$ in the $\infty$-category of $C^*( B \mathbb{T}^n;
k)$-algebras which exhibits 
$C^*(B(\mathbb{Z}/p)^n; k)$ as a faithful $\mathbb{T}^n$-Galois extension of
$C^*(B(\mathbb{Z}/p)^n; k)$. 
We can now apply a bit of descent theory. 
Fix any $C^*( B \mathbb{T}^n; k)$-algebra $R$, and 
let $R' \simeq R \otimes_{C^*(B\mathbb{T}^n; k)} C^*(B (\mathbb{Z}/p)^n; k)$.
Since $R'$ is a faithful $\mathbb{T}^n$-Galois extension of $R$, we have a
(natural) exact
sequence
given by \Cref{basicexact}: 
\begin{equation} \label{hugeexact} \widehat{\mathbb{Z}}^n \to \pi_1( \md( R'))
\to \pi_1 (\md(R)) \to 1.  \end{equation}

Finally, we may attack the problem of determining the Galois theory of $k^{t
\mathbb{Z}/p} \otimes_k k^{h(\mathbb{Z}/p)^n} 
$ where $n > 0$. 
We have
\[ \pi_* C^*( B (\mathbb{Z}/p)^{n+1}; k) \simeq 
k[e_0, e_1, \dots, e_n] \otimes E(\epsilon_0, \dots, \epsilon_n), \quad |e_i |
= -2, \ |\epsilon_i|
= -1.
\]
Our goal is to determine the Galois theory of the localization $k^{t
\mathbb{Z}/p} \otimes_k k^{h (\mathbb{Z}/p)^n} \simeq C^*(
B(\mathbb{Z}/p)^{n+1}; k)[e_0^{-1}]$. 
Now, we also have
$$\pi_* C^*( B\mathbb{T}^{n+1}; k) \simeq k[e_0, \dots, e_n], \quad |e_i|  =
-2,$$ 
and the map $C^*(B \mathbb{T}^{n+1}; k) \to C^*( B( \mathbb{Z}/p)^{n+1}; k)$
sends the $\left\{e_i\right\}$ to the $\left\{e_i\right\}$. This map is a
faithful $\mathbb{T}^{n+1} $-Galois extension. 
As we did for $C^*(B (\mathbb{Z}/p)^{n+1}; k)$, consider the localization 
$C^*(B \mathbb{T}^{n+1}; k) [e_0^{-1}]$, whose homotopy groups are given by 
\begin{equation} \pi_* C^*(B \mathbb{T}^{n+1}; k)[e_{0}^{-1}] \simeq k[e_0^{\pm 1},
f_1, \dots,
f_n], \quad |f_i| = 0,  \end{equation}
where for $i \geq 1$, $f_i = e_i/e_0$. In particular, the 
Galois theory of 
$C^*(B \mathbb{T}^{n+1}; k)[e_{0}^{-1}]$ is algebraic thanks to \Cref{etalegalois}, and by
\eqref{hugeexact}, we have an exact sequence
\begin{equation} 
\widehat{\mathbb{Z}}^{n+1} \to \pi_1  \md( C^*(B ( \mathbb{Z}/p)^{n+1};
k)[e_0^{-1}]) \to \pi_1 \md( C^*(B \mathbb{T}^{n+1};
k)[e_0^{-1}]) \to 1
\end{equation} 
Our argument will be that the first map is necessarily \emph{zero}, which will
show that the Galois theory of 
$C^*(
B(\mathbb{Z}/p)^{n+1}; k)[e_0^{-1}]$ is algebraic as desired. 
In order to do this, we will use a naturality argument. 

We can form the completion 
$$A = \widehat{ C^*(B \mathbb{T}^{n+1}; k) [e_0^{- 1}] }_{(f_1, \dots, f_n)},$$
at the ideal $(f_1, \dots, f_n)$,
whose homotopy groups now become the tensor product of 
the Laurent series ring $k[e_0^{\pm 1}]$ together with a \emph{power series}
ring $k[[f_1, \dots, f_n]]$. 
We will prove:

\begin{lemma} \label{analglem}
The Galois theory of $A' \stackrel{\mathrm{def}}{=}A \otimes_{C^*(B
\mathbb{T}^{n+1}; k)} C^*(
B(\mathbb{Z}/p)^{n+1}; k)$ is entirely algebraic (and, in particular, that of $A$). 
\end{lemma} 

\begin{proof} 
The $\e{\infty}$-ring $A' = A \otimes_{C^*(B \mathbb{T}^{n+1}; k)} C^*(
B(\mathbb{Z}/p)^{n+1}; k)$, which by definition is the $\e{\infty}$-ring obtained
from $C^*( B ( \mathbb{Z}/p)^{n+1}; k)$ obtained by inverting the generator $e_0$
and completing with respect to the ideal $(f_1, \dots, f_n)$, admits another
description: it is the homotopy fixed points $(k^{t \mathbb{Z}/p})^{h (\mathbb{Z}/p)^{n}}$ where
$(\mathbb{Z}/p)^{n}$ acts trivially.\footnote{In general, the formation of
homotopy fixed points do not commute with localization from $k^{h
\mathbb{Z}/p}$ to $k^{t \mathbb{Z}/p}$: the failure is precisely measured by
the need to take the completion.} 
Since we
have computed the Galois theory of $k ^{t \mathbb{Z}/p}$ and found it to be
algebraic in 
\Cref{galoistaterankone}, this, together with \Cref{weakinv}, implies the claim. 
\end{proof} 

Finally, consider the diagram
obtained from the faithful $\mathbb{T}^{n+1}$-Galois 
extensions
$
C^*( B\mathbb{T}^{n+1}; k)[e_0^{-1}]
\to 
C^*( B(\mathbb{Z}/p)^{n+1}; k)[e_0^{-1}]$ and $A \to A'$,
\[ \xymatrix{
\widehat{\mathbb{Z}}^{n+1} \ar[d]   \ar[r] & \ar[d]   \pi_1( \md( A'))\ar[r] &   \ar[d]  \pi_1 (\md( A)) \ar[r] &  1 \\
\widehat{\mathbb{Z}}^{n+1} \ar[r] &  \pi_1 \md( C^*(B
(\mathbb{Z}/p)^{n+1}; k)[e_0^{-1}]) \ar[r] &  \pi_1\md( C^*(B
\mathbb{T}^{n+1};
k)[e_0^{-1}]) \ar[r] &  1 }.   \]

In the top row, in view of \Cref{analglem}, the map out of
$\widehat{\mathbb{Z}}^{n+1}$
must be zero. It follows that the same must hold in the bottom row. In other
words,
the Galois theory of $C^*(B (\mathbb{Z}/p)^{n+1}; k)[e_0^{-1}]$ is equivalent to
the (algebraic) Galois theory of $C^*( B \mathbb{T}^{n+1}; k)[e_0^{-1}]$. 
As we saw at the beginning, this is precisely the step we needed to see that
the Galois theory of the Tate construction $k^{tV}$ is ``locally'' algebraic
over $\mathbb{P}(V_k^{\vee})$, and this  completes the proof of \Cref{galelem}. 
\end{proof} 

\begin{remark} 
This argument leaves open a natural question: is the Galois theory of a
general localization $C^*(B ( \mathbb{Z}/p)^{n+1}; k)[f^{-1}]$ algebraic? 
\end{remark} 

\subsection{General finite groups}

Let $G$ be any finite group. In this section, we will put together the
various pieces  (in particular, \Cref{galelem} and Quillen stratification
theory) 
to give a description of the Galois group of the
stable module $\infty$-category $\stm_G(k)$ over a field $k$ of characteristic $p
> 0$. 

For each subgroup $H \subset G$, recall the commutative algebra object
$A_H = \prod_{G/H} k \in \clg(\md_G(k))$. $A_H$ has the property that 
\( \md_{\md_G(k)}(A_H) \simeq \md_H(k),  \)
and the adjunction $\md_G(k) \rightleftarrows \md_{\md_G(k)}(A_H)$ whose left
adjoint tensors with $A_H$ can be identified with \emph{restriction} to the
subgroup $H$. We will need an analog of this at the level of stable module
categories. We refer to \cite[sec. 5.3]{MNNequiv} for a discussion of these types
of equivalences and for a proof of a general result including this in the
$\infty$-categorical setting. 
\begin{proposition}[Balmer \cite{balmerstack}]
\label{moduleres}
Let $\mathscr{A}_H \in \clg( \stm_G(k))$ be the image of $A_H$ in the stable
module $\infty$-category. Then we can identify $\md_{\mathscr{A}_H}( \stm_G(k)) \simeq
\stm_H(k)$ and we can identify the adjunction $\stm_G(k) \rightleftarrows
\md_{\mathscr{A}_H}( \stm_G(k))$ with the restriction-coinduction adjunction 
$\stm_G(k) \rightleftarrows \stm_H(k)$. 
\end{proposition}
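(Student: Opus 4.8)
The plan is to deduce this from the analogous (known) statement at the level of the module categories $\md_G(k) \simeq \mathrm{Fun}(BG, \md(k))$, passing to stable module $\infty$-categories by taking Verdier quotients compatibly. First I would recall the setup: $\stm_G(k) = \mathrm{Ind}(D(\mathcal{A})^\omega/\mathcal{I})$ is the $\mathrm{Ind}$-completion of the Verdier quotient of $\mathrm{Fun}(BG, \md^\omega(k))$ by the thick $\otimes$-ideal $\mathcal{I}_G$ of objects which are perfect as $k$-module spectra and perfect as $k[G]$-module spectra (equivalently, the projective objects), and similarly $\stm_H(k)$ is built from $\mathrm{Fun}(BH, \md^\omega(k))$ by quotienting by the corresponding ideal $\mathcal{I}_H$. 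The key input at the un-quotiented level is the equivalence $\md_{\md_G(k)}(A_H) \simeq \md_H(k)$ recalled just before the statement, under which the adjunction $\md_G(k) \rightleftarrows \md_{\md_G(k)}(A_H)$ is restriction--coinduction; this restricts to perfect objects to give $\md_{\mathrm{Fun}(BG,\md^\omega(k))}(A_H) \simeq \mathrm{Fun}(BH,\md^\omega(k))$.

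Next I would identify the localization $\stm_G(k) \to \md_{\mathscr{A}_H}(\stm_G(k))$ explicitly. Since $\mathscr{A}_H$ is the image of $A_H \in \clg(\md_G(k))$ under the symmetric monoidal localization functor $\md_G(k) \to \stm_G(k)$, base change along $\mathscr{A}_H$ commutes with this localization: one has a commuting square of symmetric monoidal left adjoints
\[
\xymatrix{
\md_G(k) \ar[d] \ar[r]^-{\otimes A_H} & \md_H(k) \ar[d] \\
\stm_G(k) \ar[r]^-{\otimes \mathscr{A}_H} & \md_{\mathscr{A}_H}(\stm_G(k))
}
\]
where the left vertical arrow is the localization $\md_G(k) \to \stm_G(k)$. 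The point is to show that the right vertical arrow is itself the localization $\md_H(k) \to \stm_H(k)$, i.e.\ that it is a Verdier quotient by exactly $\mathcal{I}_H$. For this I would argue: (i) the restriction functor $\md_G(k)\to\md_H(k)$ carries $\mathcal{I}_G$ into $\mathcal{I}_H$ (projectivity over $k[G]$ restricts to projectivity over $k[H]$ since $k[G]$ is free over $k[H]$, and perfectness over $k$ is unchanged), so one gets an induced functor $\stm_G(k)\to\stm_H(k)$ fitting into the square; (ii) conversely, $\mathscr{A}_H \otimes (-)$ kills exactly the $\otimes$-ideal generated by the kernel, and because every object of $\mathcal{I}_H \subset \md_H(k)$ is a retract of the restriction of an induced module (the projection formula: $k[H]\otimes_k M$ for $M$ perfect over $k$ is free as an $H$-module, and these generate $\mathcal{I}_H$, and each such is the restriction from $G$ of the corresponding free $G$-module, or is a summand thereof after coinduction), the ideal generated by the image of $\mathcal{I}_G$ under $\otimes A_H$ is precisely $\mathcal{I}_H$. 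Then $\md_{\mathscr{A}_H}(\stm_G(k)) = \stm_G(k)\otimes_{\md_G(k)}\md_H(k)$, and since $\stm_G(k) = \md_G(k)/\langle\mathcal{I}_G\rangle$ as a symmetric monoidal localization, one computes $\stm_G(k)\otimes_{\md_G(k)}\md_H(k) = \md_H(k)/\langle \mathcal{I}_G\cdot\md_H(k)\rangle = \md_H(k)/\langle\mathcal{I}_H\rangle = \stm_H(k)$, the relative tensor product of a localization being again a localization at the corresponding ideal.

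Finally I would identify the adjunction. The left adjoint $\stm_G(k) \to \md_{\mathscr{A}_H}(\stm_G(k)) \simeq \stm_H(k)$ is $\otimes\mathscr{A}_H$, which by the commuting square above is the functor induced on Verdier quotients by restriction $\md_G(k)\to\md_H(k)$ — i.e.\ it \emph{is} restriction at the level of stable module categories. Its right adjoint is then the induced functor on quotients of the coinduction (equivalently induction, as $[G:H]$ is finite) functor $\md_H(k)\to\md_G(k)$; one checks the right adjoint descends because coinduction carries $\mathcal{I}_H$ into $\mathcal{I}_G$ (coinduction of a $k[H]$-projective is $k[G]$-projective). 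This gives the asserted identification of $\stm_G(k)\rightleftarrows\md_{\mathscr{A}_H}(\stm_G(k))$ with the restriction--coinduction adjunction $\stm_G(k)\rightleftarrows\stm_H(k)$. The main obstacle I anticipate is step (ii): carefully verifying that the $\otimes$-ideal generated by the image of the projectives $\mathcal{I}_G$ under restriction is \emph{exactly} $\mathcal{I}_H$ and not something smaller — this is the projection-formula computation, and it is the only genuinely representation-theoretic content; the rest is formal manipulation of Verdier quotients and relative tensor products of presentable symmetric monoidal $\infty$-categories, for which one invokes \cite[sec. 5.3]{MNNequiv} as indicated.
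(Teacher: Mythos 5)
Your argument is correct, but note that the paper does not prove this proposition at all: it is attributed to Balmer and the paper explicitly defers to \cite[sec. 5.3]{MNNequiv} for an $\infty$-categorical proof of a general result containing it. So your self-contained route is genuinely different from what the paper does. Your strategy — base-change the un-quotiented equivalence $\md_{\md_G(k)}(A_H)\simeq \md_H(k)$ along the localization $\md_G(k)\to\stm_G(k)$, using $\md_{\stm_G(k)}(\mathscr{A}_H)\simeq \stm_G(k)\otimes_{\md_G(k)}\md_H(k)$ and the fact that tensoring a localization against a module category localizes at the image ideal — is sound, and it correctly isolates the one piece of representation theory: the thick $\otimes$-ideal of $\md_H(k)$ generated by $\mathrm{Res}^G_H\mathcal{I}_G$ is all of $\mathcal{I}_H$, because $k[H]$ is a direct summand of $\mathrm{Res}^G_H k[G]\cong k[H]^{\oplus [G:H]}$. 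Two points worth tightening. First, the cleanest justification that "the relative tensor product of a localization is again a localization" here is that the Verdier/finite localization $\md_G(k)\to\stm_G(k)$ is \emph{smashing} (the paper notes this in its discussion of Verdier quotients, since $\mathcal{I}_G$ consists of compact objects), so $\stm_G(k)\simeq\md_{\md_G(k)}(Lk)$ for an idempotent algebra $Lk$, and everything reduces to checking $\mathrm{Res}^G_H(Lk)\simeq L_Hk$ — which follows from the adjunction $\hom_{\md_H}(k[H],\mathrm{Res}\,Lk)\simeq\hom_{\md_G}(k[G],Lk)\simeq 0$ together with the fiber of $k\to Lk$ restricting into the localizing ideal generated by $\mathcal{I}_H$. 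Second, your parenthetical in step (ii) ("is the restriction from $G$ of the corresponding free $G$-module, or is a summand thereof after coinduction") is garbled; the statement you want is simply that $k[H]\otimes_k M$ is a direct summand of $\mathrm{Res}^G_H(k[G]\otimes_k M)$. Compared with the cited approach of \cite{MNNequiv}, which develops a general theory of module categories over "induced" commutative algebras $A_H$ once and for all, your argument buys a short ad hoc proof at the cost of redoing the base-change formalism by hand; both hinge on the same free-restriction fact.
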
 
\newcommand{\mow}{\md^{\omega}}
\newcommand{\res}{\mathrm{Res}}

\Cref{moduleres} suggests that we can perform a type of descent in stable
module $\infty$-categories by restricting
to appropriate subgroups. In particular, we can hope to reduce the calculation
of certain invariants in $\stm_G(k)$ to those of $\stm_H(k)$ where $H \subset
G$ are certain subgroups, by performing descent along commutative algebra
objects of the form $\mathscr{A}_H$. We shall carry this out for the Galois
group.

Let $G$ be any finite group, and let $\mathcal{A}$ be a collection of subgroups
of $G$ such that any elementary abelian $p$-subgroup of $G$ is contained in a
conjugate of an element of $\mathcal{A}$. 
For each $H \in \mathcal{A}$, we consider the 
object $\prod_{G/H} k \in \clg(\md_G(k))$. 
\begin{proposition} 
The commutative algebra object $A = \prod_{H \in \mathcal{A}} \left(\prod_{G/H}
k\right) \in \clg( \md_G(k))$ admits descent. 
\end{proposition}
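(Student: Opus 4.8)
The plan is to reduce the statement to the corresponding descent result over the discrete coefficient ring $\mathbb{Z}$ (or the constant coefficients $\mathbb{F}_p$), where it is precisely the Quillen stratification statement $\Cref{BC}$. Recall that the stable module $\infty$-category $\stm_G(k)$ is the Verdier quotient (completed) of $\md_G(k) = \loc_{BG}(\md(k))$ by the thick $\otimes$-ideal of perfect $k[G]$-modules, and that by $\Cref{moduleres}$ the commutative algebra object $\prod_{G/H} k$ corresponds to restriction along $H \subset G$. Since admitting descent is preserved under symmetric monoidal functors (the corollary following $\Cref{constpro}$), and since $\md_G(k)$ receives a symmetric monoidal functor from $\md_G(\mathbb{F}_p)$ via base change along $\mathbb{F}_p \to k$, it suffices to prove the statement with $k$ replaced by $\mathbb{F}_p$, or even by $\mathbb{Z}$ — and to then pass through the localization $\md_G(k) \to \stm_G(k)$.

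First I would recall from $\Cref{BC}$ (Carlson--Balmer) that the commutative algebra object $\prod_{H \in \mathcal{A}} \prod_{G/H} R$ admits descent in $\md_G(R)$ for $R$ discrete, provided that every maximal elementary abelian $p$-subgroup of $G$ is conjugate to an element of $\mathcal{A}$ — and a fortiori this holds under our weaker hypothesis that every elementary abelian $p$-subgroup lies in a conjugate of some element of $\mathcal{A}$ (one may always pass to a larger $\mathcal{A}$; note $\prod_{G/H} R$ is in the thick $\otimes$-ideal generated by $\prod_{G/H'} R$ when $H \subset H'$, via the transitivity of induction, or simply observe that a descendable subfamily forces the whole family descendable). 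Applying the symmetric monoidal functor $\md_G(\mathbb{F}_p) \to \md_G(k)$ induced by $\mathbb{F}_p \to k$ (which sends $\prod_{G/H}\mathbb{F}_p$ to $\prod_{G/H}k$), we conclude that $A = \prod_{H \in \mathcal{A}} \prod_{G/H} k$ admits descent in $\md_G(k)$: the thick $\otimes$-ideal it generates contains the unit $k$.

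Second, I would transport this through the localization $L\colon \md_G(k) \to \stm_G(k)$. Since $L$ is symmetric monoidal (it is a Verdier quotient by a $\otimes$-ideal, followed by $\mathrm{Ind}$), and since descendability is preserved under such functors, the image $L(A) = \prod_{H \in \mathcal{A}} \mathscr{A}_H$ admits descent in $\stm_G(k)$. This is exactly the assertion, since $L$ carries $\prod_{G/H}k$ to the object called $\mathscr{A}_H$ in $\Cref{moduleres}$ and is a ring map, hence $L(A) = \prod_{H}\mathscr{A}_H$ in $\clg(\stm_G(k))$.

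The step I expect to be the main obstacle — or at least the one requiring the most care — is verifying that $\Cref{BC}$ as stated (in terms of a finitely generated module $V$ with a filtration by induced modules) genuinely yields descendability of $\prod_{H \in \mathcal{A}} \prod_{G/H} R$ in the categorical sense of $\Cref{admitd}$, i.e. that the relevant thick $\otimes$-ideal contains the unit, rather than merely a weaker statement about cohomology. This is handled in the discussion immediately following $\Cref{BC}$: Carlson's module $V$ sits in a filtration $0 = V_0 \subset \cdots \subset V_k = \mathbb{Z} \oplus V$ with induced subquotients, each of which is a module over some $\prod_{G/H}\mathbb{Z}$ with $H$ elementary abelian, hence lies in the thick $\otimes$-ideal generated by $A$; so $\mathbb{Z} \oplus V$, and therefore $\mathbb{Z}$ (a retract), lies there too. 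The remaining routine point is that the hypothesis ``every elementary abelian $p$-subgroup is contained in a conjugate of an element of $\mathcal{A}$'' is at least as strong as ``every maximal elementary abelian $p$-subgroup is conjugate to an element of $\mathcal{A}$'' up to enlarging $\mathcal{A}$ and using that $\prod_{G/H}k$ generates $\prod_{G/H'}k$ as a thick $\otimes$-ideal whenever $H \subseteq H'$; I would state this explicitly and then invoke $\Cref{BC}$.
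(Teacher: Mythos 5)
Your proof is correct and rests on the same key input as the paper's, namely Theorem \ref{BC} together with the observation that $E \subseteq gHg^{-1}$ makes $\prod_{G/E}k$ a $\prod_{G/H}k$-algebra and hence puts it in the thick $\otimes$-ideal generated by $A$; the paper phrases this dually, restricting $A$ to each elementary abelian subgroup (where the unit splits off as a direct factor) and invoking the permanence properties of descendability. The base change from $\mathbb{F}_p$ and the final passage to $\stm_G(k)$ are superfluous for the statement as given, which concerns $\md_G(k)$ only and already has \ref{BC} available over an arbitrary discrete ring $R$.
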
 

\begin{proof} 
 In order to
prove this, by \Cref{BC}, it suffices to prove that the above commutative algebra
admits descent after restriction from $G$ to each elementary abelian
$p$-subgroup. However, when we restrict from $G$ to each elementary abelian
$p$-subgroup, the above commutative algebra object contains a copy of the unit
object as a direct factor (as commutative algebras), so that it
clearly admits descent. 
\end{proof} 
\newcommand{\s}[1]{\mathscr{#1}}

In particular, it follows that the image $\mathscr{A} \in \clg(\stm_G(k))$ of the above commutative algebra
object $A = \prod_{H \in \mathcal{A}} \left(\prod_{G/H} k\right) \in \md_G(k)$
in the stable module $\infty$-category also 
admits descent. 
It follows that we have an equivalence
of symmetric monoidal $\infty$-categories
\begin{equation} \label{descdecompstm} \stm_G(k) \simeq 
\mathrm{Tot}\left( 
\md_{\stm_G(k)}(\mathscr{A}) \rightrightarrows \md_{\stm_G(k)}(\mathscr{A}
\otimes \mathscr{A}) \triplearrows \dots \right)
.\end{equation}

There is a classical cofinality argument that enables us to rewrite this
inverse limit in a different fashion. 
Recall:
\begin{definition} 
The \textbf{orbit category} $\mathcal{O}(G)$ is the category of all finite
$G$-sets of the form $G/H$ for $H \subset G$ a subgroup.
\end{definition} 

We have a functor
\[ \mathcal{O}(G) \to \shot, \quad G/H \mapsto \stm_H(k) = \md_{\stm_G(k)}\left( 
\prod_{G/H} k\right). \]
Note that given any finite $G$-set $S$, we can form a commutative algebra
object in $\stm_G(k)$ given by $\prod_S k  = k^S$. This construction takes
coproducts of $G$-sets to products. 

\newcommand{\sF}{\mathcal{F}}
Suppose $\mathcal{A}$ is a collection of subgroups of $G$ which is closed under
finite intersections and conjugation by elements of $G$. 
We will use the following notation:
\newcommand{\oag}{\mathcal{O}_{\mathcal{A}}(G)}
\newcommand{\oagp}{\mathcal{O}'_{\mathcal{A}}(G)}
\begin{definition} 
We let $\oag\subset \mathcal{O}(G)$ be the full
subcategory spanned by the $G$-sets $G/H$ for $H \in \mathcal{A}$. 
We let $\oagp \subset \oag$ be the full subcategory including only the
$\left\{G/H\right\}$ for $H \in \mathcal{A}$ and $H \neq 1$. 
\end{definition} 

Using standard cofinality arguments (cf. \cite[sec. 6.5]{MNNequiv}), we
obtain from the descent statement \eqref{descdecompstm}: 

\begin{corollary} 
Let $\mathcal{A}$ be a collection of subgroups of $G$. 
Suppose that $\mathcal{A}$ is closed under conjugation and finite intersections. 
Suppose every elementary abelian $p$-subgroup of $G$ is contained in a subgroup
belonging to $\mathcal{A}$. Then we have a decomposition
\begin{equation} 
\label{stmdec}
\stm_G(k) \simeq \varprojlim_{G/H \in \mathcal{O}_{\mathcal{A}}(G)^{op}}
\stm_H(k).
\end{equation} 
\end{corollary}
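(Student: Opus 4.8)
The plan is to deduce \eqref{stmdec} from the descent equivalence \eqref{descdecompstm} by a standard cofinality argument, in the style of \cite[sec. 6.5]{MNNequiv}. First I would set up the relevant combinatorics: the cosimplicial object on the right-hand side of \eqref{descdecompstm} is built from the tensor powers $\mathscr{A}^{\otimes (\bullet+1)}$, where $\mathscr{A} = \prod_{H \in \mathcal{A}} \left(\prod_{G/H} k\right)$ is the image in $\stm_G(k)$ of the commutative algebra object associated to the finite $G$-set $T = \bigsqcup_{H \in \mathcal{A}} G/H$. Since the construction $S \mapsto k^S$ sends coproducts of $G$-sets to products of commutative algebra objects and products of $G$-sets to tensor products (as $k^{S_1} \otimes k^{S_2} \simeq k^{S_1 \times S_2}$), the totalization \eqref{descdecompstm} can be rewritten as the limit of the cosimplicial diagram $[n] \mapsto \md_{\stm_G(k)}(k^{T^{\times (n+1)}})$, and by \Cref{moduleres} each $k^{T^{\times(n+1)}}$-module category decomposes as a finite product $\prod \stm_{H_0 \cap \dots \cap H_n}(k)$ indexed by tuples of subgroups in $\mathcal{A}$ (after choosing orbit representatives). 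This exhibits $\stm_G(k)$ as the limit of a diagram indexed by the simplex category $\Delta$ weighted by the nerve of $\mathcal{A}$-orbits.

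Next I would identify this $\Delta$-indexed diagram with the $\mathcal{O}_{\mathcal{A}}(G)^{\op}$-indexed diagram $G/H \mapsto \stm_H(k)$. The key observation is that, because $\mathcal{A}$ is closed under conjugation and finite intersections, the subgroups $H_0 \cap \dots \cap H_n$ appearing in the cobar construction all lie in $\mathcal{A}$, so the whole cosimplicial diagram factors through $\mathcal{O}_{\mathcal{A}}(G)^{\op}$. More precisely, there is a canonical functor from (the simplex category built on) the cobar construction to $\mathcal{O}_{\mathcal{A}}(G)^{\op}$ sending an $(n+1)$-fold orbit product $G/H_0 \times \dots \times G/H_n$ (or rather each of its orbits $G/(gHg^{-1} \cap \dots)$) to the corresponding orbit; I would check this functor is (homotopy) cofinal using Quillen's Theorem A, i.e. that the relevant comma categories (slice categories over each $G/H$) are weakly contractible. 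This contractibility is the content of the classical cofinality lemma for the orbit-category description of such limits: the slice category is essentially the poset of ways of writing $G/H$ as a retract/subobject of a product of the generating orbits, and closure under intersection makes it cofiltered-like (it has a terminal-type object after passing to the nerve), hence contractible.

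The main obstacle I expect is precisely this cofinality verification — making the passage from the cobar/$\Delta$ indexing to the orbit-category indexing fully rigorous in the $\infty$-categorical setting, keeping careful track of the conjugation action on orbits and of the fact that $k^{S}$ for $S$ a finite $G$-set only determines a commutative algebra object up to the idempotent decomposition corresponding to the orbit decomposition of $S$. One must verify that the equivalences $\md_{\stm_G(k)}(k^S) \simeq \prod_{[G/H] \subset S} \stm_H(k)$ from \Cref{moduleres} and \Cref{prode}-type idempotent splitting are natural in $S$, so that they assemble into an equivalence of cosimplicial (indeed $\Delta$-indexed) $\infty$-categories compatible with the cofinal functor to $\mathcal{O}_{\mathcal{A}}(G)^{\op}$. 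Granting the naturality and the contractibility of the comma categories, \eqref{stmdec} follows formally: a cofinal functor induces an equivalence on limits, so $\stm_G(k) \simeq \varprojlim_{\Delta}(\text{cobar}) \simeq \varprojlim_{\mathcal{O}_{\mathcal{A}}(G)^{\op}} \stm_H(k)$. I would then simply cite \cite[sec. 6.5]{MNNequiv} for the general cofinality statement and indicate that it applies verbatim here, since the hypotheses — $\mathcal{A}$ closed under conjugation and finite intersections, and $\mathscr{A}$ descendable by \Cref{BC} — are exactly what that argument requires.
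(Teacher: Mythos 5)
Your proposal is correct and follows essentially the same route as the paper, which simply invokes "standard cofinality arguments (cf. [MNNequiv, sec.\ 6.5])" applied to the descent equivalence \eqref{descdecompstm}; you have spelled out exactly what that citation entails (rewriting the cobar tensor powers as $k^{T^{\times(\bullet+1)}}$, splitting into orbits whose stabilizers stay in $\mathcal{A}$ by closure under conjugation and intersection, and checking cofinality of the resulting functor to $\mathcal{O}_{\mathcal{A}}(G)^{\op}$). The obstacle you flag — naturality of the orbit splittings and the comma-category contractibility — is precisely what the cited reference handles, so nothing further is needed.
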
 

These types of descent statements at the level of homotopy
categories have been developed in \cite{balmerstack}. 
We also have an analogous (but easier) decomposition
$$
\md_G(k) \simeq \varprojlim_{G/H \in \mathcal{O}_{\mathcal{A}}(G)^{op}}
\md_H(k).
$$

Using \Cref{galelem}, we get: 
\begin{theorem} 
\label{galgpstmodcat}
Let $\mathcal{A}$ be the collection of elementary abelian $p$-subgroups of $G$. 
If $k$ is a separably closed field of characteristic $p$, then the Galois group of $\stm_G(k)$ is the profinite completion of
the fundamental group of 
the nerve of the category $\mathcal{O}'_{\mathcal{A}}(G)$.
\end{theorem}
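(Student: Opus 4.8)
\textbf{Proof proposal for \Cref{galgpstmodcat}.}

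The plan is to feed the decomposition \eqref{stmdec} into the van Kampen machine of \Cref{vk2} and then identify the resulting colimit of profinite groupoids with the profinite completion of a $1$-type built from the orbit category. First I would take $\mathcal{A}$ to be the collection of \emph{all} elementary abelian $p$-subgroups of $G$, which is closed under conjugation and (since a subgroup of an elementary abelian $p$-group is again elementary abelian) under finite intersections; hence \eqref{stmdec} applies and exhibits $\stm_G(k)$ as a limit $\varprojlim_{G/H \in \mathcal{O}_{\mathcal{A}}(G)^{\op}} \stm_H(k)$ over a \emph{finite} index category. Since $\stm_H(k) \simeq \md(k^{tH})$ for $H$ elementary abelian (\Cref{keller}, noting $H$ is automatically a $p$-group), and since $k$ is separably closed, \Cref{galelem} gives $\pi_1(\stm_H(k)) \simeq \gal(k^{\mathrm{sep}}/k) = 1$, so each vertex stable homotopy theory has trivial Galois group and trivial $\pi_0$ of its Galois groupoid. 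Because the index category $\mathcal{O}_{\mathcal{A}}(G)^{\op}$ is finite, \Cref{vankampen}/\Cref{vk2} apply directly (no compactness worries, as $\mathbf{1}$ is compact in stable module categories anyway), yielding
\[
\pi_{\leq 1}(\stm_G(k)) \simeq \varinjlim_{G/H \in \mathcal{O}_{\mathcal{A}}(G)} \pi_{\leq 1}(\stm_H(k)) \simeq \varinjlim_{G/H \in \mathcal{O}_{\mathcal{A}}(G)} \ast,
\]
a colimit of terminal profinite groupoids indexed by $\mathcal{O}_{\mathcal{A}}(G)$.

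The next step is to compute this colimit. A colimit of copies of the one-point profinite groupoid over an index category $I$ should be the profinite completion of the homotopy colimit $\varinjlim_I \ast$ in spaces, i.e.\ the profinite completion of the classifying space (nerve) $BI = |N(I)|$; this is because the functor $\pro(\fgp)^{\op} \simeq \galcat$ sends $\fgp$-colimits appropriately and the constant functor at $\ast$ is the image of the constant functor at the point, whose homotopy colimit over $I$ is $BI$. Concretely I would argue: $\fgp$ is generated under colimits by the one-point groupoid together with the $BG$'s, and $\pro(\fgp)$ is the pro-completion; the colimit $\varinjlim_I \ast$ in $\pro(\fgp)$ is therefore the pro-completion (in the sense of $\pro(\fgp) \simeq \mathrm{Ind}(\fgp^{\op})^{\op}$) of $\varinjlim_I \ast$ computed in $\fgp$, which since finite groupoids are $1$-truncated is the $1$-truncation of $BI$, i.e.\ the fundamental groupoid $\Pi_{\leq 1}(BI)$, and pro-completing a groupoid with finitely many components is the same as profinitely completing its automorphism groups. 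So $\pi_{\leq 1}(\stm_G(k))$ is the profinite completion of $\Pi_{\leq 1}(N(\mathcal{O}_{\mathcal{A}}(G)))$. Finally, to pass from the full orbit category $\mathcal{O}_{\mathcal{A}}(G)$ (which includes the terminal object $G/G$, or more precisely the trivial-subgroup issue) to $\mathcal{O}'_{\mathcal{A}}(G)$, I would invoke the Quillen-stratification-flavored observation: the category $\mathcal{O}_{\mathcal{A}}(G)$ has the trivial subgroup $H = 1$ among its objects, and $G/1$ is a \emph{free} $G$-set, so in the stable module category $\stm_G(k)$ the object $\prod_{G/1} k$ is zero (it is perfect as a $k[G]$-module, hence in the ideal killed by the Verdier quotient). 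This means the vertex $\stm_1(k)$ is the zero category, which does not contribute, and a cofinality/excision argument removes $G/1$ from the diagram, leaving $\mathcal{O}'_{\mathcal{A}}(G)$; thus one gets the nerve of $\mathcal{O}'_{\mathcal{A}}(G)$ as claimed. Since $k$ is separably closed and the stable module category is connected (the unit has no nontrivial idempotents), the Galois \emph{groupoid} is a Galois \emph{group} up to conjugacy, namely $\widehat{\pi_1(N(\mathcal{O}'_{\mathcal{A}}(G)))}$.

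The main obstacle I anticipate is the precise handling of the vertex $\stm_1(k) = 0$ and the passage from $\mathcal{O}_{\mathcal{A}}(G)$ to $\mathcal{O}'_{\mathcal{A}}(G)$: one needs that a limit of stable homotopy theories over a finite diagram in which one terminal-ish object is the zero category can be computed over the subdiagram omitting that object, and that this is compatible with the Galois correspondence and with the identification of $N(\mathcal{O}_{\mathcal{A}}(G))$ versus $N(\mathcal{O}'_{\mathcal{A}}(G))$ up to homotopy. I would either argue this directly (the zero category has no torsors except the trivial one, so it imposes no constraint, hence can be deleted — this needs a small lemma about $\galcat$-limits over diagrams with terminal objects mapped to the terminal Galois category, which is $\finset$, not the zero category, so some care is needed since $\stm_1(k)=0$ corresponds to the \emph{trivial} Galois category with $\emptyset \simeq \ast$) or else phrase things so that one only ever uses the subcategory $\mathcal{O}'_{\mathcal{A}}(G)$ from the start, noting that the elementary abelian $p$-subgroups $\neq 1$ already suffice to detect everything via \Cref{BC} provided $p \mid |G|$, and treating the case $p \nmid |G|$ (where $\stm_G(k) = 0$ and the statement is vacuous or the nerve is empty) separately. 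A secondary, more routine obstacle is justifying that $\varinjlim_I \ast$ in $\pro(\fgp)$ really is $\widehat{\Pi_{\leq 1}(BI)}$; this should follow formally from the construction of $\pro(\fgp) \to \galcat$ as the pro-completion of $\fgp^{\op} \to \galcat$ together with the fact, already used in the proof of \Cref{galequiv}, that the nerve of a finite diagram of copies of $\finset$ computes finite covers of the corresponding finite CW complex.
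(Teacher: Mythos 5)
Your proposal is correct and follows essentially the same route as the paper: the descent decomposition \eqref{stmdec} over $\mathcal{O}_{\mathcal{A}}(G)$, van Kampen (\Cref{vk2}), triviality of each $\pi_{\leq 1}(\stm_H(k))$ for $H \neq 1$ via \Cref{galelem}, and the observation that the $H=1$ vertex is the zero category. The ``small lemma'' you were looking for to delete the $G/1$ vertex is exactly the paper's resolution: since $\stm_1(k)=0$ has \emph{empty} Galois groupoid (not a point --- so your displayed $\varinjlim_{\mathcal{O}_{\mathcal{A}}(G)} \ast$ is not literally right), and no $G/H$ with $H\neq 1$ maps to $G/1$ in the orbit category, the diagram $G/H \mapsto \pi_{\leq 1}(\stm_H(k))$ is the left Kan extension of the constant functor $\ast$ on $\mathcal{O}'_{\mathcal{A}}(G)$, so its colimit is $\varinjlim_{\mathcal{O}'_{\mathcal{A}}(G)}\ast$; note your fallback of working with $\mathcal{O}'_{\mathcal{A}}(G)$ from the start would not apply directly, since the nontrivial elementary abelian subgroups are not closed under intersection.
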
 
\begin{proof} 
The decomposition \eqref{stmdec} implies that there is a decomposition
\[ \pi_{\leq 1} (\stm_G(k) ) = \varinjlim_{G/H \in \oag} \pi_{\leq 1} (
\stm_H(k)).  \]
Now by \Cref{galelem}, when $H $ is nontrivial we have $\pi_{\leq 1} (
\stm_H(k)) = \ast$. When $H = 1$, then $\stm_H(k) = 0$ so that the Galois
groupoid is empty. It follows that the functor $\oag \to \pro( \fgp)$,
$G/H \mapsto \pi_{\leq 1} (
\stm_H(k))$ is the left Kan extension of the constant functor $\ast$ on $\oagp
\subset \oag$. This implies the result. 
\end{proof} 

Unfortunately, we do not know in general a more  explicit description of the
Galois group. We will give a couple of simple examples below. 

\begin{theorem} 
\begin{enumerate}
\item 
Let $G$ be a finite group whose center contains an order $p$ element (e.g., a
$p$-group). Then the Galois group of $\stm_G(k)$ is the
quotient of $G$ by the normal subgroup generated by the order $p$ elements: the
functor
\( \md_G(k) \to \stm_G(k),  \)
induces an isomorphism on fundamental groups. 
\item Suppose $G$ is a finite group such that the intersection of any three
$p$-Sylow subgroups of $G$ is nontrivial. Then 
$\md_G(k) \to \stm_G(k)$ induces an isomorphism on fundamental groups. 
\end{enumerate}
\end{theorem}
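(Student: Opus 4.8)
The plan is to exploit the decomposition \eqref{stmdec} together with \Cref{galgpstmodcat}, reducing both statements to a purely combinatorial computation of the fundamental group of the nerve of the category $\oagp$ attached to the collection $\mathcal{A}$ of \emph{nontrivial} elementary abelian $p$-subgroups of $G$. The key observation, which we must establish in each case, is that this nerve is simply connected, so that by \Cref{galgpstmodcat} the Galois group of $\stm_G(k)$ is trivial --- and then, comparing with the analogous (easier) decomposition $\md_G(k) \simeq \varprojlim_{G/H \in \mathcal{O}_{\mathcal{A}}(G)^{\op}} \md_H(k)$ and the computation of $\pi_1 \md_G(k)$ in \Cref{Grepgal} (the quotient of $G$ by the normal subgroup generated by order $p$ elements), conclude that $\md_G(k) \to \stm_G(k)$ induces an isomorphism. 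Actually one must be slightly more careful: \Cref{galgpstmodcat} gives $\pi_1 \stm_G(k)$ as the profinite completion of $\pi_1$ of the nerve of $\oagp$, and to compare with $\md_G(k)$ one should run the same van Kampen / \Cref{vk2} argument for $\md_G(k)$, where $\pi_{\leq 1}\md_H(k) = BH'$ ($H'$ the quotient of $H$ by its order-$p$ elements) rather than a point; the resulting colimit in $\pro(\fgp)$, taken over $\oag$, collapses to $\pi_1\md_G(k)$ by the Quillen-type cofinality already invoked.

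\textbf{Case (1): center contains an order $p$ element.} First I would fix a central element $z \in G$ of order $p$, so that $\langle z\rangle \cong \mathbb{Z}/p$ is a nontrivial elementary abelian $p$-subgroup contained in \emph{every} $p$-Sylow subgroup and, more to the point, central. The crucial structural fact is then that $\langle z\rangle \subset H$ for every nontrivial elementary abelian $p$-subgroup $H$? --- this is false in general, so instead I would use that $\langle z \rangle \cdot H$ is again elementary abelian (as $z$ is central of order $p$ and commutes with everything, and $H$ is elementary abelian) whenever $H$ is. Thus the object $G/\langle z\rangle \in \oagp$ is a \emph{terminal-ish} vertex: for each nontrivial elementary abelian $H$, the inclusion $\langle z\rangle \hookrightarrow \langle z\rangle H \hookleftarrow H$ gives a zig-zag in $\oagp$ connecting $G/H$ to $G/\langle z \rangle$ through $G/\langle z\rangle H$. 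To upgrade connectivity to simple connectivity I would show $G/\langle z\rangle$ is a cone point after a standard manipulation: the subposet of $\oagp$ on those $H$ containing $z$ has $\langle z\rangle$ as initial object hence contractible nerve, and the inclusion of that subposet into $\oagp$ is a homotopy equivalence because $H \mapsto \langle z\rangle H$ is a well-defined functor $\oagp \to \oagp$ landing in that subposet, equipped with a natural transformation $\mathrm{id} \Rightarrow (H \mapsto \langle z \rangle H)$ (the inclusions $H \subset \langle z\rangle H$ are morphisms in $\mathcal{O}(G)$ going the right way). A natural transformation between functors induces a homotopy on nerves, so the nerve of $\oagp$ is contractible, in particular simply connected; hence $\pi_1 \stm_G(k) = \ast$ and, matching against the $\md_G(k)$ computation, $\md_G(k) \to \stm_G(k)$ is an isomorphism on $\pi_1$, which by \Cref{Grepgal} is the asserted quotient of $G$.

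\textbf{Case (2): any three $p$-Sylows intersect nontrivially.} Here there is no single central subgroup to serve as a cone point, so instead I would argue directly that the nerve of $\oagp$ --- or rather of the conjugation-and-intersection closure $\mathcal{A}$ of the collection of $p$-Sylow subgroups, which we may use in \eqref{stmdec} since it contains every elementary abelian $p$-subgroup --- is simply connected via a Mayer--Vietoris / van Kampen argument on the nerve. A loop in the nerve is a sequence of subgroups $P_0 \supset Q_1 \subset P_1 \supset Q_2 \subset \cdots \subset P_0$ (up to homotopy one may take the $P_i$ to be Sylow and the $Q_i$ intersections), and the hypothesis that $P_{i-1}\cap P_i \cap P_{i+1} \neq 1$ provides, for each consecutive triple, a nontrivial subgroup lying below all three, letting one ``fill in'' triangles in the nerve and contract the loop. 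More precisely I would set up the covering of the nerve by the stars of the vertices $G/P$ for $P$ Sylow, note pairwise intersections of such stars are non-empty (two Sylows always intersect, being contained in a common... no --- use that any two Sylows of a group need not intersect, but the hypothesis forces pairwise intersections to be nontrivial a fortiori, since taking $P_{i+1} = P_i$ shows $P_{i-1}\cap P_i \neq 1$), and triple intersections are non-empty and connected by the three-fold hypothesis, so the nerve of the cover computes $\pi_1$ and is 1-connected. The main obstacle. The genuinely delicate step in both cases is not the topology but verifying that one is entitled to replace the collection of \emph{all} nontrivial elementary abelian $p$-subgroups (as in \Cref{galgpstmodcat}) by the more convenient collections ($\{H : z \in H\}$ in case (1), the Sylow-generated collection in case (2)) without changing the homotopy type of the nerve, and in case (2) establishing the connectivity of triple intersections of stars from the purely group-theoretic hypothesis; I would handle the former by the natural-transformation/cofinality arguments sketched above (which are exactly the kind used in \Cref{galgpstmodcat}'s proof and in \cite[sec. 6.5]{MNNequiv}) and the latter by a careful poset-theoretic unwinding of what ``star intersection'' means in the nerve.
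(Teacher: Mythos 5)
Your argument for (1) has a fatal gap: you are treating $\oagp$ as the \emph{poset} of subgroups in $\mathcal{A}$, but it is the full subcategory of the orbit category, whose objects $G/H$ carry automorphism groups $N_G(H)/H$ and whose hom-sets are $(G/K)^H$. Already for $G = \mathbb{Z}/p^2$ the category $\oagp$ has a single object $G/(\mathbb{Z}/p)$ with automorphism group $\mathbb{Z}/p$, so its nerve is $B\mathbb{Z}/p$, not contractible --- and indeed the theorem asserts $\pi_1 \stm_{\mathbb{Z}/p^2}(k) \cong \mathbb{Z}/p$. Your conclusion ``$\pi_1\stm_G(k)=\ast$'' contradicts the statement you are proving whenever $G$ is not generated by order $p$ elements, and the natural transformation $\mathrm{id}\Rightarrow (H\mapsto\langle z\rangle H)$ you invoke does not exist at the level of orbit categories without confronting these automorphisms. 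The correct move --- and what the paper does --- is to avoid computing the nerve altogether: take $\mathcal{A}$ to be the nontrivial elementary abelian subgroups \emph{containing} the central subgroup $C=\langle z\rangle$. This collection is closed under conjugation and, crucially, under intersection (every member contains $C$, so no intersection is trivial), and it dominates all elementary abelians since $CH$ is again elementary abelian. Then in the two decompositions of $\stm_G(k)$ and $\md_G(k)$ over $\oag$, \emph{every} term has trivial Galois groupoid: for $\stm_H(k)$ by \Cref{galelem}, and for $\md_H(k)$ by \Cref{Grepgal} because an elementary abelian group is generated by its order $p$ elements. Hence both fundamental groups are identified with $\widehat{\pi_1 N(\oag)}$ and therefore with each other, and one reads off the answer from the known computation of $\pi_1\md_G(k)$ in \Cref{Grepgal}. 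You do not need, and should not expect, $N(\oagp)$ to be simply connected.

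For (2) the same objection kills the simple-connectivity claim, and the star-covering sketch does not track the right data: the collection generated by Sylow subgroups consists of $p$-groups rather than elementary abelians, so \Cref{galgpstmodcat} does not apply to it, and in any case the answer to be proved is generally a nontrivial group. The paper's argument is both shorter and genuinely different from what you propose: fix one Sylow $P$, and run descent along $B=\prod_{G/P}k$ in $\md_G(k)$ and its image $\mathscr{B}$ in $\stm_G(k)$. The module categories over $B^{\otimes m}$ (resp.\ $\mathscr{B}^{\otimes m}$) split as products indexed by the $G$-orbits of $(G/P)^{\times m}$ of $\md_{H}(k)$ (resp.\ $\stm_H(k)$) for $H$ an $m$-fold intersection of Sylows; the hypothesis makes these stabilizers nontrivial $p$-groups for $m\le 3$, each of which has a central order $p$ element, so part (1) identifies the Galois groupoids of the first three stages of the two cobar constructions. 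Since the Galois groupoid of the base depends only on the $3$-truncation of the descent diagram, the two colimits agree. If you want to salvage your approach, the lesson is that the comparison should always be between two descent computations term by term, never via a claimed contractibility of the indexing category.
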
 
\begin{proof} 
Consider the first case. 
 Choose an order $p$
subgroup $C$ contained in the center of $G$, and consider the collection
$\mathcal{A}$ of all nontrivial elementary abelian $p$-subgroups of $G$ which contain $C$.
Note that $\mathcal{A}$ does not contain the trivial subgroup. 
Then 
we get decompositions 
$\stm_G(k) \simeq \varprojlim_{G/H \in \mathcal{O}_{\mathcal{A}}(G)^{op}}
\stm_H(k)$ and similarly for  $\md_G(k)$. 
In both cases, the Galois groupoid of each term in the inverse limit is 
is a point. It follows that 
\[ \pi_1( \stm_G(k)) \simeq \pi_1 ( \md_G(k)) \simeq \pi_1 N( \oag),  \]
and since we have already computed the Galois group of $\md_G(k)$ (\Cref{Grepgal}),
we are done. 

For the second case, let $G$ be a finite group such that the intersection of
any three $p$-Sylows in $G$ is nontrivial. Here we will argue slightly
differently. We fix a $p$-Sylow $P \subset
G$ and consider the commutative algebra
object $B = \prod_{G/P} k \in \clg( \md_G(k))$ and its image $\mathscr{B} \in
\clg( \stm_G(k))$. 
We observe that $B, B \otimes B, B \otimes B \otimes B$ have the same
fundamental groupoids as $\mathscr{B}, \mathscr{B} \otimes \mathscr{B},
\mathscr{B }\otimes \mathscr{B} \otimes \mathscr{B}$, respectively: in fact,
this follows from the previous item (that the Galois groups for $\md_H(k)$ and
$\stm_H(k)$ where $H$ is a \emph{nontrivial} $p$-group are isomorphic), since
the hypotheses imply that the
$G$-set $G/P \times G/P \times G/P$ has no free component to it.
Therefore, by
descent theory, the Galois groups of $\md_G(k)$ and $\stm_G(k)$ must be
isomorphic; note that the Galois group only depends on the 3-truncation of the descent
diagram. 

\end{proof} 

On the other hand, there are cases in which there are finite covers in the
stable module $\infty$-category that do not come from the representation category. 

\begin{corollary} 
Let $k$ be a separably closed field of characteristic $p$. 
Let $G$ be a finite group such that the maximal elementary abelian $p$-subgroup
of $G$ has rank one (i.e., there is no embedding $\mathbb{Z}/p \times
\mathbb{Z}/p \subset G$) and any two such are conjugate. In this case, the Galois group of $\stm_G(k)$ is the
Weyl group of a subgroup $\mathbb{Z}/p \subset G$. 
\end{corollary}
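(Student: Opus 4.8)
The plan is to apply \Cref{galgpstmodcat}, which identifies $\pi_1(\stm_G(k))$ with the profinite completion of the fundamental group of the nerve $N(\oagp)$, where $\mathcal{A}$ is the collection of elementary abelian $p$-subgroups of $G$ (equivalently, by the closure-under-intersection discussion, one may take $\mathcal{A}$ to consist of all nontrivial elementary abelian $p$-subgroups, since the rank-one hypothesis means any two nontrivial such intersect trivially unless equal). So the entire problem reduces to computing $\pi_1 N(\oagp)$ for this particular $G$. First I would observe that under the rank-one hypothesis the nontrivial elementary abelian $p$-subgroups are precisely the subgroups of order $p$, and these are all conjugate by assumption. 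Let $C \simeq \mathbb{Z}/p$ be one of them, and let $W = N_G(C)/C_G(C)$ be its Weyl group (the image of $N_G(C)$ in $\mathrm{Aut}(C)$); actually the relevant group will turn out to be $N_G(C)/C$ — I need to be careful about which ``Weyl group'' is meant, and checking this is part of the task.

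Next I would analyze the category $\oagp$. Its objects are the $G$-sets $G/A$ with $A$ running over the order-$p$ subgroups, and since all such $A$ are conjugate, $\oagp$ has a single isomorphism class of object, namely $G/C$. Hence $N(\oagp)$ is the classifying space of a one-object category, i.e.\ of the monoid $\hom_{\oagp}(G/C, G/C) = \hom_G(G/C, G/C)$. A $G$-map $G/C \to G/C$ is given by a coset $gC$ with $g^{-1}Cg \subseteq C$, i.e.\ $g \in N_G(C)$; composition makes this the group $N_G(C)/C$ (it is actually a group, not just a monoid, because $C$ is its own normalizer inside... no — because $g^{-1}Cg = C$ forces $g \in N_G(C)$ and the resulting endomorphisms are all invertible). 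So $N(\oagp) \simeq B(N_G(C)/C)$, whose fundamental group is $N_G(C)/C$. I would then reconcile this with the statement: the ``Weyl group of a subgroup $\mathbb{Z}/p \subset G$'' in Quillen's sense is exactly $N_G(C)/C_G(C)$, but here one obtains $N_G(C)/C$; these differ by $C_G(C)/C$. A cleaner route, and the one I would actually pursue, is to note that the $G$-sets $G/A$ only see $A$ up to conjugacy together with the fusion data, and to invoke the standard identification of the nerve of the orbit category on a single conjugacy class with $B(\text{Weyl group})$ — this is where I would need to pin down the correct normalizing quotient. Taking profinite completion of a finite group changes nothing, so $\pi_1(\stm_G(k))$ is this (finite) Weyl group.

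The main obstacle is precisely the bookkeeping in the previous paragraph: getting the right ``Weyl group'' and making sure the nerve computation is not corrupted by the presence of nonidentity automorphisms of $G/C$ or by the poset structure of conjugacy. Concretely, I must verify that $\oagp$ really is equivalent to the one-object groupoid $B(N_G(C)/C)$ and not to some larger category — in particular that there are no other objects (this uses rank one: there are no $G/A$ with $|A| = p^2$) and no unexpected morphisms between distinct $G/A$'s beyond those coming from conjugacy in $G$ (this uses that distinct order-$p$ subgroups intersect trivially, so a $G$-map $G/A \to G/A'$ with $A, A'$ order $p$ requires a conjugating element, hence $A, A'$ conjugate, and the map is an isomorphism). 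Once $\oagp$ is a connected groupoid with automorphism group $N_G(C)/C$ at each object, $N(\oagp)$ is a $K(N_G(C)/C, 1)$ and the result follows. I would also double-check against the two easy sanity cases: when $G = \mathbb{Z}/p$ itself, $N_G(C)/C = 1$ and indeed $\stm_{\mathbb{Z}/p}(k) = \md(k^{t\mathbb{Z}/p})$ has algebraic (here, trivial) Galois group by \Cref{galelem}; and when $C$ is central, $N_G(C)/C = G/C$, matching the earlier computation that $\md_G(k) \to \stm_G(k)$ is an isomorphism on $\pi_1$ with target $G$ modulo order-$p$ elements — consistent since $\langle$order-$p$ elements$\rangle = C$ in that regime. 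Finally I would state the corollary with the Weyl group understood as $N_G(C)/C_G(C)$ if that is the intended reading, flagging the (harmless, by conjugacy-closedness of the argument) identification.
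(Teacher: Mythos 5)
Your proposal is correct and is exactly the argument the paper has in mind: the corollary is read off from \Cref{galgpstmodcat} by observing that, under the rank-one and conjugacy hypotheses, $\mathcal{O}'_{\mathcal{A}}(G)$ is a connected groupoid equivalent to $B\bigl(\mathrm{Aut}_G(G/C)\bigr) = B\bigl(N_G(C)/C\bigr)$, so its nerve is a $K(N_G(C)/C,1)$ and profinite completion does nothing. The one ambiguity you flag is resolved by your own sanity checks: the intended ``Weyl group'' is the orbit-category one, $N_G(C)/C$, not Quillen's $N_G(C)/C_G(C)$ (the central case, e.g.\ $G=\mathbb{Z}/p^2$, distinguishes them and agrees only with $N_G(C)/C$), so you should not restate the corollary with $N_G(C)/C_G(C)$.
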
 
\begin{proof} 
This is an immediate consequence of \Cref{galgpstmodcat}. 
\end{proof}

For example, we find that the Galois group of the stable module
$\infty$-category of $\Sigma_p$ is precisely a $( \mathbb{Z}/p)^{\times}$,
which is the Weyl group of $\mathbb{Z}/p \subset \Sigma_p$. 
We can see this very explicitly. The Tate construction $k^{t \Sigma_p}$ has
homotopy groups given by 
\[ \pi_* ( k^{t \Sigma_p}) \simeq E(\alpha_{2p-1}) \otimes P(\beta_{2p-2}^{\pm
1}),  \]
whereas we have $k^{t \mathbb{Z}/p} \simeq E(\alpha_{-1}) \otimes P( \beta_2^{\pm
1})$. The extension $k^{t \Sigma_p} \to k^{t \mathbb{Z}/p}$ is Galois, and is
obtained roughly by adjoining a $(p-1)$st root of the invertible element
$\beta_{2p-2}$.

\section{Chromatic homotopy theory}

In this section, we begin exploring the Galois group in chromatic stable
homotopy theory; this was the original motivating example for this project. 
In particular, we consider Galois groups over certain $E_n$-local
$\e{\infty}$-rings such as $\TMF$ and $L_n S^0$, and over the $\infty$-category
$L_{K(n)} \sp$ of $K(n)$-local spectra. 

\subsection{Affineness and $\TMF$}
\label{sec:dstack}
Consider the $\e{\infty}$-ring $\TMF$ of (periodic) topological modular forms.
We refer to \cite{TMF} for a detailed treatment.  
Our goal in this section is to describe its Galois theory. 
The homotopy groups of $\TMF$ are very far from regular; there is considerable
torsion and nilpotence in $\pi_*(\TMF)$ at the primes $2$ and $3$, coming from
the stable stems. This presents a significant difficulty in the computation of arithmetic invariants of $\TMF$
and $\md( \TMF)$. 

Nonetheless, $\TMF$ itself is built up as an inverse limit of much simpler
(at least, simpler at the level of homotopy groups) $\e{\infty}$-ring spectra. Recall the construction of
Goerss-Hopkins-Miller-Lurie, which builds $\TMF$ as the global sections of a
sheaf of $\e{\infty}$-ring spectra on the \'etale site of the moduli stack of
elliptic curves $\mell$. 
Given a commutative ring $R$, and an elliptic curve $C \to \spec R$ such that
the classifying map $\spec R \to \mell$ is \'etale, the construction assigns an
$\e{\infty}$-ring $\otop( \spec R)$ with the basic properties: 
\begin{enumerate}
\item  $\otop( \spec R)$ is even periodic. 
\item We have a canonical identification $\pi_0 \otop( \spec R) \simeq R $ and
a canonical identification of the formal group of $\otop( \spec R)$ and the
formal completion $\widehat{C}$. 
\end{enumerate}

The construction makes the assignment $(\spec R \to \mell) \mapsto \otop( \spec R)$
into a \emph{functor} from the affine \'etale site of $\mell$ to the
$\infty$-category of $\e{\infty}$-rings, 
and one defines
\begin{equation} \label{invlimTMF}\TMF = \Gamma( \mell, \otop) \stackrel{\mathrm{def}}{=} \varprojlim_{\spec
R \to \mell} \otop( \spec R).\end{equation}

The moduli stack of elliptic curves is \emph{regular}: any \'etale map $\spec R \to \mell$
has the property that $R$ is a regular, two-dimensional domain. 
The Galois theory of each $\otop( \spec R)$ is thus purely algebraic in view of
\Cref{etalegalois}. 
It follows that from the 
expression \eqref{invlimTMF} that we have a fully faithful embedding
\begin{equation} \label{easyqc}\md^{\omega}( \TMF) \subset \varprojlim_{\spec R \to \mell} \md^{\omega}( \otop(
\spec R)),  \end{equation}
which proves that an \emph{upper bound} for the Galois group of $\TMF$ is
given by the Galois group  of the moduli stack of elliptic curves. 
It is a folklore result that the moduli stack of elliptic curves, over
$\mathbb{Z}$, is simply connected; see for instance \cite{conrad}. Therefore, one has: 

\begin{theorem} \label{TMFsep}
$\TMF$ is separably closed, i.e., has trivial Galois group. 
\end{theorem}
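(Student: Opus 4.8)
\textbf{Proof proposal for \Cref{TMFsep}.} The plan is to combine the inverse-limit presentation \eqref{invlimTMF} of $\TMF$ with the van Kampen-type result \Cref{vk2} (really \Cref{vankampen} applied to the affine \'etale site of $\mell$) and the algebraicity result \Cref{etalegalois}. First I would recall that $\md(\TMF)$ receives a fully faithful embedding into the homotopy limit $\varprojlim_{\spec R\to \mell}\md(\otop(\spec R))$ — this is the content of \eqref{easyqc} at the level of perfect modules, and it follows formally since the right adjoint to tensoring up preserves the unit object. Since the Galois groupoid depends only on the subcategory of dualizable (equivalently, perfect) objects, the embedding \eqref{variousinc} then gives a chain of fully faithful inclusions of Galois categories
\[
\clgf(\md(\TMF))\ \subset\ \clgf\!\left(\varprojlim_{\spec R\to\mell}\md(\otop(\spec R))\right)\ \subset\ \varprojlim_{\spec R\to\mell}\clgf(\md(\otop(\spec R))),
\]
and dually (\Cref{connectedigal}, \Cref{galequiv}) a \emph{surjection} of profinite groups from $\varinjlim \pi_1\md(\otop(\spec R))$ onto $\pi_1\md(\TMF)$ — once we know the source is connected, which holds because $\pi_0\TMF$ has no nontrivial idempotents (indeed $\mell$ is connected). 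Thus it suffices to show the upper bound $\varinjlim_{\spec R\to\mell}\pi_1\md(\otop(\spec R))$ is trivial.

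Next I would invoke \Cref{etalegalois}: each affine \'etale $\spec R\to\mell$ has $R$ regular noetherian (two-dimensional), and each $\otop(\spec R)$ is even periodic by construction, so the Galois theory of $\otop(\spec R)$ is algebraic, i.e. $\pi_1\md(\otop(\spec R))\simeq \pi_1^{\mathrm{et}}\spec R$. Taking the colimit over the affine \'etale site (a cofiltered-type diagram of schemes \'etale over $\mell$), the resulting colimit of fundamental groups computes the \'etale fundamental group of $\mell$ itself: this is exactly the statement that $\pi_1^{\mathrm{et}}$ of a stack is the limit (over the \'etale site) of $\pi_1^{\mathrm{et}}$ of the affine \'etale charts, dual to the fact that finite \'etale covers of $\mell$ are the compatible families of finite \'etale covers of the charts. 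So $\varinjlim_{\spec R\to\mell}\pi_1\md(\otop(\spec R))\simeq \pi_1^{\mathrm{et}}(\mell)$.

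Finally I would cite the folklore fact, proved e.g.\ in \cite{conrad}, that the moduli stack of elliptic curves $\mell$ over $\mathbb{Z}$ is simply connected: $\pi_1^{\mathrm{et}}(\mell)$ is trivial. (One way to see this: any finite \'etale cover of $\mell$ would, after base change to $\overline{\mathbb{Q}}$, give a finite \'etale cover of $\mell_{\overline{\mathbb{Q}}}$, which is a rational curve with coarse space $\mathbb{A}^1_j$ and only tame stacky structure, hence simply connected; one then argues there is no unramified-at-the-bad-primes extension by a discriminant/Minkowski-type argument as in the case of $\spec\mathbb{Z}$.) Combining, the upper bound is trivial, hence $\pi_1\md(\TMF)$ is trivial, which is the assertion that $\TMF$ is separably closed.

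The main obstacle is the last step — establishing $\pi_1^{\mathrm{et}}(\mell)=0$ over $\mathbb{Z}$ — since this genuinely uses arithmetic input (the analogue of Minkowski's theorem, controlling ramification at $2$ and $3$ where $\mell$ has bad reduction) rather than the soft categorical machinery; but it is exactly the folklore computation referenced in \cite{conrad}, so for the purposes of this paper it can be cited. A secondary point to be careful about is that \Cref{vankampen} as stated gives an \emph{equivalence} of Galois categories only for \emph{finite} diagrams, whereas the \'etale site of $\mell$ is not finite; this is why the argument only yields the fully faithful inclusion \eqref{variousinc} and hence only an \emph{upper bound} on $\pi_1\md(\TMF)$ — which, happily, is all that is needed here since the upper bound is already trivial.
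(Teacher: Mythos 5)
Your proposal is correct and follows essentially the same route as the paper: use the fully faithful embedding \eqref{easyqc} to bound $\pi_1\md(\TMF)$ above by the colimit of the $\pi_1\md(\otop(\spec R))$, identify each of these algebraically via \Cref{etalegalois}, recognize the colimit as $\pi_1^{\mathrm{et}}(\mell)$, and cite the folklore simple-connectedness of $\mell$ over $\mathbb{Z}$. Your added remarks — that the inclusion of Galois categories dualizes to a surjection of profinite groups only after checking connectedness, and that \Cref{vankampen} gives only an upper bound over the infinite \'etale site — are exactly the implicit points the paper glosses over, so the argument matches.
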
 

Using more sophisticated arguments, one can 
calculate the Galois groups not only of $\TMF$, but also of various
localizations (where the algebraic stack is no longer simply connected). This
proceeds by a strengthening of \eqref{easyqc}. 

\begin{definition} 
The $\infty$-category $\qcoh( \otop)$ of \textbf{quasi-coherent
$\otop$-modules} is the inverse limit $\varprojlim_{\spec R \to \mell} \md(
\otop( \spec R))$. 
\end{definition} 

As usual, we have an adjunction
\[ \md( \TMF) \rightleftarrows \qcoh( \otop),  \]
since $\TMF$ is the $\e{\infty}$-ring of endomorphisms of the unit in $\qcoh(
\otop)$. At least away from the prime 2 (this restriction is removed in
\cite{MM}), it is a result of Meier, proved in \cite{meier}, that the adjunction
is an equivalence: $\TMF$-modules are equivalent to quasi-coherent
$\otop$-modules. In particular, the unit object in $\qcoh( \otop)$ is compact,
which would not have been obvious a priori. 
It follows that we can make  a stronger version of the argument in
\Cref{TMFsep}. We will do this below in more generality. 

In \cite{MM}, L. Meier and the author 
formulated a more general context for ``affineness'' results such as this. 
We review the results. Let $M_{FG}$ be the moduli stack of formal groups. Let $X$ be a Deligne-Mumford stack and let $X \to
M_{FG}$ be a flat map. It follows that for every \'etale map $\spec R \to X$,
the composite $\spec R \to X \to M_{FG}$ is flat and there is a canonically
associated even periodic, \emph{Landweber-exact}  multiplicative homology theory associated to
it. An \emph{even periodic refinement} of this data is a lift of the diagram of
homology theories to $\e{\infty}$-rings. In other words, it is a sheaf $\otop$
of even periodic $\e{\infty}$-rings on the affine \'etale site of $X$ with
formal groups given by the map $X \to M_{FG}$. This enables in particular the
construction of an $\e{\infty}$-ring $\Gamma(X, \otop)$ of \emph{global
sections}, obtained as a homotopy limit in a similar manner as
\eqref{invlimTMF}, and a stable homotopy theory $\qcoh( \otop)$ of
quasi-coherent modules. 

Now, one has: 
\begin{theorem}[{\cite[Theorem 4.1]{MM}}] \label{MMres}
Suppose $X \to M_{FG}$ is a flat, quasi-affine map and let the sheaf $\otop$
of $\e{\infty}$-rings on the \'etale site of $X$ define an even periodic
refinement of $X$. Then the natural adjunction
\[ \md( \Gamma( X, \otop)) \rightleftarrows \qcoh ( \otop),  \]
is an equivalence of $\infty$-categories. 
\end{theorem}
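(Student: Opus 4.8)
The plan is to verify the hypotheses of the Barr--Beck--Lurie theorem for the tensor-forgetful adjunction $(F,G)\colon \md(\Gamma(X,\otop)) \rightleftarrows \qcoh(\otop)$, where $F$ tensors a $\Gamma(X,\otop)$-module with the unit of $\qcoh(\otop)$ and $G$ takes the mapping spectrum out of the unit; equivalently, one checks that $G$ is conservative and preserves geometric realizations, or dually that $F$ is comonadic. Since $F$ is colimit-preserving and symmetric monoidal, the real content is (i) that $G$ is conservative and (ii) that the natural comparison map does not lose information. As in the argument for $\TMF$ sketched just above, the key geometric input is the quasi-affineness of $X \to M_{FG}$: a quasi-affine morphism to a nice stack lets one recover quasi-coherent sheaves from their global sections together with the (derived) ring structure, and this is precisely what makes the unit object of $\qcoh(\otop)$ \emph{compact}, so that $G$ commutes with all colimits.

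First I would reduce to the case where $X$ is quasi-compact (writing $X$ as a filtered union of quasi-compact opens and using that both sides, and the functors between them, are compatible with the resulting limit — or simply invoke the hypothesis that $X$ is a quasi-compact Deligne--Mumford stack). Next I would use quasi-affineness: by definition $X \to M_{FG}$ factors as an open immersion $X \hookrightarrow \overline{X}$ into something affine over $M_{FG}$, hence $X$ itself is covered by finitely many affine opens $\spec R_i \to X$ such that the Cech nerve is a finite cover in a strong enough sense. Then $\qcoh(\otop)$ is the totalization of the cosimplicial $\infty$-category $\prod_i \md(\otop(\spec R_i)) \rightrightarrows \prod_{i,j}\md(\otop(\spec R_i \times_X \spec R_j)) \triplearrows \cdots$, and since each $\otop(\spec R_i)$ is even periodic and Landweber-exact one has good control of the individual $\md(\otop(\spec R_i))$. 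The compactness of the unit in $\qcoh(\otop)$ follows because global sections over a quasi-affine $X$ is computed by a \emph{finite} limit up to the relevant connectivity, so it commutes with filtered colimits; this is the analogue of the statement used for $\TMF$ and stable module categories, and it is exactly here that quasi-affineness (rather than mere quasi-compactness) is essential.

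With the unit compact, $G$ automatically preserves all colimits; conservativity of $G$ then follows from the fact that $\qcoh(\otop)$ is generated under colimits by the unit, which in turn comes from the sheaf being an even periodic refinement (so that on each affine chart the theory is Landweber-exact and the unit generates $\md(\otop(\spec R_i))$). Once $G$ is conservative and colimit-preserving, Barr--Beck--Lurie gives that $F$ is comonadic, i.e.\ $\md(\Gamma(X,\otop)) \simeq \qcoh(\otop)^{\mathrm{comonad}}$; one then identifies the comonad with the one computing descent along the affine cover and checks it agrees with the tautological one, yielding the equivalence. Finally, since $F$ is symmetric monoidal and an equivalence, it is automatically an equivalence of symmetric monoidal $\infty$-categories.

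The main obstacle I expect is step (i)--(ii), namely proving that the unit of $\qcoh(\otop)$ is compact and generates — everything else is formal once that is in hand. This is genuinely a statement about the geometry of $X \to M_{FG}$ and not a soft categorical fact: without quasi-affineness it is false (for a general stack the unit of $\qcoh$ need not be compact, as the comparison between $\md(C^*(X;R))$ and $\loc_X(\md(R))$ already shows in the topological setting). So the heart of the proof is a careful analysis, chart by chart along a finite affine cover arising from the quasi-affine factorization, showing that $\Gamma(X,\otop)\otimes(-)$ and $\hom_{\qcoh(\otop)}(\mathbf{1},-)$ are inverse equivalences — and this is where one must invoke the even periodicity and Landweber exactness on each chart together with the finiteness coming from quasi-affineness. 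I would expect to cite \cite[Theorem 4.1]{MM} for the technical core and spend the remaining effort on assembling the Barr--Beck--Lurie verification and the symmetric monoidal upgrade.
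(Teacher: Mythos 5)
First, note that the paper does not prove this statement at all: it is imported verbatim as \cite[Theorem 4.1]{MM}, so your proposal has to be judged against the argument given there. Your high-level architecture --- run Barr--Beck--Lurie for the adjunction $\md(\Gamma(X,\otop)) \rightleftarrows \qcoh(\otop)$ and reduce everything to showing that the unit of $\qcoh(\otop)$ is compact and generates --- does match the shape of the actual proof. The problem is that the mechanism you propose for the crucial finiteness is wrong, and an argument built on it would prove false statements.

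Concretely: you assert that quasi-affineness of $X \to M_{FG}$ makes $\Gamma(X,\otop)$ ``a finite limit up to the relevant connectivity,'' hence that the unit is compact. It does not. The charts $\spec R_i \to X$ are \'etale, not Zariski-open; even a one-chart affine cover produces a cosimplicial totalization, which is an infinite limit a priori; and, more to the point, for $X = \mell$ at $p=2$ or $3$ the descent spectral sequence computing $\pi_*\TMF$ has infinitely many nonvanishing rows at $E_2$, coming from the torsion in the automorphism groups of supersingular curves, so no naive finiteness is available. Likewise, your argument that the unit generates ``because it generates on each chart'' is exactly the fallacy the paper flags in \Cref{subseclocsys} and in the comparison of $\md(k^{hG})$ with $\loc_{BG}(\md(k)) \simeq \qcoh(BG)$: the stack $BG$ also admits a single affine \'etale chart $\spec k \to BG$ with affine \v{C}ech nerve, the unit generates on that chart, and yet $BG$ is not $0$-affine in modular characteristic. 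What actually separates $\mell$ from $B\mathbb{Z}/p$ is the flatness over $M_{FG}$: the proof in \cite{MM} first uses flatness and quasi-compactness to factor $X$ through $M_{FG}^{\leq n}$, then invokes the Hopkins--Ravenel theorem (\Cref{HR}) that $L_nS^0 \to E_n$ is descendable --- this nilpotence statement is the true source of the horizontal vanishing line at a finite page (\Cref{descnilp}), hence of the compactness of the unit and the conservativity of $\Gamma$ over $M_{FG}^{\leq n}$ --- and finally transports $0$-affineness along the quasi-affine map $X \to M_{FG}^{\leq n}$ (this last step is where your open-immersion-into-affine factorization genuinely enters). Without that chromatic input the argument has no content, and citing \cite[Theorem 4.1]{MM} ``for the technical core'' is not available to you, since that theorem is precisely the statement to be proved.
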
 

In particular, in \cite[Theorem 5.6]{MM}, L. Meier and the author showed that, given $X \to
M_{FG}$ \emph{quasi-affine}, then one source of Galois extensions of $\Gamma(
X, \otop)$ was the Galois theory of the \emph{algebraic} stack. 
If $X$ is regular, we can give the following refinement. 

\begin{theorem} 
\label{galstack}
Let $X$ be a regular Deligne-Mumford stack. 
Let $X \to M_{FG}$ be a flat, quasi-affine map  and fix an even periodic sheaf
$\otop$ as above. Then we have a canonical identification
\[ \pi_1 ( \md( \Gamma(X, \otop))) \simeq \pi_1^{\mathrm{et}} X. \]
\end{theorem}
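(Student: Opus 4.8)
The plan is to deduce \Cref{galstack} from the affineness result \Cref{MMres} together with the algebraic computation \Cref{etalegalois}, by running a van Kampen (descent) argument over the \'etale site of $X$. First I would invoke \Cref{MMres}: since $X \to M_{FG}$ is flat and quasi-affine and $\otop$ is an even periodic refinement, the adjunction $\md(\Gamma(X,\otop)) \rightleftarrows \qcoh(\otop)$ is an equivalence of symmetric monoidal $\infty$-categories. Hence the Galois group (and weak Galois group, which coincide since the unit is now compact) of $\md(\Gamma(X,\otop))$ equals that of $\qcoh(\otop)$, and by \Cref{vkweak} (or \Cref{vankampen}) applied to the diagram $\spec R \to X$ ranging over the affine \'etale site, we get
\[ \pi_1(\qcoh(\otop)) \simeq \varinjlim_{\spec R \to X} \pi_1(\md(\otop(\spec R))) \]
as profinite groupoids, i.e.\ $\clgf(\qcoh(\otop))^{\op} \simeq \varprojlim_{\spec R \to X} \clgf(\md(\otop(\spec R)))^{\op}$ in $\galcat$.

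Next I would identify each term on the right. Since $X$ is regular, every \'etale $\spec R \to X$ has $R$ regular; moreover $\otop(\spec R)$ is even periodic with $\pi_0 \otop(\spec R) \simeq R$ regular noetherian (noetherianness can be arranged, or reduced to via a filtered colimit argument as in \Cref{connectivegal}, using \Cref{galcolim}). Therefore \Cref{etalegalois} applies and gives that the Galois theory of $\otop(\spec R)$ is algebraic: $\pi_1(\md(\otop(\spec R))) \simeq \pi_1^{\mathrm{et}} \spec R$, and the equivalence is natural in $\spec R$ (the finite covers on both sides are the classically \'etale ones, which are functorial). Feeding this back in, $\pi_1(\qcoh(\otop)) \simeq \varinjlim_{\spec R \to X} \pi_1^{\mathrm{et}}\spec R$. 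But this colimit of profinite groupoids is precisely the \'etale fundamental groupoid of the stack $X$: descent for finite \'etale covers along an \'etale cover of $X$ says $\cov_X \simeq \varprojlim_{\spec R \to X} \cov_{\spec R}$, which dualizes to $\pi_{\leq 1}^{\mathrm{et}} X \simeq \varinjlim_{\spec R \to X} \pi_{\leq 1}^{\mathrm{et}}\spec R$. Chaining the equivalences yields $\pi_1(\md(\Gamma(X,\otop))) \simeq \pi_1^{\mathrm{et}} X$ as desired.

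The main obstacle I anticipate is bookkeeping around finiteness and the passage between the two versions of the Galois groupoid. Concretely: (1) the affine \'etale site of $X$ is not a finite diagram, so a priori only \Cref{vkweak} (weak finite covers) applies cleanly; one must use that \Cref{MMres} makes the unit compact, so that $\clgf = \clgw$ by \Cref{basicgal}, and separately that the comparison $\clgf(\qcoh(\otop)) \hookrightarrow \varprojlim \clgf(\md(\otop(\spec R)))$ of \eqref{variousinc} is actually an equivalence here because each $\otop(\spec R)$-algebra that is \'etale descends to a global finite cover by \Cref{descfinloc} (descendability is finitary). (2) Reducing to noetherian $\pi_0$: a general \'etale $\spec R \to X$ need not have $R$ noetherian, so one covers by noetherian affines or applies \Cref{galcolim} fiberwise, exactly as in the proof of \Cref{connectivegal}. (3) Making everything natural/functorial in $\spec R \to X$ so that the colimit over the \'etale site is legitimate, which is routine given that the identification in \Cref{etalegalois} is realized by the classically \'etale construction $\cov_{\spec R} \subset \clgf(\md(\otop(\spec R)))^{\op}$. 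Once these are in place the theorem is formal.
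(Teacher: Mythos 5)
Your proposal is correct and follows essentially the same route as the paper: invoke the $0$-affineness equivalence $\md(\Gamma(X,\otop)) \simeq \qcoh(\otop)$ from \Cref{MMres}, note that the Galois group coincides with the weak Galois group (so that the infinite-diagram version of van Kampen, \Cref{vkweak}, applies), compute the weak Galois groupoid of the inverse limit as the colimit of the algebraic Galois groupoids of the $\otop(\spec R)$ via \Cref{etalegalois}, and identify that colimit with $\pi_{\leq 1}^{\mathrm{et}} X$. The bookkeeping points you flag are handled the same way in the paper (or are immediate from the locally noetherian hypothesis on a Deligne-Mumford stack), so there is nothing substantive to add.
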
 
\begin{proof} 
This is now a quick corollary of the machinery developed so far. By 
\Cref{MMres}, we can identify modules over $\Gamma(X, \otop)$ with
quasi-coherent sheaves of $\otop$-modules. In particular, we can equivalently
compute the Galois group, which is necessarily the same as the \emph{weak}
Galois group, of $\qcoh(\otop)$. Using
\[ \qcoh(\otop) = \varprojlim_{\spec R \to X} \md( \otop( \spec R)),  \]
where the inverse limit ranges over all \'etale maps $\spec R \to X$, 
we find that the weak Galois groupoid of $\qcoh( \otop)$ is the colimit of the weak Galois
groupoids of the various $\otop( \spec R)$. Since we know that these are
algebraic (\Cref{etalegalois}), we conclude that we arrive precisely at the
colimit of Galois groupoids that computes the Galois groupoid of $X$. 
\end{proof} 

In addition to the case of $\TMF$, we find:
\begin{corollary} 
\begin{enumerate}
\item 
The Galois group of $\Tmf_{(p)}$ (for any prime $p$) is equal to the \'etale
fundamental group of $\mathbb{Z}_{(p)}$. 
\item 
The Galois group of $KO$ is $\mathbb{Z}/2$: the map $KO \to KU$ exhibits $KU$
as the Galois closure of $KO$. 
\end{enumerate}
\end{corollary}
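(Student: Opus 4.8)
The plan is to deduce both statements of the corollary directly from \Cref{galstack}, which reduces the computation of the Galois group to a computation of an \'etale fundamental group of an algebraic (Deligne--Mumford) stack. In both cases I first need to exhibit the relevant $\e{\infty}$-ring as the global sections $\Gamma(X,\otop)$ of an even periodic refinement of a flat, quasi-affine map $X \to M_{FG}$ where $X$ is a regular Deligne--Mumford stack, and then identify $\pi_1^{\mathrm{et}} X$ by purely algebro-geometric means.

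For $\Tmf_{(p)}$: the periodic $\Tmf$ spectrum in question is $\TMF$, so one takes $X = (\mell)_{(p)}$, the moduli stack of (smooth) elliptic curves over $\mathbb{Z}_{(p)}$, with its Goerss--Hopkins--Miller--Lurie even periodic sheaf $\otop$; the map $(\mell)_{(p)} \to M_{FG}$ is flat and quasi-affine (indeed the construction producing $\TMF$ in \eqref{invlimTMF} is exactly this setup, and quasi-affineness of $\mell \to M_{FG}$ is part of the standard package), and $(\mell)_{(p)}$ is regular since any \'etale chart $\spec R \to \mell$ has $R$ a regular two-dimensional ring and we localize. Then \Cref{galstack} gives $\pi_1 \md(\TMF_{(p)}) \simeq \pi_1^{\mathrm{et}} (\mell)_{(p)}$, and the input I would cite is the folklore fact (as for \Cref{TMFsep}, via \cite{conrad}) that $\mell$ over $\mathbb{Z}$ is simply connected relative to $\spec \mathbb{Z}$, so that $\pi_1^{\mathrm{et}}(\mell)_{(p)} \simeq \pi_1^{\mathrm{et}} \spec \mathbb{Z}_{(p)}$; this is the claimed answer. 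For $KO$: here $X = B\mathbb{Z}/2$ over $\mathbb{Z}[1/2]$ (equivalently the moduli stack of elliptic curves with a chosen ordering data, but cleanly: the classifying stack of $\mathbb{Z}/2$ over $\mathbb{Z}[1/2]$, mapping to $M_{FG}$ via the multiplicative formal group with its $[-1]$-automorphism), with even periodic sheaf whose global sections are $KO$ and whose value on the \'etale double cover $\spec \mathbb{Z}[1/2] \to B\mathbb{Z}/2$ is $KU$; since $2$ is invertible the stack $B\mathbb{Z}/2$ over $\mathbb{Z}[1/2]$ is tame and regular, and $B\mathbb{Z}/2 \to M_{FG}$ is affine (classifying the multiplicative formal group with its Adams operation), so \Cref{galstack} applies and gives $\pi_1 \md(KO) \simeq \pi_1^{\mathrm{et}} (B\mathbb{Z}/2)_{\mathbb{Z}[1/2]} \simeq \mathbb{Z}/2 \times \pi_1^{\mathrm{et}} \spec \mathbb{Z}[1/2]$; using Minkowski's theorem (as in the computation of $\pi_1 \sp$) $\pi_1^{\mathrm{et}} \spec \mathbb{Z}$ is trivial and $\mathbb{Z}[1/2]$ likewise has trivial \'etale fundamental group, leaving $\mathbb{Z}/2$, realized by the Galois cover $KO \to KU$.

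I expect the main obstacle to be not the stable-homotopy input --- that is entirely supplied by \Cref{galstack} and \Cref{MMres} --- but rather assembling and citing the correct algebro-geometric facts: that $\mell \to M_{FG}$ is quasi-affine and that $\mell$ is simply connected over $\mathbb{Z}$ (including the delicate behavior at the primes $2$ and $3$, where $\mell$ has nontrivial automorphisms; one works with the regular stack directly rather than a scheme), and for $KO$ that the relevant presentation of $KO$ as global sections over $B\mathbb{Z}/2$ really fits the quasi-affine even periodic refinement framework of \cite{MM}. In the write-up I would therefore state carefully which stack is used in each case, invoke \Cref{galstack}, and then reduce $\pi_1^{\mathrm{et}}$ of the stack to $\pi_1^{\mathrm{et}} \spec \mathbb{Z}_{(p)}$ (resp.\ $\mathbb{Z}/2$) by citing \cite{conrad} and Minkowski's theorem. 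A secondary subtlety to be careful about is whether the Galois group versus weak Galois group distinction matters: it does not, because in both cases the unit of $\qcoh(\otop)$ is compact thanks to \Cref{MMres}, so \eqref{weakmap} is an equivalence.
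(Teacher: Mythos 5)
There is a genuine gap in both parts. For part (1) you have misidentified the object: as the paper notes immediately after the corollary, $\Tmf$ is the non-connective, non-periodic spectrum attached to the \emph{compactified} moduli stack $\mellc$, not the periodic $\TMF$ attached to the open stack $\mell$. This matters, because your key geometric claim --- that simple connectivity of $\mell$ over $\mathbb{Z}$ yields $\pi_1^{\mathrm{et}}(\mell)_{(p)} \simeq \pi_1^{\mathrm{et}} \spec \mathbb{Z}_{(p)}$ --- is false. Simple connectivity over $\mathbb{Z}$ does not localize: for $N \geq 3$ prime to $p$ the level cover $\mell(N) \to \mell$ is finite \'etale over $\mathbb{Z}[1/N]$, hence over $\mathbb{Z}_{(p)}$, so $\pi_1^{\mathrm{et}}((\mell)_{(p)})$ is enormous (it surjects onto the various $SL_2(\mathbb{Z}/N)$'s), and correspondingly $\TMF_{(p)}$ has a large Galois group. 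The point of using $\mellc$ is precisely that these level covers ramify at the cusps and so do not survive compactification: $\mellc$ is \emph{geometrically} simply connected (via the weighted projective presentation $\mathbb{P}(4,6)$ away from $6$), and since it is proper one gets $\pi_1^{\mathrm{et}}((\mellc)_{(p)}) \simeq \pi_1^{\mathrm{et}} \spec \mathbb{Z}_{(p)}$ from the homotopy exact sequence. With that input your strategy of invoking \Cref{galstack} does go through, but the geometric content you supply is the wrong statement about the wrong stack.

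For part (2) your computation contains a false arithmetic assertion: $\pi_1^{\mathrm{et}} \spec \mathbb{Z}[1/2]$ is \emph{not} trivial ($\mathbb{Q}(i)$, $\mathbb{Q}(\sqrt{\pm 2})$ are ramified only at $2$, so $\mathbb{Z}[i,1/2]$ etc.\ are finite \'etale over $\mathbb{Z}[1/2]$); Minkowski only kills covers of $\spec\mathbb{Z}$ itself. Moreover $KO$ is not a $\mathbb{Z}[1/2]$-algebra --- $\pi_0 KO = \mathbb{Z}$ and $KO \to KU$ is an integral $\mathbb{Z}/2$-Galois extension, one of the standard examples where descent holds even though $|G|$ is not invertible --- so the tame stack $(B\mathbb{Z}/2)_{\mathbb{Z}[1/2]}$ computes $KO[1/2]$, not $KO$. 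If your setup were correct it would output the wrong (larger) group. The paper's argument avoids the stack entirely: $KU$ is even periodic with $\pi_0 KU = \mathbb{Z}$ regular noetherian, so \Cref{etalegalois} plus Minkowski give $\pi_1 \md(KU) = 1$; since $KU$ is a connected $\mathbb{Z}/2$-torsor over $KO$ in $\clgf(\md(KO))$, the subgroup $\pi_1\md(KU) = 1$ has index $2$ in $\pi_1\md(KO)$, forcing $\pi_1 \md(KO) \simeq \mathbb{Z}/2$. Your closing remark about the weak versus ordinary Galois group is moot here, since for module categories over an $\e{\infty}$-ring the two agree by definition.
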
 
Here $\Tmf$ is the non-connective, non-periodic flavor of topological modular
forms associated to the compactified moduli stack of elliptic curves. 
\begin{proof} 
The first claim follows because the compactified moduli stack of elliptic
curves is geometrically simply connected;  this follows via the
expression as a weighted projective stack $\mathbb{P}(4, 6)$ when $6$ is
inverted. The second
assertion follows from \Cref{etalegalois}, which shows that $KU$ is simply
connected, since $\spec \mathbb{Z}$ is. 
\end{proof} 

\subsection{$K(n)$-local homotopy theory}

Let $K(n)$ be a Morava $K$-theory at height $n$. The $\infty$-category
$L_{K(n)} \sp$ of $K(n)$-local spectra, which plays a central role in modern
chromatic homotopy theory, has been studied extensively in the
monograph \cite{HoveyS}. 
$L_{K(n)} \sp$ is a basic  example of a  stable homotopy theory where the unit object
is \emph{not} compact, although $L_{K(n)} \sp$ is compactly generated (by the
localization of a finite type $n$ complex, for instance). We describe the
Galois theory of $L_{K(n)} \sp$ here, 
following ideas of \cite{DH, BR2, rognes}, and many other authors. 

According to the ``chromatic'' picture, phenomena in stable homotopy theory are
approximated by the geometry of the moduli stack $M_{FG}$ of formal groups.
When localized at a prime $p$, there is a basic open substack $M_{FG}^{ \leq
n}$ of $M_{FG}$
parametrizing formal groups whose \emph{height} (after specialization to any
field of characteristic $p$) is $\leq n$. There is a closed substack $M_{FG}^{n} \subset
M_{FG}^{\leq n}$ parametrizing formal groups of height \emph{exactly} $n$ over
$\mathbb{F}_p$-algebras. The operation of $K(n)$-localization corresponds
roughly to formally completing along this closed substack (after first
restricting to the open substack $M_{FG}^{\leq n}$, which is
$E_n$-localization). In particular, the Galois theory of $L_{K(n)} \sp$ should
be related to that of this closed substack. 

It turns out that $M_{FG}^{n}$ has an extremely special geometry. 
The substack $M_{FG}^{n}$ is essentially the ``classifying stack'' of a
large profinite group (with a slight Galois twist) known as the \emph{Morava
stabilizer group.}

\renewcommand{\G}{\mathbb{G}}
\newcommand{\Ge}{\mathbb{G}^{\mathrm{ext}}}
\begin{definition} 
Let $k = \overline{\mathbb{F}_p}$ and consider a height $n$ formal group
$\mathfrak{X}$ over $k$. We define the \textbf{$n$th Morava stabilizer group}
$\G_n$ to be the automorphism group of $\mathfrak{X}$ (in the category of
formal groups). 
\end{definition} 

Any two height $n$ formal groups over $k$ are isomorphic, so it does not matter
which one we use. 

\begin{definition} 
We define the \textbf{$n$th extended Morava stabilizer group} $\Ge_n$
to be the group of pairs $(\sigma, \phi)$ where $\sigma \in \mathrm{Aut}(
\overline{\mathbb{F}_p}/\mathbb{F}_p)$ and $\phi\colon  \mathfrak{X} \to \sigma^*
\mathfrak{X}$ is an isomorphism of formal groups. 
\end{definition} 

In fact, $\mathfrak{X}$ can be defined over the prime field $\mathbb{F}_p$
itself, so that $\sigma^* \mathfrak{X}$ is canonically identified with
$\mathfrak{X}$, 
and in this case, every automorphism of $\mathfrak{X}$ is defined over
$\mathbb{F}_{p^n}$. This gives $\G_n$ a natural profinite structure (by looking
explicitly at coefficients of power series), and $\Ge_n \simeq \G_n \rtimes
\mathrm{Gal}(\overline{\mathbb{F}_p}/\mathbb{F}_p)$. 

The picture is that the stack $M_{FG}^{n}$ is the classifying stack of the
group \emph{scheme} of automorphisms of a height $n$ formal group over
$\mathbb{F}_p$. This itself is a pro-\'etale
group scheme which becomes constant after extension of scalars to
$\mathbb{F}_{p^n}$. This picture is justified by the result that any two $n$
formal group are \'etale locally isomorphic, and the scheme of automorphisms
is in fact as claimed. 

This picture has been reproduced closely in chromatic homotopy theory. 
Some of the most important objects in $L_{K(n)}\sp$ are the \emph{Morava
$E$-theories} $E_n$. 
Let $\kappa$ be a perfect field of characteristic $p$ and let $\mathfrak{X}$ be
a formal group of height $n$ over $\kappa$, defining a map $\spec \kappa \to
M_{FG}^{n}$. The ``formal completion'' of $M_{FG}$ along this map can be
described by \emph{Lubin-Tate theory}; in other words, the universal
deformation $\mathfrak{X}_{\mathrm{univ}}$ of the formal group $\mathfrak{X}$ lives over the ring
$W(\kappa)[[v_1, \dots, v_{n-1}]]$ for $W(\kappa)$ the ring of Witt vectors on
$\kappa$. 
The association $(\kappa, \mathfrak{X}) \mapsto (W(\kappa)[[v_1, \dots, v_{n-1}]],
\mathfrak{X}_{\mathrm{univ}})$ 
defines a functor from pairs $(\kappa, \mathfrak{X})$
 to pairs of complete local rings and formal groups over them.  

The result of Goerss-Hopkins-Miller \cite{goersshopkins, rezkHM} is that the above functor can be lifted to topology. 
Each pair
$(W(\kappa)[[v_1, \dots, v_{n-1}]],
\mathfrak{X}_{\mathrm{univ}})$ can be realized by a homotopy commutative ring
spectrum $E_n = E_n( \kappa; \mathfrak{X})$ in view of the Landweber exact
functor theorem. However, in fact one can construct a functor (essentially
uniquely)
\[ (\kappa, \mathfrak{X}) \mapsto E_n(\kappa; \mathfrak{X})  \]
to the $\infty$-category of $\e{\infty}$-rings, 
lifting this diagram of formal groups: for each $(\kappa, \mathfrak{X})$,
$E_n(\kappa; \mathfrak{X})$ is even periodic with formal group identified with
the universal deformation $\mathfrak{X}_{\mathrm{univ}}$ over $W(\kappa)[[v_1,
\dots, v_{n-1}]]$. 

We formally now state a definition that we have used before. 
\begin{definition} 
 Any $E_n(\kappa; \mathfrak{X})$ will be referred to as a \textbf{Morava
 $E$-theory} and will be sometimes simply written as $E_n$. 
\end{definition}

Since $M_{FG}^{n}$ is the classifying stack of a pro-\'etale 
group scheme, we should expect, if we take $\kappa = \overline{\mathbb{F}_p}$,
an appropriate action of the extended Morava stabilizer group on $E_n(\kappa;
\mathfrak{X})$. An action of the group $\Ge_n$ is given to us 
on $E_n(\kappa; \mathfrak{X})$ by the Goerss-Hopkins-Miller theorem. 
However, we should expect a ``continuous'' action of $\Ge_n$ on $E_n( \kappa;
\mathfrak{X})$ on $\md( E_n(\kappa; \mathfrak{X}))$
whose homotopy fixed points are $L_{K(n)} \sp$.

Although this does not seem to have been fully made precise, 
given an open subgroup $U \subset \Ge_n$, Devinatz-Hopkins \cite{DH} construct
homotopy fixed points $E_n(\kappa; \mathfrak{X})^{hU}$ which have the desired
properties (for example, if $U \subset \Ge_n$, one obtains $L_{K(n)} S^0$). 
It was observed in \cite{rognes} that for $U \subset \Ge_n$ open normal, the maps
\[ L_{K(n)} S^0 \to E_n(\kappa; \mathfrak{X})^{hU}  \]
are $\Ge_n/U$-Galois in $L_{K(n)} \sp$; they become \'etale after base-change
to $E_n(\kappa; \mathfrak{X})$. 
The main result 
of this section is that this gives precisely the Galois group of $K(n)$-local
homotopy theory. 

\begin{theorem} \label{knlocalgal}
The Galois group of $L_{K(n)} \sp$ (which is also the weak Galois group) is the extended Morava stabilizer group
$\Ge_n$. 
\end{theorem}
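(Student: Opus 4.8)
The plan is to identify the Galois category $\clgf(L_{K(n)}\sp)^{\op}$ with the category of finite continuous $\Ge_n$-sets, via the Galois correspondence (\Cref{galequiv}), by exhibiting the Devinatz--Hopkins homotopy fixed point spectra $E_n^{hU}$, for $U\subset\Ge_n$ open normal, as a cofinal system of $\Ge_n/U$-Galois extensions, and then showing there are no others. First I would recall from \Cref{rognes} (see the discussion preceding \Cref{knlocalgal}) that for each open normal $U\subset\Ge_n$ the map $L_{K(n)}S^0\to E_n^{hU}$ is a $\Ge_n/U$-Galois extension, and that it becomes faithfully flat (in fact finite \'etale at the level of homotopy groups, by Lubin--Tate theory) after base change to $E_n$. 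Since $L_{K(n)}S^0\to E_n$ admits descent by \Cref{HR} (the Hopkins--Ravenel theorem, combined with the observation that $K(n)$-localization is a finite localization away from an $E_n$-nilpotent completion --- more precisely one uses that $L_n S^0\to E_n$ is descendable and then $K(n)$-localizes), the base-change criterion of \Cref{permanence}(2) shows each $E_n^{hU}$ is descendable, hence by \Cref{Gtorsor} each defines a $\Ge_n/U$-torsor in $\clgf(L_{K(n)}\sp)$. Taking the formal cofiltered limit over $U$ produces a functor $\finset_{\Ge_n}\to\clgf(L_{K(n)}\sp)^{\op}$, i.e.\ a map of profinite groupoids $\pi_{\leq 1}(L_{K(n)}\sp)\to B\Ge_n$; equivalently, since the unit has no nontrivial idempotents (as $\pi_0 L_{K(n)}S^0$ is local), a continuous surjection $\Ge_n\twoheadrightarrow\pi_1(L_{K(n)}\sp)$ --- wait, the map goes the other way: one gets $\pi_1(L_{K(n)}\sp)\twoheadrightarrow\Ge_n$ or a section, depending on variance; the point is that the $E_n^{hU}$ realize $\Ge_n$ as a \emph{quotient} of the Galois group receiving all of these covers, so it suffices to prove every finite cover arises this way.

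The core of the argument is thus the reverse inclusion: any $G$-Galois extension $A$ of $L_{K(n)}S^0$ in $L_{K(n)}\sp$ is obtained from an open subgroup of $\Ge_n$. Here I would use descent along $L_{K(n)}S^0\to E_n$ (valid since $E_n$ is descendable): by \Cref{compactdesc} and the remark following it, $A$ is dualizable over $L_{K(n)}S^0$ (it is dualizable after base change to $E_n$ since there it is finite \'etale), and $E_n\otimes_{L_{K(n)}S^0}A$ is a finite \'etale $E_n$-algebra --- more precisely, a $G$-Galois extension of $E_n$ in $L_{K(n)}\sp$, which by \Cref{etalegalois} applied to $E_n$ (even periodic with $\pi_0 E_n = W(\kappa)[[v_1,\dots,v_{n-1}]]$ regular noetherian) is classically \'etale, hence since $\pi_0 E_n$ is a complete local ring, is a finite product of copies of $E_n$ indexed by a finite set with $G$-action. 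This identifies $E_n\otimes A \simeq \prod_S E_n$ with $S$ a transitive $G$-set (after splitting idempotents, which is harmless). Now I would invoke the structure of the $K(n)$-local category: $E_n$, with its $\Ge_n$-action in the sense of Devinatz--Hopkins, satisfies $L_{K(n)}S^0\simeq E_n^{h\Ge_n}$ and more generally gives a form of $\Ge_n$-Galois descent $\md(L_{K(n)}S^0)\simeq \md(E_n)^{h\Ge_n}$ (the $K(n)$-local analogue of \Cref{galdescthm}, using that the descent spectral sequence collapses with a horizontal vanishing line by \Cref{HR}). Under this equivalence, a $G$-Galois extension of $L_{K(n)}S^0$ with $E_n\otimes A\simeq\prod_S E_n$ corresponds to the datum of the finite $G$-set $S$ together with a compatible $\Ge_n$-action on $\prod_S E_n$ commuting with everything --- i.e.\ a continuous homomorphism $\Ge_n\to$ (the automorphisms of $S$ as a $G$-set that also act on the $E_n$-factors), which unwinds to a continuous $\Ge_n$-action on $S$ making $A\simeq E_n^{hU}$ for $U$ the stabilizer of a point of $S$. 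Care is needed here because the $\Ge_n$-action on $E_n$ is only ``pro-finite/continuous'' in a delicate sense, not a genuine action of a finite group; I would handle this by working with open normal $U$ and passing to the colimit, exactly as in the construction of $E_n^{hU}$ in \cite{DH}, together with the continuity of Galois-theoretic data (cf.\ \Cref{galcolim} and \Cref{mappingspaces}).

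Finally I would assemble these two inclusions into an equivalence of Galois categories $\clgf(L_{K(n)}\sp)^{\op}\simeq\finset_{\Ge_n}$, using \Cref{torsorenough} to reduce to checking that $G$-torsors agree for each finite $G$: the first part shows $\finset_{\Ge_n}\to\clgf(L_{K(n)}\sp)^{\op}$ hits every torsor, and the second shows it is faithful (no extra torsors). Since $\mathbf 1=L_{K(n)}S^0$ is not compact, one must also note that \eqref{weakmap} need not be an equivalence a priori, so strictly one is computing $\pi_1(L_{K(n)}\sp)$; but the argument above in fact produces the $E_n^{hU}$ as genuine \emph{finite} covers (they are descendable, by \Cref{HR} and \Cref{permanence}), so the same reasoning identifies $\pw(L_{K(n)}\sp)$ as well, giving $\pi_1=\pw=\Ge_n$. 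The main obstacle, by a comfortable margin, is making precise the $\Ge_n$-Galois descent statement $\md(L_{K(n)}S^0)\simeq\md(E_n)^{h\Ge_n}$ and controlling the continuity of the $\Ge_n$-action: the Devinatz--Hopkins construction provides the $E_n^{hU}$ and a version of descent for the tower of open subgroups, but translating ``a $G$-Galois extension whose $E_n$-base-change is split'' into ``a continuous $\Ge_n$-set'' requires knowing that the relevant mapping spaces are discrete and that the pro-system of finite approximations behaves well --- essentially the assertion that $L_{K(n)}\sp$ really is the ``homotopy fixed points'' of $\md(E_n)$ under the profinite group $\Ge_n$ in a way compatible with Galois theory. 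Everything else (dualizability, the classical \'etale input from \Cref{etalegalois}, descendability from \Cref{HR}) is either quoted directly or a routine application of the machinery of \Cref{sec:descent}.
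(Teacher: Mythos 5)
Your first half — descent along $L_{K(n)}S^0\to E_n$ via \Cref{HR}, the $E_n^{hU}$ as descendable $\Ge_n/U$-torsors, and the algebraicity of the Galois theory of $E_n$ — is exactly the backbone of the paper's argument. The problem is the step you yourself flag as "the main obstacle": your reverse inclusion rests on a profinite Galois-descent equivalence $\md(L_{K(n)}S^0)\simeq\md(E_n)^{h\Ge_n}$ together with a translation of "split $G$-Galois extension over $E_n$ with compatible $\Ge_n$-equivariance" into "continuous $\Ge_n$-set." That statement is not available in the framework of the paper — indeed the paper explicitly remarks that the continuous action of $\Ge_n$ on $E_n$ with the desired fixed points "does not seem to have been fully made precise" — and \Cref{galdescthm} only covers topological groups of the homotopy type of a finite CW complex, not profinite groups. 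So as written the argument has a genuine hole at its decisive step, and the hole is precisely the hard part.

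The paper closes this gap without ever invoking a continuous group action. From descendability of $E_n$ one gets $L_{K(n)}\sp\simeq\mathrm{Tot}\bigl(L_{K(n)}\md(E_n^{\otimes\bullet+1})\bigr)$, and then \Cref{vkweak} gives $\clgw(L_{K(n)}\sp)\simeq\mathrm{Tot}\bigl(\clgw(L_{K(n)}\md(E_n^{\otimes\bullet+1}))\bigr)$ — note this step needs the \emph{weak} finite covers, since \Cref{vankampen} would require a finite diagram. Next, \Cref{etalegall} shows that a dualizable object of $L_{K(n)}\md(E_n)$ is already a perfect $E_n$-module (a point you elide when you apply \Cref{etalegalois} to a cover that is a priori only dualizable in the localized category), so \Cref{etalegalois} identifies each $\clgw(L_{K(n)}\md(E_n^{\otimes m}))$ at the zeroth stage, and by base-change-stability of classically étale algebras the whole totalization, with the finite étale site of the cosimplicial ring $\pi_0 L_{K(n)}(E_n^{\otimes\bullet+1})$. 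Each of these rings is complete along $(p,v_1,\dots,v_{n-1})$, so by topological invariance of the étale site one may reduce modulo that ideal, and the resulting cosimplicial ring is a presentation of the stack $M_{FG}^{n}$; the colimit of étale fundamental groupoids is therefore $\pi_1^{\mathrm{et}}M_{FG}^{n}\simeq\Ge_n$. This computes the whole Galois group in one shot and packages the "continuity" you were worried about into the purely algebraic fact that $\pi_0 L_{K(n)}(E_n\otimes E_n)/(p,v_1,\dots,v_{n-1})$ is continuous functions from $\Ge_n$ to the residue field. If you want to salvage your object-by-object approach, you would still need to prove an analogue of that algebraic identification, at which point you have essentially reproduced the paper's computation.
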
 

Away from the prime $2$, this result is essentially due to
Baker-Richter \cite{BR2}. 
We will give a direct proof using descent theory. 
Let $E_n$ be a Morava $E$-theory. 
Using descent for linear $\infty$-categories along $L_n S^0 \to
E_n$ (\Cref{proconstdescentC} and \Cref{HR}), we find:  

\begin{proposition} 
$E_n \in \clg( L_{K(n)} \sp)$ satisfies descent. 
In particular, we have an equivalence
\[ L_{K(n)} \sp \simeq \mathrm{Tot}\left( L_{K(n)} \md(E_n) \rightrightarrows
L_{K(n)} \md(L_{K(n)}(E_n \otimes E_n)) \triplearrows \dots \right).    \]
\end{proposition}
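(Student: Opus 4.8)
The plan is to invoke the general machinery of \Cref{easydesc} (comonadicity from descendability) together with the Hopkins--Ravenel theorem \Cref{HR} and the descent result for linear $\infty$-categories, \Cref{proconstdescentC}. The key point is that descendability of $A \in \clg(\mathcal{C})$ is a ``finitary'' diagrammatic condition preserved by every symmetric monoidal functor: $A$ admits descent iff the thick $\otimes$-ideal it generates is all of $\mathcal{C}$. So the strategy has two movements: first transport the descendability of $L_n S^0 \to E_n$ from $\sp$ (really from $L_n\sp$) into $L_{K(n)}\sp$, and then feed the resulting descendable algebra object into \Cref{easydesc} to get the comonadic adjunction and hence the totalization.

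First I would argue descendability. \Cref{HR} says the map $L_n S^0 \to E_n$ admits descent in $L_n\sp$; more precisely $E_n$, viewed as a commutative algebra object in the $E_n$-local stable homotopy theory $L_n\sp$, generates the unit as a thick $\otimes$-ideal. Now $L_{K(n)}\sp$ is the Bousfield localization $L_{K(n)}(L_n\sp)$, and the localization functor $L_n\sp \to L_{K(n)}\sp$ is symmetric monoidal (it is the left adjoint in a Bousfield localization, which is symmetric monoidal for the localized tensor product, as recalled in the Bousfield localization example in Section~2). By the \Cref{easydesc}-preceding corollary (descendability is preserved under symmetric monoidal functors between symmetric monoidal stable $\infty$-categories), the image of $E_n$ in $L_{K(n)}\sp$ is descendable there. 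Note $L_{K(n)}(E_n) \simeq E_n$ since $E_n$ is already $K(n)$-local, so this image is literally $E_n \in \clg(L_{K(n)}\sp)$.

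Next I would apply \Cref{easydesc} directly to $\mathcal{C} = L_{K(n)}\sp$ and $A = E_n$: since $A$ admits descent, the adjunction $\mathcal{C} \rightleftarrows \md_{\mathcal{C}}(A)$ given by tensoring and forgetting is comonadic, and the natural functor
\[ \mathcal{C} \to \mathrm{Tot}\left( \md_{\mathcal{C}}(A) \rightrightarrows \md_{\mathcal{C}}(A \otimes A) \triplearrows \dots \right) \]
is an equivalence. It remains only to identify $\md_{\mathcal{C}}(A^{\otimes(m+1)})$, where $\otimes$ is the $K(n)$-local smash product, with $L_{K(n)}\md(E_n^{\otimes m})$ in the ordinary (non-local) sense. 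Here the iterated tensor in $L_{K(n)}\sp$ is $L_{K(n)}(E_n \otimes_{S^0} \cdots \otimes_{S^0} E_n)$, and modules internal to $L_{K(n)}\sp$ over a $K(n)$-local $\e{\infty}$-ring $R$ form the $K(n)$-localization $L_{K(n)}\md(R)$ of the ordinary module category (the $K(n)$-local modules over $R$); this is a standard identification of relative tensor products of localized presentable $\infty$-categories, and I would cite it rather than reprove it. Substituting gives exactly the stated totalization.

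The main obstacle is the bookkeeping in the last step: carefully checking that ``$A$-modules internal to $L_{K(n)}\sp$'' for $A = L_{K(n)}(E_n^{\otimes(m+1)})$ coincide with $L_{K(n)}\md(L_{K(n)}(E_n \otimes E_n))$ and its higher analogues, i.e.\ that the $E_\infty$-structure and the passage to internal modules interact correctly with Bousfield localization of the ambient symmetric monoidal $\infty$-category. This is genuinely routine but requires one to be precise about which smash product ($S^0$-relative versus $K(n)$-local) appears at each level of the cobar construction; everything else is a formal consequence of \Cref{HR}, \Cref{easydesc}, and symmetric monoidality of $L_{K(n)}$. I would also remark that the same argument, applied to the perfect module subcategories, is what underlies \Cref{proconstdescentC}, so one could alternatively phrase the proof as a citation of that corollary applied to $L_n S^0 \to E_n$ with the $L_{K(n)}\sp$-linear structure.
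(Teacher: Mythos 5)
Your proposal is correct and matches the paper's argument: the paper likewise deduces descendability of $E_n$ in $L_{K(n)}\sp$ by transporting the Hopkins--Ravenel descendability of $L_n S^0 \to E_n$ along the symmetric monoidal localization functor (phrased there as the constancy of the cobar pro-object being preserved under $K(n)$-localization, which is exactly the content of the corollary you cite), and the totalization then follows from \Cref{easydesc}.
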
 

\begin{proof} 
This follows directly from the fact that since the cobar construction $L_n S^0
\to E_n$ defines a constant pro-object in $\sp$ (with limit $L_nS^0$), it defines a constant
pro-object (with limit $L_{K(n)}S^0$) in $L_{K(n)}\sp$ after $K(n)$-localizing everywhere. 
\end{proof}

Therefore, we need 
to understand the Galois groups of stable homotopy theories such as
$L_{K(n)}\md(E_n)$. We did most of the work in \Cref{etalegalois}, although the
extra localization adds a small twist that we should check first. 

Let $A$ be an even periodic $\e{\infty}$-ring with $\pi_0 A$  a complete
regular local ring with maximal ideal $\mathfrak{m} = (x_1, \dots, x_n)$,
where $x_1 ,\dots, x_n$ is a system of parameters for $\mathfrak{m}$. Let
$\kappa(A) = A/(x_1,\dots, x_n)$ be 
the topological ``residue field'' of $A$, as considered earlier.

\begin{proposition} 
\label{etalegall}
Given a $\kappa(A)$-local $A$-module $M$, the following are equivalent: 
\begin{enumerate}
\item $M$ is dualizable in $L_{\kappa(A)}  \md(A)$. 
\item $M$ is a perfect $A$-module. 
\end{enumerate}
\end{proposition}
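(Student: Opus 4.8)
The plan is to prove the nontrivial direction, namely that a dualizable object of $L_{\kappa(A)}\md(A)$ is already perfect as an $A$-module; the reverse implication is immediate, since perfect $A$-modules are dualizable in $\md(A)$ and the localization functor $\md(A)\to L_{\kappa(A)}\md(A)$ is symmetric monoidal, hence carries dualizable objects to dualizable objects. For the hard direction, the first step is to reduce to a statement about $\kappa(A)$. The key observation is that $\kappa(A)=A/(x_1,\dots,x_n)$ is itself $\kappa(A)$-local (being built from $\kappa(A)$), and is a perfect $A$-module, with the property that tensoring with $\kappa(A)$ is conservative on $L_{\kappa(A)}\md(A)$ by the definition of the localization. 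Moreover, since $A$ is even periodic with regular noetherian $\pi_0$, each $A/x_i$ is a ring object in the homotopy category of $A$-modules, so that $\pi_*(\kappa(A))$ is a graded field; in particular $\kappa(A)$ is dualizable in $\md(A)$ (it is perfect), and tensoring with $\kappa(A)$ commutes with all limits and colimits in $\md(A)$.

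The second step is to run the thick-subcategory argument familiar from \Cref{compactdesc} and \Cref{cptdescent}, but relative to the localization. First I would check that $\kappa(A)$ admits descent in $L_{\kappa(A)}\md(A)$: it is dualizable there and faithful there (tensoring with $\kappa(A)$ is conservative on $\kappa(A)$-local modules, essentially by construction of the localization), so \Cref{cptdescent} applies provided the unit $L_{\kappa(A)}A$ is compact in $L_{\kappa(A)}\md(A)$. This last point is the one place the argument is genuinely about the $\kappa(A)$-local category rather than $\md(A)$; here one uses that $A$ is even periodic with $\pi_0 A$ complete local, so that $L_{\kappa(A)}\md(A)$ is compactly generated by $\kappa(A)$ (or by $A/(x_1,\dots,x_n)$, which is a ``finite type $n$'' analogue), exactly as in the Hovey--Strickland picture \cite{HoveyS}; I would cite this. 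Granting compactness of the unit, $\kappa(A)$ admits descent, so $L_{\kappa(A)}\md(A)$ is the totalization of the cobar construction $L_{\kappa(A)}\md(\kappa(A))\rightrightarrows L_{\kappa(A)}\md(\kappa(A)\otimes_A\kappa(A))\triplearrows\cdots$, and dualizability can be checked after base change to $\kappa(A)$ (cf. \cite[Prop. 4.6.1.11]{higheralg}).

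The third step is the descent itself. Given $M\in L_{\kappa(A)}\md(A)$ dualizable, $M\otimes_A\kappa(A)$ is dualizable in $\md(\kappa(A))$; but $\pi_*\kappa(A)$ is a graded field, so every object of $\md(\kappa(A))$ that is dualizable has finitely generated homotopy, i.e. $M\otimes_A\kappa(A)$ is perfect over $\kappa(A)$. Now I would argue as in \Cref{compactdesc}: the collection of $A$-modules $N$ such that $M\otimes_A N$ has the property ``$\hom_A(M,-)$ commutes with the relevant colimits when tested against $N$'' is a thick subcategory; it contains $\kappa(A)$ by the above, hence contains all $\kappa(A)\otimes_A\cdots\otimes_A\kappa(A)$, hence contains $L_{\kappa(A)}A$ since that is a retract of a finite limit of such (the cobar construction being a constant pro-object because $\kappa(A)$ admits descent). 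This forces $M$ to be compact, hence perfect, in $L_{\kappa(A)}\md(A)$; a final Bockstein-type bookkeeping step (as in \Cref{easyBSS}, using the evenness of $A$ and regularity of $\pi_0 A$) upgrades compactness in the localized category to perfectness as an honest $A$-module, using that $\pi_*(M/(x_1,\dots,x_n)M)$ being finite over $\kappa(A)$ bounds $\pi_*M$ over $\pi_*A$ via the inductive argument of \Cref{BSSlem}. The main obstacle I anticipate is the compactness of the unit in $L_{\kappa(A)}\md(A)$: this is not formal, requires the complete-local hypothesis on $\pi_0 A$, and is exactly the input that lets the otherwise-standard thick-subcategory machinery go through; everything else is a routine adaptation of \Cref{compactdesc}, \Cref{cptdescent}, and the Bockstein arguments already in the paper.
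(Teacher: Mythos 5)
Your reduction to showing that a dualizable object has finite-dimensional $\kappa(A)$-homology is fine, and so is the final appeal to regularity of $\pi_0 A$; but the descent machinery you place in between rests on a false premise and cannot be repaired. The unit of $L_{\kappa(A)}\md(A)$ is \emph{not} compact: the category is compactly generated (by $\kappa(A)$, or by $A/(x_1,\dots,x_n)$), but compact generation by $\kappa(A)$ does not make the unit compact, and $\pi_*$ fails to commute with infinite direct sums in $L_{\kappa(A)}\md(A)$ for the same reason it fails in $L_{K(n)}\sp$ (the localization of an infinite direct sum is a completion). Consequently \Cref{cptdescent} does not apply; and in fact $\kappa(A)$ --- though dualizable and faithful in the local category --- does \emph{not} admit descent there. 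The $\mathrm{Tot}$ tower of its cobar construction computes the $\mathfrak{m}$-adic completion but is not a constant pro-object: the remark following \Cref{dualthing}, about $\mathbb{Z}/p$ in $p$-complete complexes, is exactly this phenomenon, and one can see directly that multiplication by $x_i$ on $A$ is a $\kappa(A)$-zero map none of whose powers vanish, so $\mathcal{I}_{\kappa(A)}$ is not nilpotent (\Cref{descnilp}). So the thick-subcategory step modeled on \Cref{compactdesc} has no starting point. Worse, its intended conclusion --- that $M$ is compact in $L_{\kappa(A)}\md(A)$ --- is not the statement you want and is false for $M = A$ itself when $\pi_0 A$ is complete: the unit is dualizable and perfect but not compact in the local category, so compactness there cannot be equivalent to perfectness over $A$.

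The paper's proof avoids descent entirely. From dualizability one extracts only that $\kappa(A)_*(M)$ is a dualizable, hence finite-dimensional, graded $\kappa(A)_*$-module (because $\kappa(A)_*$ is a monoidal homology theory with graded-field coefficients). The real content is the passage from finite-dimensionality of $\pi_*(M/(x_1,\dots,x_n)M)$ to finite generation of $\pi_*(M)$, which is done by descending induction using that $\pi_0(M/(x_1,\dots,x_{i-1})M)$ is $x_i$-adically complete --- a completed Nakayama argument; this is precisely where $\kappa(A)$-locality of $M$ enters, and it is the step your sketch leaves unaddressed. Note that neither \Cref{BSSlem} nor \Cref{easyBSS} can be cited here as stated: both assume $M$ is already perfect, which is what is being proved, and the generic rank bound of \Cref{BSSlem} does not yield finite generation for a general module. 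Once $\pi_*(M)$ is finitely generated, regularity (finite global dimension) of $\pi_*(A)$ gives perfectness, as you say.
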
 
\begin{proof} 
Only the claim that the first assertion implies the second needs to be shown.
If $M$ is dualizable in $L_{\kappa(A)} \md(A)$, then it follows that, since
the \emph{homology theory} $\kappa(A)_*$ is a monoidal functor, 
$\kappa(A)_*(M)$ must be dualizable in the category of graded
$\kappa(A)_*$-modules. In particular, $\kappa(A)_0(M)$ and $\kappa(A)_1(M)$ are
finite-dimensional vector spaces. From this, it follows that $\pi_*(M)$ itself
must be a finitely generated $\pi_*(A)$-module, since $\pi_*(M)$ is
(algebraically) complete. 
For example, given any $i$, we show that 
the $\pi_0(A)$-module
\( \pi_0 (M/(x_1, \dots, x_i) M) \) is finitely generated by descending
induction on $i$; when $i = 0$ it is the assertion we want. When $i = n$, the
finite generation follows from our earlier remarks. If we know finite
generation at $i$, then we use the cofiber sequence
\[ M/(x_1, \dots, x_{i-1}) \stackrel{x_i}{\to} M/(x_1, \dots, x_{i-1}) \to
M/(x_1, \dots, x_i),  \]
to find that
\[ \pi_0(M/(x_1, \dots, x_{i-1})) \otimes_{\pi_0(A)} \pi_0(A)/(x_i) \subset
\pi_0( M/(x_1, \dots, x_i)),  \]
is therefore finitely generated. However, by the $x_i$-adic completeness of
$\pi_0 (M/(x_1, \dots, x_{i-1}))$, this implies that $\pi_0(M/(x_1, \dots,
x_{i-1}))$ is finitely generated. 

Finally, since $\pi_*(A)$ has finite global dimension, this is
enough to imply that $M$ is perfect as an $A$-module. 
\end{proof}

\begin{proof}[Proof of \Cref{knlocalgal}]
We thus get an equivalence
\[ \clgw( L_{K(n)} \sp)
\simeq \mathrm{Tot}\left( \clgw( L_{K(n)} \md(E_n)) \rightrightarrows \clgw(
L_{K(n)} \md(E_n \otimes E_n)) \triplearrows \dots \right).
\]
However, we have shown, as a consequence of \Cref{etalegall} and \Cref{etalegalois}, that 
$\clgw( L_{K(n)} \md(E_n))$ is actually equivalent to the full subcategory spanned
by the \emph{finite \'etale} commutative algebra objects. Since finite \'etale
algebra objects are preserved under base change, we can replace the above
totalization via
\[ \clgw( L_{K(n)} \sp)
\simeq \mathrm{Tot}\left( \clgw_{\mathrm{alg}}( L_{K(n)} \md(E_n))
\rightrightarrows \clgw_{\mathrm{alg}}(
L_{K(n)} \md(E_n \otimes E_n)) \triplearrows \dots \right),
\]
where the subscript $\mathrm{alg}$ means that we are only looking at the
classical finite covers, i.e., the category is equivalent to the category of
finite \'etale covers of $ \pi_0$. 
In other words, we obtain a cosimplicial commutative ring, and we need to take
the geometric realization of the \'etale fundamental groupoids to obtain the
fundamental group of $L_{K(n)} \sp$. 

Observe that each commutative ring $\pi_0 L_{K(n)}(E_n^{\otimes m})$ is
complete with respect to the ideal $(p, v_1, \dots, v_{n-1})$, in view of the
$K(n)$-localization. 
The algebraic fundamental group is thus invariant under quotienting by this
ideal. After we do this, we obtain precisely a presentation for the moduli
stack $M_{FG}^{n}$, so the Galois group of $L_{K(n)} \sp$ is that of this
stack. As we observed earlier, this is precisely the extended Morava stabilizer group. 
\end{proof}

\subsection{Purity}
We next describe a ``purity'' phenomenon in the Galois groups of
$\e{\infty}$-rings in chromatic homotopy theory: they appear to depend only on
their $L_1$-localization. We conjecture below that this is true in general, and
verify it in a few special (but important) cases. 

We return to the setup of \Cref{sec:dstack}.
Let $R$ be an $\e{\infty}$-ring that 
arises as the global sections of the structure sheaf (``functions'') on a
derived stack $(\mathfrak{X}, \otop)$ which is a refinement of a flat
map $X \to M_{FG}$. 
Suppose further that $(\mathfrak{X}, \otop)$ is \emph{0-affine}, i.e., the
natural functor $\md(\Gamma( \mathfrak{X}, \otop)) \to \qcoh( \mathfrak{X})$
is an equivalence, and that $X$ is
\emph{regular}. 

In this case, we have: 
\begin{theorem}[$KU$-purity] \label{E1purity}
The map $R \to L_{KU} R$ induces an isomorphism on Galois groups. \end{theorem}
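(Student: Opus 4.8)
The plan is to reduce to the algebraic fact that the Galois theory (equivalently, the \'etale fundamental group) of a regular stack $X\to M_{FG}$ is unchanged by restricting to the locus of height $\le 1$. First I would unwind what $KU$-localization means on the level of the derived stack. Since $(\mathfrak{X},\otop)$ is $0$-affine and $X$ is regular, by \Cref{galstack} we have $\pi_1(\md(R))\simeq \pi_1^{\mathrm{et}} X$. The $KU$-localization $L_{KU}R$ should similarly be the global sections of the restriction of $\otop$ to the open substack $X^{\le 1}=X\times_{M_{FG}} M_{FG}^{\le 1}$, the pullback of the height $\le 1$ open locus; this uses that $KU$-localization of an even periodic $\e{\infty}$-ring built from a Landweber-exact refinement corresponds to inverting $p$ and restricting to the $E_1$-local (height $\le 1$) part, together with rationalization, and that these operations are compatible with the homotopy-limit presentation of $\Gamma(X,\otop)$. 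One has to check that $(\mathfrak{X}^{\le 1},\otop|_{X^{\le 1}})$ is again $0$-affine (this follows since quasi-affineness of $X^{\le 1}\to M_{FG}$ is inherited from that of $X\to M_{FG}$, so \Cref{MMres} applies), and that $X^{\le 1}$ is regular (it is an open substack of the regular stack $X$). Then again by \Cref{galstack}, $\pi_1(\md(L_{KU}R))\simeq \pi_1^{\mathrm{et}} X^{\le 1}$.

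With this reduction in hand, the statement becomes: for a regular Deligne--Mumford stack $X$ with a flat quasi-affine map $X\to M_{FG}$, the open immersion $X^{\le 1}\hookrightarrow X$ induces an isomorphism on \'etale fundamental groups. The key geometric input is that the complement $X\setminus X^{\le 1}$ (the locus where the formal group has height $\ge 2$ somewhere) has codimension $\ge 2$ in $X$: indeed over $M_{FG}$ the height-$\ge 2$ locus is cut out (after $p$-completion) by $p$ and $v_1$, two parameters, and flatness of $X\to M_{FG}$ ensures this pulls back to a locus of codimension $\ge 2$ in the regular stack $X$ (the rationalization $X_{\mathbb{Q}}$ is entirely of height $0\le 1$, so one only needs to worry $p$-locally). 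Then I would invoke Zariski--Nagata purity / the purity of the branch locus for regular schemes (and its standard extension to regular Deligne--Mumford stacks): removing a closed substack of codimension $\ge 2$ from a regular stack does not change the \'etale fundamental group, since every finite \'etale cover of the open extends uniquely over the whole stack by normality, and a finite \'etale cover restricted to the open is connected iff it is connected over all of $X$ (the open is dense). This gives $\pi_1^{\mathrm{et}} X^{\le 1}\xrightarrow{\sim}\pi_1^{\mathrm{et}} X$, and chasing the identifications back through \Cref{galstack} yields that $R\to L_{KU}R$ induces an isomorphism on Galois groups.

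The main obstacle I anticipate is the first step rather than the second: carefully identifying $L_{KU}R$ with $\Gamma(X^{\le 1},\otop|_{X^{\le 1}})$. One must show that $KU$-localizing the homotopy limit $\varprojlim_{\spec S\to X}\md(\otop(\spec S))$ commutes appropriately with the limit, or equivalently argue directly that $L_{KU}$ applied to an even periodic refinement of a flat map to $M_{FG}$ is computed by restriction to the height $\le 1$ open locus — this is where the regularity and quasi-affineness hypotheses, plus the Hopkins--Ravenel smashing result \Cref{HR} at height $1$ and rationally, do the work, but the bookkeeping with the derived stack is delicate. The purity step is comparatively routine once codimension $\ge 2$ is established, modulo citing the stacky version of Zariski--Nagata purity (for which a reference such as the regularity hypothesis and \'etale-local-on-the-stack arguments suffice). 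I would also remark that the same argument shows $R\to L_{K(0)\vee K(1)}R$ induces an isomorphism, and conjecture the analogous $L_1$-purity statement in general, beyond the $0$-affine regular case.
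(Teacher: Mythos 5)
Your proposal is correct and follows essentially the same route as the paper: identify $L_{KU}R$ ($p$-locally, $L_1 R$) with the sections of $\otop$ over the open substack complementary to the locus cut out by $(p,v_1)$, apply \Cref{galstack} on both sides, and conclude by Zariski--Nagata purity since flatness makes that complement of codimension $\geq 2$ in the regular stack $X$. The only presentational difference is that the paper assembles the integral statement from the $p$-local and rational cases via an explicit gluing lemma for the sheaf of $G$-Galois extensions over $\spec\mathbb{Z}$ (\Cref{galloc}), where you instead work directly with a global height $\leq 1$ open locus; both handle the same point.
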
 

In order to prove this result, we recall the \emph{Zariski-Nagata purity theorem}, for
which a useful reference is 
Expos\'e X of \cite{SGA2}. 
\begin{theorem}[Zariski-Nagata] \label{ZN} Let $X$ be a 
regular noetherian scheme and $U \subset X$ an open subset such that $X
\setminus U$ has codimension $\geq 2$ in $X$. Then the restriction functor
establishes an equivalence of categories between finite \'etale covers of $X$
and finite \'etale covers of $U$. 
\end{theorem}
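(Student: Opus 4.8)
The plan is to establish the two halves of the claimed equivalence separately: full faithfulness of the restriction functor, and essential surjectivity. Full faithfulness is the elementary half. A finite \'etale cover $Y \to X$ of a regular scheme is again regular, hence normal, and in particular satisfies Serre's condition $S_2$; thus the depth of $\mathcal{O}_Y$ at any point lying over a point of $X \setminus U$ (which has codimension $\geq 2$) is at least $2$. First I would use this to show that any section of $\mathcal{O}_Y$ defined over $Y \times_X U$ extends uniquely over $Y$, a Hartogs-type extension statement. Since a morphism $Y \to Y'$ of finite covers is the same data as a map of the corresponding $\mathcal{O}_X$-algebras, and the algebras are determined in codimension $\geq 2$ by their restrictions to $U$, it follows that $\hom_X(Y,Y') \simeq \hom_U(Y|_U, Y'|_U)$, giving full faithfulness.

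For essential surjectivity, the idea is to start with a finite \'etale $V \to U$ and produce a canonical extension. Since $U$ is regular (hence normal) and $V \to U$ is finite \'etale, $V$ is normal; I would take $Y$ to be the normalization of $X$ in the total ring of fractions of $V$. Using that $X$ is regular noetherian (hence excellent, so normalization is finite), $Y \to X$ is finite, $Y$ is normal, and $Y \times_X U \simeq V$. Everything then reduces to showing that this $Y \to X$, which is \'etale over $U$, is \'etale over all of $X$. The non-\'etale locus (branch locus) is a closed subset of $X$ contained in $X \setminus U$, hence of codimension $\geq 2$; the Zariski--Nagata \emph{purity of the branch locus} asserts that for $X$ regular and $Y$ normal this locus is either empty or of pure codimension $1$, which forces it to be empty here. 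This is the geometric heart of the theorem.

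To prove purity of the branch locus I would work locally and reduce dimension by d\'evissage. The statement is local on $X$, so assume $X = \spec A$ with $A$ a regular local ring and $Y = \spec B$ with $B$ normal and finite over $A$, \'etale over $U = \spec A \setminus Z$ for $Z$ of codimension $\geq 2$. I would induct on $\dim A$, the cases $\dim A \leq 1$ being vacuous since codimension $\geq 2$ then forces $U = X$. For the inductive step one cuts by a sufficiently general element $t$ of the maximal ideal, arranging via a Bertini-type argument that $A/t$ is again regular local, that $B/tB$ is normal, and that the branch locus of $B/tB$ over $A/t$ still has codimension $\geq 2$; applying the inductive hypothesis and descending along $t$ recovers \'etaleness of $B$ over $A$. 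The base case is $\dim A = 2$: here $B$ is finite and normal, hence $S_2$ over the two-dimensional $A$, so $B$ is a maximal Cohen--Macaulay $A$-module, and the Auslander--Buchsbaum formula gives $\mathrm{pd}_A B = 0$, i.e. $B$ is free. The trace pairing then has a discriminant $\delta \in A$ whose vanishing locus is the branch locus; being principal, $V(\delta)$ is empty or of pure codimension $1$, so the codimension $\geq 2$ hypothesis forces $\delta$ to be a unit and $B/A$ to be \'etale.

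The main obstacle is exactly the purity of the branch locus, and within it the two genuinely non-formal ingredients: the d\'evissage by general hyperplane sections and the two-dimensional base case. I expect the hyperplane-section step to require the most care, since one must verify that regularity of $A$, normality of $B/tB$, and the codimension of the branch locus all survive a general cut — preserving normality of $B/tB$ is the subtle point, and is where one must invoke a suitable local Bertini theorem rather than a purely formal manipulation. The base case, by contrast, is where regularity of $A$ is cleanly converted, via Auslander--Buchsbaum, into freeness of $B$ and hence into the discriminant argument, and is comparatively routine once the homological input is in place.
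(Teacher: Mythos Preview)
The paper does not prove this theorem: it is quoted as a classical black-box result, with a pointer to Expos\'e~X of SGA2, and then immediately applied. So there is no paper proof to compare against; your proposal is being measured only against the literature.

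Your outline is the standard reduction to purity of the branch locus, and the two-dimensional base case via Auslander--Buchsbaum (normal $\Rightarrow$ $S_2$ $\Rightarrow$ MCM over a 2-dimensional regular local ring $\Rightarrow$ free $\Rightarrow$ discriminant is principal) is correct and is exactly Auslander's argument. One small slip: ``regular noetherian hence excellent'' is false in general (there exist non-excellent DVRs), so finiteness of the normalization is not automatic; you can bypass this entirely by defining the extension as $\spec j_*\mathcal{O}_V$ and using the $S_2$/Hartogs extension you already invoked for full faithfulness to see that $j_*\mathcal{O}_V$ is coherent. The substantive gap is the d\'evissage step: reducing from dimension $d$ to $d-1$ by cutting with a general $t \in \mathfrak{m}$ requires that $B/tB$ remain normal, and over an arbitrary regular local ring (no base field, possibly finite residue field) there is no off-the-shelf ``local Bertini theorem'' that guarantees this. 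You correctly flag this as the delicate point, but it is not a matter of care --- a direct hyperplane-section induction of this form is not how the general case is actually proved. Grothendieck's argument in SGA2 goes instead through Lefschetz conditions and local cohomology to pass to the completion and reduce dimension, which is why the paper cites SGA2 rather than an elementary source.
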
 

If $X$ is instead a regular Deligne-Mumford stack, and $U \subset X$ is an open
substack whose complement has codimension $\geq 2$ (a condition that makes
sense \'etale locally, and hence for $X$), then it follows from the above
and descent theory that finite \'etale
covers of $X$ and $U$ are still equivalent.
\begin{proof}[Proof of  \Cref{E1purity}] 
First we work localized at a prime $p$, so that $L_{KU} \simeq L_1$. 
In this case, the result is a now a direct consequence of various results in the preceding sections
together with \Cref{ZN}. 

Choose a derived stack $(\mathfrak{X}, \otop)$ whose global sections give $R$;
suppose $\mathfrak{X}$ is an even periodic refinement of an ordinary
Deligne-Mumford stack $X$,
with a flat, affine map $X \to M_{FG}$. 
Then $L_1 R$ can be recovered as the $\e{\infty}$-ring of functions on the open
substack of $(\mathfrak{X}, \otop)$ corresponding to the open substack $U$ of $X$
complementary to closed substack cut out by the ideal $(p, v_1)$. The derived
version of $U$ is also 0-affine, as observed in \cite[Proposition 3.27]{MM}.

Now, in view of \Cref{galstack}, the Galois group of $L_1 R$ is that of the open
substack $U$, and the Galois group of $R$ is that of $X$. However, the
Zariski-Nagata theorem implies that the inclusion $U \subset X$ induces an
isomorphism on \'etale fundamental groups. Indeed, the complement of $U
\subset X$ has codimension $\geq 2$  as $(p, v_1)$ is a regular sequence on $X$
by flatness and thus cuts out a codimension two substack of $X$. 

To prove this integrally, we need to piece together the different primes
involved. 
Given any $\e{\infty}$-ring $A$, it follows from 
descent theory that there is a sheaf $\mathrm{Gal}_G$ of (ordinary) categories on 
the Zariski site of $\spec \pi_0 A$, such that on a basic open affine $U_f
= \spec
\pi_0 A[f^{-1}] \subset \spec \pi_0 A$, 
$\mathrm{Gal}_G(U_f)$ is the groupoid of $G$-Galois extensions of the
localization $A[f^{-1}]$. Thus we can prove:

\begin{lemma} 
\label{galloc}
Fix a finite group $G$. 
Let $R \to R'$ be a morphism of $\e{\infty}$-rings with the following properties: 
\begin{enumerate}
\item  $R \to R'$ induces an equivalence of categories $\mathrm{Gal}_G(R_{(p)}) \to
\mathrm{Gal}_G(R'_{(p)})$ for each $p$. 
\item $R_{\mathbb{Q}}\to R'_{\mathbb{Q}}$ 
induces an equivalence of categories $\mathrm{Gal}_G(R_{\mathbb{Q}}) \to
\mathrm{Gal}_G(R'_{\mathbb{Q}})$. 
\end{enumerate}
Then the natural functor $\mathrm{Gal}_G(R) \to \mathrm{Gal}_G(R')$ is an
equivalence of categories. 
\end{lemma}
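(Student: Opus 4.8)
The idea is that the category of $G$-Galois extensions of an $\e{\infty}$-ring $A$ is the groupoid of global sections of the sheaf $\mathrm{Gal}_G$ on the Zariski site of $\spec \pi_0 A$, and that this sheaf can be reconstructed from its localizations at the various primes together with its rationalization, via the arithmetic fracture square. First I would recall that for any $\e{\infty}$-ring $A$ one has the fracture square
\[ \xymatrix{
A \ar[d] \ar[r] & \prod_p A_p^{\wedge} \ar[d] \\
A_{\mathbb{Q}} \ar[r] & \left(\prod_p A_p^{\wedge}\right)_{\mathbb{Q}}
},\]
which is a pullback of $\e{\infty}$-rings; using that $\idem$ and more generally the passage to $G$-Galois extensions commutes with such finite limits (this is \Cref{idemlimit} together with the van Kampen statement \Cref{vankampen}, since a finite homotopy limit diagram of stable homotopy theories induces a limit diagram on the Galois categories), one reduces the comparison of $\mathrm{Gal}_G(R)$ and $\mathrm{Gal}_G(R')$ to the comparison on the corners of the square. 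The hypotheses handle the $p$-complete corners (after noting $\mathrm{Gal}_G(A_p^{\wedge})$ is controlled by $\mathrm{Gal}_G(A_{(p)})$, since $p$-completion does not change the \'etale site of $\pi_0$ and descent for the relevant finite covers is insensitive to it — or, more safely, one restricts to the noetherian/finitely generated case and uses \Cref{galcolim} to pass between $A_{(p)}$ and $A_p^{\wedge}$) and the rational corner directly, and the fourth corner $\left(\prod_p A_p^{\wedge}\right)_{\mathbb{Q}}$ is rational, hence also covered once one knows $\mathrm{Gal}_G$ of a rational $\e{\infty}$-ring depends only on its $\pi_0$ — which is \Cref{coconnectivecov} in the coconnective case and, more generally, follows because over $\mathbb{Q}$ Galois theory is algebraic for the relevant rings arising here.

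More carefully, here is the order of steps I would carry out. (1) Reduce to $G$-torsors by \Cref{torsorenough}, so that it suffices to show $\mathrm{Gal}_G(R) \to \mathrm{Gal}_G(R')$ is an equivalence of groupoids for each finite $G$. (2) Set up the arithmetic fracture square for $R$ and for $R'$, compatibly, and invoke \Cref{vankampen}/\Cref{vk2} (finite $K$!) to get that $\mathrm{Gal}_G(R)$ is the homotopy limit of the groupoids $\mathrm{Gal}_G\!\left(\prod_p R_p^{\wedge}\right)$, $\mathrm{Gal}_G(R_{\mathbb{Q}})$, $\mathrm{Gal}_G\!\left((\prod_p R_p^{\wedge})_{\mathbb{Q}}\right)$, and similarly for $R'$. (3) Identify $\mathrm{Gal}_G\!\left(\prod_p R_p^{\wedge}\right) \simeq \prod_p \mathrm{Gal}_G(R_p^{\wedge})$ (products of stable homotopy theories give products of Galois categories), and compare $\mathrm{Gal}_G(R_p^{\wedge})$ with $\mathrm{Gal}_G(R_{(p)})$: since the only primes $p$ that matter are those dividing $|G|$, and for those the relevant finite covers come, after the invariance results of the ``Invariance properties'' section, from classically \'etale ones over $\pi_0$, this comparison is the statement that $p$-adic completion does not change the finite \'etale site of the (noetherian) ring $\pi_0 R$ — classical commutative algebra (or \Cref{slightlyconn}). (4) For the rational corners, use that any rational $\e{\infty}$-ring appearing here has algebraic Galois theory (the key input being \Cref{coconnectivecov} plus the fact that $R_{\mathbb{Q}}$ is built from its $\pi_0$ by square-zero extensions à la \Cref{trivsqzero}); so $\mathrm{Gal}_G$ of the rational corners depends only on $\pi_0$, and on $\pi_0$ the fracture square is an ordinary-commutative-algebra statement where everything is determined by the localizations. (5) Conclude by comparing the two homotopy limit diagrams term by term.

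The main obstacle I anticipate is step (3)–(4): reconciling the honest $p$-completion $R_p^{\wedge}$ (which is what the fracture square wants) with the $p$-localization $R_{(p)}$ (which is what the hypotheses give), and controlling the Galois theory of the ``mixed'' rational corner $\left(\prod_p R_p^{\wedge}\right)_{\mathbb{Q}}$, whose $\pi_0$ is a large and unfamiliar ring. The cleanest route is probably to first reduce to the case where $\pi_0 R$ is noetherian (using \Cref{galcolim} to write $R$ as a filtered colimit of ``finite'' pieces, as in the proof of \Cref{connectivegal}), in which case $R_{(p)} \to R_p^{\wedge}$ induces an equivalence on finite \'etale sites of $\pi_0$ by faithfully flat descent and the Artin–Rees lemma, and the rational corners are genuinely controlled by $\pi_0$ via \Cref{coconnectivecov}. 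One should also double-check that the finiteness hypothesis needed for \Cref{vankampen} is met — the fracture square is a finite diagram, so this is fine, but the infinite product $\prod_p$ requires a moment's care (it is a product, not a general limit, so it is handled by the product formula for Galois categories, \Cref{prodgc} and \eqref{prodgcgen}, rather than by \Cref{vankampen} directly).
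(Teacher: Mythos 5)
Your overall strategy --- decompose $R$ arithmetically and compare corner by corner --- is in the right spirit, but the decomposition you chose does not match the hypotheses, and the bridge you propose to repair this fails. The arithmetic fracture square involves the $p$-\emph{completions} $R_p^{\wedge}$, whereas the hypotheses of the lemma concern the $p$-\emph{localizations} $R_{(p)}$. Your claim in step (3) that ``$p$-adic completion does not change the finite \'etale site of the (noetherian) ring $\pi_0 R$'' is false: already for $R = S^0$ one has $\pi_1^{\mathrm{et}} \spec \mathbb{Z}_{(p)} \neq \pi_1^{\mathrm{et}} \spec \mathbb{Z}_p$ (finite \'etale covers of $\spec \mathbb{Z}_{(p)}$ correspond to number fields unramified at $p$, while those of $\spec \mathbb{Z}_p$ correspond to unramified extensions of $\mathbb{Q}_p$, giving $\widehat{\mathbb{Z}}$), so $\mathrm{Gal}_G(R_{(p)}) \to \mathrm{Gal}_G(R_p^{\wedge})$ is not an equivalence even in the simplest case. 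Two further points would also need attention: the hypotheses give you no control over the map on the fourth corner $\left(\prod_p R_p^{\wedge}\right)_{\mathbb{Q}} \to \left(\prod_p (R')^{\wedge}_p\right)_{\mathbb{Q}}$, which you genuinely need, since identifying the two pullbacks requires equivalences on all three terms of each cospan; and the assertion that the fracture square of $\e{\infty}$-rings induces a limit diagram of module $\infty$-categories (so that \Cref{vankampen} applies) is not established in the paper and is not automatic --- compare \Cref{modclosed}, where such gluing fails in general.

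The paper's proof avoids completions entirely by using Zariski descent over $\spec \mathbb{Z}$. The assignment $\spec \mathbb{Z}[N^{-1}] \mapsto \mathrm{Gal}_G(R[N^{-1}])$ defines a sheaf of categories on the Zariski site of $\spec \mathbb{Z}$ (this only needs \Cref{vankampen} for finite covers by basic open affines). By \Cref{galcolim}, the stalk of this sheaf at a closed point $(p)$ is $\varinjlim_{p \nmid N} \mathrm{Gal}_G(R[N^{-1}]) \simeq \mathrm{Gal}_G(R_{(p)})$, and the stalk at the generic point is $\mathrm{Gal}_G(R_{\mathbb{Q}})$ --- precisely the categories the hypotheses address. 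The map of sheaves is then a stalkwise equivalence, hence an equivalence on global sections, which is the conclusion. If you want to salvage your argument, replace the fracture square by this Zariski cover; the filtered-colimit statement \Cref{galcolim} is exactly the tool that converts hypotheses about localizations into stalkwise statements.
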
 
\begin{proof} 
By the above, there is a sheaf $\mathrm{Gal}(G; R)$ (resp. $\mathrm{Gal}(G;
R')$) of categories on $\spec \mathbb{Z}$, whose value
over an open affine $\spec \mathbb{Z}[N^{-1}]$ is 
the category of $G$-Galois extensions of $R[N^{-1}]$ (resp. of $R'[N^{-1}]$). 
These are  the pushforwards of the sheaves $\mathrm{Gal}_G$ on $\spec \pi_0 R,
\spec \pi_0 R'$
discussed above. 
Now
\Cref{galcolim}, together with the hypotheses of the lemma, imply that
the map of sheaves $\mathrm{Gal}(G; R) \to \mathrm{Gal}(G'; R)$ induces an
\emph{equivalence} of categories on each stalk over every point of $\spec
\mathbb{Z}$. It follows that the map induces an equivalence upon taking global
sections, which is the conclusion we desired. 
\end{proof}

This lemma let us conclude the proof of  \Cref{E1purity}. Namely, the map $R \to L_K R$
satisfies the two hypotheses of the lemma above, since in fact $R_{\mathbb{Q}}
\simeq (L_K R)_{\mathbb{Q}}$, and we have already checked the $p$-local case
above. 
\end{proof}

Using similar techniques, we can prove a purity result for the Galois groups of
the $E_n$-local spheres. 

\begin{theorem} 
The Galois theory of $L_n S^0$ is algebraic  and is given by that of $\mathbb{Z}_{(p)}$. 
\end{theorem}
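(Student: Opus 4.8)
The idea is to combine the $E_1$-purity statement (\Cref{E1purity}) — or rather its chromatic analog for the localization $L_1$ — with our knowledge of the Galois theory of $L_1$-local or $K(1)$-local spheres, together with a descent/van~Kampen argument that reduces the height $n$ computation to height $1$ and then to the algebraic case. Concretely, $L_n S^0$ is built from the $K(i)$-local pieces $L_{K(i)} S^0$ for $i \leq n$ via the chromatic fracture squares. First I would set up the induction: the chromatic fracture square expresses $L_n S^0$ as a pullback
\[
\xymatrix{
L_n S^0 \ar[d] \ar[r] &  L_{K(n)} S^0 \ar[d] \\
L_{n-1} S^0 \ar[r] &  L_{n-1} L_{K(n)} S^0
}
\]
of $\e{\infty}$-rings. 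Applying \Cref{vankampen} (the van~Kampen theorem for finite limits of $2$-rings, valid here since the index category is finite) gives a pushout description of $\pi_{\leq 1}$, so that the Galois group of $L_n S^0$ is a pushout of those of $L_{n-1} S^0$, $L_{K(n)} S^0$, and $L_{n-1} L_{K(n)} S^0$.

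\textbf{The key steps.} The main content is therefore to show that the Galois theory of $L_{K(n)} S^0$ (for $n \geq 1$), and of the ``mixed'' term $L_{n-1} L_{K(n)} S^0$, contribute nothing new beyond what is already algebraic, i.e. beyond $\pi_1^{\mathrm{et}} \spec \mathbb{Z}_{(p)}$. For $L_{K(n)} S^0$ with $n \geq 1$: by \Cref{knlocalgal}, its Galois group is the extended Morava stabilizer group $\Ge_n$, which is emphatically \emph{not} algebraic; so the point is not that each piece is individually algebraic, but that the various topological Galois extensions \emph{fail to glue}. Here I would use $E_1$-purity in the form of \Cref{E1purity}: for $n \geq 2$, a finite cover of $L_{K(n)} S^0$ that comes from the fracture square must, upon restriction to $L_{n-1} L_{K(n)} S^0$ and hence iteratively down to the $L_1$-local (equivalently $KU$-local) information, become one of the algebraic covers, since $L_1 L_{K(n)} S^0$ and the chromatically lower pieces have purely algebraic Galois theory by the already-established height $1$ case. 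The base of the induction, $L_1 S^0$, is handled directly: its Galois group is that of $L_{K(1)} S^0$ (which by \Cref{knlocalgal} is $\Ge_1 = \mathbb{Z}_p^\times$, the extended height one stabilizer group) fractured with the rational sphere; but the apparent ``extra'' $\mathbb{Z}_p^\times$ is killed precisely because a $\mathbb{Z}_p^\times$-Galois extension of $L_{K(1)} S^0$ does \emph{not} descend to $L_1 S^0$ — the obstruction is visible in $L_0 L_{K(1)} S^0 = L_{K(1)} S^0 \otimes \mathbb{Q}$, whose Galois theory is algebraic (it is a coconnective-type rational object, cf. \Cref{coconnectivecov}), and the cyclotomic characters realizing the $K(1)$-local Galois covers do not extend over the rationalization compatibly. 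So the pushout of profinite groups collapses the topological part, leaving exactly $\pi_1^{\mathrm{et}} \spec \mathbb{Z}_{(p)}$.

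\textbf{The main obstacle.} The delicate point is the gluing/fracture step: one must show that the map of profinite groupoids $\pi_{\leq 1} \md(L_{K(n)} S^0) \to \pi_{\leq 1} \md(L_{n-1}L_{K(n)} S^0)$ has the property that, after forming the relevant pushout with $\pi_{\leq 1} \md(L_{n-1} S^0)$, the image of $\Ge_n$ is forced into the algebraic subgroup. Equivalently, in the language of Galois categories one checks that a finite cover of $L_n S^0$ is determined by compatible finite covers of the three corners of the fracture square (this is \Cref{vankampen}), and that any such compatible datum has underlying covers which are classically \'etale — because the cover of $L_{n-1} S^0$ is, by the inductive hypothesis, and a cover of $L_{K(n)} S^0$ agreeing with an algebraic cover after the map to $L_{n-1} L_{K(n)} S^0$ is itself algebraic by $E_1$-purity applied to $L_{K(n)} S^0$ (whose $KU$-localization feeds into the lower chromatic filtration). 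Making the ``$E_1$-purity for $L_{K(n)}S^0$'' precise — i.e. checking that $L_{K(n)} S^0$ fits the hypotheses needed to run \Cref{galloc}-type arguments, with the relevant complement having codimension $\geq 2$ in the appropriate moduli-of-formal-groups picture — is where the real work lies; the rest is formal manipulation of pushouts of profinite groups (using \Cref{prodgc} and \Cref{mappingspaces} to compute them) and the observation that $\pi_1^{\mathrm{et}} \spec \mathbb{Z}_{(p)}$ is already the algebraic part via \eqref{weakmap2}.
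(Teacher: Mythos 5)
Your proposal takes a genuinely different route from the paper, and as written it has real gaps. The paper's own argument is far more direct: $L_n S^0 \to E_n$ admits descent (\Cref{HR}), so $\clgf(L_n S^0)$ is computed by the totalization of the cosimplicial Galois categories $\clgf(E_n^{\otimes (\bullet+1)})$; by \Cref{etalegalois} these are purely algebraic, so the totalization computes the \'etale fundamental group of the stack $M_{FG}^{\leq n}$ presented by $\pi_0(E_n^{\otimes(\bullet+1)})$, which a Lazard-ring plus Zariski--Nagata argument identifies with $\pi_1^{\mathrm{et}} \spec \mathbb{Z}_{(p)}$. No chromatic fracture and no induction on height is needed.

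The central gap in your approach is the appeal to $KU$-purity for $L_{K(n)}S^0$ and for the mixed term $L_{n-1}L_{K(n)}S^0$. \Cref{E1purity} is proved only for $\e{\infty}$-rings arising as global sections of a $0$-affine even periodic refinement of a \emph{regular} stack flat over $M_{FG}$; neither of these rings is exhibited as such, and purity for general $L_n$-local $\e{\infty}$-rings is exactly what the paper leaves as \Cref{purityconj} --- a conjecture, not something you may cite. Worse, the remark following that conjecture notes that purity \emph{fails} for the stable homotopy theory $L_{K(n)}\sp$, which is the corner actually appearing in the categorical fracture square (and which is what \Cref{knlocalgal} computes; $\md(L_{K(n)}S^0)$ is a different category, since the unit of $L_{K(n)}\sp$ is not compact). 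You also never compute the Galois theory of $L_{n-1}L_{K(n)}S^0$, without which the pushout cannot be evaluated. Even the base case is incomplete: the claim that the $\mathbb{Z}_p^{\times}$-covers of $L_{K(1)}S^0$ ``do not extend over the rationalization compatibly'' is an assertion, not an argument --- the cyclotomic covers do induce nontrivial extensions on $\pi_0$ after rationalization, and what must actually be carried out is the computation of the pushout of $\pi_1^{\mathrm{et}}\spec\mathbb{Q}$ and $\mathbb{Z}_p^{\times}$ over the Galois groupoid of $(L_{K(1)}S^0)_{\mathbb{Q}}$, including identifying the map from $\mathbb{Z}_p^{\times}$ into $\mathrm{Gal}(\overline{\mathbb{Q}_p}/\mathbb{Q}_p)$. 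Finally, a pullback square of $\e{\infty}$-rings does not by itself give a pullback of module $\infty$-categories (cf. \Cref{modclosed}), so the categorical chromatic fracture statement must be established separately before \Cref{vankampen} can be invoked. The fracture strategy may be salvageable, but each of these steps requires substantive work that the descent-along-$E_n$ argument avoids entirely.
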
 
\begin{proof} 
We can prove this using descent along the map $L_n S^0 \to E_n$. 
Since this map admits descent, we find that 
\[ \clgf( L_n S^0) \simeq \mathrm{Tot}\left(
\clgf( E_n) \rightrightarrows \clgf(E_n \otimes E_n) \triplearrows \dots
\right). 
\]
Now, $E_n \otimes E_n$ does not have a regular noetherian $\pi_0$. However, $\clgf( E_n)$
is simply the ordinary category of finite \'etale covers of $\pi_0 E_n$, in
view of \Cref{etalegalois}. Therefore, we can replace the above totalization by
the analogous totalization where we only consider the \emph{algebraic} finite
covers at each stage (since the two are the same at the first stage). 
In particular, since the cosimplicial (discrete) commutative ring
\[ \pi_0 (E_n) \rightrightarrows \pi_0 (E_n \otimes E_n) \triplearrows \dots ,  \]
is a presentation for the algebraic stack $M_{FG}^{\leq n}$ of formal groups
(over $\mathbb{Z}_{(p)}$-algebras) of
height $\leq n$, we find that the Galois theory of $L_nS^0$ is the Galois
theory of this stack. The next lemma thus completes the proof. 
\end{proof} 

\begin{lemma} 
For $n \geq 1$,
the maps of stacks $M_{FG}^{\leq n} \to M_{FG} \to \spec \mathbb{Z}_{(p)}$ induce
isomorphisms on
fundamental groups. 
\end{lemma}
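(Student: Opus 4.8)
The plan is to prove the two asserted isomorphisms of fundamental groups separately, working always at a prime $p$. The second map, $M_{FG} \to \spec \mathbb{Z}_{(p)}$, is the easier one: over a field (or over $\mathbb{Z}_{(p)}$), the moduli stack of formal groups has a section (the additive formal group $\widehat{\mathbb{G}_a}$, or more usefully any Landweber-exact choice), so $\pi_1^{\mathrm{et}}$ of $\spec \mathbb{Z}_{(p)}$ is a retract of that of $M_{FG}$; and conversely $M_{FG}$ is covered by $\spec \mathbb{Z}_{(p)}[a_1, a_2, \dots]$ (the coefficients of a universal $p$-typical or general formal group law) modulo the action of the large group scheme of coordinate changes, which is pro-(affine space) and hence induces no new connected components and no new fundamental group. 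So the real content is that the open substack $M_{FG}^{\leq n} \subset M_{FG}$ induces an isomorphism on $\pi_1^{\mathrm{et}}$.

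For that, the strategy is a purity argument exactly parallel to the proof of \Cref{E1purity}. First I would note that $M_{FG}$ itself is \emph{not} regular or noetherian, so one cannot apply the Zariski--Nagata purity theorem (\Cref{ZN}) directly. Instead one passes to an \'etale presentation: choose a smooth (or \'etale) cover $\spec R \to M_{FG}$ by a regular scheme — for instance a suitable localization of the Lazard ring, or better, cover $M_{FG}$ locally by the regular schemes $\spec \mathbb{Z}_{(p)}[v_1, \dots, v_m][v_m^{-1}]$ arising from the standard stratification. The closed complement $M_{FG}^{> n}$ of $M_{FG}^{\leq n}$ is the vanishing locus of the invariant ideal $I_{n+1} = (p, v_1, \dots, v_n)$, and since $(p, v_1, \dots, v_n)$ is a regular sequence on each chart (the key input: $M_{FG}$ is flat over $\spec \mathbb{Z}_{(p)}$ and the $v_i$ form a regular sequence, so $I_{n+1}$ cuts out a substack of codimension $n+1 \geq 2$ for $n \geq 1$), the complement $M_{FG}^{\leq n}$ is the complement of a codimension $\geq 2$ closed substack in a regular Deligne--Mumford (here Artin, but the relevant charts are schemes) stack. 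Then the Zariski--Nagata theorem on each regular \'etale chart, combined with faithfully flat descent for finite \'etale covers (which lets one glue the equivalences over the charts and their overlaps), gives that finite \'etale covers of $M_{FG}$ and of $M_{FG}^{\leq n}$ are equivalent, hence an isomorphism on $\pi_1^{\mathrm{et}}$.

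The step I expect to be the main obstacle is making the codimension and regularity bookkeeping precise on the non-noetherian stack $M_{FG}$: one must be careful that "codimension $\geq 2$" and "regular" are statements one checks \'etale-locally on presentations by regular schemes, and that the charts can be chosen so that $(p, v_1, \dots, v_n)$ really is a regular sequence cutting out the right closed locus. A clean way to organize this is to work with the standard affine charts $\spec (BP_*/I_{n+1})[v_{n+1}^{\pm 1}, \dots]$-type presentations, or to cite the known fact (used already implicitly in the excerpt, e.g.\ via \Cref{etalegalois} applied to $E_n$ and in the proof of \Cref{knlocalgal}) that $M_{FG}^{\leq n}$ has a flat affine presentation by regular rings. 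Once regularity and the codimension bound are in hand on a presentation, the rest is formal: apply \Cref{ZN} chart-by-chart, descend, and combine with the retraction argument for $M_{FG} \to \spec \mathbb{Z}_{(p)}$ to conclude that all three stacks have the same $\pi_1^{\mathrm{et}}$, namely $\pi_1^{\mathrm{et}} \spec \mathbb{Z}_{(p)}$.
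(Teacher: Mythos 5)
Your proposal follows essentially the same route as the paper: present $M_{FG}$ by the Lazard ring (a polynomial ring on countably many generators over $\mathbb{Z}_{(p)}$, with $\spec L \times_{M_{FG}} \spec L$ of the same shape), conclude that $\pi_1^{\mathrm{et}} M_{FG} \simeq \pi_1^{\mathrm{et}} \spec \mathbb{Z}_{(p)}$, and then remove the height $>n$ locus --- cut out by the regular sequence $(p, v_1, \dots, v_n)$, hence of codimension $\geq 2$ once $n \geq 1$ --- via Zariski--Nagata purity, handling the infinitely many variables by a filtered colimit over finite-dimensional polynomial subrings. The one step you should not wave through is the claim that the (pro-)affine presentation ``induces no new fundamental group'': $\pi_1^{\mathrm{et}}(\mathbb{A}^1_{\mathbb{F}_p})$ is enormous, so the $\mathbb{A}^1$-invariance of $\pi_1^{\mathrm{et}}$ used here genuinely requires that the base $\mathbb{Z}_{(p)}$ is regular and torsion-free (an \'etale cover of $\spec R[x]$ is determined over the characteristic-zero fraction field, where it must be $L[x]$ for $L/K$ finite separable, and is then recovered by normalization); this is precisely the content of the footnote in the paper's proof, and it is the only place in the argument with real arithmetic input.
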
 
\begin{proof} 
The moduli stack of formal groups $M_{FG}$ has a presentation in terms of the
map $\spec L \to M_{FG}$, where $L$ is the \emph{Lazard ring} (localized at
$p$). $L$ is  a polynomial ring on a countable number of generators over
$\mathbb{Z}_{(p)}$. Similarly, $\spec L \times_{M_{FG}} \spec L$ is a
polynomial ring on a countable number of generators over $\spec
\mathbb{Z}_{(p)}$. 
The \'etale fundamental group of $\mathbb{Z}_{(p)}[x_1, \dots, x_n]$ is that of
$\mathbb{Z}_{(p)}$,\footnote{Let $R$ be a regular, torsion-free ring. Then we
claim that the fundamental group of $\spec R[x]$ and $\spec R$ are isomorphic
under the natural map. 
In fact, this is evident (e.g., by topology) if $R$ is a field of
characteristic zero. Now, if $K$ is the fraction field of $R$, then  to give an \'etale cover of
$\spec R[x]$ is equivalent to giving an \'etale cover of $\spec K[x]$ (i.e.,
an \'etale $K[x]$-algebra $R_K'$) such that
the normalization of $R[x]$ in $R_K'$ is \'etale over $R[x]$, since \'etale
extensions preserve normality. 
The \'etale $K[x]$-algebra $R_K'$ is necessarily of the form $L[x]$ where $L$
is a finite separable extension of $K$ (if it is connected, at least). 
In order for this normalization to be \'etale over $R[x]$, the normalization of
$R$ in $L$ must be \'etale over $R$.} and by taking filtered colimits, the same follows for a
polynomial ring over $\mathbb{Z}_{(p)}$ over a countable number of variables. 
Thus, the \'etale fundamental group $M_{FG}$ is that of $\spec
\mathbb{Z}_{(p)}$. The last assertion follows because, again, the deletion of
the closed subscheme cut out by $(p, v_1)$ does not affect the \'etale
fundamental group in view of the Zariski-Nagata theorem (applied to the
infinite-dimensional rings by the filtered colimit argument). 
\end{proof} 

The above results suggest the following purity conjecture. 
\begin{conj}
\label{purityconj} Let $R$ be \emph{any} $L_n$-local $\e{\infty}$-ring. 
The map $R \to L_1 R$ induces an isomorphism on fundamental groups. \end{conj}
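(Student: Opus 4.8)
\textbf{Proof proposal for Conjecture \ref{purityconj}.} The statement is a conjecture, so I will not attempt a complete proof, but I can outline the natural strategy and indicate where it stalls. The guiding philosophy is the one established by \Cref{E1purity} and the purity result for $L_n S^0$: in chromatic homotopy theory, passing from an $L_n$-local $\e{\infty}$-ring to its $L_1$-localization should be analogous to deleting a codimension-$\geq 2$ closed substack from a regular stack, and the Zariski–Nagata purity theorem (\Cref{ZN}) guarantees that such deletions do not change the \'etale fundamental group. The plan is therefore to reduce the general case, as much as possible, to the two cases already proved by descent and the invariance machinery of the paper.

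First I would work one prime at a time, using \Cref{galloc}: since $R_{\mathbb{Q}} \simeq (L_1 R)_{\mathbb{Q}}$ always (rationalization kills the distinction between $L_n$ and $L_1$ for $n \geq 1$), it suffices to prove the statement $p$-locally for each $p$, where $L_1 = L_{K(0) \vee K(1)}$. Next, I would try to stratify $R$ along the chromatic filtration: one has the fracture square expressing $L_n R$ as a homotopy pullback built from $L_{n-1} R$ and the $K(n)$-localization $L_{K(n)} R$, and more refined such squares. Using \Cref{vankampen} (or rather the upper-bound version \eqref{variousinc}) one obtains a description of the Galois category of $L_n R$ in terms of those of the pieces $L_{K(i)} R$ for $i \leq n$. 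The key input would then be a $K(n)$-local purity statement: for $n \geq 2$, the map $L_{K(n)} R \to L_{K(1)}(\text{something})$ should not create new Galois extensions, because $K(n)$-locally the relevant ``moduli'' is the classifying stack of the Morava stabilizer group (as in the proof of \Cref{knlocalgal}), and the geometry there is governed by deformation rings $W(\kappa)[[v_1,\dots,v_{n-1}]]$ which are regular local of dimension $n$; deleting the closed point or the locus $(p,v_1)$ from such a ring is a codimension-$\geq 2$ deletion when $n \geq 2$, so Zariski–Nagata applies. For a general $R$ (not arising from a nice derived stack) one would want to run descent along $L_{K(n)} R \to E_n \otimes R$ or $\prod E_n$, reduce to modules over Morava $E$-theory (where \Cref{etalegall} and \Cref{etalegalois} make the Galois theory algebraic), and then invoke the algebraic purity theorem on the resulting cosimplicial ring, which is complete along $(p,v_1,\dots,v_{n-1})$ and hence has algebraic fundamental group insensitive to the further $(p,v_1)$-completion.

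The main obstacle is precisely the step where one leaves the world of rings with regular (or at least well-controlled) homotopy. For an arbitrary $L_n$-local $\e{\infty}$-ring $R$, the homotopy groups $\pi_* R$ can be completely wild — there is no reason for $\pi_0 R$ to be regular, noetherian, or even coherent — so the Zariski–Nagata theorem cannot be applied directly to $\spec \pi_0 R$, and the ``codimension $\geq 2$'' heuristic has no literal meaning. The descent approach mitigates this by base-changing to $E_n \otimes R$, but then one must control the cosimplicial ring $\pi_0(E_n^{\otimes \bullet} \otimes R)$; unlike in the proof for $L_n S^0$, where $\pi_0 E_n^{\otimes \bullet}$ presents the honest moduli stack $M_{FG}^{\leq n}$, here the extra factor of $R$ destroys the geometric picture. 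A further subtlety is the convergence of the chromatic fracture: \Cref{vankampen} gives an \emph{equivalence} of Galois categories only over a \emph{finite} diagram, and while the $L_n$-fracture square is finite, tracking finite covers through it requires knowing that a commutative algebra object which is a finite cover on each chromatic layer is a finite cover in the total category — i.e., that it admits descent globally — which is exactly the kind of ``finitary'' statement that \Cref{descfinloc} handles for finite limits but which one must still verify holds in the presence of the $L_n$-localization functors. My expectation is that the conjecture is provable for $R$ coming from 0-affine even periodic refinements of flat regular stacks (a strengthening of \Cref{E1purity} and \Cref{galstack} to all heights), and that the genuinely general case requires either a new purity input at the level of $\e{\infty}$-rings or a clever reduction that I do not currently see.
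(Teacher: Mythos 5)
This statement is given in the paper only as a conjecture --- no proof is offered --- so there is no argument of the paper to compare yours against; the paper's evidence consists precisely of the two purity theorems preceding it (\Cref{E1purity} and the computation for $L_n S^0$), which you correctly identify as the model cases. Your outline --- localize at one prime at a time via \Cref{galloc}, descend along $E_n$, and invoke Zariski--Nagata on complete regular deformation rings --- is exactly the strategy those two theorems instantiate, and your diagnosis of the obstruction (for a general $L_n$-local $R$ the cosimplicial ring $\pi_0(E_n^{\otimes\bullet}\otimes R)$ has no reason to be regular, noetherian, or to present any recognizable stack) is the right one.

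One step in your sketch deserves a sharper warning than you give it. You propose as ``the key input'' a $K(n)$-local purity statement, reasoning that the Lubin--Tate rings $W(\kappa)[[v_1,\dots,v_{n-1}]]$ are regular of dimension $n\geq 2$, so that deleting the locus $(p,v_1)$ is a codimension-$\geq 2$ operation. But the paper's own remark immediately following the conjecture points out that purity fails for $L_{K(n)}\sp$: by \Cref{knlocalgal} its weak Galois group is the extended Morava stabilizer group $\Ge_n$, which grows with $n$ and is certainly not recovered from any $K(1)$-local avatar. The reason is that $K(n)$-local Galois theory sees the \'etale fundamental group not of the deformation ring but of the closed stratum $M_{FG}^{n}$ itself --- completing along a closed substack is the opposite of a codimension-$\geq 2$ deletion, and it \emph{creates} Galois extensions rather than preserving them. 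So a fracture-square argument cannot hope that each $K(i)$-local layer is individually pure; it must instead show that the extra $K(i)$-local finite covers for $i\geq 2$ fail to glue with the $L_1$-local data, i.e.\ do not descend to $L_n R$. That is exactly where the $\e{\infty}$-ring hypothesis (as opposed to a general $L_nS^0$-linear stable homotopy theory) has to enter, and your outline does not yet isolate the mechanism by which it does. With that caveat, your assessment --- provable for $0$-affine even periodic refinements of regular stacks, genuinely open in general --- matches the state of the paper.
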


\Cref{purityconj} is supported by the observation that, although not every
$L_n$-local $\e{\infty}$-ring has a regular $\pi_0$ (or anywhere close),
$L_n$-local $\e{\infty}$-rings seem to built from such at least to some extent.
For example, the free $K(1)$-local $\e{\infty}$-ring on a generator is known to
have an infinite-dimensional regular $\pi_0$. 

\begin{remark} 
Conjecture~\ref{purityconj} cannot be valid for general $L_n S^0$-linear stable
homotopy theories: it is specific to $\e{\infty}$-rings. For example, it fails
for $L_{K(n)}  \sp$. 
\end{remark} 

\bibliographystyle{alpha} 
\bibliography{thick}

\begin{thebibliography}{{Mat}15b}

\bibitem[AG14]{AntieauGepner}
Benjamin Antieau and David Gepner.
\newblock Brauer groups and \'etale cohomology in derived algebraic geometry.
\newblock {\em Geom. Topol.}, 18(2):1149--1244, 2014.

\bibitem[Ang08]{angeltveit}
Vigleik Angeltveit.
\newblock Topological {H}ochschild homology and cohomology of {$A\sb \infty$}
  ring spectra.
\newblock {\em Geom. Topol.}, 12(2):987--1032, 2008.

\bibitem[Bal]{balmersep}
Paul Balmer.
\newblock Separable extensions in tt-geometry and generalized {Q}uillen
  stratification.
\newblock {\em Ann. Sci. {\'E}cole Norm. Sup.}
\newblock To appear.

\bibitem[Bal10]{Balmer}
Paul Balmer.
\newblock Tensor triangular geometry.
\newblock In {\em Proceedings of the {I}nternational {C}ongress of
  {M}athematicians. {V}olume {II}}, pages 85--112, New Delhi, 2010. Hindustan
  Book Agency.

\bibitem[Bal15]{balmerstack}
Paul Balmer.
\newblock Stacks of group representations.
\newblock {\em J. Eur. Math. Soc. (JEMS)}, 17(1):189--228, 2015.

\bibitem[Ban13]{banerjee}
Romie Banerjee.
\newblock Galois descent for real spectra.
\newblock {\em arXiv preprint arXiv:1305.4360}, 2013.

\bibitem[BDS15]{BDAS}
Paul Balmer, Ivo Dell'Ambrogio, and Beren Sanders.
\newblock Restriction to finite-index subgroups as \'etale extensions in
  topology, {KK}--theory and geometry.
\newblock {\em Algebr. Geom. Topol.}, 15(5):3025--3047, 2015.

\bibitem[BGT13]{BGT}
Andrew~J. Blumberg, David Gepner, and Gon{\c{c}}alo Tabuada.
\newblock A universal characterization of higher algebraic {$K$}-theory.
\newblock {\em Geom. Topol.}, 17(2):733--838, 2013.

\bibitem[BHH15]{BHH}
Ilan Barnea, Yonatan Harpaz, and Geoffroy Horel.
\newblock Pro-categories in homotopy theory.
\newblock {\em arXiv preprint arXiv:1507.01564}, 2015.

\bibitem[BIK11]{BIK}
David~J. Benson, Srikanth~B. Iyengar, and Henning Krause.
\newblock Module categories for finite group algebras.
\newblock In {\em Representations of algebras and related topics}, EMS Ser.
  Congr. Rep., pages 55--83. Eur. Math. Soc., Z\"urich, 2011.

\bibitem[BJ01]{borceux}
Francis Borceux and George Janelidze.
\newblock {\em Galois theories}, volume~72 of {\em Cambridge Studies in
  Advanced Mathematics}.
\newblock Cambridge University Press, Cambridge, 2001.

\bibitem[BR70]{BRB}
Jean B{\'e}nabou and Jacques Roubaud.
\newblock Monades et descente.
\newblock {\em C. R. Acad. Sci. Paris S\'er. A-B}, 270:A96--A98, 1970.

\bibitem[BR07]{BRrealiz}
Andrew Baker and Birgit Richter.
\newblock Realizability of algebraic {G}alois extensions by strictly
  commutative ring spectra.
\newblock {\em Trans. Amer. Math. Soc.}, 359(2):827--857 (electronic), 2007.

\bibitem[BR08]{BR2}
Andrew Baker and Birgit Richter.
\newblock Galois extensions of {L}ubin-{T}ate spectra.
\newblock {\em Homology, Homotopy Appl.}, 10(3):27--43, 2008.

\bibitem[BS15a]{BSpro}
Bhargav Bhatt and Peter Scholze.
\newblock The pro-\'etale topology for schemes.
\newblock {\em Ast\'erisque}, (369):99--201, 2015.

\bibitem[BS15b]{BhSch}
Bhargav Bhatt and Peter Scholze.
\newblock Projectivity of the {W}itt vector affine {G}rassmannian.
\newblock {\em arXiv preprint arXiv:1507.06490}, 2015.

\bibitem[BZFN10]{BFN}
David Ben-Zvi, John Francis, and David Nadler.
\newblock Integral transforms and {D}rinfeld centers in derived algebraic
  geometry.
\newblock {\em J. Amer. Math. Soc.}, 23(4):909--966, 2010.

\bibitem[Car00]{carlson}
Jon~F. Carlson.
\newblock Cohomology and induction from elementary abelian subgroups.
\newblock {\em Q. J. Math.}, 51(2):169--181, 2000.

\bibitem[CJF13]{cjf}
Alex Chirvasitu and Theo Johnson-Freyd.
\newblock The fundamental pro-groupoid of an affine 2-scheme.
\newblock {\em Appl. Categ. Structures}, 21(5):469--522, 2013.

\bibitem[DFHH14]{TMF}
Christopher~L. Douglas, John Francis, Andr{\'e}~G. Henriques, and Michael~A.
  Hill, editors.
\newblock {\em Topological modular forms}, volume 201 of {\em Mathematical
  Surveys and Monographs}.
\newblock American Mathematical Society, Providence, RI, 2014.

\bibitem[DH04]{DH}
Ethan~S. Devinatz and Michael~J. Hopkins.
\newblock Homotopy fixed point spectra for closed subgroups of the {M}orava
  stabilizer groups.
\newblock {\em Topology}, 43(1):1--47, 2004.

\bibitem[DHS88]{DHS}
Ethan~S. Devinatz, Michael~J. Hopkins, and Jeffrey~H. Smith.
\newblock Nilpotence and stable homotopy theory. {I}.
\newblock {\em Ann. of Math. (2)}, 128(2):207--241, 1988.

\bibitem[EKMM97]{EKMM}
A.~D. Elmendorf, I.~Kriz, M.~A. Mandell, and J.~P. May.
\newblock {\em Rings, modules, and algebras in stable homotopy theory},
  volume~47 of {\em Mathematical Surveys and Monographs}.
\newblock American Mathematical Society, Providence, RI, 1997.
\newblock With an appendix by M. Cole.

\bibitem[FP05]{FP}
Eric~M. Friedlander and Julia Pevtsova.
\newblock Representation-theoretic support spaces for finite group schemes.
\newblock {\em Amer. J. Math.}, 127(2):379--420, 2005.

\bibitem[Gai12]{gaitsnotes}
Dennis Gaitsgory.
\newblock Notes on geometric {L}anglands: generalities on {DG}-categories.
\newblock 2012.
\newblock Available at \url{http://math.harvard.edu/~gaitsgde}.

\bibitem[Gai13]{gaits}
Dennis Gaitsgory.
\newblock Sheaves of categories and the notion of 1-affineness.
\newblock 2013.
\newblock Available at \url{http://arxiv.org/abs/1306.4304}.

\bibitem[GH04]{goersshopkins}
P.~G. Goerss and M.~J. Hopkins.
\newblock Moduli spaces of commutative ring spectra.
\newblock In {\em Structured ring spectra}, volume 315 of {\em London Math.
  Soc. Lecture Note Ser.}, pages 151--200. Cambridge Univ. Press, Cambridge,
  2004.

\bibitem[GL]{GL}
David Gepner and Tyler Lawson.
\newblock Brauer groups and {G}alois cohomology of commutative
  $\mathbb{S}$-algebras.
\newblock To appear.

\bibitem[GR03]{GR}
Ofer Gabber and Lorenzo Ramero.
\newblock {\em Almost ring theory}, volume 1800 of {\em Lecture Notes in
  Mathematics}.
\newblock Springer-Verlag, Berlin, 2003.

\bibitem[Gro03]{sga1}
Alexander Grothendieck.
\newblock {\em Rev\^etements \'etales et groupe fondamental ({SGA} 1)}.
\newblock Documents Math\'ematiques (Paris) [Mathematical Documents (Paris)],
  3. Soci\'et\'e Math\'ematique de France, Paris, 2003.
\newblock S{\'e}minaire de g{\'e}om{\'e}trie alg{\'e}brique du Bois Marie
  1960--61. [Algebraic Geometry Seminar of Bois Marie 1960-61], Directed by A.
  Grothendieck, With two papers by M. Raynaud, Updated and annotated reprint of
  the 1971 original [Lecture Notes in Math., 224, Springer, Berlin; MR0354651
  (50 \#7129)].

\bibitem[Gro05]{SGA2}
Alexander Grothendieck.
\newblock {\em Cohomologie locale des faisceaux coh\'erents et th\'eor\`emes de
  {L}efschetz locaux et globaux ({SGA} 2)}.
\newblock Documents Math\'ematiques (Paris) [Mathematical Documents (Paris)],
  4. Soci\'et\'e Math\'ematique de France, Paris, 2005.
\newblock S{\'e}minaire de G{\'e}om{\'e}trie Alg{\'e}brique du Bois Marie,
  1962, Augment{\'e} d'un expos{\'e} de Mich{\`e}le Raynaud. [With an
  expos{\'e} by Mich{\`e}le Raynaud], With a preface and edited by Yves Laszlo,
  Revised reprint of the 1968 French original.

\bibitem[Hes09]{hess}
Kathryn Hess.
\newblock Homotopic {H}opf-{G}alois extensions: foundations and examples.
\newblock In {\em New topological contexts for {G}alois theory and algebraic
  geometry ({BIRS} 2008)}, volume~16 of {\em Geom. Topol. Monogr.}, pages
  79--132. Geom. Topol. Publ., Coventry, 2009.

\bibitem[HL13]{ambidexterity}
Michael Hopkins and Jacob Lurie.
\newblock Ambidexterity in {$K(n)$}-local stable homotopy theory.
\newblock 2013.
\newblock Available at \url{http://math.harvard.edu/~lurie/}.

\bibitem[HMS15]{HMS}
Drew Heard, Akhil Mathew, and Vesna Stojanoska.
\newblock Picard groups of higher real {$K$}-theory spectra at height {$p-1$}.
\newblock {\em arXiv preprint arXiv:1511.08064}, 2015.

\bibitem[Hop14]{kone}
Michael~J. Hopkins.
\newblock {$K(1)$}-local {$E\sb \infty$}-ring spectra.
\newblock In {\em Topological modular forms}, volume 201 of {\em Math. Surveys
  Monogr.}, pages 287--302. Amer. Math. Soc., Providence, RI, 2014.

\bibitem[HPS97]{axiomatic}
Mark Hovey, John~H. Palmieri, and Neil~P. Strickland.
\newblock Axiomatic stable homotopy theory.
\newblock {\em Mem. Amer. Math. Soc.}, 128(610):x+114, 1997.

\bibitem[HPS99]{HPS}
M.~J. Hopkins, J.~H. Palmieri, and J.~H. Smith.
\newblock Vanishing lines in generalized {A}dams spectral sequences are
  generic.
\newblock {\em Geom. Topol.}, 3:155--165 (electronic), 1999.

\bibitem[HR]{HallRydh}
Jack Hall and David Rydh.
\newblock Algebraic groups and compact generation of their derived categories
  of representations.
\newblock {\em Indiana Univ. Math. J.}
\newblock To appear.

\bibitem[HS98]{HS}
Michael~J. Hopkins and Jeffrey~H. Smith.
\newblock Nilpotence and stable homotopy theory. {II}.
\newblock {\em Ann. of Math. (2)}, 148(1):1--49, 1998.

\bibitem[HS99]{HoveyS}
Mark Hovey and Neil~P. Strickland.
\newblock Morava {$K$}-theories and localisation.
\newblock {\em Mem. Amer. Math. Soc.}, 139(666):viii+100, 1999.

\bibitem[(ht]{conrad}
User22479 (http://mathoverflow.net/users/22479/user22479).
\newblock Fundamental group of the moduli stack of elliptic curves.
\newblock MathOverflow.
\newblock URL:http://mathoverflow.net/q/105062 (version: 2012-08-24).

\bibitem[Joh86]{Johnstone}
Peter~T. Johnstone.
\newblock {\em Stone spaces}, volume~3 of {\em Cambridge Studies in Advanced
  Mathematics}.
\newblock Cambridge University Press, Cambridge, 1986.
\newblock Reprint of the 1982 edition.

\bibitem[Kel94]{keller}
Bernhard Keller.
\newblock Deriving {DG} categories.
\newblock {\em Ann. Sci. \'Ecole Norm. Sup. (4)}, 27(1):63--102, 1994.

\bibitem[Lur09]{HTT}
Jacob Lurie.
\newblock {\em Higher topos theory}, volume 170 of {\em Annals of Mathematics
  Studies}.
\newblock Princeton University Press, Princeton, NJ, 2009.

\bibitem[Lur11a]{DAGIX}
Jacob Lurie.
\newblock {DAG IX}: Closed immersions.
\newblock 2011.
\newblock Available at \url{http://math.harvard.edu/~lurie}.

\bibitem[Lur11b]{DAGss}
Jacob Lurie.
\newblock {DAG VII}: Spectral schemes.
\newblock 2011.
\newblock Available at \url{http://math.harvard.edu/~lurie}.

\bibitem[Lur11c]{DAGQC}
Jacob Lurie.
\newblock {DAG VIII}: Quasi-coherent sheaves and {T}annaka duality theorems.
\newblock 2011.
\newblock Available at \url{http://math.harvard.edu/~lurie}.

\bibitem[Lur11d]{DAGdesc}
Jacob Lurie.
\newblock {DAG XI}: Descent theorems.
\newblock 2011.
\newblock Available at \url{http://math.harvard.edu/~lurie}.

\bibitem[Lur11e]{DAGrat}
Jacob Lurie.
\newblock {DAG XIII}: Rational and {$p$}-adic homotopy theory.
\newblock 2011.
\newblock Available at \url{http://math.harvard.edu/~lurie}.

\bibitem[Lur11f]{DAGVIII}
Jacob Lurie.
\newblock Derived algebraic geometry viii: Quasi-coherent sheaves and {T}annaka
  duality theorems.
\newblock 2011.
\newblock Available at \url{http://math.harvard.edu/~lurie/}.

\bibitem[Lur14]{higheralg}
Jacob Lurie.
\newblock {\em Higher algebra}.
\newblock 2014.
\newblock Available at \url{http://math.harvard.edu/~lurie/higheralgebra.pdf}.

\bibitem[Mag14]{Magid}
Andy~R. Magid.
\newblock {\em The separable {G}alois theory of commutative rings}.
\newblock Pure and Applied Mathematics (Boca Raton). CRC Press, Boca Raton, FL,
  second edition, 2014.

\bibitem[Mat15a]{thick}
Akhil Mathew.
\newblock A thick subcategory theorem for modules over certain ring spectra.
\newblock {\em Geom. Topol.}, 19(4):2359--2392, 2015.

\bibitem[{Mat}15b]{toruspic}
Akhil {Mathew}.
\newblock {Torus actions on stable module categories, Picard groups, and
  localizing subcategories}.
\newblock {\em arXiv preprint arXiv:1512.01716}, 2015.

\bibitem[May01]{mayidem}
J.~P. May.
\newblock Idempotents and {L}andweber exactness in brave new algebra.
\newblock {\em Homology Homotopy Appl.}, 3(2):355--359, 2001.
\newblock Equivariant stable homotopy theory and related areas (Stanford, CA,
  2000).

\bibitem[Mei12]{meier}
Lennart Meier.
\newblock {\em United elliptic homology}.
\newblock PhD thesis, University of Bonn, 2012.

\bibitem[Mil92]{miller}
Haynes Miller.
\newblock Finite localizations.
\newblock {\em Bol. Soc. Mat. Mexicana (2)}, 37(1-2):383--389, 1992.
\newblock Papers in honor of Jos{\'e} Adem (Spanish).

\bibitem[MM15]{MM}
Akhil Mathew and Lennart Meier.
\newblock Affineness and chromatic homotopy theory.
\newblock {\em J. Topol.}, 8(2):476--528, 2015.

\bibitem[MNN15a]{MNNequiv2}
Akhil Mathew, Niko Naumann, and Justin Noel.
\newblock Derived induction and restriction theory.
\newblock {\em arXiv preprint arXiv:1507.06867}, 2015.

\bibitem[MNN15b]{MNNequiv}
Akhil Mathew, Niko Naumann, and Justin Noel.
\newblock Nilpotence and descent in equivariant stable homotopy theory.
\newblock {\em arXiv preprint arXiv:1507.06869}, 2015.

\bibitem[MNN15c]{MNN}
Akhil Mathew, Niko Naumann, and Justin Noel.
\newblock On a nilpotence conjecture of {J.P.} {M}ay.
\newblock {\em J. Topol.}, 8(4):917--932, 2015.

\bibitem[MR]{RM}
Fernando Muro and Oriol Ravent{\'o}s.
\newblock Transfinite {A}dams representability.
\newblock {\em Advances in Mathematics}.
\newblock To appear.

\bibitem[MS]{MS}
Akhil Mathew and Vesna Stojanoska.
\newblock The {P}icard group of topological modular forms via descent theory.
\newblock {\em Geom. Topol.}
\newblock To appear.

\bibitem[Pal97]{palmieri}
John~H. Palmieri.
\newblock A note on the cohomology of finite-dimensional cocommutative {H}opf
  algebras.
\newblock {\em J. Algebra}, 188(1):203--215, 1997.

\bibitem[Pal99]{palmieriquillen}
John~H. Palmieri.
\newblock Quillen stratification for the {S}teenrod algebra.
\newblock {\em Ann. of Math. (2)}, 149(2):421--449, 1999.

\bibitem[Pau15]{pauwels}
Bregje Pauwels.
\newblock {\em Quasi-{G}alois theory in tensor-triangulated categories}.
\newblock PhD thesis, University of California at Los Angeles, 2015.

\bibitem[Qui71]{equiv}
Daniel Quillen.
\newblock The spectrum of an equivariant cohomology ring. {I}, {II}.
\newblock {\em Ann. of Math. (2)}, 94:549--572; ibid. (2) 94 (1971), 573--602,
  1971.

\bibitem[Rav92]{ravenelorange}
Douglas~C. Ravenel.
\newblock {\em Nilpotence and periodicity in stable homotopy theory}, volume
  128 of {\em Annals of Mathematics Studies}.
\newblock Princeton University Press, Princeton, NJ, 1992.
\newblock Appendix C by Jeff Smith.

\bibitem[Rez98]{rezkHM}
Charles Rezk.
\newblock Notes on the {H}opkins-{M}iller theorem.
\newblock In {\em Homotopy theory via algebraic geometry and group
  representations ({E}vanston, {IL}, 1997)}, volume 220 of {\em Contemp.
  Math.}, pages 313--366. Amer. Math. Soc., Providence, RI, 1998.

\bibitem[Rob89]{robinsonobstruct}
Alan Robinson.
\newblock Obstruction theory and the strict associativity of {M}orava
  {$K$}-theories.
\newblock In {\em Advances in homotopy theory ({C}ortona, 1988)}, volume 139 of
  {\em London Math. Soc. Lecture Note Ser.}, pages 143--152. Cambridge Univ.
  Press, Cambridge, 1989.

\bibitem[Rog]{Rognes2}
John Rognes.
\newblock A {G}alois extension that is not faithful.
\newblock Available at \url{http://folk.uio.no/rognes/papers/unfaithful.pdf}.

\bibitem[Rog08]{rognes}
John Rognes.
\newblock Galois extensions of structured ring spectra. {S}tably dualizable
  groups.
\newblock {\em Mem. Amer. Math. Soc.}, 192(898):viii+137, 2008.

\bibitem[RST93]{RST}
Jan Reiterman, Manuela Sobral, and Walter Tholen.
\newblock Composites of effective descent maps.
\newblock {\em Cahiers Topologie G\'eom. Diff\'erentielle Cat\'eg.},
  34(3):193--207, 1993.

\bibitem[RV16]{RV}
Emily Riehl and Dominic Verity.
\newblock Homotopy coherent adjunctions and the formal theory of monads.
\newblock {\em Adv. Math.}, 286:802--888, 2016.

\bibitem[SS03]{schwedeshipley}
Stefan Schwede and Brooke Shipley.
\newblock Stable model categories are categories of modules.
\newblock {\em Topology}, 42(1):103--153, 2003.

\bibitem[ST92]{ST}
M.~Sobral and W.~Tholen.
\newblock Effective descent morphisms and effective equivalence relations.
\newblock In {\em Category theory 1991 ({M}ontreal, {PQ}, 1991)}, volume~13 of
  {\em CMS Conf. Proc.}, pages 421--433. Amer. Math. Soc., Providence, RI,
  1992.

\bibitem[Wil81]{wilkerson}
Clarence Wilkerson.
\newblock The cohomology algebras of finite-dimensional {H}opf algebras.
\newblock {\em Trans. Amer. Math. Soc.}, 264(1):137--150, 1981.

\end{thebibliography}

\end{document}